\documentclass[a4paper, 12pt, twoside, reqno, openright]{amsart}
\usepackage{amsmath}
\usepackage{amssymb}
\usepackage{amsthm}
\usepackage{amscd}
\usepackage{amsopn}
\usepackage{paralist}
\usepackage[UKenglish]{babel}
\usepackage[T1]{fontenc}
\usepackage{multicol}
\usepackage{pifont}
\usepackage[top=1.5in, bottom=1.2in, left=1.2in, right=1.2in]{geometry}
\usepackage[cal=boondox, scr=euler, bb=ams]{mathalfa}
\usepackage{tikz}
\usepackage{wrapfig}
\usepackage{etex}
\usepackage{pictex}
\usepackage[all,cmtip]{xy}

\newenvironment{altenumerate}
   {\begin{list}
      {\textup{(\theenumi)} }
      {\usecounter{enumi}
       \setlength{\labelwidth}{0pt}
       \setlength{\labelsep}{2pt}
       \setlength{\leftmargin}{0pt}
       \setlength{\itemsep}{\the\smallskipamount}
       \renewcommand{\theenumi}{\roman{enumi}}
      }}
   {\end{list}}
\newenvironment{altitemize}
   {\begin{list}
      {$\bullet$ }
      {\setlength{\labelwidth}{0pt}
       \setlength{\labelsep}{2pt}
       \setlength{\leftmargin}{0pt}
       \setlength{\itemsep}{\the\smallskipamount}
      }}
   {\end{list}}

\newtheoremstyle{Teorema}{5pt}{5pt}{\it}{}{\bf}{.}{ }{}
\theoremstyle{Teorema}
\newtheorem{Theorem}{Theorem}[section]
\newtheorem{Corollary}[Theorem]{Corollary}
\newtheorem{Proposition}[Theorem]{Proposition}
\newtheorem{Definition}[Theorem]{Definition}
\newtheorem{PropDef}[Theorem]{Proposition and Definition}
\newtheorem{Lemma}[Theorem]{Lemma}
\newtheoremstyle{Annotazione}{5pt}{5pt}{\rm}{}{\it}{.}{ }{}
\theoremstyle{Annotazione}
\newtheorem{Remark}[Theorem]{Remark}

\newtheorem{Examples}[Theorem]{Examples}
\newtheorem{Question}[Theorem]{Question}

\def\GL{\mathrm{GL}}

\def\Sh{\mathrm{Sh}}
\def\bba{\mathbb{A}}
\def\bbz{\mathbb{Z}}
\def\bbr{\mathbb{R}}
\def\bbq{\mathbb{Q}}
\def\bbc{\mathbb{C}}

\def\bbp{\mathbb{P}}
\def\bbf{\mathbb{F}}
\def\bbn{\mathbb{N}}

\def\bbs{\mathbb{S}}
\def\bbg{\mathbb{G}}

\def\adf{\mathbb{A}_{\mathrm{f}}}

\def\id{\mathrm{id}}

\def\Pic{\operatorname{Pic}}
\def\End{\operatorname{End}}
\def\Hom{\operatorname{Hom}}
\def\Ext{\operatorname{Ext}}
\def\Aut{\operatorname{Aut}}

\def\Gal{\operatorname{Gal}}

\def\Spec{\operatorname{Spec}}

\def\defined{\overset{\mathrm{def}}{=}}
\def\Hup{\mathrm{H}}
\def\Rup{\mathrm{R}}
\def\MK{\mathrm{K}^{\mathrm{M}}}

\def\Sh{\operatorname{Sh}}
\def\Wrat{\mathrm{W}_{\mathrm{rat}}}
\def\pioneet{\pi_1^{\text{\textup{\'{e}t}}}}
\def\pionepath{\pi_1^{\text{\textup{path}}}}
\def\pionehat{\hat{\pi}_1^{\text{\textup{path}}}}
\def\pizeropath{\pi_0^{\text{\textup{path}}}}
\def\lpc{\mathrm{lpc}}
\def\GMod{G\text{-\textbf{\textup{Mod}}}}
\def\GammaFSet{\varGamma\text{-\textbf{\textup{FSet}}}}
\def\UellnSet{U_{\ell^n}\text{-\textbf{\textup{FSet}}}}
\def\UBrellnSet{U_{(\ell^n)}\text{-\textbf{\textup{FSet}}}}

\def\et{\text{\rm \'{e}t}}

\makeatletter
\newcommand*\rel@kern[1]{\kern#1\dimexpr\macc@kerna}
\newcommand*\widebar[1]{%
  \begingroup
  \def\mathaccent##1##2{%
    \rel@kern{0.8}%
    \overline{\rel@kern{-0.8}\macc@nucleus\rel@kern{0.2}}%
    \rel@kern{-0.2}%
  }%
  \macc@depth\@ne
  \let\math@bgroup\@empty \let\math@egroup\macc@set@skewchar
  \mathsurround\z@ \frozen@everymath{\mathgroup\macc@group\relax}%
  \macc@set@skewchar\relax
  \let\mathaccentV\macc@nested@a
  \macc@nested@a\relax111{#1}%
  \endgroup
}

\makeatother

\def\qbar{\widebar{\bbq }}
\def\Fbar{\widebar{F}}
\def\Ebar{\widebar{E}}

\begin{document}

\title{Topological~realisations of~absolute~Galois~groups}
\author{Robert A. Kucharczyk}
\address{D\'{e}partement Mathematik\\ ETH Z\"urich\\ R\"amistrasse 101\\ 8092 Z\"urich\\ Switzerland}
\email{robert.kucharczyk@math.ethz.ch}

\author{Peter Scholze}
\address{Mathematisches Institut\\ Rheinische Friedrich-Wilhelms-Universit\"at Bonn\\ Endenicher Allee 60\\ 53115 Bonn\\ Germany}
\email{scholze@math.uni-bonn.de}
\keywords{Galois groups, Fundamental groups, Witt vectors}
\subjclass[2010]{12F10, 11R32, 14F35}
\begin{abstract}
Let $F$ be a field of characteristic $0$ containing all roots of unity. We construct a functorial compact Hausdorff space $X_F$ whose profinite fundamental group agrees with the absolute Galois group of $F$, i.e.~the category of finite covering spaces of $X_F$ is equivalent to the category of finite extensions of $F$.

The construction is based on the ring of rational Witt vectors of $F$. In the case of the cyclotomic extension of $\bbq$, the classical fundamental group of $X_F$ is a (proper) dense subgroup of the absolute Galois group of $F$. We also discuss a variant of this construction when the field is not required to contain all roots of unity, in which case there are natural Frobenius-type automorphisms which encode the descent along the cyclotomic extension.
\end{abstract}

\maketitle

\thispagestyle{empty}

\tableofcontents
\section{Introduction}

\noindent This paper grew out of an attempt to understand whether certain constructions in $p$-adic Hodge theory could potentially have analogues over number fields. One important technique in $p$-adic Hodge theory is the possibility to relate Galois groups of $p$-adic fields with Galois groups or fundamental groups of more geometric objects. Some sample results of this type are the following.

\begin{Theorem}[Fontaine--Wintenberger, {\cite{FontaineWintenberger}}] Let $K$ be the cyclotomic extension $\bbq_p(\zeta_{p^\infty})$ of $\bbq_p$. Then the absolute Galois group of $K$ is isomorphic to the absolute Galois group of $\bbf_p(\! (t)\! )$.
\end{Theorem}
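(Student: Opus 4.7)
The plan is to realise both Galois groups as the Galois group of a common auxiliary field by passing through $p$-adic completion and the tilting construction. Write $K_n = \bbq_p(\zeta_{p^n})$, $K_\infty = \bigcup_n K_n$, and let $\widehat{K}_\infty$ denote the $p$-adic completion. First I would verify that $\widehat{K}_\infty$ is a perfectoid field: the element $\pi = \zeta_p - 1$ is a pseudo-uniformiser with $\pi^{p-1}$ dividing $p$; the valuation on $K_\infty$ is non-discrete because each step $K_n \subset K_{n+1}$ is totally ramified of degree $p$; and Frobenius is surjective on $\mathcal{O}_{\widehat{K}_\infty}/p$ since $\zeta_{p^{n+1}}$ is a $p$-th root of $\zeta_{p^n}$.

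Next I would compute the tilt $\widehat{K}_\infty^\flat = \varprojlim_{x \mapsto x^p} \widehat{K}_\infty$, a perfect field of characteristic $p$. The compatible system $(\zeta_{p^n})_{n \geq 0}$ defines a distinguished element $\varepsilon \in \widehat{K}_\infty^\flat$, and $t = \varepsilon - 1$ is a pseudo-uniformiser. A direct computation identifies $\widehat{K}_\infty^\flat$ with the $t$-adic completion of the perfect closure of $\bbf_p(\!(t)\!)$.

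The heart of the argument is the tilting equivalence: the categories of finite étale $\widehat{K}_\infty$-algebras and of finite étale $\widehat{K}_\infty^\flat$-algebras are naturally equivalent, yielding $\Gal(\widehat{K}_\infty) \cong \Gal(\widehat{K}_\infty^\flat)$. It then remains to relate these absolute Galois groups to those of $K_\infty$ and $\bbf_p(\!(t)\!)$. On the characteristic-zero side, $\Gal(K_\infty) \cong \Gal(\widehat{K}_\infty)$ by Krasner's lemma, since finite separable extensions of $K_\infty$ are dense in the corresponding extensions of $\widehat{K}_\infty$ and completion induces a bijection on finite separable extensions. On the characteristic-$p$ side, perfection is purely inseparable and hence preserves the absolute Galois group, while a further Krasner-style density argument identifies the Galois group of the perfect closure of $\bbf_p(\!(t)\!)$ with that of its $t$-adic completion $\widehat{K}_\infty^\flat$.

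\emph{Main obstacle.} The delicate step is the tilting equivalence itself. Concretely, one must show that any finite étale cover of $\widehat{K}_\infty$ is determined by, and lifts from, suitable reductions modulo powers of $\pi$, and that this reduction matches the analogous data on $\widehat{K}_\infty^\flat$. This requires an almost-purity result à la Faltings: on the level of rings of integers the comparison between the two sides holds only up to almost isomorphism, and to deform covers from characteristic $p$ back to characteristic $0$ one uses a Fontaine-style deformation theory (through $W(\mathcal{O}_{\widehat{K}_\infty^\flat})$). Historically this content was obtained by Fontaine and Wintenberger through careful higher-ramification estimates showing the cyclotomic tower is strictly arithmetically profinite, which produces a field of norms canonically isomorphic to $\bbf_p(\!(t)\!)$ together with an isomorphism of Galois theories.
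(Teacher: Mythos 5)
This theorem is quoted in the paper only as motivating background (with a citation to Fontaine--Wintenberger); the paper offers no proof of it, so there is nothing internal to compare against. Your sketch is a correct outline of the modern route via perfectoid tilting: $\widehat{K}_\infty$ is perfectoid, its tilt is the completed perfection of $\bbf_p(\!(t)\!)$, the tilting equivalence on finite \'etale algebras gives $\Gal(\widehat{K}_\infty)\cong\Gal(\widehat{K}_\infty^\flat)$, and the reductions from the completed/perfected fields to $K_\infty$ and $\bbf_p(\!(t)\!)$ go through Krasner-type density plus the invariance of Galois theory under purely inseparable extensions. You also correctly flag that the almost-purity input is the genuine content, and that the historical argument of Fontaine--Wintenberger instead constructs the field of norms for a strictly arithmetically profinite extension and identifies it with $\bbf_p(\!(t)\!)$. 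Two small points worth making explicit if this were fleshed out: the step $\Gal(K_\infty)\cong\Gal(\widehat K_\infty)$ is usually credited to Krasner/Ax--Sen--Tate and requires knowing that every finite separable extension of $\widehat K_\infty$ descends to a finite extension of $K_\infty$, not merely that finite extensions complete to finite extensions; and the tilting equivalence at the level of fields is really the almost-purity theorem for the pair $(\mathcal{O}_{\widehat K_\infty},\pi)$, which in turn is deduced either from Faltings-style ramification estimates or from the perfectoid machinery. With those caveats the plan is sound.
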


\begin{Theorem}[Fargues--Fontaine, Weinstein, {\cite{WeinsteinGeomGal}}] There is a natural `space' $Z$ defined over $\bbc_p$ whose geometric fundamental group is the absolute Galois group of $\bbq_p$. Formally, $Z$ is the quotient of a $1$-dimensional punctured perfectoid open unit disc by a natural action of $\bbq_p^\times$.
\end{Theorem}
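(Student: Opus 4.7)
The strategy is to identify the quotient $Z$ with the adic Fargues--Fontaine curve over $\bbc_p$, and then to invoke the Fargues--Fontaine classification of vector bundles to compute its étale fundamental group. First I would unpack the structure. The $\bbq_p^{\times}$-action on $\widetilde{\bbd}^{*}$ is built from the Lubin--Tate formal group law for a chosen uniformiser $p$ of $\bbq_p$: elements of $\bbz_p^{\times}$ act by automorphisms of the formal disc, while $p \in \bbq_p^{\times}$ itself acts via a Frobenius-type isogeny, which becomes an automorphism only after passage to the perfectoid cover $\widetilde{\bbd}^{*}$. Under this action the quotient $Z = \widetilde{\bbd}^{*}/\bbq_p^{\times}$ can be identified with $X^{\an}_{\bbq_p,\bbc_p}$, the adic Fargues--Fontaine curve base-changed to $\bbc_p$, equivalently presented as $Y^{\an}_{\bbc_p}/\varphi^{\bbz}$ where $Y^{\an}$ is the punctured analytic spectrum of a Fontaine period ring. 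This identification is handled cleanly in Scholze's diamond formalism, in which both sides appear as natural quotients of the infinite-level Lubin--Tate (equivalently, Drinfeld) tower.

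Next I would compute $\pioneet(X^{\an}_{\bbq_p,\bbc_p})$ using the Fargues--Fontaine classification of vector bundles on $X$: every vector bundle is isomorphic to a direct sum of stable bundles $\mathcal{O}(\lambda)$ with $\lambda \in \bbq$, and the global endomorphism ring satisfies $\Hom(\mathcal{O}_{X},\mathcal{O}_{X}) = \bbq_p$. It follows that any finite étale $\mathcal{O}_{X}$-algebra is, up to isomorphism, a direct sum of copies of $\mathcal{O}_{X} = \mathcal{O}(0)$; taking global sections then defines an equivalence of categories between finite étale covers of $X^{\an}_{\bbc_p}$ and finite étale $\bbq_p$-algebras, with inverse $A \mapsto A \otimes_{\bbq_p} \mathcal{O}_{X}$. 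This yields the desired isomorphism $\pioneet(Z) \cong G_{\bbq_p}$.

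The main obstacle will be twofold. First, the precise identification of $Z$ with $X^{\an}_{\bbq_p,\bbc_p}$ requires careful use of perfectoid geometry and of the tilting equivalence applied to the infinite-level Lubin--Tate tower, together with the (non-trivial) fact that this tower is perfectoid. Second, and more substantially, the Fargues--Fontaine classification of vector bundles is itself a deep theorem whose proof draws on $(\varphi,\Gamma)$-modules, the period rings $B_{\mathrm{crys}}$ and $B_{\mathrm{dR}}$, and Colmez--Fontaine's theorem that weakly admissible filtered $\varphi$-modules are admissible. Granted these two inputs, the translation from vector bundles on $X$ to finite étale $\bbq_p$-algebras, and hence to continuous representations of $G_{\bbq_p}$, is essentially formal.
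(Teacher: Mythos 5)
The statement you were asked to prove is Theorem 1.2 in the introduction, where it is quoted from the literature with attribution to Fargues--Fontaine and Weinstein (the citation is to Weinstein's paper). The present paper gives no proof of it whatsoever: it appears purely as motivation for the authors' own construction, namely the compact Hausdorff spaces $X_F$ and schemes $\mathcal{X}_F$ built from rational Witt vectors, which realize absolute Galois groups of fields $F\supseteq\bbq(\zeta_\infty)$ as \'etale fundamental groups. So there is no proof in this paper to compare your sketch against.

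Taken on its own terms, your sketch is a reasonable account of the argument in the cited sources and identifies the two genuine inputs (identifying the quotient $Z=\widetilde{\bbd}^{*}/\bbq_p^{\times}$ with the adic Fargues--Fontaine curve, and the classification of vector bundles on that curve). One step is compressed past the point of being a consequence: the claim that ``any finite \'etale $\mathcal{O}_X$-algebra is, up to isomorphism, a direct sum of copies of $\mathcal{O}_X$'' does not follow from the bundle classification together with $\operatorname{End}(\mathcal{O}_X)=\bbq_p$ alone. You need to use that the cover is finite \'etale to get a perfect trace pairing, hence self-duality of $\mathcal{A}$ as a vector bundle, and then a slope argument: if $\lambda>0$ were the maximal slope occurring in $\mathcal{A}$, the multiplication $\mathcal{O}(\lambda)\otimes\mathcal{O}(\lambda)\to\mathcal{A}$ would have to vanish (the source has slope $2\lambda$, the target only slopes $\le\lambda$), forcing nilpotents in $\mathcal{A}$, contradicting reducedness; self-duality then pins every slope to $0$. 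With that inserted, passing to global sections does give the equivalence between finite \'etale covers of the curve and finite \'etale $\bbq_p$-algebras, hence $\pioneet(Z)\cong\Gal(\qbar_p/\bbq_p)$. You also gloss over the GAGA-type comparison between the adic and schematic curves needed to transport the classification theorem, but this is a standard (if nontrivial) point. In short: your outline is faithful to Weinstein's and Fargues--Fontaine's approach, but it is orthogonal to anything actually proved in this paper.
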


One can regard both of these theorems as instances of the general `tilting' philosophy, \cite{scholzethesis}, which relates objects of mixed characteristic with objects of equal characteristic, the latter of which have a more geometric flavour. An important feature of the tilting procedure is that it only works for `perfectoid' objects; in the case of fields, this is related to the need to pass to the cyclotomic extension, or a similar `big' field. Another common feature is the critical use of ($p$-typical) Witt vectors.

In looking for a global version of these results, one is thus led to consider a `global' version of the Witt vectors, and the standard objects to consider are the big Witt vectors. Recall that for any commutative ring $A$, the ring of big Witt vectors $\mathrm{W}(A)$ can be identified with the set $1+tA[\! [t]\! ]$ of power series with constant coefficient $1$, where addition of Witt vectors corresponds to multiplication of power series. The multiplication is subtler to write down, and is essentially determined by the rule that the product of $1-at$ and $1-bt$ is given by $1-abt$. In particular, there is a multiplicative map $A\to \mathrm{W}(A)$, $a\mapsto [a]=1-at$, called the Teichm\"uller map.

In general, there is a map of algebras $\mathrm{W}(A)\to \prod_n A$ called the ghost map, where the product runs over all integers $n\geq 1$. If $A$ is a $\bbq$-algebra, the ghost map is an isomorphism, so that in particular for a field $F$ of characteristic $0$, $\mathrm{W}(F) = \prod_n F$ is just an infinite product of copies of $F$.

One thus cannot expect $\mathrm{W}(F)$ to have a rich structure. However, work on the K-theory of endomorphisms, \cite{MR0424786}, suggested to look at the following subring of $\mathrm{W}(F)$, called the ring of rational Witt vectors.\footnote{It is this connection, as well as the observation that the Dennis trace map from K-theory to topological Hochschild homology factors canonically over the K-theory of endomorphisms, that led the second author to consider the rational Witt vectors.}

\begin{Definition} Let $A$ be a commutative ring. The \emph{rational Witt vectors} over $A$ are the elements of
\[
\Wrat(A) = \left\{\frac{1+a_1t+\ldots+a_n t^n}{1+b_1t+\ldots+b_m t^m}\,\middle\lvert\, a_i, b_j\in A\right\}\ \subset\mathrm{W}(A).
\]
\end{Definition}

It is not hard to see that $\Wrat(A)$ actually forms a subring of $\mathrm{W}(A)$. The Teichm\"uller map $A\to \mathrm{W}(A)$ factors over $\Wrat(A)$.

Now let $F$ be a field of characteristic $0$ containing all roots of unity, and fix once and for all an embedding $\iota\colon \bbq /\bbz\hookrightarrow F^\times$; this `bigness' hypothesis will be important for the construction, and all constructions will depend on $\iota$. We also fix the standard embedding
\[
\exp\colon\bbq /\bbz\hookrightarrow\bbc^\times ,\quad x\mapsto e^{2\pi ix}.
\]

\begin{Definition} Let $X_F$ be the set of ring maps $\Wrat(F)\to \bbc$ whose restriction along $\bbq /\bbz\buildrel\iota\over\hookrightarrow F^\times\buildrel{[\cdot]}\over\longrightarrow \Wrat(F)^\times$ gives the standard embedding $\exp\colon \bbq /\bbz \to\bbc$. We endow $X_F$ with its natural complex topology, cf.~Definition~\ref{def:complextop}.
\end{Definition}

One can check that $X_F$ is one connected component of the complex points of the scheme $\Spec\Wrat(F)$. Actually, in the paper, $X_F$ will denote a closely related space which is a deformation retract of the space considered here. This variant will be a compact Hausdorff space.

\begin{Theorem}[{Theorem \ref{Theorem:TopologicalEtaleFundamentalGroupOfCHSIsGalois}}]\label{thm:Main} The functor taking a finite extension $E$ of $F$ to $X_E\to X_F$ induces an equivalence of categories between the category of finite extensions of $F$, and the category of connected finite covering spaces of $X_F$. In particular, the absolute Galois group of $F$ agrees with the \'etale fundamental group of $X_F$.
\end{Theorem}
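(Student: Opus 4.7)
The plan is a three-step argument: first that $E\mapsto X_E$ lands in finite coverings of $X_F$, second that the resulting functor is fully faithful, and third that it is essentially surjective, with the third step being the main obstacle.

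For the first step, a finite extension $E/F$ gives a ring map $\Wrat(F)\to\Wrat(E)$ and hence a continuous map $X_E\to X_F$. To see this is a finite covering of degree $[E:F]$, I would work fibrewise over a character $\chi\colon\Wrat(F)\to\bbc$ in $X_F$ and analyse its $\bbc$-algebra extensions to $\Wrat(E)$ whose restriction along $\bbq/\bbz\hookrightarrow E^\times\to\Wrat(E)^\times$ equals $\exp$. Writing $E=F(\alpha)$, the Teichm\"uller lift $[\alpha]$ satisfies an algebraic relation over $\Wrat(F)$ coming from the minimal polynomial of $\alpha$; generically this gives $[E:F]$ simple complex roots, and the roots-of-unity normalisation preserves the correct count. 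Continuity of roots of complex polynomials then yields local triviality, and since $X_E$ is connected (already established earlier in the paper) one obtains a finite covering.

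Full faithfulness follows by reconstructing the field from the cover. The Teichm\"uller map $E^\times\to\Wrat(E)^\times$ embeds $E$ into the ring of continuous $\bbc$-valued functions on $X_E$, and a morphism $X_{E'}\to X_E$ over $X_F$ pulls Teichm\"uller functions back to Teichm\"uller functions since these are characterised by their multiplicative behaviour together with the fixed value on roots of unity coming from $\iota$; hence such a morphism is induced by a unique $F$-embedding $E\hookrightarrow E'$.

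Essential surjectivity is where the real work lies. Given a connected finite cover $Y\to X_F$, I would first descend it to a finitely generated subfield of $F$ using that $X_F$ is the inverse limit of $X_{F_\alpha}$ over subfields $F_\alpha\subseteq F$ containing $\iota(\bbq/\bbz)$ and that finite covers of a cofiltered limit of compact Hausdorff spaces descend; this reduces matters to a countable $F$. By the formalism of Galois categories, essential surjectivity is then equivalent to showing that $X_{\Fbar}=\varprojlim_{E/F\text{ finite}} X_E$ has trivial profinite fundamental group: every connected finite cover of $X_{\Fbar}$ is trivial. This should follow by showing that any such hypothetical cover gives rise to a non-trivial finite \'etale algebra over $\Wrat(\Fbar)$ with prescribed behaviour on Teichm\"uller lifts of roots of unity, and then using that $\Fbar$ is algebraically closed together with the rigidity imposed by $\iota$ to force the algebra to split. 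The main obstacle is precisely ruling out \emph{a priori} exotic topological covers of $X_{\Fbar}$ of purely topological rather than algebraic origin; this is where the precise compact Hausdorff description of $X_F$ in terms of $\Wrat(F)$ must be used most critically, and it is the step whose proof I expect to occupy the bulk of the paper.
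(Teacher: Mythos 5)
Your three-step decomposition matches the paper's shape, but neither of the two arguments that actually carry the theorem is present. In step~1, ``generically $[E:F]$ simple roots plus continuity of roots'' does not yield a covering map: a covering must have fibres of constant cardinality \emph{everywhere}, and nothing in your sketch rules out ramification at special characters. The mechanism that forces this in the paper is Hilbert's Satz~90. For Galois $E/F$ the finite group $\Gal(E/F)$ acts \emph{freely} on $X_E$: given a nontrivial $\sigma$ of prime order $p$, Satz~90 produces $\alpha\in E^\times$ with $\sigma(\alpha)/\alpha=\zeta_p$, and injectivity of $\chi$ on $\mu_\infty$ (which is exactly what the condition $\chi|_{\mu_\infty}=\iota$ buys you) then forces $\sigma(\chi)\neq\chi$. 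A free continuous action of a finite group on a compact Hausdorff space makes the quotient map a finite covering, and the non-Galois case follows by passing to the Galois closure. This arithmetic input has no counterpart in your plan, and ``the roots-of-unity normalisation preserves the correct count'' is precisely the statement you would need to prove.

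In step~3 you correctly reduce essential surjectivity to showing that $\pioneet(X_{\Fbar})$ is trivial, but the proof is not, as you expect, a long argument about realising topological covers as finite \'etale $\Wrat(\Fbar)$-algebras via ``rigidity imposed by $\iota$''; that is not how the paper closes the gap between topology and algebra, and in fact the relevant argument is short and purely topological. One observes that $X_{\Fbar}$ is a translate of the closed subgroup $\Hom(\Fbar^\times/\mu_\infty,\bbs^1)$, which is the Pontryagin dual of the $\bbq$-vector space $\Fbar^\times/\mu_\infty$ (here divisibility of $\Fbar^\times$ is what makes the quotient a $\bbq$-vector space), and then classifies connected finite covers of $M^\vee$ for any discrete torsion-free abelian $M$: every such cover descends to $M_0^\vee$ for some finite-rank free $M_0\subseteq M$ by Proposition~\ref{Prop:FiniteCoveringDefinedOnFiniteLevel}, that $M_0^\vee$ is a torus whose connected covers are exactly the $N_0^\vee$ with $M_0\subseteq N_0\subseteq M_0\otimes\bbq$ of finite index, and for $M$ a $\bbq$-vector space no amalgam $N_0\oplus_{M_0}M$ can be both torsion-free and a proper overgroup. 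Your preliminary ``descent to a finitely generated subfield of $F$'' is also not needed and not obviously well-posed (the $X_{F_\alpha}$ for subfields $F_\alpha\subset F$ do not form the required cofiltered system since the algebraic closures vary). Finally, the Teichm\"uller-function route to full faithfulness in step~2 is genuinely different from the paper's (which simply reads off $\Aut_{X_F}(X_E)\cong\Gal(E/F)$ from the free-quotient picture once step~1 is done), but as written it is incomplete: you would have to justify that a continuous map of covers over $X_F$ is forced to respect multiplicative structure, which is not automatic.
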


Here, the \'etale fundamental group of a connected topological space classifies, by definition, the finite covering spaces of the latter, cf.~Definition~\ref{def:etfundgroup}. It is in general not directly related to the classical fundamental group defined in terms of paths. We also prove a version of this theorem in the world of schemes, replacing $X_F$ by one connected component of $\Spec (\Wrat(F)\otimes \bbc )$, cf.~Theorem~\ref{thm:MainScheme}.

Contrary to the results in $p$-adic Hodge theory cited above which reflect deep properties about ramification of local fields, this theorem is rather formal. In fact, the proof of the theorem is essentially an application of Hilbert's Satz 90 in its original form. Also, we cannot currently state a precise relationship between this theorem and the results in $p$-adic Hodge theory stated above. Still, we believe that there is such a relation, and that the theorem indicates that the ring of rational Witt vectors is an interesting object; in fact, we would go so far as to suggest to replace all occurences of the big Witt vectors by the rational Witt vectors.\footnote{An instance is the definition of a $\Lambda$-ring, which can be regarded as a commutative ring $A$ with a map $A\to \mathrm{W}(A)$ satisfying certain properties. In most natural examples, including $\mathrm{K}_0$ of a commutative ring, the map $A\to \mathrm{W}(A)$ actually factors through a map $A\to \Wrat(A)$.}

We warn the reader that the space $X_F$ is highly infinite-dimensional, and in general far from path-connected. For example, if $F$ is algebraically closed, its compact Hausdorff version can be (non-canonically) identified with an infinite product of copies of the solenoid (cf.~Proposition/Definition~\ref{propdef:solenoid})
\[
\mathcal S=\varprojlim_{n\in \bbn} \bbs^1 = \Hom(\bbq ,\bbs^1) = \bba /\bbq .
\]

Abstractly, it is clear that any group can be realised as the fundamental group of a topological space, by using the theory of classifying spaces. One may thus wonder what extra content Theorem~\ref{thm:Main} carries. We give several answers to this question. All are variants on the observation that our construction gives an actual topological space, as opposed to a topological space up to homotopy; and in fact, it is not just any space, but a compact Hausdorff space. For example, in Proposition~\ref{Prop:CompactCSImpliesTorsionFree}, we observe that properties of $X_F$ formally imply that the absolute Galois group of $F$ is torsion-free. Also, a compact Hausdorff space has certain finer homotopical and (co)homological invariants which give rise to non-profinitely completed structures on natural arithmetic invariants. From now on, let $X_F$ denote the compact Hausdorff space defined in Section~\ref{sec:GaloisTop} below, which is a deformation retract of the space considered above.

\subsubsection*{\textbf{\textup{Fundamental group.}}} By design, the \'etale fundamental group of $X_F$ agrees with the absolute Galois group of $F$. However, as a topological space, $X_F$ also has a classical fundamental group, given by homotopy classes of loops; we denote it by $\pionepath(X_F)$ (suppressing the choice of base point in the introduction). In general, $\pionepath(X_F)$ could be trivial even when $F$ is not algebraically closed; this happens whenever $F$ is `too big'.

However, for many examples of interest, the situation is better.

\begin{Theorem}[{Section~\ref{subsec:FundGroup}}]\label{thm:PathFund} Assume that $F$ is an abelian extension of a finite extension of $\bbq$. Then $X_F$ is path-connected, and the map $\pionepath(X_F)\to \pioneet(X_F)\cong \Gal(\widebar{F}/F)$ is injective with dense image. Moreover, $\pionepath(X_F)$ carries a natural topology, making it a complete topological group which can be written as an inverse limit of discrete infinite groups. The map $\pionepath(X_F)\to \Gal(\widebar{F}/F)$ is continuous for this topology, but $\pionepath(X_F)$ does not carry the subspace topology.
\end{Theorem}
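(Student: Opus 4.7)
My plan is to realise $X_F$ as a cofiltered inverse limit $X_F = \varprojlim_i Y_i$ of compact, finite-dimensional, path-connected Hausdorff spaces with surjective bonding maps. The indexing set is built from the finite subextensions $F_0$ of $F/K$ (where $K$ is the finite subextension of $\bbq$ over which $F$ is abelian) together with a level parameter governing how many roots of unity are involved. Each $Y_i$ should be modelled on a finite-dimensional torus, possibly quotiented by a finite Galois action, and the key structural feature to be extracted from the definition of $\Wrat(F)$ is that the bonding maps $Y_j \to Y_i$ can be arranged as principal bundles whose structure group is itself path-connected.

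Granted this tower, path-connectedness of $X_F$ reduces to stepwise path lifting through the principal bundles and a limit construction. For the fundamental group, I would set
\[
\pionepath(X_F) := \varprojlim_i \pi_1(Y_i, \ast)
\]
with the inverse limit topology; each $\pi_1(Y_i, \ast)$ is a discrete (infinite, finitely generated) group, so the limit is a complete pro-discrete topological group by construction. Next I would identify this inverse limit with the genuine path-homotopy fundamental group of $X_F$: a loop in $X_F$ induces a compatible system of loops in the $Y_i$, and conversely the principal-bundle structure allows a compatible system to be assembled into an actual loop, not merely a homotopy class. The map to $\pioneet(X_F) \cong \Gal(\Fbar/F)$ is then continuous, since both sides are defined through compatible systems of finite quotients.

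For injectivity, a nontrivial element of $\varprojlim \pi_1(Y_i, \ast)$ has nontrivial image in some $\pi_1(Y_i, \ast)$ and hence acts nontrivially on some finite covering in the tower; by Theorem~\ref{thm:Main} this lifts to a finite \'etale cover of $X_F$ corresponding to some finite extension $E/F$, witnessing a nontrivial image in $\Gal(\Fbar/F)$. For density, one must show every continuous finite quotient of $\Gal(\Fbar/F)$ is hit: this is where the hypothesis that $F/K$ is abelian enters essentially, via class field theory at each finite level, used to match the fundamental groups of the $Y_i$ with Galois groups of intermediate finite subextensions. Finally, the failure of the subspace topology is formal: $\pionepath(X_F)$ is complete under its natural topology but embeds as a proper dense subgroup of a Hausdorff topological group under the subspace topology, and a proper dense subgroup of a Hausdorff group cannot itself be complete.

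The main obstacle is the construction of the tower and the verification that its bonding maps really are principal bundles with a path-connected structure group: arbitrary inverse limits of circles, i.e.~solenoids, illustrate how path-connectedness and the expected fundamental group can both fail, so the arithmetic input from $F/K$ abelian is doing essential work. Extracting the correct tower from the definition of $\Wrat(F)$, and with it an explicit trivialisation of the bonding maps compatible with the Teichm\"uller-induced embedding $\bbq/\bbz \hookrightarrow F^\times$, is the technical heart of the argument.
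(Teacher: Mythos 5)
Your high-level strategy — realise $X_F$ as a cofiltered limit of finite-dimensional spaces and compute $\pionepath$ as an inverse limit — matches the paper's Section~6. But the paper does \emph{not} deduce path-connectedness from the tower. It proves path-connectedness first and independently, by showing that $\Gal(\Fbar/F)$ acts transitively on $\pizeropath(X_{\Fbar})\cong\Ext_{\exp}(\Fbar^\times,\bbz)$ (Propositions~\ref{Prop:ManyRootsUncountablyManyComp}--\ref{Prop:StablyMultFreeImpliesTransitiveGaloisOnExt}); this uses Kummer theory, May's theorem on unit groups of infinite abelian extensions (to get multiplicative freeness), and a Cantor-intersection argument to pass to infinite rank. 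Your alternative route through the tower requires the bonding maps to be principal bundles with path-connected structure group (so that the transition maps on $\pi_1$ are surjective and $\textstyle\lim^1\pi_1=0$). That property does not hold for the tower the paper builds: the $X_F(\varLambda)$ are quotients of tori by nontrivial free actions of $\Gal(E/F)$, and the transition map $\Hom(\varLambda',\bbz)\to\Hom(\varLambda,\bbz)$ need not be surjective when $\varLambda'/\varLambda$ has torsion. So the tower alone, as you set it up, does not resolve the solenoid obstruction you rightly flag — the arithmetic transitivity argument is the missing ingredient, and it is not a consequence of principal-bundle structure on the bonding maps.

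Two further gaps. First, identifying $\pionepath(X_F)$ with $\varprojlim\pionepath(X_F(\varLambda))$ requires the $\mathrm{R}^1\varprojlim\pi_2$ term in the Milnor sequence to vanish; your "assemble a compatible system into an actual loop" only addresses surjectivity of $\pi_1(\varprojlim)\to\varprojlim\pi_1$, not the kernel. The paper handles this by noting each $X_F(\varLambda_n)$ is finitely covered by a torus, so $\pi_2=0$. Second, injectivity is almost free in the paper's approach — Proposition~\ref{prop:descrpionepath} realises $\pionepath(X_F,\chi)$ literally as a stabiliser subgroup of $\Gal(\Fbar/F)$ via the exact sequence~(\ref{eqn:SesThreeFGAlsoOnRight}) and the triviality of $\pionepath(X_{\Fbar})$ — whereas your residual-finiteness argument requires more care and still presupposes the unaddressed $\pi_2$ step. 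Your paragraph on density via "class field theory at each finite level" is also underspecified; the paper's route is through stable path-connectedness plus Proposition~\ref{Prop:StablyPathConnectedThenAlphaSurjective}, which in turn rests on May's theorem rather than reciprocity. The formal argument you give for the failure of the subspace topology additionally needs properness of the image, which must be established (the paper does it by exhibiting infinite discrete quotients, incompatible with a profinite subspace topology).
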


\begin{Remark} More precisely, $\pionepath(X_F)$ is an inverse limit of discrete groups, each of which is an extension of a finite group by a free abelian group of finite rank. The essential difference between $\pionepath(X_F)$ and $\Gal(\widebar{F}/F)$ is that the Kummer map
\[
F^\times\to \Hom(\Gal(\widebar{F}/F),\hat{\bbz })
\]
lifts to a map
\[
F^\times\to \Hom(\pionepath(X_F),\bbz )\ .
\]
One can characterise the image of $\pionepath(X_F)\to \Gal(\widebar{F}/F)$ as the stabiliser of the class in $\Ext(\widebar{F}^\times,\bbz)$ coming by pullback along a fixed inclusion $\Fbar^\times\hookrightarrow \bbc^\times$ from the exponential sequence
\[
0\to \bbz\to \bbc\buildrel\exp\over\longrightarrow \bbc^\times\to 0,
\]
cf.~Proposition~\ref{prop:descrpionepath}. In particular, the group $\pionepath(X_{\bbq (\zeta_\infty)})\subset \Gal(\qbar/\bbq (\zeta_\infty))$ acts naturally on the group $\log(\widebar{\bbq })\subset \bbc$ of logarithms of algebraic numbers.\footnote{The induced action on $2\pi\mathrm{i}\bbz\subset \bbc$ is trivial, as we are working over the cyclotomic extension. In fact, there can not be an action (except for complex conjugation) on $2\pi\mathrm{i}\bbz$, which presents an obstruction to extending this action beyond the cyclotomic extension.}
\end{Remark}

\subsubsection*{\textbf{\textup{Cohomology.}}} In general, the singular cohomology groups of $X_F$ do not agree with the sheaf cohomology groups (as, e.g., path-connected and connected components do not agree), and sheaf cohomology behaves better. Thus, let $\Hup^i(X_F,A)$ denote the sheaf cohomology with coefficients in the constant sheaf $A$, for any abelian group $A$. The second part of the following theorem is a consequence of the Bloch--Kato conjecture, proved by Voevodsky, \cite[Theorem~6.1]{MR2811603}.

\begin{Theorem}[{Theorem \ref{Thm:EtaleCohomologyOfXFIsGaloisCohomology}, Proposition \ref{prop:corblochkato}}]\label{thm:Cohom} Let $i\geq 0$ and $n\geq 1$.
\begin{enumerate}
\item[{\rm (i)}] There is a natural isomorphism
\[
\Hup^i(X_F,\bbz /n\bbz )\cong \Hup^i(\Gal(\widebar{F}/F),\bbz /n\bbz )\ .
\]
\item[{\rm (ii)}] The cohomology group $\Hup^i(X_F,\bbz )$ is torsion-free. In particular, using (i), there is a canonical isomorphism
\[
\Hup^i(X_F,\bbz )/n\cong  \Hup^i(\Gal(\widebar{F}/F),\bbz /n\bbz )\ .
\]
\end{enumerate}
\end{Theorem}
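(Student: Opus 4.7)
The plan for part (i) is to compute sheaf cohomology via the profinite Galois cover $\pi\colon X_{\Fbar}\to X_F$ provided by Theorem~\ref{thm:Main}, whose deck group is $G_F := \Gal(\Fbar/F)$ and which is the cofiltered inverse limit of the finite Galois covers $X_E\to X_F$. The strategy is a Cartan--Leray (Hochschild--Serre) spectral sequence
\[
E_2^{p,q} = \Hup^p(G_F, \Hup^q(X_{\Fbar}, \bbz/n\bbz)) \;\Longrightarrow\; \Hup^{p+q}(X_F, \bbz/n\bbz),
\]
which reduces the claim to the vanishing $\Hup^q(X_{\Fbar}, \bbz/n\bbz) = 0$ for $q \geq 1$; in that case the $E_2$ page collapses to its $q=0$ row $\Hup^p(G_F, \bbz/n\bbz)$.

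For the vanishing, I would exploit the (non-canonical) identification of $X_{\Fbar}$ with an infinite product of solenoids recalled in the introduction. This writes $X_{\Fbar} = \varprojlim_\alpha T_\alpha$ as a cofiltered inverse limit of finite-dimensional real tori with surjective transition maps, indexed by the finitely generated subgroups $\Lambda_\alpha$ of the underlying $\bbq$-vector space. Since \v{C}ech cohomology of constant sheaves on compact Hausdorff spaces agrees with sheaf cohomology and commutes with such cofiltered limits,
\[
\Hup^q(X_{\Fbar}, \bbz/n\bbz) = \varinjlim_\alpha \Hup^q(T_\alpha, \bbz/n\bbz).
\]
Given any class on some $T_\alpha$, the divisibility of $\Fbar^\times$ allows us to enlarge $\Lambda_\alpha$ to $\tfrac{1}{n}\Lambda_\alpha$, yielding a transition $T_\beta \to T_\alpha$ identified with multiplication by $n$ on the torus; its pullback acts as $n^q \equiv 0 \pmod n$ on $\Hup^q(T_\alpha, \bbz/n\bbz) = \Lambda^q (\bbz/n\bbz)^{d_\alpha}$ and kills the class. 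Hence the colimit vanishes for $q \geq 1$.

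For part (ii), I use the Bockstein long exact sequence coming from $0 \to \bbz \xrightarrow{\cdot n} \bbz \to \bbz/n\bbz \to 0$: the $n$-torsion of $\Hup^{i+1}(X_F, \bbz)$ is precisely the cokernel of the mod-$n$ reduction $\Hup^i(X_F, \bbz) \to \Hup^i(X_F, \bbz/n\bbz)$, so torsion-freeness is equivalent to this reduction being surjective for all $i$ and $n$. By part (i) and Bloch--Kato (Voevodsky), the target is $\Hup^i(G_F, \bbz/n\bbz) \cong \MK_i(F)/n$, generated (since $F \supset \mu_n$) by cup products $\kappa(a_1)\cup\cdots\cup\kappa(a_i)$ of Kummer symbols of units in $F^\times$. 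The Remark following Theorem~\ref{thm:PathFund} provides a factorisation of the Kummer map through $\Hom(\pionepath(X_F), \bbz) \to \Hup^1(X_F, \bbz)$; taking cup products of these integral classes supplies integral lifts of all Bloch--Kato generators, giving the required surjectivity. The canonical isomorphism $\Hup^i(X_F, \bbz)/n \cong \Hup^i(G_F, \bbz/n\bbz)$ then drops out of the short exact sequence $0 \to \Hup^i(X_F, \bbz)/n \hookrightarrow \Hup^i(X_F, \bbz/n\bbz) \twoheadrightarrow \Hup^{i+1}(X_F, \bbz)[n] \to 0$.

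The principal obstacle is the vanishing step in (i): one must pin down an inverse-limit presentation of $X_{\Fbar}$, verify that sheaf cohomology commutes with this limit, and analyse the pullbacks under isogenies between tori. Setting up Cartan--Leray for a profinite (rather than finite) cover is largely formal given the tower from Theorem~\ref{thm:Main}, but also requires some care.
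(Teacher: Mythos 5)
Your argument for part~(i) is essentially the paper's: a Cartan--Leray spectral sequence for the profinite Galois cover $X_{\Fbar}\to X_F$, reducing to the vanishing $\Hup^q(X_{\Fbar},\bbz/n\bbz)=0$ for $q\ge 1$, which is then proved exactly as in Proposition~\ref{Prop:CohomologyOfPontryaginDuals}.(i): write $X_{\Fbar}$ (up to translation) as $\varprojlim M^{\vee}$ over finitely generated free $M\subset\Fbar^{\times}_{\mathrm{tf}}$, use that sheaf cohomology of compact Hausdorff spaces commutes with cofiltered limits, and kill any class by pulling back along $(\tfrac{1}{n}M)^{\vee}\to M^{\vee}$, which is the $n$-th power map on a torus and hence acts by $n^q\equiv 0$ on $\Hup^q(\cdot,\bbz/n\bbz)$.

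For part~(ii) you take a genuinely different route, and it has a gap. The paper works with the sequence $0\to\bbz\to\bbq\to\bbq/\bbz\to 0$: it first computes $\Hup^m(X_F,\bbq)\cong\bigl(\bigwedge^m_{\bbq}\Fbar^{\times}_{\mathrm{tf}}\bigr)^{\Gal(\Fbar/F)}$ from the spectral sequence with $\bbq$-coefficients, then observes that $\bigwedge^m_{\bbq}(F^{\times}\otimes\bbq)\to\MK_m(F)\otimes\bbq/\bbz$ is surjective and factors through $\Hup^m(X_F,\bbq)\to\Hup^m(X_F,\bbq/\bbz)$, so the connecting maps vanish and $\Hup^m(X_F,\bbz)$ injects into the $\bbq$-vector space $\Hup^m(X_F,\bbq)$. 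This needs only \emph{rational} lifts of the Bloch--Kato generators, which come for free from the spectral sequence. Your Bockstein argument with $\bbz/n\bbz$-coefficients instead requires \emph{integral} lifts, i.e.\ classes in $\Hup^1(X_F,\bbz)$ mapping to the Kummer classes in $\Hup^1(X_F,\bbz/n\bbz)$. To produce them you appeal to the Remark after Theorem~\ref{thm:PathFund}, but that Remark only furnishes a map $F^{\times}\to\Hom(\pionepath(X_F),\bbz)$; it does not construct a map $\Hom(\pionepath(X_F),\bbz)\to\Hup^1(X_F,\bbz)$ (here $\Hup^1$ is sheaf cohomology), and no such natural map is available: $X_F$ is far from locally path-connected, so a homomorphism out of $\pionepath$ does not yield a $\bbz$-torsor or a \v{C}ech $1$-cocycle. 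Moreover, Theorem~\ref{thm:PathFund} and the ensuing Remark are only stated for $F$ an abelian extension of a number field, whereas the present statement concerns arbitrary $F\supseteq\bbq(\zeta_{\infty})$. The correct source for the integral lifts in your approach would be Theorem~\ref{Thm:CechCohomologyOfXF}.(iii), which establishes $\Hup^1(X_F,\bbz)\cong F^{\times}_{\mathrm{tf}}$ directly from the $\bbq\to\bbq/\bbz$ long exact sequence in degree~$1$; with that in hand your Bockstein argument goes through, since the cup products of these integral classes reduce mod~$n$ to the generating symbols of $\MK_i(F)/n$.
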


Thus, one gets natural $\bbz$-structures on the Galois cohomology groups. Note that we regard the choice $\iota$ of roots of unity as fixed throughout; in particular, all Tate twists are trivialised.

\begin{Remark} Recall that by the Bloch--Kato conjecture,
\[
\Hup^i(\Gal(\widebar{F}/F),\bbz /n\bbz )\cong \MK_i(F)/n,
\]
where $\MK_i(F)$ denotes the Milnor K-groups of $F$. One might thus wonder whether $\Hup^i(X_F,\bbz )=\MK_i(F)$. This cannot be true, as the latter contains torsion. However, it is known that all torsion in $\MK_i(F)$ comes via cup product by roots of unity
\[
\bbq /\bbz\otimes \MK_{i-1}(F)\to \MK_i(F),
\]
so that $\MK_i(F)_{\mathrm{tf}} \defined \MK_i(F) / (\bbq /\bbz\cup \MK_{i-1}(F))$ is torsion-free. Also, as we are taking the quotient by a divisible subgroup, one still has
\[
\Hup^i(\Gal(\Fbar /F),\bbz /n\bbz )\cong \MK_i(F)_{\mathrm{tf}}/n.
\]
One could then wonder whether
\[
\Hup^i(X_F,\bbz )=\MK_i(F)_{\mathrm{tf}}.
\]
This is true for $i=0,1$, but not for $i>1$, as the Steinberg relation $x\cup (1-x)=0$ for $x\neq 0,1$ does not hold in $\Hup^2(X_F,\bbz )$. However, we regard this as a defect of $X_F$ that should be repaired:
\end{Remark}

\begin{Question} Does there exist a topological space $X^{\mathrm{M}}_F$ mapping to $X_F$ such that there are isomorphisms
\[
\Hup^i(X^{\mathrm{M}}_F,\bbz )\cong \MK_i(F)_{\mathrm{tf}}
\]
for all $i\geq 0$, which are compatible with the isomorphisms in degrees $i=0,1$ for~$X_F$?
\end{Question}

For algebraically closed fields $F$, the space $X^{\mathrm{M}}_F$ would have to be constructed in such a way as to freely adjoin the Steinberg relation on its cohomology groups; the general case should reduce to this case by descent.

\subsubsection*{\textbf{\textup{Descent along the cyclotomic extension.}}}

So far, all of our results were assuming that $F$ contains all roots of unity. One may wonder whether the general case can be handled by a descent technique. This is, unfortunately, not automatic, as the construction for $F$ involved the choice of roots of unity, so one cannot na\"{\i}vely impose a descent datum. However, there are certain structures on $X_F$ that we have not made use of so far.

First, $X_F$ was defined as (one connected component of) the $\bbc$-valued points of some scheme defined over $\bbq$ (or even $\bbz$). In particular, by the machinery of \'etale homotopy types, its profinite homotopy type acquires an action of (a subgroup of) the absolute Galois group of $\bbq$. This action should, in fact, factor over the Galois group of the cyclotomic extension of $\bbq$, and allow one to define the descent datum. Unfortunately, this requires heavy technology, and does not play well with the purely topological considerations on cohomology and fundamental groups above; however, we record a version of this relationship on the level of cohomology as part of Theorem~\ref{Thm:CompatibilityThreeActionsCohomology} below.

Second, $X_F$ was defined in terms of the rational Witt vectors, and the rational Witt vectors carry extra endomorphisms, given by Frobenius operators.\footnote{In fact, one can combine the first and second observation, which leads to the observation that $\Wrat(F)$ is a $\Lambda$-ring; in fact, (almost tautologically) one for which the map $\Wrat(F)\to \mathrm{W}(\Wrat(F))$ factors over $\Wrat(F)\to \Wrat(\Wrat(F))$.} Thus, one would expect to have Frobenius operators on $X_F$; however, the Frobenius operators exchange connected components, and it turns out that on the connected component $X_F$ there are no remaining operators.\footnote{In fancy language, the `dynamical system' of the connected components of $\Spec (\Wrat(\bbq(\zeta_\infty))\otimes \bbc )$ with its Frobenius operators is one form of the Bost--Connes system, \cite{bostconnes}.} However, one can use a different connected component instead, at least in some situations. In this respect, we prove the following result.

\begin{Theorem}[{Proposition~\ref{Prop:EtaleFGYellF}.(ii), Theorem~\ref{thm:cohomelladic}, Theorem~\ref{Thm:CompatibilityThreeActionsCohomology}}]\label{thm:Descent} Let $\ell$ be a fixed prime, and let $F$ be a perfect field of characteristic different from $\ell$ (but possibly positive) whose absolute Galois group $\Gal(\widebar{F}/F)$ is pro-$\ell$. Let $n\leq \infty$ be maximal such that $\mu_{\ell^n}\subset F$; for simplicity, we assume $n\geq 2$ in case $\ell=2$. Then there is a compact Hausdorff space $Y_{\ell^n,F}$ with an action of $U_{(\ell^n)} = 1+\ell^n\bbz_{(\ell)}$, with the following properties.
\begin{enumerate}
\item[{\rm (i)}] Let $F(\zeta_{\ell^\infty})/F$ be the $\ell$-cyclotomic extension. Then there is a natural isomorphism $\pioneet(Y_{\ell^n,F})\cong \Gal(\widebar{F}/F(\zeta_{\ell^\infty}))$.
\item[{\rm (ii)}] There is a natural isomorphism
\[
\Hup^i(Y_{\ell^n,F},\bbz /\ell^m\bbz )\cong \Hup^i(\Gal(\widebar{F}/F(\zeta_{\ell^\infty})),\bbz /\ell^m\bbz )\ .
\]
Under this isomorphism, the action of $U_{(\ell^n)}$ on the left corresponds to the action of $1+\ell^n\bbz_\ell\cong \Gal(F(\zeta_{\ell^\infty})/F)$ on the right via the tautological embedding $U_{(\ell^n)}\hookrightarrow 1+\ell^n\bbz_\ell$.
\end{enumerate}
\end{Theorem}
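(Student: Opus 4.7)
The compact Hausdorff space $Y_{\ell^n,F}$ is constructed in parallel with $X_F$, starting from the partial cyclotomic data available: since $\Gal(\Fbar/F)$ is pro-$\ell$, the field $F$ contains $\mu_m$ for every $m$ coprime to $\ell$ together with $\mu_{\ell^n}$. Using the chosen embedding $\iota$, one singles out an appropriate locus in $\Spec(\Wrat(F)\otimes\bbc)$, defined by requiring that the Teichm\"uller lift $[\zeta]$ of each such root of unity $\zeta\in F$ map to its standard image in $\bbc^\times$; one then passes to the compact Hausdorff variant as in Section~\ref{sec:GaloisTop}. The action of $U_{(\ell^n)} = 1+\ell^n\bbz_{(\ell)}$ arises from the Frobenius endomorphisms $F_k$ of $\Wrat(F)$, which act on Teichm\"uller lifts by $[a]\mapsto [a^k]$: for $k\in U_{(\ell^n)}$, the congruence $k\equiv 1\pmod{\ell^n}$ ensures that $F_k$ fixes the $\mu_{\ell^n}$-compatibility and hence preserves the constructed space.

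For part~(i), I would run the Hilbert~90 argument of Theorem~\ref{thm:Main} verbatim on $Y_{\ell^n,F}$. A connected finite cover corresponds to a finite extension $E/F$ inside $\Fbar$ together with an extension of the partial embedding of roots of unity carried by $F$ to a full embedding $\bbq/\bbz\hookrightarrow E^\times$. The pro-$\ell$ hypothesis ensures $E/F$ is automatically pro-$\ell$, and the existence of such an extended embedding forces $E$ to contain all $\ell$-power roots of unity, i.e.\ $E\supseteq F(\zeta_{\ell^\infty})$. Thus the category of connected finite covers of $Y_{\ell^n,F}$ is equivalent to the category of finite subextensions of $\Fbar/F(\zeta_{\ell^\infty})$, yielding the identification $\pioneet(Y_{\ell^n,F})\cong \Gal(\Fbar/F(\zeta_{\ell^\infty}))$.

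Part~(ii) consists of two pieces. The cohomology comparison with $\bbz/\ell^m\bbz$ coefficients is a direct consequence of~(i) by the soft argument of Theorem~\ref{thm:Cohom}: sheaf cohomology of a constant finite coefficient system on the compact Hausdorff space $Y_{\ell^n,F}$ agrees with \v{C}ech cohomology for the cofinal system of connected finite covers, which by~(i) computes the Galois cohomology of $\Gal(\Fbar/F(\zeta_{\ell^\infty}))$. For the equivariance, one verifies that the Frobenius $F_k$ on $Y_{\ell^n,F}$ induces, under the fundamental group identification of~(i), the action of $k\in 1+\ell^n\bbz_\ell\cong\Gal(F(\zeta_{\ell^\infty})/F)$ on the target. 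Via Kummer theory this reduces in degree~$1$ to the tautological identity $F_k(\alpha^{1/\ell^m})=\alpha^{k/\ell^m}$ on the Witt-vector side, matching the cyclotomic character action on $\mu_{\ell^m}\subset\Fbar^\times$; cup-products then propagate the match to all higher degrees.

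The principal obstacle is precisely this last compatibility: the $U_{(\ell^n)}$-action on $Y_{\ell^n,F}$ is defined geometrically, via the Witt-vector Frobenius, while the $\Gal(F(\zeta_{\ell^\infty})/F)$-action on Galois cohomology is defined arithmetically, via the cyclotomic character. Bridging the two requires carefully tracking the Teichm\"uller lift through every step of the construction, the pro-\'etale descent that produces~(i), and the identification of sheaf cohomology with Galois cohomology; and it must be reconciled with the other natural actions (the \'etale homotopy-type action of $\Gal(\qbar/\bbq)$ and the field-theoretic cyclotomic action) that figure in Theorem~\ref{Thm:CompatibilityThreeActionsCohomology}.
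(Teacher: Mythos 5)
Your construction of $Y_{\ell^n,F}$ has a flaw that breaks the $U_{(\ell^n)}$-action. You propose to cut out the locus where the Teichm\"uller lift $[\zeta]$ of \emph{every} root of unity $\zeta\in F$ — including all of $\mu_{\ell'}=\bigcup_{(m,\ell)=1}\mu_m$ — is sent to its standard image. But then for $k\in U_{(\ell^n)}$ the Frobenius $F_k$ sends $[\zeta]\mapsto[\zeta^k]$, and for $\zeta$ of order $m$ prime to $\ell$ the congruence $k\equiv 1\pmod{\ell^n}$ gives no control on $k\bmod m$; so $F_k$ does \emph{not} preserve the locus you describe, and your space carries no $U_{(\ell^n)}$-action. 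The paper's fix is to work with characters of $\Fbar^\times/\mu_{\ell'}$, i.e.\ to \emph{kill} the prime-to-$\ell$ torsion rather than pin it down, and only fix the restriction to $\mu_{\ell^n}$; then $U_{(\ell^n)}$ genuinely acts.

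Your argument for (i) also has a gap: you claim that a connected finite cover of $Y_{\ell^n,F}$ "forces $E\supseteq F(\zeta_{\ell^\infty})$" for some finite extension $E/F$, but when $n<\infty$ the extension $F(\zeta_{\ell^\infty})/F$ is infinite, so no finite $E/F$ can contain it. The paper instead proves a canonical homeomorphism $Y_{\ell^n,F}\cong Y_{\ell^\infty,F(\zeta_{\ell^\infty})}$ (Lemma~\ref{Lem:ConnectedComponentsOfYlnFj}), which reduces everything to the case $n=\infty$; there the universal profinite cover is $Y_{\ell^\infty,\Fbar}$ (simply connected as the Pontryagin dual of a $\bbq$-vector space), with deck group $\Gal(\Fbar/F(\zeta_{\ell^\infty}))$. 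Finally, your reduction of the equivariance in (ii) to degree $1$ plus "cup-products propagate the match" tacitly invokes Bloch--Kato and still leaves unverified that the comparison isomorphism respects cup products; the paper sidesteps this entirely by running a Cartan--Leray spectral sequence for the $U_{(\ell^n)}$-equivariant finite covers (an exact sequence of \'etale fundamental groups via Proposition~\ref{Prop:SESStackyFibration}), which yields equivariance in all degrees uniformly.
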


We note that there is again an interesting difference between discrete and profinite groups: The Galois group of the cyclotomic extension is profinite, but the Frobenius operators live in a discrete subgroup. This is necessary, as the Frobenius operators will also act on $\Hup^i(Y_{\ell^n,F},\bbz )$.

Finally, let us give a brief summary of the different sections. In Sections 2 and 3, we recall various basic facts about topological fundamental groups, and Pontrjagin duals, respectively. The material here is standard, but not always well-known. In Section 4, we prove Theorem~\ref{thm:Main} in the world of schemes, and in Section 5 we prove the version for topological spaces. Next, in Section 6, we prove Theorem~\ref{thm:PathFund}; this relies on a careful analysis of the path-connected components of $X_F$ and an analysis of the multiplicative groups of large extensions of number fields. In Section 7, we prove Theorem~\ref{thm:Cohom}. Finally, in Section 8, we prove Theorem~\ref{thm:Descent}.

\subsubsection*{\textbf{\textup{Acknowledgements.}}} Part of this work was done while the second author was a Clay Research Fellow. All of it was done while the first author was supported by the Swiss National Science Foundation. The  authors wish to thank Lennart Meier for asking a very helpful question, Markus Land and Thomas Nikolaus for a discussion about Proposition~\ref{Prop:CompactCSImpliesTorsionFree}, and Eric Leichtnam for pointing out some typographical errors in an earlier version.

\subsubsection*{\textbf{\textup{Notation.}}} We denote the profinite completion of the integers by $\hat{\bbz }$, the ring of finite (rational) ad\`eles by $\adf =\hat{\bbz}\otimes_{\bbz }\bbq$ and the full ring of ad\`eles by $\bba =\adf\times\bbr$.

\section{Preliminaries on fundamental groups}

\noindent In this section we assemble a number of results, some well-known, some less so, on different concepts of fundamental groups.

\subsection{Classical fundamental groups and path components} For a topological space $X$ with a base point $x\in X$ we let $\pionepath (X,x)$ be the usual fundamental group defined in terms of loops. To be precise, a loop in $X$ based at $x$ is a continuous map $\gamma\colon [0,1]\to X$ with $\gamma (0)=\gamma (1)$, and for two loops $\gamma$, $\delta$ the product $\gamma\ast\delta$ is defined as `run first through $\delta$, then through $\gamma$',\footnote{Note that this convention is reverse to that prevalent in algebraic topology, but it is common in algebraic geometry and is more convenient when working with categories of covering spaces. Of course, these two conventions yield groups which are opposite groups of one another, hence related by a canonical isomorphism $[\gamma ]\mapsto [\gamma]^{-1}$.} i.e.\ as
\begin{equation*}
\gamma\ast\delta\colon [0,1]\to X,\quad t\mapsto\begin{cases}
\delta (2t)&\text{for }0\le t\le\frac 12,\\
\gamma (2t-1)&\text{for }\frac 12\le 1.
\end{cases}
\end{equation*}
This composition induces a group structure on the set $\pionepath (X,x)$ of homotopy classes of loops in $X$ based at~$x$; we call $\pionepath (X,x)$ the \emph{classical fundamental group} of $X$ at~$x$.

\subsubsection*{Path components} Recall that a space $X$ is \emph{path-connected} if for every two points $x,y\in X$ there is a path in $X$ from $x$ to $y$, i.e.\ a continuous map $\gamma\colon [0,1]\to X$ with $\gamma (0)=x$ and $\gamma (1)=y$. More generally, introduce an equivalence relation on the points of a space $X$ by declaring $x$ and $y$ equivalent if there is a path from $x$ to $y$ in~$X$. Then the equivalence classes of this relation are called the \emph{path components} of $X$; they can be characterised as the maximal path-connected subspaces of~$X$. The set $\pizeropath (X)$ of path components will be equipped with the quotient topology induced by the given topology on~$X$.

Finally note that if $x\in X$ and $X^{\circ }\subseteq X$ is the path component containing $x$, then every loop in $X$ based at $x$ lies in $X^{\circ }$, and similarly for every homotopy of loops. Hence the inclusion $X^{\circ }\hookrightarrow X$ induces an isomorphism of fundamental groups $\pionepath (X^{\circ },x)\to\pionepath (X,x)$.

Since the interval $[0,1]$ is connected, every path-connected space is connected, but the converse does not hold. The most well-known counterexample seems to be the \emph{topologist's sine curve}
\begin{equation*}
T=\{ (0,y)\mid -1\le y\le 1 \}\cup \{ (x,\sin\tfrac 1x)\mid x>0 \}\subset\bbr^2
\end{equation*}
which is connected but has two path components, cf.\ Figure~\ref{Fig:TopologistsSineCurve}. See \cite[Example~117]{MR507446} for more details.
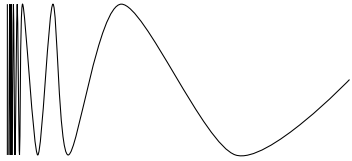
\begin{figure}
\begin{tikzpicture}
\draw plot [smooth] coordinates {(0.02,-1) (0.03,1) (0.04,-1) (0.05,1) (0.06,-1) (0.08,1) (0.1,-1) (0.13,1) (0.16,-1) (0.2,1) (0.4,-1) (0.6,1) (0.8,-1) (1.5,1) (3,-1) (4.5,0)};
\draw (0,1) -- (0,-1);
\end{tikzpicture}
	\caption{The topologist's sine curve}\label{Fig:TopologistsSineCurve}
\end{figure}

 More instructive for our purposes is the following example, to which we will return several times in this section.

\begin{PropDef}\label{propdef:solenoid}
The following topological groups are all canonically isomorphic; each of them is called a (one-dimensional) \emph{solenoid}.
\begin{enumerate}
	\item The Pontryagin dual $\bbq^{\vee }$, i.e.\ the set of group homomorphisms $\bbq\to\bbs^1$ endowed with the compact-open topology, where $\bbq$ carries the discrete topology and $\bbs^1\subset\bbc^{\times }$ is the unit circle;	
	\item the inverse limit $\varprojlim_{n\in\bbn }\bbs^1$, where the set $\bbn$ is partially ordered by divisibility, and for $m\mid n$ the transition map from the $n$-th to the $m$-th copy of $\bbs^1$ is $z\mapsto z^{n/m}$;
	\item the inverse limit $\varprojlim_{n\in\bbn }\bbr /\frac 1n\bbz$, where the transition maps are induced by the identity $\bbr\to\bbr$;
	\item the ad\`ele class group $\bba /\bbq$, where $\bbq$ is diagonally embedded in~$\bba$.
\end{enumerate}
\end{PropDef}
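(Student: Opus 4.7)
The plan is to construct canonical isomorphisms among the four groups using Pontryagin duality as the unifying tool. The key observation is that $\bbq$, considered as a discrete abelian group, is the filtered union $\bigcup_{n \in \bbn}\frac{1}{n}\bbz$, and its Pontryagin dual mediates among all four descriptions.

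First, for (1) $\cong$ (2), I would write $\bbq = \varinjlim_{n}\frac{1}{n}\bbz$ as a direct limit along the inclusions $\frac{1}{m}\bbz \hookrightarrow \frac{1}{n}\bbz$ for $m\mid n$. Pontryagin duality converts this filtered direct limit of discrete groups into a cofiltered inverse limit of compact groups, so $\bbq^\vee \cong \varprojlim_{n}(\frac{1}{n}\bbz)^\vee$. Evaluation at $1/n$ identifies $(\frac{1}{n}\bbz)^\vee$ canonically with $\bbs^1$, and under this identification the restriction arrow $(\frac{1}{n}\bbz)^\vee \to (\frac{1}{m}\bbz)^\vee$ is transported to $z \mapsto z^{n/m}$, because $\chi(1/m) = \chi\bigl(\tfrac{n}{m}\cdot\tfrac{1}{n}\bigr) = \chi(1/n)^{n/m}$. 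For (2) $\cong$ (3), I would identify each $\bbr/\frac{1}{n}\bbz$ with $\bbs^1$ via the continuous group isomorphism $x \mapsto e^{2\pi i n x}$ and verify that the transitions induced by the identity $\bbr \to \bbr$ correspond, under this identification, to the power maps in (2).

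For (1) $\cong$ (4), I would dualise the short exact sequence of locally compact abelian groups $0 \to \bbq \to \bba \to \bba/\bbq \to 0$ to obtain
\[
0 \to (\bba/\bbq)^\vee \to \bba^\vee \to \bbq^\vee \to 0 .
\]
Fixing the standard self-duality $\bba^\vee \cong \bba$ supplied by a non-trivial additive character $\psi\colon \bba \to \bbs^1$ that is trivial on the diagonal copy of $\bbq$, the subgroup $(\bba/\bbq)^\vee$ corresponds to the annihilator $\bbq^\perp \subset \bba$. The crucial identity $\bbq^\perp = \bbq$ then identifies $(\bba/\bbq)^\vee$ with $\bbq$, and Pontryagin reflexivity yields the desired topological isomorphism $\bba/\bbq \cong \bbq^\vee$; continuity of the inverse is automatic since $\bba/\bbq$ is compact Hausdorff and $\bbq^\vee$ likewise.

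The main technical obstacle is the self-annihilation $\bbq^\perp = \bbq$ inside $\bba$. One inclusion, that $\bbq$ annihilates itself, is the product formula, i.e.\ the statement that $\psi$ restricted to the diagonal copy of $\bbq$ is the trivial character. The converse, that every adele annihilating all of $\bbq$ under the pairing is itself rational, is the Fourier-analytic heart of the argument and uses the discreteness of $\bbq \subset \bba$ together with strong approximation. Once this is in hand, the remaining steps amount to bookkeeping within the category of locally compact Hausdorff abelian groups, which is precisely where Pontryagin duality behaves functorially.
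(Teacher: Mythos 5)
Your proof is correct. For the identifications (1)\,$\cong$\,(2)\,$\cong$\,(3) you follow essentially the same route as the paper: write $\bbq=\varinjlim_n\frac 1n\bbz$, dualise termwise, and identify $(\frac 1n\bbz)^{\vee}$ with $\bbs^1$ by evaluation at $1/n$ (the paper writes the map in the opposite direction, $B_n\to A_n$, $s\mapsto(q\mapsto s^{nq})$, but it is the inverse of your evaluation map). The difference is in how you handle (4). The paper keeps the whole argument at the level of inverse systems: it introduces the constituent $D_n=\bba/(\bbq+\frac 1n\hat{\bbz})$, exhibits the explicit isomorphism $C_n\to D_n$ induced by $\bbr\hookrightarrow\bba$, $t\mapsto(0,t)$, and checks compatibility with the transition maps. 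What makes this work is nothing more than the elementary facts $\bbq+\hat{\bbz}=\adf$ (a weak form of strong approximation) and $\bbq\cap\hat{\bbz}=\bbz$; no Fourier analysis on $\bba$ is needed. You instead dualise the short exact sequence $0\to\bbq\to\bba\to\bba/\bbq\to 0$ and invoke the self-duality of $\bba$ together with the identity $\bbq^{\perp}=\bbq$, which is the central computation of Tate's thesis. Both arguments are sound, but the paper's is lighter in prerequisites and keeps all four descriptions on the same footing as compatible inverse systems, whereas yours imports the Pontryagin self-duality of the adeles (and the attendant choice of nontrivial additive character $\psi$ trivial on $\bbq$) as a black box. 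One small remark: strictly speaking the choice of $\psi$ introduces a non-canonical ingredient into your identification of $(\bba/\bbq)^{\vee}$ with $\bbq$, so to make the isomorphism truly canonical you should fix the standard Tate character; the paper's construction via $\bbr\hookrightarrow\bba$ and the standard exponential carries an analogous normalisation, so this is not a real discrepancy, but it is worth being explicit about.
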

\begin{proof}
	We can write each of these groups as an inverse limit of certain topological groups, indexed by the partially ordered set $\bbn$. The constituents of index $n$ are, respectively:
	\begin{enumerate}
		\item the quotient $A_n=(\frac 1n\bbz )^{\vee }$ of $\bbq^{\vee }$ corresponding to the subgroup $\frac 1n\bbz\subset\bbq$;
		\item the $n$-th copy of $\bbs^1$, denoted by $B_n$;
		\item the quotient $C_n=\bbr /\frac 1n\bbz$; 
		\item the double quotient $D_n=(\frac 1n\hat{\bbz })\backslash\bba /\bbq  =\bba / (\bbq +\frac 1n\hat{\bbz })$.
	\end{enumerate}
	We can write down some isomorphisms between these constituents:
	\begin{itemize}
		\item $B_n\to A_n$, $s\mapsto (\frac 1n\bbz\to\bbs^1,\, q\mapsto s^{nq})$;		
		\item $C_n\to B_n$, $t+\frac 1n\bbz\mapsto\mathrm{e}^{2\pi\mathrm{i}nt}$;
		\item $C_n\to D_n$ induced by the inclusion $\bbr\hookrightarrow\bba =\adf\times\bbr$, $t\mapsto (0,t)$.
	\end{itemize}
	It is easy to see that these give three families of isomorphisms of topological groups $A_n\leftarrow B_n\leftarrow C_n\to D_n$, commuting with the structure maps of the inverse systems, hence defining isomorphisms between the limits.
\end{proof}
\begin{Proposition}\label{Prop:PiZeroPathSolenoid}
	The solenoid $\mathcal{S}$ is a commutative compact Hausdorff group. It is connected, but not path-connected. The path component $\mathcal{S}^{\circ }$ containing the identity is a subgroup, hence the path components of $\mathcal{S}$ are precisely the cosets of~$\mathcal{S}^{\circ }$. Therefore $\pizeropath (\mathcal{S})\cong\mathcal{S}/\mathcal{S}^{\circ }$ acquires the structure of a topological group.
	
	As such it is canonically isomorphic to $\adf /\bbq\cong\hat{\bbz }/\bbz$ with the quotient topology (which is indiscrete), where $\bbq$ is embedded diagonally in~$\adf$.
\end{Proposition}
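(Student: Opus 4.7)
My plan is to work throughout with the description $\mathcal{S}\cong\bba/\bbq$, as it is the most convenient for topological-group arguments; I will invoke the other descriptions only where useful. The first observation is that $\bbq$ sits discretely in $\bba$ (the neighbourhood $\hat\bbz\times(-\tfrac12,\tfrac12)$ of $0$ meets $\bbq$ only at $0$) and cocompactly, so $\mathcal{S}=\bba/\bbq$ is compact Hausdorff abelian. Crucially, the projection $\pi\colon\bba\to\mathcal{S}$ is then a covering map with fibre~$\bbq$.

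For connectedness I would consider the continuous homomorphism $\bbr\to\mathcal{S}$, $t\mapsto (0,t)+\bbq$. Since $\bbq$ is dense in $\adf$ (which, after fixing some $n$, reduces to the Chinese remainder theorem), the set $\{0\}\times\bbr+\bbq$ is dense in $\bba$, so the image of this homomorphism is dense in~$\mathcal{S}$. Being a continuous image of $\bbr$ this image is connected, hence so is its closure~$\mathcal{S}$.

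The heart of the argument is identifying the path component $\mathcal{S}^{\circ}$ of the identity as precisely the image of the above homomorphism. One inclusion is clear, since this image is path-connected and contains~$0$. For the reverse inclusion I would invoke path lifting along the covering~$\pi$: any path in $\mathcal{S}$ based at $0$ lifts uniquely to a path in $\bba$ based at~$0$, and since $\adf$ is totally disconnected (being locally isomorphic to $\hat\bbz$, which is profinite) the path component of $0$ in $\bba=\adf\times\bbr$ is exactly $\{0\}\times\bbr$. Hence the lifted path stays in $\{0\}\times\bbr$, forcing the original path to remain in $\mathcal{S}^{\circ}$. This identification exhibits $\mathcal{S}^{\circ}$ as a subgroup and as a proper subset of $\mathcal{S}$ (see below), so $\mathcal{S}$ is not path-connected, and $\pizeropath(\mathcal{S})=\mathcal{S}/\mathcal{S}^{\circ}$ inherits a topological group structure whose cosets in $\mathcal{S}$ are the remaining path components.

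The final step is a computation: algebraically,
\[
\mathcal{S}/\mathcal{S}^{\circ}=\bba/\bigl(\bbq+\{0\}\times\bbr\bigr)\cong\adf/\bbq
\]
by projection onto the finite-adelic factor, and using $\adf=\hat\bbz+\bbq$ this further equals $\hat\bbz/(\hat\bbz\cap\bbq)=\hat\bbz/\bbz$. The indiscreteness of the quotient topology is then a small independent observation: since $\bbz$ is dense in $\hat\bbz$, any non-empty $\bbz$-invariant open subset of $\hat\bbz$ must be all of $\hat\bbz$, so the only open subsets of $\hat\bbz/\bbz$ are $\emptyset$ and the whole space. The only step I expect to require genuine care is the path-lifting argument; it relies on the (essential but easy) discreteness of $\bbq\subset\bba$, and once this is in place the rest is routine.
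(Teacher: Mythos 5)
Your proof is correct and, for the central computation of $\pizeropath(\mathcal{S})$, follows the same route as the paper: work with $\mathcal{S}\cong\bba/\bbq$, use unique path lifting along $\bba\to\bba/\bbq$ (justified by discreteness of $\bbq$ in $\bba$), and observe that total disconnectedness of $\adf$ forces lifted paths to stay in $\{0\}\times\bbr$, giving $\mathcal{S}^\circ = \pi(\{0\}\times\bbr)$ and $\mathcal{S}/\mathcal{S}^\circ\cong\adf/\bbq\cong\hat\bbz/\bbz$. The one place you diverge is the connectedness argument: the paper switches to the inverse-limit description $\mathcal{S}\cong\varprojlim_n\bbs^1$ and uses that any continuous map to a finite discrete space factors through a finite stage, whereas you stay inside the adèle picture and argue that the image of $\bbr$ is a connected dense subset, whose closure is all of $\mathcal{S}$. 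Your variant is arguably tidier in that it keeps everything in one model of $\mathcal{S}$ and reuses the density of $\bbq$ in $\adf$ (which you also need for the indiscreteness claim at the end), at the minor cost of having to verify that density rather than invoking a factoring lemma for inverse limits. Both arguments are standard and correct; the choice is a matter of economy.

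One small remark: you defer the properness of $\mathcal{S}^\circ$ to the quotient computation ("see below"), which is fine, but it would read more cleanly to note directly that $\hat\bbz/\bbz$ is nontrivial (indeed uncountable) before drawing the conclusion that $\mathcal{S}$ is not path-connected.
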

\begin{proof}
	The claims in the third and fourth sentences of the proposition follow formally from $\mathcal{S}$ being a topological group. 
	
	We next show that $\mathcal{S}$ is connected, using the description (ii): $\mathcal{S}\cong\varprojlim B_n$ with each $B_n\cong\bbs^1$. If $f\colon\varprojlim B_n\to\{ 0,1\}$ is continuous, then by construction of the inverse limit topology $f$ must factor through some $B_n$, hence be constant.
	
	It remains to determine $\pizeropath (\mathcal{S})$. It is most convenient to do this using description (iv): $\mathcal{S}\cong\bba /\bbq$. It is not hard to see that the quotient map $\bba\to\bba /\bbq$ has unique lifting of paths, i.e.\ if $\gamma\colon [0,1]\to\bba /\bbq$ is continuous and $a\in\bba$ is such that $\gamma (0)=a+\bbq$, then there is a unique continuous $\tilde{\gamma }\colon [0,1]\to\bba$ inducing $\gamma$ and satisfying $\tilde{\gamma }(0)=a$. From this we see that the neutral path component of $\bba /\bbq$ is precisely the image in $\bba /\bbq$ of $\{ 0 \}\times\bbr\subset\adf\times\bbr =\bba$. Note that $\bbq$ is embedded diagonally, hence it intersects trivially with $\{ 0 \}\times\bbr$, and we obtain a group isomorphism (but not a homeomorphism, see below!) $\bbr\to\mathcal{S}^0$. Hence the topological group $\pizeropath (\mathcal{S})\cong\pizeropath (\bba /\bbq)$ is isomorphic to $(\bba /\bbr )/\bbq\cong\adf /\bbq$. Since $\bbq$ is dense in $\adf$, this carries the indiscrete topology. Note that $\bbq +\hat{\bbz} =\adf$ and $\bbq\cap\hat{\bbz }=\bbz$, so we may also identify $\pizeropath (\mathcal{S})$ with $\hat{\bbz }/\bbz$.
\end{proof}

\subsubsection*{Locally path-connected spaces} A topological space $X$ is \emph{locally path-connected} if for every $x\in X$ and every open neighbourhood $V\subseteq X$ of $x$ there exists an open neighbourhood $U\subseteq V\subseteq X$ of $x$ which is path-connected.

Let $X$ be a topological space, and denote the given topology by $\mathfrak{O}=\{ V\subseteq X\mid V\text{ open} \}\subseteq\mathfrak{P}(X)$. For an open subset $V\in\mathfrak{O}$ and $x\in V$ set
\begin{equation*}
U(V,x)=\{ y\in X\mid\text{ there is a path }\gamma\colon [0,1]\to V\text{ with }\gamma (0)=x\text{ and }\gamma (1)=y \} ;
\end{equation*}
i.e., $U(V,x)$ is the path component of $V$ containing~$x$. It is then clear that the $U(V,x)$ for varying $x\in X$ and $x\in V\in\mathfrak{O}$ form a basis of a topology $\mathfrak{O}^{\lpc}$ on the set~$X$. We let $X^{\lpc }$ be the topological space with underlying space $X$ and topology $\mathfrak{O}^{\lpc }$. Hence we obtain a continuous but not necessarily open bijection $X^{\lpc }\to X$. The following properties are easily checked:
\begin{Lemma}\label{Lem:CheapPropertiesOfLPCFunctor}
	Let $X$ be a topological space, with topology $\mathfrak{O}$.
	\begin{enumerate}
		\item For every point $x\in X$ the sets $U(V,x)$, where $V$ runs through all elements of $\mathfrak{O}$ containing $x$, is a basis of neighbourhoods of $x$ in $X^{\lpc}$.
		\item The space $X^{\lpc }$ is locally path-connected.
		\item The topology $\mathfrak{O}^{\lpc}$ is the coarsest topology on the set $X$ which is finer than $\mathfrak{O}$ and locally path-connected.
		\item The construction is functorial: if $f\colon Y\to X$ is continuous, then so is $f\colon Y^{\lpc }\to X^{\lpc }$.
		\item If $Y$ is a locally path-connected space, then any continuous map $Y\to X$ factors uniquely as $Y\to X^{\lpc }\to X$. In other words, $X^{\lpc }\to X$ is universal among continuous maps from locally path-connected spaces to~$X$.
		
		This may be rephrased as follows: if $\mathbf{Top}$ denotes the category of topological spaces with continuous maps and $\mathbf{LPC}\subset\mathbf{Top}$ denotes the full subcategory of locally path-connected spaces then the functor $(-)^{\lpc }\colon\mathbf{Top}\to\mathbf{LPC}$ is right adjoint to the inclusion functor $\mathbf{LPC}\hookrightarrow\mathbf{Top}$.\hfill $\square$
	\end{enumerate}
\end{Lemma}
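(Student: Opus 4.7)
The plan is to verify the five items in order by routine point-set arguments; only the path-continuity check in (ii) requires any real thought. For (i), the sets $U(V,y)$ form by construction a basis of $\mathfrak{O}^{\lpc}$; since the $U(V,y)$ are by definition the path components of $V$, any such basic open containing $x$ coincides with $U(V,x)$, which gives the desired neighbourhood base. For (ii), given $x\in X^{\lpc}$ and a neighbourhood $W$, I use (i) to choose $V\in\mathfrak{O}$ with $x\in U(V,x)\subseteq W$, and argue that $U(V,x)$ is path-connected in $X^{\lpc}$. For $y\in U(V,x)$ one has by definition an $\mathfrak{O}$-continuous path $\gamma\colon [0,1]\to V$ from $x$ to $y$; the real content is that $\gamma$ remains continuous into $X^{\lpc}$. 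By (i) this reduces to showing that, for each $t\in [0,1]$ and each basic neighbourhood $U(V',\gamma(t))$, an open interval $I\ni t$ with $\gamma(I)\subseteq U(V',\gamma(t))$ can be found: take $I\subseteq\gamma^{-1}(V')$ to be a connected interval around $t$, so that restricting $\gamma$ to the subinterval joining $t$ to any $s\in I$ provides an $\mathfrak{O}$-path in $V'$ from $\gamma(t)$ to $\gamma(s)$, whence $\gamma(s)\in U(V',\gamma(t))$.

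For (iii), let $\tau$ be a locally path-connected topology on $X$ refining $\mathfrak{O}$; I show each basic open $U(V,x)$ is $\tau$-open. Since $V\in\mathfrak{O}\subseteq\tau$ and $\tau$ is locally path-connected, the $\tau$-path components of $V$ are $\tau$-open. Because $\tau$ refines $\mathfrak{O}$, every $\tau$-path in $V$ is an $\mathfrak{O}$-path in $V$, so each $\tau$-path component of $V$ lies inside a single $\mathfrak{O}$-path component, whence $U(V,x)$ is a union of $\tau$-open $\tau$-path components. For (iv), given continuous $f\colon Y\to X$ and a basic open $U(V,x)\subseteq X^{\lpc}$, I take $y\in f^{-1}(U(V,x))$, set $W=f^{-1}(V)\in\mathfrak{O}_Y$, and verify $U(W,y)\subseteq f^{-1}(U(V,x))$: an $\mathfrak{O}_Y$-path in $W$ from $y$ to $z$ pushes forward under $f$ to an $\mathfrak{O}_X$-path in $V$ from $f(y)$ to $f(z)$, so $f(z)$ lies in the same $\mathfrak{O}_X$-path component of $V$ as $f(y)$, namely $U(V,x)$.

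Finally, for (v), I observe that $\mathfrak{O}^{\lpc}$ always refines $\mathfrak{O}$, since $V=\bigcup_{y\in V}U(V,y)$ for any $V\in\mathfrak{O}$. If $Y$ is locally path-connected, then applying (iii) with $\tau=\mathfrak{O}_Y$ gives $\mathfrak{O}_Y^{\lpc}=\mathfrak{O}_Y$, so $Y^{\lpc}=Y$ as topological spaces; the factorisation of a continuous $f\colon Y\to X$ through $X^{\lpc}\to X$ is then immediate from (iv), and uniqueness is automatic because $X^{\lpc}\to X$ is a bijection on underlying sets. No step presents a genuine obstacle; the mildest subtlety is the verification in (ii) that a path $[0,1]\to V$ continuous for $\mathfrak{O}$ is also continuous for the finer topology $\mathfrak{O}^{\lpc}$.
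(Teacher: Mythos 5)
The paper does not actually spell out a proof of this lemma; it ends the statement with~$\square$ and the remark that the properties ``are easily checked.'' Your write-up therefore has no in-paper argument to compare against, and it correctly fills in the omitted details.

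All five items are verified soundly. The one place that genuinely needs care, as you identify, is (ii): showing that an $\mathfrak{O}$-continuous path $\gamma\colon[0,1]\to V$ from $x$ to $y\in U(V,x)$ remains continuous into $X^{\lpc}$. Your argument -- reduce to basic neighbourhoods $U(V',\gamma(t))$ via (i), pull back $V'$ to an open set $\gamma^{-1}(V')\subseteq[0,1]$, extract the (open, by local connectedness of $[0,1]$) interval $I$ around $t$, and observe that for $s\in I$ the restricted path on $[\min(t,s),\max(t,s)]\subseteq I$ lands in $V'$ and hence witnesses $\gamma(s)\in U(V',\gamma(t))$ -- is exactly right. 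Items (i), (iii), (iv) are routine and correctly handled; in (iii) it is worth saying explicitly that $\mathfrak{O}\subseteq\mathfrak{O}^{\lpc}$ (each $V\in\mathfrak{O}$ is the union of its $U(V,y)$), which you defer to (v), but this is only an ordering quibble. In (v) the deduction $\mathfrak{O}_Y^{\lpc}=\mathfrak{O}_Y$ from (iii), followed by the appeal to (iv) and the set-level bijectivity of $X^{\lpc}\to X$, gives the universal property cleanly. The proof is complete and correct.
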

\begin{Examples}\label{Epl:lpcFoliationsSolenoid}
	\begin{altenumerate}
		\item If $X$ is totally disconnected then $X^{\lpc}$ is discrete.
		\item Let $M$ be a smooth manifold and let $\mathcal{F}$ be a foliation on $M$, defined by a vector subbundle $F$ of the tangent bundle $\mathrm{T}M$ such that the sections of $F$ are stable under the Lie bracket (also known as an \emph{involutive} or \emph{integrable} subbundle). Recall that a \emph{leaf} of the foliation is a smooth manifold $L$ together with an injective immersion $i\colon L\hookrightarrow M$ which, for every $p\in L$, induces an identification of $\mathrm{T}_pL$ with $F_p\subseteq\mathrm{T}_{i(p)}M$, and which is maximal with respect to this property.
		
		Since $i$ is injective, we may identify $L$ with a subset of~$M$. In general, however, the topology of $L$ will not be the subspace topology inherited from~$M$. It is not too hard to show that if $i(L)$ is endowed with this subspace topology, then
		\begin{equation*}
		i(L)^{\lpc}\cong L.
		\end{equation*}
		For instance, let $\vartheta\in\bbr$ and consider the \emph{Kronecker foliation of slope~$\vartheta$}. This is the foliation $\mathcal{F}_{\vartheta}$ on the torus $M=\bbr^2/\bbz^2$ given by the subbundle $F_{\vartheta }\subset\mathrm{T}M\cong \bbr^2\times M$ with $F_{\vartheta ,p} =\bbr\cdot {\vartheta\choose 1}$ for every $p\in M$. Each leaf of $\mathcal{F}_{\vartheta}$ is then the image in $M$ of an affine subspace in $\bbr^2$ parallel to $\bbr\cdot {\vartheta\choose 1}$. If $\vartheta\in\bbq$ then every leaf $L$ is homeomorphic to $\bbs^1$, and $L\to i(L)$ is a homeomorphism. If $\vartheta$ is irrational then all leaves are homeomorphic to $\bbr$ and have \emph{dense image} in $M$. They are all translates, via the group structure on $M$, of the leaf through $0$:
		\begin{equation*}
		i\colon\bbr\to M,\quad t\mapsto\begin{pmatrix}
		t\vartheta \\ t
		\end{pmatrix}\bmod\bbz^2.
		\end{equation*}
		The topology on $i(\bbr )$ inherited from $M$ defines a topology $\mathfrak{O}_{\vartheta}$ on $\bbr$. A basis of neighbourhoods of $0$ for this topology is given by the sets
		\begin{equation*}
		\{ t\in\bbr\mid \text{both $t$ and $t\vartheta$ differ by less than $\varepsilon$ from an integer}\}
		\end{equation*}
		for varying $\varepsilon>0$. Hence every $\mathfrak{O}_{\vartheta}$-neighbourhood of $0$ is unbounded. Still, $\mathfrak{O}^{\lpc}_{\vartheta}$ is the Euclidean topology on~$\bbr$.
		
		Note that $M$ is the completion of the topological group $(\bbr ,\mathfrak{O}_{\vartheta })$, and using this it is not hard to see that $(\bbr ,\mathfrak{O}_{\vartheta_1})\simeq (\bbr ,\mathfrak{O}_{\vartheta_2})$ as topological groups if and only if $\vartheta_1$ and $\vartheta_2$ are in the same $\GL_2(\bbz )$-orbit in $\bbp^1(\bbr )\smallsetminus\bbp^1(\bbq )$.
		
		\item There is a similar description of $\mathcal{S}^{\lpc}$, where $\mathcal{S}=\bba /\bbq$ is the solenoid. We have already determined the path components of $\mathcal{S}$ in the proof of Proposition~\ref{Prop:PiZeroPathSolenoid}. Since $\mathcal{S}$ is a topological group, they are all homeomorphic, and they are all dense in $\mathcal{S}$ by Proposition~\ref{Prop:PiZeroPathSolenoid}. One of them is the image of $\bbr$ under the obvious homomorphism $i\colon \bbr\to\bba\to\mathcal{S}$. Again, $i$ is injective and continuous, but not a homeomorphism onto its image. If $\mathfrak{O}$ denotes the topology on $\bbr$ corresponding to the subspace topology on $i(\bbr )\subset\mathcal{S}$, then a basis for $\mathfrak{O}$ is given by the open sets
		\begin{equation*}
		x+\bigcup_{n\in\bbz }\left] kn-\varepsilon ,kn+\varepsilon \right[
		\end{equation*}
		for $x\in\bbr$, $k\in\bbn$ and $\varepsilon >0$. This topology is not locally path-connected, but $\mathfrak{O}^{\lpc }$ is the Euclidean topology on~$\bbr$.
		
		From this we can determine the topology on $\mathcal{S}^{\lpc }$: it is the unique topology on $\mathcal{S}$ for which $\mathcal{S}$ is a topological group and $i\colon\bbr\to\mathcal{S}$ is a homeomorphism onto an open subgroup, where $\bbr$ has the Euclidean topology. Hence $\mathcal{S}^{\lpc}$ is (non-canonically) isomorphic to $\hat{\bbz }/\bbz \times\bbr$, with the discrete topology on $\hat{\bbz }/\bbz$.
	\end{altenumerate}
\end{Examples}
\begin{Corollary}\label{Cor:LPCPreservesManyInvariants}
	Let $X$ be a topological space.
	\begin{enumerate}
		\item The canonical map $\pizeropath(X^{\lpc })\to\pizeropath (X)$ is a bijection.
		\item For any $x\in X$ the canonical map $\pionepath (X^{\lpc },x)\to\pionepath (X,x)$ is a group isomorphism, and similarly for higher homotopy groups defined in the usual way using spheres.
		\item The complex $\mathrm{C}_{\bullet }(X)$ of singular simplices in $X$ with integral coefficients is canonically isomorphic to $\mathrm{C}_{\bullet }(X^{\lpc })$. In particular $X^{\lpc }\to X$ induces isomorphisms on singular homology and cohomology, for any abelian coefficient group.
	\end{enumerate}
\end{Corollary}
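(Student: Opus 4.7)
The uniform strategy is to exploit the adjunction from Lemma~\ref{Lem:CheapPropertiesOfLPCFunctor}(v): any continuous map from a locally path-connected space $Y$ to $X$ factors uniquely through $X^{\lpc}\to X$. All the topological invariants considered in the corollary are computed in terms of continuous maps out of locally path-connected test spaces (intervals, spheres, and standard simplices, together with their products with an interval for homotopies), so they cannot distinguish $X$ from $X^{\lpc}$.

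Concretely, for (iii) I would observe that the standard $n$-simplex $\Delta^n$ is locally path-connected (being a convex subset of $\bbr^{n+1}$). Hence the map $X^{\lpc}\to X$ induces a bijection $\Hom_{\mathbf{Top}}(\Delta^n,X^{\lpc})\to \Hom_{\mathbf{Top}}(\Delta^n,X)$ by the adjunction. Because the face inclusions $\Delta^{n-1}\hookrightarrow\Delta^n$ are just continuous maps, these bijections commute with the face and degeneracy operators. Therefore the singular simplicial sets, and hence the singular chain complexes, of $X$ and $X^{\lpc}$ are canonically isomorphic; the statements about singular (co)homology follow. For (i) the same argument applies with $\Delta^n$ replaced by the point and the interval $[0,1]$: paths in $X$ based at a given point correspond bijectively to paths in $X^{\lpc}$, so $\pizeropath(X^{\lpc})\to\pizeropath(X)$ is a bijection.

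For (ii) I would run the identical argument with test spaces $(\mathbb{S}^n,*)$ and the homotopy cylinders $(\mathbb{S}^n\times [0,1],\{*\}\times [0,1])$, each of which is locally path-connected. The adjunction gives bijections on pointed continuous maps that are compatible with the homotopy relation and with concatenation of based maps $\mathbb{S}^n\to X$, whence $\pionepath(X^{\lpc},x)\to\pionepath(X,x)$ is a group isomorphism, and the analogous statement holds in all higher degrees.

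There is essentially no obstacle: the only subtle point is the \emph{uniqueness} in the factorisation property of Lemma~\ref{Lem:CheapPropertiesOfLPCFunctor}(v), which is exactly what upgrades the surjectivity of $\Hom(Y,X^{\lpc})\to \Hom(Y,X)$ to a bijection and thus makes the induced maps on the set-level invariants bijective rather than just surjective. All three assertions are then formal consequences of a single observation about locally path-connected test objects.
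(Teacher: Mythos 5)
Your proposal is correct and takes essentially the same approach as the paper: both reduce all three statements to the observation (Lemma~\ref{Lem:CheapPropertiesOfLPCFunctor}.(v)) that continuous maps from locally path-connected test spaces such as $[0,1]$, $\Delta^n$, and $\mathbb{S}^n$ into $X$ and into $X^{\lpc}$ coincide. Your version merely spells out the details of why the resulting bijections are compatible with the relevant face/degeneracy/concatenation structures, which the paper leaves implicit.
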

\begin{proof}
	Lemma~\ref{Lem:CheapPropertiesOfLPCFunctor} implies that a map of sets $[0,1]\to X$ is continuous if and only if it is continuous when viewed as a map $[0,1]\to X^{\lpc }$, and similarly for homotopies and singular simplices.
\end{proof}

\subsubsection*{Topologies on the classical fundamental group} Let $(X,x)$ be a pointed topological space. Then there exist several natural topologies on $\pionepath (X,x)$.
\begin{altitemize}
	\item The \emph{loop topology} on $\pionepath (X,x)$ is the quotient topology defined by the surjective map $\Omega (X,x)\to\pionepath (X,x)$, where $\Omega (X,x)$ is the loop space of $(X,x)$, i.e.\ the set of all continuous pointed maps $(\bbs^1 ,1)\to (X,x)$ endowed with the compact-open topology. While the loop topology is defined in a very natural way it does not always turn $\pionepath (X,x)$ into a topological group, only into a \emph{quasi-topological group}.
	
	Here a quasi-topological group is a group $G$ together with a topology such that the inverse map $G\to G$, $g\mapsto g^{-1}$, and all multiplication maps $G\to G$, $g\mapsto gh$, and $G\to G$, $g\mapsto hg$, for $h\in G$ are continuous. These conditions do not imply that the multiplication map $G\times G\to G$, $(g,h)\mapsto gh$, is continuous (which would turn $G$ into a topological group).
	
	For instance for the \emph{Hawaiian earrings}
	\begin{equation*}
	H=\bigcup_{n\in\bbn }C_n
	\end{equation*}
	where $C_n\subset\bbr^2$ is a circle of radius $\frac 1n$ centered at $(0,\frac 1n)$, the fundamental group $\pionepath (H,0)$ with the loop topology is a quasi-topological group but not a topological group, see~\cite{MR2810974}.
	\item Brazas \cite{MR2995090} showed that for any pointed space $(X,x)$ there is a finest topology on $\pionepath (X,x)$ such that $\pionepath (X,x)$ becomes a topological group and $\Omega (X,x)\to\pionepath (X,x)$ is continuous. This topology is known as the \emph{$\tau$-topology}. Clearly it agrees with the loop topology if and only if the latter already turns $\pionepath (X,x)$ into a topological group.

	Brazas introduces a generalised notion of covering spaces called \emph{semicovering spaces}, cf.~\cite{MR2954666}. For a semicovering space $p\colon Y\to X$ the subspace topology on the fibre $p^{-1}(x)$ is discrete, and the monodromy action of $\pionepath (X,x)$ on $p^{-1}(x)$ is continuous for the $\tau$-topology. If $X$ is path-connected and locally path-connected this construction provides an equivalence of categories between semicoverings of $X$ and discrete sets with continuous $\pionepath (X,x)$-action.
\item For any monodromy action defined by a semicovering space the point stabilisers will be open subgroups of $\pionepath (X,x)$ for the $\tau$-topology. Hence it makes sense to define a new topology called the \emph{$\sigma$-topology} on $\pionepath (X,x)$ where a neighbourhood basis of the identity is given by the $\tau$-open subgroups (rather than all $\tau$-open neighbourhoods) of $\pionepath (X,x)$.
\item Finally we may consider the completion $\pi_1^{\Gal }(X,x)$ of $\pionepath (X,x)$ with respect to the $\sigma$-topology (more precisely, with respect to the two-sided uniformity defined by the $\sigma$-topology). This group  is complete and has a basis of open neighbourhoods of the identity given by open subgroups. By \cite[Proposition~7.1.5]{MR3379634} it is then a \emph{Noohi group}, i.e.\ the tautological map from $\pi_1^{\Gal }(X,x)$ to the automorphism group of the forgetful functor
\begin{equation*}
\pi_1^{\Gal }(X,x)\text{-\textbf{Sets}}\to\mathbf{Sets}
\end{equation*}
is an isomorphism. Here $\mathbf{Sets}$ is the category of sets and $\pi_1^{\Gal}(X,x)$-\textbf{Sets} is the category of (discrete) sets with a continuous left $\pi_1^{\Gal }(X,x)$-action.

 In the case where $X$ is path-connected and locally path-connected the category $\pi_1^{\Gal}(X,x)$-\textbf{Sets} is again equivalent to the category of semicovering spaces of $X$, and $\pi_1^{\Gal }(X,x)$ can be constructed from that category as the automorphism group of a fibre functor,  see \cite{Klevdal2015} for details.
\end{altitemize}

\subsection{Etale fundamental groups of topological spaces} 
Let $X$ be a connected (but not necessarily path-connected!) topological space and $x\in X$. We shall construct a profinite group $\pioneet (X,x)$ which classifies pointed finite coverings of $(X,x)$, much like the \'etale fundamental group in algebraic geometry does. To do so we proceed analogously to the usual construction for schemes.

\subsubsection*{Categories of finite covering spaces} Recall that a continuous map of topological spaces $p\colon Y\to X$ is a trivial finite covering if there is a finite discrete space $D$ and a homeomorphism $X\times D\to Y$ making the obvious diagram commute; more generally, $p\colon Y\to X$ is a \emph{finite covering} if every point in $X$ has an open neighbourhood $U\subseteq X$ such that the base change $p_U\colon Y_U=p^{-1}(U)\to U$ is a trivial finite covering. The map
\begin{equation*}
X\to\bbn_0,\quad x\mapsto\lvert p^{-1}(x)\rvert ,
\end{equation*}
is continuous. If $X$ is connected, it is therefore constant; the unique value it assumes is called the \emph{degree} of the covering.
\begin{Definition}
	Let $X$ be a topological space. The category $\mathbf{FCov}(X)$ has as objects the pairs $(Y,p)$ where $Y$ is a topological space and $p\colon Y\to X$ is a finite covering, and as morphisms from $(Y_1,p_1)$ to $(Y_2,p_2)$ the continuous maps $f\colon Y_1\to Y_2$ such that $p_1=p_2\circ f$.
\end{Definition}
For every point $x\in X$ we define a functor $\Phi_x\colon\mathbf{FCov}(X)\to\mathbf{FSet}$ (the target being the category of finite sets) by sending $p\colon Y\to X$ to the fibre $p^{-1}(x)$, with the obvious action on morphisms. For a continuous map $f\colon X\to X'$ we obtain a functor $f^{\ast }\colon\mathbf{FCov}(X')\to\mathbf{FCov}(X)$ by pullback. For $x\in X$ there is then a canonical isomorphism $\Phi_{f(x)}\cong\Phi_{x}\circ f^{\ast }$ of functors $\mathbf{FCov}(X')\to\mathbf{FSet}$.
\begin{Proposition}\label{Prop:DecompositionForFCov}
	Let $X$ be a connected topological space and $x\in X$. Let $p\colon Y\to X$ be a finite covering of degree~$d$. Then $Y$ splits into finitely many connected components as $Y=Y_1\coprod\dotsb\coprod Y_n$, each $Y_i\to X$ is a finite covering of $X$ of some degree $d_i$, and $d=d_1+\dotsb +d_n$.
\end{Proposition}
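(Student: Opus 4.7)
The plan is to exploit two general properties of the map $p$: it is \emph{open}, since a finite covering is a local homeomorphism, and it is \emph{proper} (hence closed), since any compact $K\subseteq X$ can be covered by finitely many trivialising opens $U_1,\dots,U_m$, over each of which $p^{-1}$ is a finite disjoint union of copies of $U_j$, so $p^{-1}(K)$ is compact. With these two properties in hand, most of the proposition becomes formal.

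Let $\{Y_i\}_{i\in I}$ be the connected components of $Y$. Each $Y_i$ is closed in $Y$, and since $p$ is closed, each image $p(Y_i)$ is closed in $X$. The key step is to show $p(Y_i)$ is also \emph{open}. Given $y\in Y_i$ with $p(y)=x$, choose a trivialising open $U\ni x$ with $p^{-1}(U)=V_1\sqcup\cdots\sqcup V_d$ and $y\in V_1$; the restriction $p|_{V_1}\colon V_1\to U$ is a homeomorphism. The connected component of $y$ in $V_1$ is a connected subset of $Y$ containing $y$, hence is contained in $Y_i$, and its image is an open neighbourhood of $x$ in $p(Y_i)$ (shrinking $U$ if necessary so that $V_1$ is connected). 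Thus $p(Y_i)$ is clopen and non-empty in the connected space $X$, so $p(Y_i)=X$.

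Now count: fix $x_0\in X$ and write $p^{-1}(x_0)=\{y_1,\dots,y_d\}$. Since every $Y_i$ surjects onto $X$, every $Y_i$ meets $p^{-1}(x_0)$, and the $Y_i$ partition this $d$-element set into non-empty subsets of sizes $d_i\geq 1$. Hence $|I|\leq d$, so $Y$ has only finitely many components, each of which is therefore clopen in $Y$, and $\sum_i d_i=d$. Finally, for each $i$ one checks that $p|_{Y_i}\colon Y_i\to X$ is a finite covering by observing that any trivialising open $U$ for $p$ (taken small enough that its preimage splits into connected sheets, each of which lies entirely in some $Y_j$) is also trivialising for $p|_{Y_i}$: the preimage $p^{-1}(U)\cap Y_i$ is the disjoint union of precisely those sheets that lie in $Y_i$, giving a trivial covering of degree $d_i$.

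The main obstacle is the openness of $p(Y_i)$: everything else is routine bookkeeping using connectedness of $X$ and the finiteness of the fibres. The argument for openness is immediate when $X$ is locally connected (one takes $U$ connected so that each sheet $V_k$ is connected and automatically contained in one $Y_j$), and in the general case one must carefully arrange the trivialising neighbourhoods by passing to connected components within a single sheet, which is possible because each sheet is homeomorphic to its image in $X$.
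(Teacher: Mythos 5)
Your approach is genuinely different from the paper's, but it has a real gap at exactly the place you flag as "the main obstacle": openness of $p(Y_i)$, or equivalently openness of $Y_i$ in $Y$. Your argument relies on shrinking a trivialising open $U$ so that the sheet $V_1$ (or at least its component through $y$) is connected, and then claiming its image is an open neighbourhood of $x$. When $X$ is not locally connected this fails twice over: one cannot shrink $U$ to be connected, and the connected component of $x$ in $U$ need not be open in $X$ (connected components of open sets are open precisely when the ambient space is locally connected). Your fallback remark that one "passes to connected components within a single sheet" does not help, because the image of that component is the connected component of $x$ in $U$, which is still not open in general. The paper explicitly warns about this: "Note we cannot assume $U$ to be connected itself because we have not assumed $X$ to be locally connected." Since the paper later applies this proposition to spaces like Pontryagin duals of $\bbq$-vector spaces, where local connectedness fails badly, the gap is fatal and not just a matter of convenience.

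The paper sidesteps this by working with \emph{clopen} subsets rather than connected components from the start: for any clopen $Z\subseteq Y$ the counting function $c_Z(x)=\lvert p^{-1}(x)\cap Z\rvert$ is locally constant without any local connectedness hypothesis (one only needs $Z\cap p^{-1}(U)$ to be clopen in $p^{-1}(U)\cong U\times D$, which gives local constancy of $c_Z$ on $U$), hence constant on $X$. One then splits $Y$ recursively into proper clopen pieces of strictly smaller degree; the recursion terminates because the degree is a positive integer that drops each time, and at termination the pieces are connected, clopen, and are exactly the connected components. The later part of your proof — trivialising $p|_{Y_i}$ by shrinking $U$ until each sheet is connected — has the same hidden local-connectedness assumption and would need the same counting-function fix (use that the map $u\mapsto\{j:(u,j)\in Y_i\}$ is locally constant on a trivialising chart, not that the sheets are connected).
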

\begin{proof}
	To each open and closed subset $Z\subseteq Y$ we assign a counting function
	\begin{equation*}
	c_Z\colon X\to\{ 0,1,\dotsc ,d \} ,\quad x\mapsto\lvert p^{-1}(x)\cap Z \rvert .
	\end{equation*}
	We claim this is continuous: let $U\subseteq X$ be an open subset over which $p$ becomes trivial. Note we cannot assume $U$ to be connected itself because we have not assumed $X$ to be locally connected. Still, $Z\cap p^{-1}(U)$ is both open and closed in $p^{-1}(U)$, and we may assume the latter to be $U\times D$, where $D$ is a discrete set of cardinality~$d$. Hence for each $\delta\in D$ the locus of $u\in U$ with $(u,\delta )\in Z$ is both open and closed in $U$. Therefore $c_Z$ is continuous on~$U$. But any point in $X$ is contained in a suitable $U$, therefore $c_Z$ is continuous on $X$. But as $X$ is connected, $c_Z$ must be constant, equal to some $0\le d_Z\le d$.
	
	From this argument we also see that $Z\to X$ is a finite covering. The same applies to $Y\smallsetminus Z$. The degrees of the two coverings thus obtained must be strictly smaller than $d$, hence after finitely many steps we arrive at a decomposition into connected finite coverings.
\end{proof}
\begin{Lemma}\label{Lemma:AutomorphismForGaloisCat}
	Let $p\colon Y\to X$ be a finite covering, where $X$ and $Y$ are connected topological spaces. Let $g\in\Aut (Y/X)$, i.e.\ $g$ is a homeomorphism $Y\to Y$ with $p\circ g=g$. If $g$ has a fixed point, then it is the identity.
\end{Lemma}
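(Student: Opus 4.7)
The plan is to prove the classical rigidity statement for deck transformations: the fixed-point set of $g$ is clopen, so by connectedness of $Y$ it must be all of $Y$. The hypothesis that $g$ has a fixed point ensures this clopen set is nonempty.

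More precisely, set $F=\{y\in Y\mid g(y)=y\}$. First I would show that $F$ is open. Given $y\in F$, pick an open neighbourhood $U\subseteq X$ of $x=p(y)$ that trivialises $p$, so $p^{-1}(U)\cong U\times D$ for a finite discrete set $D$, and write $V_{\delta}=U\times\{\delta\}$. Let $\delta_{0}\in D$ with $y\in V_{\delta_{0}}$. Since $p\circ g=p$ and $g$ is continuous, the set $W=V_{\delta_{0}}\cap g^{-1}(V_{\delta_{0}})$ is an open neighbourhood of $y$. For any $y'\in W$, both $y'$ and $g(y')$ lie in $V_{\delta_{0}}$ and have the same image under $p$; since $p|_{V_{\delta_{0}}}\colon V_{\delta_{0}}\to U$ is a homeomorphism, $g(y')=y'$, whence $W\subseteq F$.

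Next I would show that $F$ is closed, by exhibiting an open neighbourhood inside $Y\smallsetminus F$ of each of its points. Given $y\notin F$, choose a trivialising neighbourhood of $p(y)=p(g(y))$ as above; then $y\in V_{\delta_{0}}$ and $g(y)\in V_{\delta_{1}}$ with $\delta_{0}\neq\delta_{1}$, and $V_{\delta_{0}}\cap g^{-1}(V_{\delta_{1}})$ is an open neighbourhood of $y$ in which $g$ has no fixed point, since $V_{\delta_{0}}$ and $V_{\delta_{1}}$ are disjoint.

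Combining the two claims, $F$ is clopen in $Y$. By hypothesis $F\neq\emptyset$, and by connectedness of $Y$ this forces $F=Y$, i.e.\ $g=\id_{Y}$. I expect no real obstacle: the argument only uses the local triviality of $p$ and the connectedness of $Y$, and in particular does not require any local connectedness or Hausdorff assumption on $X$.
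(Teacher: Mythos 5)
Your proof is correct and follows essentially the same route as the paper: the paper's proof (referring back to the argument of Proposition~\ref{Prop:DecompositionForFCov}) likewise shows that the fixed-point set $\{y\in Y\mid g(y)=y\}$ is both open and closed, hence all of $Y$ by connectedness. You have simply spelled out the clopen-ness directly via trivialising neighbourhoods rather than by citation; the argument and the reliance on the local triviality of $p$ plus connectedness of $Y$ (with no local-connectedness or Hausdorff hypotheses) are identical.
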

\begin{proof}
	Similarly to the proof of Proposition~\ref{Prop:DecompositionForFCov}, we show that the set $\{ y\in Y\mid g(y)=y \}$ is both open and closed in~$Y$.
\end{proof}
\begin{Proposition}\label{Prop:FCovIsGaloisCategory}
	Let $X$ be a connected topological space. Then $\mathbf{FCov}(X)$ is a Galois category in the sense of SGA~1, and for every $x\in X$ the functor $\Phi_x$ may serve as a fibre functor.
\end{Proposition}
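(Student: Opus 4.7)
The plan is to verify the six axioms (G1)--(G6) of a Galois category in the sense of SGA 1, Exp.~V, for the category $\mathbf{FCov}(X)$ equipped with the functor $\Phi_x$. Throughout, I would mirror the familiar proof in the scheme-theoretic setting, relying crucially on Proposition~\ref{Prop:DecompositionForFCov} and Lemma~\ref{Lemma:AutomorphismForGaloisCat} together with basic formal properties of coverings of topological spaces.

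For the axioms on the category itself: I would first check that $\mathbf{FCov}(X)$ admits finite limits and a terminal object. The terminal object is $\mathrm{id}\colon X\to X$, and fibre products $Y_1\times_X Y_2$ carry a natural finite covering structure, locally trivialising simultaneously any trivialisations of $Y_1$ and $Y_2$. Equalizers of two maps $f,g\colon Y_1\to Y_2$ can be realised as the clopen subset $\{y\in Y_1\mid f(y)=g(y)\}$, which is a finite covering by an argument analogous to that of Proposition~\ref{Prop:DecompositionForFCov}. For coproducts I would take the disjoint union. Quotients of a covering $Y\to X$ by a finite group $G\subseteq\Aut(Y/X)$ are constructed by passing to the orbit space $Y/G$; locally, choosing a trivialisation $Y_U\cong U\times D$, the action of $G$ on the finite set $D$ makes the quotient locally isomorphic to $U\times (D/G)$, so $Y/G\to X$ is again a finite covering.

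For the factorisation axiom (G3), given a morphism $f\colon Y_1\to Y_2$ in $\mathbf{FCov}(X)$, I would use that a map of finite coverings is itself a local homeomorphism with finite fibres, so $f(Y_1)\subseteq Y_2$ is both open (local homeomorphism) and closed (properness, via the continuous degree function combined with Proposition~\ref{Prop:DecompositionForFCov}). Then $Y_2=f(Y_1)\sqcup (Y_2\smallsetminus f(Y_1))$ gives the required decomposition, with $Y_1\twoheadrightarrow f(Y_1)$ a strict epimorphism and $f(Y_1)\hookrightarrow Y_2$ the inclusion of a direct summand.

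For the axioms concerning $\Phi_x$: exactness of $\Phi_x$ (commutation with finite limits and colimits, including quotients by finite groups) is immediate from the pointwise description of fibres, together with the fact that all the constructions above are performed locally around $x$ where everything trivialises. Finally, for the conservativity axiom (G6), suppose $f\colon Y_1\to Y_2$ becomes a bijection on the fibre over $x$. By Proposition~\ref{Prop:DecompositionForFCov} the two coverings have well-defined degrees and these must coincide; since $f$ is a local homeomorphism between spaces of the same finite degree over a connected base and is bijective on one fibre, the continuity of the cardinality of preimages shows that $f$ is a bijection on every fibre, hence a homeomorphism. The main genuine obstacle I anticipate is the construction of the quotient by a finite group action --- one must be careful that $X$ need not be locally connected nor locally path-connected, so that the standard arguments using evenly covered neighbourhoods require the slightly more flexible form provided by Proposition~\ref{Prop:DecompositionForFCov}. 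All other verifications are essentially formal once the correct local descriptions are in place.
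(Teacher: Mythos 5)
Your plan follows essentially the same route as the paper: verify the axioms (G1)--(G6) directly, singling out the construction of quotients (G2) and conservativity (G6) as the real content, and handle both by local trivialisations and locally constant counting functions. The one place where your sketch elides a genuinely necessary observation is in the quotient construction. When you write ``choosing a trivialisation $Y_U\cong U\times D$, the action of $G$ on the finite set $D$ makes the quotient locally isomorphic to $U\times(D/G)$,'' you are already treating $G$ as acting on the abstract set $D$ independently of the base point. A priori this is not so: a chosen trivialisation gives, for each $u\in U$, a permutation action $\alpha(u)\colon G\to\mathfrak S_d$ on the fibre, and it is the continuity of the resulting map $\alpha\colon U\to\Hom(G,\mathfrak S_d)$ (into a finite discrete set, hence locally constant) that lets one shrink $U$ and reduce to a constant action. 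Your closing caveat correctly senses that the absence of local connectedness is where the subtlety lies, but points at Proposition~\ref{Prop:DecompositionForFCov} rather than at this local-constancy statement, which is what the paper actually uses; the same observation is also what makes your assertion that a morphism of coverings is a local homeomorphism (used in both your (G3) and (G6) arguments) rigorous. With that made explicit, the remainder of your verification matches the paper's, and your additional detail for (G3) (image clopen by counting fibres) is a correct filling-in of a step the paper calls ``mostly straightforward.''
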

\begin{proof}
	There are several equivalent characterisations of Galois categories, one being given as follows: an essentially small category $\mathcal{C}$ that admits a functor $\Phi\colon\mathcal{C}\to\mathbf{FSet}$ (called `fibre functor') satisfying the following set of axioms (reproduced from  cf.\ \cite[Expos\'e V.4]{MR2017446}).
	\begin{itemize}
		\item[(G1)] $\mathcal{C}$ has a final object, and arbitrary fibre products exist.
		\item[(G2)] $\mathcal{C}$ has finite coproducts and categorical quotients of objects by finite groups of automorphisms.
		\item[(G3)] Every morphism in $\mathcal{C}$ factors as $\iota\circ\pi$ where $\iota$ is the inclusion of a direct summand in a coproduct and $\pi$ is a strict epimorphism.
		\item[(G4)] $\Phi$ commutes with fibre products and sends right units to right units.
		\item[(G5)] $\Phi$ commutes with finite coproducts, sends strict epimorphisms to strict epimorphisms and sends categorical quotients by finite groups to categorical quotients by finite groups.
		\item[(G6)] If $\Phi (f)$ is an isomorphism, then so is~$f$.
	\end{itemize}
	To show that $\mathbf{FCov}(X)$ and $\Phi_x$ satisfy these axioms is mostly straightforward. The nontrivial parts are the existence of quotients in (G2), and (G6).
	
	For the former let $p\colon Y\to X$ be an object in $\mathbf{FCov}(X)$ and let $G\subseteq\Aut (Y/X)$ be a finite subgroup. Endow $G\backslash Y$ with the quotient topology; we claim that $G\backslash Y\to X$ is an object of $\mathbf{FCov}(X)$, and it will follow formally that it is a categorical quotient for the group action. Take an open subset $U\subseteq X$ over which $Y$ is trivialised; it suffices to show that $G\backslash p^{-1}(U)\to U$ is a finite covering. We may assume that $p^{-1}(U)=U\times\{ 1,2,\dotsc ,d \}$. Then we obtain a continuous, hence locally constant, map
	\begin{equation*}
	\alpha\colon U\to\Hom (G,\mathfrak{S}_d)
	\end{equation*}
	where $\alpha (u)\colon G\to\mathfrak{S}_d$ is the permutation action of $G$ on the fibre $p^{-1}(u)\cong \{ 1,2,\dotsc ,d \}$. From the fact that $\alpha$ is locally constant we deduce that the restriction of $G\backslash Y$ to $U$ is a finite covering of~$U$, as desired.
	
	As for (G6) let $f\colon Y_1\to Y_2$ be a morphism in $\mathbf{FCov}(X)$ which induces a bijection on the fibres over some $x\in X$. We need to show that $f$ is a homeomorphism. First, by an argument analogous to the preceding, we show that $f$ is in fact bijective. Then on any open subset $U\subseteq X$ trivialising both coverings we may assume that $f$ takes the form
	\begin{equation*}
	U\times\{ 1,2,\dotsc ,d \}\to U\times \{  1,2,\dotsc ,d \},\quad (u,\delta)\mapsto (u,\beta (u)(\delta))
	\end{equation*}
	for some finite sets $D_i$ and some continuous map $\beta\colon U\to \mathfrak{S}_d$. It is then clear that $f$ is also open.
\end{proof}
For a Galois category $\mathcal{C}$ with a fibre functor $\Phi$ the group $\pi =\Aut\Phi$ acquires a natural structure of a profinite group, as a projective limit over all the images of $\pi$ in $\Aut (\Phi (Y))$ for $Y\in\operatorname{Ob}\mathcal{C}$. The functor $\Phi$ then factors through the category $\pi$-$\mathbf{FSet}$ of finite sets with a continuous left action by $\pi$, and in fact induces an equivalence between $\mathcal{C}$ and $\pi$-$\mathbf{FSet}$ by \cite[Expos\'e~V, Th\'eor\`eme~4.1]{MR2017446}. In fact, Galois categories are precisely those that are equivalent to $\pi$-$\mathbf{FSet}$ for some profinite group~$\pi$, cf.\ the remarks after \cite[Expos\'e~V, D\'efinition~5.1]{MR2017446}.
\begin{Definition}\label{def:etfundgroup}
	Let $X$ be a connected topological space and $x\in X$. The automorphism group of the fibre functor $\Phi_x\colon\mathbf{FCov}(X)\to\mathbf{FSet}$ is called the \emph{\'etale fundamental group} of $X$ at $x$ and denoted by~$\pioneet (X,x)$.
\end{Definition}
It follows from the formalism of Galois categories that for two points $x,x'\in X$ the groups $\pioneet (X,x)$ and $\pioneet (X,x')$ are isomorphic, the isomorphism being canonical up to inner automorphisms, cf.\ \cite[Expos\'e~V, Corollaire~5.7]{MR2017446}.

In a similar vein, let $X$ be a connected topological space and $\gamma\colon [0,1]\to X$ a path; write $x_0=\gamma (0)$ and $x_1=\gamma (1)$. Then $\gamma$ induces an isomorphism of functors $\varphi_{\gamma }\colon\Phi_{x_0}\to\Phi_{x_1}$ as follows: for any finite covering $p\colon Y\to X$ the pullback $\gamma^{\ast }p\colon \gamma^{\ast }Y=Y\times_{X,\gamma }[0,1]\to [0,1]$ trivialises canonically, i.e.\ for any $t\in [0,1]$ the composition
\begin{equation*}
\Phi_{x_t}(Y)=p^{-1}(\gamma (t))= (\gamma^{\ast }p)^{-1}(t)\hookrightarrow\gamma^{\ast }Y\to\pi_0(\gamma^{\ast }Y)
\end{equation*}
is a bijection, hence there is a canonical identification
\begin{equation}\label{eqn:IsoOfFibreFunctorsInducedByPath}
\Phi_{x_0}(Y)\cong\pi_0(\gamma^{\ast }Y)\cong\Phi_{x_1}(Y).
\end{equation}
We define the isomorphism of functors $\varphi_{\gamma }\colon\Phi_{x_0}\to\Phi_{x_1}$ applied to the object $p\colon Y\to X$ to be (\ref{eqn:IsoOfFibreFunctorsInducedByPath}). By conjugation it induces an isomorphism of \'etale fundamental groups
\begin{equation}\label{eqn:PathInducesCanonicalIsoBetweenEtaleFundamentalGroups}
\tau_{\gamma }\colon\pioneet (X,x_0)=\Aut\Phi_{x_0}\to\Aut\Phi_{x_1}=\pioneet (X,x_1),\quad \alpha\mapsto\varphi_{\gamma }\circ\alpha\circ\varphi_{\gamma}^{-1}.
\end{equation}
The class of this isomorphism up to inner automorphisms is precisely the canonical class of isomorphisms for any two points of $X$ mentioned above.

\subsubsection*{Continuity properties}

Another important property of \'etale fundamental groups is their compatibility with cofiltered projective limits.

\begin{Proposition}\label{Prop:FiniteCoveringDefinedOnFiniteLevel}
	Let $(X_{\alpha })$ be a cofiltered projective system of compact Hausdorff spaces and let $X=\varprojlim_{\alpha }X_{\alpha }$.
	\begin{enumerate}
	\item Let $p\colon Y\to X$ be a finite covering space. Then there exists some $\alpha_0$, a finite covering $p_0\colon Y_0\to X_{\alpha_0}$ and a pullback diagram
	\begin{equation*}
	\xymatrix{
		Y \ar[r] \ar[d]_-p	& Y_0\ar[d]^-{p_0}\\
		X \ar[r] & X_{\alpha_0}.
	}
	\end{equation*}
	\item Let $p_1\colon Y_1\to X_{\alpha_1}$ and $p_2\colon Y_2\to X_{\alpha_2}$ be finite covering spaces, and let $f\colon Y_1\times_{X_{\alpha_1}}X\to Y_2\times_{X_{\alpha_2}}X$ be a continuous map commuting with the projections to~$X$. Then there exists some $\alpha_0\ge\alpha_1,\alpha_2$ such that $f$ is the base change along $X\to X_{\alpha_0}$ of a continuous map $Y_1\times_{X_{\alpha_1}}X_{\alpha_0}\to Y_2\times_{X_{\alpha_2}}X_{\alpha_0}$ commuting with the projections to~$X_{\alpha_0}$.
	\end{enumerate}
\end{Proposition}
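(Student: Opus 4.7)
The proof hinges on a compactness lemma for cofiltered inverse limits of compact Hausdorff spaces $(X_\alpha)$ with limit $X=\varprojlim X_\alpha$: every continuous map $f\colon X\to S$ to a finite discrete set $S$ factors as $X\to X_\gamma\to S$ for some index~$\gamma$. The proof combines four standard principles. First, the preimages $\pi_\alpha^{-1}(V)$ of opens $V\subseteq X_\alpha$ form a basis for the topology of $X$, so by compactness of each (clopen) fibre $f^{-1}(s)\subseteq X$ and cofilteredness of the index category, $f^{-1}(s)=\pi_\alpha^{-1}(V_s)$ for some open $V_s\subseteq X_\alpha$ with a single $\alpha$ working for all~$s$. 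Second, any two disjoint compacts in $X$ have disjoint images in some $X_\gamma$, by separation using basic opens and Hausdorffness. Third, compact Hausdorff spaces are normal, so the disjoint compact images $\pi_\gamma(f^{-1}(s))\subseteq X_\gamma$ can be enclosed in pairwise disjoint opens $V'_s\subseteq X_\gamma$. Fourth, the decreasing family of compacts $\pi_{\gamma\delta}(X_\delta)\subseteq X_\gamma$ has intersection $\pi_\gamma(X)\subseteq\bigcup_s V'_s$, so by compactness the union $\bigcup_s V'_s$ already contains some $\pi_{\gamma\delta}(X_\delta)$ outright; pulling back to $X_\delta$ yields a clopen partition of $X_\delta$ realising~$f$.

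For part~(i), compactness of $X$ and the basic-open basis let us choose a finite trivialising cover $\{W_j=\pi_{\alpha_0}^{-1}(U_j)\}_{j=1}^n$ of $X$, with $U_j\subseteq X_{\alpha_0}$ open, on each of which $Y$ is a product $W_j\times F_j$ for some finite $F_j$. (Only finitely many fibre sizes occur, since the fibre-size function $X\to\bbn$ is continuous with compact domain.) The covering $Y$ is then encoded by a \v{C}ech transition cocycle $g_{jk}\colon W_j\cap W_k\to\operatorname{Iso}(F_k,F_j)$ to a finite discrete set, satisfying the cocycle relation on triple overlaps. The compactness lemma, adapted via normality of the $X_\gamma$ to the basic opens $W_j\cap W_k=\pi_{\alpha_0}^{-1}(U_j\cap U_k)$, descends each $g_{jk}$ to $\tilde g_{jk}\colon\pi_{\alpha_0\gamma}^{-1}(U_j\cap U_k)\to\operatorname{Iso}(F_k,F_j)$ at some common level $\gamma\ge\alpha_0$. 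The cocycle identity on triple overlaps, which may initially fail at level~$\gamma$, has failure locus a closed subset of $X_\gamma$ with empty preimage in~$X$; by the closed form of the compactness principle (a closed $C\subseteq X_\gamma$ with $\pi_\gamma^{-1}(C)=\varnothing$ is eventually empty, since the sets $C\cap\pi_{\gamma\alpha_0'}(X_{\alpha_0'})$ form a decreasing family of compacts with empty intersection), the identity holds at some larger index~$\alpha_0'$. Glueing via the descended cocycle produces the required $p_0\colon Y_0\to X_{\alpha_0'}$ with $Y_0\times_{X_{\alpha_0'}}X\cong Y$.

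Part~(ii) follows by the same technique, after passing to a common index $\alpha_0\ge\alpha_1,\alpha_2$ over which both coverings descend and share a refined trivialising cover: the morphism $f$ is recorded patchwise by continuous maps into a finite discrete set of maps between fibres, which descend by the compactness lemma, and overlap compatibility is a closed condition handled as above. The principal technical difficulty in both parts is that the \v{C}ech cocycles live on the non-compact opens $W_j\cap W_k\subseteq X$, whereas the compactness lemma applies most directly to continuous maps from the full compact~$X$. Bridging this gap requires the interplay of the four compactness principles with normality of the $X_\gamma$, so as to refine the cocycle data to clopen (and hence compact) form at a sufficiently high finite level, where the lemma applies verbatim.
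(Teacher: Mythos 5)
Your overall strategy agrees with the paper's: choose a finite trivialising cover of $X$ by basis-open sets, record $Y$ by a \v{C}ech transition cocycle with values in a finite set, and push the data down to a finite level. Your explicit compactness lemma --- a continuous map from $X$ to a finite discrete set factors through some $X_\gamma$ --- cleanly isolates what the paper uses implicitly, and your observation that the cocycle identity can be enforced at a higher index because its failure locus is compact and disjoint from $\pi_\gamma(X)$, hence eventually empty by Cantor's intersection theorem (Lemma~\ref{Lem:CantorIntersection}), is correct and welcome.

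The gap is precisely where you locate it, and it is not cosmetic. The step ``the compactness lemma, adapted via normality, descends each $g_{jk}$ to some level $\gamma$'' is false as stated: your lemma treats maps out of the \emph{compact} space $X$, whereas $g_{jk}$ lives on the open overlap $W_j\cap W_k$, and a locally constant function on a non-compact open subset of $\varprojlim X_\alpha$ need not factor through any finite level. For instance, on the Cantor set $\{0,1\}^{\bbn}=\varprojlim\{0,1\}^n$ the function on the open set $U=\{0,1\}^{\bbn}\smallsetminus\{(0,0,\ldots)\}$ that reads off the coordinate immediately following the first $1$ is locally constant but depends on all coordinates. Appealing to ``the interplay of the four compactness principles with normality'' names the difficulty without resolving it. The paper circumvents it by refining the trivialising cover first: cover the compact $X$ (not the overlaps) by finitely many basis-open sets $V_1,\dots,V_m$, each contained in some $U_{ij}$ and so small that each $\varphi_{ij}$ is constant on every $V_k$ with $V_k\subseteq U_{ij}$; then a single $\alpha_0$ captures all the $U_i$ and $V_k$, and the covering on $X_{\alpha_0}$ is assembled from the resulting finite list of constant transition data. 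You should carry out this refinement --- your Cantor-intersection observation remains useful in making the glued object a genuine covering --- in place of the unsupported descent of the $g_{jk}$.
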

\begin{proof} We will only prove (i), the proof for (ii) being very similar.
	
	Call a subset of $X$ \emph{basis-open} if it is the preimage of an open set in some $X_{\alpha }$. As the name suggests, these form a basis of the topology on~$X$.
	
	Let $\mathfrak{U}$ be the set of basis-open subsets of $X$ on which $p$ becomes trivial. Then $\mathfrak{U}$ is an open cover of $X$, and since $X$ is compact there exists a finite subcover, say $\{ U_1,\dotsc ,U_n \}$. Then there exist finite sets $D_1,\dotsc ,D_n$ and continuous functions $\varphi_{ij}\colon U_{ij}=U_i\cap U_j\to\operatorname{Isom}(D_i,D_j)$ satisfying the cocycle condition
	\begin{equation*}
	\varphi_{jk}(u)\circ\varphi_{ij}(u)=\varphi_{ik}(u)\qquad\text{for all }u\in U_{ijk}=U_i\cap U_j\cap U_k
	\end{equation*}
	such that $Y$ is isomorphic to the union of the spaces $U_i\times D_i$, glued along the~$\varphi_{ij}$.
	
	Next, let $\mathfrak{V}$ be the set of all basis-open subsets $V\subseteq X$ such that $V$ is contained in some $U_{ij}$, and for every $i,j$ with $V\subseteq U_{ij}$ the restriction $\varphi_{ij}\rvert_V$ is constant. Again, $\mathfrak{V}$ is an open cover of $X$ and hence has a finite subcover $\{ V_1,\dotsc ,V_m \}$. Since the projective system $(X_{\alpha })$ is cofiltered and the $U_i$ and $V_j$ are basis-open sets, there exists some $\alpha_0$ such that all $U_i$ and $V_j$ are preimages of open sets in~$X_{\alpha_0}$. Then also the $\varphi_{ij}$ are compositions with functions on $X_{\alpha_0}$, and we see that $p\colon Y\to X$ is the pullback of a finite covering defined on~$X_{\alpha_0}$.
\end{proof}
\begin{Remark}
	With a little more effort we can show that in the situation of Proposition~\ref{Prop:FiniteCoveringDefinedOnFiniteLevel}, for a compatible system of basepoints and under the assumption that the $X_{\alpha }$ are connected (and hence so is $X$), the natural map
	\begin{equation*}
	\pioneet (X,x)\to\varprojlim_{\alpha }\pioneet (X_{\alpha },x_{\alpha })
	\end{equation*}
	is an isomorphism.
\end{Remark}

\begin{Lemma}\label{Lem:SimplyConnectedProfiniteCovIsUniversal}
	Let $X$ be a connected compact Hausdorff space, and let $(p_{\alpha }\colon Y_{\alpha }\to X)_{\alpha }$ be a cofiltered projective system of connected finite covering spaces such that $\tilde{Y}=\varprojlim_{\alpha }Y_{\alpha }$ has trivial \'etale fundamental group.
	
	Then every connected finite covering space of $X$ is dominated by some~$Y_{\alpha }$.
\end{Lemma}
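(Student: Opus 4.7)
The plan is to pull the cover $q\colon Z\to X$ back to the limit $\tilde Y$, use the triviality of $\pioneet(\tilde Y)$ to exhibit a section, and then descend this section to some finite level via the continuity result Proposition~\ref{Prop:FiniteCoveringDefinedOnFiniteLevel}(ii), applied to the cofiltered system $(Y_\alpha)$ of compact Hausdorff spaces (rather than to some system on the base $X$).

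First I would form the pullback $\tilde Z = Z\times_X\tilde Y$, which is a finite covering of $\tilde Y$. By assumption $\pioneet(\tilde Y)$ is trivial, so the Galois category $\mathbf{FCov}(\tilde Y)$ is equivalent, via any fibre functor, to $\mathbf{FSet}$; in particular every finite covering of $\tilde Y$ is a finite disjoint union of copies of $\tilde Y$. Picking any connected component of $\tilde Z$ therefore gives a section $s\colon\tilde Y\to\tilde Z$ of $\tilde Z\to\tilde Y$, and composing with the natural projection $\tilde Z\to Z$ yields a continuous map $\tilde f\colon\tilde Y\to Z$ commuting with the projections to~$X$.

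To descend $\tilde f$ to finite level, for each $\alpha$ I set $Z_\alpha = Z\times_X Y_\alpha\to Y_\alpha$, which is a finite covering obtained by pulling back $Z\to X$ along $Y_\alpha\to X$. Canonically $\tilde Z$ is the base change of $Z_\alpha\to Y_\alpha$ along $\tilde Y\to Y_\alpha$, and the identity covering $\tilde Y\to\tilde Y$ is the base change of the identity $Y_\alpha\to Y_\alpha$ along the same map. Under the bijection between sections of $\tilde Z\to\tilde Y$ and $X$-morphisms $\tilde Y\to Z$, the map $s$ is thus a continuous $\tilde Y$-morphism between the base changes of two finite coverings of $Y_\alpha$. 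Regarding $(Y_\alpha)$ as a cofiltered projective system of compact Hausdorff spaces with limit $\tilde Y$, Proposition~\ref{Prop:FiniteCoveringDefinedOnFiniteLevel}(ii) produces an index $\alpha_0$ and a continuous $Y_{\alpha_0}$-morphism $s_0\colon Y_{\alpha_0}\to Z_{\alpha_0}$ whose base change to $\tilde Y$ is $s$. Post-composing $s_0$ with $Z_{\alpha_0}\to Z$ yields a continuous map $Y_{\alpha_0}\to Z$ over $X$, i.e., a morphism in $\mathbf{FCov}(X)$, so $Y_{\alpha_0}$ dominates~$Z$.

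The only delicate point is the descent step: Proposition~\ref{Prop:FiniteCoveringDefinedOnFiniteLevel} is phrased for an arbitrary cofiltered system of compact Hausdorff spaces, and here one must take this system to be $(Y_\alpha)$ itself, viewing both $\tilde Y\to\tilde Y$ and $\tilde Z\to\tilde Y$ as limits of finite coverings of the $Y_\alpha$. Once this is recognised, and once one has used $\pioneet(\tilde Y)=1$ to produce the section, the rest of the argument is entirely formal. Note also that it does not matter whether $Z$ is already dominated by some $Y_\alpha$ to begin with; the triviality of $\pioneet(\tilde Y)$ is strong enough to force the existence of such a section, which is then descended.
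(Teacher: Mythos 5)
Your proof is correct and follows essentially the same route as the paper: pull $Z$ back to $\tilde Y$, use $\pioneet(\tilde Y)=1$ to split the pullback, and apply Proposition~\ref{Prop:FiniteCoveringDefinedOnFiniteLevel} with the system $(Y_\alpha)$ to descend to some finite level $Y_{\alpha_0}$. The only cosmetic difference is that you descend the section $s\colon\tilde Y\to\tilde Z$ directly via part (ii), whereas the paper first descends the full trivialising isomorphism $Z\times_X\tilde Y\simeq\tilde Y\times D$ and only then chooses a section at level $\alpha_0$.
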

\begin{proof}
	Let $Z\to X$ be a connected finite covering. By assumption, the pullback covering $Z\times_X\tilde{Y}\to\tilde{Y}$ splits, i.e.\ $Z\times_X\tilde{Y}$ is isomorphic to $\tilde{Y}\times D$ as a $\tilde{Y}$-space, for some finite discrete set~$D$. By Proposition~\ref{Prop:FiniteCoveringDefinedOnFiniteLevel} this splitting has to occur at a finite level, i.e.\ there has to be some $\alpha_0$ such that $Z\times_XY_{\alpha_0}\simeq Y_{\alpha_0}\times D$ as a $Y_{\alpha_0}$-space. Choosing some $d\in D$ we obtain a commutative diagram
	\begin{equation*}
	\xymatrix{
		Y_{\alpha_0}\ar@{^{(}->}[r]^-{\id\times d}\ar@{=}[d] & Y_{\alpha_0}\times D \ar[r]^-{\simeq }\ar[d] & Z\times_XY_{\alpha_0} \ar[r]\ar[d] & Z\ar[d]\\
		Y_{\alpha_0}\ar@{=}[r] & Y_{\alpha_0}\ar@{=}[r] & Y_{\alpha_0}\ar[r] & X
	}
	\end{equation*}
	The composition of the upper horizontal maps $Y_{\alpha }\to Z$ is a continuous map between finite covering spaces of $X$, respecting the projections to $X$, hence itself a finite covering, as desired.
\end{proof}

\subsubsection*{Homotopy invariance} We now show that \'etale fundamental groups are homotopy invariant. We will make extensive use of the following classical result:
\begin{Proposition}[Unique Homotopy Lifting Property]\label{Prop:UHLP}
	Let $X$ be a topological space, $p\colon Y\to X$ a finite covering, and $S$ another topological space. Assume we are given a homotopy, i.e.\ a continuous map $H\colon S\times [0,1]\to X$, together with a lift of $H(-,0)$ to $Y$, i.e.\ a commutative diagram of continuous maps
	\begin{equation*}
	\xymatrix{
		S\times \{0 \} \ar[r] \ar@{^{(}->}[d] & Y\ar[d]^-{p} \\
		S\times [0,1] \ar[r]_-H & X.
	}
	\end{equation*}
	Then there exists a unique continuous map $S\times [0,1]\to Y$ making the resulting diagram
	\begin{equation*}
	\xymatrix{
		S\times \{0 \} \ar[r] \ar@{^{(}->}[d] & Y\ar[d]^-{p} \\
		S\times [0,1] \ar[r]_-H \ar@{-->}[ur] & X.
	}
	\end{equation*}
	commute.\hfill $\square$
\end{Proposition}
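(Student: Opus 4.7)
The plan is to establish uniqueness first, construct the lift pointwise by path-lifting along each slice $\{s\}\times [0,1]$, and finally verify joint continuity by a tube-lemma argument; the last step is the main obstacle.

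\textbf{Uniqueness.} Suppose $\tilde H_1,\tilde H_2\colon S\times [0,1]\to Y$ are two lifts agreeing on $S\times\{0\}$. For fixed $s\in S$ set $T_s=\{t\in [0,1]\mid \tilde H_1(s,t)=\tilde H_2(s,t)\}$. Given any $t\in [0,1]$, choose a trivialising open $W\subseteq X$ containing $H(s,t)$ so that $p^{-1}(W)\cong W\times D$ for a finite discrete $D$; by continuity of the $\tilde H_i$ there is an open interval $V\ni t$ with $H(\{s\}\times V)\subseteq W$ and each $\tilde H_i(\{s\}\times V)$ contained in a single sheet. Since distinct sheets are disjoint, $T_s\cap V$ equals either $V$ or $\emptyset$ according to whether the two sheets coincide, so $T_s$ is clopen in the connected interval $[0,1]$; since $0\in T_s$, we conclude $T_s=[0,1]$.

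\textbf{Pointwise existence.} For each $s\in S$, compactness of $[0,1]$ and local triviality of $p$ yield a partition $0=\tau_0<\tau_1<\dotsb <\tau_n=1$ and trivialising opens $W_i\subseteq X$ with $H(\{s\}\times [\tau_{i-1},\tau_i])\subseteq W_i$. Starting from the given value $\tilde H(s,0)$ and proceeding inductively, at step $i$ there is a unique sheet of $p^{-1}(W_i)$ containing $\tilde H(s,\tau_{i-1})$; lifting $H(s,-)$ through this sheet on $[\tau_{i-1},\tau_i]$ extends the definition. By the uniqueness step above, the resulting set-theoretic map $\tilde H\colon S\times [0,1]\to Y$ is independent of all choices.

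\textbf{Continuity.} Fix $(s_0,t_0)\in S\times [0,1]$. Cover $\{s_0\}\times [0,1]$ by basic open rectangles $U'\times V$ with $H(U'\times V)$ contained in some trivialising open of $X$; by the tube lemma and compactness of $[0,1]$ one extracts a neighbourhood $U\ni s_0$ together with a partition $0=\tau_0<\dotsb <\tau_n=1$ such that each $H(U\times [\tau_{i-1},\tau_i])$ lies in a trivialising open $W_i\subseteq X$. Inductively on $i$, after possibly shrinking $U$ we may assume $\tilde H(U\times\{\tau_{i-1}\})$ is contained in a single sheet of $p^{-1}(W_i)$: this holds for $i=0$ by continuity of the initial lift (shrink $U$ using the discreteness of fibres), and for $i>0$ by the continuity established at the previous step. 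By the uniqueness argument, $\tilde H$ on $U\times [\tau_{i-1},\tau_i]$ then coincides with the composition of $H\rvert_{U\times [\tau_{i-1},\tau_i]}$ with the unique section $W_i\to p^{-1}(W_i)$ landing in this sheet, which is continuous. Gluing these pieces shows $\tilde H$ is continuous on $U\times [0,1]$, in particular near $(s_0,t_0)$, completing the proof.
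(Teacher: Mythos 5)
The paper does not actually prove this statement; it simply cites \cite[Proposition~1.30]{MR1867354} (Hatcher), adding only the remark that no local connectivity hypothesis on $X$ is needed. Your proof is a correct, self-contained version of the standard tube-lemma argument, and it does respect the paper's caveat: the only facts about $p$ you use are local triviality and that sheets over a trivialising open set are open and closed in the preimage (automatic for any covering, and in particular for finite ones), with no local connectedness of $X$ invoked anywhere.

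Two small remarks. In the continuity step the base case of your induction should be $i=1$ rather than $i=0$, since the intervals run over $[\tau_{i-1},\tau_i]$ for $i=1,\dots,n$; the content is unaffected. Also, the phrase ``shrink $U$ using the discreteness of fibres'' is really using that the sheet containing $\tilde H(s,\tau_{i-1})$ is open in $p^{-1}(W_i)$, so that $s\mapsto\tilde H(s,\tau_{i-1})$, being continuous (by hypothesis for $i=1$, by the inductive continuity statement for $i>1$), pulls that sheet back to an open neighbourhood of $s_0$; stating it this way makes it transparent that no connectivity of $X$ or of $U$ is needed. With these cosmetic adjustments the argument is complete and matches what the cited reference does.
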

This is of course well-known, see e.g.\ \cite[Proposition~1.30]{MR1867354}; however, we wish to explicitly stress that no (local) connectivity assumptions about $X$ are made.

\begin{Proposition}\label{Prop:HomotopicMapsInduceAlmostSameMapsOnPioneet}
	Let $X$ and $Z$ be connected topological spaces, let $f,g\colon X\to Z$ be continuous, let $H$ be a homotopy between them (i.e.\ a continuous map $H\colon X\times [0,1]\to Z$ such that $H(\xi ,0)=f(\xi )$ and $H(\xi ,1)=g(\xi )$ for all $\xi\in X$), and let $x\in X$ be a basepoint. These data determine a path $\gamma\colon [0,1]\to Z$ by $\gamma (t)=H(x,t)$.
	
Then the diagram
\begin{equation*}
\xymatrix{
	&\pioneet (Z,f(x))\ar[dd]^{\tau_{\gamma }}_{\cong }\\
	\pioneet (X,x)\ar[ur]^-{f_{\ast }}\ar[dr]_-{g_{\ast }}\\
	&\pioneet (Z,g(x)),
	}
\end{equation*}
where $\tau_{\gamma}$ is the map in (\ref{eqn:PathInducesCanonicalIsoBetweenEtaleFundamentalGroups}), commutes.
\end{Proposition}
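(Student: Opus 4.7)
The plan is to use the unique homotopy lifting property (Proposition~\ref{Prop:UHLP}) to construct, for every finite covering $p\colon Y\to Z$, a ``parallel transport'' morphism $\Psi_Y\colon f^{\ast }Y\to g^{\ast }Y$ in $\mathbf{FCov}(X)$ whose fibre at $x$ is exactly the bijection $\varphi_{\gamma }(p)\colon \Phi_{f(x)}(Y)\to\Phi_{g(x)}(Y)$ defined in (\ref{eqn:IsoOfFibreFunctorsInducedByPath}). Once this is in place the proposition falls out of the naturality of $\alpha\in\operatorname{Aut}\Phi_x$, since naturality applied to the morphism $\Psi_Y$ forces
\begin{equation*}
\Phi_x(\Psi_Y)\circ\alpha (f^{\ast }Y)=\alpha (g^{\ast }Y)\circ\Phi_x(\Psi_Y),
\end{equation*}
and this is exactly the identity $\varphi_{\gamma }\circ f_{\ast }(\alpha )=g_{\ast }(\alpha )\circ\varphi_{\gamma }$ which, after conjugating by $\varphi_{\gamma}^{-1}$, is the commutativity of the triangle.

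To construct $\Psi_Y$, I would apply Proposition~\ref{Prop:UHLP} with $S=f^{\ast }Y$, the homotopy $H\circ (p_{f^{\ast }Y}\times\id_{[0,1]})\colon f^{\ast }Y\times [0,1]\to Z$ (where $p_{f^{\ast }Y}\colon f^{\ast }Y\to X$ is the structure map), and the initial lift $f^{\ast }Y\times\{0\}\to Y$ given by the canonical projection $f^{\ast }Y\to Y$. The UHLP produces a unique continuous map $\widetilde{H}\colon f^{\ast }Y\times [0,1]\to Y$ covering the given homotopy, and $\Psi_Y$ is defined as the restriction $\widetilde{H}(-,1)$, which lands in $g^{\ast }Y$ by the commutativity of the lifting diagram. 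Commutativity with the projections to $X$ is immediate, so $\Psi_Y$ is a morphism in $\mathbf{FCov}(X)$, and applying the construction to the reversed homotopy $(\xi,t)\mapsto H(\xi,1-t)$ yields a two-sided inverse, hence $\Psi_Y$ is an isomorphism of finite coverings.

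For the fibre at $x$, restricting $\widetilde{H}$ to $\{x\}\times [0,1]$ gives a lift of $\gamma =H(x,-)$ to $Y$ starting at a prescribed point of $p^{-1}(f(x))$, which is precisely the canonical trivialisation of $\gamma^{\ast }Y\to [0,1]$ used to define $\varphi_{\gamma }(p)$ in (\ref{eqn:IsoOfFibreFunctorsInducedByPath}). Hence $\Phi_x(\Psi_Y)=\varphi_{\gamma }(p)$, as required. One should also verify that the assignment $Y\mapsto\Psi_Y$ is natural in $Y$, which is an immediate consequence of the uniqueness part of Proposition~\ref{Prop:UHLP}; this ensures that $\varphi_{\gamma }$ is literally the natural transformation induced by $\Psi$.

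The main subtlety, and the only place where any real work happens, is the construction of $\Psi_Y$ from the UHLP; in particular one must be careful that no local connectedness hypothesis on $X$ or $Z$ is needed, which is why Proposition~\ref{Prop:UHLP} was formulated in the generality stated. Once $\Psi_Y$ is in hand, the proof is a purely formal naturality argument in the Galois category $\mathbf{FCov}(X)$ and uses nothing beyond the definitions of $f_{\ast }$, $g_{\ast }$ and $\tau_{\gamma }$ given in (\ref{eqn:PathInducesCanonicalIsoBetweenEtaleFundamentalGroups}).
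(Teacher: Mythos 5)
Your proof is correct and follows essentially the same route as the paper: both use the unique homotopy lifting property to produce a natural isomorphism $f^{\ast}Y \to g^{\ast}Y$ whose fibre over $x$ is identified with $\varphi_{\gamma}$, and then conclude by naturality of automorphisms of the fibre functor. The only cosmetic differences are that the paper factors the lifting step through an auxiliary lemma about coverings of $X\times[0,1]$ and invokes the Galois category axiom (G6) to see the lift is a homeomorphism, whereas you lift directly into $Y$ and exhibit an inverse via the reversed homotopy.
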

To show this we first need a lemma.
\begin{Lemma}\label{Lem:LemmaForHomotopyInvarianceOfPioneet}
	Let $X$ be a topological space and let $q\colon W\to X\times [0,1]$ be a finite covering. For any $t\in [0,1]$ consider the restriction $q_t\colon W_t=q^{-1}(X\times\{ t \} )\to X\times\{ t \}\cong X$. Then there is a canonical isomorphism $W_0\cong W_1$ in $\mathbf{FCov}(X)$; the construction of this isomorphism is functorial in~$W$. 
\end{Lemma}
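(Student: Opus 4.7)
The plan is to apply the Unique Homotopy Lifting Property of Proposition~\ref{Prop:UHLP} with test space $S=W_0$. Identify $X\times\{0\}$ with $X$ and write $\pi\colon X\times[0,1]\to X$ for the projection. Define a homotopy
\[
H\colon W_0\times [0,1]\to X\times [0,1],\quad (w,t)\mapsto (\pi(q(w)),t),
\]
and observe that at $t=0$ it is lifted to $W$ by the inclusion $W_0\hookrightarrow W$. By Proposition~\ref{Prop:UHLP} there exists a unique continuous $\tilde H\colon W_0\times [0,1]\to W$ with $q\circ \tilde H=H$ and $\tilde H(-,0)$ the inclusion $W_0\hookrightarrow W$. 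Set
\[
\phi_W\colon W_0\to W_1,\quad w\mapsto \tilde H(w,1).
\]
This is continuous and commutes with the projections to $X$, hence defines a morphism in $\mathbf{FCov}(X)$.

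Next I would show that $\phi_W$ is an isomorphism by constructing a two-sided inverse. Reversing the time parameter, apply the same construction to the covering $q$ viewed via the self-homeomorphism $(x,t)\mapsto (x,1-t)$ of $X\times[0,1]$: starting from $W_1$ one obtains a continuous map $\psi_W\colon W_1\to W_0$ over $X$. The composite $\psi_W\circ\phi_W\colon W_0\to W_0$ then lifts the constant homotopy $(w,t)\mapsto (\pi(q(w)),0)$ (after concatenation of the two lifts), and so does the identity; by the uniqueness clause in Proposition~\ref{Prop:UHLP}, the two lifts must agree, so $\psi_W\circ\phi_W=\id$. The same argument gives $\phi_W\circ\psi_W=\id$.

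For functoriality, let $f\colon W\to W'$ be a morphism in $\mathbf{FCov}(X\times[0,1])$; it restricts to maps $f_t\colon W_t\to W'_t$. I claim that $\phi_{W'}\circ f_0=f_1\circ\phi_W$. Indeed, $f\circ\tilde H_W\colon W_0\times[0,1]\to W'$ is a continuous lift of $H\colon W_0\times[0,1]\to X\times[0,1]$ (via $q'$) whose value at $t=0$ is $f_0$, viewed as a map $W_0\hookrightarrow W'$. On the other hand, $\tilde H_{W'}\circ(f_0\times\id)$ is another such lift with the same initial condition. Uniqueness in Proposition~\ref{Prop:UHLP} forces them to coincide, and evaluation at $t=1$ gives the desired compatibility.

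The only delicate point is really the use of Proposition~\ref{Prop:UHLP} without any local connectedness assumption on $X$; this is precisely why we quoted the proposition in that generality. Everything else is formal manipulation with unique lifts, so no essential obstacle is expected.
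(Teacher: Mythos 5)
Your proof is correct in substance but takes a different route to invertibility than the paper does. You construct $\phi_W$ by a homotopy lift and then build a two-sided inverse $\psi_W$ by running the time parameter backward. The paper instead observes that the lift $\tilde H\colon W_0\times[0,1]\to W$ is itself a morphism in $\mathbf{FCov}(X\times[0,1])$ which restricts to the identity on the fibre over $(x,0)$; by axiom (G6) for Galois categories (established in Proposition~\ref{Prop:FCovIsGaloisCategory}), $\tilde H$ is therefore already a homeomorphism $W_0\times[0,1]\cong W$ over $X\times[0,1]$, and restricting to $t=1$ produces the desired isomorphism $W_0\cong W_1$ in one stroke. The paper's argument is more economical and leverages the categorical machinery already in place; your argument is more hands-on and would survive in a setting without the Galois-category formalism, at the cost of a second lifting argument. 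One small caveat: your justification that $\psi_W\circ\phi_W=\id$ is phrased loosely. The concatenation of the two lifts is not a lift of the \emph{constant} homotopy $(w,t)\mapsto(\pi(q(w)),0)$; rather, the clean way to say it is that $(w,t)\mapsto\tilde H(w,1-t)$ and $(w,t)\mapsto\tilde H'(\phi_W(w),t)$ are both lifts of the reversed homotopy $(w,t)\mapsto(\pi(q(w)),1-t)$ agreeing at $t=0$ with $\phi_W$, hence coincide by the uniqueness clause of Proposition~\ref{Prop:UHLP}; evaluating at $t=1$ gives $w=\psi_W(\phi_W(w))$. With that adjustment (and its mirror image for the other composite), your proof goes through, and your functoriality argument is precisely the right one.
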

\begin{proof}
	Applying Proposition~\ref{Prop:UHLP} to the diagram
	\begin{equation*}
	\xymatrixcolsep{4pc}
	\xymatrix{
		W_0\ar@{^{(}->}[r]\ar@{^{(}->}[d]_-{\id_{W_0}\times 0}& W\ar[d]_-q\\
		W_0\times [0,1]\ar[r]_-{q_0\times\id_{[0,1]}}& X\times [0,1],
		}
	\end{equation*}
	we deduce the existence of a unique continuous map $W_0\times [0,1]\to W$ making the resulting diagram commute. In particular this map induces an isomorphism on the fibres over any point in $X\times [0,1]$ of the form $(x,0)$. By Proposition~\ref{Prop:FCovIsGaloisCategory} (or more precisely by axiom (G6) for Galois categories, mentioned in the proof thereof) it must be a homeomorphism $W_0\times [0,1]\to W$. Functoriality is straightforward.
\end{proof}
\begin{proof}[Proof of Proposition~\ref{Prop:HomotopicMapsInduceAlmostSameMapsOnPioneet}]
	We first note that $H$ induces an isomorphism of functors $f^{\ast }\Rightarrow g^{\ast }\colon\mathbf{FCov}(Z)\to\mathbf{FCov}(X)$, which by abuse of notation we call $H^{\ast }$, in the following way: for every finite covering $p\colon Y\to Z$ we consider the pullback $q=H^{\ast }p\colon H^{\ast }Y\to X\times [0,1]$. The natural isomorphism from Lemma~\ref{Lem:LemmaForHomotopyInvarianceOfPioneet} can then be rewritten as $f^{\ast }Y\overset{\simeq }{\to } g^{\ast }Y$, and it is easy to check that this indeed defines an isomorphism of functors $H^{\ast }\colon f^{\ast }\Rightarrow g^{\ast }$. The `horizontal' composition of $H^{\ast }$ with the identity on the fibre functor $\Phi_x$ induces an isomorphism of functors $\Phi_x\circ f^{\ast }\Rightarrow\Phi_x\circ g^{\ast }$; this isomorphism can be identified with
	\begin{equation*}
	\Phi_x\circ f^{\ast }\cong\Phi_{f(x)}=\Phi_{\gamma (0)}\overset{\varphi_{\gamma }}{\Rightarrow }\Phi_{\gamma (1)}=\Phi_{g(x)}\cong\Phi_x\circ g^{\ast },
	\end{equation*}
	where $\varphi_{\gamma }$ is the isomorphism of functors from (\ref{eqn:IsoOfFibreFunctorsInducedByPath}). This identity of isomorphisms between functors can be translated into the identity $\tau_y\circ f_{\ast }=g_{\ast }$ of maps between automorphism groups of fibre functors, i.e.\ the commutativity of the diagram under consideration.
\end{proof}
Two consequences are easily drawn:
\begin{Corollary}\label{Cor:HomotopicPointPreservingInduceIdenticalMapsOnPioneet}
	Let $(X,x)$ and $(Z,z)$ be connected pointed topological spaces, and let $f,g\colon (X,x)\to (Z,z)$ be homotopic continuous maps in the pointed sense, that is, assume there exists a continuous map $H\colon X\times [0,1]\to Z$ with $H(\xi ,0)=f(\xi )$ and $H(\xi ,1)=g(\xi )$ for all $\xi\in X$, and also $H(x,t)=z$ for all $t\in [0,1]$.
	
	Then the group homomorphisms $f_{\ast },g_{\ast }\colon\pioneet (X,x)\to\pioneet (Z,z)$ are equal.
\end{Corollary}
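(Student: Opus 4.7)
The plan is to deduce this directly from Proposition~\ref{Prop:HomotopicMapsInduceAlmostSameMapsOnPioneet}, which already says that any homotopy $H$ from $f$ to $g$ makes the triangle
\[
\pioneet(X,x) \xrightarrow{f_*} \pioneet(Z,f(x)) \xrightarrow{\tau_\gamma} \pioneet(Z,g(x))
\]
coincide with $g_*$, where $\gamma(t) = H(x,t)$. In the pointed setting we have $f(x) = g(x) = z$ and $\gamma$ is the constant path at $z$, so it suffices to verify that the isomorphism $\tau_\gamma \colon \pioneet(Z,z) \to \pioneet(Z,z)$ attached to a constant path is the identity; then the commutativity of the triangle collapses to $f_* = g_*$.

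To show $\tau_\gamma = \id$ for the constant path $\gamma$ at $z$, I would unwind the construction of $\varphi_\gamma$ in (\ref{eqn:IsoOfFibreFunctorsInducedByPath}). For any finite covering $p\colon Y \to Z$, the pullback $\gamma^* Y \to [0,1]$ is the finite covering of $[0,1]$ obtained by pulling back along the constant map; this is canonically homeomorphic to $p^{-1}(z) \times [0,1]$ with projection onto the second factor. Hence for every $t \in [0,1]$ the inclusion $(\gamma^* p)^{-1}(t) \hookrightarrow \gamma^* Y$ composed with $\gamma^* Y \to \pi_0(\gamma^* Y)$ is just the canonical bijection $p^{-1}(z) \to p^{-1}(z)$ (the identity). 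Thus the composite identification $\Phi_z(Y) \cong \pi_0(\gamma^* Y) \cong \Phi_z(Y)$ is the identity of $\Phi_z(Y)$, and this is natural in $Y$, so $\varphi_\gamma = \id_{\Phi_z}$. Conjugation by this gives $\tau_\gamma = \id$ on $\pioneet(Z,z) = \Aut \Phi_z$.

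There is no real obstacle here; the only small point worth being careful about is keeping the bookkeeping of the canonical identifications in (\ref{eqn:IsoOfFibreFunctorsInducedByPath}) straight, so that one genuinely gets the identity map (and not merely some automorphism of $\Phi_z$ that happens to be inner). Once $\tau_\gamma = \id$ is in hand, Proposition~\ref{Prop:HomotopicMapsInduceAlmostSameMapsOnPioneet} immediately yields the equality $f_* = g_*$ of homomorphisms $\pioneet(X,x) \to \pioneet(Z,z)$.
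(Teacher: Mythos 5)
Your proof is correct and follows the same route as the paper: apply Proposition~\ref{Prop:HomotopicMapsInduceAlmostSameMapsOnPioneet} and observe that the track path $\gamma(t)=H(x,t)$ is constant at $z$, so $\tau_\gamma$ is the identity. The paper treats the last point as immediate; your unwinding of $\varphi_\gamma$ for the constant path is a correct and welcome justification of it.
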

\begin{proof}
	In this case the path $\gamma\colon [0,1]\to Z$ as in Proposition~\ref{Prop:HomotopicMapsInduceAlmostSameMapsOnPioneet} is constant, hence induces the identity automorphism of $\pioneet (Z,z)$.
\end{proof}
\begin{Corollary}\label{Cor:HomotopyEquivalenceInducesIsoOnPioneet}
	Let $(X,x)$ and $(Y,y)$ be pointed topological spaces, and let $f\colon (X,x)$ $\to (Y,y)$ be a pointed homotopy equivalence, that is, assume there exists a pointed map $g\colon (Y,y)\to (X,x)$ such that $f\circ g$ and $g\circ f$ are homotopic in the pointed sense to the respective identities.
	
	Then $X$ is connected if and only if $Y$ is connected. Assuming this to be the case, $f^{\ast }\colon\mathbf{FCov}(Y)\to\mathbf{FCov}(X)$ is an equivalence of categories and $f_{\ast }\colon\pioneet (X,x)\to\pioneet (Y,y)$ is an isomorphism of topological groups.
\end{Corollary}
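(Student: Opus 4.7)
The plan would be to decompose the statement into three parts and handle each by invoking the homotopy-invariance tools already established in this subsection. The overall shape: (a) show that connectedness is preserved under (ordinary, unpointed) homotopy equivalence; (b) show $f^{\ast}$ and $g^{\ast}$ are quasi-inverse equivalences of the categories of finite coverings; (c) deduce that $f_{\ast}$ and $g_{\ast}$ are inverse isomorphisms of topological groups.

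For (a) I would argue directly. Suppose $Y = Y_1 \sqcup Y_2$ were a decomposition into non-empty open-and-closed subsets. Then $X = f^{-1}(Y_1) \sqcup f^{-1}(Y_2)$; if one preimage, say $f^{-1}(Y_2)$, were empty, then $f(X) \subseteq Y_1$, and picking any $y_0 \in Y_2$ the pointed homotopy $K \colon Y \times [0,1] \to Y$ from $f \circ g$ to $\id_Y$ would yield a path $t \mapsto K(y_0,t)$ running from $f(g(y_0)) \in Y_1$ to $y_0 \in Y_2$, which is impossible since $[0,1]$ is connected. Hence both preimages are non-empty and $X$ is disconnected. Swapping the roles of $f$ and $g$ (and of $H$ and $K$) gives the reverse implication.

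For (b), let $H \colon X \times [0,1] \to X$ and $K \colon Y \times [0,1] \to Y$ be pointed homotopies from $g \circ f$ to $\id_X$ and from $f \circ g$ to $\id_Y$, respectively. For any finite covering $W \to X$, the pullback $H^{\ast} W \to X \times [0,1]$ has restrictions over $X \times \{ 0 \}$ and $X \times \{ 1 \}$ equal to $f^{\ast} g^{\ast} W$ and $W$, and Lemma~\ref{Lem:LemmaForHomotopyInvarianceOfPioneet} supplies a canonical isomorphism between them, functorial in $W$. This delivers a natural isomorphism $f^{\ast} \circ g^{\ast} \cong \id_{\mathbf{FCov}(X)}$; the analogous construction with $K$ yields $g^{\ast} \circ f^{\ast} \cong \id_{\mathbf{FCov}(Y)}$, so $f^{\ast}$ is an equivalence of categories.

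Part (c) then follows formally. Because $f$ is pointed, $\Phi_{f(x)} = \Phi_y$ and $f_{\ast}$ is literally the map on automorphism groups induced by the equivalence $f^{\ast}$ (viewed as commuting with the chosen fibre functors). Corollary~\ref{Cor:HomotopicPointPreservingInduceIdenticalMapsOnPioneet} applied to $H$ and $K$ gives $g_{\ast} \circ f_{\ast} = \id$ and $f_{\ast} \circ g_{\ast} = \id$, so $f_{\ast}$ is an abstract group isomorphism with inverse $g_{\ast}$; continuity in both directions is automatic since both are induced by equivalences of Galois categories preserving fibre functors. The main obstacle I anticipate is not a single hard step but the bookkeeping around the three flavours of homotopy invariance (at the level of fibre functors, of $\mathbf{FCov}$, and of profinite automorphism groups), all of which reduce to the Unique Homotopy Lifting Property via Lemma~\ref{Lem:LemmaForHomotopyInvarianceOfPioneet}; once the natural isomorphism in (b) is in hand, (c) is pure formalism.
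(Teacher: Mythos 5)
Your proof is correct and follows essentially the same route as the paper: prove first that connectedness is a homotopy invariant, then deduce the remaining statements from the homotopy-invariance results already established. The paper's connectedness argument proceeds via a continuous surjection $c\colon Y\to\{0,1\}$ (observing that $c\circ f\circ g$ is homotopic to $c$, and homotopic maps into a discrete space agree, so $c\circ f$ is still surjective); your preimage-decomposition argument with the path $t\mapsto K(y_0,t)$ is the same idea phrased differently, and your parts (b) and (c) simply spell out in detail what the paper dismisses as ``follow[ing] formally from Corollary~\ref{Cor:HomotopicPointPreservingInduceIdenticalMapsOnPioneet}.''
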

\begin{proof}
	It suffices to show that connectedness is preserved under homotopy equivalence, then the remainder will follow formally from Corollary~\ref{Cor:HomotopicPointPreservingInduceIdenticalMapsOnPioneet}. So, let $f\colon X\to Y$ be a homotopy equivalence with quasi-inverse $g\colon Y\to X$. If $Y$ is disconnected there exists a continuous surjection $c\colon Y\to\{ 0,1\}$. Then $c\circ f\circ g$ is homotopic to $c$; but since these two maps have discrete image, they must then be identical. Hence $c\circ f$ must be surjective, hence $X$ is disconnected as well.
\end{proof}
The assumptions in Corollary~\ref{Cor:HomotopyEquivalenceInducesIsoOnPioneet} are met for the inclusion of a \emph{deformation retract}. Recall that a subspace $A$ of a topological space $X$ is called a deformation retract if there exists a continuous map $H\colon X\times [0,1]\to X$ with the following properties:
\begin{enumerate}
	\item $H(x,0)=x$ for all $x\in X$;
	\item $H(x,1)\in A$ for all $x\in X$;
	\item $H(a,t)=a$ for all $a\in A$ and $t\in [0,1]$.
\end{enumerate}
Such a map $H$ is then called a \emph{defining homotopy} for the deformation retract $A\subseteq X$, and the map $r\colon X\to A$ sending $x$ to $H(x,1)$ is called a \emph{deformation retraction}. Note that some authors do not require condition~(iii), and call a deformation retraction in our sense a `strong deformation retraction'.

\subsubsection*{The \'etale fundamental group as a limit of deck transformation groups} There is another way to view the \'etale fundamental group which will be useful; for proofs cf.\ \cite[Expos\'e~V.4]{MR2017446}. A general fact about Galois categories is the existence of a fundamental pro-object representing a given fibre functor $\Phi\colon\mathcal{C}\to\mathbf{FSet}$; this is a cofiltered projective system $(Y_{\alpha })$ of objects together with a functorial isomorphism
\begin{equation*}
\Phi (T)\cong \varinjlim_{\alpha }\Hom_{\mathcal{C}}(Y_{\alpha },T)
\end{equation*}
for objects $T$ of~$\mathcal{C}$. By passing to a cofinal subsystem we may assume that all $Y_{\alpha }$ are Galois objects, i.e.\ $\Aut (Y_{\alpha })$ operates simply transitively on $\Phi (Y_{\alpha })$. We then obtain identifications $\Aut (Y_{\alpha })\cong\operatorname{Im} (\pi\to\Aut\Phi (Y_{\alpha}))$, and by passage to the limit
\begin{equation*}
\varprojlim_{\alpha }\Aut (Y_{\alpha})\cong\pi.
\end{equation*}
For $\mathcal{C}=\mathbf{FCov}(X)$ and $\Phi =\Phi_x$ a fundamental pro-object is a cofiltered projective system of connected finite coverings of $X$ such that every connected finite covering is dominated by one of them. It serves as a replacement for a universal covering space of $X$, which may not even exist as a topological space. The Galois objects in $\mathbf{FCov}(X)$ are precisely the normal finite connected coverings, and so we obtain:
\begin{Proposition}\label{Prop:ExistenceOfAFundamentalProObjectInFCovX}
	Let $X$ be a connected topological space and $x\in X$. Then there exists a cofiltered projective system $(p_{\alpha}\colon Y_{\alpha }\to X)$ of finite connected normal coverings of $X$ such that every finite connected covering of $X$ is dominated by some $Y_{\alpha }$, together with an isomorphism of functors
	\begin{equation*}
	\Phi_x\cong\varinjlim_{\alpha }\Hom_X(Y_{\alpha },-).
	\end{equation*}
	For such a system there is a canonical isomorphism of profinite groups
	\begin{equation*}
	\pioneet (X,x)\cong\varprojlim_{\alpha }\Aut_X(Y_{\alpha }).
	\end{equation*}
\end{Proposition}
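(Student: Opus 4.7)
The statement is essentially a translation of the general theory of Galois categories (SGA~1, Expos\'e~V) into the concrete category $\mathbf{FCov}(X)$. The plan is therefore to invoke the abstract pro-representability theorem and then to identify the abstract notions -- fundamental pro-object, Galois object -- with their topological counterparts.

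First, by Proposition~\ref{Prop:FCovIsGaloisCategory} the pair $(\mathbf{FCov}(X),\Phi_x)$ is a Galois category with fibre functor. The general machinery recalled just before the statement furnishes a cofiltered pro-object $(Y_{\alpha})$ in $\mathbf{FCov}(X)$ together with a natural isomorphism $\Phi_x\cong\varinjlim_{\alpha}\Hom_X(Y_{\alpha},-)$, which can moreover be arranged so that each $Y_{\alpha}$ is a \emph{Galois object}, in the sense that $\Aut_X(Y_{\alpha})$ acts simply transitively on $\Phi_x(Y_{\alpha})$. This pro-representability is the only non-elementary input.

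The next step is to match the abstract notion of a Galois object with something concrete. Connectedness of each $Y_{\alpha}$ is automatic from Proposition~\ref{Prop:DecompositionForFCov}: any decomposition of $Y_{\alpha}$ into open-and-closed pieces would be preserved by $\Aut_X(Y_{\alpha})$, contradicting transitivity on a single fibre. Conversely, by Lemma~\ref{Lemma:AutomorphismForGaloisCat} the group $\Aut_X(Y)$ always acts \emph{freely} on the fibres of any connected finite covering $Y\to X$, so the simply transitive condition reduces to the equality $|\Aut_X(Y)|=\deg(Y/X)$, which is the classical definition of a normal covering. Thus the Galois objects in $\mathbf{FCov}(X)$ are precisely the finite connected normal coverings, and the domination assertion in the proposition is the standard fact that in a Galois category every object embeds into a Galois one.

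Finally, the profinite group isomorphism is formal: applied to the pro-representation $\Phi_x\cong\varinjlim_{\alpha}\Hom_X(Y_{\alpha},-)$, Yoneda's lemma yields
\[
\pioneet(X,x)=\Aut\Phi_x\cong\varprojlim_{\alpha}\Aut_X(Y_{\alpha}),
\]
and the canonical profinite topology on the left matches the inverse limit topology induced by the discrete topologies on the finite groups on the right. I do not expect any real obstacle; the whole argument is a dictionary between Galois categories and topological coverings, the only subtlety being the free-action lemma needed to reconcile the two notions of `Galois'.
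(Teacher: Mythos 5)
Your overall strategy — invoke the general Galois category machinery and then translate Galois objects into normal finite connected coverings — matches the paper, which likewise treats the result as a dictionary entry following from SGA~1, Exposé~V (the paper offers no separate proof beyond the ambient discussion). The identification `Galois object $\Leftrightarrow$ finite connected covering with $|\Aut_X(Y)|=\deg(Y/X)$' and the Yoneda step are exactly as in the paper.

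However, your argument for connectedness is wrong, and the error is worth flagging. You claim that simple transitivity of $\Aut_X(Y_{\alpha})$ on a fibre forces $Y_{\alpha}$ to be connected. This is false: the trivial two-sheeted covering $X\times\{1,2\}\to X$ (with $X$ connected) has $\Aut_X(X\times\{1,2\})\cong\bbz/2\bbz$ acting simply transitively on the fibre $\{(x,1),(x,2)\}$, yet it is disconnected. Simple transitivity alone does not pin down connectedness. The correct point is that connectedness is \emph{part of the definition} of a Galois object in a Galois category (cf.\ SGA~1, V.4--5), and the fundamental pro-object itself is built out of connected pointed objects — as the paper's own construction of the universal profinite covering space via pointed finite covering spaces makes explicit. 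Once one uses the lemma that automorphisms of a \emph{connected} object act freely on its fibre, `Galois' becomes `connected with transitive $\Aut$', which is the classical notion of a normal covering. In short: do not try to derive connectedness from the group action; take it as given by the abstract theory, and reserve the free-action lemma for converting `transitive' into `simply transitive'.
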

Given $x\in X$ there is a simple natural construction of this fundamental pro-object. We define a \emph{pointed finite covering space} of $(X,x)$ to be a continuous map of pointed spaces $p\colon (Y,y)\to (X,x)$ such that $p\colon Y\to X$ is an object of $\mathbf{FCov}(X)$; to put it another way, this is an object $Y$ of $\mathbf{FCov}(X)$ together with an element of $\Phi_x(Y)$. A morphism of pointed finite covering spaces, say from $p_1\colon (Y_1,y_1)\to (X,x)$ to $p_2\colon (Y_2,y_2)\to (X,x)$ is a continuous map of pointed spaces $(Y_1,y_1)\to (Y_2,y_2)$ that makes the obvious diagram commute. Then for each two given pointed finite covering spaces there is \emph{at most one} morphism from the first to the second. In particular, if two pointed finite covering spaces are isomorphic, the isomorphism is unique.

It is easily seen that the isomorphism classes of pointed finite covering spaces of $(X,x)$ form a set $P=P(X,x)$; it becomes a directed set when we define $(Y_1,y_1)\ge (Y_2,y_2)$ to mean that there exists a (necessarily unique) morphism of pointed direct covering spaces $(Y_1,y_1)\to (Y_2,y_2)$. We then define the \emph{universal profinite covering space} of $(X,x)$ to be the pair
\begin{equation*}
(\tilde{X},\tilde{x})=\varprojlim_{(Y,y)\in P}(Y,y).
\end{equation*}
This is a pointed topological space coming with a continuous map $p\colon (\tilde{X},\tilde{x})\to (X,x)$, and (by Proposition~\ref{Prop:ExistenceOfAFundamentalProObjectInFCovX}) also with a continuous action by $\pioneet (X,x)$ which preserves~$p$; moreover, $p$ is the quotient map for this action. The fibre $p^{-1}(x)\subseteq\tilde{X}$ is a principal homogeneous space for $\pioneet (X,x)$, and the point $\tilde{x}\in p^{-1}(x)$ defines a canonical trivialisation. 

We also note the following for later use:
\begin{Proposition}\label{Prop:UHLPforProfiniteCovering}
	Let $(X,x)$ be a pointed connected topological space, and let $p\colon (\tilde{X},\tilde{x})\to (X,x)$ be its universal profinite covering space. Then $p\colon \tilde{X}\to X$ satisfies the unique homotopy lifting property.
\end{Proposition}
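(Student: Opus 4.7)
The plan is to reduce the unique homotopy lifting property for the universal profinite covering to the same property for each of the finite coverings $Y_\alpha \to X$ in the defining system, where Proposition~\ref{Prop:UHLP} already applies.

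Given a homotopy $H\colon S\times [0,1]\to X$ together with a lift $\tilde{h}_0\colon S\times\{0\}\to\tilde{X}$ of $H(-,0)$, I would proceed as follows. Write $(\tilde{X},\tilde{x})=\varprojlim_{(Y_\alpha,y_\alpha)\in P(X,x)}(Y_\alpha,y_\alpha)$, and let $\pi_\alpha\colon\tilde{X}\to Y_\alpha$ be the projection. For each index $\alpha$, composing with $\pi_\alpha$ yields a continuous map $h_{\alpha,0}=\pi_\alpha\circ\tilde{h}_0\colon S\times\{0\}\to Y_\alpha$ lifting $H(-,0)$ along $p_\alpha\colon Y_\alpha\to X$. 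Proposition~\ref{Prop:UHLP} then produces a unique continuous map $H_\alpha\colon S\times[0,1]\to Y_\alpha$ with $H_\alpha(-,0)=h_{\alpha,0}$ and $p_\alpha\circ H_\alpha=H$.

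The key step is checking compatibility: for any morphism $f_{\alpha\beta}\colon Y_\alpha\to Y_\beta$ in the system, both $f_{\alpha\beta}\circ H_\alpha$ and $H_\beta$ are continuous lifts of $H$ along $p_\beta\colon Y_\beta\to X$, and they agree on $S\times\{0\}$ because the original map $\tilde{h}_0$ is compatible with all projections. By the uniqueness clause of Proposition~\ref{Prop:UHLP}, these two lifts coincide. Hence the family $(H_\alpha)_\alpha$ is a compatible system of continuous maps into the cofiltered limit, and by the universal property of $\tilde{X}=\varprojlim Y_\alpha$ it assembles into a unique continuous map $\tilde{H}\colon S\times[0,1]\to\tilde{X}$ with $\pi_\alpha\circ\tilde{H}=H_\alpha$ for every $\alpha$. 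By construction $p\circ\tilde{H}=H$ and $\tilde{H}(-,0)=\tilde{h}_0$, since both sides induce the same system of maps at each finite level.

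For uniqueness of the lift $\tilde{H}$ itself: any other continuous lift $\tilde{H}'\colon S\times[0,1]\to\tilde{X}$ of $H$ agreeing with $\tilde{h}_0$ on $S\times\{0\}$ would project under each $\pi_\alpha$ to a continuous lift of $H$ to $Y_\alpha$ agreeing with $h_{\alpha,0}$ at $t=0$, hence to $H_\alpha$ by Proposition~\ref{Prop:UHLP}. Thus $\pi_\alpha\circ\tilde{H}'=\pi_\alpha\circ\tilde{H}$ for all $\alpha$, and since the $\pi_\alpha$ jointly separate points of the inverse limit, $\tilde{H}'=\tilde{H}$. The only conceptual point worth pausing on is that Proposition~\ref{Prop:UHLP} is available without any local connectivity hypothesis on $X$, which is precisely what allows the argument to go through for the arbitrary connected topological space in the statement.
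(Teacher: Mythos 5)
Your proof is correct and is precisely the detailed version of the argument the paper invokes in one line, namely that the statement "follows formally from Proposition~\ref{Prop:UHLP} and the universal property of projective limits." You apply the unique homotopy lifting property at each finite level, use the uniqueness clause to obtain compatibility with the transition maps, assemble via the universal property of the cofiltered limit, and deduce uniqueness from the fact that the projections jointly separate points — exactly the intended reasoning.
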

\begin{proof}
	This follows formally from Proposition~\ref{Prop:UHLP} and the universal property of projective limits.
\end{proof}

\subsubsection*{Equivariant \'etale fundamental groups} Let $X$ be a connected topological space and let $\varGamma$ be a group acting on $X$ from the left by homeomorphisms. Then we define a \emph{$\varGamma$-equivariant finite covering space} of $X$ as a finite covering $p\colon Y\to X$ together with a lift of the $\varGamma$-action to $Y$, i.e.\ an action of $\varGamma$ by homeomorphisms on $Y$ such that $p$ becomes $\varGamma$-equivariant. If $p_i\colon Y_i\to X$ are $\varGamma$-equivariant finite covering spaces for $i=1,2$ then a morphism from $Y_1$ to $Y_2$ is a continuous $\varGamma$-equivariant map $f\colon Y_1\to Y_2$ such that $p_2\circ f=p_1$. We obtain a category $\mathbf{FCov}_{\varGamma }(X)$ of $\varGamma$-equivariant finite covering spaces of $X$. Essentially repeating the proof of Proposition~\ref{Prop:FCovIsGaloisCategory} we see that $\mathbf{FCov}_{\varGamma }(X)$ is a Galois category, and for every $x\in X$ the functor $\Phi_x\colon\mathbf{FCov}_{\varGamma }(X)\to\mathbf{FSet}$ with $\Phi_x(p\colon Y\to X)=p^{-1}(Y)$ is a fibre functor.
\begin{Definition}
	Let $X$ be a connected topological space endowed with a left action of a group $\varGamma$ by homeomorphisms, and let $x\in X$. The automorphism group of the fibre functor $\Phi_x\colon\mathbf{FCov}_{\varGamma }(X)\to\mathbf{FSet}$ is called the \emph{$\varGamma$-equivariant \'etale fundamental group} of $X$ at $x$ and denoted by $\pioneet ([\varGamma\backslash X],x)$.
\end{Definition}
The notation is purely symbolic at this point, though it is possible to define a stack $[\varGamma\backslash X]$ on a suitable site and extend the theory of \'etale fundamental groups to such stacks. For our purposes, however, the definition of $\pioneet ([\varGamma\backslash X],x)$ given above will suffice.

There is a forgetful functor\label{page:SesFCovForStackyQuotient} $F\colon\mathbf{FCov}_{\varGamma }(X)\to\mathbf{FCov}(X)$ and also a functor $I\colon\GammaFSet\to\mathbf{FCov}_{\varGamma }(X)$ which is some sort of induction: it sends a finite $\varGamma$-set $S$ to the topologically trivial covering $X\times S\to X$ with the diagonal $\varGamma$-action on $X\times S$ (note that as soon as the $\varGamma$-action on $S$ is nontrivial this is nontrivial as an object of $\mathbf{FCov}_{\varGamma }(X)$). These two exact functors induce homomorphisms of fundamental groups $\pioneet (X,x)\to\pioneet ([\varGamma\backslash X],x)\to\hat{\varGamma }$, where $\hat{\varGamma}$ is the profinite completion of $\varGamma$, which is canonically isomorphic to the fundamental group of $\GammaFSet$ at the forgetful fibre functor $\GammaFSet\to\mathbf{FSet}$.
\begin{Proposition}\label{Prop:SESStackyFibration}
	Let $X$ be a connected topological space endowed with a left action of an abstract group $\varGamma$ by homeomorphisms, and let $x\in X$. Then the sequence
	\begin{equation}\label{eqn:SesForEquivEtaleFG}
	\pioneet (X,x)\overset{F^{\ast }}{\to }\pioneet ([\varGamma \backslash X],x)\overset{I^{\ast }}{\to }\hat{\varGamma}\to 1
	\end{equation}
	is exact.
\end{Proposition}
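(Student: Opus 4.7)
The plan is to invoke the standard exactness criterion for a short exact sequence of fundamental groups of Galois categories (see \cite[Expos\'e~V]{MR2017446}). This reduces the assertion to three statements about the exact functors $I\colon\GammaFSet \to \mathbf{FCov}_\varGamma(X)$ and $F\colon \mathbf{FCov}_\varGamma(X) \to \mathbf{FCov}(X)$:
\begin{altenumerate}
 \item $I$ is fully faithful and its essential image is closed under passing to subobjects;
 \item $F \circ I$ sends every object of $\GammaFSet$ to a trivial object of $\mathbf{FCov}(X)$;
 \item every $Y \in \mathbf{FCov}_\varGamma(X)$ with $F(Y)$ a trivial covering lies in the essential image of $I$.
\end{altenumerate}
These correspond respectively to the surjectivity of $I^*$, the triviality of $I^* \circ F^*$, and the inclusion $\ker I^* \subseteq \operatorname{im} F^*$, which together yield exactness of (\ref{eqn:SesForEquivEtaleFG}).

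Conditions (i) and (ii) are straightforward and rely chiefly on connectedness of $X$. For (i), any morphism $X \times S_1 \to X \times S_2$ of finite coverings of $X$ over $X$ has the form $(\xi, s) \mapsto (\xi, f(s))$ for a unique function $f\colon S_1 \to S_2$, because $X$ is connected and the fibres are discrete; the $\varGamma$-equivariance of the morphism then translates into $\varGamma$-equivariance of $f$. Likewise, any sub-covering of $X \times S$ is of the form $X \times T$ for a unique subset $T \subseteq S$, and it is $\varGamma$-invariant precisely when $T$ is. Condition (ii) is immediate, since $F(I(S)) = X \times S$ is a topologically trivial finite covering of $X$, and hence has trivial $\pioneet(X,x)$-monodromy on its fibre.

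The main obstacle is (iii). Given such a $Y$, choose an isomorphism $F(Y) \cong X \times D$ of plain coverings of $X$; transporting the $\varGamma$-equivariant structure on $Y$ through this trivialisation yields a continuous lift of the $\varGamma$-action to $X \times D$ over the $\varGamma$-action on $X$. For each $g \in \varGamma$ the corresponding homeomorphism has the form $(\xi, d) \mapsto (g\xi, \beta_g(\xi, d))$ for a continuous map $\beta_g\colon X \times D \to D$; since $X$ is connected while $D$ is discrete, $\beta_g(\xi, d)$ is independent of $\xi$, and we may write $\beta_g(\xi, d) = \alpha_g(d)$. Associativity of the lifted $\varGamma$-action forces $\alpha_{gh} = \alpha_g \circ \alpha_h$, so $\alpha$ endows $D$ with the structure of a finite $\varGamma$-set, and with this structure we obtain the required isomorphism $Y \cong I(D)$ in $\mathbf{FCov}_\varGamma(X)$. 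Once (iii) is established, the conclusion is purely formal from the theory of Galois categories.
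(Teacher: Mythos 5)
Your proof is correct and follows essentially the same route as the paper: both invoke the exactness criterion for fundamental groups of Galois categories from SGA~1, Expos\'e~V, and then reduce to checking three categorical conditions on the functors $I$ and $F$. The only cosmetic difference is that where the paper applies \cite[Expos\'e~V, Proposition~6.11]{MR2017446} in the form ``every \emph{connected} $Y$ in $\mathbf{FCov}_\varGamma(X)$ with $F(Y)$ admitting a section is in the essential image of $I$'' and then simply observes that $Y\to X\times\pi_0(Y)$ is an isomorphism, you use the equivalent formulation with $F(Y)$ trivial and spell out the constancy-of-$\beta_g$ argument; the two formulations coincide because a connected $\varGamma$-equivariant cover whose underlying cover has a section automatically has a fully split underlying cover, and conversely one may decompose into connected summands.
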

\begin{proof}
	The functor $I\colon\GammaFSet\to\mathbf{FCov}_{\varGamma }(X)$ is fully faithful, hence it induces a surjection on fundamental groups by \cite[Expos\'e~V, Proposition~6.9]{MR2017446}. The functor $F\circ I$ sends every object of $\GammaFSet$ to a completely decomposed object in $\mathbf{FCov}(X)$; by \cite[Expos\'e~V, Corollaire~6.5]{MR2017446} this implies that $I^{\ast}\circ F^{\ast }=(F\circ I)^{\ast }=1$, or $\operatorname{im} F^{\ast }\subseteq\ker I^{\ast }$. For the reverse inclusion $\ker I^{\ast }\subseteq\operatorname{im}F^{\ast }$ we apply the criterion given in~\cite[Expos\'e~V, Proposition~6.11]{MR2017446}\footnote{Note, however, the misprint there: the two inclusions $\ker u\subset\operatorname{im}u'$ and $\ker u\supset\operatorname{im}u'$ must be exchanged.}. Using said criterion we can reduce this inclusion to the following claim: if $Y\to X$ is a connected object in $\mathbf{FCov}_{\varGamma }(X)$ whose image under $F$ admits a section (i.e.\ such that there is a continuous but not necessarily $\varGamma$-invariant section of $Y\to X$) then $Y$ is in the essential image of~$I$. Indeed, the tautological map $Y\to X\times\pi_0(Y)$ is then an isomorphism in $\mathbf{FCov}_{\varGamma }(X)$.
\end{proof}
\begin{Remark}\label{Rmk:RightActionSignInSes}
	\begin{altenumerate}
		\item The homomorphism $\pioneet (X,x)\to\pioneet ([\varGamma\backslash X],x)$ need not be injective. As a counterexample we may take $X=\bbs^1$ and $\varGamma =\mu_{\infty }$ acting by translations. Then for any $n>1$ the homeomorphism of $\bbs^1$ given by a primitive $n$-th root of unity does not lift to a homeomorphism of the same order along the degree $n$ covering $\bbs^1\to\bbs^1$, which shows that a finite covering of $\bbs^1$ admitting a lift of the $\mu_{\infty }$-action must already be trivial. Hence the map $\hat{\bbz }\cong\pioneet (\bbs^1)\to\pioneet ([\mu_{\infty }\backslash\bbs^1])$ is trivial (therefore not injective), and consequently $\pioneet ([\mu_{\infty }\backslash\bbs^1])\cong\hat{\mu}_{\infty }=1$.
		\item We will need to apply these constructions in the case where $\varGamma$ operates from the right on~$X$. This can be translated to an action from the left by setting $\gamma x=x\gamma^{-1}$ and $\pioneet ([X/\varGamma],x)=\pioneet ([\varGamma\backslash X],x)$. This way we still obtain an exact sequence
		\begin{equation*}
		\pioneet (X,x)\to\pioneet ([X/\varGamma ],x)\to\hat{\varGamma }\to 1,
		\end{equation*}
		but we have to bear in mind that the construction of the second map involves the inversion map $\gamma\mapsto\gamma^{-1}$.
	\end{altenumerate}
\end{Remark}

\subsection{Comparison between classical and \'etale fundamental groups} Let $X$ be a connected topological space and $x\in X$. Then there is a canonical homomorphism
\begin{equation}\label{eqn:DefTau}
\alpha\colon\pionepath (X,x)\to\pioneet (X,x)=\Aut (\Phi_x)
\end{equation}
constructed as follows. For $[\gamma]\in\pionepath (X,x)$ represented by a loop $\gamma\colon [0,1]\to X$ and a finite covering $p\colon Y\to X$ we let $\tau ([\gamma])$ operate on $\Phi_x(Y)=p^{-1}(x)$ by sending $y\in p^{-1}(x)$ to the end point $\tilde{\gamma}(1)$ of the unique continuous lift $\tilde{\gamma }\colon [0,1]\to Y$ of $\gamma$ with starting point $\tilde{\gamma }(0)=y$.
\begin{Proposition}
	Let $(X,x)$ be a connected pointed topological space. Then $\alpha$ as in (\ref{eqn:DefTau}) is continuous if $\pioneet (X,x)$ is endowed with its profinite topology, and $\pionepath (X,x)$ is endowed with either of the loop, $\tau$- and $\sigma$-topologies. It also extends uniquely to a continuous group homomorphism $\pi_1^{\Gal }(X,x)\to\pioneet (X,x)$.
\end{Proposition}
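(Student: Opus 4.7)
The plan is to reduce everything to proving continuity of the composition
\[
\Omega (X,x)\to \pionepath (X,x)\xrightarrow{\alpha}\pioneet (X,x)
\]
when the target is given its profinite topology, and then to deduce continuity for the $\tau$- and $\sigma$-topologies and the extension to $\pi_1^{\Gal }(X,x)$ by formal arguments. For the loop topology, I would fix a finite covering $p\colon Y\to X$ and argue that the orbit map $\Omega (X,x)\to\Aut (p^{-1}(x))$ is locally constant; since the kernels of the tautological maps $\pioneet (X,x)\to \Aut (p^{-1}(x))$ form a basis of neighbourhoods of $1$ in $\pioneet (X,x)$, and the loop topology on $\pionepath (X,x)$ is by definition the quotient of the compact-open topology on $\Omega (X,x)$, this will suffice.

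To establish local constancy, given $\gamma_0\in \Omega (X,x)$ I would cover the compact set $\gamma_0([0,1])$ by finitely many trivialising open sets $U_1,\ldots ,U_k\subseteq X$, use a Lebesgue-number argument to produce a partition $0=t_0<\cdots <t_n=1$ and indices $j(i)$ with $\gamma_0([t_{i-1},t_i])\subseteq U_{j(i)}$, and use local constancy of the transition functions (continuous maps from $U_{j(i)}\cap U_{j(i+1)}$ to a finite set of bijections) to find open neighbourhoods $V_i\subseteq U_{j(i)}\cap U_{j(i+1)}$ of $\gamma_0(t_i)$ on which those transitions are constant. Then the compact-open neighbourhood of $\gamma_0$ cut out by the conditions $\gamma ([t_{i-1},t_i])\subseteq U_{j(i)}$ and $\gamma (t_i)\in V_i$ maps to a single element of $\Aut (p^{-1}(x))$, since any such $\gamma$ lifts along the same sheet sequence with the same transitions as $\gamma_0$. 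The main obstacle is precisely this step: because $X$ is not assumed locally (path-)connected, the refinements $V_i$ are needed to pin down the transition functions, and one cannot simply rely on connected trivialising neighbourhoods.

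For the $\tau$-topology I would introduce the initial topology $\tau '$ on $\pionepath (X,x)$ pulled back from the profinite topology along $\alpha$. As the pullback of a group topology along a group homomorphism, $\tau '$ is itself a group topology, and by the previous step $\Omega (X,x)\to (\pionepath (X,x),\tau ')$ is continuous; hence $\tau '$ satisfies both defining properties of the $\tau$-topology, so $\tau '$ is coarser than $\tau$ and $\alpha$ is continuous for the $\tau$-topology. For the $\sigma$-topology the preimages $\alpha^{-1}(U)$ of open subgroups $U\subseteq \pioneet (X,x)$ are $\tau$-open subgroups of $\pionepath (X,x)$, hence $\sigma$-open neighbourhoods of $1$, yielding continuity directly.

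Finally, to obtain the extension to $\pi_1^{\Gal }(X,x)$ I would invoke the universal property of completion: $\pi_1^{\Gal }(X,x)$ is by construction the completion of $\pionepath (X,x)$ with respect to the two-sided uniformity associated to the $\sigma$-topology, while $\pioneet (X,x)$ is profinite and therefore Hausdorff and complete for its own two-sided uniformity. The $\sigma$-continuous homomorphism $\alpha$ is uniformly continuous, hence extends to a continuous group homomorphism $\pi_1^{\Gal }(X,x)\to \pioneet (X,x)$; uniqueness of this extension follows from the density of the image of $\pionepath (X,x)$ in $\pi_1^{\Gal }(X,x)$ and the Hausdorffness of the target.
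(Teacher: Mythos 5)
Your argument is correct, but your handling of the loop topology step differs genuinely from the paper's. The paper fixes, for each open subgroup $H\subseteq\pioneet(X,x)$, a pointed connected finite covering $p\colon(Y,y)\to(X,x)$ with $H=p_{\ast}(\pioneet(Y,y))$, and then uses the unique homotopy lifting property (Proposition~\ref{Prop:UHLP}) applied to the evaluation homotopy $\Omega(X,x)\times[0,1]\to X$ to show that $\Omega(p)\colon\Omega(Y,y)\to\Omega(X,x)$ is a homeomorphism onto a clopen subset; it then deduces that $p_{\ast}\colon\pionepath(Y,y)\to\pionepath(X,x)$ is open with image $\alpha^{-1}(H)$. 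You instead show directly that the monodromy map $\Omega(X,x)\to\operatorname{Perm}(p^{-1}(x))$ is locally constant via a Lebesgue-number subdivision and local constancy of the transition functions, and you correctly identify the place where local connectivity of $X$ would have simplified matters (and supply the needed refinement $V_i$ when it is absent). The paper's route is slicker and reuses machinery already established for the homotopy invariance results, whereas yours is elementary and makes the mechanism visible. For the $\tau$-topology, the paper invokes the adjunction between quasi-topological and topological groups from \cite{Klevdal2015}; your argument via the pullback topology $\tau'$ is an unwinding of that adjunction and is equally valid. The $\sigma$-topology step and the extension to $\pi_1^{\Gal}(X,x)$ via completeness and density match the paper.
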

\begin{proof}
	We first show that $\alpha$ is continuous for the loop topology on $\pionepath (X,x)$. Since the open subgroups  of $\pioneet (X,x)$ form a basis of open neighbourhoods of the identity and since the loop topology turns $\pionepath (X,x)$ into a quasi-topological group, it suffices to show that the preimage of any open subgroup of $\pioneet (X,x)$ under $\alpha$ is open in $\pionepath (X,x)$.
	
	For such an open subgroup there is a pointed connected finite covering $p\colon (Y,y)\to (X,x)$ such that the subgroup is the image of $p_{\ast }\colon\pioneet (Y,y)\to\pioneet (X,x)$. There is a commutative diagram of continuous maps
	\begin{equation*}
	\xymatrix{
		\Omega (Y,y)\ar[r]\ar[d]_-{\Omega (p)} & \pionepath (Y,y)\ar[r]^-{\alpha }\ar[d]_-{p_{\ast }} & \pioneet (Y,y)\ar[d]^-{p_{\ast }}\\
		\Omega (X,x)\ar[r] &\pionepath (X,x)\ar[r]_-{\alpha }&\pioneet (X,x).
		}
	\end{equation*}
	By the unique homotopy lifting property for $p$ the map $\Omega (p)$ defines a homeomorphism from $\Omega (Y,y)$ to an open and closed subset of $\Omega (X,x)$; in particular it is open as a map $\Omega (Y,y)\to\Omega (X,x)$. Hence $p_{\ast }\colon\pionepath (Y,y)\to\pionepath (X,x)$ is also open, and its image is equal to the preimage in $\pionepath (X,x)$ of $p_{\ast }(\pioneet (Y,y))\subseteq\pioneet (X,x)$. Hence this preimage is open. Therefore $\alpha$ is continuous for the loop topology.
	
	One way to construct the $\tau$-topology from the loop topology is explained in \cite[Section~7]{Klevdal2015}: the forgetful functor from topological groups to quasi-topological groups has a left adjoint $\tau$ which preserves the underlying groups. Hence for any topological group $G$ and any quasi-topological group $\pi$ a group homomorphism $\pi\to G$ is continuous if and only if $\tau (\pi)\to G$ is continuous. Now $\tau$ applied to $\pionepath (X,x)$ with the loop topology yields $\pionepath (X,x)$ with the $\tau$-topology, and therefore $\alpha$ remains continuous when $\pionepath (X,x)$ is endowed with the $\tau$-topology.
	
	For the $\sigma$-topology we use again the fact that the open subgroups of $\pioneet (X,x)$ form a basis of open neighbourhoods of the identity. Since the preimage of each of these under $\alpha$ is an open subgroup for the $\tau$-topology, it is also an open subgroup for the $\sigma$-topology. Hence $\alpha$ is also continuous for the $\sigma$-topology.
	
	Finally, because $\pioneet (X,x)$ is complete, $\alpha$ extends to a continuous homomorphism $\pi_1^{\Gal }(X,x)\to\pioneet (X,x)$.
\end{proof}
\begin{Proposition}
	Let $(X,x)$ be a pointed connected topological space, and let $p\colon (\tilde{X},\tilde{x})\to (X,x)$ be its universal profinite covering space. Then the sequence of groups
	\begin{equation}\label{eqn:SesThreeFundamentalGroups}
	1\to\pionepath (\tilde{X},\tilde{x})\overset{p_{\ast}}{\to }\pionepath (X,x)\overset{\alpha }{\to}\pioneet (X,x)
	\end{equation}
	is exact.
\end{Proposition}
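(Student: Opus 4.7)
The plan is to prove the three claims successively: injectivity of $p_{\ast}$, the inclusion $\operatorname{im} p_{\ast}\subseteq\ker\alpha$, and its converse. All three rely on two ingredients already available, namely the unique homotopy lifting property for the universal profinite cover (Proposition~\ref{Prop:UHLPforProfiniteCovering}) and the explicit description of $\tilde{X}$ as $\varprojlim_{(Y,y)\in P(X,x)}(Y,y)$ together with the fact that the fibre $p^{-1}(x)$ is a principal homogeneous space for $\pioneet(X,x)$ trivialised by $\tilde{x}$.

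For injectivity I would take a loop $\tilde{\gamma}$ at $\tilde{x}$ in $\tilde{X}$ whose image $p\circ\tilde{\gamma}$ is based-null-homotopic in $X$ via a homotopy $H\colon[0,1]\times[0,1]\to X$, and lift $H$ uniquely by Proposition~\ref{Prop:UHLPforProfiniteCovering} to $\tilde{H}\colon[0,1]\times[0,1]\to\tilde{X}$ with $\tilde{H}(\cdot,0)=\tilde{\gamma}$. Each of the vertical edges $s\mapsto\tilde{H}(0,s)$ and $s\mapsto\tilde{H}(1,s)$ is then a lift of the constant path at $x$ starting at $\tilde{x}$; by the uniqueness clause of path lifting it must itself be constant at $\tilde{x}$, so $\tilde{H}$ is a based null-homotopy of $\tilde{\gamma}$.

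For $\operatorname{im} p_{\ast}\subseteq\ker\alpha$, given a loop $\tilde{\gamma}$ at $\tilde{x}$ and an arbitrary pointed connected finite covering $(Y,y)\to(X,x)$, I would compose $\tilde{\gamma}$ with the canonical pointed map $(\tilde{X},\tilde{x})\to(Y,y)$ to obtain a loop at $y$ in $Y$, which shows that the monodromy of $\gamma=p\circ\tilde{\gamma}$ on $\Phi_x(Y)$ fixes $y$. If $(Y,y)$ is Galois then the monodromy action on $\Phi_x(Y)$ factors through the quotient $\pioneet(X,x)\twoheadrightarrow\Aut(Y/X)$ acting simply transitively on $\Phi_x(Y)$, so fixing one point forces the action to be trivial; alternatively this is Lemma~\ref{Lemma:AutomorphismForGaloisCat}. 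Since every connected finite covering is dominated by a Galois one (Proposition~\ref{Prop:ExistenceOfAFundamentalProObjectInFCovX}), triviality on all Galois fibres propagates to triviality on every object of $\mathbf{FCov}(X)$, so $\alpha([\gamma])=1$.

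For the converse, if $\alpha([\gamma])=1$ then for every $(Y,y)\in P(X,x)$ the monodromy of $\gamma$ fixes $y$, so the unique path lift $\gamma_Y\colon[0,1]\to Y$ of $\gamma$ with $\gamma_Y(0)=y$ automatically satisfies $\gamma_Y(1)=y$. By uniqueness of path lifting the family $(\gamma_Y)_{(Y,y)\in P}$ is compatible with the morphisms of $P(X,x)$, and therefore assembles through $\tilde{X}=\varprojlim_{(Y,y)}(Y,y)$ into a continuous loop $\tilde{\gamma}\colon[0,1]\to\tilde{X}$ based at $\tilde{x}$ with $p\circ\tilde{\gamma}=\gamma$, exhibiting $[\gamma]$ as $p_{\ast}[\tilde{\gamma}]$. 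The only non-formal step in the whole argument is the identification of the categorical monodromy action with the path-lifting action on Galois fibres, which is essentially Lemma~\ref{Lemma:AutomorphismForGaloisCat} together with the Galois-categorical description of $\pioneet(X,x)$ recorded just before the statement.
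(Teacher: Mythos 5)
Your proof is correct and follows essentially the same strategy as the paper: it relies on the unique homotopy lifting property of the universal profinite cover (Proposition~\ref{Prop:UHLPforProfiniteCovering}) for injectivity, and on the relation between $\alpha([\gamma])$ and the endpoint of the lift of $\gamma$ in $\tilde{X}$ for exactness at $\pionepath(X,x)$. The only real difference is cosmetic: for the middle exactness the paper works directly with the fibre $p^{-1}(x)\subset\tilde{X}$ as a $\pioneet(X,x)$-torsor and reads off $\alpha([\gamma])$ from the endpoint $\tilde{\gamma}(1)$, whereas you unwind the inverse limit $\tilde{X}=\varprojlim_{(Y,y)\in P}(Y,y)$ and argue covering-by-covering; likewise, for injectivity the paper concludes constancy of the boundary paths from total disconnectedness of $p^{-1}(x)$ while you conclude it from uniqueness of path lifting --- these are interchangeable.
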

\begin{proof}
	We first show that $p_{\ast }$ is injective. Let $\tilde{\gamma }\colon [0,1]\to\tilde{X}$ be a loop based at $\tilde{x}$ such that $\gamma =p\circ\tilde{\gamma }$ is nullhomotopic, say by a homotopy $H\colon [0,1]\times [0,1]\to X$ with $H(0,t)=H(1,t)=x$ and $H(t,0)=\gamma (t)$ for all $t\in [0,1]$. Then by Proposition~\ref{Prop:UHLPforProfiniteCovering} $H$ lifts to a homotopy of paths $\tilde{H}\colon [0,1]\times [0,1]\to\tilde{X}$ with $\tilde{H}(0,t)=\tilde{x}$ and $\tilde{H}(t,0)=\tilde{\gamma }(t)$ for all $t\in [0,1]$. By construction, $\tilde{H}(1,t)\in p^{-1}(x)$ for all $t\in [0,1]$, and $\tilde{H}(1,0)=\tilde{x}$. Since $p^{-1}(x)$ is totally disconnected, $\tilde{H}(1,t)$ must be equal to $\tilde{x}$ for all $t\in [0,1]$. Hence $\tilde{H}$ really defines a homotopy of loops, and not just of paths, from $\tilde{\gamma }$ to the constant loop. Therefore the class of $\tilde{\gamma }$ in $\pionepath (\tilde{X},\tilde{x})$ is trivial. This shows the injectivity of~$p_{\ast }$.
	
	For exactness at $\pionepath (X,x)$, let $\gamma \colon [0,1]\to X$ be a loop based at~$x$. Then by Proposition~\ref{Prop:UHLPforProfiniteCovering} there exists a unique lift $\tilde{\gamma }\colon [0,1]\to\tilde{X}$ which is continuous and satisfies $\tilde{\gamma }(0)=\tilde{x}$. The end point $\tilde{x}'=\tilde{\gamma }(1)$ is another element of the fibre $p^{-1}(x)$, not necessarily equal to~$\tilde{x}$. Recall that $p^{-1}(x)$ is a principal homogeneous space for $\pioneet (X,x)$, hence there is a unique element of $\pioneet (X,x)$ that sends $\tilde{x}$ to~$\tilde{x}'$. Unravelling of definitions shows that this element is equal to~$\alpha ([\gamma ])$. Hence we find that the following conditions are equivalent:
	\begin{enumerate}
		\item $[\gamma ]\in p_{\ast }(\pionepath (\tilde{X},\tilde{x}))$;
		\item $\tilde{\gamma }(1)=\tilde{\gamma }(0)$;
		\item $\tilde{x}=\tilde{x}'$;
		\item $\alpha ([\gamma ])=1$.
	\end{enumerate}
	The equivalence of (i) and (iv) then shows exactness at $\pionepath (X,x)$.
\end{proof}
We can also characterise the image of~$\alpha$. Since $\pioneet (X,x)$ acts continuously on $\tilde{X}$ it permutes the path-components of that space.
\begin{Proposition}\label{Prop:ImageOfDiscreteAlpha}
	Let $(X,x)$ be a pointed connected space and let $p\colon (\tilde{X},\tilde{x})\to (X,x)$ be its universal profinite covering space. Let $\tilde{X}^{\circ}$ be the path-component of $\tilde{X}$ containing~$\tilde{x}$.
	
	Then the image of $\alpha\colon\pionepath (X,x)\to\pioneet (X,x)$ is the stabiliser of $\tilde{X}^{\circ}$ in $\pioneet (X,x)$.
\end{Proposition}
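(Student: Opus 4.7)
The plan is to exploit the key characterisation already extracted in the proof of the preceding proposition: for any loop $\gamma\colon [0,1]\to X$ based at $x$, the element $\alpha([\gamma])\in\pioneet(X,x)$ is the unique one sending $\tilde{x}$ to $\tilde{\gamma}(1)$, where $\tilde{\gamma}$ is the lift of $\gamma$ to $\tilde{X}$ starting at $\tilde{x}$ furnished by Proposition~\ref{Prop:UHLPforProfiniteCovering}. Since $\pioneet(X,x)$ acts by homeomorphisms on $\tilde{X}$ and a homeomorphism always permutes path-components, the image of the stabiliser condition is sensible, and both inclusions will come out of unravelling this characterisation.

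First I would check the inclusion $\operatorname{im}\alpha\subseteq\operatorname{Stab}(\tilde{X}^{\circ})$. Given $[\gamma]\in\pionepath(X,x)$, the lift $\tilde{\gamma}$ is a continuous path in $\tilde{X}$ from $\tilde{x}$ to $\alpha([\gamma])\cdot\tilde{x}$, so both of its endpoints lie in the same path-component, namely $\tilde{X}^{\circ}$. Since $\alpha([\gamma])$ maps the path-component of $\tilde{x}$ to the path-component of $\alpha([\gamma])\cdot\tilde{x}$, and both equal $\tilde{X}^{\circ}$, the element $\alpha([\gamma])$ stabilises $\tilde{X}^{\circ}$.

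For the reverse inclusion, suppose $g\in\pioneet(X,x)$ stabilises $\tilde{X}^{\circ}$. Then $g\cdot\tilde{x}\in\tilde{X}^{\circ}$, so there exists a continuous path $\tilde{\delta}\colon[0,1]\to\tilde{X}$ with $\tilde{\delta}(0)=\tilde{x}$ and $\tilde{\delta}(1)=g\cdot\tilde{x}$. Setting $\gamma=p\circ\tilde{\delta}$, which is a loop at $x$ because $p(g\cdot\tilde{x})=p(\tilde{x})=x$, the uniqueness part of Proposition~\ref{Prop:UHLPforProfiniteCovering} identifies $\tilde{\delta}$ with the canonical lift of $\gamma$ starting at $\tilde{x}$. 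Hence $\alpha([\gamma])\cdot\tilde{x}=\tilde{\delta}(1)=g\cdot\tilde{x}$, and since $p^{-1}(x)$ is a principal homogeneous space for $\pioneet(X,x)$ this forces $\alpha([\gamma])=g$, as desired.

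The only mildly delicate point is making sure to invoke unique homotopy (here just unique path) lifting for the universal \emph{profinite} covering, which is provided by Proposition~\ref{Prop:UHLPforProfiniteCovering}; everything else is formal manipulation of the free transitive action on the fibre over~$x$.
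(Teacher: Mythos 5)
Your proof is correct and follows essentially the same route as the paper's: lift a loop to a path in $\tilde{X}$ for the forward inclusion, and project a path in $\tilde{X}^{\circ}$ down to a loop, then invoke freeness of the $\pioneet(X,x)$-action on the fibre, for the reverse inclusion. The only cosmetic difference is that the paper's forward argument tracks an arbitrary fibre point $\tilde{x}'$ (showing $\alpha([\gamma])$ preserves every path-component meeting the fibre), whereas you argue directly with $\tilde{x}$, which suffices.
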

\begin{proof}
	Let $[\gamma ]\in\pionepath (X,x)$ be represented by a loop $\gamma\colon [0,1]\to X$ based at~$x$, and let $\tilde{x}'\in p^{-1}(x)\subseteq\tilde{X}$. Let $\tilde{\gamma }\colon [0,1]\to\tilde{X}$ be the unique continuous lift of $\gamma$ with $\tilde{\gamma }(0)=\tilde{x}'$. Then $\alpha ([\gamma ])(\tilde{x}')=\tilde{\gamma }(1)$, hence $\tilde{x}'$ and its image under $\alpha ([\gamma ])$ lie in the same path component of~$\tilde{X}$. Since $\tilde{x}'\in p^{-1}(x)$ was arbitrary this shows that $\alpha ([\gamma ])$ preserves all path components of $\tilde{X}$ meeting $p^{-1}(x)$, in particular~$\tilde{X}^{\circ}$.
	
	For the other inclusion let $\beta\in\pioneet (X,x)$ be an element preserving $\tilde{X}^{\circ}$. Then $\beta (\tilde{x})\in \tilde{X}^{\circ}$, hence there is a path $\tilde{\gamma }$ in $\tilde{X}$ from $\tilde{x}$ to $\beta (\tilde{x})$. Then $\gamma =p\circ\tilde{\gamma }$ is a closed loop in $X$ based at $x$ and therefore represents an element of $\pionepath (X,x)$. By construction, both $\alpha ([\gamma ])\in\pioneet (X,x)$ and $\beta$ send $\tilde{x}$ to $\beta (\tilde{x})$, but since $\pioneet (X,x)$ acts freely on $\tilde{X}$, they must be equal. Hence $\beta =\alpha ([\gamma ])$ is in the image of~$\alpha$.
\end{proof}
Hence we can rewrite (\ref{eqn:SesThreeFundamentalGroups}) in a more precise way: the sequence
\begin{equation}\label{eqn:SesThreeFGAlsoOnRight}
1\to\pionepath (\tilde{X},\tilde{x})\overset{p_{\ast}}{\to }\pionepath (X,x)\overset{\alpha}{\to }\operatorname{Stab}_{\pioneet (X,x)}(\tilde{X}^{\circ})\to 1
\end{equation}
is exact.

Let $\pionehat (X,x)$ be the profinite completion of the group $\pionepath (X,x)$. By the universal property of profinite completions $\alpha$ induces a continuous group homomorphism
\begin{equation}\label{eqn:DefAlphaHat}
\hat{\alpha }\colon\pionehat (X,x)\to\pioneet (X,x).
\end{equation}
\begin{Proposition}
	Let $X$ be path-connected, locally path-connected and semi-locally simply connected. Then $\hat{\alpha }$ as in (\ref{eqn:DefAlphaHat}) is an isomorphism of topological groups.
\end{Proposition}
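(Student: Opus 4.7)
The plan is to reduce the statement to the classical theory of covering spaces. Under the three hypotheses on $X$ -- path-connected, locally path-connected, and semi-locally simply connected -- the classical recognition theorem (cf.~\cite[Chapter~1]{MR1867354}) produces a simply connected covering $p_{\mathrm{cl}}\colon(\tilde X_{\mathrm{cl}},\tilde x_{\mathrm{cl}})\to(X,x)$ carrying a free action of $\pionepath(X,x)$ with quotient $X$, and sets up a bijection between conjugacy classes of subgroups of $\pionepath(X,x)$ and isomorphism classes of connected pointed covering spaces of $(X,x)$ via $H\mapsto H\backslash\tilde X_{\mathrm{cl}}$. Restricting to finite-index subgroups and invoking Proposition~\ref{Prop:DecompositionForFCov} to decompose possibly disconnected finite covers produces an equivalence of categories
\[
\mathbf{FCov}(X)\ \simeq\ \pionepath(X,x)\text{-}\mathbf{FSet}
\]
(the right-hand side denoting finite sets with an abstract left action of the \emph{discrete} group $\pionepath(X,x)$). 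Moreover, this equivalence carries the fibre functor $\Phi_x$ to the forgetful functor, since the $\pionepath(X,x)$-action on a fibre is given by classical monodromy via loop-lifting.

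Next, I would observe that every finite $\pionepath(X,x)$-action factors through a finite quotient of $\pionepath(X,x)$, so the universal property of profinite completion yields a canonical equivalence $\pionepath(X,x)\text{-}\mathbf{FSet}\simeq\pionehat(X,x)\text{-}\mathbf{FSet}$, compatible with the forgetful fibre functors. Composing gives an equivalence of Galois categories
\[
\mathbf{FCov}(X)\ \simeq\ \pionehat(X,x)\text{-}\mathbf{FSet}
\]
carrying $\Phi_x$ to the forgetful fibre functor. Passing to automorphism groups of these two fibre functors, and using the standard fact that for a profinite group $G$ the automorphism group of the forgetful functor on $G\text{-}\mathbf{FSet}$ is canonically $G$ itself (cf.~\cite[Expos\'e~V, Th\'eor\`eme~4.1]{MR2017446}), one obtains an isomorphism of profinite groups $\pioneet(X,x)\cong\pionehat(X,x)$.

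Finally, I would verify that the abstract isomorphism just constructed really is $\hat{\alpha}$. The monodromy action underlying the classical recognition theorem is path-lifting, which is exactly the prescription used in (\ref{eqn:DefTau}) to define $\alpha$; hence the composition $\pionepath(X,x)\to\pionehat(X,x)\cong\pioneet(X,x)$ coming from the equivalence of Galois categories agrees with $\alpha$, and uniqueness of the continuous extension to the profinite completion identifies the induced map with $\hat{\alpha}$. Since both source and target of $\hat{\alpha}$ are profinite -- in particular compact Hausdorff -- topological groups and $\hat{\alpha}$ has already been shown to be continuous, its bijectivity upgrades it automatically to an isomorphism of topological groups.

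The main obstacle is conceptual rather than technical: the whole argument rests on the classical recognition theorem for covering spaces, whose proof genuinely requires all three hypotheses on $X$ (local path-connectedness to lift paths and homotopies, semi-local simple connectedness to build the universal cover from equivalence classes of paths). All remaining steps are bookkeeping; the only subtle point is the identification of the classical monodromy action with the action defining $\alpha$, and this identification is tautological because both are defined by the very same path-lifting construction.
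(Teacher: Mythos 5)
Your proof is correct and takes essentially the same route as the paper, whose proof is a three-sentence sketch of precisely this argument: the classical classification of covering spaces gives $\mathbf{FCov}(X)\simeq\pionepath(X,x)\text{-}\mathbf{FSet}$, comparison with the Galois-category equivalence $\mathbf{FCov}(X)\simeq\pioneet(X,x)\text{-}\mathbf{FSet}$ then identifies $\pioneet(X,x)$ with the profinite completion $\pionehat(X,x)$, and compatibility of the monodromy actions pins down the isomorphism as $\hat{\alpha}$. One small imprecision worth fixing: you describe a bijection between \emph{conjugacy classes} of subgroups and isomorphism classes of connected \emph{pointed} covers, but for pointed covers the classification is by subgroups themselves (conjugacy classes arise only in the unpointed version); this slip has no effect on the rest of the argument.
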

\begin{proof}
	This follows from the classical theory of covering spaces: under the given circumstances, $(X,x)$ has a universal (possibly infinite) covering space, and discrete sets with an operation by $\pionepath (X,x)$ are equivalent to covering spaces of~$X$. Hence finite covering spaces of $X$ are equivalent to finite sets with $\pionepath (X,x)$-action, which are in turn equivalent to finite sets with continuous $\pioneet (X,x)$-action.
\end{proof}
For general path-connected spaces $\hat{\alpha }$ need not be an isomorphism; however, there is a weaker technical condition which still ensures that $\hat{\alpha }$ is surjective.
\begin{Definition}
	Let $X$ be a topological space. We say that $X$ is \emph{stably path-connected} if $X$ is path-connected, and for every finite covering $Y\to X$ with $Y$ connected, $Y$ is already path-connected.
\end{Definition}
\begin{Examples}\label{epl:WarsawCircle}
	\begin{altenumerate}
		\item If a topological space $X$ is connected and locally path-connected then it is also stably path-connected. To see this, note that any finite covering space of $X$ is then also locally path-connected, and a space which is connected and locally path-connected is also globally path-connected, cf.\ \cite[Theorem~25.5]{Munkres2000}.
		\item Let $\omega_1$ be the first uncountable ordinal, and let $L=\omega_1\times [0,1)$ be the \emph{long line}, equipped with the order topology (see \cite[Example~45]{MR507446}). Then $L$ is Hausdorff and locally homeomorphic to $\bbr$ but not paracompact. Every two points in $L$ are contained in an open subset homeomorphic to $\bbr$, hence $L$ is path-connected and locally path-connected. The one-point compactification $L^{\ast }=L\cup \{\ast \}$ is no longer path-connected.
		
		We then define the \emph{long circle} to be $C=L^{\ast }/(\ast\sim (0,0))$, see Figure~\ref{Fig:WarsawCircle}. The dotted part is so long on the left that no path can enter it from the left, but every point in it can be reached by a path entering from the right, which shows that $C$ is path-connected. $C$ is not locally path-connected at $\ast$, but everywhere else.
		
		For every $n\in\bbn$ the long circle admits a connected covering of degree $n$ with $n$ path components, hence it is not stably path-connected. See Figure~\ref{Fig:CoverWarsawCircle} for the case $n=3$; the different colours encode the path components.
		\item Similar remarks apply to the \emph{Warsaw circle}. Consider the  truncated topologist's sine curve
		\begin{equation*}
		S'=\{ (0,y)\mid -1\le y\le 1 \}\cup\left\{ \left( x,\sin\frac 1x\right)\,\middle\lvert\, 0<x\le \frac{1}{\pi } \right\} ;
		\end{equation*}
		the Warsaw circle is defined as the quotient $W=S'/{((0,0)\sim (0,\frac{1}{\pi }))}$, cf.\ Figure~\ref{Fig:WarsawCircle}. Like the long circle, $W$ is path-connected but not stably path-connected.
	\end{altenumerate}
\end{Examples}
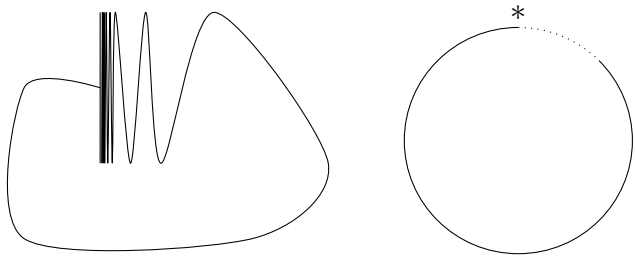
\begin{figure}
	\begin{tikzpicture}
	\draw plot [smooth] coordinates {(0.02,-1) (0.03,1) (0.04,-1) (0.05,1) (0.06,-1) (0.08,1) (0.1,-1) (0.13,1) (0.16,-1) (0.2,1) (0.4,-1) (0.6,1) (0.8,-1) (1.5,1) (3,-1) (2,-2) (-1,-2) (-1,0) (0,0)};
	\draw (0,1) -- (0,-1);
	\draw (5.5,0.8) arc (90:405:1.5cm);
	\draw[dotted] (5.5,0.8) arc (90:45:1.5cm);
	\draw (5.5,1) node {$\ast$};
	\end{tikzpicture}
	\caption{The Warsaw circle (left) and the long circle (right)}\label{Fig:WarsawCircle}
\end{figure}
\begin{figure}
	\begin{tikzpicture}
	\draw (3.5,1.2) arc (90:250:1.5cm) .. controls (3.45,-1.8) and (3.3,-1.4) .. (3.5,-1.4) arc (270:405:1.1cm);
	\draw[blue] (3.5,1) arc (90:240:1.3cm) .. controls (3.3,-1.6) and (3.3,-1.8) .. (3.5,-1.8) arc (270:405:1.5cm);
	\draw[red] (3.5,0.8) arc (90:240:1.1cm) .. controls (3.3,-1.4) and (3.3,-1.6) .. (3.5,-1.6) arc (270:405:1.3cm);
	\draw[dotted] (3.5,0.8) arc (90:45:1.1cm);
	\draw[blue,dotted] (3.5,1.2) arc (90:45:1.5cm);
	\draw[red,dotted] (3.5,1) arc (90:45:1.3cm);
	\end{tikzpicture}
	\caption{A connected but not path-connected covering of the long circle}\label{Fig:CoverWarsawCircle}
\end{figure}
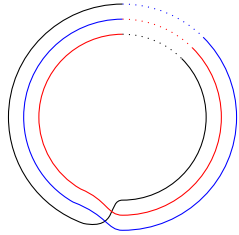
\begin{Proposition}\label{Prop:StablyPathConnectedThenAlphaSurjective}
	Let $X$ be a path-connected topological space and let $x\in X$. Then the following are equivalent.
	\begin{enumerate}
		\item $X$ is stably path-connected.
			\item For every finite Galois covering $p\colon Y\to X$ the map $\alpha_Y\colon\pionepath (X,x)\to\Aut (Y/X)$ is surjective.
		\item The map $\alpha\colon\pionepath (X,x)\to\pioneet (X,x)$ has dense image.
		\item The map $\hat{\alpha }\colon\hat{\pi}_1^{\mathrm{path}}(X,x)\to\pioneet (X,x)$ is surjective.
	\end{enumerate}
\end{Proposition}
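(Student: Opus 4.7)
The plan is to split the four-way equivalence into three pieces. The equivalences $(iii)\Leftrightarrow(iv)$ and $(ii)\Leftrightarrow(iii)$ reduce to formal facts about profinite groups and inverse limits; the substantive content lies in $(i)\Leftrightarrow(ii)$, which I would prove by explicit path-lifting arguments.

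For $(iii)\Leftrightarrow(iv)$, since the image of $\pionepath(X,x)$ is dense in its profinite completion $\pionehat(X,x)$ and $\hat\alpha$ is continuous, the images of $\alpha$ and $\hat\alpha$ have the same closure in $\pioneet(X,x)$. Because $\pionehat(X,x)$ is compact and $\pioneet(X,x)$ is Hausdorff, the image of $\hat\alpha$ is closed, so density of the image is equivalent to surjectivity of $\hat\alpha$. For $(ii)\Leftrightarrow(iii)$, I would invoke Proposition~\ref{Prop:ExistenceOfAFundamentalProObjectInFCovX} to identify $\pioneet(X,x)$ with $\varprojlim_\beta \Aut(Y_\beta/X)$ for a cofinal system of finite connected Galois covers $(Y_\beta\to X)_\beta$. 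A subgroup of such an inverse limit of finite groups is dense precisely when it surjects onto each term, and surjectivity onto a cofinal family of Galois covers is equivalent to surjectivity onto \emph{every} finite Galois cover, since monodromy is functorial and so surjectivity descends along dominations. This yields the equivalence with (ii).

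For $(i)\Rightarrow(ii)$, let $q\colon Y\to X$ be a finite connected Galois covering. By hypothesis $Y$ is path-connected; fix $y\in q^{-1}(x)$. Given $g\in\Aut(Y/X)$, choose a path $\tilde\gamma$ in $Y$ from $y$ to $g(y)$. Then $\gamma=q\circ\tilde\gamma$ is a loop in $X$ based at $x$, and by uniqueness of path lifting the monodromy $\alpha_Y([\gamma])$ sends $y$ to $g(y)$; since $\Aut(Y/X)$ acts simply transitively on $q^{-1}(x)$ (Lemma~\ref{Lemma:AutomorphismForGaloisCat}), this forces $\alpha_Y([\gamma])=g$.

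The heart of the argument is $(ii)\Rightarrow(i)$, which I would prove by reducing to the Galois case. Let $p\colon Z\to X$ be a finite connected covering. Since $\mathbf{FCov}(X)$ is a Galois category, there exists a finite connected Galois cover $q\colon Y\to X$ dominating $Z$ via some morphism $f\colon Y\to Z$, and the induced map $q^{-1}(x)\to p^{-1}(x)$ is surjective. By (ii), $\pionepath(X,x)\to\Aut(Y/X)$ is surjective, and $\Aut(Y/X)$ acts transitively on $q^{-1}(x)$. By naturality of the monodromy action under morphisms of covers --- which follows from applying uniqueness of path lifting to $f\circ\tilde\gamma$ for a lift $\tilde\gamma$ of a loop $\gamma$ --- the monodromy of $\pionepath(X,x)$ on $p^{-1}(x)$ is then transitive. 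Hence any two points of $p^{-1}(x)$ can be joined by a path in $Z$ (namely the lift of an appropriate loop in $X$), and any $z\in Z$ can be joined to some fibre point by lifting a path in $X$ from $p(z)$ to $x$, using path-connectedness of $X$. Therefore $Z$ is path-connected. The main point that requires care is the naturality of monodromy under morphisms of coverings, which is standard but must be invoked explicitly.
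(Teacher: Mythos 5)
Your proof is correct and follows essentially the same approach as the paper: the substantive directions $(i)\Rightarrow(ii)$ and $(ii)\Rightarrow(i)$ are handled by the same path-lifting arguments, and your argument for the formal equivalences $(ii)\Leftrightarrow(iii)\Leftrightarrow(iv)$ correctly fills in what the paper dismisses as "easily seen." The only cosmetic difference is in $(ii)\Rightarrow(i)$: the paper first shows the dominating Galois cover $Y$ is path-connected and then observes $Z$, being a continuous image of $Y$, is too, whereas you argue directly that the monodromy on $p^{-1}(x)\subset Z$ is transitive via naturality; both are fine.
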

\begin{proof}
	The equivalence of (ii), (iii) and (iv) is easily seen.
	
	We now show that (i) implies (ii). Assume that $X$ is stably path-connected, let $\beta\in\Aut (Y/X)$, and fix some $y\in p^{-1}(Y)$. By assumption $Y$ is path-connected, hence there exists a path $\tilde{\gamma }\colon [0,1]\to Y$ from $y$ to~$\beta (y)$. Then $\gamma =p\circ\tilde{\gamma }$ is a closed loop in $X$, and as in the proof of Proposition~\ref{Prop:ImageOfDiscreteAlpha} we find that $\alpha_Y([\gamma ])=\beta$. Hence (iv) holds.
	
	Finally we assume (ii) and show that it implies (i). Since every connected finite covering of $X$ is dominated by a finite Galois covering it suffices to show that all finite Galois covering spaces are path-connected. So, let $p\colon Y\to X$ be a finite Galois covering. First we note that every $y\in Y$ is in the same path-component as some element of the fibre $p^{-1}(x)$ (take a path from $p(y)$ to $x$ and lift it). It then suffices to show that any two points $y_1,y_2$ in $p^{-1}(x)$ can be linked by a path in~$Y$. Since $\Aut (Y/X)$ operates transitively on $p^{-1}$ and since $\alpha_Y$ is surjective, there is some loop $\gamma$ in $(X,x)$ such that $\alpha ([\gamma ])$ sends $y_1$ to $y_2$. There is then a lift of $\gamma$ to a path in $Y$ starting at $y_1$, and by our choice of $\gamma$ this path must end at~$y_2$. This shows~(iv).
\end{proof}
\begin{Remark}
	By Proposition~\ref{Prop:StablyPathConnectedThenAlphaSurjective} the image of $\alpha$ cannot be dense for our examples of path-connected spaces which are not stably path-connected. Indeed, the long circle $C$ from Example~\ref{epl:WarsawCircle}.(ii) is path-connected, and $C^{\lpc}$ is homeomorphic to the long line, hence $\pionepath (C)\cong\pionepath (C^{\lpc })$ is trivial. However, from the finite connected coverings of $C$ mentioned in Example~\ref{epl:WarsawCircle}.(ii) we see that $\pioneet (C)\cong\hat{\bbz }$. Similarly, the Warsaw circle $W$ from Example~\ref{epl:WarsawCircle}.(iii) has trivial classical fundamental group, and there exists a surjection $\pioneet (W)\to\hat{\bbz }$.
\end{Remark}

\subsection{Etale fundamental groups of schemes} We assume the classical theory of \'etale fundamental groups for schemes, as exposed in \cite{MR2017446}, to be known to the reader. Briefly, for a connected scheme $\mathcal{X}$ the category $\mathbf{FEt}(\mathcal{X})$ of \'etale coverings of $\mathcal{X}$ (i.e.\ schemes $\mathcal{Y}$ together with a finite \'etale morphism $\mathcal{Y}\to\mathcal{X}$) is a Galois category, and for every geometric point $\widebar{x}\colon\Spec\varOmega\to\mathcal{X}$ the functor
\begin{equation*}
\Phi_{\widebar{x}}\colon\mathbf{FEt}(\mathcal{X})\to\mathbf{FSet}
\end{equation*}
is a fibre functor, and the corresponding fundamental group $\Aut (\Phi_{\widebar{x}})$ is called the \emph{\'etale fundamental group} of $\mathcal{X}$ at $\widebar{x}$ and denoted by $\pioneet (\mathcal{X},\widebar{x})$.

If $\mathcal{X}$ is a connected scheme of finite type over $\bbc$, there is a canonical topology called the \emph{complex topology} on $\mathcal{X}(\bbc )$, turning it into a connected topological space. For an \'etale covering $\mathcal{Y}\to\mathcal{X}$ the map $\mathcal{Y}(\bbc )\to\mathcal{X}(\bbc )$ is then a finite covering. Hence we obtain a functor $\mathbf{FEt}(\mathcal{X})\to\mathbf{FCov}(\mathcal{X}(\bbc ))$ which is an equivalence of categories by \emph{Riemann's Existence Theorem}, cf.\ \cite[Expos\'e~XII, Th\'eor\`eme~5.1]{MR2017446}. In particular we obtain an isomorphism of profinite groups
\begin{equation*}
\pioneet (\mathcal{X},\widebar{x})\cong\pioneet (\mathcal{X}(\bbc ),\widebar{x}).
\end{equation*}
 See section~\ref{section:RelationXFCalXF} for a partial extension of these observations to schemes of infinite type over~$\bbc$.

For a field $F$, the \'etale covers of $\Spec F$ are of the form $\Spec E$ where $E$ is an \'etale $F$-algebra, i.e.\ a finite product of finite separable field extensions of~$F$. Consequently, a universal profinite covering is given by $\Spec\Fbar\to\Spec F$ where $\Fbar$ is a separable closure of~$F$. Note that the morphism $\widebar{x}\colon\Spec\Fbar\to\Spec F$ is also a geometric point of $\Spec F$, and we obtain an isomorphism of profinite groups
\begin{equation*}
\pioneet (\Spec F,\widebar{x})\cong\Gal (\Fbar /F),
\end{equation*}
where $\Gal (\Fbar /F)$ is endowed with the Krull topology.

We note a technical result on \'etale fundamental groups of schemes which is analogous to one for compact Hausdorff spaces mentioned before.

\begin{Lemma}\label{Lem:SimplyConnectedProEtaleIsUniversal}
	Let $\mathcal{X}$ be a connected quasi-compact quasi-separated scheme, and let $(p_{\alpha }\colon \mathcal{Y}_{\alpha }\to \mathcal{X})_{\alpha }$ be a cofiltered projective system of connected finite \'etale coverings such that $\tilde{\mathcal{Y}}=\varprojlim_{\alpha}\mathcal{Y}_{\alpha }$ is simply connected (in the sense that $\pioneet (\tilde{\mathcal{Y}})$ is trivial).
	
	Then every connected finite \'etale covering space of $\mathcal{X}$ is dominated by some~$\mathcal{Y}_{\alpha }$.
\end{Lemma}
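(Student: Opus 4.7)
The proof should proceed in direct analogy to the topological case (Lemma~\ref{Lem:SimplyConnectedProfiniteCovIsUniversal}), replacing the topological descent result Proposition~\ref{Prop:FiniteCoveringDefinedOnFiniteLevel} by its scheme-theoretic counterpart, i.e.\ Grothendieck's limit theorems for qcqs schemes along cofiltered systems with affine transition maps (EGA IV.8 together with the preservation of the \'etale property in 17.7.8). Concretely, since each $p_{\alpha}$ is finite, each $\mathcal{Y}_{\alpha}$ is qcqs and the transition morphisms in the system $(\mathcal{Y}_{\alpha})$ are finite, hence affine, so the limit $\tilde{\mathcal{Y}}=\varprojlim_{\alpha}\mathcal{Y}_{\alpha}$ exists in the category of schemes and is qcqs.

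The plan is as follows. Let $q\colon \mathcal{Z}\to\mathcal{X}$ be a connected finite \'etale cover. First I would base change along $\tilde{\mathcal{Y}}\to\mathcal{X}$ to form the finite \'etale cover $\mathcal{Z}\times_{\mathcal{X}}\tilde{\mathcal{Y}}\to\tilde{\mathcal{Y}}$. Since $\pioneet(\tilde{\mathcal{Y}})=1$, every finite \'etale cover of $\tilde{\mathcal{Y}}$ is trivial, so there is an isomorphism
\[
\mathcal{Z}\times_{\mathcal{X}}\tilde{\mathcal{Y}}\ \cong\ \tilde{\mathcal{Y}}\times D
\]
of $\tilde{\mathcal{Y}}$-schemes, for some finite set~$D$. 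Then I would invoke the limit theorem: finite \'etale morphisms over $\tilde{\mathcal{Y}}$ and morphisms between them descend to some finite level of the cofiltered system. Thus there exists an index $\alpha_{0}$ and an isomorphism of $\mathcal{Y}_{\alpha_{0}}$-schemes
\[
\mathcal{Z}\times_{\mathcal{X}}\mathcal{Y}_{\alpha_{0}}\ \cong\ \mathcal{Y}_{\alpha_{0}}\times D.
\]
Choosing any $d\in D$ provides a section $\mathcal{Y}_{\alpha_{0}}\to\mathcal{Z}\times_{\mathcal{X}}\mathcal{Y}_{\alpha_{0}}$; composing with the projection to $\mathcal{Z}$ yields a morphism $\mathcal{Y}_{\alpha_{0}}\to\mathcal{Z}$ over $\mathcal{X}$. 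This is the desired domination; it is automatically finite \'etale because $\mathcal{Y}_{\alpha_{0}}$ and $\mathcal{Z}$ are both finite \'etale over $\mathcal{X}$.

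The main obstacle is invoking the correct descent/limit statement for finite \'etale morphisms along a cofiltered inverse limit of qcqs schemes with affine transition maps, and verifying that isomorphisms (not just objects) descend to some finite stage. Both are standard: by EGA IV 8.8.2 and 8.10.5 finitely presented morphisms and their isomorphisms descend, while 17.7.8 shows that the \'etale property also descends, so the conclusion applies verbatim to finite \'etale covers. Once this technical input is in place, the rest of the argument is a direct transcription of the topological proof.
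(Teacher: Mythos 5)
Your proof is correct and takes the same route the paper intends: the paper's own proof just says the argument is "strictly parallel" to Lemma~\ref{Lem:SimplyConnectedProfiniteCovIsUniversal}, and you have faithfully carried out that transcription, correctly identifying the scheme-theoretic replacement for Proposition~\ref{Prop:FiniteCoveringDefinedOnFiniteLevel} as the EGA~IV limit/spreading-out theorems for qcqs schemes with affine transition maps. One point worth making explicit (though harmless here) is that the transition maps $\mathcal{Y}_{\alpha_1}\to\mathcal{Y}_{\alpha_2}$ are themselves finite \'etale — a morphism of finite \'etale $\mathcal{X}$-schemes over $\mathcal{X}$ is finite \'etale by cancellation — which both justifies the affine transition maps you invoke and shows that the final morphism $\mathcal{Y}_{\alpha_0}\to\mathcal{Z}$, being a finite \'etale map to a connected scheme, is surjective.
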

\begin{proof}
The proof of Lemma~\ref{Lem:SimplyConnectedProEtaleIsUniversal} is strictly parallel to that of Lemma~\ref{Lem:SimplyConnectedProfiniteCovIsUniversal}.
\end{proof}

\section{Topological invariants of Pontryagin duals}

\noindent We begin by briefly summarising the basic results about Pontryagin duals; for a systematic introduction see~\cite{MR0152834}.

For a commutative, locally compact topological group $M$ let $M^{\vee }$ be its \emph{Pontryagin dual}, i.e.\ the set of continuous group homomorphisms $M\to\bbs^1$, endowed with the compact open topology. By Pontryagin duality, this is again a commutative, locally compact topological group, and the tautological map $M\to (M^{\vee })^{\vee }$ is an isomorphism.

The Pontryagin dual $M^{\vee }$ is compact if and only if $M$ is discrete. Moreover, $M^{\vee }$ is connected if and only if $M$ is torsion-free, and $M^{\vee }$ is totally disconnected if and only if $M$ is a torsion group.

\subsection{Pontryagin duals of discrete abelian groups}

For discrete abelian $M$ there is a short exact sequence
\begin{equation*}
0\to M_{\mathrm{tors}}\to M\to M_{\mathrm{tf}}\to 0,
\end{equation*}
where $M_{\mathrm{tors}}$ is the torsion subgroup of $M$, and $M_{\mathrm{tf}}$ is the maximal torsion-free quotient of~$M$. By duality we obtain a short exact sequence of compact topological groups
\begin{equation*}
0\to M_{\mathrm{tf}}^{\vee}\to M^{\vee }\to M_{\mathrm{tors}}^{\vee }\to 0
\end{equation*}
with $M_{\mathrm{tf}}^{\vee }$ connected and $M_{\mathrm{tors}}^{\vee}$ totally disconnected. In particular, the connected components of $M^{\vee }$ are precisely the translates of the subgroup $M_{\mathrm{tf}}^{\vee }$. Note that $M_{\mathrm{tf}}$ can be written as a filtered inductive limit over free abelian groups of finite rank, hence $M_{\mathrm{tf}}^{\vee }$ can be written as a cofiltered projective limit over finite-dimensional tori. We then also see that the canonical homomorphism $\pi_0(M^{\vee })\to M_{\mathrm{tors}}^{\vee }$ is an isomorphism of topological groups, in particular $\pi_0(M^{\vee })$ is compact and totally disconnected, therefore profinite.

The determination of the path components of $M^{\vee }$ is a bit more involved. Consider the short exact sequence
\begin{equation*}
0\to \bbz \to \bbr \to\bbs^1\to 0,
\end{equation*}
where $\bbr\to\bbs^1$ is the map $t\mapsto\mathrm{e}^{2\pi\mathrm{i}t}$, and apply the left-exact functor $\Hom (M,-)$ to it. This yields an exact sequence
\begin{equation*}
\begin{split}
0&\to\Hom (M,\bbz )\to\Hom (M,\bbr )\to \Hom (M,\bbs^1)\\
&\overset{\delta}{\to}\Ext (M,\bbz )\to\Ext (M,\bbr )\to\dotsc ;
\end{split}
\end{equation*}
here $\Hom (M,\bbs^1)=M^{\vee }$ (because $M$ is discrete), and since $\bbr$ is a $\bbq$-vector space, it is injective as an abelian group, hence $\Ext (M,\bbr )=0$. That is, the interesting part of our sequence can be rewritten as
\begin{equation}\label{eqn:LESForMAndRealExponentialSequence}
\Hom (M,\bbr )\to M^{\vee} \overset{\delta}{\to }\Ext (M,\bbz )\to 0.
\end{equation}
Recall that the connecting homomorphism $\delta$ can be described in a more explicit way: if $\chi\in M^{\vee}=\Hom (M,\bbs^1)$ then $\delta (\chi )\in\Ext (M,\bbz )$ is the class of the extension
\begin{equation*}
0\to\bbz \to E_{\chi }\to M\to 0,
\end{equation*}
where
\begin{equation*}
E_{\chi }= \bbr \times_{\bbs^1,\chi }M= \{ (r,m)\in\bbr \times M\mid\exp r=\chi (M) \} .
\end{equation*}
\begin{Proposition}\label{Prop:PiZeroPathPontryaginDual}
	 Let $M$ be a discrete abelian group. The path component of $M^{\vee }$ containing the trivial element is precisely the image of $\Hom (M,\bbr )$. In particular, the connecting homomorphism $\delta$ in (\ref{eqn:LESForMAndRealExponentialSequence}) induces a group isomorphism
	\begin{equation*}
	\pizeropath (M^{\vee })\overset{\cong }{\to }\Ext (M,\bbz ).
	\end{equation*}
\end{Proposition}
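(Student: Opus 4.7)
The plan is to prove the main assertion, that the neutral path component of $M^{\vee}$ coincides with the image of $\Hom(M,\bbr)\to M^{\vee}$; the asserted isomorphism then follows formally. Indeed, $M^{\vee}$ is a topological group, so its path components are precisely the cosets of the path component $(M^{\vee})^{\circ}$ of the identity. Once we know $(M^{\vee})^{\circ}=\operatorname{im}(\Hom(M,\bbr)\to M^{\vee})=\ker\delta$, exactness of (\ref{eqn:LESForMAndRealExponentialSequence}) yields the isomorphism $\pizeropath(M^{\vee})=M^{\vee}/(M^{\vee})^{\circ}\cong\operatorname{im}\delta=\Ext(M,\bbz)$.

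For the inclusion $\operatorname{im}(\Hom(M,\bbr)\to M^{\vee})\subseteq (M^{\vee})^{\circ}$, take $\varphi\in\Hom(M,\bbr)$ and define $H\colon[0,1]\to M^{\vee}$ by $H(t)(m)=\exp(t\varphi(m))$. Because $M$ is discrete, the compact-open topology on $M^{\vee}=\Hom(M,\bbs^1)$ agrees with the topology of pointwise convergence, so continuity of $H$ reduces to continuity of each $t\mapsto\exp(t\varphi(m))$, which is obvious. Thus $H$ is a path in $M^{\vee}$ from the trivial character to $\exp\circ\varphi$.

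For the reverse inclusion $(M^{\vee})^{\circ}\subseteq\operatorname{im}(\Hom(M,\bbr)\to M^{\vee})$, I will use the path-lifting property of the covering map $\exp\colon\bbr\to\bbs^1$. Fix a continuous path $\gamma\colon[0,1]\to M^{\vee}$ with $\gamma(0)$ trivial and $\gamma(1)=\chi$. For each $m\in M$ the map $\gamma_m\colon[0,1]\to\bbs^1$, $t\mapsto\gamma(t)(m)$, is continuous (again because evaluation at $m$ is continuous in the compact-open topology), starts at $1$, and therefore admits a unique continuous lift $\tilde{\gamma}_m\colon[0,1]\to\bbr$ with $\tilde{\gamma}_m(0)=0$. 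Define $\varphi\colon M\to\bbr$ by $\varphi(m)=\tilde{\gamma}_m(1)$. For $m_1,m_2\in M$, both $\tilde{\gamma}_{m_1+m_2}$ and $\tilde{\gamma}_{m_1}+\tilde{\gamma}_{m_2}$ are continuous lifts of $\gamma_{m_1+m_2}=\gamma_{m_1}\cdot\gamma_{m_2}$ starting at $0$, so uniqueness of path lifts forces them to coincide; evaluating at $t=1$ gives additivity of $\varphi$. Hence $\varphi\in\Hom(M,\bbr)$, and by construction $\exp(\varphi(m))=\gamma_m(1)=\chi(m)$, i.e.\ $\chi$ lies in the image of $\Hom(M,\bbr)\to M^{\vee}$.

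There is no real obstacle: the argument is purely the standard path-lifting construction for the universal cover of $\bbs^1$, used simultaneously in each coordinate $m\in M$, with the uniqueness clause of Proposition~\ref{Prop:UHLP} turning the pointwise construction into a group homomorphism. The only subtle point worth flagging is the use of the fact that the compact-open topology on $\Hom(M,\bbs^1)$ coincides with pointwise convergence when $M$ is discrete, which is what guarantees both that the explicit homotopy $H$ in the first inclusion is continuous and that $\gamma_m$ in the second inclusion is continuous.
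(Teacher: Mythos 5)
Your proof is correct and takes essentially the same approach as the paper: both establish the first inclusion via the explicit contraction $t\mapsto t\varphi$ in $\Hom(M,\bbr)$, and both handle the reverse inclusion by lifting each coordinate path $\gamma_m$ through $\exp\colon\bbr\to\bbs^1$ and deducing additivity from uniqueness of lifts. Your deduction of additivity (comparing $\tilde{\gamma}_{m_1+m_2}$ with $\tilde{\gamma}_{m_1}+\tilde{\gamma}_{m_2}$ via uniqueness) is a clean restatement of the paper's argument that $f_{m,n}(t)=\tilde{\gamma}_m(t)+\tilde{\gamma}_n(t)-\tilde{\gamma}_{m+n}(t)$ is a continuous integer-valued function vanishing at $0$.
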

\begin{proof}
	Endow $\Hom (M,\bbr )$ with the compact-open topology. Then it is clearly path-connected (if $f$ is an element of this space, then so is $tf$ for every $t\in [0,1]$, and the assignment $t\mapsto tf$ is continuous). Furthermore, the map $\Hom (M,\bbr )\to M^{\vee }$ is continuous, therefore its image must be contained in the trivial path component of~$M^{\vee }$.
	
	For the other inclusion, consider a path beginning at the trivial element of~$M^{\vee }$. Such a path is given by a family $(\chi_t)_{t\in [0,1]}$ of group homomorphisms $\chi_t\colon M\to\bbs^1$ such that for every $m\in M$, $\chi_0(m)=1$ and the map $\gamma_m\colon [0,1]\to\bbs^1$, $t\mapsto\chi_t(m)$, is continuous. We need to show that then $\chi_t$ is in the image of $\Hom (M,\bbr )$, for every $t\in [0,1]$.
	
	Since $\bbr$ is the universal covering space of $\bbs^1$, there is a unique continuous lift $\tilde{\gamma}_m\colon [0,1]\to\bbr$ with $\tilde{\gamma }_m(0)=0$. Then for each $t\in [0,1]$ we define a map
	\begin{equation*}
	\tilde{\chi }_t\colon M\to\bbr ,\quad m\mapsto\tilde{\gamma }_m(t).
	\end{equation*}
	By construction, the composition
	\begin{equation*}
	M\overset{\tilde{\chi}_t}{\to}\bbr \overset{\exp}{\to }\bbs^1
	\end{equation*}
	is equal to $\chi_t$; it remains to be shown that $\tilde{\chi}_t$ is indeed a group homomorphism.
	
	For each $m,n\in M$ consider the map
	\begin{equation*}
	f_{m,n}\colon [0,1]\to\bbr ,\quad t\mapsto\tilde{\chi}_t(m)+\tilde{\chi}_t(n)-\tilde{\chi}_t(m+n) =\tilde{\gamma }_m(t)+\tilde{\gamma }_n(t)-\tilde{\gamma}_{m+n}(t).
	\end{equation*}
	This is a continuous map, as can be seen from the second expression. Also it has image in $\bbz $ because the $\chi_t=\exp\tilde{\chi}_t$ are group homomorphisms. Hence it is constant. Since $f_{m,n}(0)=0$, it has to be zero. That this holds for every $m,n\in M$ precisely means that each $\tilde{\chi }_t$ is a group homomorphism.
\end{proof}
\begin{Remark}
	A \emph{Whitehead group} is an abelian group $M$ with $\Ext (M,\bbz )=0$. By Proposition~\ref{Prop:PiZeroPathPontryaginDual} an abelian group is a Whitehead group if and only if its Pontryagin dual is path-connected. Clearly free abelian groups are Whitehead groups; their Pontryagin duals are products of circle groups. The \emph{Whitehead problem} is the question whether the reverse implication holds, i.e.\ whether every Whitehead group is free abelian or, equivalently, whether every abelian path-connected compact Hausdorff topological group is a product of circles.
	
	Stein \cite{MR0043219} showed that every countable Whitehead group is indeed free abelian; Shelah \cite{MR0357114, MR0389579} showed that the statement `every Whitehead group is free abelian' is independent of ZFC. More precisely, if G\"odel's constructibility axiom $V=L$ holds, then every Whitehead group is free abelian; if $2^{\aleph_0}>\aleph_1$ and Martin's Axiom holds, then there is a Whitehead group of cardinality $\aleph_1$ which is not free abelian.
\end{Remark}
The subspace topology on this path component may be odd (cf.\ Example~\ref{Epl:lpcFoliationsSolenoid}.(iii)), but its minimal locally path-connected refinement admits a simple description, at least in the case relevant to us:
\begin{Proposition}\label{Prop:LPCForPontryaginDuals}
	Let $M$ be a $\bbq$-vector space with discrete topology, and let $(M^{\vee})^{\circ}$ be the neutral path component of its Pontryagin dual. Then there is a natural isomorphism of topological groups
	\begin{equation*}
	\Hom (M,\bbr )\cong ((V^{\vee })^{\circ})^{\lpc},
	\end{equation*}
	where $\Hom (M,\bbr )$ is endowed with the compact-open topology.
\end{Proposition}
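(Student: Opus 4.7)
My plan is to exhibit the map
\[
e\colon \Hom(M,\bbr)\longrightarrow (M^{\vee})^{\circ},\quad \varphi\mapsto \exp\circ\varphi
\]
as a continuous group homomorphism, then factor it through $((M^\vee)^{\circ})^{\lpc}$ via the universal property in Lemma~\ref{Lem:CheapPropertiesOfLPCFunctor}(v), and finally show that the resulting continuous bijection is open. Naturality in $M$ is automatic once the map has been identified on underlying groups.

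First, since $M$ is a $\bbq$-vector space and $\bbz$ has no nontrivial divisible subgroup, $\Hom(M,\bbz)=0$. Combined with the six-term sequence~(\ref{eqn:LESForMAndRealExponentialSequence}) and Proposition~\ref{Prop:PiZeroPathPontryaginDual}, this shows that $e$ is a bijection onto $(M^{\vee})^{\circ}$; it is continuous because $\exp\colon\bbr\to\bbs^1$ is continuous and both sides carry the compact-open topology. Next, since $M$ is discrete, compact subsets of $M$ are finite, so the compact-open topology on $\Hom(M,\bbr)$ is the topology of pointwise convergence; this realises $\Hom(M,\bbr)$ as a subgroup of $\bbr^M$ with the product topology, hence as a locally convex topological vector space. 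In particular $\Hom(M,\bbr)$ is locally path-connected, and Lemma~\ref{Lem:CheapPropertiesOfLPCFunctor}(v) produces a continuous bijective group homomorphism $\tilde e\colon\Hom(M,\bbr)\to ((M^{\vee})^{\circ})^{\lpc}$.

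The main step is to show $\tilde e$ is open. Since $\tilde e$ is a group homomorphism it suffices to treat a basis of neighbourhoods of $0$. For a finite set $F=\{m_1,\dots,m_n\}\subseteq M$ and $0<\varepsilon<\tfrac12$ put
\[
W_{F,\varepsilon} = \{\varphi\in\Hom(M,\bbr) \mid |\varphi(m_i)|<\varepsilon \text{ for all } i\},
\]
\[
V_{F,\varepsilon} = \{\chi\in (M^{\vee})^{\circ} \mid \chi(m_i)\in\exp((-\varepsilon,\varepsilon)) \text{ for all } i\}.
\]
The $W_{F,\varepsilon}$ form a neighbourhood basis of $0$, and $V_{F,\varepsilon}$ is open in $(M^{\vee})^{\circ}$. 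I claim $\tilde e(W_{F,\varepsilon})=U(V_{F,\varepsilon},1)$, the path component of $1$ in $V_{F,\varepsilon}$; by Lemma~\ref{Lem:CheapPropertiesOfLPCFunctor}(i) this is open in $((M^{\vee})^{\circ})^{\lpc}$, which concludes the proof. The inclusion $\tilde e(W_{F,\varepsilon})\subseteq U(V_{F,\varepsilon},1)$ follows by joining $\exp\circ\varphi$ to $1$ along the straight-line homotopy $t\mapsto\exp\circ(t\varphi)$, which stays in $V_{F,\varepsilon}$ because $|t\varphi(m_i)|<\varepsilon$.

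For the reverse inclusion, let $\chi\in U(V_{F,\varepsilon},1)$ and let $\gamma\colon [0,1]\to V_{F,\varepsilon}$ be a path with $\gamma(0)=1$, $\gamma(1)=\chi$. As in the proof of Proposition~\ref{Prop:PiZeroPathPontryaginDual}, $\gamma$ admits a unique continuous pointwise lift $\tilde\gamma\colon [0,1]\to\Hom(M,\bbr)$ with $\tilde\gamma(0)=0$ and $\exp\circ\tilde\gamma(t)=\gamma(t)$. For each $i$, the real-valued function $t\mapsto\tilde\gamma(t)(m_i)$ is a continuous lift of $\gamma(t)(m_i)\in\exp((-\varepsilon,\varepsilon))$ starting at $0$; since $\varepsilon<\tfrac12$, the preimage $\exp^{-1}(\exp((-\varepsilon,\varepsilon)))=\bigcup_{k\in\bbz}(k-\varepsilon,k+\varepsilon)$ has $(-\varepsilon,\varepsilon)$ as the connected component containing $0$, so $\tilde\gamma(t)(m_i)\in(-\varepsilon,\varepsilon)$ for all $t$. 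Thus $\tilde\gamma(1)\in W_{F,\varepsilon}$ and $\tilde e(\tilde\gamma(1))=\chi$. The trickiest point, and the heart of the argument, is precisely this connectedness observation about $\exp^{-1}$: one has to exploit that the path stays in the uniformly small neighbourhood $V_{F,\varepsilon}$ simultaneously for all the finitely many coordinates $m_i$, which is exactly what ensures that the abstract bijection $e$ becomes a homeomorphism after replacing the target by its locally path-connected refinement.
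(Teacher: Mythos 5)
Your proof is correct and follows the same underlying idea as the paper's: obtain a continuous bijection from the exact sequence, use local path-connectedness of $\Hom(M,\bbr)$ to factor it through the $\lpc$-refinement, and show openness by identifying the image of a basic neighbourhood $W_{F,\varepsilon}$ with the path component $U(V_{F,\varepsilon},1)$. The paper's proof is very terse — it defers the openness step to ``arguing as in Example~\ref{Epl:lpcFoliationsSolenoid}.(iii)'', which works out the rank-one case $M=\bbq$ — whereas you spell out the general argument directly; in particular, the pointwise path-lifting and the observation that $\exp^{-1}\bigl(\exp((-\varepsilon,\varepsilon))\bigr)$ has $(-\varepsilon,\varepsilon)$ as the connected component of $0$ (for $\varepsilon<\tfrac12$) is exactly the content the paper leaves implicit. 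Your version is more self-contained, but not a genuinely different route.
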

\begin{proof}
	From the short exact sequence (\ref{eqn:LESForMAndRealExponentialSequence}) we obtain a map $\Hom (M,\bbr)\to ((M^{\vee })^{\circ })^{\lpc}$. This map is a continuous bijection and an isomorphism of abstract groups. Arguing as in Example~\ref{Epl:lpcFoliationsSolenoid}.(iii) we find that it is in fact a homeomorphism.
\end{proof}
\begin{Corollary}\label{Cor:PontryaginDualHasTrivialPionepath}
	Let $M$ be a $\bbq$-vector space with discrete topology. Then $\pionepath (M^{\vee },1)$ is the trivial group.
\end{Corollary}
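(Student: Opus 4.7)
The plan is to reduce the claim to the contractibility of $\Hom(M,\bbr)$ with the compact-open topology, via the two immediately preceding results.

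First I would observe that, as noted at the beginning of Section~2, a loop based at the identity and any homotopy between such loops are automatically contained in the path component $(M^{\vee})^{\circ}$ of $1$; hence the inclusion induces an isomorphism $\pionepath((M^{\vee})^{\circ},1)\xrightarrow{\cong}\pionepath(M^{\vee},1)$. Next, applying Corollary~\ref{Cor:LPCPreservesManyInvariants}.(ii) to $X=(M^{\vee})^{\circ}$ gives a further isomorphism $\pionepath(((M^{\vee})^{\circ})^{\lpc},1)\xrightarrow{\cong}\pionepath((M^{\vee})^{\circ},1)$. Combining these with Proposition~\ref{Prop:LPCForPontryaginDuals}, it suffices to show that the classical fundamental group of $\Hom(M,\bbr)$, endowed with the compact-open topology, is trivial.

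For this, the key point is that $\Hom(M,\bbr)$ is a topological $\bbr$-vector space: scalar multiplication $\bbr\times\Hom(M,\bbr)\to\Hom(M,\bbr)$, $(t,f)\mapsto tf$, is continuous for the compact-open topology (since it is so on each $\Hom(M,\bbr)$ restricted to a fixed compact—here, finite—subset of $M$). Consequently the map
\[
H\colon\Hom(M,\bbr)\times[0,1]\to\Hom(M,\bbr),\qquad (f,t)\mapsto (1-t)f,
\]
is a continuous deformation retraction onto the zero homomorphism, so $\Hom(M,\bbr)$ is contractible. In particular its classical fundamental group at $0$ is trivial, which completes the proof.

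There is no real obstacle here; the proof is a routine concatenation of the preceding structural results together with the observation that $\Hom(M,\bbr)$, being a topological $\bbr$-vector space, is star-shaped at the origin and hence has trivial homotopy groups.
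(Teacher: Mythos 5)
Your proof is correct and follows essentially the same route as the paper, which simply cites Corollary~\ref{Cor:LPCPreservesManyInvariants} and Proposition~\ref{Prop:LPCForPontryaginDuals}. You have filled in the two implicit steps — restricting to the path component of $1$ and observing that $\Hom(M,\bbr)$, being a topological $\bbr$-vector space (here a subspace of $\bbr^M$ with the product topology), is contractible via the star-shaped homotopy — both of which are correct.
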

\begin{proof}
	This follows from Corollary~\ref{Cor:LPCPreservesManyInvariants} and Proposition~\ref{Prop:LPCForPontryaginDuals}.
\end{proof}
We will now determine the \'etale fundamental groups of Pontryagin duals.
\begin{Proposition}\label{Prop:FiniteCoversOfPontryaginDual}
	Let $M$ be a torsion-free discrete abelian group. Then the connected \'etale coverings of $M^{\vee }$ are precisely the $N^{\vee }$, where $N$ runs through the subgroups of $V=M\otimes\bbq$ that contain $M$ with finite index.
\end{Proposition}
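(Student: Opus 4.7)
The plan is to exhibit every connected finite covering of $M^{\vee}$ as $N^{\vee}$ for some $M\subseteq N\subseteq V:=M\otimes_{\bbz}\bbq$ with $[N:M]$ finite, via the criterion of Lemma~\ref{Lem:SimplyConnectedProfiniteCovIsUniversal}. The forward direction---that each such $N^{\vee}\to M^{\vee}$ is a connected finite covering---is quick: dualising the inclusion $M\hookrightarrow N$ yields a continuous surjection of compact groups whose kernel is the finite discrete group $(N/M)^{\vee}$, which exhibits $N^{\vee}\to M^{\vee}$ as a principal bundle of degree $[N:M]$ and hence a finite covering; connectedness of $N^{\vee}$ follows from $N\subseteq V$ being torsion-free.

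For the converse, let $\mathcal{N}$ denote the directed family of finite-index overgroups $M\subseteq N\subseteq V$. Clearing denominators in any finitely generated subgroup of $V$ shows $V=\varinjlim_{N\in\mathcal{N}}N$, and Pontryagin duality yields $V^{\vee}=\varprojlim_{N\in\mathcal{N}}N^{\vee}$. The key point is to verify that $\pioneet (V^{\vee})=1$. By Proposition~\ref{Prop:FiniteCoveringDefinedOnFiniteLevel}, every finite covering of $V^{\vee}$ arises by pullback from a finite covering of $V_{\alpha}^{\vee}$ for some finitely generated subgroup $V_{\alpha}\subseteq V$. Since $V_{\alpha}\cong\bbz^{r}$ makes $V_{\alpha}^{\vee}$ an $r$-torus, such a covering is necessarily of the form $L^{\vee}\to V_{\alpha}^{\vee}$ for a finite-index overlattice $V_{\alpha}\subseteq L\subseteq V_{\alpha}\otimes\bbq\subseteq V$. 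Crucially, $L\subseteq V$ gives a factorisation $V^{\vee}\to L^{\vee}\to V_{\alpha}^{\vee}$ of the structure map, and this factorisation is a section of the pulled-back cover, so the pullback is trivial; in particular no nontrivial connected finite cover of $V^{\vee}$ exists.

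Applying Lemma~\ref{Lem:SimplyConnectedProfiniteCovIsUniversal} to the cofiltered system $(N^{\vee}\to M^{\vee})_{N\in\mathcal{N}}$, every connected finite cover $Y\to M^{\vee}$ is dominated by some $N^{\vee}\to M^{\vee}$. Since the latter is Galois with deck group $(N/M)^{\vee}$, one has $Y\cong N^{\vee}/H$ for a unique subgroup $H\subseteq (N/M)^{\vee}$. Pontryagin duality for the finite abelian group $N/M$ exhibits $H$ as the annihilator of some subgroup $K\subseteq N/M$; letting $N'$ be the preimage of $K$ under $N\to N/M$, the map $N^{\vee}\to N'^{\vee}$ dual to $N'\hookrightarrow N$ has kernel $(N/N')^{\vee}=((N/M)/K)^{\vee}=H$, so $Y\cong N'^{\vee}$, and $N'$ lies in $\mathcal{N}$.

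The principal technical obstacle I anticipate is the proof that $V^{\vee}$ is \'etale simply connected; the decisive observation there is that the $\bbq$-vector-space structure of $V$ forces every finite-index overlattice of a finitely generated subgroup of $V$ to already be contained in $V$, which is exactly what produces the trivialising section above.
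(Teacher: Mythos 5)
Your argument is correct but takes a genuinely different route from the paper's. For the forward direction you observe directly that a surjective continuous homomorphism of compact Hausdorff abelian groups with finite discrete kernel is automatically a covering; the paper instead reduces to the finitely generated case by choosing $N'\subseteq N$ not contained in $M$ and writing $N^{\vee}\cong(N')^{\vee}\times_{(M')^{\vee}}M^{\vee}$ as a base change of a covering of tori. For the converse the paper again pulls back to a finitely generated $M'\subseteq M$ (via Proposition~\ref{Prop:FiniteCoveringDefinedOnFiniteLevel}), identifies the descended cover as $(N')^{\vee}$, writes the original cover as $(N'\oplus_{M'}M)^{\vee}$, and uses torsion-freeness of this pushout to embed it into $V$; you instead establish $\pioneet(V^{\vee})=1$ directly, invoke Lemma~\ref{Lem:SimplyConnectedProfiniteCovIsUniversal} so that every connected cover of $M^{\vee}$ is dominated by some $N^{\vee}$, and finish with Pontryagin duality for finite abelian groups to produce the intermediate $N'$. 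Your version has the conceptual merit of making explicit that $V^{\vee}$ plays the role of the universal (profinite) cover, at the cost of invoking the heavier Lemma~\ref{Lem:SimplyConnectedProfiniteCovIsUniversal}. Two small points of care: the claim ``a section, so the pullback is trivial'' is not automatic for coverings over a connected base, but holds here because the section $\chi\mapsto(\chi|_L,\chi)$ is a continuous \emph{group} homomorphism splitting the short exact sequence $0\to(L/V_\alpha)^{\vee}\to Z\to V^{\vee}\to 0$, so $Z\cong V^{\vee}\times(L/V_\alpha)^{\vee}$ and every connected component maps isomorphically; and before classifying coverings of $V_\alpha^{\vee}$ as $L^{\vee}$ you should insert the harmless reduction to the case that the covering $Y_0\to V_\alpha^{\vee}$ from which $Y$ is pulled back is itself connected.
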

\begin{proof}
	Let us first assume that $M$ is finitely generated; then $M^{\vee }$ is an $n$-dimensional torus, and the statement is well-known.
	
	Now consider the general case. We first show that each $N$ indeed defines a finite covering of $M^{\vee }$. Without loss of generality we may assume that $(N:M)$ is a prime~$p$. Then there exists a finitely generated subgroup $N'\subseteq N$ which is not completely contained in~$M$; the subgroup $M'=M\cap N'$ is then also finitely generated, and $(N':M')$ must be $p$ because it cannot be~$1$. Then $N=N'+M$ and thus
	\begin{equation*}
	N^{\vee }\cong (N')^{\vee}\times_{(M')^{\vee}}M^{\vee}.
	\end{equation*}
	Hence the map $N^{\vee }\to M^{\vee }$ is a base change of the finite covering $(N')^{\vee}\to (M')^{\vee }$, hence itself a finite covering. Since $N$ is also torsion-free, $N^{\vee }$ is connected.
	
	For the other implication, let $Y\to M^{\vee}$ be a finite covering. By Proposition \ref{Prop:FiniteCoveringDefinedOnFiniteLevel} this must be the base change of some finite covering space $Y_0\to (M')^{\vee }$ for a finitely generated subgroup $M'\subseteq M$, and by what we have already shown, $Y_0=(N')^{\vee }$ with $(N':M')$ finite. Then $Y=(N'\oplus_{M'}M)^{\vee }$; since this is assumed to be connected, $N'\oplus_{M'}M$ must be torsion-free, hence embed via the obvious map into $V=M\otimes\bbq$.
\end{proof}
The next corollary follows essentially formally from Proposition~\ref{Prop:FiniteCoversOfPontryaginDual}.
\begin{Corollary}\label{Cor:PioneetPontryaginDual}
	Let $M$ be a torsion-free discrete abelian group. Then $\pioneet (M^{\vee },0)\cong (M\otimes\bbq /M)^{\vee}$ as topological groups.
	
	In particular, if $M$ is a $\bbq$-vector space, then $\pioneet (M^{\vee })$ is trivial.
	\hfill $\square$
\end{Corollary}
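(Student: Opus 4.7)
The plan is to combine Proposition~\ref{Prop:FiniteCoversOfPontryaginDual} with the description of $\pioneet$ as an inverse limit of deck transformation groups furnished by Proposition~\ref{Prop:ExistenceOfAFundamentalProObjectInFCovX}. Set $V = M\otimes\bbq$, and index the connected finite coverings of $M^{\vee}$ by the directed set of subgroups $N\subseteq V$ with $M\subseteq N$ and $[N:M]<\infty$ (ordered by inclusion).

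First I would check that each covering $N^{\vee}\to M^{\vee}$ is Galois with a natural identification of its deck group. Pontryagin duality applied to the short exact sequence $0\to M\to N\to N/M\to 0$ gives a short exact sequence of compact groups
\[
0\to (N/M)^{\vee}\to N^{\vee}\to M^{\vee}\to 0,
\]
so $(N/M)^{\vee}$ acts on $N^{\vee}$ by translation, freely and transitively on fibres of $N^{\vee}\to M^{\vee}$. Hence $N^{\vee}\to M^{\vee}$ is normal with $\Aut(N^{\vee}/M^{\vee})\cong (N/M)^{\vee}$. Since any two such subgroups $N_1,N_2$ are dominated by $N_1+N_2\subseteq V$, the family $(N^{\vee})_N$ is cofiltered, and by Proposition~\ref{Prop:FiniteCoversOfPontryaginDual} it dominates every connected finite covering of $M^{\vee}$; hence it provides a fundamental pro-object.

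Applying Proposition~\ref{Prop:ExistenceOfAFundamentalProObjectInFCovX} then yields
\[
\pioneet(M^{\vee},0)\;\cong\;\varprojlim_{N}\,(N/M)^{\vee}
\]
as topological groups. Since $V/M = \varinjlim_N N/M$ as a filtered colimit of discrete groups (every element of $V$ lies in some $N$ of finite index over $M$), and Pontryagin duality turns such filtered colimits of discrete torsion groups into cofiltered projective limits of profinite groups, we obtain
\[
\varprojlim_N (N/M)^{\vee}\;\cong\;(V/M)^{\vee}\;=\;(M\otimes\bbq/M)^{\vee},
\]
as topological groups. For the second assertion, if $M$ is already a $\bbq$-vector space, then the canonical map $M\to M\otimes\bbq$ is an isomorphism, so $M\otimes\bbq/M=0$ and its dual is trivial.

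There is essentially no obstacle beyond bookkeeping: the only point demanding genuine care is the compatibility of the duality step with the topologies, but this is automatic because each $N/M$ is finite (hence $(N/M)^{\vee}$ is finite discrete) and contravariant Pontryagin duality carries the directed system $\{N/M\}_N$ of finite quotients of $V/M$ to the cofiltered inverse system computing the profinite group $(V/M)^{\vee}$.
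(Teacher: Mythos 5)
Your proof is correct and is exactly the "formal" derivation that the paper leaves implicit in the one-line remark preceding the corollary: Proposition~\ref{Prop:FiniteCoversOfPontryaginDual} identifies the connected finite covers as the $N^{\vee}$, Proposition~\ref{Prop:ExistenceOfAFundamentalProObjectInFCovX} gives $\pioneet(M^{\vee})\cong\varprojlim_N\Aut(N^{\vee}/M^{\vee})\cong\varprojlim_N(N/M)^{\vee}$, and Pontryagin duality converts the filtered colimit $V/M=\varinjlim_N N/M$ into this cofiltered limit. Nothing to add.
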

Applying this to $M=\bbq$ we find that the solenoid $\mathcal{S}\cong\bbq^{\vee}$ has trivial \'etale fundamental group.

\subsubsection*{Cohomology}

For a topological space $X$ and an abelian group $A$ there are several different ways to define cohomology groups $\Hup^p (X,A)$, and they do not always agree.
\begin{enumerate}
	\item First there is the singular cohomology group $\Hup_{\mathrm{sing}}^p(X,A)$, which is the cohomology of the singular cochain complex $\mathrm{C}^{\bullet }(X,A)$ with
	\begin{equation*}
	\mathrm{C}^p(X,A)=\Hom (\mathrm{C}_p(X),A).
	\end{equation*}
	\item There is another construction using sheaf cohomology: the category $\Sh (X)$ of sheaves of abelian groups on $X$ is an abelian category with enough injectives, and the global sections functor $\Gamma_X\colon\Sh (X)\to\mathbf{Ab}$ sending a sheaf $\mathcal{A}$ to $\Gamma (X,\mathcal{A})$ is left exact. Hence it admits right derived functors, and we set $\Hup^p(X,\mathcal{A})=\Rup^p\Gamma_X(\mathcal{A})$. For each abelian group $A$ there is a constant sheaf $A_X$ modelled on $A$, and we set $\Hup^p(X,A)=\Hup^p(X,A_X)$.
	\item Finally we can also consider \v{C}ech cohomology groups for sheaves:
	\begin{equation*}
	\check{\Hup }^p(X,\mathcal{A})=\varinjlim_{\mathfrak{U}}\check{\Hup }^p(\mathfrak{U},\mathcal{A})
	\end{equation*}
	where the limits goes over ever finer open covers of~$X$, and again we set $\check{\Hup }^p(X,A)=\check{\Hup }^p(X,A_X)$.
\end{enumerate}
If $X$ is a paracompact Hausdorff space, sheaf cohomology in terms of derived functors is always isomorphic to \v{C}ech cohomology by \cite[Th\'eor\`eme~5.10.1]{MR0345092}. However, sheaf cohomology and singular cohomology do not even agree on all compact Hausdorff spaces, see Remark~\ref{Rmk:ExampleForSheafCohomologyNotSingularCohomology} below. We prefer to work with sheaf cohomology because it behaves better with respect to projective limits of spaces: \begin{Proposition}\label{Prop:SheafCohomologyProjectiveLimits}
	Let $A$ be an abelian group and let $(X_j)_{j\in J}$ be a cofiltered projective system of compact Hausdorff spaces and let $X=\varprojlim_jX_j$. Then the canonical map of sheaf cohomology groups
	\begin{equation*}
	\varinjlim_{j\in J}\Hup^p(X_j,A)\to\Hup^p(X,A)
	\end{equation*}
	is an isomorphism.
\end{Proposition}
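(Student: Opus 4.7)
The plan is to pass from sheaf cohomology to \v{C}ech cohomology, exploit compactness to show that every open cover of $X$ refines to one pulled back from some $X_j$, and then conclude by the combinatorial nature of \v{C}ech cohomology with constant coefficients.

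First, I would observe that each $X_j$ and the limit $X$ are compact Hausdorff, hence paracompact, so by the result of Godement cited earlier in the section (\cite[Th\'eor\`eme~5.10.1]{MR0345092}) sheaf cohomology agrees with \v{C}ech cohomology on each of them. Since $A_X$ pulls back to $A_{X_j}$ along the projection $p_j\colon X\to X_j$, we have canonical maps $\Hup^p(X_j,A)\to\Hup^p(X,A)$ compatible with the transition maps of the system, inducing the map in the statement. It therefore suffices to show that
\begin{equation*}
\varinjlim_{j}\check\Hup^p(X_j,A)\to\check\Hup^p(X,A)
\end{equation*}
is an isomorphism.

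The key geometric input is the following refinement lemma: \emph{every finite open cover $\mathfrak{U}=\{U_1,\ldots,U_n\}$ of $X$ admits a refinement of the form $\{p_j^{-1}(V_1),\ldots,p_j^{-1}(V_m)\}$ for some $j\in J$ and some finite open cover $\{V_1,\ldots,V_m\}$ of $X_j$.} To prove this, one uses that the basis-open subsets of $X$, i.e.\ preimages under some $p_j$ of opens in $X_j$, form a basis of the topology on $X$ (this is the definition of the limit topology, together with the fact that the system is cofiltered, so one can always pass to a common index). Each $U_i$ is then a union of basis-open subsets, and by compactness of $X$ one can extract a finite subcover consisting of basis-open subsets refining $\mathfrak{U}$. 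Using cofilteredness once more, these finitely many basis-open subsets all come from a single $X_j$. To arrange that their images genuinely cover $X_j$ (not just that their preimages cover $X$), one applies the standard fact that for a cofiltered system of compact Hausdorff spaces with surjective projections, an open $W\subseteq X_j$ with $p_j^{-1}(W)=X$ must equal $X_j$ after passing to a possibly larger index; if the projections are not surjective, one replaces each $X_j$ by the closed (hence compact) image of $X$.

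With this refinement lemma in hand, surjectivity and injectivity of the map on \v{C}ech cohomology follow formally. For surjectivity, any \v{C}ech cohomology class on $X$ is represented with respect to some finite cover $\mathfrak{U}$; by the lemma we may replace $\mathfrak{U}$ by a pull-back cover $p_j^{-1}(\mathfrak{V})$ of some $X_j$. Since the \v{C}ech complex of $p_j^{-1}(\mathfrak{V})$ with coefficients in the constant sheaf $A$ depends only on the nerve of the cover, which coincides with the nerve of $\mathfrak{V}$, the cocycle is pulled back from a \v{C}ech cocycle on $(X_j,\mathfrak{V})$. For injectivity, if a cocycle on $X_j$ becomes a coboundary on $X$, it does so with respect to a finer cover of $X$; refining again to a pull-back cover from some $X_{j'}$ (which we may take to dominate $j$) shows the cocycle is already a coboundary on $X_{j'}$, hence trivial in the colimit. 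The main technical obstacle, and the only step where compactness is essential, is the refinement lemma; the rest is the purely formal observation that \v{C}ech cohomology of a pull-back cover with constant coefficients depends only on the nerve.
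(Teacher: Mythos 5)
Your overall plan---reduce to \v{C}ech cohomology, use compactness to show every finite open cover of $X$ is refined by one pulled back from some $X_j$, then conclude combinatorially---is exactly the circle of ideas behind the cited reference (\cite[Chapter~X, Theorem~3.1]{MR0050886}), and the refinement lemma itself is correct; note that no surjectivity hypothesis is actually needed there, since one can simply apply Lemma~\ref{Lem:CantorIntersection} to the compact sets $p_{j'j}^{-1}\bigl(X_j\smallsetminus\bigcup_i V_i\bigr)$. The gap lies in the sentence ``the \v{C}ech complex of $p_j^{-1}(\mathfrak V)$ with coefficients in the constant sheaf $A$ depends only on the nerve of the cover, which coincides with the nerve of $\mathfrak V$,'' neither half of which is correct. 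The \v{C}ech complex with constant-\emph{sheaf} coefficients records locally constant functions on the intersections $V_{i_0\ldots i_p}$ and therefore sees their clopen structure, not merely whether they are nonempty; the nerve-only statement holds for the constant \emph{presheaf}, and passing between the two requires a nontrivial paracompactness input that you never invoke. Moreover the two nerves genuinely need not coincide, even after increasing $j$: a nonempty \emph{open} intersection in $X_j$ with empty preimage in $X$ is not forced to become empty at any finite stage by Cantor's lemma, unlike a closed one. Concretely, take $J=\bbn$ and $X_j=\{0\}\cup[1-\tfrac1j,1]$ with inclusion transition maps, so that $X=\{0,1\}$; the cover of $X_j$ by $V_1=X_j\smallsetminus\{1\}$ and $V_2=(1-\tfrac1j,1]$ has $V_1\cap V_2=(1-\tfrac1j,1)\neq\varnothing$ inside every $X_{j'}$, while $p_j^{-1}(V_1\cap V_2)=\varnothing$.

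Your fallback of replacing $X_j$ by $p_j(X)$ would make the projections surjective and hence make the nerves coincide, but it is not free: one then needs $\varinjlim_j\Hup^p(X_j,A)\cong\varinjlim_j\Hup^p(p_j(X),A)$, and since $p_j(X)=\bigcap_{j'\geq j}p_{j'j}(X_{j'})$ is itself a cofiltered intersection of compact subspaces of $X_j$, justifying this is essentially another instance of the very continuity statement you are trying to prove. The argument can be repaired---for instance by shrinking to finite \emph{closed} covers, where the Cantor lemma does control the intersections, before fattening back up, and by working throughout with the nerve-theoretic (presheaf) \v{C}ech theory together with its comparison to sheaf cohomology on compact Hausdorff spaces---but this is exactly the technical content of the Eilenberg--Steenrod theorem, which the paper's proof simply cites rather than re-derives.
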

\begin{proof}
	As already remarked we may identify these groups with \v{C}ech cohomology groups; the corresponding statement for \v{C}ech cohomology groups is \cite[Chapter~X, Theorem~3.1]{MR0050886}. The proof is based on the observation that for a compact space \v{C}ech cohomology may be computed using only finite covers.
\end{proof}
\begin{Proposition}\label{Prop:CohomologyOfPontryaginDuals}
	Let $V$ be a $\bbq$-vector space.
	\begin{enumerate}
		\item If $A$ is an abelian torsion group then $\Hup^0(V^{\vee },A)=A$ and $\Hup^p(V^{\vee },A)=0$ for all $p>0$.
		\item For each $p\ge 0$ there is a canonical isomorphism
		\begin{equation*}
		\Hup^p(V^{\vee },\bbq  )\cong\textstyle{\bigwedge}^p_{\bbq }V.
		\end{equation*}
		Under these isomorphisms wedge and cup products correspond to each other.
		\item We have $\Hup^0(V^{\vee },\bbz )=\bbz$, and for each $p>0$ the inclusion $\bbz\hookrightarrow\bbq$ induces an isomorphism
		\begin{equation*}
		\Hup^p(V^{\vee },\bbz )\cong\Hup^p(V^{\vee },\bbq ).
		\end{equation*}
	\end{enumerate}
\end{Proposition}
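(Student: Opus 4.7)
The plan is to reduce the computation to the classical case of finite-dimensional tori via the limit formula of Proposition~\ref{Prop:SheafCohomologyProjectiveLimits}. First, I would write $V = \varinjlim_i L_i$ as the filtered colimit of its finitely generated subgroups. Since $V$ is a $\bbq$-vector space and hence torsion-free, each $L_i$ is free abelian of finite rank, so its Pontryagin dual $L_i^{\vee }$ is a compact torus of dimension $\operatorname{rk}L_i$. Pontryagin duality converts this filtered colimit of discrete groups into the cofiltered inverse limit of compact Hausdorff groups $V^{\vee } \cong \varprojlim_i L_i^{\vee }$, to which Proposition~\ref{Prop:SheafCohomologyProjectiveLimits} applies.

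The limit formula then gives $\Hup^p(V^{\vee }, A) \cong \varinjlim_i \Hup^p(L_i^{\vee }, A)$. For each $L_i$ the classical torus computation, via K\"unneth together with the fact that $\Hup^{\ast }((\bbs^1)^n, \bbz) \cong \textstyle{\bigwedge}^{\ast }\bbz^n$ is torsion-free (so all Tor terms in universal coefficients vanish), provides a natural isomorphism
\[
\Hup^p(L_i^{\vee }, A) \;\cong\; \textstyle{\bigwedge}^p_\bbz L_i \otimes_\bbz A
\]
intertwining cup product with wedge product. The transition $L_j^{\vee } \to L_i^{\vee }$ dual to an inclusion $L_i \hookrightarrow L_j$ induces on cohomology the functorially induced map $\textstyle{\bigwedge}^p L_i \to \textstyle{\bigwedge}^p L_j$. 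Since $\textstyle{\bigwedge}^p$ and $-\otimes_\bbz A$ both commute with filtered colimits of abelian groups, passing to the colimit yields
\[
\Hup^p(V^{\vee }, A) \;\cong\; \bigl(\textstyle{\bigwedge}^p_\bbz V\bigr) \otimes_\bbz A,
\]
with the ring structure coming from wedge product. For $p \geq 1$, $\textstyle{\bigwedge}^p_\bbz V$ is itself a $\bbq$-vector space and is canonically identified with $\textstyle{\bigwedge}^p_\bbq V$.

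From this master formula the three statements are read off immediately. For (i), in positive degrees $\textstyle{\bigwedge}^p_\bbz V$ is divisible, and a divisible abelian group tensored over $\bbz$ with a torsion group vanishes; in degree zero $\Hup^0(V^{\vee }, A) = A$ by connectedness of $V^{\vee }$. For (ii), substituting $A = \bbq$ yields $\Hup^p(V^{\vee }, \bbq) \cong \textstyle{\bigwedge}^p_\bbq V$ for all $p \geq 0$, with cup-wedge compatibility inherited from each torus by naturality. For (iii), the degree-zero claim is again connectedness, and in positive degrees the map $\Hup^p(V^{\vee }, \bbz) \to \Hup^p(V^{\vee }, \bbq)$ induced by $\bbz \hookrightarrow \bbq$ becomes the canonical map $\textstyle{\bigwedge}^p_\bbq V \otimes_\bbz \bbz \to \textstyle{\bigwedge}^p_\bbq V \otimes_\bbz \bbq$, which is an isomorphism because the first factor is already a $\bbq$-vector space.

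No serious obstacle arises: the cohomology of a torus is classical, and Proposition~\ref{Prop:SheafCohomologyProjectiveLimits} supplies exactly the compatibility with the inverse limit that is needed. The only piece of bookkeeping is to verify that $\textstyle{\bigwedge}^p_\bbz(\varinjlim_i L_i) = \varinjlim_i \textstyle{\bigwedge}^p_\bbz L_i$ even when the inclusions $L_i \hookrightarrow L_j$ are not saturated (so that the individual maps $\textstyle{\bigwedge}^p L_i \to \textstyle{\bigwedge}^p L_j$ may fail to be injective); this is automatic from the construction of $\textstyle{\bigwedge}^p$ as a quotient of a tensor power, both operations preserving filtered colimits.
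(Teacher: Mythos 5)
Your proposal is correct and uses the same key reduction as the paper (writing $V$ as a filtered colimit of finitely generated free subgroups and invoking Proposition~\ref{Prop:SheafCohomologyProjectiveLimits} to pass to a colimit over finite-rank tori), but it organises the argument differently. The paper treats (i) by first reducing to $A=\bbz /n\bbz$ and then showing each positive-degree class $c\in\Hup^p(M^{\vee },\bbz /n\bbz )$ dies under pullback to $(\tfrac 1nM)^{\vee }$ (the covering being multiplication by $n$ on the torus, which kills mod-$n$ cohomology in positive degrees); it proves (ii) directly; and it deduces (iii) from (i) and (ii) via the long exact sequence attached to $0\to\bbz\to\bbq\to\bbq /\bbz\to 0$. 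You instead first derive the uniform formula $\Hup^p(V^{\vee },A)\cong\bigl(\textstyle{\bigwedge}^p_{\bbz }V\bigr)\otimes_{\bbz }A$ for an arbitrary abelian group $A$ by running universal coefficients on each torus and passing to the colimit, and then read off all three parts as specialisations: (i) because $\textstyle{\bigwedge}^p_{\bbz }V$ is divisible for $p\ge 1$, so its tensor product with a torsion group vanishes; (ii) with $A=\bbq$; and (iii) with $A=\bbz$, noting that in positive degrees the answer is already a $\bbq$-vector space. Your route is a bit more streamlined, unifying the three cases at the cost of having to track general coefficients through the torus computation; the paper's is more piecemeal but avoids invoking the universal coefficients theorem. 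One incidental remark: your final caveat about possible non-injectivity of the transition maps $\textstyle{\bigwedge}^pL_i\to\textstyle{\bigwedge}^pL_j$ is moot --- those maps are in fact injective, since $\textstyle{\bigwedge}^p$ of a finitely generated free $\bbz$-module embeds into the corresponding rational exterior power and exterior powers of injections of $\bbq$-vector spaces remain injective --- although, as you correctly note, filtered colimits never required injectivity in the first place.
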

\begin{proof}
	\begin{enumerate}
		\item First note that it suffices to prove this statement for $A=\bbz /n\bbz$: On compact Hausdorff spaces, sheaf cohomology commutes with direct sums and filtered limits of sheaves, and every abelian torsion group can be built from the groups $\bbz /n\bbz$ for $n\in\bbn$ using these constructions.
		
		Let $\mathcal{M}$ be the set of all finitely generated free abelian subgroups of $V$, so that $V=\varinjlim_{M\in \mathcal{M}}M$ is a filtered limit. Then by Proposition~\ref{Prop:SheafCohomologyProjectiveLimits} we can write $$\Hup^p(V^{\vee },\bbz /n\bbz )\cong\varinjlim_{M\in \mathcal{M}}\Hup^p(M^{\vee },\bbz /n\bbz).$$
		We need to show that the image of each $c\in\Hup^p(M^{\vee },\bbz /\bbz )$ for $p>0$ is trivial. Consider the subgroup $\frac{1}{n}M\in\mathcal{M}$; a short calculation using that $M^{\vee }$ and $(\frac 1nM)^{\vee }$ are tori of the same dimension shows that the pullback of $c$ to $\Hup^p((\frac 1nM)^{\vee },\bbz /n\bbz)$ is zero (but only for $p>0$).
		\item Again we write $V=\varinjlim_{M\in\mathcal{M}}M$ and use that $\Hup^p(M^{\vee },\bbq )\cong\bigwedge_{\bbq }^p(M\otimes\bbq )$.
		\item This follows from (i) and (ii) using the long exact cohomology sequence induced by the short exact sequence $0\to\bbz\to\bbq\to\bbq /\bbz\to 0$.\qedhere
	\end{enumerate}
\end{proof}
\begin{Remark}\label{Rmk:ExampleForSheafCohomologyNotSingularCohomology}
Note that, by contrast, for any abelian group $A$ the singular cohomology of $V^{\vee }$ with $A$-coefficients can be calculated as
\begin{equation*}
\Hup^0_{\mathrm{sing}}(V^{\vee },A )\cong A^{\Ext (V,\bbz )} ,\quad\Hup^p_{\mathrm{sing }}(V^{\vee },A)=0\text{ for all }p>0
\end{equation*}
by Corollary~\ref{Cor:LPCPreservesManyInvariants}.(iii) and Proposition~\ref{Prop:PiZeroPathPontryaginDual}. So, for every $p\ge 0$ the groups $\Hup^p(V^{\vee },\bbz )$ and $\Hup^p_{\mathrm{sing}}(V^{\vee },\bbz )$ are not isomorphic. The same holds for rational coefficients.
\end{Remark}

\subsection{Spectra of group algebras}

We assemble some simple results on the spectra of group algebras; they will play the role in our scheme-theoretic considerations that is played by Pontryagin duals in the topological case.
\begin{Proposition}\label{Prop:PiZeroOfGroupAlgebra}
	Let $M$ be an abelian group. Then $\Spec \bbc [M]$ is connected if and only if $M$ is torsion-free.
\end{Proposition}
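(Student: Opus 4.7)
The plan is to prove both implications separately, each being a short and standard argument.

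For the direction that torsion in $M$ forces $\Spec \bbc[M]$ to be disconnected, I would fix a nontrivial torsion element $m\in M$ of finite order $n>1$ and exhibit explicit nontrivial idempotents in $\bbc[M]$. Concretely, for each $n$-th root of unity $\zeta\in\bbc$ the element
\[
e_\zeta = \frac{1}{n}\sum_{k=0}^{n-1}\zeta^{-k}[m^k]\in\bbc[M]
\]
is an idempotent (this is the usual orthogonal decomposition of $\bbc[\bbz/n\bbz]\cong\prod_\zeta\bbc$), and the relations $\sum_\zeta e_\zeta = 1$ and $e_\zeta e_{\zeta'}=0$ for $\zeta\neq\zeta'$ exhibit $1$ as a sum of at least two nonzero orthogonal idempotents, so $\Spec\bbc[M]$ has at least $n$ clopen components. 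The only point requiring a brief check is that each $e_\zeta$ is nonzero in $\bbc[M]$, which is clear since the $[m^k]$ for $0\le k<n$ are $\bbc$-linearly independent (as elements of the group basis of $\bbc[M]$).

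For the converse, suppose $M$ is torsion-free. Write $M = \varinjlim_\alpha M_\alpha$ as the filtered colimit of its finitely generated subgroups. Each $M_\alpha$, being finitely generated and torsion-free abelian, is free of some finite rank $r_\alpha$, so
\[
\bbc[M_\alpha]\cong\bbc[x_1^{\pm 1},\dotsc,x_{r_\alpha}^{\pm 1}]
\]
is a Laurent polynomial ring, in particular an integral domain. Since tensoring (and filtered colimits) commute with taking group algebras over $\bbc$, $\bbc[M]=\varinjlim_\alpha\bbc[M_\alpha]$. A filtered colimit of integral domains is an integral domain: if $ab=0$ in $\bbc[M]$, then $a,b$ come from some $\bbc[M_\alpha]$, and $ab=0$ already in some $\bbc[M_\beta]$ with $\beta\ge\alpha$, forcing $a=0$ or $b=0$ there and hence in $\bbc[M]$. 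Therefore $\Spec\bbc[M]$ is irreducible, and in particular connected.

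The only mild subtlety is verifying that the idempotents $e_\zeta$ in the first part are genuinely nonzero, but this is immediate from the linear independence of group-algebra basis elements; everything else is bookkeeping. No step looks like a real obstacle.
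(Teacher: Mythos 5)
Your proof is correct, but takes a different route from the paper's on both implications. For the torsion-free direction, you prove the stronger statement that $\bbc[M]$ is an integral domain (as a filtered colimit of Laurent polynomial rings, using that the transition maps are injective), hence $\Spec\bbc[M]$ is irreducible, hence connected. The paper instead observes that $\mathcal{X}$ is connected iff every $\bbc$-morphism $\mathcal{X}\to\Spec\bbc\coprod\Spec\bbc$ is constant, writes $\Spec\bbc[M]$ as $\varprojlim_i\Spec\bbc[M_i]\cong\varprojlim_i\bbg_m^{n_i}$, and notes that any such morphism factors through a finite stage. Your argument buys a strictly stronger conclusion (irreducibility) at no extra cost; the paper's is tailored to connectedness and is the formulation that generalises more directly to the analogous statement with $\Spec\bbc$ replaced by other base schemes. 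For the torsion direction, you write down the explicit orthogonal idempotents $e_\zeta$ coming from the standard decomposition of $\bbc[\bbz/n\bbz]$, while the paper argues more abstractly: if $M_0\subset M$ is a nontrivial finite subgroup, then $\Spec\bbc[M]\to\Spec\bbc[M_0]$ is surjective (as $\bbc[M]$ is free, hence faithfully flat, over $\bbc[M_0]$) with disconnected target, so the source is disconnected. These are really two presentations of the same underlying idea, and yours is the more hands-on one. Both proofs are complete and all the subtleties (nonvanishing of the $e_\zeta$, passage to a finite stage in the colimit) are handled correctly.
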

\begin{proof}
	Note that a $\bbc$-scheme $\mathcal{X}$ is connected if and only if every $\bbc$-morphism $\mathcal{X}\to \mathcal{S}=\Spec\bbc\coprod\Spec\bbc$ is constant.
	
	First assume that $M$ is torsion-free. Then $M$ is a filtered limit $\varinjlim_{i\in I}M_i$ where the $M_i$ are free abelian groups of finite rank. Then
	\begin{equation*}
	\Spec\bbc [M]=\varprojlim_{i\in I}\Spec \bbc [M_i],
	\end{equation*}
	so each $\bbc$-morphism $\Spec \bbc [M]\to \mathcal{S}$ factors through some $\Spec \bbc [M_i]$. But $M_i\simeq\bbz^n$ for some $n\in\bbn$, hence $\Spec \bbc [M_i]\simeq\bbg_m^n$ is connected.
	
	Now assume that $M$ contains a nontrivial finite subgroup $M_0$. Then $\Spec \bbc [M]\to\Spec \bbc [M_0]$ is surjective, but the target is a disjoint union of $\lvert M_0\rvert$ copies of $\Spec\bbc$, so $\Spec \bbc [M]$ cannot be connected.
\end{proof}
For any abelian group $M$ the group algebra $\bbc [M]$ is a Hopf algebra: the comultiplication $\bbc [M]\to \bbc [M]\otimes_{\bbc } \bbc [M]$ is determined by $m\mapsto m\otimes m$ for all $m\in M$, and the other structure maps are even more obvious. This turns $\bbc [M]$ into a commutative group scheme over~$\bbc $. From the short exact sequence
\begin{equation*}
0\to M_{\mathrm{tors}}\to M\to M_{\mathrm{tf}}\to 0,
\end{equation*}
where $M_{\mathrm{tors}}$ is the torsion subgroup and $M_{\mathrm{tf}}$ is the maximal torsion-free quotient, we obtain a short exact sequence of group schemes
\begin{equation*}
0\to\Spec\bbc [M_{\mathrm{tf}}]\to\Spec \bbc [M]\to\Spec \bbc [M_{\mathrm{tors}}]\to 0.
\end{equation*}
Here $\Spec \bbc [M_{\mathrm{tf}}]$ is connected by Proposition~\ref{Prop:PiZeroOfGroupAlgebra}, and it is easy to see that the topological group underlying $\Spec\bbc [M_{\mathrm{tors}}]$ is isomorphic to $(M_{\mathrm{tors}})^{\vee }$, in particular totally disconnected. From this we see:
\begin{Corollary}\label{Cor:PiZeroOfGroupAlgebra}
	Let $M$ be an abelian group. Then $\Spec \bbc [M]\to\Spec\bbc [M_{\mathrm{tors}}]$ induces a homeomorphism on~$\pi_0(\cdot )$, and the identity component is isomorphic to $\Spec \bbc [M_{\mathrm{tf}}]$. In particular $\pi_0(\Spec\bbc [M]))$ is canonically isomorphic to $(M_{\mathrm{tors}})^{\vee }$ as a topological group, and the \'etale fundamental group of $\Spec \bbc [M]$ at any base point is isomorphic to that of $\Spec \bbc [M_{\mathrm{tf}}]$.
\end{Corollary}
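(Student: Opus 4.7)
The plan is to leverage the short exact sequence of commutative group schemes
\[
0 \to \Spec\bbc[M_{\mathrm{tf}}] \to \Spec\bbc[M] \to \Spec\bbc[M_{\mathrm{tors}}] \to 0
\]
constructed immediately before the corollary, together with the facts already recorded in the excerpt that the kernel $\Spec\bbc[M_{\mathrm{tf}}]$ is connected while the quotient $\Spec\bbc[M_{\mathrm{tors}}]$ is totally disconnected and homeomorphic as a topological group to $(M_{\mathrm{tors}})^{\vee}$. The whole point of the corollary is to read the decomposition into connected components and the \'etale fundamental group off this sequence.

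First I would identify the fibres of $p\colon\Spec\bbc[M]\to\Spec\bbc[M_{\mathrm{tors}}]$. A $\bbc$-point of the target is a character $\chi\colon M_{\mathrm{tors}}\to\bbc^{\times}$; since $\bbc^{\times}$ is divisible, hence injective as an abelian group, $\chi$ extends to a character $\tilde\chi\colon M\to\bbc^{\times}$. Translation by $\tilde\chi$ in the group scheme $\Spec\bbc[M]$ carries the fibre over the trivial character to the fibre over $\chi$, so every fibre of $p$ is (non-canonically) isomorphic to $\Spec\bbc[M_{\mathrm{tf}}]$, which is connected by Proposition~\ref{Prop:PiZeroOfGroupAlgebra}.

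Next, since $\Spec\bbc[M_{\mathrm{tors}}]$ is totally disconnected, every connected subset of $\Spec\bbc[M]$ is contained in a single fibre of $p$; combined with the connectedness and non-emptiness of each fibre, this shows that the connected components of $\Spec\bbc[M]$ are precisely the fibres of $p$. Hence $p$ induces a continuous set-theoretic bijection $\pi_0(\Spec\bbc[M])\to\Spec\bbc[M_{\mathrm{tors}}]$, under which the identity component corresponds to the trivial character and is equal to the kernel $\Spec\bbc[M_{\mathrm{tf}}]$.

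The only step I expect to require care is upgrading this bijection to a homeomorphism of topological groups. For this I would reduce to the case of finitely generated $M$ by writing $M=\varinjlim_iM_i$ as a filtered colimit of finitely generated subgroups, so that $\Spec\bbc[M]=\varprojlim_i\Spec\bbc[M_i]$ as a cofiltered limit of schemes. When $M_i$ is finitely generated, $M_{i,\mathrm{tf}}$ is free of finite rank, so $\Ext(M_{i,\mathrm{tf}},M_{i,\mathrm{tors}})=0$, the short exact sequence of abelian groups splits, and $\Spec\bbc[M_i]\cong\Spec\bbc[M_{i,\mathrm{tors}}]\times\Spec\bbc[M_{i,\mathrm{tf}}]$; the claim then follows from the obvious identification $\pi_0(Y\times Z)\cong Y$ when $Y$ is totally disconnected and $Z$ is connected. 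Passing to the limit gives the homeomorphism $\pi_0(\Spec\bbc[M])\cong\varprojlim_i(M_{i,\mathrm{tors}})^{\vee}\cong(M_{\mathrm{tors}})^{\vee}$ of topological groups. Finally, the statement about \'etale fundamental groups is essentially automatic: $\pioneet$ of a non-connected scheme at a basepoint is by convention $\pioneet$ of the connected component containing that point, and by the above every connected component of $\Spec\bbc[M]$ is isomorphic as a scheme to $\Spec\bbc[M_{\mathrm{tf}}]$ via translation in the ambient group scheme.
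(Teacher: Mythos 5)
Your proof is correct and follows essentially the same approach as the paper: read everything off the short exact sequence of group schemes $0\to\Spec\bbc[M_{\mathrm{tf}}]\to\Spec\bbc[M]\to\Spec\bbc[M_{\mathrm{tors}}]\to 0$, use divisibility of $\bbc^{\times}$ to lift characters and translate components onto one another, and invoke the already-established facts that the kernel is connected and the quotient is totally disconnected with underlying topological group $(M_{\mathrm{tors}})^{\vee}$. The only difference is one of detail: you spell out the reduction to finitely generated $M$ (and the splitting of the torsion sequence there) to upgrade the bijection on $\pi_0$ to a homeomorphism, whereas the paper treats this step as already contained in the remark immediately preceding the corollary and just says it is clear; the argument for the fundamental-group clause via a $\bbc$-rational point in each component and translation is identical to the paper's.
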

\begin{proof}
	Everything is clear from the preceding, except the statement about fundamental groups; but because $\Spec\bbc [M]$ is a group scheme and any connected component contains a $\bbc$-rational point, every two connected components are isomorphic as schemes.
\end{proof}

\begin{Proposition}\label{Prop:FiniteCoversOfGroupAlgebraSpectrum}
	Let $M$ be a torsion-free abelian group. Then the connected \'etale coverings of $\Spec \bbc [M]$ are precisely given by the $\Spec \bbc [N]$, where $N$ runs through the subgroups of $V=M\otimes\bbq$ that contain $M$ with finite index.
\end{Proposition}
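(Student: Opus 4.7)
My plan is to mirror the proof of Proposition~\ref{Prop:FiniteCoversOfPontryaginDual} in the algebraic setting, adapting each topological tool to its scheme-theoretic counterpart. The easy direction is to verify that for any $N$ with $M \subseteq N \subseteq V = M \otimes \bbq$ of finite index, $\Spec \bbc[N]$ is indeed a connected finite étale cover of $\Spec \bbc[M]$. Connectedness follows immediately from Proposition~\ref{Prop:PiZeroOfGroupAlgebra}, since $N \subset V$ is torsion-free. For finite étaleness, the morphism is a torsor under the finite commutative group scheme $\Spec \bbc[N/M]$ acting via the surjection $N \twoheadrightarrow N/M$; since $\bbc$ has characteristic zero, $\bbc[N/M]$ is a product of copies of $\bbc$, so $\Spec \bbc[N/M]$ is étale over $\bbc$, and any torsor under an étale group scheme is an étale morphism.

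For the converse, let $\mathcal{Y} \to \Spec \bbc[M]$ be a connected finite étale cover. I would write $M = \varinjlim_{i \in I} M_i$ as a filtered colimit over its finitely generated subgroups; each such $M_i$ is free of finite rank, and $\Spec \bbc[M] = \varprojlim_{i} \Spec \bbc[M_i]$ is a cofiltered limit of tori with affine transition maps. By the standard descent result for finite étale covers along such limits (EGA~IV.8 or Stacks Project, Tag~0BQY), there exist $i \in I$ and a finite étale cover $\mathcal{Y}_0 \to \Spec \bbc[M_i]$ whose base change to $\Spec \bbc[M]$ is $\mathcal{Y}$. By the classical classification of finite étale covers of tori over $\bbc$, the connected components of $\mathcal{Y}_0$ are of the form $\Spec \bbc[N_i^{(k)}]$, with each $N_i^{(k)} \subseteq M_i \otimes \bbq$ containing $M_i$ with finite index. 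Because $\mathcal{Y}$ is connected, only one such component contributes after base change, so $\mathcal{Y} = \Spec(\bbc[N_i] \otimes_{\bbc[M_i]} \bbc[M]) = \Spec \bbc[N_i \oplus_{M_i} M]$ for a single $N_i$, where the last equality uses the standard identification of tensor products of group algebras with group algebras of pushouts.

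It remains to realise the pushout $N := N_i \oplus_{M_i} M$ as a subgroup of $V$. Since $N_i \otimes \bbq = M_i \otimes \bbq$, tensoring the pushout with $\bbq$ collapses it to $V$, so $N \otimes \bbq = V$. By Proposition~\ref{Prop:PiZeroOfGroupAlgebra}, the assumed connectedness of $\mathcal{Y} = \Spec \bbc[N]$ forces $N$ to be torsion-free, hence the canonical map $N \to N \otimes \bbq = V$ is injective. This identifies $N$ with the subgroup $N_i + M \subset V$, which contains $M$ with index at most $(N_i : M_i) < \infty$, completing the classification. The main obstacle is the descent step for finite étale covers along the cofiltered limit $\varprojlim \Spec \bbc[M_i]$: every other piece is a direct algebraic translation of the topological arguments, but this descent is the one place requiring an appeal to standard but nontrivial machinery from outside the paper.
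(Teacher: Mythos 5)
Your proof is correct and, for the converse direction, follows essentially the same route as the paper: descend the finite étale cover to a finitely generated $M_i$ via the standard limit argument, invoke the classical classification for tori, identify the algebra with the group algebra of the pushout $N_i\oplus_{M_i}M$, and use connectedness together with Proposition~\ref{Prop:PiZeroOfGroupAlgebra} to conclude that the pushout is torsion-free and embeds in $V$. (One small wrinkle in the wording: it is not that ``only one component contributes after base change'' — since $\Spec\bbc[M]\to\Spec\bbc[M_i]$ is faithfully flat, \emph{every} nonempty component of $\mathcal{Y}_0$ contributes a nonempty piece, and hence $\mathcal{Y}_0$ must already be connected. The conclusion is the same.)

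Where you genuinely diverge from the paper is in the easy direction. The paper's strategy, spelled out in the Pontryagin-dual analogue Proposition~\ref{Prop:FiniteCoversOfPontryaginDual} and invoked as ``directly parallel'', reduces to the case $(N:M)$ prime, finds a finitely generated $N'\subset N$ not contained in $M$ with $M'=M\cap N'$, and exhibits $\Spec\bbc[N]\to\Spec\bbc[M]$ as a fibre-product base change of the finite-level cover $\Spec\bbc[N']\to\Spec\bbc[M']$. You instead observe directly that $\Spec\bbc[N]\to\Spec\bbc[M]$ is a torsor under the diagonalisable group scheme $\Spec\bbc[N/M]$, which in characteristic zero is finite étale, and conclude by fppf descent. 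This is a valid and arguably cleaner argument: it avoids the prime-by-prime reduction and the auxiliary choice of $N'$, at the cost of invoking the torsor formalism (in particular the isomorphism $\bbc[N]\otimes_{\bbc[M]}\bbc[N]\cong\bbc[N]\otimes_\bbc\bbc[N/M]$, equivalently $N\oplus_M N\cong N\times(N/M)$, which you should verify but which is elementary). Both routes work; yours packages more of the computation into a general principle, while the paper's stays closer to the concrete fibre-product manipulation used in the topological case.
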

\begin{proof}
	The proof Proposition~\ref{Prop:FiniteCoversOfGroupAlgebraSpectrum} is closely analogous to that of Proposition~\ref{Prop:FiniteCoversOfPontryaginDual}. We begin again by observing that the desired result is well-known in the finitely generated case.
	
	For the general case, the argument that each $N$ defines an \'etale covering $\Spec\bbc [N]$ $\to\Spec\bbc [M]$ is directly parallel to the corresponding argument in the proof of Proposition~\ref{Prop:FiniteCoversOfPontryaginDual}, and we shall not repeat it.
	
	For the other implication, let $\mathcal{Y}\to\Spec \bbc [M]$ be an \'etale covering. Then $\mathcal{Y}$ must be affine, say $\mathcal{Y}=\Spec B$ for some finite \'etale ring homomorphism $\Spec \bbc [M]\to B$. By \cite[Tag 00U2, item (9)]{StacksProject} this must be the base change of some \'etale ring homomorphism $\bbc [M']\to B'$ for a finitely generated subgroup $M'\subseteq M$. Since $B$ is finite over $\bbc [M]$ so must $B'$ be over $\bbc [M']$, i.e.\ $\Spec B'\to\Spec \bbc [M']$ must be an \'etale covering, and by what we have already shown, $B'=\bbc [N']$ with $(N':M')$ finite. Then $B=\bbc [N'\oplus_{M'}M]$; since the spectrum of this algebra must be connected, $N'\oplus_{M'}M$ must be torsion-free, hence embed via the obvious map into $V=M\otimes\bbq$.
\end{proof}

\begin{Corollary}\label{Cor:FGofDiagonalisableGroupScheme}
	Let $M$ be a torsion-free abelian group. Then $\Spec\bbc [M]$ is connected, and for any geometric base point $x$ we obtain a natural isomorphism of profinite groups
	\begin{equation*}
	\pioneet (\Spec\bbc [M],x)\cong (M\otimes\bbq /M)^{\vee }.
	\end{equation*}
	In particular, $\Spec\bbc [M]$ is simply connected if and only if $M$ is a $\bbq$-vector space.\hfill $\square$
\end{Corollary}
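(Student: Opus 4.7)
The plan is to combine Proposition~\ref{Prop:PiZeroOfGroupAlgebra} (for connectedness) with Proposition~\ref{Prop:FiniteCoversOfGroupAlgebraSpectrum} (classifying connected finite \'etale covers) and then pass to the limit via Pontryagin duality for finite abelian groups. Connectedness of $\Spec\bbc [M]$ is immediate from Proposition~\ref{Prop:PiZeroOfGroupAlgebra} since $M$ is torsion-free.

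Write $V = M\otimes\bbq$ and let $\mathcal{N}$ denote the directed set of subgroups $N\subset V$ containing $M$ with finite index, ordered by inclusion (it is directed because $N_1+N_2\in\mathcal{N}$ whenever $N_1,N_2\in\mathcal{N}$). By Proposition~\ref{Prop:FiniteCoversOfGroupAlgebraSpectrum}, the connected finite \'etale covers of $\Spec\bbc [M]$ are precisely the $\Spec\bbc [N]$ with $N\in\mathcal{N}$. I next check that the cofiltered projective system $(\Spec\bbc [N])_{N\in\mathcal{N}}$ satisfies the hypothesis of Lemma~\ref{Lem:SimplyConnectedProEtaleIsUniversal}: its limit is $\Spec\bbc [V]$ (since $\bbc [V]=\varinjlim_{N}\bbc [N]$), and applying Proposition~\ref{Prop:FiniteCoversOfGroupAlgebraSpectrum} with $M$ replaced by $V$ shows that $\Spec\bbc [V]$ is simply connected: the only subgroup of $V\otimes\bbq=V$ containing $V$ with finite index is $V$ itself. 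Consequently every connected finite \'etale cover of $\Spec\bbc [M]$ is dominated by some $\Spec\bbc [N]$.

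To identify the Galois group of $\Spec\bbc [N]\to\Spec\bbc [M]$ for $N\in\mathcal{N}$, note that the finite abelian group $N/M$ has Cartier dual $(N/M)^{\vee}=\Hom (N/M,\bbc^{\times})$, which acts on $\bbc [N]$ by $\chi\cdot [n]=\chi (\bar{n})[n]$ for $n\in N$ with image $\bar{n}\in N/M$. The invariants of this action are exactly $\bbc [M]$, so this exhibits $\Spec\bbc [N]\to\Spec\bbc [M]$ as a Galois cover with group $(N/M)^{\vee}$, and $\Aut (\Spec\bbc [N]/\Spec\bbc [M])\cong (N/M)^{\vee}$.

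Invoking Proposition~\ref{Prop:ExistenceOfAFundamentalProObjectInFCovX} (or its evident scheme-theoretic analogue furnished by Lemma~\ref{Lem:SimplyConnectedProEtaleIsUniversal}) and passing to the limit yields
\[
\pioneet (\Spec\bbc [M],x)\;\cong\;\varprojlim_{N\in\mathcal{N}}(N/M)^{\vee}\;\cong\;\Bigl(\varinjlim_{N\in\mathcal{N}}N/M\Bigr)^{\vee}\;\cong\;(V/M)^{\vee},
\]
where the middle isomorphism is Pontryagin duality for the directed system of finite abelian groups $N/M$, and the last uses $V=\bigcup_{N\in\mathcal{N}}N$. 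This proves the main isomorphism; the final assertion follows because $(V/M)^{\vee}=1$ iff $V/M=0$ iff $M=V$, i.e.\ $M$ is a $\bbq$-vector space. The only subtle point is ensuring that the tower $(\Spec\bbc [N])_{N}$ really computes the \'etale fundamental group, which is exactly the role of the simple connectedness of $\Spec\bbc [V]$ combined with Lemma~\ref{Lem:SimplyConnectedProEtaleIsUniversal}; everything else is formal.
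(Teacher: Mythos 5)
Your proposal is correct and matches the paper's intended argument. The paper states only that the corollary is ``directly analogous'' to Corollary~\ref{Cor:PioneetPontryaginDual} and follows from Proposition~\ref{Prop:FiniteCoversOfGroupAlgebraSpectrum}; you have supplied precisely the omitted formal details: connectedness via Proposition~\ref{Prop:PiZeroOfGroupAlgebra}, simple connectedness of the limit $\Spec\bbc[V]$ so that Lemma~\ref{Lem:SimplyConnectedProEtaleIsUniversal} applies, identification of the deck groups $\Aut(\Spec\bbc[N]/\Spec\bbc[M])\cong(N/M)^{\vee}$ via the diagonalisable group scheme action, and the Pontryagin duality step $\varprojlim_N(N/M)^{\vee}\cong(V/M)^{\vee}$.
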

This corollary is again directly analogous to Corollary~\ref{Cor:PioneetPontryaginDual}. Finally we note the following analogue of Proposition~\ref{Prop:CohomologyOfPontryaginDuals}.(i):
\begin{Proposition}\label{Prop:EtaleCohomologyOfSpecGroupAlgebra}
	Let $V$ be a $\bbq$-vector space and let $A$ be an abelian torsion group. Then the \'etale cohomology groups $\Hup^p_{\et }(\Spec\bbc [V],A)$ vanish for all $p>0$.
\end{Proposition}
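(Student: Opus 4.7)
The plan is to mimic closely the argument used for Proposition~\ref{Prop:CohomologyOfPontryaginDuals}.(i), replacing the compact-Hausdorff continuity of sheaf cohomology by the continuity of \'etale cohomology under cofiltered limits of qcqs schemes with affine transition maps.

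First I would reduce to the case $A=\bbz/n\bbz$. Any torsion abelian group $A$ is a filtered colimit of its finitely generated subgroups, each of which is a finite direct sum of cyclic groups $\bbz/n\bbz$. Since $\Spec\bbc[V]$ is quasi-compact and quasi-separated (being affine), \'etale cohomology with values in the constant sheaf commutes with filtered colimits of coefficient groups, so it is enough to prove $\Hup^p_{\et}(\Spec\bbc[V],\bbz/n\bbz)=0$ for all $p>0$ and all $n\geq 1$.

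Next I would write $V=\varinjlim_{M\in\mathcal{M}}M$ as a filtered colimit of its finitely generated free abelian subgroups, so that
\[
\Spec\bbc[V]=\varprojlim_{M\in\mathcal{M}}\Spec\bbc[M]
\]
is a cofiltered limit of affine schemes of finite type over $\bbc$ with affine transition maps. By the standard continuity result for \'etale cohomology (SGA~4, Expos\'e~VII, or \cite[Tag~03Q4]{StacksProject}), we obtain
\[
\Hup^p_{\et}(\Spec\bbc[V],\bbz/n\bbz)\;\cong\;\varinjlim_{M\in\mathcal{M}}\Hup^p_{\et}(\Spec\bbc[M],\bbz/n\bbz).
\]
Thus it suffices to show that for each $M\in\mathcal{M}$ and each class $c\in\Hup^p_{\et}(\Spec\bbc[M],\bbz/n\bbz)$ with $p>0$, the image of $c$ in the colimit vanishes, i.e.\ $c$ pulls back to $0$ on some $\Spec\bbc[M']$ with $M\subseteq M'\in\mathcal{M}$.

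The key computation is to take $M'=\tfrac{1}{n}M\in\mathcal{M}$. The inclusion $M\hookrightarrow\tfrac{1}{n}M$ corresponds, after identifying both groups with $\bbz^r$ (where $r=\operatorname{rank}M$), to multiplication by $n$; hence the induced morphism of tori $\Spec\bbc[\tfrac{1}{n}M]\to\Spec\bbc[M]$ is the $n$-th power isogeny on $\bbg_m^r$. By the Kummer sequence (or directly from the known \'etale cohomology of a split torus), $\Hup^p_{\et}(\bbg_m^r,\bbz/n\bbz)\cong \bigwedge^p(\bbz/n\bbz)^r$, and the $[n]$-isogeny acts on $\Hup^1$ as multiplication by $n$, hence on $\Hup^p$ as multiplication by $n^p$. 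For $p>0$ this is zero modulo $n$, so the pullback of any $c$ to $\Hup^p_{\et}(\Spec\bbc[\tfrac{1}{n}M],\bbz/n\bbz)$ is zero. This completes the argument.

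The only non-routine ingredient is the continuity statement for \'etale cohomology along the cofiltered limit $\Spec\bbc[V]=\varprojlim\Spec\bbc[M]$; but this is standard since all transition maps are affine and the schemes are qcqs, so I do not expect any real obstacle.
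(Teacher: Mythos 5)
Your argument is correct and is exactly the one the paper intends: the paper's proof is just a pointer back to Proposition~\ref{Prop:CohomologyOfPontryaginDuals}.(i), and your write-up fills in precisely the required translation, replacing Proposition~\ref{Prop:SheafCohomologyProjectiveLimits} (continuity of sheaf cohomology on compact Hausdorff spaces) by the continuity of \'etale cohomology along cofiltered limits of qcqs schemes with affine transition maps, and replacing tori $M^{\vee }$ by split tori $\Spec\bbc [M]\cong\bbg_m^r$. The key computation (pull back along $\Spec\bbc[\tfrac 1n M]\to\Spec\bbc[M]$, i.e.\ the $[n]$-isogeny, which acts by $n^p\equiv 0\pmod n$ on $\Hup^p$ for $p>0$) is the same ``short calculation'' the paper alludes to.
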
 
\begin{proof}
	The proof is analogous to that of Proposition~\ref{Prop:CohomologyOfPontryaginDuals}.(i).
\end{proof}

\section{Galois groups as \'etale fundamental groups of $\bbc$-schemes}\label{sec:GaloisAsEtaleFGOfSchemes}

\subsection{Rational Witt vectors}

We begin by recalling rings of `big' Witt vectors. Note that there are many different constructions of these rings; for more information see~\cite{MR2553661}.

In the following, rings are always supposed commutative and unital. The ring $\mathrm{W}(A)$ of Witt vectors in $A$ is defined for any ring $A$. Its underlying set is the set $1+tA[\![t]\!]$ of formal power series in $A[\![t]\!]$ with constant coefficient one. Addition of Witt vectors is multiplication of power series: $f\oplus g=fg$. Multiplication of Witt vectors is more involved.
\begin{Proposition}
	There is a unique system of binary operations $\odot$, consisting of one binary operation $\odot\colon\mathrm{W} (A)\times\mathrm{W} (A)\to\mathrm{W} (A)$ for each ring $A$, such that the following statements hold:
	\begin{enumerate}
		\item With $\oplus$ as addition and $\odot$ as multiplication, $\mathrm{W} (A)$ becomes a ring.
		\item For any ring $A$ and elements $a,b\in A$ the equation
		\begin{equation*}
		(1-at)\odot (1-bt)=1-abt
		\end{equation*}
		holds in $\mathrm{W} (A)$.
		\item The operation $\odot$ is functorial in $A$: for a ring homomorphism $\varphi\colon A\to B$ and elements $f,g\in\mathrm{W} (A)$ the equation $\mathrm{W} (\varphi )(f\odot g)=\mathrm{W} (\varphi)(f)\odot\mathrm{W} (\varphi )(g)$ holds.
		
		Here $\mathrm{W}(\varphi )\colon\mathrm{W}(A)\to\mathrm{W}(B)$ is the obvious map that sends $t$ to $t$ and acts as $\varphi$ on the coefficients.
		\item The operation $\odot$ is continuous for the $t$-adic topology on $\mathrm{W} (A)$.
	\end{enumerate}
	 Hence $(\mathrm{W} (A),\oplus,\odot )$ becomes a complete topological ring, and $\mathrm{W}$ becomes a functor from rings to complete topological rings.
\end{Proposition}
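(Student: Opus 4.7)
The plan is to establish both existence and uniqueness of $\odot$ via the classical universal polynomial trick, reducing to a splitting ring where the multiplication is forced by axiom (ii) plus distributivity.

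\textbf{Uniqueness.} By axiom (iv), $f\odot g\bmod t^{N+1}$ depends only on $f\bmod t^{N+1}$ and $g\bmod t^{N+1}$, so it suffices to check that on truncations the operation is uniquely determined. Given $f=1+s_1t+\dots+s_Nt^N$ and $g=1+u_1t+\dots+u_Mt^M$ in $\mathrm{W}(A)$, I would use functoriality (iii) to pull back along the ring map $\mathbb{Z}[S_1,\dots,S_N,U_1,\dots,U_M]\to A$ sending $S_i\mapsto s_i,U_j\mapsto u_j$, thereby reducing to a universal case. In the further extension $R=\mathbb{Z}[x_1,\dots,x_N,y_1,\dots,y_M]$, the universal truncations agree with $\prod_i(1-x_it)$ and $\prod_j(1-y_jt)$ mod higher-order terms; then axioms (i) and (ii) combined with bilinearity of $\odot$ over $\oplus$ force
\[
\Bigl(\bigoplus_i(1-x_it)\Bigr)\odot\Bigl(\bigoplus_j(1-y_jt)\Bigr)=\bigoplus_{i,j}(1-x_it)\odot(1-y_jt)=\prod_{i,j}(1-x_iy_jt).
\]
The coefficients of $\prod_{i,j}(1-x_iy_jt)$ up to degree $N+M$ are symmetric in the $x_i$ and in the $y_j$ separately, hence lie in $\mathbb{Z}[S_1,\dots,S_N,U_1,\dots,U_M]$ by the fundamental theorem of symmetric functions. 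This produces universal polynomials $P_k(S_\bullet,U_\bullet)\in\mathbb{Z}[S_\bullet,U_\bullet]$ that determine the coefficients of $f\odot g$, proving uniqueness.

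\textbf{Existence.} I would \emph{define} $\odot$ by these universal polynomials $P_k$. Continuity (iv) and functoriality (iii) are immediate from the polynomial formula, and axiom (ii) holds by construction. What remains is to verify the ring axioms (commutativity, associativity, distributivity, neutral and unit elements, compatibility with $\oplus$). For each such identity, both sides are given by polynomial expressions in finitely many coefficients with integer coefficients, so it suffices to verify them in a single faithfully chosen test ring. I would take the polynomial ring $R_\infty=\mathbb{Z}[x_{i},y_{j},z_{k}\mid i,j,k\ge 1]$ and test the axioms on the universal split elements $f=\prod_i(1-x_it)$, $g=\prod_j(1-y_jt)$, $h=\prod_k(1-z_kt)$ (truncated to any finite degree to stay inside $\mathrm{W}(R_\infty)$). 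On such split elements the uniqueness argument above forces $f\odot g=\prod_{i,j}(1-x_iy_jt)$, and all ring axioms reduce to the corresponding commutative-ring identities for ordinary multiplication of the $x_iy_jz_k$ in $R_\infty$, which are trivial. Because $R_\infty$ is an integral domain and the universal polynomials are defined over $\mathbb{Z}$, any polynomial identity witnessed in $R_\infty$ holds in every ring $A$ after applying (iii).

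\textbf{Main obstacle.} The delicate point is the clean passage from ``split'' universal Witt vectors back to arbitrary truncated ones: one must argue that the coefficient polynomials of $\prod_{i,j}(1-x_iy_jt)$ really do descend to $\mathbb{Z}[S_\bullet,U_\bullet]$ (not merely to $\mathbb{Q}[S_\bullet,U_\bullet]$), which is where the integrality of the fundamental theorem of symmetric functions is essential, and that the resulting $P_k$ are independent of the chosen number of formal roots $N,M$ (so that the definition is consistent under increasing truncation level). Once this bookkeeping is in place, the continuity and the upgrade to a complete topological ring structure are automatic, since the $k$-th coefficient of $f\odot g$ depends only on the coefficients of $f$ and $g$ of degree $\le k$.
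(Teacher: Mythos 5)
The paper does not prove this proposition; it defers to Hazewinkel (the cited reference \cite[Section~9]{MR2553661}), and the splitting-principle argument you give is precisely the one found there and in most standard sources. Your outline is correct, and the point you flag as the main obstacle (integrality via the fundamental theorem of symmetric functions, and stability of the universal polynomials $P_k$ under increasing the number $N,M$ of formal roots) is indeed the crux of the existence half.

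Two small remarks on precision. For uniqueness, you assert that axiom (iv) implies $f\odot g\bmod t^{N+1}$ depends only on $f,g\bmod t^{N+1}$; by itself (iv) only guarantees that for every $N$ there is \emph{some} $M$ with this property. The sharper statement is a consequence of the eventual formula (the degree-$k$ coefficient of $\prod_{i,j}(1-x_iy_jt)$ is determined by the elementary symmetric polynomials of degree $\le k$), but you cannot invoke it before the formula is established. The cleaner uniqueness argument is: polynomial Witt vectors $1+a_1t+\dotsb+a_Nt^N$ are $t$-adically dense in $\mathrm{W}(A)$, $\mathrm{W}(A)$ is Hausdorff, and two continuous binary operations agreeing on a dense subset agree everywhere, so it suffices to determine $\odot$ on polynomial truncations — and then proceed as you do via (iii) and the splitting ring. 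Also, the phrase ``agree with $\prod_i(1-x_it)$ \dots mod higher-order terms'' is confusingly put: under the substitution $S_k\mapsto(-1)^ke_k(x)$ the universal polynomial Witt vector $1+S_1t+\dotsb+S_Nt^N$ maps \emph{exactly} to $\prod_{i=1}^N(1-x_it)$, and it is the injectivity of $\mathbb{Z}[S_\bullet,U_\bullet]\hookrightarrow\mathbb{Z}[x_\bullet,y_\bullet]$ (the $e_k$ being algebraically independent) that lets you descend $\prod_{i,j}(1-x_iy_jt)$ back to the universal base. Neither point is a gap in substance, only in formulation.
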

The proof of this result can be found in many sources, e.g.\ \cite[Section~9]{MR2553661}.
\begin{Proposition}
	Let $A$ be a ring. The set
	\begin{equation*}
	\Wrat (A)\defined \left\{ f\in\mathrm{W(A)}\,\middle\lvert\, f=\frac{1+a_1t+a_2t^2+\dotsb +a_nt^n}{1+b_1t+b_2t^2+\dotsb +b_mt^m},\, a_i,b_j\in A \right\}
	\end{equation*}
	is a subring of $\mathrm{W}(A)$.
\end{Proposition}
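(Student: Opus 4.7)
The plan is to verify that $\Wrat(A)$ is closed under the unit elements and the three operations $\oplus$, $\ominus$ (additive inverse) and $\odot$. The containment of the additive unit $1 \in \mathrm{W}(A)$ is immediate, and the multiplicative unit $1 - t$ clearly lies in $\Wrat(A)$. Because $\oplus$ is just multiplication of power series, the identity $(p_1/q_1)(p_2/q_2) = (p_1 p_2)/(q_1 q_2)$ shows closure under $\oplus$, and taking reciprocals shows closure under $\ominus$. So the only substantive point is to prove closure under $\odot$.

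The natural first reduction is to the case of polynomials. I claim that if $p, q \in 1 + tA[t]$ are polynomials, then $p \odot q \in 1 + tA[t]$. To prove this I would invoke a universality argument: by functoriality of $\mathrm{W}(-)$ in $A$, it suffices to treat the universal case $A_0 = \bbz[a_1,\ldots,a_n,b_1,\ldots,b_m]$ with $p = 1 + a_1 t + \cdots + a_n t^n$ and $q = 1 + b_1 t + \cdots + b_m t^m$. Passing to a further extension $\tilde A \supseteq A_0$ in which $p$ and $q$ split completely, write
\[
p = \prod_{i=1}^{n}(1 - \alpha_i t), \qquad q = \prod_{j=1}^{m}(1 - \beta_j t).
\]
Using $\oplus$-additivity in each factor and the defining identity $(1 - \alpha t)\odot(1 - \beta t) = 1 - \alpha\beta t$, together with the continuity of $\odot$, one obtains
\[
p \odot q = \bigoplus_{i,j}(1 - \alpha_i \beta_j t) = \prod_{i=1}^{n}\prod_{j=1}^{m}(1 - \alpha_i \beta_j t).
\]
This last product is a polynomial in $t$ whose coefficients are symmetric in the $\alpha_i$ separately and in the $\beta_j$ separately, hence, by the fundamental theorem on symmetric polynomials, lie in $\bbz[a_1,\ldots,a_n,b_1,\ldots,b_m] = A_0$. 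Therefore $p \odot q \in 1 + tA_0[t] \subset \Wrat(A_0)$, and by functoriality $p \odot q \in \Wrat(A)$ for every $A$.

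Finally I would bootstrap from polynomials to general rational Witt vectors via distributivity. Given $\alpha, \beta \in \Wrat(A)$, write $\alpha = p_1 \ominus q_1$ and $\beta = p_2 \ominus q_2$ with $p_i, q_i \in 1 + tA[t]$. Then the ring axioms for $\mathrm{W}(A)$ yield
\[
\alpha \odot \beta \;=\; (p_1 \odot p_2) \;\ominus\; (p_1 \odot q_2) \;\ominus\; (q_1 \odot p_2) \;\oplus\; (q_1 \odot q_2).
\]
Each of the four summands is a product of two polynomials in $\Wrat(A)$, hence polynomial by the previous paragraph, and the previously-established closure of $\Wrat(A)$ under $\oplus$ and $\ominus$ then places the whole expression in $\Wrat(A)$.

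The only genuine obstacle is the polynomial case, and the only delicate point there is to justify the factorisation and the symmetric-function argument over the universal base; this is routine but is the conceptual content of the proof. An alternative, more computational route would be to derive an explicit recursion for the coefficients of $p \odot q$ from the bilinearity of $\odot$ and the generating identity $(1-at)\odot(1-bt) = 1-abt$, but the universality approach is cleaner and also makes transparent why $\Wrat$ is a functorial construction.
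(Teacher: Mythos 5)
The paper does not prove this proposition itself but cites Almkvist \cite{MR0424786}, so there is no in-paper proof to compare against. Your argument is correct, and its structure is the standard one. The essential move is the reduction to the universal ring $A_0=\bbz[a_1,\dotsc,a_n,b_1,\dotsc,b_m]$: since $A_0$ is an integral domain, $p$ and $q$ split over a domain extension $\tilde A\supseteq A_0$ with $A_0\hookrightarrow\tilde A$ injective, and bilinearity of $\odot$ together with the generating identity $(1-\alpha t)\odot(1-\beta t)=1-\alpha\beta t$ gives $p\odot q=\prod_{i,j}(1-\alpha_i\beta_j t)$; the fundamental theorem of symmetric functions then pulls the coefficients back into $A_0$, and functoriality carries the conclusion to an arbitrary $A$ (where such a splitting need not exist, so the detour is genuinely necessary). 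The bootstrap from polynomials to rational fractions by distributivity is exactly right. One small correction of emphasis: in the polynomial step you invoke the continuity of $\odot$, but since only finite $\oplus$-sums are involved, bilinearity (part of the ring axioms) is all that is used; continuity would matter only for infinite sums. A variant worth knowing, which stays entirely inside $A_0$ and is closer to the K-theoretic viewpoint the paper alludes to, writes $p=\det(1-Mt)$ for $M$ the companion matrix of the reversed polynomial and uses the identity $\det(1-Mt)\odot\det(1-Nt)=\det\bigl(1-(M\otimes N)t\bigr)$; this is the matrix form of your eigenvalue computation.
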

It seems that this result first appeared explicitly in the literature as \cite[Proposition~3.4]{MR0424786}. The elements of $\Wrat (A)$ are called \emph{rational Witt vectors}. The rings $\Wrat (A)$ occur naturally in some problems in K-theory, see \cite{MR0424786, MR523461, MR0260842}.

\begin{Remark}\label{Rmk:RationalWittVectorsForFields}
	In case $A=F$ is a field of characteristic zero, there is a more elementary description of $\Wrat (F)$. First assume that $F$ is algebraically closed. Then the set of all polynomials $1-\alpha t$, where $\alpha$ runs through $F^{\times }$, is a basis of the abelian group underlying $\Wrat (F)$, and the product of two basis elements corresponding to $\alpha$ and $\beta$, respectively, is the basis element corresponding to $\alpha\beta$. This means that $\Wrat (F)$ is canonically isomorphic to the group ring $\bbz [F^{\times }]$.

In the general case, choose an algebraic closure $\Fbar /F$. Then there is a natural action of $\Gal (\Fbar /F)$ on $\Wrat (\Fbar )$, the ring of invariants being canonically isomorphic to $\Wrat (F)$. The isomorphism $\Wrat (\Fbar)\cong\bbz [\Fbar^{\times}]$ is equivariant for this Galois action, hence $\Wrat (F)$ is canonically isomorphic to the ring of $\Gal (\Fbar /F)$-invariants in $\bbz [\Fbar^{\times}]$.
\end{Remark}

Now we can give the the construction of the schemes $\mathcal{X}_F$. So, let $F$ be a field containing $\bbq (\zeta_{\infty})\defined\bigcup_n \bbq(\zeta_n)$, where the latter is assumed embedded into~$\bbc$. The group homomorphism
\begin{equation*}
\mu_{\infty }\to\Wrat (F),\quad \zeta\mapsto [\zeta]\defined 1-\zeta t
\end{equation*}
and the canonical inclusion
\begin{equation*}
\mu_{\infty }\hookrightarrow \bbc^{\times}
\end{equation*}
define ring homomorphisms $\bbz [\mu_{\infty}]\to\Wrat (F)$ and $\bbz [\mu_{\infty }]\to\bbc$, respectively. We then set
\begin{equation*}
A_F=\Wrat (F)\otimes_{\bbz [\mu_{\infty}]}\bbc\qquad\text{and}\qquad \mathcal{X}_F=\Spec A_F.
\end{equation*}
Note that if $\Fbar /F$ is an algebraic closure, $A_{\Fbar}$ comes with an action by $\Gal (\Fbar /F)$, and the ring of invariants is canonically isomorphic to~$A_F$.

\begin{Remark}\label{Rmk:XEForArbitraryAlgebras}
	In fact we may define a $\bbc$-algebra $A_E$ and a $\bbc$-scheme $\mathcal{X}_E=\Spec A_E$ for any $\bbq (\zeta_{\infty })$-algebra $E$ by the same formula. We will only use this construction in the case where $E$ is a finite product of fields, say $E=E_1\times\dotsb\times E_n$ with each $E_{\nu }$ a field extension of~$\bbq (\zeta_{\infty })$; then there are natural isomorphisms
	\begin{equation*}
	A_E\cong\prod_{\nu =1}^n A_{E_{\nu }}\quad\text{and}\quad\mathcal{X}_E\cong\coprod_{\nu =1}^n\mathcal{X}_{E_{\nu }}.
	\end{equation*}
\end{Remark}

The following is the version of Theorem~\ref{thm:Main} for schemes.

\begin{Theorem}\label{thm:MainScheme} For any field $F\supset \bbq (\zeta_\infty)$, $\mathcal{X}_F$ is connected, and the \'etale fundamental group of $\mathcal{X}_F$ is isomorphic to the absolute Galois group of $F$. More precisely, the functor $E\mapsto \mathcal{X}_E$ induces a (degree-preserving) equivalence of categories between the category of finite \'etale $F$-algebras and the category of finite \'etale schemes over $\mathcal{X}_F$.
\end{Theorem}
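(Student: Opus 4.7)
The plan has three main steps: establish that $\mathcal{X}_{\Fbar}$ is connected and simply connected; show that each $\mathcal{X}_E\to\mathcal{X}_F$ for $E/F$ finite Galois inside $\Fbar$ is a $\Gal(E/F)$-Galois finite \'etale cover, using Hilbert's Satz~90; then conclude via Lemma~\ref{Lem:SimplyConnectedProEtaleIsUniversal}.

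For the first step, Remark~\ref{Rmk:RationalWittVectorsForFields} identifies $\Wrat(\Fbar)\cong\bbz[\Fbar^\times]$. Since $\Fbar^\times$ is divisible with torsion subgroup $\mu_\infty$, it decomposes (abstractly, as an abelian group) as $\mu_\infty\oplus V$ where $V=\Fbar^\times/\mu_\infty$ is a $\bbq$-vector space. Hence
\[
A_{\Fbar}=\bbz[\Fbar^\times]\otimes_{\bbz[\mu_\infty]}\bbc\ \cong\ \bbc[V],
\]
which is connected and simply connected by Corollary~\ref{Cor:FGofDiagonalisableGroupScheme}. Flatness of $\bbc$ over $\bbz[\mu_\infty]$ (via the factorisation $\bbz[\mu_\infty]\to\bbq(\zeta_\infty)\to\bbc$) shows $A_F\hookrightarrow A_{\Fbar}$, so $\mathcal{X}_F$ is connected too. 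Moreover, because every element of $\bbz[\Fbar^\times]$ is supported on finitely many $x\in\Fbar^\times$ and hence fixed by some open $G_E\subset G_F:=\Gal(\Fbar/F)$, one has $\Wrat(\Fbar)=\varinjlim_E\Wrat(E)$, so $\mathcal{X}_{\Fbar}=\varprojlim_E\mathcal{X}_E$ over finite Galois subextensions $E/F$ of $\Fbar/F$.

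The central step is the Galois cover property. Fix $E/F$ finite Galois with $G=\Gal(E/F)$, and let $g\in G$ be non-trivial of order $n$. Applying Hilbert's Satz~90 to the cyclic extension $E/E^{\langle g\rangle}$ and a primitive $n$-th root of unity $\zeta\in\mu_n\subset F$ (for which $N_{E/E^{\langle g\rangle}}(\zeta)=\zeta^n=1$), one obtains an element $y_g\in E^\times$ with $g(y_g)/y_g=\zeta$. By multiplicativity of the Teichm\"uller map, $[y_g]\in A_E^\times$ is then a $\zeta$-eigenvector for $g$: since $[\zeta]=\zeta\in\bbc\subset A_E$, we have $g\cdot [y_g]=\zeta\cdot [y_g]$ with $\zeta\in\bbc^\times\smallsetminus\{1\}$. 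Using such eigenvectors for every non-trivial $g\in G$, I would verify both $A_E^G=A_F$ (combining these rigidity statements with flatness over $\bbz[\mu_\infty]$) and, crucially, that the natural map
\[
A_E\otimes_{A_F}A_E\ \longrightarrow\ \prod_{g\in G}A_E,\qquad a\otimes b\mapsto\bigl(g(a)\cdot b\bigr)_{g\in G},
\]
is an isomorphism. This is the defining condition for $A_E/A_F$ to be a $G$-Galois \'etale extension, hence finite \'etale of degree $[E:F]$; together with connectedness of $\mathcal{X}_E$ (by the same argument as for $\mathcal{X}_F$) it makes $\mathcal{X}_E\to\mathcal{X}_F$ a connected $G$-Galois finite \'etale cover.

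The main obstacle is this last verification: translating the fibrewise non-triviality provided by the Hilbert 90 eigenvectors into the global ring-theoretic Galois condition. A cleaner route is to base change to $A_{\Fbar}$, where the simple connectedness of $\mathcal{X}_{\Fbar}$ forces any finite \'etale cover to split, and then use the eigenvectors $[y_g]$ to exhibit the freeness of the $G_F$-action on $\mathcal{X}_{\Fbar}(\bbc)$: any non-trivial $g\in G_F$ acts by $\zeta\neq 1$ on some $[y_g]$, so no character $\Fbar^\times\to\bbc^\times$ extending $\mu_\infty\hookrightarrow\bbc^\times$ is $g$-fixed. Granting this, step three is formal: the system $(\mathcal{X}_E)_E$ of connected $G$-Galois finite \'etale covers of $\mathcal{X}_F$ has simply connected limit $\mathcal{X}_{\Fbar}$, so Lemma~\ref{Lem:SimplyConnectedProEtaleIsUniversal} shows every connected finite \'etale cover of $\mathcal{X}_F$ is dominated by some $\mathcal{X}_E$, giving essential surjectivity; full faithfulness follows from the Galois structure. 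The passage to general finite \'etale $F$-algebras, which are products of finite separable field extensions, is handled by Remark~\ref{Rmk:XEForArbitraryAlgebras}, and degree preservation is built into the construction.
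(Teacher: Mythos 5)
Your outline matches the paper's strategy: establish that $\mathcal{X}_{\Fbar}$ is simply connected via the identification $A_{\Fbar}\cong\bbc[\Fbar^\times/\mu_\infty]$, use Hilbert~90 eigenvectors to show the Galois action on geometric points is free, conclude that $\mathcal{X}_E\to\mathcal{X}_F$ is a finite Galois \'etale cover, and then close the argument with Lemma~\ref{Lem:SimplyConnectedProEtaleIsUniversal}. But the crucial bridge is left unresolved, and you partially misidentify where the work lies.

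You flag the direct verification of $A_E\otimes_{A_F}A_E\overset{\cong}{\to}\prod_{g\in G}A_E$ as ``the main obstacle'' and propose the ``cleaner route'' of showing the Galois action is free on geometric points. But you then stop exactly where the real work begins: why should freeness of the $G$-action on geometric points imply that $\Spec A\to\Spec A^G$ is finite \'etale Galois? This is not automatic; the paper devotes Proposition~\ref{Prop:QuotientByFreeFiniteActionFiniteEtale} to it, proving that $\mathcal{X}\times G\to\mathcal{X}\times\mathcal{X}$ is a closed immersion (hence a finite \'etale equivalence relation), and invoking representability of the algebraic-space quotient. You also need to know that geometric points of $\mathcal{X}_F=\Spec A_{\Fbar}^{\Gal(\Fbar/F)}$ are exactly Galois orbits of geometric points of $\mathcal{X}_{\Fbar}$, which is the paper's Lemma~\ref{Lem:GeoPointsOfQuotient}; its proof is non-trivial, requiring separate treatment of positive characteristic and a Cantor-intersection argument for the profinite case. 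Without these two ingredients your ``cleaner route'' does not actually reach the conclusion. Two smaller points: the freeness must be established on $\mathcal{X}_{\Fbar}(k)$ for \emph{every} algebraically closed field $k$, not just $k=\bbc$ (your Hilbert~90 eigenvectors do handle this, since they only use injectivity of $\chi$ on $\mu_\infty$, but you should say so explicitly); and the remark that ``simple connectedness of $\mathcal{X}_{\Fbar}$ forces any finite \'etale cover to split'' belongs to the essential-surjectivity step, not to the argument that $\mathcal{X}_E\to\mathcal{X}_F$ is \'etale --- inserting it there muddies the logic.
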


A special case of this can be handled directly.

\begin{Proposition}\label{Prop:XFbarIsSC}
	If $F$ is algebraically closed, $\mathcal{X}_F$ is connected and simply connected.
\end{Proposition}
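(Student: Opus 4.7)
The plan is to reduce the statement directly to Corollary~\ref{Cor:FGofDiagonalisableGroupScheme}, which asserts that $\Spec\bbc[V]$ is connected and simply connected whenever $V$ is a $\bbq$-vector space. So the task becomes an identification of $A_F$ as a group algebra of this form.

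First I would invoke Remark~\ref{Rmk:RationalWittVectorsForFields}: since $F$ is algebraically closed (of characteristic zero, as it contains $\bbq(\zeta_\infty)$), the Teichm\"uller map provides a ring isomorphism $\Wrat(F)\cong\bbz[F^\times]$ sending $1-\alpha t$ to $[\alpha]$. Under this identification, the structural map $\bbz[\mu_\infty]\to\Wrat(F)$ entering the definition of $A_F$ is induced by the inclusion of the torsion subgroup $\mu_\infty\hookrightarrow F^\times$.

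Next I would exploit the fact that $F^\times$ is a divisible abelian group (this uses that $F$ is algebraically closed of characteristic zero, so every element admits $n$-th roots for all $n\geq 1$), whose torsion subgroup is exactly $\mu_\infty$. Because $\mu_\infty$ is itself divisible, it is injective in the category of abelian groups, so the short exact sequence
\[
0\to\mu_\infty\to F^\times\to V\to 0
\]
splits, yielding a (non-canonical) decomposition $F^\times\cong\mu_\infty\oplus V$. Here $V=F^\times/\mu_\infty$ is torsion-free and divisible, hence a $\bbq$-vector space. Passing to group algebras, this splitting gives an isomorphism of $\bbz[\mu_\infty]$-algebras $\bbz[F^\times]\cong\bbz[\mu_\infty]\otimes_{\bbz}\bbz[V]$, and consequently
\[
A_F=\Wrat(F)\otimes_{\bbz[\mu_\infty]}\bbc\;\cong\;\bbz[V]\otimes_\bbz\bbc\;\cong\;\bbc[V].
\]
Thus $\mathcal{X}_F\cong\Spec\bbc[V]$, and the desired conclusion is immediate from Corollary~\ref{Cor:FGofDiagonalisableGroupScheme}.

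There is essentially no obstacle here beyond assembling these pieces: the only thing to keep an eye on is that the splitting of $F^\times$ as $\mu_\infty\oplus V$ is non-canonical, but since we are only asserting properties (connectedness and simple connectedness) that are invariant under isomorphism, this is harmless. The substantive content is already packaged in Corollary~\ref{Cor:FGofDiagonalisableGroupScheme}; what the algebraically closed hypothesis gives us is precisely the input needed to land in that corollary.
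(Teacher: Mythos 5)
Your proof is correct, and it takes a mildly different route than the paper. The paper identifies $\mathcal{X}_F$ with the fibre product $\Spec\bbc[F^\times]\times_{\Spec\bbc[\mu_\infty]}\Spec\bbc$, then invokes Corollary~\ref{Cor:PiZeroOfGroupAlgebra} twice: once to see that $\Spec\bbc[F^\times]\to\Spec\bbc[\mu_\infty]$ is a homeomorphism on $\pi_0$ (so the fibre product is one connected component of $\Spec\bbc[F^\times]$), and once to deduce that this component has the same \'etale fundamental group as $\Spec\bbc[(F^\times)_{\mathrm{tf}}]$ --- using, under the hood, that $\Spec\bbc[F^\times]$ is a group scheme whose connected components are all isomorphic. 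You instead use the injectivity of $\mu_\infty$ in the category of abelian groups to split the torsion sequence and produce a \emph{non-canonical} isomorphism $F^\times\cong\mu_\infty\oplus V$, which lets you read off a direct isomorphism $A_F\cong\bbc[V]$ as $\bbc$-algebras. Your route bypasses Corollary~\ref{Cor:PiZeroOfGroupAlgebra} entirely at the cost of a choice of splitting, which is harmless here since you only care about isomorphism-invariant conclusions; the paper's route is canonical throughout and is the one that scales to the non-algebraically-closed setting later on, where $V$ cannot be chosen Galois-equivariantly (as the paper notes in the proof of Proposition~\ref{Prop:SimplePropertiesOfCHXF}). For the statement at hand, both are complete and of comparable length.
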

\begin{proof}
  We may identify $\mathcal{X}_F$ with
  \begin{equation*}
  \Spec \left((\Wrat (F)\otimes_{\bbz}\bbc )\otimes_{\bbc [\mu_{\infty}]}\bbc\right) \cong \Spec\bbc [F^{\times}]\times_{\Spec \bbc [\mu_{\infty}]}\Spec\bbc .
  \end{equation*}
  The embedding $\mu_{\infty}\hookrightarrow F^{\times}$ induces an isomorphism on torsion subgroups, hence by Corollary~\ref{Cor:PiZeroOfGroupAlgebra} the morphism $\Spec\bbc [F^{\times}]\to\Spec \bbc [\mu_{\infty}]$ induces a homeomorphism on $\pi_0(\cdot )$. The morphism $\Spec \bbc\to\Spec\bbc [\mu_{\infty}]$ picks one connected component, hence the fibre product $\mathcal{X}_F$ must be connected, more precisely identified with one particular connected component of $\Spec\bbc [F^{\times}]$.
  
  By Corollary~\ref{Cor:PiZeroOfGroupAlgebra}, $\pioneet (Y_F)$ is therefore isomorphic to $\pioneet (\Spec \bbc [(F^{\times})_{\mathrm{tf}}])$. Since $F^{\times}$ is divisible, $(F^{\times})_{\mathrm{tf}}$ is a $\bbq$-vector space, and by Corollary~\ref{Cor:FGofDiagonalisableGroupScheme} the spectrum of its group algebra is simply connected. Hence so is~$\mathcal{X}_F$.
\end{proof}

\subsection{Recognising properties of scheme morphisms on geometric points}

We assemble a few well-known observations on morphisms of schemes which will be useful later.

\begin{Proposition}\label{Prop:QuotientByFreeFiniteActionFiniteEtale}
	Let $\mathcal{X}=\Spec A$ be an affine scheme and let $G$ be a finite group operating on $\mathcal{X}$, hence on $A$. Assume that for any algebraically closed field $k$ the group $G$ operates freely on $\mathcal{X}(k)$.
	
	Then the natural morphism $\pi\colon \mathcal{X}\to \mathcal{Y}=\Spec A^G$ is a finite \'etale Galois covering with deck group~$G$.
\end{Proposition}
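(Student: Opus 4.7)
The goal is to prove that the canonical ``Galois map''
\[
\Psi\colon A\otimes_B A \longrightarrow \prod_{g\in G} A, \qquad a_1\otimes a_2 \mapsto \bigl(a_1\cdot g(a_2)\bigr)_{g\in G}
\]
(with $B=A^G$) is an isomorphism of $B$-algebras. This is the classical Chase--Harrison--Rosenberg condition for $A/B$ to be a $G$-Galois extension of commutative rings; it implies that $A$ is finitely generated projective over $B$ of rank $|G|$, that $\pi$ is finite \'etale, and that $G$ acts simply transitively on the geometric fibers of $\pi$ --- in other words, exactly that $\pi$ is a finite \'etale Galois covering with deck group~$G$. As a preliminary observation, $A$ is integral over $B$: for any $a\in A$, the monic polynomial $\prod_{g\in G}(T-g(a))$ lies in $B[T]$ and annihilates $a$, so in particular $\pi$ is an integral surjection with finite fibers.

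The crucial step will be to translate the freeness hypothesis into the classical algebraic condition for $\Psi$ to be an isomorphism, namely the vanishing of every inertia subgroup. Let $\mathfrak{q}\subset A$ be a prime ideal and set $I_{\mathfrak{q}}=\{g\in G\mid g\mathfrak{q}=\mathfrak{q}\text{ and }g\text{ acts as the identity on }A/\mathfrak{q}\}$. Choosing any embedding of $A/\mathfrak{q}$ into an algebraic closure $k$, the composite $A\twoheadrightarrow A/\mathfrak{q}\hookrightarrow k$ is a geometric point of $\mathcal{X}$ fixed by every element of $I_{\mathfrak{q}}$, so the freeness hypothesis forces $I_{\mathfrak{q}}=1$ for every $\mathfrak{q}$. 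Combined with $B=A^G$ (which is built into the definition of $\mathcal{Y}$), the theorem of Chase, Harrison and Rosenberg (see, e.g., Knus--Ojanguren, \emph{Th\'eorie de la descente et alg\`ebres d'Azumaya}, Ch.~III) then yields that $\Psi$ is an isomorphism, completing the proof.

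The main obstacle is the passage from pointwise/fiberwise information (trivial inertia; geometric fibers of cardinality $|G|$) to the \emph{global} assertion that $A$ is finitely generated as a $B$-module and that $\Psi$ is a global isomorphism, in the absence of any Noetherian hypothesis on $A$. The Chase--Harrison--Rosenberg theorem handles this in one stroke, but a more hands-on argument would write $A=\varinjlim_{\alpha}A_{\alpha}$ as a filtered colimit of $G$-stable finitely generated $B$-subalgebras $A_{\alpha}\subseteq A$, verify that each $A_{\alpha}/A_{\alpha}^G$ is classically finite Galois with group $G$ (inheriting freeness on geometric points), and pass to the limit; care is required here to compare each invariant ring $A_{\alpha}^G$ with $B$ and to preserve the Galois property through the direct system.
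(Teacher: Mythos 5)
Your proof is correct and takes a genuinely different route from the paper. The paper's argument works geometrically: it shows that the action morphism $\mathcal{X}\times G\to\mathcal{X}\times\mathcal{X}$ is a closed immersion (using the freeness hypothesis to verify that it is a monomorphism, reduced via the ``constant on connected components'' trick to a check on geometric points), concludes that one has a finite \'etale equivalence relation, and then invokes the machinery of algebraic spaces (Stacks Project Tags 03BM and 02WV) to identify the quotient with $\Spec A^G$ and establish that $\mathcal{X}\to\mathcal{Y}$ is finite \'etale. Your route instead stays entirely within commutative ring theory: you translate the freeness hypothesis into the vanishing of all inertia subgroups $I_{\mathfrak{q}}$ (this translation is carried out correctly --- if $g\in I_{\mathfrak{q}}$ then the geometric point $A\twoheadrightarrow A/\mathfrak{q}\hookrightarrow k$ is fixed by $g$), and then invoke the Chase--Harrison--Rosenberg criterion, which says that for $B=A^G$ the extension $A/B$ is $G$-Galois exactly when every inertia group is trivial. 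The two approaches trade off differently against the absence of any finiteness hypothesis on $A$: the paper handles it by appealing to the representability of the algebraic-space quotient, while CHR handles it internally by assembling the ``Galois idempotent'' $e\in A\otimes_B A$ with $\Psi(e)=e_1$ directly from the global generation statements $(\sigma(a)-a\mid a\in A)=A$ that trivial inertia produces, whence finite generation and projectivity of $A$ over $B$ come out as consequences rather than inputs. Your approach is arguably more classical and more self-contained for a reader comfortable with Galois theory of commutative rings; the paper's is better adapted to being quoted later as a statement about geometric quotients. One small remark: faithfulness of the $G$-action, a standing hypothesis in some formulations of CHR, is not verified explicitly in your writeup, but it follows immediately from trivial inertia when $A\neq 0$ (and the case $A=0$ is trivial), so this is not a genuine gap.
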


\begin{proof}
	We claim that the natural morphism $\mathcal{X}\times G\to \mathcal{X}\times \mathcal{X}$, $(x,g)\mapsto (x,xg)$ is a closed immersion. It is clearly finite, as the composite with each projection $\mathcal{X}\times\mathcal{X}\to\mathcal{X}$ is finite. Thus, by \cite[Corollaire~18.12.6]{MR0238860}, it remains to show that it is a monomorphism. Thus, let $\mathcal{S}$ be some scheme with two maps $a=(a_\mathcal{X},a_G),b=(b_\mathcal{X},b_G)\colon \mathcal{S}\to \mathcal{X}\times G$ whose composites with $\mathcal{X}\times G\to \mathcal{X}\times \mathcal{X}$ agree. In particular, it follows that the two maps $a_\mathcal{X},b_\mathcal{X}\colon \mathcal{S}\to \mathcal{X}$ agree. It remains to see that the two maps $a_G,b_G\colon \mathcal{S}\to G$ agree. As both maps are constant on connected components, it suffices to check this on geometric points, where it follows from the assumption.

Since the component maps $\mathcal{X}\times G\to \mathcal{X}$ are also finite \'etale, the morphism $\mathcal{X}\times G\to \mathcal{X}\times \mathcal{X}$ is a finite \'etale equivalence relation on~$\mathcal{X}$. Hence its quotient $[\mathcal{X}/G]$ is an algebraic space; by \cite[Tag 03BM]{StacksProject} it must be representable by an affine scheme, and the morphism $\mathcal{X}\to [\mathcal{X}/G]$ is finite. This affine scheme represents the same functor as $\mathcal{Y}$, therefore $\mathcal{Y}=[\mathcal{X}/G]$ and $\mathcal{X}\to \mathcal{Y}$ is finite. It is also \'etale by \cite[Tag 02WV]{StacksProject}.
\end{proof}

\begin{Lemma}\label{Lem:IntegralSurjectiveOnGeomPoints}
	Let $B\subseteq A$ be an integral ring extension, and let $f\colon \mathcal{X}=\Spec A\to\Spec B=\mathcal{Y}$ be the corresponding morphism of schemes. Then for every algebraically closed field $k$ the map $\mathcal{X}(k)\to \mathcal{Y}(k)$ is surjective.
\end{Lemma}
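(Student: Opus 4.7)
My plan is to translate the statement into commutative algebra: a $k$-point of $\mathcal{Y}$ is the same as a ring homomorphism $\varphi\colon B\to k$, and we must show every such $\varphi$ extends to a ring homomorphism $A\to k$. I would begin by setting $\mathfrak{p}=\ker\varphi$, so that $\varphi$ factors as $B\twoheadrightarrow B/\mathfrak{p}\hookrightarrow k$, and hence induces an embedding of the residue field $\kappa(\mathfrak{p})=\operatorname{Frac}(B/\mathfrak{p})$ into $k$.

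Next I would invoke the \emph{lying-over} theorem for integral extensions (a classical consequence of the Cohen--Seidenberg theorems, see e.g.\ Atiyah--Macdonald): since $B\subseteq A$ is integral, there exists a prime ideal $\mathfrak{q}\subset A$ with $\mathfrak{q}\cap B=\mathfrak{p}$. Passing to the quotient, $A/\mathfrak{q}$ is an integral extension of $B/\mathfrak{p}$, so the induced extension of fraction fields $\kappa(\mathfrak{p})\hookrightarrow\kappa(\mathfrak{q})$ is algebraic.

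Finally, because $k$ is algebraically closed, the embedding $\kappa(\mathfrak{p})\hookrightarrow k$ extends (by a standard Zorn's lemma argument, or equivalently by the fact that an algebraic closure of $\kappa(\mathfrak{p})$ embeds into $k$) to an embedding $\kappa(\mathfrak{q})\hookrightarrow k$. Composing with the quotient map yields a ring homomorphism
\[
A\twoheadrightarrow A/\mathfrak{q}\hookrightarrow\kappa(\mathfrak{q})\hookrightarrow k
\]
whose restriction to $B$ is $\varphi$, as required. There is really no serious obstacle here: the argument is a routine bundling of lying-over with the universal property of algebraic closure, and the only point worth stating carefully is the appeal to lying-over, which is where the integrality hypothesis is used.
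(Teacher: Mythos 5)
Your proof is correct and matches the paper's argument essentially step for step: the paper's appeal to \cite[Tag 00GQ]{StacksProject} for surjectivity of $\Spec A\to\Spec B$ on topological points is precisely the lying-over theorem you invoke, and the subsequent extension of the residue-field embedding $\kappa(\mathfrak{p})\hookrightarrow k$ to $\kappa(\mathfrak{q})\hookrightarrow k$ using algebraicity and algebraic closedness of $k$ is the same in both. The only difference is cosmetic: you phrase everything on the ring side, while the paper works with scheme points and residue fields $\kappa(x)$, $\kappa(y)$.
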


\begin{proof}
	Let $\widebar{y}\colon\Spec k\to \mathcal{Y}$ be a geometric point with image $y\in \mathcal{Y}$. Then $\widebar{y}$ factors as $\Spec k\to\Spec\kappa (y)\hookrightarrow \mathcal{Y}$, defined by a field extension $\kappa (y)\hookrightarrow k$.
	
	By \cite[Tag 00GQ]{StacksProject} $f$ is surjective on topological points, hence there exists some $x\in \mathcal{X}$ with $f(x)=y$. By integrality, $\kappa (x)$ is an algebraic extension of $\kappa (y)$. Since $k$ was assumed to be algebraically closed, we can find an embedding $\kappa (y)\hookrightarrow k$ making the diagram
	\begin{equation*}
	\xymatrix{
		&\Spec\kappa (x)\ar[r]\ar[d] &\mathcal{X}\ar[d]\\
		\Spec k\ar[r]\ar@{-->}[ru] & \Spec\kappa (y)\ar[r] & \mathcal{Y}
		}
	\end{equation*}
	commute. Then the composition $\widebar{x}\colon\Spec k\to\Spec\kappa (x)\to \mathcal{X}$ is a preimage of $\widebar{y}$ in~$\mathcal{X}(k)$.
\end{proof}

\begin{Lemma}\label{Lem:GeoPointsOfQuotient}
	Let $A$ be a ring, and let $G$ be a profinite group acting continuously on~$A$. Let $A^G\subseteq A$ be the ring of invariants, and let $\varphi\colon \mathcal{X}=\Spec A\to\Spec A^G=\mathcal{Y}$ be the associated morphism of schemes. Then for every algebraically closed field $k$ the induced map $\mathcal{X}(k)/G\to \mathcal{Y}(k)$ is bijective.
\end{Lemma}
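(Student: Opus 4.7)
The plan is to prove surjectivity and injectivity on $k$-points separately. For surjectivity, the key point is that $A$ is integral over $A^G$. Indeed, continuity of the $G$-action on the discrete ring $A$ forces every $a\in A$ to have open stabiliser and hence a finite orbit $\{g_1 a,\ldots,g_n a\}$; the coefficients of $\prod_i(T-g_i a)$ are $G$-invariant, exhibiting $a$ as a root of a monic polynomial over $A^G$. Lemma~\ref{Lem:IntegralSurjectiveOnGeomPoints} then gives surjectivity of $\mathcal{X}(k)\to\mathcal{Y}(k)$.

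For injectivity, I would first treat the finite case as a known input: if $H$ is a finite group acting on a ring $B$ with invariants $C=B^H$, and $\phi_1,\phi_2\colon B\to k$ agree on $C$, then $\phi_2=\phi_1\circ h$ for some $h\in H$. This is the classical theorem on finite group actions, proved in two steps: the primes $\ker\phi_i$ of $B$ both lie over $\ker(\phi_1|_C)$ and so are $H$-conjugate, and after pre-composing $\phi_1$ with a suitable $h_0$ both maps factor through $B/\mathfrak{q}$ for a common prime $\mathfrak{q}$; quasi-Galois-ness of $\kappa(\mathfrak{q})/\kappa(\mathfrak{p})$ together with surjectivity of the decomposition group onto $\Aut(\kappa(\mathfrak{q})/\kappa(\mathfrak{p}))$ then produces an element of the decomposition subgroup identifying the two embeddings into~$k$.

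To pass from the finite to the profinite case, I would exploit that continuity of the $G$-action means $A=\bigcup_N A^N$, where $N$ runs over open normal subgroups of $G$, with $A^G=(A^N)^{G/N}$ for each such $N$. Given $\phi_1,\phi_2\colon A\to k$ agreeing on $A^G$, the finite case applied to $G/N\curvearrowright A^N$ produces a non-empty, finite subset $T_N\subseteq G/N$ of classes $\bar g$ with $\phi_2|_{A^N}=\phi_1|_{A^N}\circ\bar g$. Let $S_N\subseteq G$ be the preimage of $T_N$; then $S_N$ is a finite union of cosets of $N$, hence clopen in $G$, and the inclusion $A^{N}\subseteq A^{N'}$ for $N'\subseteq N$ forces $S_{N'}\subseteq S_N$. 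Since $S_{N_1\cap N_2}\subseteq S_{N_1}\cap S_{N_2}$, the family $\{S_N\}$ has the finite intersection property, and compactness of $G$ yields some $g\in\bigcap_N S_N$. Such a $g$ satisfies $\phi_2=\phi_1\circ g$ on every $A^N$, hence on $A$.

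The main obstacle is the finite-group injectivity statement, and within it the surjection from the decomposition group onto $\Aut(\kappa(\mathfrak{q})/\kappa(\mathfrak{p}))$, which is more delicate than the mere transitivity of $G$-action on primes above $\mathfrak{p}$; once this classical input is in hand, the passage to the profinite setting is a routine compactness argument, and surjectivity is an immediate consequence of integrality plus the earlier lemma.
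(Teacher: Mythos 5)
Your proof is correct, and the overall skeleton (surjectivity via integrality, then the finite case, then a compactness argument over open subgroups) matches the paper's. The genuine divergence is in the finite-group injectivity step. The paper's proof is self-contained and elementary: it first reduces to the case where $A$ is an algebra over a prime field, which in characteristic $p$ requires a non-trivial argument (replacing $A$ by a flat $\bbz$-algebra and showing $A^G/p\to (A/p)^G$ becomes an isomorphism after passing to perfections, using that $\Hup^1(G,A)\to\Hup^1(G,A/p^n)$ is injective for suitable $n$), and then produces a separating $G$-invariant function via the Chinese Remainder Theorem. You instead invoke the classical theory of a finite group acting on an arbitrary commutative ring: transitivity of $G$ on the fibres of $\Spec A\to\Spec A^G$, normality of the residue field extension $\kappa(\mathfrak q)/\kappa(\mathfrak p)$, and surjectivity of the decomposition subgroup onto $\Aut(\kappa(\mathfrak q)/\kappa(\mathfrak p))$ (see e.g.\ Bourbaki, Commutative Algebra, Ch.~V, \S 2, or the Stacks Project material on finite group actions on rings). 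This applies directly to any ring, so you sidestep the paper's prime-field reduction entirely; the trade-off is that you are importing a substantive classical theorem where the paper gives a self-contained proof. In the profinite-to-finite passage you use open \emph{normal} subgroups, which is slightly tidier than the paper's use of arbitrary open subgroups (since $G/N$ is then a group acting on $A^N$ in the most naive sense), but the compactness argument via clopen sets with the finite intersection property is the same as the paper's invocation of Cantor's intersection lemma. One small point worth flagging explicitly: when you conclude that $\bar\phi_2=\bar\phi_1\circ\alpha$ for some $\alpha\in\Aut(\kappa(\mathfrak q)/\kappa(\mathfrak p))$, you are using that a normal algebraic extension embeds into an algebraically closed overfield with a uniquely determined image over $\kappa(\mathfrak p)$; that is exactly what quasi-Galois-ness buys you, and it is correct, but it deserves a sentence.
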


\begin{proof}
	Note that the map $A^G\to A$ is integral: For any $a\in A$, the $G$-orbit $Ga = \{a_1,\ldots,a_n\}$ of $a$ is finite, and then $a$ is a root of the monic polynomial $P(X)=\prod_{i=1}^n (X-a_i)\in A^G[X]$. Surjectivity therefore follows from Lemma~\ref{Lem:IntegralSurjectiveOnGeomPoints}, and we only need to show injectivity.
	
	First, we handle the case of finite $G$. Let $p$ be the characteristic of $k$. First, we reduce to the case $A$ is an algebra over the corresponding prime field. If $p=0$, then $(A\otimes \bbq )^G = A^G\otimes \bbq$ as invariants commute with filtered colimits, so we can replace $A$ by $A\otimes \bbq$. If $p>0$, we first assume that $A$ is flat over $\bbz$, by replacing $A$ by the free algebra $\bbz [x_a|a\in A]$ on the elements of $A$, which admits a natural $G$-equivariant surjective map to $A$. Assuming now that $A$ is flat over $\bbz$, the map $A^G/p\to (A/p)^G$ is injective, but need not be an isomorphism. However, we claim that the map induces an isomorphism on perfections, i.e.~on the filtered colimit over $a\mapsto a^p$; in particular, the $k$-valued points are the same. We need to show that whenever $a\in A/p$ is $G$-invariant, there is some $n$ such that $a^{p^n}$ lifts to an element of $A^G$. Note that there is a commutative diagram
\[\xymatrix{
0\ar[r] & A^G\ar[r]\ar[d] & A^G\ar[r]\ar[d]& (A/p)^G\ar[r]\ar[d] & \Hup^1(G,A)\ar[d] \\
0\ar[r] & (A/p^n)^G\ar[r] & (A/p^{n+1})^G \ar[r] & (A/p)^G\ar[r] & \Hup^1(G,A/p^n)\ .\\
}\]
Choose $n$ large enough that the $p$-part of the order of $G$ divides $p^n$. Then $\Hup^1(G,A)$ is killed by $p^n$, and thus the map $\Hup^1(G,A)\to \Hup^1(G,A/p^n)$ is injective. Thus, if an element of $(A/p)^G$ can be lifted to $(A/p^{n+1})^G$, then it can be lifted all the way to $A^G$. But for any $a\in A/p$, the element $a^{p^n}$ lifts canonically to $A/p^{n+1}$: Indeed, for any two lifts $\tilde{a}_1,\tilde{a}_2\in A$ of $a$, one has $\tilde{a}_1^{p^n}=\tilde{a}_2^{p^n}\in A/p^{n+1}$. It follows that for any $a\in (A/p)^G$, $a^{p^n}$ lifts to $(A/p^{n+1})^G$.

In particular, we can assume that $A$ is defined over a field. Let $x_0,x_1\in \mathcal{X}(k)$ be in different $G$-orbits. Then for every $g\in G$ the induced homomorphisms $x_{\nu}\circ g\colon A_k=A\otimes k\to k$ are surjective, hence their kernels are maximal ideals in~$A_k$. By assumption, these ideals are all distinct, hence (by maximality) coprime. By the Chinese Remainder Theorem we then find some $f\in A_k$ which is sent to $0$ by all $x_0\circ g$ and to $1$ by all $x_1\circ g$. After possibly replacing $f$ by
	\begin{equation*}
	\prod_{g\in G}g(f)
	\end{equation*}
	we may assume that $f\in A_k^G=A^G\otimes k$ (as $k$ is a free module over its prime field). Then $f(\varphi (x_0))=0$ and $f(\varphi (x_1))=1$, whence $\varphi (x_0)\neq\varphi (x_1)$. 

	This finishes the case that $G$ is finite. In general, $A$ is the filtered colimit of its subrings $A^H$, where $H\subset G$ runs through open subgroups. Let $\mathcal{Y}_H=\Spec(A^H)$, so that $\mathcal{X}(k)=\varprojlim_{H\subset G} \mathcal{Y}_H(k)$. By the case of finite $G$, we know that $\mathcal{Y}_H(k)/(G/H)=\mathcal{Y}(k)$. Therefore, if $x,y\in \mathcal{X}(k)$ map to the same element of $\mathcal{Y}(k)$, then their images in $\mathcal{Y}_H(k)$ lie in the same $G/H$-orbit, in particular in the same $G$-orbit. For each $H$, we get a nonempty closed subset of $G$ of elements which carry the image of $x$ in $\mathcal{Y}_H(k)$ to the image of $y$ in $\mathcal{Y}_H(k)$. By the variant of Cantor's Intersection Theorem given as Lemma~\ref{Lem:CantorIntersection} below, their intersection is nonempty, which gives an element of $G$ carrying $x$ to $y$.
\end{proof}
\begin{Lemma}\label{Lem:CantorIntersection}
	Let $X$ be a compact topological space and let $(A_i)_{i\in I}$ be a family of non-empty closed subspaces of~$X$. Assume the family is cofiltered in the sense that for every $i,j\in I$ there is some $k\in I$ such that $A_k\subseteq A_i\cap A_j$.
	
	Then the intersection $\bigcap_{i\in I}A_i$ is non-empty.
\end{Lemma}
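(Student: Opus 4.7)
The plan is to argue by contradiction using the finite intersection characterisation of compactness. Suppose that $\bigcap_{i\in I}A_i=\emptyset$. Passing to complements, the open sets $U_i=X\smallsetminus A_i$ form an open cover of $X$: for any $x\in X$, the assumption forces $x\notin A_i$ for some $i$, i.e.\ $x\in U_i$.

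By compactness of $X$, this cover admits a finite subcover, say $X=U_{i_1}\cup\dotsb\cup U_{i_n}$, which translates to $A_{i_1}\cap\dotsb\cap A_{i_n}=\emptyset$. Now invoke the cofilteredness hypothesis: applied repeatedly (first to $i_1,i_2$, then to the resulting index together with $i_3$, and so on) it produces an index $k\in I$ with $A_k\subseteq A_{i_1}\cap\dotsb\cap A_{i_n}$. But then $A_k=\emptyset$, contradicting the assumption that every $A_i$ is non-empty. Hence $\bigcap_{i\in I}A_i\neq\emptyset$.

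There is no real obstacle here; the only thing to check is that a binary cofilteredness property suffices to produce lower bounds for arbitrary finite subsets of $I$, which is a straightforward induction on the cardinality of the subset. The argument is the standard reformulation of `compact $\Leftrightarrow$ every family of closed sets with the finite intersection property has non-empty intersection', specialised to the cofiltered setting where the finite intersection property is automatic.
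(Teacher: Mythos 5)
Your proof is correct and follows essentially the same route as the paper: assume the intersection is empty, pass to the open complements, extract a finite subcover by compactness, and use cofilteredness to find a single $A_k$ inside the resulting empty finite intersection. The only (minor) added content is your explicit remark that the binary cofilteredness condition yields lower bounds for arbitrary finite subsets by induction, which the paper leaves implicit.
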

\begin{proof}
	Assume that the intersection is empty. Then the union of the open subsets $U_i=X\smallsetminus A_i$ is all of $X$, that is, the $U_i$ form an open cover of~$X$. By compactness there exists some finite subcover, say $X=U_{i_1}\cup\dotsb\cup U_{i_n}$. This means that $A_{i_1}\cap\dotsb\cap A_{i_n}=\varnothing$. But by our assumption there exists some $k\in I$ such that $A_k\subseteq A_{i_1}\cap\dotsb\cap A_{i_n}=\varnothing$, which contradicts our assumption that $A\neq\varnothing$.
\end{proof}

\subsection{Classification of \'etale covering spaces of~$\mathcal{X}_F$}

Now let $F$ be any field containing $\bbq (\zeta_{\infty })\subset\bbc$. Consider the $\bbc$-scheme $\mathcal{X}_F$ as defined before. By Remark~\ref{Rmk:XEForArbitraryAlgebras} we obtain a contravariant functor $E\mapsto\mathcal{X}_E$ from $F$-algebras to $\mathcal{X}_F$-schemes, in other words, a covariant functor from affine $(\Spec F)$-schemes to $\mathcal{X}_F$-schemes.
\begin{Theorem}\label{Theorem:CoversOfXFandExtensionsOfF}
	For any field $F\supseteq\bbq (\zeta_{\infty })$ the $\bbc$-scheme $\mathcal{X}_F$ is connected. If $E/F$ is a finite \'etale $F$-algebra then $\mathcal{X}_E\to \mathcal{X}_F$ is a finite \'etale covering space. The resulting functor
	\begin{equation*}
	\mathbf{FEt}(\Spec F)\to\mathbf{FEt}(\mathcal{X}_F),\quad \Spec E\mapsto\mathcal{X}_E,
	\end{equation*}
	is an equivalence of categories.
\end{Theorem}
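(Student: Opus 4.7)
The plan is to deduce the theorem from the simply-connectedness of $\mathcal{X}_{\Fbar}$ (Proposition~\ref{Prop:XFbarIsSC}) together with the Galois-category formalism developed in Section~2. Since $\Wrat$ commutes with filtered colimits one has $\mathcal{X}_{\Fbar}\cong\varprojlim_L\mathcal{X}_L$ as a cofiltered limit over finite Galois subextensions $L/F$ inside $\Fbar$; if each $\mathcal{X}_L\to\mathcal{X}_F$ can be shown to be a finite \'etale Galois cover with deck group $\Gal(L/F)$, then Lemma~\ref{Lem:SimplyConnectedProEtaleIsUniversal} ensures that every connected finite \'etale cover of $\mathcal{X}_F$ is dominated by some $\mathcal{X}_L$, and the Galois correspondence matches the two categories in the statement.

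A first preparation is the invariants computation $A_L=A_{\Fbar}^{\Gal(\Fbar/L)}$ for every intermediate $L$, and more generally $A_L^H=A_{L^H}$ for subgroups $H\subset\Gal(L/F)$. This follows from Remark~\ref{Rmk:RationalWittVectorsForFields} together with the flatness of $\bbc$ over $\bbz[\mu_\infty]$, which lets one commute the formation of Galois invariants with the tensor product $\Wrat(\cdot)\otimes_{\bbz[\mu_\infty]}\bbc$ defining $A$ (first for finite quotients, then passing to the filtered colimit). Combining this with Lemma~\ref{Lem:GeoPointsOfQuotient} gives that $\mathcal{X}_{\Fbar}\to\mathcal{X}_F$ is surjective on geometric, hence topological, points; connectedness of $\mathcal{X}_{\Fbar}$ then forces connectedness of $\mathcal{X}_F$.

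The heart of the argument, and the main obstacle, is to show that $G=\Gal(L/F)$ acts freely on $\mathcal{X}_L(k)$ for any algebraically closed field $k$ and any finite Galois extension $L/F$. A $k$-point of $\mathcal{X}_L$ is a ring homomorphism $x\colon\Wrat(L)\to k$ whose restriction to $\bbz[\mu_\infty]$ is the standard embedding, and an element $g\in G$ fixes it precisely when $x(g(a))=x(a)$ for all $a\in\Wrat(L)$. For a nontrivial $g$ the Hilbert~90/Dedekind trick enters: setting $m=\operatorname{ord}(g)\ge 2$, picking a primitive $m$-th root of unity $\zeta\in\mu_m\subset F$, and invoking the linear independence of the distinct automorphisms $1,g,\ldots,g^{m-1}$ of $L$ produces $\gamma\in L^\times$ with $\beta=\sum_{i=0}^{m-1}\zeta^i g^i(\gamma)\neq 0$; a direct calculation then yields $g(\beta)=\zeta^{-1}\beta$. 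Applying $x$ to the Teichm\"uller lift $[\beta]=1-\beta t\in\Wrat(L)^\times$ and using multiplicativity together with the defining condition on $x$ at roots of unity, the identity $x(g([\beta]))=x([\beta])$ collapses to $\zeta^{-1}\cdot x([\beta])=x([\beta])$ with $x([\beta])\in k^\times$, forcing $\zeta^{-1}=1$, a contradiction.

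With freeness in hand, Proposition~\ref{Prop:QuotientByFreeFiniteActionFiniteEtale} applied to the action of $G$ on $\mathcal{X}_L$ (with $A_L^G=A_F$ by the invariants computation) shows that $\mathcal{X}_L\to\mathcal{X}_F$ is a finite \'etale $G$-torsor. For an arbitrary finite separable extension $E/F$, take a Galois closure $L\supset E$ and the corresponding subgroup $H=\Gal(L/E)$; then $\mathcal{X}_E=\Spec(A_L^H)=\mathcal{X}_L/H$ is finite \'etale over $\mathcal{X}_F$ of degree $[E:F]$, and the case of a product $E=\prod_\nu E_\nu$ reduces to the field case via Remark~\ref{Rmk:XEForArbitraryAlgebras}. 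Essential surjectivity of the resulting functor $E\mapsto\mathcal{X}_E$ follows from Lemma~\ref{Lem:SimplyConnectedProEtaleIsUniversal} combined with the Galois correspondence for the covers $\mathcal{X}_L\to\mathcal{X}_F$ (any cover dominated by the $G$-torsor $\mathcal{X}_L$ is of the form $\mathcal{X}_L/H=\mathcal{X}_{L^H}$), and full faithfulness reduces to the Galois case, where $\Hom_F(L,L)=\Gal(L/F)$ is matched with the deck group of $\mathcal{X}_L\to\mathcal{X}_F$ by construction.
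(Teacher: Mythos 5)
Your proof is correct and follows essentially the same route as the paper: show via a Hilbert~90-type argument that the Galois group acts freely on geometric points, deduce that $\mathcal{X}_L\to\mathcal{X}_F$ is a finite \'etale Galois cover via Proposition~\ref{Prop:QuotientByFreeFiniteActionFiniteEtale}, and obtain essential surjectivity from the simple connectedness of $\mathcal{X}_{\Fbar}$ (Proposition~\ref{Prop:XFbarIsSC}) together with Lemma~\ref{Lem:SimplyConnectedProEtaleIsUniversal}. The only variation is in the freeness step: you work directly with the finite quotient $\Gal(L/F)$ and build the Lagrange resolvent $\beta=\sum_i\zeta^ig^i(\gamma)$ from Dedekind's independence of characters (so that $[g(\beta)]=[\zeta^{-1}][\beta]$ forces $x([\zeta^{-1}])=1$, absurd), whereas the paper argues on the full profinite $\Gal(\Fbar/F)$, reduces to a cyclic degree-$p$ subextension of the procyclic closure, and cites Satz~90 as a black box --- these are two implementations of the same Kummer-theoretic idea, so there is no genuine strategic difference.
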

The proof of this theorem rests on the generalities proved before, as well as the following observations.
\begin{Lemma}\label{Lem:MitHilbert90}
	Let $F\supseteq\bbq (\zeta_{\infty })$ be a field, let $\Fbar /F$ be an algebraic closure and let $G=\Gal (\Fbar /F)$ be the corresponding Galois group. Let $k$ be a field. Then $G$ operates freely on the set
	\begin{equation*}
	\mathcal{I}(\Fbar,k)=\{\chi\colon (\Fbar)^{\times }\to k^{\times }\mid \text{$\chi$ is a group homomorphism, injective on $\mu_{\infty}$} \}.
	\end{equation*}
\end{Lemma}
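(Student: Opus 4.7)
The plan is to show the contrapositive: if $\sigma \in G$ fixes some $\chi \in \mathcal{I}(\Fbar,k)$, then $\sigma = 1$. This will exploit the hypothesis that $\mu_\infty \subset F$ (so Kummer theory is available over every intermediate field) together with the injectivity of $\chi$ on~$\mu_\infty$.

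First I would reduce the problem to a finite cyclic situation. Assume for contradiction that $\sigma \neq 1$ but $\sigma\chi = \chi$, i.e.\ $\chi(\sigma(a)) = \chi(a)$ for all $a \in \Fbar^\times$. Choose a finite Galois extension $L/F$ inside $\Fbar$ such that $\tau \defined \sigma|_L \neq 1$, and set $H = \langle\tau\rangle$ and $M = L^H$. Then $L/M$ is a cyclic Galois extension of some degree $n > 1$ with Galois group generated by $\tau$, and $M \supseteq F \supseteq \bbq(\zeta_\infty)$ contains~$\mu_n$. The condition $\sigma\chi = \chi$ restricts to $\chi(\tau(a)) = \chi(a)$ for all $a \in L^\times$.

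The key step — this is the ``Satz~90'' part of the argument — is to produce an explicit element $a \in L^\times$ with $\tau(a)/a$ a nontrivial root of unity. By Kummer theory, since $\mu_n \subset M$ and $L/M$ is cyclic of degree~$n$, one can write $L = M(a)$ with $c \defined a^n \in M^\times$. The generator $\tau$ of $\Gal(L/M)$ then acts by $\tau(a) = \zeta a$ for some $\zeta \in \mu_n$, and $\zeta \neq 1$ because $\tau$ acts nontrivially on~$L = M(a)$. Consequently
\[
\chi\!\left(\frac{\tau(a)}{a}\right) = \chi(\zeta),
\]
and $\chi(\zeta) \neq 1$ by the injectivity of $\chi$ on~$\mu_\infty$. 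This contradicts the assumption $\chi \circ \tau = \chi$ on~$L^\times$, so no such $\sigma$ exists.

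The only mild subtlety — really more of a bookkeeping point than an obstacle — is the reduction from an arbitrary $\sigma \in G$ to a \emph{finite} cyclic quotient on which $\sigma$ still acts nontrivially; but this is automatic from the fact that $G = \varprojlim \Gal(L/F)$ over finite Galois subextensions~$L$, so $\sigma \neq 1$ forces some such $L$ to exist. After that, the proof is entirely driven by Kummer theory for $L/M$, which is available precisely because of the ``bigness'' hypothesis $\mu_\infty \subset F$ baked into the setup.
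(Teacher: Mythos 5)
Your proof is correct and is essentially the paper's argument: both hinge on producing an $\alpha$ in a cyclic extension with $\sigma(\alpha)/\alpha$ a nontrivial root of unity, and then invoking injectivity of $\chi$ on $\mu_\infty$. The only cosmetic differences are that you package Hilbert's Satz~90 as Kummer theory for the cyclic extension $L/M$ (rather than citing Satz~90 directly), and you reduce to an arbitrary finite cyclic quotient $\langle\tau\rangle$ of order $n>1$, whereas the paper first passes to the procyclic group $\widebar{\langle\sigma\rangle}$ and then picks out a quotient of prime order $p$.
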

\begin{proof}
	Let $\sigma\in G\smallsetminus\{ 1 \}$ and $\chi\in\mathcal{I}(\Fbar ,k)$. We need to show that $\sigma (\chi )\neq\chi$.
	
	Without loss of generality we may assume that $\sigma$ topologically generates $G$ (otherwise we replace $F$ by the fixed field of $\sigma$). Then $G$ is a nontrivial procyclic group. We claim that there exists some prime $p$ such that the closed subgroup generated by $\sigma^p$ is a proper subgroup of~$G$: otherwise, for every finite quotient $H$ of $G$ and every prime $p$ the $p$-th power map $H\to H$ is surjective; since $H$ is a finite abelian group this is only possible if $H$ is trivial. But if every finite quotient of $G$ is trivial, then so is $G$ itself, contradiction.
	
	Now pick some $p$ such that $\overline{\langle\sigma^p\rangle}$ is properly contained in $G$, and let $E$ be the fixed field of~$\sigma^p$. Then $E/F$ is a cyclic Galois extension of degree~$p$.
	
	Let $\zeta\in F$ be a primitive $p$-th root of unity. Note that since $\chi$ is injective on $\mu_{\infty}$ we must have $\chi (\zeta )\neq 1$. Furthermore $\mathrm{N}_{E/F}(\zeta )=\zeta^p=1$, hence by the original form of Hilbert's `Satz~90' (\cite[Satz~90]{Zahlbericht}, see also \cite[Chapter~IV, Theorem~3.5]{MR1697859}) there is some $\alpha\in E^{\times}$ with $\zeta =\sigma (\alpha )/\alpha$. But then $\chi (\sigma (\alpha )/\alpha )=\chi (\zeta )\neq 1$, hence $\chi (\sigma (\alpha ))\neq \chi (\alpha )$. Therefore $\sigma (\chi )\neq\chi$.
\end{proof}
\begin{Lemma}\label{Lem:AnwendungHilbert90AufGeoPunkte}
		Let $F$, $\Fbar$, $G$ and $k$ be as in Lemma~\ref{Lem:MitHilbert90}. Then $G$ operates freely on $\mathcal{X}_{\Fbar }(k)$.
\end{Lemma}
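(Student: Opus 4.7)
The plan is to identify $\mathcal{X}_{\Fbar}(k)$ with a $G$-equivariant subset of the set $\mathcal{I}(\Fbar,k)$ from Lemma~\ref{Lem:MitHilbert90}, and then simply invoke that lemma. The argument is essentially bookkeeping; the work has already been done in the preceding lemma.

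First I would unpack what a $k$-point of $\mathcal{X}_{\Fbar}$ is. Note that $\mathcal{X}_{\Fbar}$ is a $\bbc$-scheme, so $\mathcal{X}_{\Fbar}(k)$ is empty unless $k$ carries the structure of a $\bbc$-algebra; in that case, assume such a structure fixed (when $k$ is empty the statement is vacuous). By the definition of $A_{\Fbar}=\Wrat(\Fbar)\otimes_{\bbz [\mu_\infty]}\bbc$ together with the identification $\Wrat(\Fbar)\cong\bbz[\Fbar^\times]$ from Remark~\ref{Rmk:RationalWittVectorsForFields}, a $\bbc$-algebra homomorphism $A_{\Fbar}\to k$ is the same datum as a group homomorphism $\chi\colon\Fbar^\times\to k^\times$ whose restriction to $\mu_\infty$ coincides with the fixed embedding $\mu_\infty\hookrightarrow\bbc^\times\hookrightarrow k^\times$. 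In particular, $\chi\rvert_{\mu_\infty}$ is injective, so $\chi$ defines an element of $\mathcal{I}(\Fbar,k)$; this identifies $\mathcal{X}_{\Fbar}(k)$ with a subset of $\mathcal{I}(\Fbar,k)$.

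Next I would check equivariance. The $G$-action on $\mathcal{X}_{\Fbar}$ comes from the action of $G$ on $\Fbar^\times$, and corresponds on the character side to the standard action $(\sigma\cdot\chi)(a)=\chi(\sigma^{-1}(a))$ on group homomorphisms $\Fbar^\times\to k^\times$, which is precisely the action used in Lemma~\ref{Lem:MitHilbert90}. Since $\mu_\infty\subseteq\bbq(\zeta_\infty)\subseteq F$ is fixed pointwise by $G$, the condition that $\chi$ restrict to the standard embedding on $\mu_\infty$ is preserved by this $G$-action, so $\mathcal{X}_{\Fbar}(k)\subseteq\mathcal{I}(\Fbar,k)$ is a $G$-stable inclusion.

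Finally I would conclude by citing Lemma~\ref{Lem:MitHilbert90}: $G$ acts freely on $\mathcal{I}(\Fbar,k)$, and a free action restricts to a free action on any invariant subset, so $G$ acts freely on $\mathcal{X}_{\Fbar}(k)$. There is no real obstacle here, as all of the content of the proof is concentrated in Lemma~\ref{Lem:MitHilbert90} (which rests on Hilbert 90); the only point that requires a moment's care is the identification of the $G$-action on $\mathcal{X}_{\Fbar}(k)$ with the precomposition action on characters.
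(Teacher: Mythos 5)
Your proof is correct and follows the same route as the paper: unpack $\mathcal{X}_{\Fbar}(k)$ in terms of group homomorphisms $\chi\colon\Fbar^{\times}\to k^{\times}$, observe that the cyclotomic constraint forces $\chi$ to be injective on $\mu_{\infty}$, check that the $G$-action is precomposition, and invoke Lemma~\ref{Lem:MitHilbert90}.

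One small interpretational remark. You take $\mathcal{X}_{\Fbar}(k)$ to mean $\bbc$-algebra homomorphisms with respect to a fixed $\bbc$-algebra structure on $k$. The paper instead takes $\mathcal{X}_{\Fbar}(k)$ to mean \emph{all} ring homomorphisms $A_{\Fbar}\to k$ (this is what feeds into Proposition~\ref{Prop:QuotientByFreeFiniteActionFiniteEtale} via geometric points $\Spec k\to\mathcal{X}$), so that a point is a \emph{pair} $(\chi,g)$ with $g\colon\bbc\to k$ a field embedding and $\chi\rvert_{\mu_{\infty}}=g\rvert_{\mu_{\infty}}$. Since $G$ acts only on the $\chi$-component and leaves $g$ untouched, your argument is exactly the fibre-wise version of the paper's, and the full statement follows immediately: freeness on each fibre over the choice of $g$ gives freeness on the whole set. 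So there is no gap, merely a slightly narrower reading of the notation.
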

\begin{proof}
	Just note that
	\begin{equation*}
	\mathcal{X}_{\Fbar}(k)=\Hom (\bbz [\Fbar^{\times}]\otimes_{\bbz [\mu_{\infty }]}\bbc ,k)
	\end{equation*}
	can be identified with the set of pairs $(\chi ,g)$, where $\chi \colon\Fbar^{\times}\to k^{\times}$ is a group homomorphism and $g\colon \bbc\to k$ is a field embedding such that $\chi $ and $g$ agree on all roots of unity. The Galois action is given by $\sigma (\chi ,g)=(\sigma (\chi ),g)$. The $\chi$ occurring are all injective on $\mu_{\infty}$, and therefore the desired result follows from Lemma~\ref{Lem:MitHilbert90}.
\end{proof}
\begin{Lemma}\label{Lem:GeometricPointsOfXF}
	The map $\mathcal{X}_{\Fbar}(k)\to\mathcal{X}_F(k)$ is constant on $G$-orbits, and the induced map $\mathcal{X}_{\Fbar }(k)/G\to\mathcal{X}_F(k)$ is a bijection.
\end{Lemma}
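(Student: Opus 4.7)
The plan is to deduce the lemma directly from Lemma~\ref{Lem:GeoPointsOfQuotient}, which was formulated with exactly this situation in mind. Recall that after the definition of $\mathcal{X}_F$ it was noted that $A_{\Fbar}$ carries a natural $G$-action (coming from the Galois action on $\Fbar^\times$, hence on $\Wrat(\Fbar)=\bbz[\Fbar^\times]$, extended $\bbc$-linearly after base change along $\bbz[\mu_\infty]\to\bbc$) and that the subring of invariants is canonically $A_F$. From this the first assertion is immediate: for any $x\in\mathcal{X}_{\Fbar}(k)$, viewed as a $\bbc$-algebra map $A_{\Fbar}\to k$, and any $\sigma\in G$, the two compositions $A_F\hookrightarrow A_{\Fbar}\xrightarrow{x}k$ and $A_F\hookrightarrow A_{\Fbar}\xrightarrow{x\circ\sigma}k$ coincide because $\sigma$ fixes $A_F$ pointwise. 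Hence the map $\mathcal{X}_{\Fbar}(k)\to\mathcal{X}_F(k)$ is constant on $G$-orbits and factors through $\mathcal{X}_{\Fbar}(k)/G\to\mathcal{X}_F(k)$.

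For the bijectivity of the induced map (with $k$ algebraically closed, as is implicit since $k$-points of the $\bbc$-scheme $\mathcal{X}_F$ require a $\bbc$-algebra structure on $k$), I plan to verify the hypothesis of Lemma~\ref{Lem:GeoPointsOfQuotient}, namely that the $G$-action on $A_{\Fbar}$ is continuous for the Krull topology on $G$. This is where the rationality of the Witt vectors enters: every element of $\Wrat(\Fbar)$ is, by definition, a ratio of two polynomials involving only finitely many elements of $\Fbar$, and hence lies in $\Wrat(E)$ for some finite subextension $E\subset\Fbar$ of $F$. It is therefore fixed by the open subgroup $\Gal(\Fbar/E)\subset G$, and the same stabilisation persists after base change to $\bbc$. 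Thus the $G$-action on $A_{\Fbar}$ is continuous, and Lemma~\ref{Lem:GeoPointsOfQuotient} applied to $A=A_{\Fbar}$ with $A^G=A_F$ yields the bijection $\mathcal{X}_{\Fbar}(k)/G\overset{\sim}{\to}\mathcal{X}_F(k)$.

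There is no genuine obstacle here; all the work has been pushed into Lemmas~\ref{Lem:GeoPointsOfQuotient} and into the continuity observation. The only conceptual point is that continuity of the $G$-action is exactly the feature that distinguishes $\Wrat$ from the full Witt ring $\mathrm{W}$: a general big Witt vector would involve infinitely many coefficients in $\Fbar$ and need not be fixed by any open subgroup of $G$, whereas a rational Witt vector always is. This is what makes the descent $A_F=(A_{\Fbar})^G$ behave well on geometric points.
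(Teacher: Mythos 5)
Your proof is correct and follows the same route as the paper: both reduce the lemma to Lemma~\ref{Lem:GeoPointsOfQuotient}, using that $A_F = A_{\Fbar}^G$. The one thing you do in addition is to explicitly verify the continuity hypothesis of that lemma (every rational Witt vector over $\Fbar$ already lies in $\Wrat(E)$ for some finite $E/F$, hence is stabilised by an open subgroup, and this persists after the tensor product with $\bbc$); the paper takes this for granted, so your verification is a welcome bit of diligence rather than a different argument.
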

\begin{proof}
	Recall that $\mathcal{X}_{\Fbar }=\Spec A_{\Fbar }$ and $\mathcal{X}_F=\Spec A_{\Fbar }^G$ (the ring of $G$-invariants), so the result follows from Lemma~\ref{Lem:GeoPointsOfQuotient}.
\end{proof}
\begin{proof}[Proof of Theorem~\ref{Theorem:CoversOfXFandExtensionsOfF}]
Let $E$ be a finite \'etale $F$-algebra. We will show that $\mathcal{X}_E\to\mathcal{X}_F$ is a finite \'etale covering.

First assume that $E$ is a field, Galois over~$F$. Combining Lemmas \ref{Lem:AnwendungHilbert90AufGeoPunkte} and \ref{Lem:GeometricPointsOfXF} we find that $\Gal (E/F)$ operates freely on $\mathcal{X}_E(k)$, for any algebraically closed field~$k$. By Proposition~\ref{Prop:QuotientByFreeFiniteActionFiniteEtale} then $\mathcal{X}_E\to\mathcal{X}_F$ must be finite \'etale.

For a general finite field extension $E/F$ let $E'/F$ be the Galois closure of $E$ in $\Fbar$; then the composition $\mathcal{X}_{E'}\to\mathcal{X}_E\to\mathcal{X}_F$ is finite \'etale, as is the first component, hence the second has to be finite \'etale, too.

Finally, if $E$ is an arbitrary finite \'etale $F$-algebra, there is a canonical isomorphism
\begin{equation*}
E\overset{\cong}{\to}\prod_{\mathfrak{p}\in\Spec E}E/\mathfrak{p}
\end{equation*}
with the $E/\mathfrak{p}$ being finite field extensions of~$F$. Therefore, by Remark~\ref{Rmk:XEForArbitraryAlgebras}, we obtain an isomorphism of $\mathcal{X}_F$-schemes
\begin{equation*}
\mathcal{X}_E\overset{\cong}{\to}\coprod_{\mathfrak{p}\in\Spec E}\mathcal{X}_{E/\mathfrak{p}},
\end{equation*}
hence $\mathcal{X}_E\to\mathcal{X}_F$ is also finite \'etale.

We now have shown that the functor $\Spec E\mapsto\mathcal{X}_E$ really sends $\mathbf{FEt}(\Spec F)$ to $\mathbf{FEt}(\mathcal{X}_F)$. Note that if $E/F$ is Galois, then $\mathcal{X}_E\to\mathcal{X}_F$ is a Galois covering with group $\Gal (E/F)$. From this we deduce that the functor is fully faithful. It remains to be shown that it is essentially surjective; this follows by applying Lemma~\ref{Lem:SimplyConnectedProEtaleIsUniversal} to the system of \'etale coverings $(\mathcal{X}_E\to\mathcal{X}_F)_{E/F\text{ finite}}$, using that the limit $\mathcal{X}_{\Fbar }=\varprojlim_{E}\mathcal{X}_E$ is simply connected by Proposition~\ref{Prop:XFbarIsSC}.
\end{proof}

\begin{Corollary}\label{Cor:PioneetOfXFIsAbsoluteGalois}
	`The' \'etale fundamental group of $\mathcal{X}_F$ is naturally isomorphic to `the' absolute Galois group of~$F$.\hfill $\square$
\end{Corollary}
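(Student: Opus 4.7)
The plan is to deduce this essentially formally from Theorem~\ref{Theorem:CoversOfXFandExtensionsOfF}, which supplies an equivalence of categories $\Spec E \mapsto \mathcal{X}_E$ between $\mathbf{FEt}(\Spec F)$ and $\mathbf{FEt}(\mathcal{X}_F)$. Both categories are Galois categories: $\mathbf{FEt}(\Spec F)$ in the standard way from SGA~1, and $\mathbf{FEt}(\mathcal{X}_F)$ by the general theory of \'etale fundamental groups for schemes recalled at the end of Section~2, since $\mathcal{X}_F$ is connected by Theorem~\ref{Theorem:CoversOfXFandExtensionsOfF}. An equivalence of Galois categories carries any fibre functor of the source to a fibre functor on the target, and in particular induces an isomorphism on automorphism groups of fibre functors (cf.\ \cite[Expos\'e~V]{MR2017446}).

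To make this concrete, first I would fix an algebraic closure $\Fbar/F$; the associated geometric point $\widebar{x}\colon \Spec\Fbar \to \Spec F$ yields the fibre functor $\Phi_{\widebar x}\colon \mathbf{FEt}(\Spec F)\to \mathbf{FSet}$ whose automorphism group is, by definition, $\pioneet(\Spec F,\widebar x) \cong \Gal(\Fbar/F)$. Transporting $\widebar x$ through the equivalence of Theorem~\ref{Theorem:CoversOfXFandExtensionsOfF} amounts to choosing a geometric point $\widebar{y}$ of $\mathcal{X}_F$ lying over $\widebar x$ in a compatible way, which by Lemma~\ref{Lem:GeometricPointsOfXF} is the same as choosing a $\bbc$-valued point of $\mathcal{X}_{\Fbar}$; this is possible since $\mathcal{X}_{\Fbar}$ is non-empty (being connected and reduced).

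With these basepoints fixed, the equivalence $\Spec E\mapsto \mathcal{X}_E$ identifies $\Phi_{\widebar x}$ with $\Phi_{\widebar y}\circ(\mathcal{X}_{(\cdot)})$, and therefore induces a canonical isomorphism of profinite groups
\[
\Gal(\Fbar/F) \;\cong\; \Aut(\Phi_{\widebar x}) \;\overset{\cong}{\longrightarrow}\; \Aut(\Phi_{\widebar y}) \;=\; \pioneet(\mathcal{X}_F,\widebar y).
\]
There is no real obstacle here: the only subtlety is that, as is standard in the Galois-categorical formalism, the isomorphism is canonical only after fixing compatible basepoints, which is exactly why the word `natural' is hedged with quotation marks in the statement. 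For two different choices the resulting isomorphisms differ by an inner automorphism of $\Gal(\Fbar/F)$, as follows from \cite[Expos\'e~V, Corollaire~5.7]{MR2017446}.
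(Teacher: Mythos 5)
Your proposal is correct and takes essentially the same approach as the paper: deduce the isomorphism of \'etale fundamental groups formally from the equivalence of Galois categories established in Theorem~\ref{Theorem:CoversOfXFandExtensionsOfF}, with the `naturality' understood in the usual Galois-categorical sense of canonicity up to inner automorphism once basepoints are fixed. The only cosmetic difference is the direction in which you transport the basepoint (you fix a geometric point of $\Spec F$ and lift it to $\mathcal{X}_F$ via a $\bbc$-point of $\mathcal{X}_{\Fbar}$, while the paper's discussion following the corollary starts from an arbitrary geometric point of $\mathcal{X}_F$ and composes with the functor to obtain a fibre functor on $\mathbf{FEt}(\Spec F)$); both routes are equivalent.
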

The formulation of this corollary requires some explanation. Note that classically, to speak sensibly of the absolute Galois group of a field $F$ one needs to fix an algebraic closure~$\Fbar$. However, we can get by with a slightly more general type of object. In Grothendieck's interpretation of the absolute Galois group, $\Gal (\Fbar /F)$ is the \'etale fundamental group of $\Spec F$ at the geometric point $\Spec\Fbar\to\Spec F$. This is the automorphism group of the fibre functor $\Psi_{\Fbar}$ on the Galois category $\mathbf{FEt}(\Spec F)$, given by $\Psi_{\Fbar}(X)=X(\Fbar)$. If $\Psi$ is \emph{any} fibre functor on $\mathbf{FEt}(\Spec F)$, then $\Aut \Psi$ is still isomorphic to $\Gal (\Fbar /F)$, the isomorphism being canonical up to inner automorphisms. Hence we might call $\Aut (\Psi)$ `the absolute Galois group of $F$ at~$\Psi$'.

Now, let $\overline{x}\colon\Spec\varOmega\to \mathcal{X}_F$ be a geometric point. Then $\pioneet (\mathcal{X}_F,\overline{x})$ is the automorphism group of the fibre functor $\Phi_{\overline{x}}$ on $\mathbf{FEt}(\mathcal{X}_F)$ with $\Phi_{\overline{x}}(Y)=Y_{\overline{x}}$. The composition
\begin{equation*}
\Psi_{\overline{x}}\colon\mathbf{FEt}(\Spec F)\overset{\mathcal{X}_{(\cdot )}}{\longrightarrow}\mathbf{FEt}(\mathcal{X}_F)\overset{\Phi_{\overline{x}}}{\longrightarrow}\mathbf{Sets}
\end{equation*}
is still a fibre functor on $\mathbf{FEt}(\Spec F)$, and Corollary~\ref{Cor:PioneetOfXFIsAbsoluteGalois} says that the absolute Galois group of $F$ at $\Psi_{\overline{x}}$ is canonically isomorphic to $\pioneet (\mathcal{X}_F,\overline{x})$, for any geometric point $\overline{x}$ of~$\mathcal{X}_F$.

\begin{Remark} Let $A$ be any ring and $B$ a finite \'etale $A$-algebra of constant degree $d$. Above, we have shown that if $A$ is a field over $\bbq (\zeta_\infty)$, then the map $\Wrat(A)\to \Wrat(B)$ becomes finite \'etale after base change from $\bbz [\mu_{\infty} ]$ to $\bbc$; by faithfully flat base change, this is already true after base change along $\bbz [\mu_{\infty } ]\to \bbq (\zeta_\infty)$. It is not evident from our proof how general this result is, so we want to mention the following generalisation. We do not know whether the assumption $\Pic (A)=0$ is necessary, and it would be nice to remove it.

\begin{Theorem}\label{thm:Wratfiniteetale} Let $A$ be a ring with $\Pic (A)=0$, let $S$ be a set of primes which is invertible in $A$, and let $B$ be a finite \'etale $A$-algebra. For a prime $p\in S$, consider the element $\Phi_p\in \Wrat(A)$ given by the cyclotomic polynomial $\prod_{i=1}^{p-1} (1-\zeta_p^i t)\in \Wrat(\bbz )\to \Wrat(A)$. Then the map
\[
\Wrat(A)[ (\Phi_p-(p-1))^{-1}\mid p\in S]\to \Wrat(B)[ (\Phi_p-(p-1))^{-1}\mid p\in S]
\]
is finite \'etale in the following cases:
\begin{enumerate}
\item[{\rm (i)}] The algebra $B$ is everywhere of degree $\leq d$ over $A$, and $S$ contains the set of primes $p\leq d$.
\item[{\rm (ii)}] The algebra $B$ is Galois over $A$ with Galois group $G$, and $S$ contains the set of primes dividing the order of $G$.
\end{enumerate}
\end{Theorem}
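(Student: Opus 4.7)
The strategy will be to reduce (i) to the Galois case (ii), then prove (ii) by first identifying the $G$-invariants $\Wrat(B)^G = \Wrat(A)$ and then verifying finite \'etaleness by a geometric-point argument modelled on Lemma~\ref{Lem:MitHilbert90}.

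For (i)$\Rightarrow$(ii): given $B/A$ finite \'etale of rank everywhere at most $d$, I would form (\'etale-locally on $\Spec A$, then glue using $\Pic(A)=0$) the Galois closure $C/A$, a finite \'etale $G$-Galois cover with $G\subseteq S_d$; all primes dividing $|S_d|=d!$ are at most $d$, hence lie in $S$. Writing $B = C^H$ for the stabiliser $H\subseteq G$ of a chosen embedding, the finite \'etaleness of the localised map $\Wrat(A)\to\Wrat(C)$ furnished by (ii), together with the identification $\Wrat(B) = \Wrat(C)^H$ (the same argument as below applied to the $H$-Galois extension $C/B$), yields finite \'etaleness of $\Wrat(A)\to\Wrat(B)$ by Galois descent.

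For the Galois case itself, first verify $\Wrat(B)^G = \Wrat(A)$. Faithful flatness of $B/A$ at once gives $\mathrm{W}(B)^G = \mathrm{W}(A)$, so it suffices to show that any $h=f/g\in\Wrat(B)$ with $f,g\in 1+tB[t]$ that lies in $\mathrm{W}(A)$ already lies in $\Wrat(A)$: clearing denominators by multiplying by $\prod_{\sigma\neq 1}\sigma(g)\in B[t]$ exhibits $h$ as a ratio of $A$-polynomials
\[
h \;=\; \frac{f\prod_{\sigma\neq 1}\sigma(g)}{\prod_{\sigma\in G}\sigma(g)},
\]
since the denominator lies in $A[t]$ and the numerator, as an element of $A[[t]]\cap B[t]$, automatically lies in $A[t]$.

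The finite-\'etale statement is then checked on geometric points of the localised spectrum. A geometric point is a ring homomorphism $\phi\colon\Wrat(A)\to k$ with $k$ algebraically closed and $\phi(\Phi_p-(p-1))\in k^\times$ for each prime $p\mid |G|$; I claim $G$ acts freely on the fibre of $\Wrat(B)\to\Wrat(A)$ over $\phi$. Transitivity is Lemma~\ref{Lem:GeoPointsOfQuotient}, so freeness yields exactly $|G|$ points in the fibre, matching the expected degree of a $G$-Galois cover. Freeness itself, the heart of the proof, adapts Lemma~\ref{Lem:MitHilbert90}: if $\sigma\in G$ fixes a lift $\tilde\phi$, pick a prime $p\mid\operatorname{ord}(\sigma)$, pass to a cyclic degree-$p$ sub-extension after the \'etale base change $A\to A[\zeta_p]$ (\'etale because $p$ is invertible in $A$), produce via classical Hilbert 90 an $\alpha$ with $\sigma\alpha/\alpha=\zeta_p$, and observe that the identity $\Phi_p = \sum_{i=1}^{p-1}[\zeta_p^i]$ in $\Wrat(A[\zeta_p])$ converts the assumption $\phi(\Phi_p-(p-1))\neq 0$ into $\tilde\phi([\zeta_p])\neq 1$; since $\sigma\tilde\phi=\tilde\phi$ would force $\tilde\phi([\sigma\alpha]/[\alpha])=1$, that is, $\tilde\phi([\zeta_p])=1$, this gives the desired contradiction.

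The principal obstacle is establishing flatness of $\Wrat(A)\to\Wrat(B)$ after the stated localisations, so that the fibrewise information above genuinely upgrades to finite \'etaleness; the role of the hypothesis $\Pic(A)=0$ is presumably to provide a globally free model for $B/A$ whose Teichm\"uller lifts exhibit $\Wrat(B)$ as a locally free $\Wrat(A)$-module of rank $|G|$ once the $\Phi_p-(p-1)$ have been inverted.
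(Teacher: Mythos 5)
Your overall strategy matches the paper's: reduce (i) to (ii) by Galois closure, identify $\Wrat(A)$ with $\Wrat(B)^G$, and then show $G$ acts freely on geometric points of the localised $\Spec\Wrat(B)$ by a Hilbert-90 style argument after adjoining $\zeta_p$. The explicit check that a $G$-invariant rational Witt vector over $B$ is already rational over $A$ (clearing denominators by $\prod_{\sigma\neq 1}\sigma(g)$, then using $B[t]\cap A[\![t]\!]=A[t]$) is a nice detail the paper leaves implicit.

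The genuine gap is exactly the one you flag at the end, and the resolution is not what you guess. Knowing that $G$ acts simply transitively on each geometric fibre does not by itself give finiteness or flatness, and $\Pic(A)=0$ is not used to produce a globally free model of $\Wrat(B)$. What you are missing is Proposition~\ref{Prop:QuotientByFreeFiniteActionFiniteEtale}: for an affine scheme $\mathcal{X}=\Spec R$ with a finite group $G$ acting so that $G$ operates freely on $\mathcal{X}(k)$ for every algebraically closed $k$, the map $\mathcal{X}\to\Spec R^G$ is automatically a finite \'etale $G$-cover. The proof there shows $\mathcal{X}\times G\to\mathcal{X}\times\mathcal{X}$ is a closed immersion (geometric freeness is precisely what makes it a monomorphism), hence a finite \'etale equivalence relation, and the quotient is then affine with the quotient map finite \'etale. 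No a priori flatness of $R^G\to R$ is needed; the structural statement comes for free once the geometric-point freeness is established. This is the step your proposal cannot close on its own.

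Separately, be careful where $\Pic(A)=0$ really enters: your appeal to ``classical Hilbert 90'' to produce $\alpha$ with $\sigma(\alpha)/\alpha=\zeta_p$ is a statement about $H^1(G,B^\times)$ for a Galois extension of \emph{rings}, which is not automatically zero. It vanishes because $H^1(G,B^\times)$ injects into $\Pic(A)$. Equivalently (the form the paper uses), Kummer theory plus $\Pic(A)=0$ gives $B=A[a^{1/p}]$ for a unit $a\in A^\times$, and $[a^{1/p}]\in\Wrat(B)^\times$ is the Teichm\"uller element on which $\sigma$ acts by $[\zeta_p]$, producing the same contradiction with $\tilde\phi([\zeta_p])\neq 1$. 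So the hypothesis on $\Pic$ is for Kummer theory, not for flatness.
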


Note that if $A$ contains a $p$-th root of unity $\zeta_p\in A$ and we denote $[\zeta_p] = 1-\zeta_p t\in \Wrat(A)$, then $\Phi_p = [\zeta_p] +[\zeta_p^2] + \ldots + [\zeta_p^{p-1}]$, and inverting $\Phi_p-(p-1)$ is equivalent to inverting $[\zeta_p]-1$. We do not know to what extent it is necessary to invert $\Phi_p-(p-1)$ for all $p\in S$.

\begin{proof} One may reduce part (i) to part (ii) by passing to the Galois closure. Now $\Wrat(A) = \Wrat(B)^G$ and this passes to the filtered colimit, giving
\[
\Wrat(A)[ (\Phi_p-(p-1))^{-1}\mid p\in S] = \Wrat(B)[ (\Phi_p-(p-1))^{-1}\mid p\in S]^G\ .
\]
By Proposition~\ref{Prop:QuotientByFreeFiniteActionFiniteEtale}, it is enough to check that $G$ acts freely on geometric points of $\Spec(\Wrat(B)[ ([\zeta_p]-1)^{-1}\mid p\in S])$. Thus, for every $1\neq g\in G$, we need to check that $g$ acts freely; replacing $g$ by a power, we can assume that the order of $g$ is a prime $p$. We can also replace $G$ by the cyclic subgroup generated by $g$ (and $A$ by the invariants of $B$ under this subgroup), and assume that $G\cong \bbz /p\bbz$ is cyclic of prime order.

We make a further reduction to assume that $A$ contains a $p$-th root of unity. Indeed, let $A_1 = A\otimes_{\bbz } \bbz [\zeta_p]$, which is a finite \'etale Galois cover with Galois group $G_1=(\bbz /p\bbz )^\times$. We claim that
\[
\Wrat(A)[ (\Phi_p-(p-1))^{-1}]\to \Wrat(A_1)[ (\Phi_p-(p-1))^{-1}] = \Wrat(A_1)[([\zeta_p]-1)^{-1}]
\]
is a finite \'etale Galois cover with Galois group $G_1$. To check this, as before it is enough to check that $G_1$ acts trivially on geometric points of $\Spec(\Wrat(A_1)[([\zeta_p]-1)^{-1}])$. But for any map $\Wrat(A_1)[([\zeta_p]-1)^{-1}]\to k$, the image of $[\zeta_p]$ in $k$ will be a nontrivial $p$-th root of $1$, so that no nontrivial element of $G_1$ fixes the image of $[\zeta_p]$ in $k$, and in particular $G_1$ acts freely on geometric points. A similar statement holds for $B\to B_1\defined B\otimes_A A_1$, and by faithfully flat base change it is enough to check the result for $A_1\to B_1$.

Thus, we are reduced to the case that $A$ is a $\bbz [\frac 1p,\zeta_p]$-algebra, and $B$ is a finite \'etale $G=\bbz /p\bbz$-cover of $A$; we want to prove that the map
\[
\Wrat(A)[ ([\zeta_p]-1)^{-1}]\to \Wrat(B)[ ([\zeta_p]-1)^{-1}]
\]
is finite \'etale, for which it is enough to check that $G$ acts freely on geometric points of $\Spec(\Wrat(B)[ ([\zeta_p]-1)^{-1}])$. In that case, by Kummer theory and our assumption $\Pic (A)=0$, $B$ is given by adjoining the $p$-th root $a^{1/p}\in B$ of some element $a\in A^\times$. But then $a^{1/p}$ gives an element $[a^{1/p}]\in \Wrat(B)$ on which $1\in G=\bbz /p\bbz$ acts by $[a^{1/p}]\mapsto [\zeta_p a^{1/p}] = [\zeta_p] [a^{1/p}]$. For any geometric point $\Wrat(B)[([\zeta_p]-1)^{-1}]\to k$, the image of $[\zeta_p]-1$ is invertible in $k$, so it follows that $[a^{1/p}]$ maps to $0$ in $k$; but this is impossible, as $a^{1/p}$ and thus $[a^{1/p}]$ is a unit.
\end{proof}
\end{Remark}

\section{Galois groups as \'etale fundamental groups of topological spaces}\label{sec:GaloisTop}

\noindent We now present the construction of a compact Hausdorff space $X_F$ for every field $F$ containing $\bbq (\mu_{\infty })$, with properties analogous to those of~$\mathcal{X}_F$. In fact, the two constructions are closely related, see Theorem~\ref{Thm:CompatibilityBetweenXFSpaceAndXFScheme} below.

\subsection{The spaces $X_F$}

Let $F$ be a field containing~$\bbq (\zeta_{\infty })$ and let $\Fbar /F$ be an algebraic closure. We endow $\Fbar^{\times }$ with the discrete topology and consider the Pontryagin dual $(\Fbar^{\times })^{\vee }=\Hom (\Fbar^{\times },\bbs^1)$. Letting $\iota\colon\mu_{\infty }\hookrightarrow\bbs^1$ be the obvious embedding, we then set
\begin{equation*}
X_{\Fbar }=\{ \chi\in (\Fbar^{\times })^{\vee  }\mid \chi\rvert_{\mu_{\infty }}=\iota \} ,
\end{equation*}
endowed with the subspace topology from $(\Fbar^{\times })^{\vee}$. The absolute Galois group $G=\Gal (\Fbar /F)$ operates from the left on $X_{\Fbar }$ by homeomorphisms via $\sigma (f)=f\circ\sigma^{-1}$. Then we set
\begin{equation*}
X_F=G\backslash X_{\Fbar },
\end{equation*}
endowed with the quotient topology. Note that if $\Fbar '/F$ is another algebraic closure of~$F$, the version of $X_F$ constructed from $\Fbar$ and that constructed from $\Fbar '$ are canonically homeomorphic.
\begin{Proposition}\label{Prop:SimplePropertiesOfCHXF}
	Let $F\supseteq\bbq (\zeta_{\infty })$ be a field with algebraic closure $\Fbar$ and absolute Galois group $G=\Gal (\Fbar /F)$.
	\begin{enumerate}
		\item The $G$-action on $X_{\Fbar }$ is proper and free.
		\item The space $X_F$ is nonempty, connected, compact and Hausdorff.
		\item The \'etale fundamental group of $X_{\Fbar}$ is trivial.
	\end{enumerate}
\end{Proposition}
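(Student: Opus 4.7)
The plan is to identify $X_{\Fbar}$ with a Pontryagin dual in a way that makes all three statements transparent, and then to reduce the statements about $X_F$ to properties of the $G$-action.

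First I would observe that $\mu_{\infty}\hookrightarrow \Fbar^{\times}$ splits as an inclusion of divisible subgroups (any divisible subgroup of an abelian group is a direct summand), so as abstract groups $\Fbar^{\times}\cong \mu_{\infty}\oplus V$ with $V=\Fbar^{\times}/\mu_{\infty}$. Since $\Fbar^{\times}$ is divisible and $\mu_{\infty}$ is its torsion subgroup, $V$ is a torsion-free divisible group, i.e.\ a $\bbq$-vector space. Because $\bbs^1$ is divisible and hence an injective abelian group, $\iota$ extends to $\Fbar^{\times}$, so $X_{\Fbar}\neq\varnothing$; and picking any base point $\chi_0\in X_{\Fbar}$ exhibits $X_{\Fbar}$ as a torsor under $V^{\vee}$, giving a (non-canonical) homeomorphism $X_{\Fbar}\cong V^{\vee}$. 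The condition $\chi\rvert_{\mu_{\infty}}=\iota$ defines a closed subset of the compact Hausdorff group $(\Fbar^{\times})^{\vee}$, so $X_{\Fbar}$ is compact Hausdorff; and since $V$ is torsion-free, Proposition~\ref{Prop:PiZeroPathPontryaginDual} (or the discussion preceding it) gives that $V^{\vee}$, hence $X_{\Fbar}$, is connected. This already disposes of (iii): by Corollary~\ref{Cor:PioneetPontryaginDual}, $\pioneet(V^{\vee})=1$ since $V$ is a $\bbq$-vector space.

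For (i), continuity of the $G$-action on $X_{\Fbar}$ is immediate because $\Fbar^{\times}$ carries the discrete topology and each element has finite $G$-orbit, so the action on $(\Fbar^{\times})^{\vee}\subseteq (\bbs^1)^{\Fbar^{\times}}$ is continuous in the product topology. Since $G$ is profinite, hence compact, a continuous action of $G$ on the Hausdorff space $X_{\Fbar}$ is automatically proper. The key point is freeness: if $\sigma\in G\smallsetminus\{1\}$ fixed some $\chi\in X_{\Fbar}$, I would rerun the Hilbert~90 argument from Lemma~\ref{Lem:MitHilbert90} verbatim. Namely, after replacing $F$ by the fixed field of the closed subgroup generated by $\sigma$, we may assume $\sigma$ topologically generates $G$; then some prime $p$ admits a cyclic degree-$p$ quotient, and Hilbert~90 applied to $\zeta_p$ produces $\alpha\in E^{\times}$ with $\sigma(\alpha)/\alpha=\zeta_p$, forcing $\chi(\sigma(\alpha))/\chi(\alpha)=\iota(\zeta_p)\neq 1$, contradicting $\sigma\chi=\chi$.

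For (ii), nonemptiness and connectedness of $X_F=G\backslash X_{\Fbar}$ follow from those of $X_{\Fbar}$ (the quotient map is surjective and continuous), and compactness follows similarly. Hausdorffness is the one subtle point and is exactly where (i) is needed: a continuous, proper, free action of a compact Hausdorff group on a Hausdorff space has Hausdorff orbit space, since the graph of the orbit equivalence relation is the image under the proper map $G\times X_{\Fbar}\to X_{\Fbar}\times X_{\Fbar}$, $(g,\chi)\mapsto(\chi,g\chi)$, of the closed set $G\times X_{\Fbar}$, hence closed. The main obstacle is really just the Hilbert~90 argument in (i); everything else is formal consequence of the torsor description and of standard Pontryagin duality together with the results in Section~3 already proved in the paper.
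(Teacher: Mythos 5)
Your proof is correct and follows essentially the same route as the paper: split $\Fbar^{\times}\cong\mu_{\infty}\oplus V$ using divisibility, recognise $X_{\Fbar}$ as a translate of $V^{\vee}=\Hom(\Fbar^{\times}/\mu_{\infty},\bbs^1)$ to get nonemptiness, connectedness, compactness and (via Corollary~\ref{Cor:PioneetPontryaginDual}) triviality of $\pioneet$, use compactness for properness, and invoke the Hilbert~90 argument of Lemma~\ref{Lem:MitHilbert90} for freeness. The only difference is that you spell out the deduction of Hausdorffness of $X_F$ from properness, which the paper leaves implicit.
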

\begin{proof}
	We begin by showing that $X_{\Fbar }$ is nonempty; it will follow that $X_F$ is also nonempty. The group $\Fbar^{\times }$ is divisible, and its torsion subgroup is equal to~$\mu_{\infty }$. By general facts about divisible groups it can then be written as a direct sum $\Fbar^{\times }=V\oplus\mu_{\infty}$ where $V$ is a $\bbq$-vector space. Then we can construct an element $\chi\in X_{\Fbar }\subset\Hom (\Fbar^{\times },\bbs^1)=\Hom (V,\bbs^1)\times\Hom (\mu_{\infty },\bbs^1)$ by declaring it to be $\iota$ on $\mu_{\infty }$ and any group homomorphism on~$V$. Note, however, that $V$ can in general not be chosen Galois invariant.

	 The space $X_{\Fbar}$ is a translate of, and therefore homeomorphic to, the closed subgroup $\Hom (\Fbar^{\times }/\mu_{\infty},\bbs^1)\subset\Hom (\Fbar^{\times },\bbs^1)$. This is clearly a connected compact Hausdorff space, and it has trivial \'etale fundamental group by Corollary~\ref{Cor:PioneetPontryaginDual}, which proves (iii).
	
	For (i), note that the action being proper means that the map $G\times X_{\Fbar }\to X_{\Fbar}\times X_{\Fbar }$ sending $(g,x)$ to $(gx,x)$ is proper; but since both $G$ and $X_{\Fbar }$ are compact, this follows automatically from continuity. That the action is free is a direct consequence of Lemma~\ref{Lem:MitHilbert90}.
	
	For (ii), note that we already know the corresponding statement for~$X_{\Fbar}$. From this and (i) we easily deduce~(ii).
\end{proof}
Similarly to the scheme-theoretic case, let $E$ be finite \'etale $F$-algebra. Recall that there is a canonical isomorphism
\begin{equation*}
E\overset{\cong}{\to}\prod_{\mathfrak{p}\in\Spec E}E/\mathfrak{p};
\end{equation*}
we therefore set
\begin{equation*}
X_E=\coprod_{\mathfrak{p}\in\Spec E}X_{E/\mathfrak{p}}.
\end{equation*}
This assignment extends to a functor from $\mathbf{FEt}(\Spec F)$ to the category $\mathbf{CH}/X_F$ of compact Hausdorff spaces over~$X_F$. To give its action on morphisms, note that
\begin{equation*}
\begin{split}
\Hom_F(\Spec E_1,\Spec E_2)&\cong\Hom_F\left( \coprod_{\mathfrak{p}_1\in\Spec E_1}\Spec E_1/\mathfrak{p}_1,\coprod_{\mathfrak{p}_2\in\Spec E_2}\Spec E_2/\mathfrak{p}_2\right)\\ &\cong\prod_{\mathfrak{p}_1}\coprod_{\mathfrak{p}_2}\Hom_F(\Spec E_1/\mathfrak{p}_1,\Spec E_2/\mathfrak{p}_2)\\
&\cong\prod_{\mathfrak{p}_1}\coprod_{\mathfrak{p}_2}\Hom_F(E_2/\mathfrak{p}_2,E_1/\mathfrak{p}_1)
\end{split}
\end{equation*}
and similarly
\begin{equation*}
\begin{split}
\Hom_{X_F}(X_{E_1},X_{E_2})&\cong\Hom_{X_F}\left( \coprod_{\mathfrak{p}_1\in\Spec E_1}X_{E_1/\mathfrak{p}_1},\coprod_{\mathfrak{p}_2\in\Spec E_2}X_{E_2/\mathfrak{p}_2}\right)\\ &\cong\prod_{\mathfrak{p}_1}\coprod_{\mathfrak{p}_2}\Hom_{X_F}(X_{E_1/\mathfrak{p}_1},X_{E_2/\mathfrak{p}_2}).
\end{split}
\end{equation*}
By piecing together morphisms the obvious way, it therefore suffices to give a continuous map $X_E\to X_F$ for a finite field extension $E/F$. For this, choose an algebraic closure $\widebar{E} /E$, which is then also an algebraic closure of $F$, and let $X_E\to X_F$ be the forgetful map
\begin{equation*}
X_E=\Gal (\widebar{E} /E)\backslash X_{\widebar{E} }\to \Gal (\widebar{E} /F)\backslash X_{\widebar{E}}=X_F.
\end{equation*}
\begin{Theorem}\label{Theorem:TopologicalEtaleFundamentalGroupOfCHSIsGalois}
	The functor $\mathbf{FEt}(\Spec F)\to\mathbf{CH}/X_F$ sending an \'etale covering space $\Spec E\to\Spec F$ to the map $X_E\to X_F$ in fact has image in $\mathbf{FCov}(X_F)$ and defines an equivalence of categories $\mathbf{FEt}(\Spec F)\to \mathbf{FCov}(X_F)$.
\end{Theorem}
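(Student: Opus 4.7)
The plan is to mirror the scheme-theoretic proof of Theorem~\ref{Theorem:CoversOfXFandExtensionsOfF}, with Proposition~\ref{Prop:SimplePropertiesOfCHXF} taking the place of the Hilbert~90 lemmas. By construction $X_F = G\backslash X_{\Fbar}$ with $G = \Gal(\Fbar/F)$, and for a finite field extension $E/F$ inside $\Fbar$ one has $X_E = \Gal(\Fbar/E)\backslash X_{\Fbar}$, so that $X_E \to X_F$ is always a quotient map of compact Hausdorff spaces.

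First I would handle the case of a finite Galois extension $E/F$ with Galois group $\Delta = \Gal(E/F)$. Here $\Delta$ acts on $X_E$ with quotient $X_F$, and the action is free because $G$ acts freely on $X_{\Fbar}$ by Proposition~\ref{Prop:SimplePropertiesOfCHXF}.(i). It is a standard topological fact---proved by using freeness and Hausdorffness to separate the finitely many translates of any point by pairwise disjoint open sets which can then be shrunk to be permuted by $\Delta$---that a free action of a finite group on a compact Hausdorff space gives a finite Galois covering. Hence $X_E \to X_F$ is a finite Galois covering with deck group $\Delta$, of degree $[E:F]$.

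For a general finite separable field extension $E/F$, let $E'/F$ be its Galois closure in $\Fbar$. The previous step gives finite Galois coverings $X_{E'} \to X_F$ (with deck group $\Gal(E'/F)$) and $X_{E'} \to X_E$ (with deck group $\Gal(E'/E)$, via its free action on $X_{E'}$). A direct local argument, using the trivialisations over evenly covered neighbourhoods in $X_F$ and passing to the $\Gal(E'/E)$-quotient, then shows that $X_E \to X_F$ is itself a finite covering of degree $[E:F]$. For a general finite \'etale $F$-algebra $E \cong \prod_{\mathfrak{p}} E/\mathfrak{p}$, the definition $X_E = \coprod_{\mathfrak{p}} X_{E/\mathfrak{p}}$ reduces matters to the field case. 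This establishes that the functor lands in $\mathbf{FCov}(X_F)$.

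Full faithfulness then follows as in the scheme-theoretic case: for a finite Galois extension $E/F$, the covering $X_E \to X_F$ has degree $[E:F]$ and admits $\Gal(E/F)$ as a group of deck transformations, so its full automorphism group over $X_F$ is precisely $\Gal(E/F)$; the standard formal argument for Galois categories then upgrades this to a bijection $\Hom_F(\Spec E_1, \Spec E_2) \overset{\cong}{\to} \Hom_{X_F}(X_{E_1}, X_{E_2})$ for arbitrary finite \'etale $F$-algebras. Finally, for essential surjectivity, I would apply Lemma~\ref{Lem:SimplyConnectedProfiniteCovIsUniversal} to the cofiltered system $(X_E \to X_F)_E$ indexed by finite Galois $E/F$ inside $\Fbar$. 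The projective limit is identified with $X_{\Fbar}$: the natural continuous map $X_{\Fbar} \to \varprojlim_E X_E$ is injective since every element of $\Fbar^\times$ lies in a finite subextension, and surjective by a Cantor intersection argument in the style of Lemma~\ref{Lem:CantorIntersection} applied to the nonempty closed $\Gal(\Fbar/E)$-orbits in $X_{\Fbar}$; being a continuous bijection between compact Hausdorff spaces it is a homeomorphism. Since $X_{\Fbar}$ has trivial \'etale fundamental group by Proposition~\ref{Prop:SimplePropertiesOfCHXF}.(iii), Lemma~\ref{Lem:SimplyConnectedProfiniteCovIsUniversal} implies that every connected finite covering of $X_F$ is dominated by some $X_E$, and combined with full faithfulness this yields essential surjectivity. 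The argument is essentially formal once one has the analogues of the scheme-theoretic ingredients; the only genuinely topological input is the elementary fact that a free action of a finite group on a compact Hausdorff space yields a covering.
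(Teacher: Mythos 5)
Your proof follows essentially the same route as the paper's: reduce to the Galois case via Galois closures, use the freeness of the $\Gal(\Fbar/F)$-action (Proposition~\ref{Prop:SimplePropertiesOfCHXF}) to get finite coverings, reduce full faithfulness to deck transformation groups of Galois objects, and deduce essential surjectivity from Lemma~\ref{Lem:SimplyConnectedProfiniteCovIsUniversal} together with the triviality of $\pioneet(X_{\Fbar})$. The only place you add content beyond the paper's compressed exposition is in spelling out the identification $X_{\Fbar}\cong\varprojlim_E X_E$ via injectivity (which you should note more carefully rests on the fact that $\Gal(\Fbar/E)$ fixes characters' values on $E^\times$, so distinct $\chi_1,\chi_2\in X_{\Fbar}$ differing at some $\alpha$ already have distinct images in $X_{F(\alpha)'}$ for a suitable finite Galois $F(\alpha)'$) and a Cantor intersection argument for surjectivity; the paper leaves this step implicit in its appeal to the two propositions, absorbing it into the general discussion of universal profinite covering spaces in Section~2.
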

\begin{proof}
	Let first $E/F$ be a finite Galois extension. Then $X_E\to X_F$ is the quotient map for the action of the finite group $\Gal (E/F)$ on $X_E$, which is free by Proposition~\ref{Prop:SimplePropertiesOfCHXF}. Since the spaces involved are compact Hausdorff spaces, $X_E\to X_F$ is then a finite covering, and $\Gal (E/F)=\Aut_{\Spec F}(\Spec E)\to\Aut_{X_F}(X_E)$ is an isomorphism of finite groups.
	
	By passing to a Galois closures, we find that for finite but not necessarily Galois field extensions $E/F$ the map $X_E\to X_F$ is still a finite covering. From this, the same statement for finite \'etale $F$-algebras follows formally.
	
	Hence the functor $\mathbf{FEt}(\Spec F)\to\mathbf{FCov}(X_F)$ is well-defined. That it is fully faithful can again be reduced to the case of automorphisms of a Galois object, which we have already seen. Finally, its essential surjectivity follows from the combination of Propositions~\ref{Lem:SimplyConnectedProfiniteCovIsUniversal} and~\ref{Prop:SimplePropertiesOfCHXF}.(iii).
\end{proof}

\subsection{The relation between $X_F$ and $\mathcal{X}_F$}\label{section:RelationXFCalXF}

Recall that for a scheme $\mathcal{X}$ of finite type over $\bbc$ there is a canonical topology on $\mathcal{X}(\bbc )$, called the \emph{complex topology}, cf.\ \cite{MR0082175} and \cite[Expos\'e~XII]{MR2017446}. Here $\mathcal{X}(\bbc )$ designates the set of sections of the structural morphism $\mathcal{X}\to\Spec\bbc$ (rather than all scheme morphisms $\Spec\bbc\to\mathcal{X}$); we hope that no confusion with the usage in section~\ref{sec:GaloisAsEtaleFGOfSchemes} will arise. The complex topology is characterised by the following properties:
\begin{enumerate}
	\item The complex topology on $\bba^1(\bbc )\cong\bbc$ is the Euclidean topology, i.e.\ the metric topology on~$\bbc$ induced by the metric $d(z,w)=\lvert z-w\rvert$.
	\item The complex topology on $(\mathcal{X}\times_{\Spec\bbc } \mathcal{Y})(\bbc )\cong \mathcal{X}(\bbc )\times \mathcal{Y}(\bbc )$ is the product topology defined by the complex topologies on $\mathcal{X}(\bbc )$ and $\mathcal{Y}(\bbc )$.
	\item If $\mathcal{Y}\hookrightarrow \mathcal{X}$ is an open, resp.\ closed, embedding, then so is $\mathcal{Y}(\bbc )\hookrightarrow \mathcal{X}(\bbc )$.
\end{enumerate}
In particular, for a quasiprojective variety $\mathcal{X}\subseteq\bbp^n_{\bbc }$ the complex topology on $\mathcal{X}(\bbc )$ is the subspace topology induced by the Euclidean topology on $\bbp^n(\bbc)$.

The complex topology can easily be generalised to arbitrary $\bbc$-schemes; its description is facilitated when restricted to affine $\bbc$-schemes, which suffices for our purposes.

Let $A$ be a $\bbc$-algebra, so that $\mathcal{X}=\Spec A$ becomes an affine $\bbc$-scheme. Note that $\mathcal{X}(\bbc )$ can be identified with the set of $\bbc$-algebra homomorphisms $A\to\bbc$. We can interpret an element $f\in A$ as a function $f\colon\mathcal{X}(\bbc )\to \bbc$ by sending $\mathcal{X}(\bbc )\ni x\colon A\to\bbc$ to $x(f)$, i.e.\ by writing $f(x)$ for what formally is $x(f)$.

This way we define for every $f\in A$ a subset $U_f\subseteq \mathcal{X}(\bbc )$ by
	\begin{equation*}
	U_f=\{ x\in \mathcal{X}(\bbc )\mid \lvert f(x)\rvert <1 \} .
	\end{equation*}
\begin{Definition}\label{def:complextop}
	Let $\mathcal{X}=\Spec A$ be an affine $\bbc$-scheme. The \emph{complex topology} on $\mathcal{X}(\bbc )$ is the unique topology for which $\{ U_f\mid f\in A \}$ is a subbasis of open sets.
\end{Definition}
That is, a subset of $\mathcal{X}(\bbc )$ is open if it can be written as a union of subsets of the form $U_{f_1,\dotsc ,f_n}=U_{f_1}\cap\dotsb\cap U_{f_n}$ (for possibly varying $n$).
\begin{Proposition}\label{Prop:ComplexTopologyComparedWithSchemes}
	\begin{enumerate}
		\item If $\mathcal{X}$ is an affine $\bbc$-scheme of finite type, the above definition of the complex topology on $\mathcal{X}(\bbc )$ agrees with the classical one.
		\item If $\mathcal{X}=\varprojlim_{\alpha }\mathcal{X}_{\alpha }$ is a cofiltered limit of affine $\bbc$-schemes $\mathcal{X}_{\alpha }$ of finite type, then $\mathcal{X}(\bbc )=\varprojlim_{\alpha }\mathcal{X}_{\alpha }(\bbc )$ as topological spaces.
		\item The complex topology is compatible with fibre products: $(\mathcal{X}\times_{\mathcal{S}}\mathcal{Y})(\bbc )=\mathcal{X}(\bbc )\times_{\mathcal{S}(\bbc )}\mathcal{Y}(\bbc )$ as topological spaces.
		\item If $\mathcal{Y}\to \mathcal{X}$ is a finite \'etale covering, then $\mathcal{Y}(\bbc )\to \mathcal{X}(\bbc )$ is a finite covering.
	\end{enumerate}
\end{Proposition}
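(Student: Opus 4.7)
The four statements can be handled in order, each building on the previous ones.

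For (i), I would reduce to the case of affine space. Choose a closed embedding $\mathcal{X}\hookrightarrow\bba^n_{\bbc}$ corresponding to a surjection $\bbc [x_1,\dotsc ,x_n]\twoheadrightarrow A$. On $\bba^n(\bbc)=\bbc^n$, every function $f\in\bbc [x_1,\dotsc ,x_n]$ is continuous in the Euclidean topology, so each $U_f$ is Euclidean-open, giving one inclusion of topologies. For the other inclusion note that for any $a=(a_1,\dotsc ,a_n)\in\bbc^n$ and $r>0$ the Euclidean-open polydisc around $a$ of polyradius $r$ is exactly $\bigcap_{i=1}^n U_{(x_i-a_i)/r}$, so the polydiscs (which form a basis for the Euclidean topology) lie in the topology generated by the $U_f$. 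The topologies on $\bba^n(\bbc)$ therefore coincide, and the passage to the closed subscheme $\mathcal{X}$ is automatic since in both descriptions $\mathcal{X}(\bbc)$ carries the subspace topology from $\bbc^n$.

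For (ii), write $\mathcal{X}_{\alpha}=\Spec A_{\alpha}$, so $\mathcal{X}=\Spec A$ with $A=\varinjlim_{\alpha}A_{\alpha}$. On the level of sets, $\mathcal{X}(\bbc)=\Hom_{\bbc\text{-Alg}}(A,\bbc)=\varprojlim_{\alpha}\Hom_{\bbc\text{-Alg}}(A_{\alpha},\bbc)=\varprojlim_{\alpha}\mathcal{X}_{\alpha}(\bbc)$. For the topology, every $f\in A$ is the image of some $f_{\alpha}\in A_{\alpha}$, and then $U_f\subseteq\mathcal{X}(\bbc)$ is the preimage of $U_{f_{\alpha}}\subseteq\mathcal{X}_{\alpha}(\bbc)$ under the projection. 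Hence the subbasis defining the complex topology on $\mathcal{X}(\bbc)$ is exactly the subbasis defining the inverse-limit topology coming from the $\mathcal{X}_{\alpha}(\bbc)$. For (iii), write $\mathcal{X}=\Spec A$, $\mathcal{Y}=\Spec B$, $\mathcal{S}=\Spec C$, so $\mathcal{X}\times_{\mathcal{S}}\mathcal{Y}=\Spec(A\otimes_C B)$. A basis for the fibre-product topology on $\mathcal{X}(\bbc)\times_{\mathcal{S}(\bbc)}\mathcal{Y}(\bbc)$ is given by the restrictions of $U_a\times\mathcal{Y}(\bbc)$ and $\mathcal{X}(\bbc)\times U_b$, which are precisely $U_{a\otimes 1}$ and $U_{1\otimes b}$, so the fibre-product topology is coarser than the complex topology. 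Conversely, any $f\in A\otimes_C B$ can be written as a finite sum $\sum_i a_i\otimes b_i$, and the function $(x,y)\mapsto \sum_i a_i(x)\,b_i(y)$ is continuous in the fibre-product topology, so $U_f$ is open there; hence the two topologies agree.

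For (iv), the plan is to reduce to the classical statement for schemes of finite type over $\bbc$. Assume first that $\mathcal{X}=\Spec A$ is affine, write $A=\varinjlim_{\alpha}A_{\alpha}$ with $A_{\alpha}$ finitely generated $\bbc$-algebras. Finite \'etale morphisms are of finite presentation, and by standard limit arguments \cite[Tag~07C3]{StacksProject} there exists some index $\alpha_0$ and a finite \'etale cover $\mathcal{Y}_{0}\to\mathcal{X}_{\alpha_0}$ whose base change to $\mathcal{X}$ is $\mathcal{Y}\to\mathcal{X}$. By (ii) and (iii) the map $\mathcal{Y}(\bbc)\to\mathcal{X}(\bbc)$ is the topological base change of $\mathcal{Y}_{0}(\bbc)\to\mathcal{X}_{\alpha_0}(\bbc)$ along $\mathcal{X}(\bbc)\to\mathcal{X}_{\alpha_0}(\bbc)$. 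The latter is a finite covering by the classical complex-analytic theory of finite \'etale morphisms of finite-type $\bbc$-schemes \cite[Expos\'e~XII, Proposition~3.1]{MR2017446}, and the pullback of a finite covering along a continuous map is again a finite covering. The general case is reduced to the affine one by choosing an affine open cover $\{\mathcal{X}_i\}$ of $\mathcal{X}$ and observing that $\mathcal{X}(\bbc)=\bigcup \mathcal{X}_i(\bbc)$ with $\mathcal{X}_i(\bbc)$ open (by property~(3) listed before Definition~\ref{def:complextop}, which is straightforward from the subbasis description), so that being a finite covering can be checked locally.

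The only genuinely substantial point is part~(iv), and specifically the spreading-out step: we must know that an arbitrary finite \'etale cover of a possibly enormous affine $\bbc$-scheme descends to a cover of a finite-type subscheme. Once that is granted, (i)--(iii) turn the question into the classical one handled in SGA~1.
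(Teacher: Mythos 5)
Your proof is correct, and for parts (i), (ii), and (iv) it follows essentially the same route as the paper: (i) by reducing to affine space and comparing subbases, (ii) by observing that every $f\in A=\varinjlim A_\alpha$ is pulled back from some level, and (iv) by spreading out the finite \'etale cover to a finite-type model (as in the Stacks Project) and invoking the finite-type case from SGA~1. The one place where you diverge is (iii): the paper deduces it from (i) and (ii) by first treating the finite-type case and then passing to the limit (which requires choosing compatible filtered presentations of $\mathcal{X}$, $\mathcal{Y}$, $\mathcal{S}$), whereas you give a direct argument with tensor products, checking that the two topologies on $(\Spec (A\otimes_C B))(\bbc)$ agree by showing each $U_f$ for $f=\sum a_i\otimes b_i$ is the preimage of the unit disc under a continuous function, and conversely that the subbasic sets of the fibre-product topology are of the form $U_{a\otimes 1}$ and $U_{1\otimes b}$. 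Your direct argument is slightly more self-contained and avoids some bookkeeping; both are sound. One cosmetic slip: in (iii) you say the restrictions of $U_a\times\mathcal{Y}(\bbc)$ and $\mathcal{X}(\bbc)\times U_b$ form a \emph{basis}, but they form a \emph{subbasis}; the argument is unaffected.
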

\begin{proof}
	\begin{enumerate}
		\item Easy.
		\item Note that we can write $\mathcal{X}=\Spec A$ and $\mathcal{X}_{\alpha }=\Spec A_{\alpha }$ where $A=\varinjlim_{\alpha }A_{\alpha }$. Then because the limit is filtered every finite subset of $A$ is the image of a finite subset of some $A_{\alpha }$. Hence every basic open set $U_{f_1,\dotsc ,f_n}\subseteq \mathcal{X}(\bbc )$ is the preimage of an open subset of some $\mathcal{X}_{\alpha }(\bbc )$.
		\item By (i) this holds if $\mathcal{X}$, $\mathcal{Y}$ and $\mathcal{S}$ are of finite type over~$\bbc$. Apply (ii) to deduce the general case from this.
		\item Again this is well-known if $\mathcal{X}$ (and then automatically also $\mathcal{Y}$) is of finite type over $\bbc$. It follows e.g.\ from \cite[Expos\'e XII, Propositions 3.1.(iii) and 3.2.(vi)]{MR2017446}. Every \'etale covering of an arbitrary $\mathcal{X}$ is the pullback from an \'etale covering of a $\bbc$-scheme of finite type, cf.\ \cite[Tag 00U2, item (9)]{StacksProject}. Apply this, (ii) and (iii) to deduce the general case.
		\qedhere
	\end{enumerate}
\end{proof}
\begin{Remark}
	Note that for a nonempty affine $\bbc$-scheme $\mathcal{X}$ the space $\mathcal{X}(\bbc )$ may well be empty, e.g.\ for $\mathcal{X}=\Spec \bbc (T)$.
\end{Remark}
For each field $F$ containing $\bbq (\zeta_{\infty })$ we may consider $\mathcal{X}_F(\bbc )$, and from now on we will tacitly assume it to be endowed with its complex topology.
\begin{Corollary}
	Let $F$ be a field extension of $\bbq (\zeta_{\infty })$. Then $\mathcal{X}_F(\bbc )$ is connected. For a finite extension $E/F$ the map $\mathcal{X}_E(\bbc )\to \mathcal{X}_F(\bbc )$ is a covering map of degree $[E:F]$.
\end{Corollary}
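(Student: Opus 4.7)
The plan has two independent parts: the covering assertion (with correct degree) and connectedness of $\mathcal{X}_F(\bbc)$. The covering part is essentially formal from what is in hand, while the connectedness part requires a short explicit analysis of the complex points.

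First I would deduce from Theorem~\ref{Theorem:CoversOfXFandExtensionsOfF} that $\mathcal{X}_E\to\mathcal{X}_F$ is a finite \'etale covering of $\bbc$-schemes; Proposition~\ref{Prop:ComplexTopologyComparedWithSchemes}(iv) then upgrades this automatically to a finite covering of topological spaces $\mathcal{X}_E(\bbc)\to\mathcal{X}_F(\bbc)$. To pin down the degree, I would compute the cardinality of a typical fibre using Lemma~\ref{Lem:GeoPointsOfQuotient}: writing $G=\Gal(\Fbar/F)$ and $H=\Gal(\Fbar/E)$, the lemma identifies $\mathcal{X}_F(\bbc)=\mathcal{X}_{\Fbar}(\bbc)/G$ and $\mathcal{X}_E(\bbc)=\mathcal{X}_{\Fbar}(\bbc)/H$ set-theoretically. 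Since $G$ acts freely on $\mathcal{X}_{\Fbar}(\bbc)$ by Lemma~\ref{Lem:AnwendungHilbert90AufGeoPunkte}, the fibre of $\mathcal{X}_E(\bbc)\to\mathcal{X}_F(\bbc)$ over the class of any $x\in\mathcal{X}_{\Fbar}(\bbc)$ is $Gx/H$, which has cardinality $[G:H]=[E:F]$.

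For connectedness of $\mathcal{X}_F(\bbc)$, I would reduce to the algebraically closed case: the continuous map $\mathcal{X}_{\Fbar}(\bbc)\to\mathcal{X}_F(\bbc)$ is surjective by Lemma~\ref{Lem:GeoPointsOfQuotient}, so it suffices to exhibit $\mathcal{X}_{\Fbar}(\bbc)$ as connected. Using Remark~\ref{Rmk:RationalWittVectorsForFields} and the definition of $A_{\Fbar}$, a $\bbc$-point of $\mathcal{X}_{\Fbar}$ is precisely a group homomorphism $\chi\colon\Fbar^{\times}\to\bbc^{\times}$ whose restriction to $\mu_{\infty}$ is the standard embedding $\iota$. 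The polar decomposition $\bbc^{\times}\cong\bbs^1\times\bbr_{>0}$ (noting $\iota$ lands in $\bbs^1$) yields a homeomorphism
\[
\mathcal{X}_{\Fbar}(\bbc)\ \cong\ X_{\Fbar}\times\Hom(\Fbar^{\times}/\mu_{\infty},\bbr_{>0}),
\]
where both factors carry the natural compact-open/product topology inherited from their description as subspaces of $\Hom(\Fbar^{\times},\bbc^{\times})$; matching this with the complex topology reduces via Proposition~\ref{Prop:ComplexTopologyComparedWithSchemes}(ii) to the evident description on each finitely generated sub-torus of $\Spec\bbc[\Fbar^{\times}]$. The first factor $X_{\Fbar}$ is connected by Proposition~\ref{Prop:SimplePropertiesOfCHXF}(ii). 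The second factor, via the logarithm, is isomorphic to $\Hom(\Fbar^{\times}/\mu_{\infty},\bbr)$; since $\Fbar^{\times}$ is divisible, $\Fbar^{\times}/\mu_{\infty}$ is a $\bbq$-vector space, and thus this $\Hom$ is convex in the real vector space structure, hence contractible.

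The only genuine subtlety I foresee is the identification of the complex topology on $\mathcal{X}_{\Fbar}(\bbc)$ with the subspace topology from $\Hom(\Fbar^{\times},\bbc^{\times})$ (discrete source, Euclidean target); this has to be checked through the colimit presentation $A_{\Fbar}=\varinjlim_{H}\bbc[H]\otimes_{\bbc[\mu_N]}\bbc$ over finitely generated subgroups $H\subseteq\Fbar^{\times}$ containing some $\mu_N$, and Proposition~\ref{Prop:ComplexTopologyComparedWithSchemes}(ii). Once the two topologies are identified, the argument above gives the homeomorphism and hence connectedness at once, concluding the proof.
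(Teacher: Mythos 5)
Your proof is correct and takes essentially the same route as the paper: the covering claim follows from Theorem~\ref{Theorem:CoversOfXFandExtensionsOfF} (a degree-preserving equivalence) combined with Proposition~\ref{Prop:ComplexTopologyComparedWithSchemes}(iv), and for connectedness you reduce to $\Fbar$ via surjectivity (Lemma~\ref{Lem:GeoPointsOfQuotient}) and then re-derive the polar-decomposition homeomorphism $\mathcal{X}_{\Fbar}(\bbc)\cong X_{\Fbar}\times\Hom(\Fbar^{\times},\bbr)$ — this is exactly the content of Theorem~\ref{Thm:CompatibilityBetweenXFSpaceAndXFScheme}(i), which the paper simply cites forward rather than re-proving inline.
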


\begin{proof} We only need to prove that $\mathcal{X}_F(\bbc)$ is connected. This reduces to the case that $F$ is algebraically closed, where it follows from Proposition~\ref{Prop:ComplexTopologyComparedWithSchemes} and Theorem~\ref{Thm:CompatibilityBetweenXFSpaceAndXFScheme}~(i).
\end{proof}

This suggests that also $\pioneet (\mathcal{X}_F(\bbc ))\cong\Gal (\Fbar /F)$; the difficulty here is to show that every topological finite covering of $\mathcal{X}_F(\bbc )$ comes from an \'etale covering of the scheme~$\mathcal{X}_F$. An analogous statement in the case of $\bbc$-schemes of finite type is known under the name `Riemann's existence theorem', cf.\ \cite[Expos\'e~XII, Th\'eor\`eme~5.1]{MR2017446}. It is, however, not known to the authors in which generality this can hold for affine schemes of infinite type over $\bbc$.

However, we can now show in a roundabout way that indeed $\pioneet (\mathcal{X}_F(\bbc ))\cong\Gal (\Fbar /F)$.
\begin{Theorem}\label{Thm:CompatibilityBetweenXFSpaceAndXFScheme}
	Let $F$ be a field containing $\bbq (\zeta_{\infty })$, and let $\Fbar /F$ be an algebraic closure.
	\begin{enumerate}
	\item There is a canonical $\Gal (\Fbar /F)$-equivariant homeomorphism
	\begin{equation*}
	\mathcal{X}_{\Fbar }(\bbc )\overset{\cong }{\to }X_{\Fbar }\times\Hom (\Fbar^{\times },\bbr ),
	\end{equation*}
	where the second factor denotes the set of group homomorphisms $\Fbar^{\times }\to\bbr$, endowed with the compact-open topology (the topology on $\Fbar^{\times }$ being discrete).
	\item The homeomorphism from (i) induces a continuous map $\mathcal{X}_F(\bbc )\to X_F$, which is a deformation retraction. Each fibre of this map is homeomorphic to $\Hom (\Fbar^{\times },\bbr )$.
	\item The diagram
	\begin{equation*}
	\xymatrix{
		\mathbf{FEt}(\Spec F) \ar[r]^-{\varPhi_{\mathrm{sch}} } \ar[d]_-{\varPhi_{\mathrm{top}}} & \mathbf{FEt}(\mathcal{X}_F) \ar[d]^-{\mathcal{Y}\mapsto\mathcal{Y}(\bbc )}\\
		\mathbf{FCov}(X_F) \ar[r]_-{\varPsi} & \mathbf{FCov} (\mathcal{X}_F(\bbc ))
		}
	\end{equation*}
	commutes up to isomorphism of functors, and all functors in it are equivalences of categories. Here the two functors $\varPhi_{\mathrm{sch}}$ and $\varPhi_{\mathrm{top}}$ are those from Theorems \ref{Theorem:CoversOfXFandExtensionsOfF} and \ref{Theorem:TopologicalEtaleFundamentalGroupOfCHSIsGalois}, respectively, and $\varPsi$ is induced by the map $\mathcal{X}_F(\bbc )\to X_F$ in (ii).
	\end{enumerate}
\end{Theorem}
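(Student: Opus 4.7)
The three parts build on one another, with (i) carrying the main content; (ii) is an easy consequence, and (iii) is a formal wrap-up.

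For (i), I would unwind the definitions. Using the identification $\Wrat(\Fbar)\cong \bbz[\Fbar^\times]$ from Remark~\ref{Rmk:RationalWittVectorsForFields}, a $\bbc$-algebra map $A_{\Fbar} = \Wrat(\Fbar)\otimes_{\bbz[\mu_\infty]}\bbc\to\bbc$ is the same as a group homomorphism $\chi\colon \Fbar^\times \to \bbc^\times$ whose restriction to $\mu_\infty$ is $\iota$. The polar decomposition of topological groups
\begin{equation*}
\bbc^\times \xrightarrow{\ \sim\ } \bbs^1\times\bbr,\qquad z\mapsto \bigl(z/\lvert z\rvert,\ \log\lvert z\rvert\bigr),
\end{equation*}
splits every such $\chi$ uniquely as a pair $(\chi_1,\chi_2)$ with $\chi_1\in X_{\Fbar}$ and $\chi_2\in\Hom(\Fbar^\times,\bbr)$: the constraint $\chi|_{\mu_\infty}=\iota$ places $\chi_1$ in $X_{\Fbar}$, while the triviality of $\chi_2$ on $\mu_\infty$ is automatic as $\bbr$ is torsion-free. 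This decomposition is $\Gal(\Fbar/F)$-equivariant by naturality. To match topologies, observe that the complex topology on $\mathcal{X}_{\Fbar}(\bbc)$ is the initial topology making every evaluation $\chi\mapsto \chi(f)$ continuous for $f\in A_{\Fbar}$; since $A_{\Fbar}$ is generated as a $\bbc$-algebra by the Teichm\"uller elements $[\alpha]=1-\alpha t$ for $\alpha\in\Fbar^\times$, this is precisely the topology of pointwise convergence of maps $\Fbar^\times\to\bbc^\times$, which polar decomposition transports to the product of the pointwise-convergence topologies on $X_{\Fbar}$ and on $\Hom(\Fbar^\times,\bbr)$.

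For (ii), Lemma~\ref{Lem:GeoPointsOfQuotient} identifies $\mathcal{X}_F(\bbc)$ set-theoretically with $\Gal(\Fbar/F)\backslash \mathcal{X}_{\Fbar}(\bbc)$; combined with (i), this becomes $\Gal(\Fbar/F)\backslash (X_{\Fbar}\times\Hom(\Fbar^\times,\bbr))$. Projection onto $X_{\Fbar}$ is Galois-equivariant and thus descends to a set map $\mathcal{X}_F(\bbc)\to X_F$; continuity follows because the quotient by a continuous action of the compact group $\Gal(\Fbar/F)$ is open, so $\mathcal{X}_F(\bbc)$ carries the quotient topology. The Galois-equivariant linear scaling
\begin{equation*}
\widetilde{H}_s(\chi_1,\chi_2) = (\chi_1,(1-s)\chi_2)
\end{equation*}
defines a deformation retraction of $\mathcal{X}_{\Fbar}(\bbc)$ onto $X_{\Fbar}\times\{0\}\cong X_{\Fbar}$, which descends to the asserted deformation retraction of $\mathcal{X}_F(\bbc)$ onto $X_F$. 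Since $\Gal(\Fbar/F)$ acts freely on $X_{\Fbar}$ by Proposition~\ref{Prop:SimplePropertiesOfCHXF}(i), each orbit in a fibre of $\mathcal{X}_F(\bbc)\to X_F$ over $[\chi_1]$ has a unique representative with first coordinate $\chi_1$, so the fibre is $\Hom(\Fbar^\times,\bbr)$.

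For (iii), three of the four functors are equivalences by prior results: $\varPhi_{\mathrm{sch}}$ by Theorem~\ref{Theorem:CoversOfXFandExtensionsOfF}, $\varPhi_{\mathrm{top}}$ by Theorem~\ref{Theorem:TopologicalEtaleFundamentalGroupOfCHSIsGalois}, and $\varPsi$ by Corollary~\ref{Cor:HomotopyEquivalenceInducesIsoOnPioneet} applied to the deformation retraction of (ii). It remains to produce a natural isomorphism
\begin{equation*}
\mathcal{X}_E(\bbc)\cong X_E\times_{X_F}\mathcal{X}_F(\bbc)
\end{equation*}
of finite coverings of $\mathcal{X}_F(\bbc)$ for each finite \'etale $F$-algebra $E$. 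By Remark~\ref{Rmk:XEForArbitraryAlgebras} this reduces to $E$ being a finite field extension of $F$ inside a fixed $\Fbar$; applying the decomposition from (i) with $H=\Gal(\Fbar/E)\subseteq G=\Gal(\Fbar/F)$, both sides are identified with $H\backslash(X_{\Fbar}\times\Hom(\Fbar^\times,\bbr))$ over $G\backslash(X_{\Fbar}\times\Hom(\Fbar^\times,\bbr)) = \mathcal{X}_F(\bbc)$. Commutativity of the square together with the three established equivalences forces the fourth functor $\mathcal{Y}\mapsto \mathcal{Y}(\bbc)$ to be an equivalence by two-out-of-three.

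The principal obstacle is the topological identification in (i): verifying that the complex topology on the spectrum of the infinitely-generated $\bbc$-algebra $A_{\Fbar}$ really agrees with the product of the two pointwise-convergence topologies. Once this is set up $\Gal(\Fbar/F)$-equivariantly, (ii) and (iii) are essentially formal.
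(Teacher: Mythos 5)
Your proposal is correct and follows essentially the same route as the paper: unwind $A_{\Fbar}$ to group homomorphisms $\Fbar^\times\to\bbc^\times$ compatible with $\iota$, split $\bbc^\times\cong\bbs^1\times\bbr$ to get the product decomposition in (i), descend the obvious linear scaling homotopy in (ii) using Lemma~\ref{Lem:GeoPointsOfQuotient}, and close out (iii) by combining Theorems~\ref{Theorem:CoversOfXFandExtensionsOfF} and~\ref{Theorem:TopologicalEtaleFundamentalGroupOfCHSIsGalois} with Corollary~\ref{Cor:HomotopyEquivalenceInducesIsoOnPioneet}. You actually spell out two steps the paper dismisses as `straightforward' — the identification of the complex topology with the topology of pointwise convergence via the Teichm\"uller generators, and the explicit isomorphism $\mathcal{X}_E(\bbc)\cong X_E\times_{X_F}\mathcal{X}_F(\bbc)$ witnessing commutativity of the square — which is a genuine improvement in readability.
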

\begin{proof}\begin{enumerate}
		\item Note that there are canonical bijections
		\begin{equation*}\begin{split}
		\mathcal{X}_{\Fbar }(\bbc )&\cong\Hom_{\text{$\bbc$-algebras}}(\Wrat (\Fbar)\otimes_{\bbz [\mu_{\infty }]}\bbc ,\bbc )\\
		&\cong\Hom_{\text{$\bbc$-algebras}}(\bbz [\Fbar^{\times }]\otimes_{\bbz [\mu_{\infty }]}\bbc ,\bbc)\\
		&\cong \Hom_{\text{$\bbc$-algebras}}(\bbc [\Fbar^{\times }]\otimes_{\bbc [\mu_{\infty }]}\bbc ,\bbc )\\
		&\cong\{ \chi\in\Hom_{\text{groups}}(\Fbar^{\times },\bbc^{\times })\mid \chi |_{\mu_{\infty }}=\id_{\mu_{\infty }} \} .
		\end{split}
		\end{equation*}
		Now $\bbc^{\times }\cong\bbs^1\times\bbr$ as Lie groups, and from this we obtain a product decomposition
		\begin{equation*}
		\mathcal{X}_{\Fbar }(\bbc )\cong\{ \chi\in\Hom (\Fbar^{\times },\bbs^1)\mid\chi |_{\mu_{\infty}}=\id_{\mu_{\infty}} \}\times \Hom (\Fbar^{\times },\bbr ).
		\end{equation*}
		The first factor is equal to~$X_{\Fbar}$. This bijection is clearly equivariant for $\Gal (\Fbar /F)$, and it is straightforward to show that it is a homeomorphism.
		\item There is a $\Gal (\Fbar /F)$-equivariant deformation retraction $\mathcal{X}_{\Fbar }(\bbc )\to X_{\Fbar}$: using the description of $\mathcal{X}_{\Fbar }(\bbc )$ from (i) we can define it as
		\begin{equation*}
		\begin{split}
		H\colon X_{\Fbar}\times\Hom (\Fbar^{\times },\bbr )\times [0,1]&\to X_{\Fbar}\times\Hom (\Fbar^{\times },\bbr )\\
		(\chi ,\lambda ,t)&\mapsto (\chi ,t\lambda).
		\end{split}
		\end{equation*}
		By equivariance, $H$ descends to a deformation retraction $\mathcal{X}_F(\bbc )\to X_F$ (note that $X_F$ is the quotient of $X_{\Fbar }$ by the Galois group by construction, and $\mathcal{X}_F(\bbc )$ is the quotient by the Galois group by Lemma~\ref{Lem:GeometricPointsOfXF}). The statement about the fibres follows from the fact that $\Gal (\Fbar /F)$ operates freely on $X_{\Fbar }$.
		\item That the diagram commutes up to isomorphism of functors is a direct calculation. We already know that three of the functors are equivalences: $\varPhi_{\mathrm{sch}}$ is an equivalence by Theorem~\ref{Theorem:CoversOfXFandExtensionsOfF}, $\varPhi_{\mathrm{top}}$ is an equivalence by Theorem~\ref{Theorem:TopologicalEtaleFundamentalGroupOfCHSIsGalois}, and $\varPsi$ is an equivalence by (ii) and Corollary~\ref{Cor:HomotopyEquivalenceInducesIsoOnPioneet}. Hence the fourth functor is also an equivalence.\qedhere
	\end{enumerate}
\end{proof}

\newpage

\section{Classical fundamental groups inside Galois groups}

\subsection{Path components of the spaces~$X_F$}
First let $\Fbar$ be an algebraically closed field. To determine the set of path components of $X_{\Fbar }$ we need to contemplate a large commutative diagram.
\begin{Lemma}
	Let $\Fbar$ be an algebraically closed field containing $\bbq (\zeta_{\infty })$. Then there is a commutative diagram with exact rows and columns:
	\begin{equation}\label{eqn:HugeDiagramReal}
	\xymatrix{
		& 0\ar[d] & 0\ar[d] & 0\ar[d] \\
		0\ar[r] & \Hom (\Fbar^{\times }_{\mathrm{tf}},\bbr )\ar[r]\ar[d] & \Hom (\Fbar^{\times }_{\mathrm{tf}},\bbs^1)\ar[r]\ar[d] &\Ext (\Fbar^{\times }_{\mathrm{tf}},\bbz )\ar[r]\ar[d] & 0\\
		0\ar[r] & \Hom (\Fbar^{\times },\bbr )\ar[r]\ar[d] & \Hom (\Fbar^{\times },\bbs^1)\ar[r]\ar[d] &\Ext (\Fbar^{\times },\bbz )\ar[r]\ar[d] & 0\\
		 & 0\ar[r] & \Hom (\mu_{\infty },\bbs^1)\ar[r]\ar[d] &\Ext (\mu_{\infty },\bbz )\ar[r]\ar[d] & 0\\
		&  & 0 & 0
	}
	\end{equation}
	Here, all $\Hom$ and $\Ext$ groups are understood to be in the category of abelian groups.
\end{Lemma}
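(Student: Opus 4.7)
My plan is to obtain every row and every column of the diagram as truncations of long exact sequences, using only the two short exact sequences already in sight together with a small list of vanishing results that hold in the category of abelian groups.

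First I would record the relevant vanishings. Since $\bbs^1$ and $\bbr$ are divisible, they are injective objects in the category of abelian groups, so $\Ext(M,\bbs^1)=\Ext(M,\bbr)=0$ for any $M$. Since $\Fbar$ is algebraically closed, $\Fbar^\times$ is divisible, and hence so is the quotient $\Fbar^\times_{\mathrm{tf}}=\Fbar^\times/\mu_\infty$; as the only divisible subgroup of $\bbz$ is $0$, this forces $\Hom(\Fbar^\times,\bbz)=\Hom(\Fbar^\times_{\mathrm{tf}},\bbz)=0$. Finally, $\mu_\infty$ is a torsion group while $\bbz$ and $\bbr$ are torsion-free, so $\Hom(\mu_\infty,\bbz)=\Hom(\mu_\infty,\bbr)=0$.

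Next I would produce the three rows by applying $\Hom(M,-)$ to the exponential sequence $0\to\bbz\to\bbr\to\bbs^1\to 0$ (this is already used in the excerpt, cf.\ (\ref{eqn:LESForMAndRealExponentialSequence})), for $M=\Fbar^\times_{\mathrm{tf}}$, $M=\Fbar^\times$ and $M=\mu_\infty$ in turn. The general six-term sequence ends in $\Ext(M,\bbr)=0$. For $M=\Fbar^\times_{\mathrm{tf}}$ and $M=\Fbar^\times$ the divisibility of $M$ kills the leftmost term $\Hom(M,\bbz)$, producing the top and middle rows. For $M=\mu_\infty$ the first two terms both vanish by the torsion argument, leaving precisely the bottom row $0\to\Hom(\mu_\infty,\bbs^1)\to\Ext(\mu_\infty,\bbz)\to 0$.

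Then I would produce the three columns by applying $\Hom(-,A)$ together with $\Ext(-,A)$ to the short exact sequence
\begin{equation*}
0\to\mu_\infty\to\Fbar^\times\to\Fbar^\times_{\mathrm{tf}}\to 0,
\end{equation*}
for $A=\bbr$, $A=\bbs^1$ and $A=\bbz$. Injectivity of $\bbr$ and $\bbs^1$ truncates the six-term sequences for $A=\bbr$ and $A=\bbs^1$ at the $\Hom$ level (and combined with $\Hom(\mu_\infty,\bbr)=0$ one reads off the left column; for $A=\bbs^1$ one reads off the middle column directly). For $A=\bbz$ the connecting map starts at $\Hom(\mu_\infty,\bbz)=0$, so the $\Ext$-part degenerates into the short exact right column. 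Commutativity of the squares is then automatic from the naturality of the $\Ext$ long exact sequences in both variables. No step is really hard; the only thing that could go wrong is a missing vanishing, so the main care is in checking that each of the above degeneracies really uses only the hypotheses that $\Fbar$ is algebraically closed and of characteristic $0$ (so that $\Fbar^\times$ is divisible with torsion subgroup $\mu_\infty$).
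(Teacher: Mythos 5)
Your proof is correct and takes essentially the same route as the paper: both derive the rows and columns from the $\Hom/\Ext$ long exact sequences attached to $0\to\bbz\to\bbr\to\bbs^1\to 0$ and to $0\to\mu_\infty\to\Fbar^\times\to\Fbar^\times_{\mathrm{tf}}\to 0$, kill the unwanted end terms by divisibility and torsion arguments, and get commutativity from bifunctoriality. One small point worth making explicit in the third column: the vanishing $\Hom(\mu_\infty,\bbz)=0$ only accounts for exactness at $\Ext(\Fbar^\times_{\mathrm{tf}},\bbz)$, while the surjectivity of $\Ext(\Fbar^\times,\bbz)\to\Ext(\mu_\infty,\bbz)$ additionally uses that $\Ext^2$ vanishes in the category of abelian groups (equivalently, that $\bbz$ has injective dimension $1$) --- the paper states this explicitly, and you tacitly rely on it when describing the $\Hom/\Ext$ sequence as six-term.
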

\begin{proof}
	This diagram is essentially obtained by applying the bifunctor $\Hom (-,-)$ and its derivative $\Ext (-,-)$ to the short exact sequences
	\begin{equation*}
	0\to\mu_{\infty }\to F^{\times }\to F\to 0
	\end{equation*}
	in the first variable and
	\begin{equation*}
	0\to \bbz \to\bbr \to\bbs^1\to 0
	\end{equation*}
	in the second variable. Hence it commutes by functoriality. We now show exactness in the rows and columns.
	
	For the rows, let $A$ be one of the groups $\mu_{\infty }$, $\Fbar^{\times }$ and $\Fbar^{\times}_{\mathrm{tf}}$. Then we obtain an exact sequence
	\begin{equation}\label{eqn:FirstSesInProofOfLemmaBigCDExact}
	\begin{split}
	\Hom (A,\bbz )&\to\Hom (A,\bbr )\to\Hom (A,\bbs^1)\to\\
	\Ext (A,\bbz )&\to\Ext (A,\bbr ).
	\end{split}
	\end{equation}
	In all three cases $A$ is divisible, therefore the first term $\Hom (A,\bbz )$ vanishes; the last term $\Ext (A,\bbr )$ vanishes because $\bbr $ is divisible. In the case $A=\mu_{\infty }$ the term $\Hom (A,\bbr )$ is also trivial. Hence the exact sequences (\ref{eqn:FirstSesInProofOfLemmaBigCDExact}) for these choices of $A$ can be identified with the rows of~(\ref{eqn:HugeDiagramReal}).
	
	As to the columns, the exactness of the first column is trivial. The second column is the exact sequence
	\begin{equation*}
	0\to\Hom (\Fbar^{\times}_{\mathrm{tf}},\bbs^1)\to\Hom (\Fbar^{\times },\bbs^1)\to\Hom (\mu_{\infty },\bbs^1)\to\Ext (\Fbar^{\times}_{\mathrm{tf}},\bbs^1)
	\end{equation*}
	where the last term is zero because $\bbs^1$ is divisible. Finally the third column is the exact sequence
	\begin{equation*}
	\Hom (\mu_{\infty },\bbz )\to\Ext (\Fbar^{\times}_{\mathrm{tf}},\bbs^1)\to\Ext (\Fbar^{\times },\bbs^1)\to\Ext (\mu_{\infty },\bbs^1)\to 0,
	\end{equation*}
	which is exact because $\Ext^2$ is zero on the category of abelian groups, and whose first member $\Hom (\mu_{\infty },\bbz )$ is clearly trivial.
\end{proof}
Recall that by Proposition~\ref{Prop:PiZeroPathPontryaginDual} there is a canonical bijection between $\pizeropath ((\Fbar^{\times })^{\vee })$  and $\Ext (\Fbar^{\times },\bbz )$. Denote by
\begin{equation*}
\Ext_{\exp }(\Fbar^{\times },\bbz )\subset\Ext (\Fbar^{\times },\bbz )
\end{equation*}
the subset of those extensions whose restriction to $\mu_{\infty }\subset\Fbar^{\times }$ is isomorphic to the exponential sequence
\begin{equation*}
0\to\bbz \to\bbq \to\mu_{\infty }\to 0,
\end{equation*}
i.e.\ the preimage of the element in $\Ext (\mu_{\infty },\bbz )$ encoding this extension in $\Ext (\Fbar^{\times },\bbz )$.
\begin{Proposition}
	Let $\Fbar\supset\bbq (\zeta_{\infty })$ be an algebraically closed field. The subset $X_{\Fbar }\subset (\Fbar^{\times})^{\vee }$ is the union of those path components corresponding to the subset $\Ext_{\exp }(\Fbar^{\times },\bbz )\subset\Ext (\Fbar^{\times },\bbz )\cong\pizeropath ((\Fbar^{\times })^{\vee })$.
\end{Proposition}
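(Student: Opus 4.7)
The plan is to exploit the commutativity of diagram~(\ref{eqn:HugeDiagramReal}) restricted to the relevant rows and columns, combined with Proposition~\ref{Prop:PiZeroPathPontryaginDual}, in order to identify $X_{\Fbar}$ with a preimage under the path-component map.

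First, I would invoke Proposition~\ref{Prop:PiZeroPathPontryaginDual} twice: for $\Fbar^{\times}$, it identifies $\pizeropath((\Fbar^{\times})^{\vee})$ with $\Ext(\Fbar^{\times},\bbz)$ via the connecting homomorphism $\delta$ of the bottom row of (\ref{eqn:HugeDiagramReal}); and for $\mu_{\infty}$, it identifies $\pizeropath((\mu_{\infty})^{\vee})$ with $\Ext(\mu_{\infty},\bbz)$ via a connecting map $\delta_{\mu_{\infty}}$. Crucially, because $\mu_{\infty}$ is a torsion group we have $\Hom(\mu_{\infty},\bbr)=0$, so $\delta_{\mu_{\infty}}$ is actually a bijection, i.e.\ the path components of $(\mu_{\infty})^{\vee}$ are singletons. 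This in particular confirms that $\delta_{\mu_{\infty}}(\iota)$ is exactly the class of the exponential sequence $0\to\bbz\to\bbq\to\mu_{\infty}\to 0$, which is the distinguished class defining $\Ext_{\exp}(\Fbar^{\times},\bbz)$.

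Next, I would set up the commutative square
\begin{equation*}
\xymatrix{
(\Fbar^{\times})^{\vee} \ar[r]^-{r}\ar[d]_-{\delta} & (\mu_{\infty})^{\vee}\ar[d]^-{\delta_{\mu_{\infty}}}_-{\cong}\\
\Ext(\Fbar^{\times},\bbz) \ar[r]_-{\rho} & \Ext(\mu_{\infty},\bbz),
}
\end{equation*}
where $r$ is restriction of characters to $\mu_{\infty}$ and $\rho$ is the corresponding restriction on $\Ext$-groups; the commutativity is a piece of diagram~(\ref{eqn:HugeDiagramReal}) and reflects the functoriality of the $\Ext$-long exact sequence in the first variable. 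By definition, $X_{\Fbar}=r^{-1}(\iota)$, and by definition $\Ext_{\exp}(\Fbar^{\times},\bbz)=\rho^{-1}(\delta_{\mu_{\infty}}(\iota))$.

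Putting these together, $\chi\in X_{\Fbar}$ iff $r(\chi)=\iota$; since $\delta_{\mu_{\infty}}$ is injective, this is equivalent to $\delta_{\mu_{\infty}}(r(\chi))=\delta_{\mu_{\infty}}(\iota)$, hence by commutativity to $\rho(\delta(\chi))=\delta_{\mu_{\infty}}(\iota)$, i.e.\ to $\delta(\chi)\in\Ext_{\exp}(\Fbar^{\times},\bbz)$. This shows $X_{\Fbar}=\delta^{-1}(\Ext_{\exp}(\Fbar^{\times},\bbz))$, which is precisely the union of the path components of $(\Fbar^{\times})^{\vee}$ that correspond to $\Ext_{\exp}(\Fbar^{\times},\bbz)$ under the identification $\pizeropath((\Fbar^{\times})^{\vee})\cong\Ext(\Fbar^{\times},\bbz)$.

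No step is really an obstacle: once Proposition~\ref{Prop:PiZeroPathPontryaginDual} is in hand, the only point requiring minor care is to check that the square above is the same as the corresponding sub-square of (\ref{eqn:HugeDiagramReal}), which is immediate from the explicit description of $\delta$ via the pullback extension $\bbr\times_{\bbs^1,\chi}(-)$ recalled just before Proposition~\ref{Prop:PiZeroPathPontryaginDual}.
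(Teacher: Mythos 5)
Your proof is correct and is essentially the same as the paper's: both arguments isolate the commutative square inside diagram (\ref{eqn:HugeDiagramReal}) relating the restriction $r\colon(\Fbar^{\times})^{\vee}\to\mu_{\infty}^{\vee}$ to the corresponding map on $\Ext$-groups, use that $\delta_{\mu_{\infty}}\colon\mu_{\infty}^{\vee}\to\Ext(\mu_{\infty},\bbz)$ is an isomorphism (because $\Hom(\mu_{\infty},\bbr)=0$ and $\Ext(\mu_{\infty},\bbr)=0$) sending $\iota$ to the class of the exponential sequence, and conclude by a diagram chase that $X_{\Fbar}=r^{-1}(\iota)=\delta^{-1}(\Ext_{\exp}(\Fbar^{\times},\bbz))$. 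You simply spell out the chase that the paper compresses into the phrase ``the exactness and commutativity of (5.1) imply.''
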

\begin{proof}
	The middle column in (\ref{eqn:HugeDiagramReal}) can be rewritten as
	\begin{equation*}
	0\to (\Fbar^{\times }_{\mathrm{tf}})^{\vee }\to (\Fbar^{\times })^{\vee }\to\mu_{\infty }^{\vee}\to 0,
	\end{equation*}
	and $X_F$ is equal to the preimage of the inclusion $\iota\in\Hom (\mu_{\infty },\bbs^1)=\mu_{\infty }^{\vee }$. The horizontal map $(\Fbar^{\times })^{\vee }=\Hom (\Fbar^{\times },\bbs^1)\to\Ext (\Fbar^{\times },\bbz )$ occurring in (\ref{eqn:HugeDiagramReal}) is precisely the one taking a point in $(\Fbar^{\times })^{\vee }$ to its path component. The exactness and commutativity of~(\ref{eqn:HugeDiagramReal}) imply that $X_F$ can equally be described as the preimage of the exponential sequence
	\begin{equation*}
	[0\to\bbz \to\bbq \to\mu_{\infty }\to 0]\in\Ext (\mu_{\infty },\bbz )
	\end{equation*}
	in $(\Fbar^{\times })^{\vee }$.
\end{proof}
\begin{Corollary}\label{Cor:PiZeroPathOfXFbar}
	There is a canonical $\Gal (\Fbar /F)$-equivariant bijection between the sets $\pizeropath (X_{\Fbar})$ and $\Ext_{\exp }(\Fbar^{\times },\bbz )$.\hfill $\square$
\end{Corollary}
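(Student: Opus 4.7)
The plan is to observe that essentially all the work has been done by the preceding proposition; what remains is to track the $\Gal(\Fbar/F)$-actions through the identifications.

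First, recall that by Proposition~\ref{Prop:PiZeroPathPontryaginDual} the connecting homomorphism $\delta$ of the exponential sequence induces a bijection
\[
\pizeropath((\Fbar^\times)^\vee) \overset{\cong}{\to} \Ext(\Fbar^\times,\bbz ).
\]
The preceding proposition identifies $X_{\Fbar}$ with the union of those path components of $(\Fbar^\times)^\vee$ whose image under $\delta$ lies in $\Ext_{\exp}(\Fbar^\times,\bbz)$. Restricting $\delta$ therefore gives the claimed set-theoretic bijection
\[
\pizeropath(X_{\Fbar}) \overset{\cong}{\to} \Ext_{\exp}(\Fbar^\times,\bbz ).
\]

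The remaining content is $\Gal(\Fbar/F)$-equivariance. The left-hand side carries the action induced by $\sigma\cdot\chi = \chi\circ\sigma^{-1}$ on $(\Fbar^\times)^\vee$, and the right-hand side carries the action by pullback along $\sigma^{-1}\colon\Fbar^\times\to\Fbar^\times$ on the first argument of $\Ext$. Both actions are functorial in $\Fbar^\times$ in the first variable, and the connecting homomorphism $\delta$ arising from applying $\Hom(\Fbar^\times,-)$ to $0\to\bbz\to\bbr\to\bbs^1\to 0$ is natural in $\Fbar^\times$; hence $\delta$ is automatically Galois-equivariant, and so is its restriction to $X_{\Fbar}$.

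The only point that needs a brief check is that $\Ext_{\exp}(\Fbar^\times,\bbz)$ is indeed a Galois-stable subset of $\Ext(\Fbar^\times,\bbz)$, so that the action is defined on the right-hand side. By construction, $\Ext_{\exp}(\Fbar^\times,\bbz)$ is the preimage under the restriction map $\Ext(\Fbar^\times,\bbz)\to\Ext(\mu_\infty,\bbz)$ of the class $[0\to\bbz\to\bbq\to\mu_\infty\to 0]$; this map is Galois-equivariant by naturality, so Galois-stability of $\Ext_{\exp}$ reduces to the observation that $\Gal(\Fbar/F)$ fixes the distinguished class in $\Ext(\mu_\infty,\bbz)$. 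But this is automatic: since $F\supset\bbq(\zeta_\infty)$ by hypothesis, the Galois action on $\mu_\infty$ is trivial, and hence acts trivially on all of $\Ext(\mu_\infty,\bbz)$. I expect no real obstacle here; the only thing to be careful about is bookkeeping of the action conventions ($\sigma$ versus $\sigma^{-1}$), which is why it is useful to remember that $F$ already contains $\bbq(\zeta_\infty)$ so that the subtle twist by the cyclotomic character does not appear.
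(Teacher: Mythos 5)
Your argument is correct and matches what the paper leaves implicit (the corollary is marked $\square$, i.e.\ an immediate consequence of the preceding proposition): you restrict the bijection of Proposition~\ref{Prop:PiZeroPathPontryaginDual} to the Galois-stable union of path components identified in the proposition, and equivariance follows from naturality of the connecting homomorphism in the first $\Hom$-variable. Your extra check that $\Ext_{\exp}(\Fbar^{\times},\bbz)$ is Galois-stable --- using that $\Gal(\Fbar/F)$ acts trivially on $\mu_{\infty}$ and hence on $\Ext(\mu_{\infty},\bbz)$ --- is a reasonable and correct piece of bookkeeping that the paper takes for granted.
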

By Theorem~\ref{Thm:CompatibilityBetweenXFSpaceAndXFScheme}.(i) there is then also a $\Gal (\Fbar /F)$-equivariant bijection between $\pizeropath (\mathcal{X}_{\Fbar }(\bbc ))$ and $\Ext_{\exp }(\Fbar^{\times },\bbz )$.

\subsection{Multiplicatively free fields}\label{ssn:MultiplicativelyFreeFields} We will now investigate conditions for $X_F$, and therefore also $\mathcal{X}_F(\bbc )$, to be path-connected. First we note that for a large class of fields it cannot be path-connected.
\begin{Proposition}\label{Prop:ManyRootsUncountablyManyComp}
	Let $F\supseteq\bbq (\zeta_{\infty })$ be a field, and assume that there exists an element $\alpha\in F^{\times }$ which is not a root of unity, such that for infinitely many $n\in\bbn$ there exists an $n$-th root of $\alpha$ in~$F$.
	
	Then $X_F$ has uncountably many path components.
\end{Proposition}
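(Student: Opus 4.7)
My plan is to reduce the claim to showing that a certain Ext group is uncountable, by exhibiting a continuous surjection from $X_F$ onto a Pontryagin-dual-type torsor. Restriction of characters along $F^\times\hookrightarrow\Fbar^\times$ gives a $\Gal(\Fbar/F)$-invariant continuous map $X_{\Fbar}\to Y$, where
\[
Y \defined \{\chi\in\Hom(F^\times,\bbs^1) \mid \chi|_{\mu_\infty}=\iota\},
\]
and hence induces a continuous map $X_F\to Y$. This map is surjective because $\bbs^1$ is divisible, so every $\chi\in Y$ extends to a character of $\Fbar^\times$. Since a continuous surjection is surjective on $\pizeropath$, it suffices to show that $Y$ has uncountably many path components. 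Now $Y$ is a nonempty torsor under the compact group $(F^\times/\mu_\infty)^\vee$ via pointwise multiplication, so after choosing a basepoint one obtains a homeomorphism $Y\cong (F^\times/\mu_\infty)^\vee$; by Proposition~\ref{Prop:PiZeroPathPontryaginDual}, $\pizeropath(Y)\cong\Ext(F^\times/\mu_\infty,\bbz)$, and the task becomes to show this Ext group is uncountable.

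To exploit the hypothesis, choose a chain $1=n_1\mid n_2\mid n_3\mid\cdots$ with $n_i\to\infty$ and $n_i$-th roots $\alpha_{n_i}\in F$ of $\alpha$. Since $F^\times/\mu_\infty$ is torsion-free it admits unique $n_i$-th roots, so automatically $\bar\alpha_{n_{i+1}}^{n_{i+1}/n_i}=\bar\alpha_{n_i}$, and $1/n_i\mapsto\bar\alpha_{n_i}$ defines an injection of the subgroup $D\defined\bigcup_i\tfrac{1}{n_i}\bbz\subset\bbq$ into $F^\times/\mu_\infty$. Because $\Ext^2=0$ on abelian groups, pulling back extensions along $D\hookrightarrow F^\times/\mu_\infty$ yields a surjection $\Ext(F^\times/\mu_\infty,\bbz)\twoheadrightarrow\Ext(D,\bbz)$, so it is enough to show $\Ext(D,\bbz)$ is uncountable.

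Finally, $\Hom(D,\bbz)=0$ (any such $\phi$ satisfies $n_i\mid\phi(1)$ for every $i$, forcing $\phi=0$), so the long exact sequence associated with $0\to\bbz\to D\to D/\bbz\to 0$ gives a short exact sequence $0\to\bbz\to\Ext(D/\bbz,\bbz)\to\Ext(D,\bbz)\to 0$. Since $D/\bbz=\varinjlim\bbz/n_i\bbz$ is a torsion group, the long exact sequence arising from $0\to\bbz\to\bbr\to\bbs^1\to 0$, together with the fact that $\bbr$ is injective as an abelian group, identifies $\Ext(D/\bbz,\bbz)\cong\Hom(D/\bbz,\bbs^1)=\varprojlim_i\bbz/n_i\bbz$. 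Because $n_i\to\infty$, this inverse limit is an infinite profinite group and therefore uncountable; consequently so is its quotient $\Ext(D,\bbz)$ by the countable subgroup $\bbz$. The only non-formal step is the construction of the subgroup $D$; everything else is Pontryagin duality and routine homological algebra.
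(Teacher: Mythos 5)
Your proof is correct and pursues the same underlying strategy as the paper: restrict characters to obtain a continuous surjection from $X_F$ onto (a translate of) a Pontryagin dual, and then show that the relevant $\Ext(-,\bbz)$ group is uncountable by realising it as an infinite profinite group. The details of the reduction and the $\Ext$ computation differ, though. The paper restricts along the smallest \emph{saturated} subgroup $M\subset F^\times$ containing $\alpha$, so that the various roots of $\alpha$ lie in $M$ automatically; you restrict along all of $F^\times$ and then pull back along a rank-one subgroup $D=\bigcup_i\tfrac{1}{n_i}\bbz\hookrightarrow F^\times/\mu_\infty$. Your chain $1=n_1\mid n_2\mid\cdots$ with $n_i\to\infty$ does exist, but only because the set $\{n:\alpha\in (F^\times)^n\}$ is closed under least common multiples (one checks $nG\cap mG=\mathrm{lcm}(n,m)\,G$ for any abelian group $G$ by localising at each prime); this is a fact worth stating explicitly, since an infinite divisor-closed set such as $\{1\}\cup\{\text{primes}\}$ admits no unbounded divisibility chain, so the lcm-closure is genuinely doing work that the paper's saturation device handles silently. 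Finally, to see that $\Ext(D/\bbz,\bbz)$ is uncountable you apply the exponential sequence $0\to\bbz\to\bbr\to\bbs^1\to 0$ to identify it with $(D/\bbz)^\vee=\varprojlim_i\bbz/n_i\bbz$, whereas the paper writes $M/\bbz$ as a filtered colimit of finite cyclic groups and invokes a $\varprojlim$-spectral sequence with Mittag--Leffler vanishing. Your route is a touch more economical at that step, reusing the Pontryagin-duality computation already set up for Proposition~\ref{Prop:PiZeroPathPontryaginDual}.
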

For example, $X_F$ has uncountably many path components for $F=\bbq (\zeta_{\infty },p^{1/\infty })$ with $p$ a rational prime.
\begin{proof}
	Let $M\subset F^{\times }$ be the smallest saturated subgroup of $F^{\times }$ containing $\alpha$; then $\mu_{\infty }\subset M$, and $M_{\mathrm{tf}}=M/\mu_{\infty }$ is a torsion-free abelian group of rank one, hence it can be embedded into the additive group of~$\bbq$. By construction it has unbounded denominators, i.e.\ it is not contained in $\frac 1n\bbz$ for any $n\in\bbn$.
	
	The inclusion $M\hookrightarrow \Fbar^{\times }$ defines a continuous surjection $X_{\Fbar }\to\Hom_{\exp }(M,\bbs^1)$ which is clearly $\Gal (\Fbar /F)$-equivariant, hence we obtain a continuous surjection $X_{\Fbar }\to\Hom_{\exp }(M,\bbs^1)$, where the target is (non-canonically) homeomorphic to $M^{\vee }$. We therefore obtain a surjection $\pizeropath (X_F)\to\pizeropath (M^{\vee })$.
	
	By Proposition~\ref{Prop:PiZeroPathPontryaginDual} there is a bijection $\pizeropath (M^{\vee })\cong\Ext (M,\bbz )$. Hence we need to show that $\Ext (M,\bbz )$ is uncountable.
	
	We may assume that $\bbz\subset M$. Then there is a short exact sequence
	\begin{equation*}
	0\to\bbz \to M\to M/\bbz \to 0
	\end{equation*}
	and hence a long exact sequence
	\begin{equation}\label{eqn:SESUncountableExt}
	\underbrace{\Hom (M,\bbz)}_{=0}\to\underbrace{\Hom (\bbz ,\bbz )}_{=\bbz }\to\Ext (M/\bbz ,\bbz )\to\Ext (M,\bbz )\to\underbrace{\Ext (\bbz ,\bbz)}_{=0}.
	\end{equation}
	Letting $\mathfrak{N}$ be the set of cyclic subgroups $N\subset M$ containing $\bbz$ we can then write $M=\varinjlim_{N\in \mathfrak{N}}N$, hence we obtain a spectral sequence with
	\begin{equation*}
	E_2^{p,q}=\mathrm{R}^p\varprojlim_{N\in\mathfrak{N}}\Ext^q(N/\bbz ,\bbz )\Rightarrow\Ext^{p+q}(M/\bbz ,\bbz) .
	\end{equation*}
	Since the $\Ext^q(N,\bbz )$ satisfy a Mittag-Leffler condition the higher limits vanish and the spectral sequence degenerates at $E_2$. We therefore obtain an isomorphism
	\begin{equation*}
	\Ext (M/\bbz ,\bbz )\cong\varprojlim_{N\in\mathfrak{N}}\Ext (N/\bbz ,\bbz ).
	\end{equation*}
	This is an inverse limit over infinitely many finite groups with surjective transition maps, hence an uncountable profinite group. By (\ref{eqn:SESUncountableExt}) $\Ext (M,\bbz )\cong\pizeropath (M^{\vee})$ must then also be uncountable.
\end{proof}
Here is a class of fields where an $\alpha$ as in Proposition~\ref{Prop:ManyRootsUncountablyManyComp} cannot occur. 
\begin{Definition}
	Let $F$ be a field containing $\bbq (\zeta_{\infty })$. Then $F$ is called \emph{multiplicatively free} if $F^{\times}_{\mathrm{tf}}=F^{\times }/\mu_{\infty}$ is a free abelian group. $F$ is called \emph{stably multiplicatively free} if every finite extension of $F$ is multiplicatively free.
\end{Definition}
\begin{Remark}
	\begin{altenumerate}
		\item It is unknown to the authors whether there exists a field which is multiplicatively free but not stably multiplicatively free.
		\item Recall that a subgroup $B$ of an abelian group $A$ is \emph{saturated} if whenever $a\in A$ and there is some $n\in\bbn$ with $na\in B$, then $a\in B$. By a result of Pontryagin \cite[Lemma~16]{MR1503168}, a torsion-free abelian group $A$ is free if and only if every finitely generated subgroup of $A$ is contained in a saturated finitely generated subgroup of~$A$. Hence a field $F\supseteq\bbq (\zeta_{\infty })$ is multiplicatively free if and only if every finite subset of $F^{\times }$ is contained in a saturated subgroup of $F^{\times }$ generated (as a group) by all roots of unity and possibly finitely many additional elements.
		\item The condition in Proposition~\ref{Prop:ManyRootsUncountablyManyComp} and the property of being multiplicatively free are mutually exclusive. It is again unknown to the authors whether always one or the other holds. For general abelian torsion-free groups, not necessarily isomorphic to $F^{\times}_{\mathrm{tf}}$ for a field $F$, both can be false.
		
		More precisely, there exists an abelian group $A$ with the following properties: it is  torsion-free; it has rank two (i.e., $A\hookrightarrow A\otimes_{\bbz }\bbq\cong\bbq^2$); for any $a\in A\smallsetminus\{ 0\}$ there are only finitely many $n\in\bbn$ for which there exists $b\in A$ with $a=nb$; it is not free, in fact, every rank one quotient of $A$ is divisible. This group $A$ is constructed in \cite[Lemma~2]{MR0289476}.\footnote{Here is the construction. Let $\varphi\in\End (\bbq /\bbz )\cong\End (\hat{\bbz })\cong\hat{\bbz }\cong\prod_{p\text{ prime}}\bbz_p$ be such that the component $\varphi_p\in\bbz_p$ at each $p$ is transcendental over~$\bbq$. Then we set $A=\{ (a,b)\in\bbq^2\mid \varphi (a\bmod\bbz )=b\bmod z \}$.} 
	\end{altenumerate}
\end{Remark}
\begin{Proposition}
	Let $F$ be an algebraic extension of $\bbq (\zeta_{\infty })$ which can be written as an abelian extension of a finite extension of~$\bbq$. Then $F$ is stably multiplicatively free.
	
	In particular, $\bbq (\zeta_{\infty })$ is stably multiplicatively free.
\end{Proposition}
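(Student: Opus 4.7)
The plan has two parts: a reduction from ``stably multiplicatively free'' to ``multiplicatively free'' applied to every field satisfying the hypothesis, followed by a verification of Pontryagin's criterion (as recalled in the remark preceding the proposition). For the reduction, given any finite extension $E/F$ with $E = F(\theta)$, I would choose a finite subextension $L\subset F$ of $F/k$ (where $k$ is a finite extension of $\bbq$ with $F/k$ abelian) containing the coefficients of the minimal polynomial of $\theta$ over $F$, and set $k' = L(\theta)$. Then $k'/\bbq$ is finite, $E = F\cdot k'$, $k'\cap F = L$ (since $[k':L] = [E:F]$), and the Galois-theoretic isomorphism $\Gal(E/k')\cong \Gal(F/L)$ makes $E/k'$ abelian as a subextension of $F/k$. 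Since $E\supset F\supset \bbq(\zeta_{\infty})$ and $E/\bbq(\zeta_{\infty})$ is algebraic, $E$ again satisfies the hypotheses of the proposition, and stable multiplicative freeness reduces to multiplicative freeness applied to each such $F$.

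Next, since $F\subset \qbar$ is countable, Pontryagin's criterion reduces freeness of $F^\times/\mu_{\infty}$ to showing that every finite subset of $F^\times$ lies in a finitely generated saturated subgroup. Given $\alpha_1,\ldots,\alpha_r\in F^\times$, I would pick a finite subextension $L_0\subset F$ abelian over $k$ containing all the $\alpha_i$. The group $L_0^\times/\mu(L_0)$ is free abelian: the Dirichlet unit theorem makes $\mathcal{O}_{L_0}^\times/\mu(L_0)$ free of finite rank, $L_0^\times/\mathcal{O}_{L_0}^\times$ embeds into the free abelian group of fractional divisors and is therefore also free, and an extension of two free abelian groups is free. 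Lifting the saturation of $\langle \alpha_1,\ldots,\alpha_r\rangle$ in $L_0^\times/\mu(L_0)$ back to a subgroup $S\subset L_0^\times$, let $\tilde M\subset F^\times$ be the saturation of $S\cdot \mu_{\infty}$ in $F^\times$. Then $\tilde M/\mu_{\infty}$ is torsion-free of rank $\rho\le r$, and it is finitely generated if and only if the torsion quotient $\tilde M/(S\cdot \mu_{\infty})\subset (\bbq/\bbz)^\rho$ is finite.

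The technical heart, and the step I expect to be hardest, is this finiteness: for each $\alpha\in L_0^\times$ of infinite order modulo $\mu(L_0)$, the set $\{m\ge 1:\alpha^{1/m}\in F\}$ must be bounded, uniformly in $\alpha\in S$. Whenever $\alpha^{1/m}\in F$, the extension $L_0(\mu_m,\alpha^{1/m})/k$ is abelian as a subextension of $F/k$, and Kummer theory over $L_0(\mu_m)$ translates abelianness over $k$ into a Schinzel-type relation of the form $\sigma(\alpha)/\alpha\in L_0(\mu_m)^{\times m}$ for every $\sigma\in\Gal(L_0(\mu_m)/k)$. A local analysis at the finitely many primes of $L_0$ in the support of the divisor of $\alpha$, combined with a bound on the ramification indices of abelian extensions of $L_0$ unramified outside a fixed finite set, then confines $m$ to a finite set depending only on $L_0/k$ and on that support. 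Uniformity over $\alpha\in S$ follows because $S$ has finitely many generators sharing a common finite support. The delicate point is accommodating primes at which $F/k$ is infinitely ramified --- for example the primes above $p$ in $\bbq(\zeta_{p^{\infty}})$ --- where the admissible denominators form a very specific arithmetic progression (essentially powers of $p$ times a residue-field-controlled factor); this still yields a uniform bound on $m$ but needs a careful case analysis. Once this finiteness is in hand, $\tilde M/\mu_{\infty}$ is finitely generated, Pontryagin's criterion applies, and $F^\times/\mu_{\infty}$ is free; the ``in particular'' for $\bbq(\zeta_{\infty})$ is then the special case $k=\bbq$, $F=\bbq(\zeta_{\infty})$.
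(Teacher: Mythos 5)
Your reduction from ``stably multiplicatively free'' to ``multiplicatively free'' is sound and essentially the paper's: the paper, given $E=F(\alpha)$, simply notes that $E/K(\alpha)$ is abelian whenever $F/K$ is; your version with $k'=L(\theta)$ is a slightly more elaborate way of saying the same thing. Your framing via Pontryagin's criterion is also legitimate (and you correctly invoke the countability of $F$, which Pontryagin's criterion needs, although the paper's remark omits that hypothesis).

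The divergence is in the main step. The paper does not try to prove anything arithmetic here: it cites May's theorem \cite{MR0258786}, that $\mathfrak{o}_{F,S}^\times/\mu_\infty$ is free abelian whenever $F$ is an abelian extension of a number field, together with the accompanying Lemma in the same paper to glue the free pieces $\mathfrak{o}_{F,S}^\times/\mu_\infty$ into a free group $F^\times/\mu_\infty$. You, by contrast, try to establish the key finiteness (boundedness of denominators of a fixed $\alpha$ in $F$) directly, via Schinzel's abelianness criterion and local ramification considerations. That is in effect an attempt to reprove the arithmetic core of May's theorem from scratch, and the sketch does not close the gap.

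Concretely: the assertion that there is ``a bound on the ramification indices of abelian extensions of $L_0$ unramified outside a fixed finite set'' is false as stated --- the tower $\bbq(\zeta_{p^n})/\bbq$ is abelian, unramified outside $p$, and has ramification index $\varphi(p^n)\to\infty$. You acknowledge this wild phenomenon, but then the sentence ``the admissible denominators form a very specific arithmetic progression (essentially powers of $p$ times a residue-field-controlled factor); this still yields a uniform bound on $m$'' is internally inconsistent: if the set of admissible $m$ genuinely contained all powers of $p$ up to a bounded factor, there would be no bound, and the proposition would be false. What actually saves the day is a nontrivial Kummer-theoretic/Schinzel-type argument (roughly: the condition that $L_0(\zeta_m,\alpha^{1/m})/k$ be abelian forces a relation of the form $\alpha^{w}\in (L_0(\zeta_m)^\times)^m$ with $w$ controlled by the roots of unity already in the base, and this is incompatible with arbitrarily large $p$-power $m$ for an $\alpha$ of infinite multiplicative order) --- and that argument is precisely what is missing. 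As it stands, the proposal asserts the conclusion of the hard step rather than proving it. Either invoke May's theorem as a black box, as the paper does, or supply the full Kummer/local analysis.
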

\begin{proof}
	We first show that $F$ is multiplicatively free.
	
	For a finite set $S$ of rational primes let $\mathfrak{o}_{F,S}$ be the ring of $S$-integers in~$F$. By a result of May \cite[Theorem]{MR0258786} the group $\mathfrak{o}_{F,S}^{\times }/\mu_{\infty }$ is then free abelian. Note that the free abelian subgroups $\mathfrak{o}_{F,S}^{\times }/\mu_{\infty }$ are saturated in $F^{\times }/\mu_{\infty }$, and every finitely generated subgroup of $F^{\times }/\mu_{\infty }$ is contained in one of them. By \cite[Lemma]{MR0258786} $F^{\times }/\mu_{\infty }$ must then be free abelian itself. Hence $F$ is multiplicatively free.
	
	Now let $E/F$ be a finite extension; we can write $E=F(\alpha )$ for some $\alpha\in E$. By assumption there exists a subfield $K\subset F$ which is finite over $\bbq$ such that the extension $F/K$ is abelian. Then $E/K(\alpha )$ is abelian, too. By what we have just shown $E$ is therefore multiplicatively free.
\end{proof}

\subsubsection*{The Kummer pairing} For a field $F$ containing $\bbq (\zeta_{\infty })$ and an algebraic closure $\Fbar /F$, let $F^{\times}_{\mathrm{sat}}$ be the saturation of $F^{\times }$ in $\Fbar^{\times}$, i.e.\ the group of all $\alpha\in\Fbar^{\times }$ such that there exists some $n\in\bbn$ with $\alpha^n\in F^{\times }$. Note that $F_{\mathrm{sat}}^{\times }$ is divisible, hence $F_{\mathrm{sat}}^{\times }/\mu_{\infty }=(F_{\mathrm{sat}}^{\times })_{\mathrm{tf}}$ is a $\bbq$-vector space. Then there is a canonical biadditive pairing
\begin{equation*}
\langle\cdot ,\cdot\rangle\colon \Gal (\Fbar /F)\times F_{\mathrm{sat}}^{\times}\to\mu_{\infty },\quad (\sigma ,\alpha)\mapsto \langle\sigma ,\alpha\rangle =\frac{\sigma (\alpha )}{\alpha }.
\end{equation*}
It clearly factors through $\Gal (F^{\mathrm{ab}}/F)\times F_{\mathrm{sat}}^{\times }/F^{\times }$. By Kummer theory, cf.\ \cite[section~VI.8]{MR1878556}, the maximal abelian extension $F^{\mathrm{ab}}$ is obtained by adjoining all elements of $F_{\mathrm{sat}}^{\times }$ to~$F$, and the resulting homomorphism
\begin{equation}\label{eqn:KummerIsomorphismGlobal}
\kappa\colon\Gal (F(F_{\mathrm{sat}}^{\times})/F)\to\Hom (F_{\mathrm{sat}}^{\times}/F^{\times},\mu_{\infty }),\quad\sigma\mapsto\langle\sigma ,\cdot\rangle
\end{equation}
is an isomorphism.

Now assume in addition that $F$ is stably multiplicatively free. We wish to understand the action of $\Gal (\Fbar /F)$ on $\Ext_{\exp}(\Fbar^{\times},\bbz )$.

We begin by considering a saturated subgroup $V\subset\Fbar^{\times }$ such that $\mu_{\infty }\subseteq V$, $V_{\mathrm{tf}}=V/\mu_{\infty }$ is a $\bbq$-vector space of finite rank, $V$ is stable under the Galois action, and $V$ is the saturation of $\varLambda =V\cap F^{\times }$. Note that then $\varLambda_{\mathrm{tf}}=\varLambda /\mu_{\infty }$ is a $\bbz$-lattice in~$V$.

Then the Galois action on $V$ admits the following description: for $\alpha\in V$ and $\sigma\in\Gal (\Fbar /F)$ we obtain
\begin{equation*}
\sigma (\alpha )=\langle\sigma ,\alpha\rangle\cdot\alpha .
\end{equation*}
Then there is also a natural action of $\Gal (\Fbar /F)$ on $\Hom (V,\mu_{\infty })$, namely $\sigma (\chi )=\chi\circ\sigma^{-1}$, that is,
\begin{equation*}
\sigma (\chi)(\alpha)=\chi (\sigma^{-1}\alpha )=\chi (\langle\sigma^{-1},\alpha\rangle\cdot\alpha)=\chi (\langle \sigma ,\alpha\rangle )^{-1}\cdot\chi (\alpha).
\end{equation*}
Hence for the subset
\begin{equation*}
\Hom_{\exp }(V,\mu_{\infty })=\{ \chi\in\Hom (V,\mu_{\infty })\mid\chi |_{\mu_{\infty}}=\mathrm{id}_{\mu_{\infty}} \},
\end{equation*}
which is a translate of the subgroup $\Hom (V_{\mathrm{tf}},\mu_{\infty })\subset\Hom (V,\mu_{\infty })$, we obtain a particularly simple description of the Galois action: for $\chi\in\Hom_{\exp }(V,\mu_{\infty })$ and $\sigma\in\Gal (\Fbar /F)$ we have
\begin{equation}\label{eqn:GaloisActionOnHomExpVMuInfty}
\sigma (\chi)=\langle\sigma ,\cdot\rangle^{-1}\cdot\chi =\kappa_V (\sigma)^{-1}\cdot\chi ,
\end{equation}
where $\kappa_V$ is the isomorphism $\Gal (F(V)/F)\to\Hom (V/\varLambda ,\mu_{\infty })$ induced by (\ref{eqn:KummerIsomorphismGlobal}).

\begin{Lemma}\label{Lem:OpenSGWhereGaloisActsUnipotentlyThroughKummer}
	Let $F\supseteq\bbq (\zeta_{\infty })$ be a stably multiplicatively free field, let $\Fbar /F$ be an algebraic closure, and let $V\subset\Fbar^{\times }$ be a Galois-stable saturated subgroup of finite rank.
	
	Then there exist
	\begin{itemize}
		\item an open subgroup $H\subseteq\Gal (\Fbar /F)$,
		\item an open compact subgroup $L\subseteq\Hom (V_{\mathrm{tf}},\mu_{\infty })\cong\Hom(V,\adf )$ and
		\item a surjective continuous group homomorphism $\kappa\colon H\to L$
	\end{itemize}
	such that $H\subseteq\Gal (\Fbar /F)$ operates on $\Hom_{\exp }(V,\mu_{\infty })$ by $\tau (\chi )=\chi +\kappa (\tau )$ (where the group structure on $\mu_{\infty }$ is written additively).
\end{Lemma}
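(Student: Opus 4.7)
The strategy is to pass to a finite extension $F' \subseteq \Fbar$ of $F$ that brings $V$ squarely into the Kummer-theoretic framework assembled just before the lemma. Since $V_{\mathrm{tf}}$ is a $\bbq$-vector space of finite dimension $n$, pick $v_1, \ldots, v_n \in V$ whose images form a $\bbq$-basis and set $F' = F(v_1, \ldots, v_n)$; then $H \defined \Gal(\Fbar/F')$ is an open finite-index subgroup of $\Gal(\Fbar/F)$, and $\Lambda' \defined V \cap (F')^{\times}$ contains $\mu_{\infty}$ together with the $v_i$. The crucial input of stable multiplicative freeness comes in here: it guarantees that $F'$ is multiplicatively free, so $(F')^{\times}/\mu_{\infty}$ is free abelian, and therefore its subgroup $\Lambda'_{\mathrm{tf}} \defined \Lambda'/\mu_{\infty}$ is itself free abelian. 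Having rank at least $n$ by virtue of the images of the $v_i$ and at most $n$ because it embeds into $V_{\mathrm{tf}} \cong \bbq^n$, it is isomorphic to $\bbz^n$ and realises a $\bbz$-lattice of full rank inside $V_{\mathrm{tf}}$. In particular, $V$ is the saturation of $\Lambda'$ in $\Fbar^{\times}$.

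With $(F', \Lambda', V)$ now satisfying all the hypotheses of the paragraph preceding the lemma, Kummer theory (cf.~\ref{eqn:KummerIsomorphismGlobal}) furnishes a topological isomorphism $\kappa_V \colon \Gal(F'(V)/F') \xrightarrow{\sim} \Hom(V/\Lambda', \mu_{\infty})$ between profinite abelian groups. I would then define $\kappa\colon H \to L$ as the composition of the continuous surjection $H \twoheadrightarrow \Gal(F'(V)/F')$ with $\tau \mapsto \kappa_V(\tau)^{-1}$, where $L \defined \Hom(V/\Lambda', \mu_{\infty})$; this is a continuous surjective group homomorphism because the target is abelian. The subgroup inclusion $L \hookrightarrow \Hom(V_{\mathrm{tf}}, \mu_{\infty})$ induced by $V_{\mathrm{tf}} \twoheadrightarrow V_{\mathrm{tf}}/\Lambda'_{\mathrm{tf}}$ is identified, via $V_{\mathrm{tf}}/\Lambda'_{\mathrm{tf}} \cong (\bbq/\bbz)^n$ and $\Hom(\bbq, \bbq/\bbz) \cong \adf$, with the inclusion of $\hat{\bbz}^n$ into $\adf^n$, so $L$ is indeed open and compact inside $\Hom(V_{\mathrm{tf}}, \mu_{\infty})$.

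The final step is to check the action formula, which is immediate from~\ref{eqn:GaloisActionOnHomExpVMuInfty} applied over $F'$: for $\tau \in H$ and $\chi \in \Hom_{\exp}(V, \mu_{\infty})$ one has $\tau(\chi) = \kappa_V(\tau)^{-1} \cdot \chi$, which in additive notation on $\mu_{\infty} = \bbq/\bbz$ becomes $\tau(\chi) = \chi + \kappa(\tau)$, as required. The only substantive step is the first paragraph: without stable multiplicative freeness, $\Lambda'_{\mathrm{tf}}$ could be an infinitely generated, rationally full-rank subgroup of $V_{\mathrm{tf}}$, which would make $V/\Lambda'$ too large as a torsion group and prevent $L = \Hom(V/\Lambda', \mu_{\infty})$ from being open (or even compact) inside $\Hom(V_{\mathrm{tf}}, \mu_{\infty}) \cong \adf^n$.
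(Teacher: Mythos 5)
Your proof is correct and follows essentially the same route as the paper's: pass to the finite extension generated by a rational basis of $V_{\mathrm{tf}}$, invoke stable multiplicative freeness to make $\Lambda = V\cap E^\times$ a full $\bbz$-lattice, take $L=\Hom(V/\Lambda,\mu_\infty)$, and read the action off (\ref{eqn:GaloisActionOnHomExpVMuInfty}). You spell out two points the paper leaves implicit --- the argument that $\Lambda_{\mathrm{tf}}$ has rank exactly $n$ (subgroup of a free abelian group, bounded above and below by $n$), and the sign $\kappa(\tau)=\kappa_V(\tau)^{-1}$ needed to match the stated formula --- both of which are accurate.
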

\begin{proof}
	Let $B\subset V$ be a finite set which maps to a $\bbq$-basis of $V_{\mathrm{tf}}$, and let $E$ be the subfield of $\Fbar$ generated by~$B$. Then $E/F$ is a finite extension, hence $E$ is multiplicatively free. Let $\varLambda =V\cap E^{\times }$; this is a subgroup of $V$ containing $\mu_{\infty }$, and the quotient $\varLambda_{\mathrm{tf}}=\varLambda /\mu_{\infty }$ is a full $\bbz$-lattice in~$V_{\mathrm{tf}}$.
	
	We set $L=\Hom (V/\varLambda ,\mu_{\infty })\subset\Hom (V_{\mathrm{tf}},\mu_{\infty })$; under the isomorphism
	\begin{equation*}
	\Hom (V_{\mathrm{tf}},\mu_{\infty })\cong\Hom (V,\adf )\cong\Hom (\varLambda,\adf )
	\end{equation*}
	it corresponds to the subgroup $\Hom (\varLambda,\hat{\bbz })$, which is clearly open and compact.
	
	We then set $H=\Gal (\Fbar /E)$ and let
	$\kappa\colon\Gal (\Fbar /E)\to\Hom (V/\varLambda ,\mu_{\infty })$ be the map induced by the Kummer pairing, as in (\ref{eqn:KummerIsomorphismGlobal}). From (\ref{eqn:GaloisActionOnHomExpVMuInfty}) we see that the $H$-action on $\Hom_{\exp }(V,\mu_{\infty })$ is indeed as described.
\end{proof}
\begin{Lemma}\label{Lem:TransitiveGaloisActionOnHomExpVMuInfty}
	Let $F\supseteq\bbq (\zeta_{\infty })$ be a stably multiplicatively free field, let $\Fbar /F$ be an algebraic closure and let $V\subset\Fbar^{\times }$ be a Galois-stable saturated subgroup of finite rank.
	
	Then $\Gal (\Fbar /F)$ operates transitively on $\Ext_{\exp }(V,\bbz )$.
\end{Lemma}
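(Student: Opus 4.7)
The plan is to combine the Kummer-theoretic Lemma~\ref{Lem:OpenSGWhereGaloisActsUnipotentlyThroughKummer} with a direct $\Ext$-computation. First I would unravel $\Ext_{\exp}(V,\bbz)$ as a torsor: writing $V_{\mathrm{tf}}=V/\mu_{\infty}$, a chase through the diagram~(\ref{eqn:HugeDiagramReal}) shows that the connecting map $\delta$ restricts to a surjection $\delta\colon X_V=\Hom_{\exp}(V,\bbs^1)\twoheadrightarrow\Ext_{\exp}(V,\bbz)$ whose fibres are the orbits for translation by the image of $\Hom(V_{\mathrm{tf}},\bbr)$ in $\Hom(V_{\mathrm{tf}},\bbs^1)$. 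Consequently, $\Ext_{\exp}(V,\bbz)$ becomes a torsor under $\Ext(V_{\mathrm{tf}},\bbz)=\Hom(V_{\mathrm{tf}},\bbs^1)/\mathrm{image}(\Hom(V_{\mathrm{tf}},\bbr))$, and it is non-empty by the argument of Proposition~\ref{Prop:SimplePropertiesOfCHXF}.

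Next I would make the Galois action explicit via Kummer theory. Let $E/F$, $\varLambda=V\cap E^{\times}$ and $H=\Gal(\Fbar/E)$ be as in the proof of Lemma~\ref{Lem:OpenSGWhereGaloisActsUnipotentlyThroughKummer}, so that $\langle\sigma,\alpha\rangle\in\mu_{\infty}$ for every $\sigma\in H$ and every $\alpha\in V$, and the Kummer pairing supplies a surjection $\kappa\colon H\twoheadrightarrow L=\Hom(V_{\mathrm{tf}}/\varLambda_{\mathrm{tf}},\mu_{\infty})$. For $\sigma\in H$ and $\chi\in X_V$ the formula $\sigma(\chi)(\alpha)=\chi(\alpha)\cdot\chi(\langle\sigma^{-1},\alpha\rangle)$ simplifies to $\chi(\alpha)\cdot\langle\sigma^{-1},\alpha\rangle$, because $\chi|_{\mu_{\infty}}=\iota$. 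Hence $\sigma$ acts on the $\Hom(V_{\mathrm{tf}},\bbs^1)$-torsor $X_V$ by translation by an element of $L\subseteq\Hom(V_{\mathrm{tf}},\mu_{\infty})$, and the induced $H$-action on $\Ext_{\exp}(V,\bbz)$ is by translation through the composition
\[
H\xrightarrow{\kappa}L\hookrightarrow\Hom(V_{\mathrm{tf}},\mu_{\infty})\xrightarrow{\partial}\Ext(V_{\mathrm{tf}},\bbz),
\]
where $\partial$ is the connecting map arising from $0\to\bbz\to\bbq\to\mu_{\infty}\to 0$.

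The main obstacle is then to show that this composite $L\to\Ext(V_{\mathrm{tf}},\bbz)$ is surjective; once that is done, $H$, and a fortiori $\Gal(\Fbar/F)$, acts transitively on $\Ext_{\exp}(V,\bbz)$. From the long exact sequence of $0\to\bbz\to\bbq\to\mu_{\infty}\to 0$, and using $\Hom(V_{\mathrm{tf}},\bbz)=0$ and $\Ext(V_{\mathrm{tf}},\bbq)=0$ (both holding because $V_{\mathrm{tf}}$ is a $\bbq$-vector space and $\bbq$ is a divisible abelian group), one has $\ker\partial=\mathrm{image}(\Hom(V_{\mathrm{tf}},\bbq))$. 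It therefore suffices to prove $L+\Hom(V_{\mathrm{tf}},\bbq)=\Hom(V_{\mathrm{tf}},\mu_{\infty})$, and this is the step where the finite-rank hypothesis on $V$ is essential: any $\chi\in\Hom(V_{\mathrm{tf}},\mu_{\infty})$ restricts to a map on the lattice $\varLambda_{\mathrm{tf}}\cong\bbz^{r}$, whose finitely many values in $\bbq/\bbz$ can be lifted to $\bbq$ and then $\bbq$-linearly extended to a homomorphism $\tilde{\chi}\colon V_{\mathrm{tf}}\to\bbq$. By construction $\chi-(\tilde{\chi}\bmod\bbz)$ vanishes on $\varLambda_{\mathrm{tf}}$ and hence lies in $L$, which gives the desired decomposition of $\chi$ and completes the proof.
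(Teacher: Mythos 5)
Your proof is correct and follows essentially the same strategy as the paper's: set up $\Ext_{\exp}(V,\bbz)$ as an $\Ext(V_{\mathrm{tf}},\bbz)$-torsor via the connecting map, describe the Galois action through the Kummer surjection $\kappa\colon H\twoheadrightarrow L$ from Lemma~\ref{Lem:OpenSGWhereGaloisActsUnipotentlyThroughKummer}, and reduce to showing $L+\Hom(V_{\mathrm{tf}},\bbq)=\Hom(V_{\mathrm{tf}},\mu_{\infty})$. The only (mild) divergence is in proving this last identity: the paper appeals to $\Hom(V_{\mathrm{tf}},\bbq)$ being dense in $\Hom(V_{\mathrm{tf}},\mu_{\infty})\cong\Hom(V_{\mathrm{tf}},\adf)$ and $L$ being open, while you make the same fact explicit by lifting the values of $\chi$ on a $\bbz$-basis of $\varLambda_{\mathrm{tf}}$ to $\bbq$ and extending $\bbq$-linearly; the two arguments are interchangeable.
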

\begin{proof}
	Using the short exact sequences $0\to\bbz \to\bbq \to\mu_{\infty }\to 0$ and $0\to\mu_{\infty }\to V\to V_{\mathrm{tf}}\to 0$ we obtain a commutative diagram with exact rows and columns analogous to (\ref{eqn:HugeDiagramReal}):
	\begin{equation*}
	\xymatrix{
		& 0\ar[d] & 0\ar[d] & 0\ar[d] \\
		0\ar[r] & \Hom (V_{\mathrm{tf}},\bbq )\ar[r]\ar[d] & \Hom (V_{\mathrm{tf}},\mu_{\infty })\ar[r]\ar[d] &\Ext (V_{\mathrm{tf}},\bbz )\ar[r]\ar[d] & 0\\
		0\ar[r] & \Hom (V,\bbq )\ar[r]\ar[d] & \Hom (V,\mu_{\infty})\ar[r]\ar[d] &\Ext (V,\bbz )\ar[r]\ar[d] & 0\\
		& 0\ar[r] & \Hom (\mu_{\infty },\mu_{\infty})\ar[r]\ar[d] &\Ext (\mu_{\infty },\bbz )\ar[r]\ar[d] & 0.\\
		&  & 0 & 0
	}
	\end{equation*}
	Here $\Ext_{\exp }(V,\bbz )$ is the preimage of the exponential extension $\varepsilon_{\exp }\in\Ext (\mu_{\infty },\bbz )$ in $\Ext (V,\bbz )$, hence a translate of the subgroup $\Ext (V_{\mathrm{tf}},\bbz )$. It can therefore also be described as the quotient of $\Hom_{\exp }(V,\mu_{\infty })$, a translate of $\Hom (V_{\mathrm{tf}},\mu_{\infty})$ in $\Hom (V,\mu_{\infty } )$, by the subgroup $\Hom (V,\bbq )$.
	
	From Lemma~\ref{Lem:OpenSGWhereGaloisActsUnipotentlyThroughKummer} we see that there is an open subgroup of $\Hom (V_{\mathrm{tf}},\mu_{\infty })$ on whose translates in $\Hom_{\exp }(V,\mu_{\infty })$ a suitable open subgroup $H$ of the Galois group acts transitively. Since the subgroup $\Hom (V_{\mathrm{tf}},\bbq )$ is dense in $\Hom (V_{\mathrm{tf}},\mu_{\infty })\cong\Hom (V_{\mathrm{tf}},\adf )$ this implies that $H$, and therefore also $\Gal (\Fbar /F)$, operates transitively on $\Ext_{\exp }(V,\bbz )$.
\end{proof}
\begin{Lemma}\label{Lem:InductionStepInGaloisTransitivity}
	Let $V\subseteq W\subset\Fbar^{\times }$ be Galois-stable saturated subgroups of finite rank. Let also $\varepsilon_1,\varepsilon_2\in\Ext_{\exp }(\Fbar^{\times },\bbz )$, and let $\chi_1^{(V)},\chi_2^{(V)}\in\Hom (V,\mu_{\infty})$ satisfying the following conditions:
	\begin{enumerate}
		\item the connecting homomorphism $\delta\colon\Hom (V,\mu_{\infty})\to\Ext (V,\bbz )$ induced by the exponential sequence sends $\chi_i^{(V)}$ to $\varepsilon_i\rvert_V$, for $i=1,2$;
		\item $\chi_1^{(V)}$ and $\chi_2^{(V)}$ lie in the same $\Gal (\Fbar /F)$-orbit.
	\end{enumerate}
	Then there exist elements $\chi_1^{(W)},\chi_2^{(W)}\in\Hom (W,\mu_{\infty})$ such that
	\begin{enumerate}
	\setcounter{enumi}{2}
		\item the connecting homomorphism $\delta\colon\Hom (W,\mu_{\infty})\to\Ext (W,\bbz )$ sends $\chi_i^{(W)}$ to $\varepsilon_i\rvert_W$, for $i=1,2$;
		\item $\chi_i^{(W)}\rvert_V=\chi_i^{(V)}$, for $i=1,2$;
		\item $\chi_1^{(W)}$ and $\chi_2^{(W)}$ lie in the same $\Gal (\Fbar /F)$-orbit.
	\end{enumerate}
\end{Lemma}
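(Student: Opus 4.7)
The plan is to construct $\chi_1^{(W)},\chi_2^{(W)}\in\Hom(W,\mu_{\infty})$ in two stages: first produce lifts satisfying (iii) and (iv) by a diagram chase, then modify them via a Galois action to arrange (v), with Lemma~\ref{Lem:OpenSGWhereGaloisActsUnipotentlyThroughKummer} as the key input.

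For the first stage, apply $\Hom(-,\mu_{\infty})$ and $\Ext(-,\bbz )$ to the short exact sequence $0\to V\to W\to W/V\to 0$. Both yield short exact sequences (using that $\mu_{\infty}$ is divisible hence injective, and that $\Hom(V,\bbz )=0$), and the connecting map $\delta$ is compatible between them. Given $\chi_i^{(V)}$, lift it to any $\tilde{\chi}\in\Hom(W,\mu_{\infty})$; then $\delta(\tilde{\chi})-\varepsilon_i\rvert_W\in\Ext(W,\bbz )$ restricts to zero on $V$ by construction, hence lies in $\Ext(W/V,\bbz )$. Since $\delta\colon\Hom(W/V,\mu_{\infty})\to\Ext(W/V,\bbz )$ is surjective (because $\Ext(W/V,\bbq )=0$), subtracting a suitable preimage of $\delta(\tilde{\chi})-\varepsilon_i\rvert_W$ yields the required $\chi_i^{(W)}$ satisfying (iii) and~(iv).

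For the second stage, fix $\sigma_0\in\Gal(\Fbar /F)$ with $\sigma_0\chi_1^{(V)}=\chi_2^{(V)}$ (which exists by~(ii)), and seek $\tau\in\Gal(\Fbar /F)$ of the form $\tau=\sigma_0\tau'$ with $\tau'\in\Gal(\Fbar /F(V))$ such that $\chi_2^{(W)}\defined\tau\chi_1^{(W)}$ still satisfies~(iii); then (iv) holds automatically (as $\sigma_0\chi_1^{(V)}=\chi_2^{(V)}$ and $\tau'$ fixes $V$ pointwise), and (v) is built in. Condition (iii) for $\chi_2^{(W)}$ translates to $\tau'(\varepsilon_1\rvert_W)=\sigma_0^{-1}(\varepsilon_2\rvert_W)$ in $\Ext(W,\bbz )$; both sides restrict to $\varepsilon_1\rvert_V$ on $V$, so their difference lies in $\Ext(W/V,\bbz )$. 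Since Galois multiplies each element of $W$ by a root of unity in $V\supseteq\mu_{\infty}$, the action on $W/V$ is trivial, and hence the map $\tau'\mapsto\tau'(\varepsilon_1\rvert_W)-\varepsilon_1\rvert_W$ is a \emph{group homomorphism} $\Gal(\Fbar /F(V))\to\Ext(W/V,\bbz )$. A direct computation based on formula~(\ref{eqn:GaloisActionOnHomExpVMuInfty}) identifies it with $-\delta\circ\kappa_W$, where $\kappa_W$ denotes the Kummer pairing.

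The main obstacle is showing $\delta\circ\kappa_W$ is surjective. By Lemma~\ref{Lem:OpenSGWhereGaloisActsUnipotentlyThroughKummer} applied to $W$ and restricted to a suitable open subgroup of $\Gal(\Fbar /F(V))$, the image of $\kappa_W$ is a compact open subgroup of $\Hom(W/V,\mu_{\infty})$. Using the identifications $\Hom(W/V,\mu_{\infty})\cong\adf^d$ and $\Ext(W/V,\bbz )\cong \adf^d/\bbq^d$ with $d=\dim_{\bbq }(W_{\mathrm{tf}}/V_{\mathrm{tf}})$, the density of $\bbq^d$ in $\adf^d$ (weak approximation) implies that any compact open subgroup of $\adf^d$ surjects onto $\adf^d/\bbq^d$ under the quotient map. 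Hence there exists $\tau'$ with $\delta(\kappa_W(\tau'))=\varepsilon_1\rvert_W-\sigma_0^{-1}(\varepsilon_2\rvert_W)$ in $\Ext(W/V,\bbz )$, which completes the construction.
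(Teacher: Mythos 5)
Your argument follows essentially the same two-step strategy as the paper's: first lift the characters along the restriction map via a diagram chase, then adjust by a Galois element coming from a Kummer-type group so that (iii) holds while (iv), (v) persist. Packaging aside (the paper sets $\psi = \sigma(\chi_1^{(W)})$ and then adds $\alpha = \kappa(\tau)$, whereas you write $\chi_2^{(W)} = \sigma_0\tau'(\chi_1^{(W)})$ directly; these are literally the same once one unravels), the one place where you improve on the paper's exposition is that you make the density argument explicit: since $\Hom(W/V,\bbq)$ is dense in $\Hom(W/V,\adf)$, any compact open subgroup of $\Hom(W/V,\adf)$ surjects onto $\Ext(W/V,\bbz)\cong\adf^d/\bbq^d$. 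The paper uses this silently when it produces an $\alpha$ with prescribed $\delta(\alpha)$ and simultaneously lying in the image of $\kappa$.

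One step, however, is overstated as written. You claim that since ``Galois multiplies each element of $W$ by a root of unity in $V\supseteq\mu_{\infty}$,'' the action of $\Gal(\Fbar/F(V))$ on $W/V$ is trivial, so that $\tau'\mapsto\tau'(\varepsilon_1\rvert_W)-\varepsilon_1\rvert_W$ is a homomorphism on all of $\Gal(\Fbar/F(V))$. This premise is false in general: for $\tau'\in\Gal(\Fbar/F(V))$ and $w\in W$, the element $\tau'(w)/w$ lies in $W$ but need not lie in $V$. For instance, take $V=\mu_{\infty}$ (so $F(V)=F$) and let $W$ be the Galois-stable saturation of $\langle\alpha,\mu_{\infty}\rangle$ for $\alpha$ a root of $x^2-3x+1$; then the nontrivial element of $\Gal(F(\alpha)/F)$ sends $\alpha$ to $\alpha^{-1}$, which differs from $\alpha$ by a non-root-of-unity, so the action on $W/V$ is nontrivial. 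The correct statement — precisely what Lemma~\ref{Lem:OpenSGWhereGaloisActsUnipotentlyThroughKummer} supplies after adjoining a $\bbq$-basis of $W_{\mathrm{tf}}$ to $F$ — is that only a suitable \emph{open} subgroup $H\subseteq\Gal(\Fbar/F)$ acts on $W$ by root-of-unity multiplication, and on that subgroup the Kummer formula~(\ref{eqn:GaloisActionOnHomExpVMuInfty}) holds. You do perform exactly this restriction two sentences later, when invoking Lemma~\ref{Lem:OpenSGWhereGaloisActsUnipotentlyThroughKummer} to show $\kappa_W$ has compact open image; the homomorphism claim and the identification with $-\delta\circ\kappa_W$ should be made on $H\cap\Gal(\Fbar/F(V))$ from the start. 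With the quantifier placed correctly the argument is sound, since a compact open subgroup of $\Hom(W/V,\adf)$ still surjects onto $\Ext(W/V,\bbz)$.
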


\begin{proof}
	Again we contemplate a large commutative diagram, obtained from  $0\to V\to W\to W/V\to 0$ and the exponential sequence:
	\begin{equation*}
	\xymatrix{
		& 0\ar[d] & 0\ar[d] & 0\ar[d] \\
		0\ar[r] & \Hom (W/V,\bbq )\ar[r]\ar[d] & \Hom (W/V,\mu_{\infty})\ar[r]\ar[d] & \Ext (W/V,\bbz )\ar[r]\ar[d] & 0\\
		0\ar[r] & \Hom (W,\bbq )\ar[r]\ar[d] & \Hom (W,\mu_{\infty})\ar[r]\ar[d] & \Ext (W,\bbz )\ar[r]\ar[d] & 0\\
		0\ar[r] & \Hom (V,\bbq )\ar[r]\ar[d] & \Hom (V,\mu_{\infty})\ar[r]\ar[d] & \Ext (V,\bbz )\ar[r]\ar[d] & 0\\
		& 0 & 0 & 0
		}
	\end{equation*}
	First we consider the problem of lifting an individual character. So, let $\varepsilon\in\Ext (\Fbar^{\times },\bbz )$ and $\chi^{(V)}\in\Hom (V,\mu_{\infty })$ with $\delta (\chi^{(V)})=\varepsilon\rvert_V$. A short diagram chase then shows that there exists some $\chi^{(W)}\in\Hom (W,\mu_{\infty })$ with $\delta (\chi^{(W)})=\varepsilon\rvert_W$ and $\chi^{(W)}\rvert_V=\chi^{(V)}$, and that moreover the set of all such $\chi^{(W)}$ is a translate of $\Hom (W/V,\bbq )$ (which can be considered a subgroup of $\Hom (W,\mu_{\infty})$).
	
	With this preparation we find an element $\chi_1^{(W)}$ satisfying (iii) and (iv). We choose some $\sigma\in\Gal (\Fbar /F)$ with $\sigma (\chi_1^{(V)})=\chi_2^{(V)}$, and let $\psi =\sigma (\chi_1^{(V)})$. Then $\psi$ is our first approximation to $\chi_2^{(W)}$, and it clearly satisfies (iv) and (v), but not necessarily (iii).
	
	From Lemma~\ref{Lem:OpenSGWhereGaloisActsUnipotentlyThroughKummer} we deduce the existence of the following objects:
	\begin{itemize}
		\item a closed subgroup $H\subseteq\Gal (\Fbar /F)$,
		\item an open compact subgroup $L\subseteq\Hom (W/V,\mu_{\infty })\cong\Hom (W/V,\adf )$ and
		\item a surjective continuous group homomorphism $\kappa\colon H\to L$
	\end{itemize}
	such that on $\Hom_{\exp }(V,\mu_{\infty })$ the group $H\subseteq\Gal (\Fbar /F)$ operates by $\tau (\chi )=\chi +\kappa (\tau )$ (here we write the group structure in $\mu_{\infty}$ additively).
	
	Since $\varepsilon_2\rvert_W$ and $\delta (\psi)$ both restrict to $\varepsilon_2\rvert_V\in\Ext (V,\bbz )$ their difference lies in $\Ext (W/V,\bbz )$. Hence there exists an element $\alpha\in\Hom (W/V,\mu_{\infty })\cong\Hom (W/V,\adf )$ such that $\delta (\alpha )=\varepsilon_2\rvert_W-\delta (\psi)$. We let further $\tau\in H\subseteq\Gal (\Fbar /F)$ with $\kappa (\tau )=\alpha$.
	
	Finally we set $\chi_2^{(W)}=\psi +\alpha$. We check that all desired conditions are met:
	\begin{enumerate}
		\setcounter{enumi}{2}
		\item $\chi_1^{(W)}$ was chosen such that $\delta (\chi_1^{(W)})=\varepsilon_2\rvert_W$, and
		\begin{equation*}
		\delta (\chi_2^{(W)})=\delta (\psi )+\delta (\alpha ) =\delta (\psi )+\varepsilon_2\rvert_W-\delta (\psi )=\varepsilon_2\rvert_W.
		\end{equation*}
		\item $\chi_1^{(W)}$ was chosen such that $\chi_1^{(W)}\rvert_V=\chi_1^{(V)}$, and
		\begin{equation*}
		\chi_2^{(W)}\rvert_V=\psi\rvert_V+\alpha\rvert_V=\chi_2^{(V)}+0=\chi_2^{(V)}.
		\end{equation*}
		\item $\chi_1^{(W)}$ and $\chi_2^{(W)}$ are in the same Galois orbit because
		\begin{equation*}
		\tau\sigma (\chi_1^{(W)})=\tau (\psi )=\psi +\kappa (\tau )=\psi +\alpha =\chi_2^{(W)}.\qedhere
		\end{equation*}
	\end{enumerate}
\end{proof}
\begin{Proposition}\label{Prop:StablyMultFreeImpliesTransitiveGaloisOnExt}
	Let $F\supseteq\bbq (\zeta_{\infty })$ be a \emph{countable} stably multiplicatively free field, and let $\Fbar /F$ be an algebraic closure.
	
	Then $\Gal (\Fbar /F)$ operates transitively on~$\Ext_{\exp }(\Fbar^{\times },\bbz )$.
\end{Proposition}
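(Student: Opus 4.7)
The plan is to exhaust $\Fbar^\times$ by an increasing chain of Galois-stable saturated subgroups of finite rank, run the inductive construction of Lemma~\ref{Lem:InductionStepInGaloisTransitivity} along this chain, and then use compactness of the Galois group to synthesise a single Galois element that does the job globally.

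More concretely: since $F$ is countable and $\Fbar$ is algebraic over $F$, $\Fbar^\times$ is countable; enumerate its elements as $\alpha_1,\alpha_2,\ldots$. Each $\alpha_i$ has a finite $\Gal(\Fbar/F)$-orbit because it is algebraic over $F$. Define $V_n$ to be the saturation in $\Fbar^\times$ of the subgroup generated by $\mu_\infty$ together with the Galois orbits of $\alpha_1,\ldots,\alpha_n$. Each $V_n$ is then Galois-stable, saturated, of finite rank, with $\mu_\infty\subseteq V_0$ and $\bbcup_n V_n=\Fbar^\times$.

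Now fix $\varepsilon_1,\varepsilon_2\in\Ext_{\exp}(\Fbar^\times,\bbz)$. By Lemma~\ref{Lem:TransitiveGaloisActionOnHomExpVMuInfty} applied to $V_0$, there exist $\chi_1^{(V_0)},\chi_2^{(V_0)}\in\Hom(V_0,\mu_\infty)$ lifting $\varepsilon_1|_{V_0}$ and $\varepsilon_2|_{V_0}$ respectively and lying in the same $\Gal(\Fbar/F)$-orbit. Applying Lemma~\ref{Lem:InductionStepInGaloisTransitivity} inductively with $V=V_n$ and $W=V_{n+1}$, we obtain characters $\chi_i^{(V_n)}\in\Hom(V_n,\mu_\infty)$, compatible in $n$ (so $\chi_i^{(V_{n+1})}|_{V_n}=\chi_i^{(V_n)}$), such that $\delta(\chi_i^{(V_n)})=\varepsilon_i|_{V_n}$ and $\chi_1^{(V_n)},\chi_2^{(V_n)}$ are in the same Galois orbit. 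Passing to the limit gives characters $\chi_i\in\Hom(\Fbar^\times,\mu_\infty)$ with $\delta(\chi_i)=\varepsilon_i$.

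The remaining issue—the main point of the argument—is that the Galois elements witnessing the orbit equality at each finite stage are a priori unrelated. For each $n$ let
\[
H_n=\{\sigma\in\Gal(\Fbar/F)\mid \sigma(\chi_1^{(V_n)})=\chi_2^{(V_n)}\}.
\]
Since $V_n$ is generated modulo the Galois-invariant subgroup $\mu_\infty$ by finitely many algebraic elements over $F$, the Galois action on $V_n$, hence on $\Hom(V_n,\mu_\infty)$, factors through a finite quotient of $\Gal(\Fbar/F)$, so each $H_n$ is clopen in $\Gal(\Fbar/F)$; it is nonempty by construction. The compatibility $\chi_i^{(V_{n+1})}|_{V_n}=\chi_i^{(V_n)}$ gives $H_{n+1}\subseteq H_n$. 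Since $\Gal(\Fbar/F)$ is compact, Cantor's intersection lemma (Lemma~\ref{Lem:CantorIntersection}) yields some $\sigma\in\bigcap_n H_n$, which satisfies $\sigma(\chi_1)=\chi_2$ and hence $\sigma(\varepsilon_1)=\sigma(\delta(\chi_1))=\delta(\chi_2)=\varepsilon_2$. This establishes transitivity. The main obstacle is precisely this compactness step: the inductive construction yields only partial witnesses, and it is essential that the subgroups $H_n$ be \emph{clopen and nested}, which in turn relies on Galois-stability and finite rank of each $V_n$ together with the compatibility built into Lemma~\ref{Lem:InductionStepInGaloisTransitivity}.
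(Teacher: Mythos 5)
Your overall strategy coincides with the paper's: exhaust $\Fbar^\times$ by a chain of Galois-stable saturated finite-rank subgroups $V_n$, build compatible characters via Lemmas~\ref{Lem:TransitiveGaloisActionOnHomExpVMuInfty} and~\ref{Lem:InductionStepInGaloisTransitivity}, and apply Cantor's Intersection Theorem (Lemma~\ref{Lem:CantorIntersection}) to the witness sets. Two points, however, need correction.

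First, the assertion that ``the Galois action on $V_n$, hence on $\Hom(V_n,\mu_\infty)$, factors through a finite quotient of $\Gal(\Fbar/F)$, so each $H_n$ is clopen'' is \emph{false}. Although the action on $V_n/\mu_\infty$ does factor through a finite quotient, the action on $V_n$ itself (and a fortiori on $\Hom_{\exp}(V_n,\mu_\infty)$) does not: by Lemma~\ref{Lem:OpenSGWhereGaloisActsUnipotentlyThroughKummer} an open subgroup of $\Gal(\Fbar/F)$ maps \emph{onto} an open compact subgroup $L\subseteq\Hom(V_{n,\mathrm{tf}},\mu_\infty)$, and $L$ is infinite once $V_n$ has positive rank. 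Consequently $H_n$ is closed but in general \emph{not} open. Fortunately, closedness is what you actually use — it is obtained as the preimage of the (closed) point $\chi_2^{(V_n)}$ under the continuous map $\sigma\mapsto\sigma(\chi_1^{(V_n)})$ into the Hausdorff space $\Hom(V_n,\mu_\infty)$ — and Lemma~\ref{Lem:CantorIntersection} needs only a cofiltered family of nonempty closed sets in a compact space. So the conclusion stands, but the justification you give for it does not.

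Second, there is a genuine gap at the step ``Passing to the limit gives characters $\chi_i\in\Hom(\Fbar^\times,\mu_\infty)$ with $\delta(\chi_i)=\varepsilon_i$,'' and correspondingly at the final identity $\sigma(\varepsilon_1)=\varepsilon_2$. What you actually know is $\delta(\chi_i)\rvert_{V_n}=\varepsilon_i\rvert_{V_n}$ for all $n$, and concluding equality of the global classes requires the injectivity of $\Ext(\Fbar^\times,\bbz)\to\varprojlim_n\Ext(V_n,\bbz)$. This must be justified; it follows from the Milnor exact sequence
\[
0\to\mathrm{R}^1\varprojlim_n\Hom(V_n,\bbz)\to\Ext(\Fbar^\times,\bbz)\to\varprojlim_n\Ext(V_n,\bbz)\to 0,
\]
together with the observation that $\Hom(V_n,\bbz)=0$ because each $V_n$ is divisible (so $\mathrm{R}^1\varprojlim$ vanishes trivially). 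The paper handles this via a degenerating spectral sequence and a Mittag--Leffler argument; some such step must appear in your proof as well.
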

\begin{proof}
	By countability we can find an ascending chain (indexed by $\bbn$) of Galois-stable subgroups $\mu_{\infty }\subset V_1\subset V_2\subset\cdots$ of finite rank, whose union is~$\Fbar^{\times }$. Hence there is a spectral sequence with
	\begin{equation}\label{eqn:SpecSequExtLimits}
	E_2^{p,q}=\mathrm{R}^p\varprojlim_n\Ext^q(V_n,\bbz )\Rightarrow\Ext^{p+q}(\Fbar^{\times },\bbz ).
	\end{equation}
	Since these $\Ext$ groups are taken in the category of abelian groups, the entries with $q>1$ vanish. Likewise, it is easy to see that the structure maps $\Hom (V_n,\bbz )\to\Hom (V_m,\bbz )$ and $\Ext (V_n,\bbz )\to\Ext (V_m,\bbz )$ for $n\ge m$ are surjective, hence the inverse systems in (\ref{eqn:SpecSequExtLimits}) satisfy a Mittag-Leffler condition, and the higher direct images also vanish. Therefore the spectral sequence degenerates at $E_2$, and the natural map
	\begin{equation*}
	\Ext (\Fbar^{\times },\bbz )\to\varprojlim_n\Ext (V_n,\bbz )
	\end{equation*}
	is an isomorphism. We deduce that the restriction
	\begin{equation}\label{eqn:ContinuityOfExtExp}
	\Ext_{\exp } (\Fbar^{\times },\bbz )\to\varprojlim_n\Ext_{\exp } (V_n,\bbz )
	\end{equation}
	is a bijection.
	
	Let $\varepsilon_1,\varepsilon_2\in\Ext_{\exp }(\Fbar^{\times },\bbz )$. We need to show that there is a $\sigma\in\Gal (\Fbar /F)$ with $\sigma (\varepsilon_1)=\varepsilon_2$; by what we have just seen this is equivalent to $\sigma (\varepsilon_1\vert_{V_n})=\varepsilon_2\rvert_{V_n}$ for all $n\in\bbn$. Using Lemmas \ref{Lem:TransitiveGaloisActionOnHomExpVMuInfty} and \ref{Lem:InductionStepInGaloisTransitivity} we inductively produce elements $\chi_1^{(n)},\chi_2^{(n)}\in\Hom (V_n,\mu_{\infty })$ such that the following conditions hold:
	\begin{enumerate}
	\setcounter{enumi}{2}
		\item the connecting homomorphism $\delta\colon\Hom (V_n,\mu_{\infty })\to\Ext (V_n,\bbz )$ sends $\chi_i^{(n)}$ to $\varepsilon_i\rvert_{V_n}$, for $i=1,2$ and $n\in\bbn$;
		\item $\chi_i^{(n+1)}\rvert_{V_n}=\chi_i^{(n)}$ for $i=1,2$ and $n\in\bbn$;
		\item $\chi_1^{(n)}$ and $\chi_2^{(n)}$ lie in the same $\Gal (\Fbar /F)$-orbit.
	\end{enumerate}
	We let
	\begin{equation*}
	T_n=\{ \sigma\in\Gal (\Fbar /F)\mid \sigma (\chi_1^{(n)})=\chi_2^{(n)} \} .
	\end{equation*}
	By (v) each $T_n$ is a nonempty subset of $\Gal (\Fbar /F)$, and by (iv) the sequence of subsets $(T_n)$ is descending, i.e.\ $T_n\supseteq T_{n+1}$ for all $n\in\bbn$. Moreover, $\Gal (\Fbar /F)$ operates continuously on $\Hom (V_n,\mu_{\infty })$ when the latter is endowed with the compact-open topology; since $\Hom (V_n,\mu_{\infty })$ is Hausdorff, points in this space are closed, hence the $T_n$ are closed subsets of $\Gal (\Fbar /F)$.
	
	Now we can apply Cantor's Intersection Theorem (Lemma~\ref{Lem:CantorIntersection} above) to conclude that
	\begin{equation*}
	\bigcap_{n\in\bbn }T_n\neq\varnothing .
	\end{equation*}
	By construction, any element in this intersection sends $\varepsilon_1$ to $\varepsilon_2$.
\end{proof}
\begin{Remark}
	The reader may wonder why we do not simply proceed as follows to prove Proposition~\ref{Prop:StablyMultFreeImpliesTransitiveGaloisOnExt}. For any Galois-stable saturated subgroup $V\subset\Fbar^{\times }$ of finite rank we set $U_V=\{ \sigma\in\Gal (\Fbar /F)\mid\sigma (\varepsilon_1\rvert_V)=\varepsilon_2\rvert_V \}$; this is a coset of the stabiliser of $\varepsilon_1\rvert_V$, and it is nonempty by Lemma~\ref{Lem:TransitiveGaloisActionOnHomExpVMuInfty}. Then an application of Lemma~\ref{Lem:CantorIntersection} should show that the intersection of all $U_V$ is nonempty.
	
	The problem with this argument is that $U_V$ is not closed, only an $F_{\sigma}$-subset (a countable union of closed subsets). Rigidifying the situation by adding the auxiliary conditions that the lifts $\chi_i^{(n)}\in\Hom (V,\mu_{\infty })$ also be fixed replaces the $U_V$ by the closed subsets~$T_V$, which allows us to apply Lemma~\ref{Lem:CantorIntersection}.
\end{Remark}
\begin{Corollary}\label{Cor:SMFImpliesPathConnected}
	Let $F$ be a countable stably multiplicatively free field. Then $X_F$ and $\mathcal{X}_F(\bbc )$ are path-connected.
\end{Corollary}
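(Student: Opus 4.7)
The plan is to identify $\pizeropath(X_F)$ with the $G$-orbit set of $\Ext_{\exp}(\Fbar^\times,\bbz)$, where $G=\Gal(\Fbar/F)$, and to invoke Proposition~\ref{Prop:StablyMultFreeImpliesTransitiveGaloisOnExt}, which says this orbit set is a singleton.

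First I would reduce the statement for $\mathcal{X}_F(\bbc)$ to the one for $X_F$. Theorem~\ref{Thm:CompatibilityBetweenXFSpaceAndXFScheme}(ii) produces an explicit deformation retraction $\mathcal{X}_F(\bbc)\to X_F$, and the defining homotopy joins each point of $\mathcal{X}_F(\bbc)$ by a path to its image in $X_F$; hence $\mathcal{X}_F(\bbc)$ is path-connected as soon as $X_F$ is.

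Next I would set up the quotient map $p\colon X_{\Fbar}\to X_F$ as a universal profinite covering. Writing $\Fbar$ as the colimit of its finite subextensions $E/F$ produces $\Fbar^\times=\varinjlim_E E^\times$, and dualising gives $X_{\Fbar}\cong\varprojlim_E X_E$ $G$-equivariantly. Each $X_E\to X_F$ is a finite Galois covering by Theorem~\ref{Theorem:TopologicalEtaleFundamentalGroupOfCHSIsGalois}, and the limit $X_{\Fbar}$ is connected with trivial \'etale fundamental group by Proposition~\ref{Prop:SimplePropertiesOfCHXF}(ii)--(iii), identifying $p$ with the universal profinite covering of $X_F$. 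Proposition~\ref{Prop:UHLPforProfiniteCovering} then guarantees that $p$ satisfies the unique path lifting property.

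With path lifting in hand, I would check that the canonical map $G\backslash\pizeropath(X_{\Fbar})\to\pizeropath(X_F)$ is a bijection. Surjectivity follows from surjectivity of $p$; for injectivity, given $x,y\in X_F$ joined by a path $\gamma$, I fix any lift $\tilde x\in p^{-1}(x)$ and lift $\gamma$ to a path in $X_{\Fbar}$ starting at $\tilde x$ — its endpoint $\tilde y'$ lies in $p^{-1}(y)=G\cdot\tilde y$, so some $G$-translate of $\tilde y$ lies in the same path component as $\tilde x$. Combining this bijection with the $G$-equivariant identification $\pizeropath(X_{\Fbar})\cong\Ext_{\exp}(\Fbar^\times,\bbz)$ from Corollary~\ref{Cor:PiZeroPathOfXFbar} and the transitivity of the $G$-action supplied by Proposition~\ref{Prop:StablyMultFreeImpliesTransitiveGaloisOnExt}, we conclude that $\pizeropath(X_F)$ is a singleton.

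No serious obstacle remains once the two deep inputs (transitivity of $G$ on $\Ext_{\exp}$, and path lifting for the universal profinite covering) are available; the only minor care is in verifying that $X_{\Fbar}\to X_F$ is indeed the universal profinite covering so that Proposition~\ref{Prop:UHLPforProfiniteCovering} applies.
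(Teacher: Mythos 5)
Your argument is correct, and it uses exactly the two deep inputs the paper relies on: the $G$-equivariant identification $\pizeropath(X_{\Fbar})\cong\Ext_{\exp}(\Fbar^\times,\bbz)$ from Corollary~\ref{Cor:PiZeroPathOfXFbar}, transitivity of the $G$-action from Proposition~\ref{Prop:StablyMultFreeImpliesTransitiveGaloisOnExt}, and the deformation retraction of Theorem~\ref{Thm:CompatibilityBetweenXFSpaceAndXFScheme}.(ii) to pass to $\mathcal{X}_F(\bbc)$. Where you diverge from the paper is in the amount of machinery you bring to bear. The paper's proof is a one-liner at the final step: once $G$ acts transitively on $\pizeropath(X_{\Fbar})$, the quotient $X_F=G\backslash X_{\Fbar}$ is automatically path-connected, because $p$ is a surjection, each path component of $X_{\Fbar}$ has path-connected image, and transitivity forces all these images to coincide (since $g\in G$ sends a component $C$ to $gC$ with $p(gC)=p(C)$). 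You instead set out to prove the stronger statement that $G\backslash\pizeropath(X_{\Fbar})\to\pizeropath(X_F)$ is a \emph{bijection}, and for injectivity you invoke the unique path lifting property of the universal profinite covering (Proposition~\ref{Prop:UHLPforProfiniteCovering}); but for the corollary, only surjectivity of this map together with the (trivial) fact that $p$ collapses $G$-orbits of path components is needed, so the entire path-lifting apparatus can be dropped. As a minor inaccuracy, the assertion that ``dualising $\Fbar^\times=\varinjlim_E E^\times$ gives $X_{\Fbar}\cong\varprojlim_E X_E$'' misidentifies the source of that isomorphism — $X_E$ is the quotient $\Gal(\Fbar/E)\backslash X_{\Fbar}$, not $\Hom_{\exp}(E^\times,\bbs^1)$, and the identification $X_{\Fbar}=\varprojlim X_E$ comes from the universal profinite covering construction rather than Pontryagin duality — but this identification plays no role in the actual argument, so it does not affect correctness.
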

\begin{proof}
	By Corollary~\ref{Cor:PiZeroPathOfXFbar} and Proposition~\ref{Prop:StablyMultFreeImpliesTransitiveGaloisOnExt}, $\Gal (\Fbar /F)$ operates transitively on $\pizeropath (X_{\Fbar })$, hence $X_F=\Gal (\Fbar /F)\backslash X_{\Fbar }$ is path-connected. By Theorem~\ref{Thm:CompatibilityBetweenXFSpaceAndXFScheme}.(ii) then also $\mathcal{X}_F(\bbc )$ is path-connected.
\end{proof}

\subsection{Classical fundamental groups of the spaces~$X_F$}

 It will be convenient to fix a basepoint $\tilde{\chi}\in X_{\Fbar }$ and denote its image in $X_F$ by~$\chi$. Then $p\colon (X_{\Fbar },\tilde{\chi})\to (X_F,\chi)$ is a universal profinite covering space. Recall that there is then a short exact sequence of abstract groups (\ref{eqn:SesThreeFGAlsoOnRight}) which in our case becomes
\begin{equation*}
1\to \pionepath (X_{\Fbar },\tilde{\chi})\overset{p_{\ast }}{\to }\pionepath (X_F,\chi )\overset{\alpha}{\to }\operatorname{Stab}_{\pioneet (X_F,\chi )}X_{\Fbar }^{\circ}\to 1.
\end{equation*}
Since $X_{\Fbar }$ is homeomorphic to the Pontryagin dual of a $\bbq$-vector space its classical fundamental group $\pionepath (X_{\Fbar },\tilde{\chi})$ is trivial by Corollary~\ref{Cor:PontryaginDualHasTrivialPionepath}, hence $\alpha$ maps $\pionepath (X_F,\chi )$ isomorphically to the stabiliser. The latter can be rewritten: the set $\pizeropath (X_{\Fbar })$ is in canonical bijection with $\Ext_{\exp }(\Fbar^{\times },\bbz )$, and this bijection is equivariant for the isomorphism $\pioneet (X_F,\chi )\cong\Gal (\Fbar /F)$. Hence we have shown the following:
\begin{Proposition}\label{prop:descrpionepath}
	Let $F\supseteq\bbq (\zeta_{\infty })$ be a field, and let $\tilde{\chi }\in X_{\Fbar }$. Denote the image of $\tilde{\chi }$ in $X_F$ by $\chi$, and let $\varepsilon\in\Ext_{\exp }(\Fbar^{\times },\bbz )$ be the pullback of the extension
	\begin{equation*}
	[0\to\bbz \to\bbr \to\bbs^1\to 0]\in\Ext (\bbs^1,\bbz )
	\end{equation*}
	along~$\tilde{\chi }$.
	
	Then $\pionepath (X_F,\chi )$ is canonically isomorphic to the stabiliser of $\varepsilon$ in $\Gal (\Fbar /F)$.\hfill $\square$
\end{Proposition}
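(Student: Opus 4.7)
The plan is essentially to assemble pieces the authors have already set up: the short exact sequence (\ref{eqn:SesThreeFGAlsoOnRight}) comparing the three fundamental groups, the triviality of $\pionepath$ for Pontryagin duals of $\bbq$-vector spaces, and the equivariant identification of $\pizeropath(X_{\Fbar})$ with $\Ext_{\exp}(\Fbar^\times,\bbz)$. The statement to be proved is literally just a naming of the resulting stabiliser, so the main task is bookkeeping of the identifications.

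First I would invoke Proposition~\ref{Prop:SimplePropertiesOfCHXF}.(iii) to know that $p\colon (X_{\Fbar},\tilde{\chi})\to (X_F,\chi)$ is the universal profinite covering space, and feed this into the exact sequence (\ref{eqn:SesThreeFGAlsoOnRight}):
\begin{equation*}
1\to\pionepath (X_{\Fbar},\tilde{\chi})\xrightarrow{p_{\ast }}\pionepath (X_F,\chi )\xrightarrow{\alpha }\operatorname{Stab}_{\pioneet (X_F,\chi )}(X_{\Fbar}^{\circ})\to 1,
\end{equation*}
where $X_{\Fbar}^\circ$ is the path component of $\tilde{\chi}$. Since $X_{\Fbar}$ is a translate of the Pontryagin dual of the $\bbq$-vector space $\Fbar^\times/\mu_{\infty}$, Corollary~\ref{Cor:PontryaginDualHasTrivialPionepath} gives $\pionepath (X_{\Fbar},\tilde{\chi})=1$. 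Hence $\alpha$ is an isomorphism onto $\operatorname{Stab}_{\pioneet(X_F,\chi)}(X_{\Fbar}^\circ)$.

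Next I would transport this stabiliser to the Galois side. By Corollary~\ref{Cor:PioneetOfXFIsAbsoluteGalois} (or equivalently Theorem~\ref{Theorem:TopologicalEtaleFundamentalGroupOfCHSIsGalois}), there is a canonical isomorphism $\pioneet(X_F,\chi)\cong\Gal(\Fbar/F)$ compatible with the deck-transformation action on $X_{\Fbar}$. Corollary~\ref{Cor:PiZeroPathOfXFbar} then supplies a $\Gal(\Fbar/F)$-equivariant bijection
\begin{equation*}
\pizeropath (X_{\Fbar})\;\overset{\sim}{\longrightarrow}\;\Ext_{\exp }(\Fbar^{\times },\bbz ),
\end{equation*}
so the stabiliser of $X_{\Fbar}^\circ$ inside $\pioneet(X_F,\chi)$ corresponds to the stabiliser inside $\Gal(\Fbar/F)$ of whichever extension class $X_{\Fbar}^\circ$ maps to.

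The only point requiring actual verification is that this extension class is indeed the $\varepsilon$ described in the statement, namely the pullback of the exponential sequence $0\to\bbz\to\bbr\to\bbs^1\to 0$ along $\tilde{\chi}\colon\Fbar^\times\to\bbs^1$. This is a direct unravelling of the bijection in Corollary~\ref{Cor:PiZeroPathOfXFbar}: that bijection was produced from the long exact sequence obtained by applying $\Hom(\Fbar^\times,-)$ to the exponential sequence, and under Proposition~\ref{Prop:PiZeroPathPontryaginDual} the class of a character $\tilde{\chi}\in (\Fbar^\times)^\vee$ in $\pizeropath$ is sent by the connecting homomorphism $\delta$ precisely to the extension $\tilde{\chi}^\ast[0\to\bbz\to\bbr\to\bbs^1\to 0]$. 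Hence the path component of $\tilde{\chi}$ corresponds to $\varepsilon$, and combining everything gives the required canonical isomorphism $\pionepath(X_F,\chi)\cong\operatorname{Stab}_{\Gal(\Fbar/F)}(\varepsilon)$. No genuine obstacle arises; the proof is purely a matter of tracing naturality of the identifications already established.
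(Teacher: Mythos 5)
Your proof is correct and follows exactly the same route as the paper: invoke the exact sequence (\ref{eqn:SesThreeFGAlsoOnRight}) with $X_{\Fbar}$ as universal profinite cover, kill $\pionepath(X_{\Fbar})$ by Corollary~\ref{Cor:PontryaginDualHasTrivialPionepath}, and translate the stabiliser of $X_{\Fbar}^{\circ}$ into the stabiliser of $\varepsilon$ via the equivariant bijection of Corollary~\ref{Cor:PiZeroPathOfXFbar}. The only difference is that you spell out the identification of $\delta(\tilde{\chi})$ with the pullback extension a little more explicitly than the paper's prose, which is fine and already contained in the discussion preceding Proposition~\ref{Prop:PiZeroPathPontryaginDual}.
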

This stabiliser seems to be hard to determine in general. However, in the countable stably multiplicatively free case we can at least say that it is large.
\begin{Proposition}
	Let $F\supseteq\bbq (\zeta_{\infty })$ be a countable stably multiplicatively free field, and let $\chi\in X_F$ be any basepoint. Then the image of $\pionepath (X_F,\chi )$ is a dense subgroup of $\Gal (\Fbar /F)$.
\end{Proposition}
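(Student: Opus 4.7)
The plan is to use Proposition~\ref{prop:descrpionepath} to identify $\pionepath(X_F,\chi)$ with the stabiliser $\operatorname{Stab}_{\Gal(\Fbar/F)}(\varepsilon)$ of some $\varepsilon \in \Ext_{\exp}(\Fbar^\times,\bbz)$, and then reduce the density statement to a transitivity statement that is already available.

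Concretely, density of the image in $\Gal(\Fbar/F)$ is equivalent to the assertion that for every finite Galois extension $E/F$ the composition
\begin{equation*}
\operatorname{Stab}_{\Gal(\Fbar/F)}(\varepsilon) \hookrightarrow \Gal(\Fbar/F) \twoheadrightarrow \Gal(E/F)
\end{equation*}
is surjective. So I would fix such an $E$, set $H = \Gal(\Fbar/E)$, and pick an arbitrary $\sigma \in \Gal(\Fbar/F)$; the goal is to produce $\tau \in \operatorname{Stab}(\varepsilon)$ with $\tau|_E = \sigma|_E$, i.e.\ an element of the coset $\sigma H$ that fixes $\varepsilon$.

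The key observation is that $E$ inherits the hypotheses on $F$: it is countable since it is a finite extension of a countable field, and it is stably multiplicatively free since every finite extension of $E$ is a finite extension of $F$ and hence multiplicatively free. Therefore Proposition~\ref{Prop:StablyMultFreeImpliesTransitiveGaloisOnExt}, applied to $E$ in place of $F$, says that $H = \Gal(\Fbar/E)$ acts transitively on $\Ext_{\exp}(\Fbar^\times,\bbz)$. In particular the two elements $\varepsilon$ and $\sigma^{-1}(\varepsilon)$, both of which still lie in $\Ext_{\exp}(\Fbar^\times,\bbz)$, are in the same $H$-orbit, so there exists $\eta \in H$ with $\eta(\varepsilon) = \sigma^{-1}(\varepsilon)$. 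Setting $\tau = \sigma\eta$ gives $\tau(\varepsilon) = \varepsilon$ and $\tau|_E = \sigma|_E$ (because $\eta$ acts trivially on $E$), which proves surjectivity onto $\Gal(E/F)$ and hence density.

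There is no real obstacle beyond verifying that the hypotheses transfer to $E$; all the hard work has already been carried out in Proposition~\ref{Prop:StablyMultFreeImpliesTransitiveGaloisOnExt} (the Cantor-intersection argument combining Lemmas~\ref{Lem:TransitiveGaloisActionOnHomExpVMuInfty} and~\ref{Lem:InductionStepInGaloisTransitivity}). The only subtle point to flag is the need to invoke that earlier result with $E$ as the base field rather than $F$, which is why stable multiplicative freeness (as opposed to mere multiplicative freeness) enters essentially at this step.
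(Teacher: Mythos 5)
Your proof is correct, and it takes a somewhat different route from the paper's. The paper deduces the statement from the general topological dichotomy of Proposition~\ref{Prop:StablyPathConnectedThenAlphaSurjective}: it shows that each $X_E$ for $E/F$ finite is path-connected (via Corollary~\ref{Cor:SMFImpliesPathConnected}), concludes via Theorem~\ref{Theorem:TopologicalEtaleFundamentalGroupOfCHSIsGalois} that $X_F$ is stably path-connected, and then invokes the equivalence between stable path-connectedness and density of the image of $\alpha$. You instead start from the stabiliser description $\operatorname{im}(\pionepath(X_F,\chi)\to\Gal(\Fbar/F)) = \operatorname{Stab}_{\Gal(\Fbar/F)}(\varepsilon)$ furnished by Proposition~\ref{prop:descrpionepath}, reformulate density as surjectivity onto each finite quotient $\Gal(E/F)$, and realise that surjectivity is exactly transitivity of $\Gal(\Fbar/E)$ on $\Ext_{\exp}(\Fbar^\times,\bbz)$, which is Proposition~\ref{Prop:StablyMultFreeImpliesTransitiveGaloisOnExt} applied to $E$. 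Both proofs rest on the same hard ingredient (that transitivity result, applied to arbitrary finite extensions $E/F$), and both need stable multiplicative freeness precisely so that the hypotheses descend to $E$; the difference is one of packaging. The paper's version is more modular and reuses the covering-space machinery it has set up, whereas yours is more direct and algebraic, replacing the appeal to `stably path-connected' by an explicit coset manipulation ($\tau=\sigma\eta$). Your verification that $E$ inherits countability, stable multiplicative freeness, and containment of $\bbq(\zeta_\infty)$ is correct, and so is the observation that the Galois action preserves $\Ext_{\exp}(\Fbar^\times,\bbz)$ because $\mu_\infty\subset F$ is fixed pointwise.
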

\begin{proof}
	If $F$ is countable and stably multiplicatively free then so is any finite extension $E$ of~$F$. Hence all coverings $X_E$ defined by finite extensions of $F$ are path-connected by Corollary~\ref{Cor:SMFImpliesPathConnected}. By Theorem~\ref{Theorem:TopologicalEtaleFundamentalGroupOfCHSIsGalois} every connected covering space of $X_F$ is of this form, therefore $X_F$ is stably path-connected. Hence by Proposition~\ref{Prop:StablyPathConnectedThenAlphaSurjective} the image of $\pionepath (X_F,\chi )$ in $\pioneet (X_F,\chi )\cong\Gal (\Fbar /F)$ is dense.
\end{proof}
This applies in particular to $F=\bbq (\zeta_{\infty })$.

\subsubsection*{The fundamental group as an inverse limit}\label{subsec:FundGroup} Let $F\supseteq\bbq (\zeta_{\infty })$ be a countable stably multiplicatively free field. We shall write $\pionepath (X_F)$ as an inverse limit of discrete groups which are extensions of finite groups by free abelian groups of finite rank. In particular, $\pionepath (X_F)$ will be endowed with a non-discrete topology.

Fix an algebraic closure $\Fbar /F$, and let $\mathfrak{L}(\Fbar /F)$ denote the set of all subgroups $\varLambda <\Fbar^{\times }$ satisfying the following conditions:
\begin{enumerate}
	\item $\varLambda$ contains $\mu_{\infty }$, and $\varLambda_{\mathrm{tf}}=\varLambda /\mu_{\infty }$ is a free abelian group of finite rank.
	\item $\varLambda$ is stable under $\Gal (\Fbar /F)$.
	\item Set $E=F(\varLambda )$; this is a finite Galois extension of~$F$ by (i) and (ii). Also, let $V$ be the saturation of $\varLambda$ in~$\Fbar^{\times }$. Then $E^{\times }\cap V=\varLambda$.
	\item For every $\sigma\in\Gal (E/F)$ there exists a $\lambda\in\varLambda$ such that $\sigma (\lambda )/\lambda =\zeta_n$, where $n$ is the order of $\sigma$ in $\Gal (E/F)$.
\end{enumerate}
\begin{Lemma}
	Let $F\supseteq\bbq (\zeta_{\infty })$ be a stably multiplicatively free field with algebraic closure~$\Fbar$. Then $\Fbar^\times$ can be written as the filtered union
	\begin{equation*}
	\Fbar^{\times }=\bigcup_{\varLambda\in\mathfrak{L} (\Fbar /F)}\varLambda .
	\end{equation*}
\end{Lemma}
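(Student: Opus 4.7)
The plan is to reduce the lemma to the claim that every finite subset $S\subset\Fbar^{\times}$ is contained in some $\varLambda\in\mathfrak{L}(\Fbar/F)$. This suffices, because filteredness then follows by applying the claim to a finite generating set of $\varLambda\cdot\varLambda'$ modulo $\mu_{\infty}$ for any two given elements $\varLambda,\varLambda'$ of $\mathfrak{L}(\Fbar/F)$, and the equality of the union with $\Fbar^{\times}$ is just the case of singleton $S$.

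Given such an $S$, I would first enlarge it to be $\Gal(\Fbar/F)$-stable (possible because each element of $S$ lies in a finite Galois extension of $F$ and so has a finite Galois orbit) and let $\varLambda_0$ be the subgroup of $\Fbar^{\times}$ generated by $\mu_{\infty}$ and this enlarged set. Then $\varLambda_0/\mu_{\infty}$ is finitely generated and torsion-free, so $\varLambda_0$ satisfies (i) and (ii). Set $E=F(\varLambda_0)$, a finite Galois extension of $F$, and let $\varLambda_1$ be the saturation of $\varLambda_0$ in $E^{\times}$. Here I use the hypothesis that $F$ is stably multiplicatively free: this makes $E^{\times}/\mu_{\infty}$ a free abelian group, and in a free abelian group the saturation of a finite-rank subgroup is again free of the same (finite) rank. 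So $\varLambda_1$ still satisfies (i), remains Galois-stable, still generates $E$ over $F$, and now additionally satisfies (iii).

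Next, I would enforce (iv) via Hilbert's original Satz 90 (exactly as it was used in Lemma~\ref{Lem:MitHilbert90}). For each nontrivial $\sigma\in\Gal(E/F)$ of order $n$, the fixed field $E^{\langle\sigma\rangle}$ contains $\zeta_n$ (since $\mu_{\infty}\subset F$) and the norm $\mathrm{N}_{E/E^{\langle\sigma\rangle}}(\zeta_n)=\zeta_n^n=1$, so Satz~90 produces $\alpha_\sigma\in E^{\times}$ with $\sigma(\alpha_\sigma)/\alpha_\sigma=\zeta_n$. Let $\varLambda_2$ be the subgroup of $E^{\times}$ generated by $\varLambda_1$ together with the (finite) $\Gal(\Fbar/F)$-orbits of these $\alpha_\sigma$, and let $\varLambda_3$ be the saturation of $\varLambda_2$ in $E^{\times}$. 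Then $\varLambda_3\subset E^{\times}$ forces $F(\varLambda_3)=E$; (i) and (iii) follow once more from the saturation argument above; (ii) is preserved because Galois orbits and saturation both commute with the Galois action; and (iv) holds because the chosen $\alpha_\sigma$ already lie in $\varLambda_3$. Hence $\varLambda_3\in\mathfrak{L}(\Fbar/F)$ and contains $S$.

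The main obstacle is the control of saturations: without the stable multiplicative freeness assumption on $F$, the saturation of a finitely generated subgroup of $E^{\times}/\mu_{\infty}$ could fail to be finitely generated, and the construction above would produce a $\varLambda_3$ of infinite rank, violating (i). A secondary subtlety is ensuring that enforcing (iv) does not force us to enlarge $E$ and so reopen (i)-(iii); this is precisely why we close up the $\alpha_\sigma$ under $\Gal(\Fbar/F)$ inside $E^{\times}$ before saturating, rather than enlarging $\varLambda$ arbitrarily, so that $E$ stays fixed throughout the construction.
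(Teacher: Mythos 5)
Your proof is correct and takes essentially the same approach as the paper: choose a finite Galois extension $E$ containing a Galois-stable enlargement of $S$, produce the elements needed for condition (iv) via Hilbert's original Satz~90, saturate inside $E^\times$, and invoke stable multiplicative freeness of $E$ to keep the rank finite. You add two small points of care that the paper leaves implicit: the reduction of filteredness to the containment claim, and closing up the Satz~90 elements $\alpha_\sigma$ under the Galois action before saturating (the paper only explicitly includes Galois conjugates of the original $\alpha_i$ in its $V$, so Galois-stability of the resulting $\varLambda$ requires exactly the step you spell out).
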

\begin{proof}
	Let $\alpha_1,\ldots,\alpha_n\in\Fbar^{\times }$; we need to find a $\varLambda\in\mathfrak{L}(\Fbar /F)$ with $\alpha_1,\ldots,\alpha_n\in\varLambda$.
	
	First let $E$ be the Galois closure of $F(\alpha_1,\ldots,\alpha_n)$ in~$\Fbar$. By Hilbert's Theorem 90 in the form already used in the proof of Lemma~\ref{Lem:MitHilbert90}, for every $\sigma\in\Gal (E/F)$ there exists some $\lambda_{\sigma }\in E^{\times }$ such that $\sigma (\lambda_{\sigma })/\lambda_{\sigma }=\zeta_n$. Let $V$ be the smallest saturated subgroup of $\Fbar^{\times }$ containing the $\lambda_{\sigma }$ and all Galois conjugates of~$\alpha_1,\ldots,\alpha_n$. Then $V_{\mathrm{tf}}$ is a $\bbq$-vector space of finite rank, and $\varLambda =E^{\times }\cap V$ will be an element of $\mathfrak{L}(\Fbar /F)$ containing~$\alpha_1,\ldots,\alpha_n$.
\end{proof}
For $\varLambda\in\mathfrak{L}(\Fbar /F)$ set $X_{\Fbar }(\varLambda )=\Hom_{\exp }(\varLambda,\bbs^1)$. There is a natural continuous action of $\Gal (\Fbar /F)$ on $X_{\Fbar }(\varLambda)$, and we set $X_F(\varLambda )=\Gal (\Fbar /F)\backslash X_{\Fbar }(\varLambda )$.
\begin{Lemma}
	Let $F\supseteq\bbq (\zeta_{\infty })$ be a stably multiplicatively free field, and let $\Fbar /F$ be an algebraic closure.
	\begin{enumerate}
		\item For every $\varLambda\in\mathfrak{L}(\Fbar /F)$ the space $X_{\Fbar }(\varLambda )$ is homeomorphic to a torus of dimension $\operatorname{rank}\varLambda$. For any basepoint $\chi\in X_{\Fbar }(\varLambda )$ there is a canonical isomorphism
		\begin{equation*}
		\pionepath (X_{\Fbar },\chi )\cong\Hom (\varLambda,\bbz ).
		\end{equation*}
		\item Let $\varLambda\in\mathfrak{L}(\Fbar /F)$ and let $E$ be the subfield of $\Fbar$ generated by~$\varLambda$, a finite Galois extension of~$F$. Then the action of $\Gal (\Fbar /F)$ on $X_{\Fbar}(\varLambda )$ factors through $\Gal (E/F)$, and the induced action of $\Gal (E/F)$ is free. Hence $X_{\Fbar }(\varLambda )\to X_F(\varLambda )$ is a finite covering.
		\item For any basepoint $\chi\in X_F(\varLambda )$ there is a natural exact sequence
		\begin{equation*}
		1\to\Hom (\varLambda ,\bbz )\to\pionepath (X_F(\varLambda ),\chi )\to\Gal (E/F)\to 1.
		\end{equation*}
	\end{enumerate}
\end{Lemma}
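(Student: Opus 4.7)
For part (i), I would observe that $X_{\Fbar}(\varLambda) = \Hom_{\exp}(\varLambda,\bbs^1)$ is a torsor under the closed subgroup $\Hom(\varLambda_{\mathrm{tf}},\bbs^1)$ of $\Hom(\varLambda,\bbs^1)$, since the condition of restricting to $\iota$ on $\mu_{\infty}$ determines an element up to a character of $\varLambda/\mu_{\infty} = \varLambda_{\mathrm{tf}}$. As $\varLambda_{\mathrm{tf}}$ is free of finite rank $r = \operatorname{rank}\varLambda$ by condition (i) in the definition of $\mathfrak{L}(\Fbar/F)$, the Pontryagin dual $\Hom(\varLambda_{\mathrm{tf}},\bbs^1)$ is an $r$-dimensional torus, and so is $X_{\Fbar}(\varLambda)$. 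The fundamental group of an $r$-dimensional torus is $\bbz^r$, which canonically identifies with $\Hom(\varLambda_{\mathrm{tf}},\bbz) = \Hom(\varLambda,\bbz)$ (the last equality because $\mu_{\infty}$ is torsion and $\bbz$ is torsion-free).

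For part (ii), the action of $\Gal(\Fbar/F)$ on $\varLambda$ factors through $\Gal(E/F)$ since $\varLambda \subset E^{\times}$, and hence so does the induced action on $X_{\Fbar}(\varLambda)$. To prove freeness, let $1 \neq \sigma \in \Gal(E/F)$ have order $n$. By condition (iv), pick $\lambda \in \varLambda$ with $\sigma(\lambda)/\lambda = \zeta_n$, so $\sigma^{-1}(\lambda) = \zeta_n^{-1}\lambda$ (using $\zeta_n \in F$). Then for any $\chi \in X_{\Fbar}(\varLambda)$,
\[
(\sigma\chi)(\lambda) = \chi(\sigma^{-1}\lambda) = \chi(\zeta_n^{-1}\lambda) = \iota(\zeta_n)^{-1}\chi(\lambda) \neq \chi(\lambda),
\]
since $\iota(\zeta_n) \neq 1$. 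Hence $\sigma\chi \neq \chi$, so the $\Gal(E/F)$-action is free. A free action of a finite group on a compact Hausdorff space yields a finite covering $X_{\Fbar}(\varLambda) \to X_F(\varLambda) = \Gal(E/F)\backslash X_{\Fbar}(\varLambda)$ of degree $\lvert \Gal(E/F)\rvert$.

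For part (iii), since $X_{\Fbar}(\varLambda)$ is path-connected (a torus) and the covering $X_{\Fbar}(\varLambda) \to X_F(\varLambda)$ is a finite Galois covering with deck group $\Gal(E/F)$, standard covering space theory (valid because the base of a finite covering of a torus-quotient is locally path-connected and semilocally simply connected, being a CW-complex up to homeomorphism) gives a short exact sequence
\[
1 \to \pionepath(X_{\Fbar}(\varLambda),\tilde{\chi}) \to \pionepath(X_F(\varLambda),\chi) \to \Gal(E/F) \to 1.
\]
Substituting the identification of $\pionepath(X_{\Fbar}(\varLambda),\tilde{\chi})$ with $\Hom(\varLambda,\bbz)$ from part (i) yields the desired sequence.

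The main subtlety is part (ii), where the free action hinges on the Hilbert~90 type condition (iv) in the definition of $\mathfrak{L}(\Fbar/F)$; without this condition, the Galois action on the torus $X_{\Fbar}(\varLambda)$ need not be free. The rest is essentially bookkeeping with Pontryagin duals and the standard covering-space exact sequence.
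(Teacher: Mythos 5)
Your proof is correct and follows essentially the same route as the paper: identify $X_{\Fbar}(\varLambda)$ as a translate of the torus $\varLambda_{\mathrm{tf}}^{\vee}$ for (i), use the Hilbert~90-type condition~(iv) in the definition of $\mathfrak{L}(\Fbar/F)$ to rule out fixed points for (ii), and invoke the classical covering space exact sequence for manifolds for (iii). Your freeness computation in (ii) is if anything a touch more careful than the paper's, in that it consistently uses the convention $\sigma(\chi)=\chi\circ\sigma^{-1}$ and tracks the inverse $\sigma^{-1}(\lambda)=\zeta_n^{-1}\lambda$.
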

\begin{proof}
	For (i) note that $X_{\Fbar }(\varLambda )$ is a translate of the subgroup $\varLambda_{\mathrm{tf}}^{\vee }$ in $\varLambda^{\vee }$; this subgroup is a torus whose classical fundamental group is canonically isomorphic to $\Hom (\varLambda ,\bbz )$.
	
	As to (ii), it is clear that the Galois action factors through $\Gal (E/F)$. We will now show that the induced action of this finite group is free. Let $1\neq\sigma\in\Gal (E/F)$ be an element of order $n>1$, and assume that there is some $\chi\in X_{\Fbar }(\varLambda )$ with $\sigma (\chi )=\chi$. By condition (iv) in the definition of $\mathfrak{L}(\Fbar /F)$ there exists some $\lambda\in\varLambda$ with $\sigma (\lambda )/\lambda =\zeta_n$. But then
	\begin{equation*}
	\mathrm{e}^{2\pi\mathrm{i}/n}=\chi (\zeta_n)=\frac{\chi (\sigma (\lambda ))}{\chi (\lambda )}=\frac{\sigma (\chi )(\lambda )}{\chi (\lambda )}=\frac{\chi (\lambda )}{\chi (\lambda )}=1,
	\end{equation*}
	a contradiction. Therefore $\sigma$ cannot have a fixed point in $X_{\Fbar }(\varLambda )$, and the action is free.
	
	Part (iii) then follows easily (note that these spaces are path-connected and locally path-connected, in fact manifolds, so the classical theory of fundamental groups and covering spaces applies).
\end{proof}
\begin{Proposition}
	Let $F\supseteq\bbq (\zeta_{\infty })$ be a stably multiplicatively free field with algebraic closure~$\Fbar /F$. Then the canonical map
	\begin{equation}\label{eqn:XFAsALimitOfXFLambda}
	X_F\to\varprojlim_{\varLambda\in\mathfrak{L}(\Fbar /F)}X_F(\varLambda )
	\end{equation}
	is a homeomorphism.
\end{Proposition}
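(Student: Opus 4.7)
The plan is to exhibit the map as a continuous bijection between a compact space and a Hausdorff space, whence it is automatically a homeomorphism. Continuity is built into the definition of the inverse limit, so the content is in bijectivity. The target is Hausdorff as a cofiltered limit of the Hausdorff spaces $X_F(\varLambda)$, and $X_F$ is compact as a quotient of the compact space $X_{\Fbar}$, so once bijectivity is shown the result follows.

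As a preliminary I would record the Galois-equivariant statement $X_{\Fbar} \cong \varprojlim_\varLambda X_{\Fbar}(\varLambda)$; this is formal from Pontryagin duality applied to the filtered union $\Fbar^\times = \bigcup_\varLambda \varLambda$ (cf.\ the lemma preceding the proposition), together with the observation that the condition ``extends $\iota$ on $\mu_\infty$'' is preserved under the identification. In particular, each restriction map $X_{\Fbar} \to X_{\Fbar}(\varLambda)$ is continuous and surjective (divisibility of $\bbs^1$).

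For injectivity of $X_F \to \varprojlim_\varLambda X_F(\varLambda)$, suppose $\chi_1,\chi_2 \in X_{\Fbar}$ have the same image in every $X_F(\varLambda)$. For each $\varLambda$ set
\[
T_\varLambda = \{ \sigma \in \Gal(\Fbar/F) : \sigma(\chi_1)|_\varLambda = \chi_2|_\varLambda \}.
\]
Because the Galois action on $X_{\Fbar}(\varLambda)$ factors through the finite quotient $\Gal(F(\varLambda)/F)$, $T_\varLambda$ is a (nonempty, by hypothesis) coset of the open subgroup $\Gal(\Fbar/F(\varLambda))$, hence closed. The system $\{T_\varLambda\}$ is cofiltered because $\varLambda \subseteq \varLambda'$ forces $T_{\varLambda'} \subseteq T_\varLambda$ and because $\mathfrak{L}(\Fbar/F)$ is directed. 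Lemma~\ref{Lem:CantorIntersection} (Cantor's Intersection) applied inside the compact group $\Gal(\Fbar/F)$ produces $\sigma \in \bigcap_\varLambda T_\varLambda$, and since $\Fbar^\times = \bigcup_\varLambda \varLambda$, such a $\sigma$ satisfies $\sigma(\chi_1) = \chi_2$.

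For surjectivity, fix a compatible family $(\bar\chi_\varLambda) \in \varprojlim_\varLambda X_F(\varLambda)$, choose arbitrary lifts $\chi_\varLambda \in X_{\Fbar}(\varLambda)$, and consider
\[
S_\varLambda = \{ \chi \in X_{\Fbar} : \chi|_\varLambda \in \Gal(\Fbar/F)\cdot\chi_\varLambda \} \subseteq X_{\Fbar}.
\]
The Galois orbit of $\chi_\varLambda$ is finite, hence closed in $X_{\Fbar}(\varLambda)$, so $S_\varLambda$ is closed in $X_{\Fbar}$. It is nonempty because $X_{\Fbar} \to X_{\Fbar}(\varLambda)$ is surjective, and the compatibility of the $\bar\chi_\varLambda$ gives $S_{\varLambda'} \subseteq S_\varLambda$ whenever $\varLambda \subseteq \varLambda'$. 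A second application of Lemma~\ref{Lem:CantorIntersection}, now inside the compact space $X_{\Fbar}$, yields a point whose image in $X_F$ realises $(\bar\chi_\varLambda)$. The main technical point in the argument is not any single step but the same rigidification used in Proposition~\ref{Prop:StablyMultFreeImpliesTransitiveGaloisOnExt}: orbit-wise Galois-conjugacy data is only an $F_\sigma$-condition, so to invoke Cantor's theorem one must replace ``same orbit'' by the \emph{closed} conditions $T_\varLambda$ and $S_\varLambda$ above, whose closedness ultimately rests on the finiteness of the Galois orbits in each $X_{\Fbar}(\varLambda)$, which in turn comes from the Galois-stability clause (ii) in the definition of $\mathfrak{L}(\Fbar/F)$.
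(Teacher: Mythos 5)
Your proof is correct and follows essentially the same route as the paper's (which is stated tersely: surjectivity is declared "clear" and injectivity is relegated to "an argument using Cantor's Intersection Theorem similar to that used in the proof of Proposition~\ref{Prop:StablyMultFreeImpliesTransitiveGaloisOnExt}"). You have simply spelled out both halves: the identification $X_{\Fbar}\cong\varprojlim X_{\Fbar}(\varLambda)$, the closed sets $T_\varLambda$ and $S_\varLambda$, and the two invocations of Lemma~\ref{Lem:CantorIntersection}, followed by the compact-to-Hausdorff bijection argument. One small remark: your closing comparison to the rigidification in Proposition~\ref{Prop:StablyMultFreeImpliesTransitiveGaloisOnExt} is slightly off, since here the rigidification is automatic — you begin with honest characters on $\varLambda$ rather than $\Ext$-classes, so $T_\varLambda$ is already a coset of the open stabiliser $\Gal(\Fbar/F(\varLambda))$ and no lifting from $\Ext$ to $\Hom$ is needed.
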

\begin{proof}
	Consider first the map $X_{\Fbar }\to\varprojlim X_F(\varLambda )=\varprojlim \Gal (\Fbar /F)\backslash X_{\Fbar }(\varLambda )$. This is clearly surjective, and if two elements of $X_{\Fbar }$ have the same image they must be in the same Galois orbit, by an argument using Lemma~\ref{Lem:CantorIntersection} (Cantor's Intersection Theorem) similar to that used in the proof of Proposition~\ref{Prop:StablyMultFreeImpliesTransitiveGaloisOnExt}. Hence the map (\ref{eqn:XFAsALimitOfXFLambda}) is bijective. It is also continuous, and domain and target are compact Hausdorff spaces. Therefore it is a homeomorphism.
\end{proof}
\begin{Proposition}
	Let $F\supseteq\bbq (\zeta_{\infty })$ be a countable stably multiplicatively free field with algebraic closure~$\Fbar$. Choose a basepoint $\chi\in X_F$, and for each $\varLambda\in\mathfrak{L}(\Fbar /F)$ denote its image in $X_F(\varLambda )$ by~$\chi_{\varLambda}$. Then the natural map
	\begin{equation}\label{eqn:PionepathXFAsInverseLimit}
	\pionepath (X_F,\chi )\to\varprojlim_{\varLambda\in\mathfrak{L}(\Fbar /F)}\pionepath (X_F(\varLambda) ,\chi_{\varLambda})
	\end{equation}
	is an isomorphism.
\end{Proposition}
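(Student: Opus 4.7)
The plan is to compare the natural map with the tautological embedding $\alpha\colon\pionepath(X_F,\chi)\hookrightarrow\Gal(\Fbar/F)$ and to exploit the short exact sequences for the finite-rank pieces $X_F(\varLambda)$ given in the previous lemma. Concretely, I would consider the commutative diagram
\begin{equation*}
\xymatrix{
\pionepath(X_F,\chi)\ar[r]^-{\varPhi}\ar@{^{(}->}[d]_-{\alpha} & \varprojlim_{\varLambda}\pionepath(X_F(\varLambda),\chi_\varLambda)\ar[d]^-{\varprojlim_\varLambda\alpha_\varLambda}\\
\Gal(\Fbar/F)\ar[r]^-{\sim} & \varprojlim_\varLambda\Gal(F(\varLambda)/F),
}
\end{equation*}
where the bottom horizontal is the isomorphism coming from $\Fbar=\bigcup_\varLambda F(\varLambda)$, and the left vertical arrow is injective because $\pionepath(X_{\Fbar},\tilde\chi)=0$ (Corollary~\ref{Cor:PontryaginDualHasTrivialPionepath}) in the sequence (\ref{eqn:SesThreeFGAlsoOnRight}).

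First I would establish that $\varprojlim_\varLambda\alpha_\varLambda$ is injective. Applying the left-exact functor $\varprojlim$ to the short exact sequences $1\to\Hom(\varLambda,\bbz)\to\pionepath(X_F(\varLambda),\chi_\varLambda)\to\Gal(F(\varLambda)/F)\to 1$ reduces this to the vanishing $\varprojlim_\varLambda\Hom(\varLambda,\bbz)=\Hom(\Fbar^\times,\bbz)=0$, which holds because $\Fbar^\times/\mu_\infty$ is a $\bbq$-vector space and $\Hom(\mu_\infty,\bbz)=0$. A diagram chase then yields that $\varPhi$ is injective.

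For surjectivity, let $(\gamma_\varLambda)$ be a compatible system, with image $\sigma\in\Gal(\Fbar/F)$. By Proposition~\ref{Prop:ImageOfDiscreteAlpha} it suffices to show that $\sigma(\tilde\chi)$ lies in the path component of $\tilde\chi$ in $X_{\Fbar}$, and then to produce an explicit loop inducing the given system. For each $\varLambda$ the space $X_{\Fbar}(\varLambda)=\Hom_{\exp}(\varLambda,\bbs^1)$ is a torus with universal cover the real vector space $\Hom(\varLambda,\bbr)$, via the exponential. The class $\gamma_\varLambda$ is represented by a path in $X_{\Fbar}(\varLambda)$ from $\tilde\chi_\varLambda$ to $\sigma(\tilde\chi_\varLambda)$, whose unique lift through $\tilde\chi_\varLambda$ selects a distinguished element $v_\varLambda\in\Hom(\varLambda,\bbr)$ with
\begin{equation*}
\exp(2\pi\mathrm{i}\,v_\varLambda)=\sigma(\tilde\chi_\varLambda)\cdot\tilde\chi_\varLambda^{-1}.
\end{equation*}
The restriction maps $X_{\Fbar}(\varLambda')\to X_{\Fbar}(\varLambda)$ for $\varLambda\subset\varLambda'$ lift to the restriction maps on $\Hom(-,\bbr)$; the compatibility of the $\gamma_\varLambda$ in the inverse system forces $v_{\varLambda'}\rvert_\varLambda=v_\varLambda$. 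Hence the $v_\varLambda$ assemble into an element
\begin{equation*}
v\in\varprojlim_\varLambda\Hom(\varLambda,\bbr)=\Hom(\Fbar^\times,\bbr).
\end{equation*}

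Finally, the path $t\mapsto\tilde\chi\cdot\bigl[\alpha\mapsto\exp(2\pi\mathrm{i}\,t\,v(\alpha))\bigr]$ in $X_{\Fbar}$ is well-defined and continuous, starts at $\tilde\chi$, and has endpoint $\sigma(\tilde\chi)$ by testing against each $\varLambda$ and using $X_{\Fbar}=\varprojlim_\varLambda X_{\Fbar}(\varLambda)$. Projecting to $X_F$ yields a loop whose image in each $\pionepath(X_F(\varLambda),\chi_\varLambda)$ coincides with $\gamma_\varLambda$ by construction. This establishes surjectivity of $\varPhi$. The main obstacle is the compatibility assertion $v_{\varLambda'}\rvert_\varLambda=v_\varLambda$: one must check that the universal covers of the tori $X_{\Fbar}(\varLambda)$ fit together coherently under the restriction maps, and that the compatibility of the $\gamma_\varLambda$ at the level of rel-endpoint homotopy classes of lifted paths translates correctly to equality of the selected preimages in $\Hom(\varLambda,\bbr)$.
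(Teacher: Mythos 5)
Your proof is correct, but it takes a genuinely different route from the paper's. The paper fixes a cofinal sequence $(\varLambda_n)$ (here countability enters), observes that the transition maps $X_F(\varLambda_{n+1})\to X_F(\varLambda_n)$ are fibrations, and invokes the Milnor--Cohen exact sequence
\begin{equation*}
1\to\mathrm{R}^1\varprojlim\pi_2^{\mathrm{path}}(X_F(\varLambda_n))\to\pionepath(X_F)\to\varprojlim\pionepath(X_F(\varLambda_n))\to 1,
\end{equation*}
and then kills the $\mathrm{R}^1\varprojlim$ term by noting that the $X_F(\varLambda_n)$ have vanishing $\pi_2$ (being finite quotients of tori). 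You instead bypass that machinery entirely: injectivity falls out of the injectivity of $\alpha\colon\pionepath(X_F,\chi)\hookrightarrow\Gal(\Fbar/F)$ (from $\pionepath(X_{\Fbar})=1$) together with $\Gal(\Fbar/F)\cong\varprojlim\Gal(F(\varLambda)/F)$, and surjectivity is handled by an explicit construction: lift each $\gamma_\varLambda$ through the universal cover $\Hom(\varLambda,\bbr)\to X_{\Fbar}(\varLambda)\to X_F(\varLambda)$ to get $v_\varLambda$, assemble $v\in\Hom(\Fbar^\times,\bbr)=\varprojlim\Hom(\varLambda,\bbr)$, and exhibit the loop $t\mapsto\tilde\chi\cdot\exp(2\pi\mathrm{i}\,tv)$ in $X_{\Fbar}$. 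The compatibility step you flag as the main obstacle is in fact fine: the lifted endpoint $v_\varLambda$ depends only on the homotopy class of $\gamma_\varLambda$ rel basepoint (being a point in the fibre of a genuine universal cover of the manifold $X_F(\varLambda)$), and the restriction $\Hom(\varLambda',\bbr)\to\Hom(\varLambda,\bbr)$ covers the restriction $X_F(\varLambda')\to X_F(\varLambda)$ and sends $0$ to $0$, so it carries the lift of $\gamma_{\varLambda'}$ to the lift of its image class $\gamma_\varLambda$, giving $v_{\varLambda'}\rvert_\varLambda=v_\varLambda$. What your approach buys: it is more elementary and self-contained, using only classical covering-space theory for tori rather than the $\lim^1$-theory of towers of fibrations; it does not appear to use countability in any essential way (the compatible family of $v_\varLambda$ assembles over the whole directed set $\mathfrak{L}(\Fbar/F)$). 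What the paper's approach buys: it is a one-line reduction to a standard theorem and requires no explicit description of universal covers, only the vanishing of $\pi_2$ of the finite-level spaces. One minor remark: your intermediate claim that $\varprojlim\alpha_\varLambda$ is injective (via $\varprojlim\Hom(\varLambda,\bbz)=\Hom(\Fbar^\times,\bbz)=0$) is correct but is not actually needed for the injectivity of $\varPhi$; the chase through $\alpha$ and the bottom isomorphism alone suffices.
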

\begin{proof}
	Since $F$ (hence also $\Fbar$) is countable, there exists a cofinal sequence $(\varLambda_n)_{n\in\bbn }$ in $\mathfrak{L}(\Fbar /F)$. To see this, choose an enumeration $\Fbar =\{ a_1,a_2,a_3,\ldots  \}$ and choose the $\varLambda_n$ inductively in such a way that $\varLambda_n\subseteq\varLambda_{n+1}$ and $a_1,\ldots ,a_n\in\varLambda_n$. Hence $X_F\to\varprojlim_nX_F(\varLambda_n)$ is
	is also a homeomorphism, and it suffices to show that
	\begin{equation*}
	\pionepath (X_F,\chi )\to\varprojlim_{n\in\bbn }\pionepath (X_F(\varLambda_n ),\chi_{\varLambda_n})
	\end{equation*}
	is an isomorphism.
	
	For each $n\in\bbn$ there is a commutative diagram
	\begin{equation*}
	\xymatrix{
		X_{\Fbar }(\varLambda_{n+1})\ar[d]\ar[r]^-{\tilde{p}} & X_{\Fbar }(\varLambda_n)\ar[d]\\
		X_F(\varLambda_{n+1})\ar[r]_-p & X_F(\varLambda_n)
		}
	\end{equation*}
	where $\tilde{p}$ is a fibration and the vertical maps are finite coverings. Therefore $p$ is also a fibration.
	
	In general if $(X_n)_{n\in\bbn }$ is a projective system of pointed topological spaces where the transition maps are fibrations, then there is a short exact sequence
	\begin{equation*}
	1\to\mathrm{R}^1\varprojlim_{n\in\bbn }\pi_2^{\mathrm{path}}(X_n)\to\pionepath (X)\to\varprojlim_{n\in\bbn }\pionepath (X_n)\to 1
	\end{equation*}
	of abstract groups (compatible choice of basepoints understood), see \cite{MR0346781} (see also \cite[Theorem~2.1]{Hirschhorn2015} for a more elementary exposition). Hence in our case there is a short exact sequence
	\begin{equation*}
	1\to\mathrm{R}^1\varprojlim_{n\in\bbn }\pi_2^{\mathrm{path}}(X_F(\varLambda_n))\to\pionepath (X_F)\to\varprojlim_{n\in\bbn }\pionepath (X_F(\varLambda_n))\to 1.
	\end{equation*}
	Since the $X_F(\varLambda_n)$ admit finite covering spaces which are tori, their second homotopy groups vanish.
\end{proof}
\begin{Proposition}
	The loop topology turns $\pionepath (X_F,\chi )$ into a topological group with a basis of open neighborhoods of the identity given by open subgroups, hence it is equal to the $\tau$- and $\sigma$-topologies. If we endow each $\pionepath (X(\varLambda ),\chi_{\varLambda })$ with the discrete topology, then (\ref{eqn:PionepathXFAsInverseLimit}) becomes an isomorphism of topological groups.
	
	Moreover, $\pionepath (X_F,\chi )$ is complete for this topology. Therefore $\pionepath (X_F,\chi )\cong\pi_1^{\Gal }(X_F,\chi )$ is a Noohi group.
\end{Proposition}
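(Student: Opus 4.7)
The strategy is to identify the loop topology on $\pionepath(X_F,\chi)$ with the inverse limit topology on $\varprojlim_\varLambda\pionepath(X_F(\varLambda),\chi_\varLambda)$ arising from discrete topologies on each factor. Once this identification is established, everything else is formal. An inverse limit of discrete topological groups is automatically a Hausdorff topological group whose basis of open neighbourhoods of the identity is given by the open subgroups $K_\varLambda:=\ker(\pionepath(X_F,\chi)\to\pionepath(X_F(\varLambda),\chi_\varLambda))$; it is complete, being a cofiltered inverse limit of complete (discrete) topological groups; and by the cited Proposition~7.1.5 of~\cite{MR3379634}, a complete Hausdorff topological group with a basis of open subgroups at the identity is a Noohi group. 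The equality of the loop, $\tau$- and $\sigma$-topologies is then automatic: the loop topology being a group topology forces loop $=\tau$, and a basis of open subgroups forces $\tau=\sigma$.

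For the comparison of topologies, I would first observe that $\Omega(X_F,\chi)=\varprojlim_\varLambda\Omega(X_F(\varLambda),\chi_\varLambda)$ as topological spaces, since the compact-open topology commutes with cofiltered inverse limits of compact Hausdorff spaces when the source is compact Hausdorff. One direction---that the inverse limit topology is coarser than the loop topology---is easy: each $K_\varLambda$ is loop-open, since $X_F(\varLambda)$ is a compact manifold (a free finite quotient of a torus), hence locally path-connected and semilocally simply connected, so the path component $P_\varLambda^0$ of the constant loop in $\Omega(X_F(\varLambda))$ is open, whence $q^{-1}(K_\varLambda)=\pi_\varLambda^{-1}(P_\varLambda^0)$ is open in $\Omega(X_F)$.

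The converse direction---that any loop-open neighbourhood $U$ of the identity contains some $K_\varLambda$---is the crux. Given such $U$, the set $q^{-1}(U)$ contains a basic open neighbourhood $\pi_\varLambda^{-1}(W)$ of the constant loop for some $\varLambda$ and some open $W\ni c_{\chi_\varLambda}$ in $\Omega(X_F(\varLambda))$. Given $[\gamma]\in K_\varLambda$, the loop $\pi_\varLambda(\gamma)$ is nullhomotopic in $X_F(\varLambda)$, and the plan is to lift this nullhomotopy to a homotopy in $X_F$ starting at $\gamma$ and ending at a loop $\gamma'$ with $\pi_\varLambda(\gamma')=c_{\chi_\varLambda}\in W$, yielding $[\gamma]=[\gamma']\in U$. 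The key geometric input is the homotopy lifting property (with basepoint preserved) for $X_F\to X_F(\varLambda)$, established using the commutative square in which $X_{\Fbar}\to X_F$ and $X_{\Fbar}(\varLambda)\to X_F(\varLambda)$ are quotients by free (pro)finite Galois actions, while $X_{\Fbar}\to X_{\Fbar}(\varLambda)$ is a principal bundle for the compact abelian group $(\Fbar^\times/\varLambda)^\vee$ obtained by Pontryagin-dualising $0\to\varLambda\to\Fbar^\times\to\Fbar^\times/\varLambda\to 0$. The main obstacle is preserving the basepoint condition during lifting, as the fibre of $X_F\to X_F(\varLambda)$ over $\chi_\varLambda$ need not be simply connected; this may require either choosing a local section of the principal bundle at the basepoint or adjusting the lift within the fibre before projecting back down.
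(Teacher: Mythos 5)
Your proposal starts from the same key observation as the paper, namely that $\Omega(X_F,\chi)\to\varprojlim_\varLambda\Omega(X_F(\varLambda),\chi_\varLambda)$ is a homeomorphism, and your reduction of the proposition to formal facts about inverse limits of discrete groups and Noohi groups is exactly right. Where you diverge from the paper is in how the homeomorphism of fundamental groups is established. The paper disposes of it in one line: it asserts that the vertical quotient maps $\Omega\to\pionepath$ in the commutative square are open (using that each $\pionepath(X_F(\varLambda),\chi_\varLambda)$ is discrete because $X_F(\varLambda)$ is a manifold, so the path components of $\Omega(X_F(\varLambda),\chi_\varLambda)$ are open), and concludes by a diagram chase that the bottom map is continuous, open, and bijective, hence a homeomorphism. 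You instead argue at the level of neighbourhood bases at the identity: the easy direction (each $K_\varLambda$ is loop-open) you do essentially the same way, but for the hard direction (every loop-open neighbourhood of the identity contains some $K_\varLambda$) you propose an explicit homotopy-lifting argument along $X_F\to X_F(\varLambda)$. This is a genuinely more hands-on route, and it makes visible what the paper's openness claim for the right-hand vertical map is tacitly using.

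The one point you flag as an ``obstacle'' --- preserving the basepoint during the lift --- is indeed the real issue, but it has a clean resolution that you should spell out rather than leave as a suggestion. Given $[\gamma]\in K_{\varLambda_0}$ and a nullhomotopy $H\colon[0,1]^2\to X_F(\varLambda_0)$ of $\pi_{\varLambda_0}(\gamma)$ rel endpoints, you want a lift $\tilde H$ of $H$ to $X_F$ that equals $\gamma$ on $[0,1]\times\{0\}$ and is constantly $\chi$ on $\{0,1\}\times[0,1]$. This prescribes the lift on $\left([0,1]\times\{0\}\right)\cup\left(\{0,1\}\times[0,1]\right)$, which is a subcomplex such that the pair $\left([0,1]^2,\ [0,1]\times\{0\}\cup\{0,1\}\times[0,1]\right)$ is homeomorphic to $\left([0,1]^2,\ [0,1]\times\{0\}\right)$; a Hurewicz fibration extends lifts over such pairs. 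So the missing ingredient is precisely that $X_F\to X_F(\varLambda_0)$ is a Hurewicz fibration. Using the countability of $F$ (as in the paper's proof of the preceding proposition, where a cofinal sequence $(\varLambda_n)$ is chosen), this reduces to the fact that the projection from the limit of a tower of Hurewicz fibrations to any stage is again a Hurewicz fibration, which holds by iteratively extending the lift up the tower. Note also that your description of the fibre of $X_{\Fbar}\to X_{\Fbar}(\varLambda)$ as a principal homogeneous space under $(\Fbar^\times/\varLambda)^\vee$ is not needed for this step once you have the fibration structure and the relative lifting property; its non-trivial path components (the connected component is the Pontryagin dual of a $\bbq$-vector space, and the component group is the dual of the torsion of $\Fbar^\times/\varLambda$) are exactly why the na\"{\i}ve lift without the boundary constraint would not suffice, so your instinct about the difficulty is correct. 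In sum: the approach is sound and somewhat more explicit than the paper's compressed argument, but you must invoke the relative homotopy lifting property and the stability of Hurewicz fibrations under countable inverse limits to close the gap you identified.
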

\begin{proof}
	Since the $X_F(\varLambda )$ are manifolds, their classical fundamental groups are discrete for the loop topology. Consider the commutative diagram
	\begin{equation*}
	\xymatrix{
		\Omega (X_F,\chi )\ar[r]\ar[d] & \varprojlim\limits_{\varLambda\in\mathfrak{L}(\Fbar /F)}\Omega (X_F(\varLambda ),\chi_{\varLambda })\ar[d]\\
		\pionepath (X_F,\chi )\ar[r]_-{(\ref{eqn:PionepathXFAsInverseLimit})} &\varprojlim\limits_{\varLambda\in\mathfrak{L}(\Fbar /F)}\pionepath (X_F(\varLambda ),\chi_{\varLambda}).
		}
	\end{equation*}
	Here the upper horizontal map is a homeomorphism and the vertical maps are open. Hence (\ref{eqn:PionepathXFAsInverseLimit}) is a bijection which is continuous and open, hence also a homeomorphism.
	
	A quasi-topological group which is a projective limit of topological groups is itself a topological group, and hence the loop and $\tau$-topologies on $\pionepath (X_F,\chi )$ agree. Since $\pionepath (X,\chi )$ is a projective limit of discrete groups its $\tau$- and $\sigma$-topologies agree, and it is complete.
\end{proof}
In particular we find that the loop topology turns $\pionepath (X_F)$, which we may identify with a subgroup of $\Gal (\Fbar /F)$, into a complete topological group whose topology is strictly finer than the subspace topology induced from the Krull topology on $\Gal (\Fbar /F)$, because it has infinite discrete quotients.

\section{Cohomology}

\noindent We will next show how to realise Galois cohomology groups with constant coefficients as suitable cohomology groups of the spaces $X_F$ and the schemes~$\mathcal{X}_F$.

\subsection{The Cartan--Leray spectral sequence} Consider the following situation: $X$ is a compact Hausdorff space and $G$ is a profinite group operating freely and continuously on~$X$, with quotient space~$Y=G\backslash X$. We will construct a spectral sequence relating the (sheaf) cohomologies of $X$ and $Y$ with the continuous group cohomology of~$G$.

\subsubsection*{Continuous group cohomology} Let $G$ be a profinite group. A \emph{continuous $G$-module} is an abelian group $A$ with a $G$-operation which becomes continuous when $A$ is endowed with the discrete topology. The continuous $G$-modules form an abelian category $\GMod$ in an obvious way; this category has enough injectives. The functor $(-)^G\colon\GMod\to\mathbf{Ab}$ sending a $G$-module $A$ to its invariant submodule $A^G$ is left exact. Hence we obtain a total derived functor between derived categories $\mathrm{R}(-)^G\colon\mathcal{D}(\GMod )\to\mathcal{D}(\mathbf{Ab})$, and derived functors in the classical sense which we call \emph{continuous group cohomology}:
\begin{equation*}
\Hup^p(G,A)=\Rup^p(-)^G (A).
\end{equation*}
Note that this may well differ from the classical group cohomology $\Hup^p(G^{\delta },A)$ where $G^{\delta }$ is $G$ as an abstract group. However, there is a canonical isomorphism
\begin{equation*}
\varinjlim_H\Hup^p(G/H,A^H)\cong\Hup^p(G,A),
\end{equation*}
where the limit is over all normal open subgroups $H$ of $G$, cf.\ the discussion in \cite[section~2.2]{MR0180551}.

For a field $F$ with separable closure $\Fbar /F$ and a continuous $\Gal (\Fbar /F)$-module $A$ we write shortly
\begin{equation*}
\Hup^m(F,A)=\Hup^m(\Gal (\Fbar /F),A);
\end{equation*}
these groups are called \emph{Galois cohomology groups}. Note that if $A$ is an abelian group interpreted as a constant module for the Galois group, then $\Hup^m(F,A)$ does not depend on the choice of a separable closure of $F$, i.e.\ for another separable closure $\Fbar '/F$ there is a canonical isomorphism $\Hup^m(\Gal (\Fbar '/F),A)\cong\Hup^m(\Gal (\Fbar /F),A)$; this justifies the notation $\Hup^m(F,A)$.

\subsubsection*{Equivariant sheaves and their cohomology} Let $X$ be a compact Hausdorff space and $G$ a profinite group operating continuously and freely on~$X$. There are several ways to define the category $\Sh_G(X)$ of $G$-equivariant abelian sheaves on~$X$.

For instance, a sheaf of abelian groups $\mathcal{A}$ on $X$ corresponds to an \emph{espace \'etal\'e} $\pi\colon A\to X$, which is a topological space $A$ with a local homeomorphism $\pi\colon A\to X$ and an abelian group structure in the category of $X$-spaces. Here $\pi^{-1}(x)\cong\mathcal{A}_x$ (the stalk), and for an open set $U\subseteq X$ sections of $\mathcal{A}$ on $U$ are the same as continuous sections of the map $\pi^{-1}(U)\to U$. Then we define a \emph{$G$-equivariant sheaf} on $X$ to be a sheaf $\mathcal{A}$ of abelian groups on $X$ together with a lift of the $G$-action on $X$ to a continuous $G$-action on~$A$. For a more `modern' definition that is more amenable to generalisations see e.g.\ \cite[section~1]{MR1620705}.

Again the $G$-equivariant sheaves on $X$ form an abelian category $\Sh_G(X)$ with enough injectives; there is a canonical equivalence $\Sh_G(\ast )\simeq\GMod$, where $\ast$ denotes the one-point space.

For a $G$-equivariant sheaf $\mathcal{A}$ the group $\Gamma (X,\mathcal{A})$ of global sections comes naturally with a continuous $G$-action. Hence we obtain a left exact functor
\begin{equation*}
\Gamma_{X,G}\colon\Sh_G(X)\to\GMod ,\quad\mathcal{A}\mapsto\Gamma (X,\mathcal{A}).
\end{equation*}
For a $G$-equivariant sheaf $\mathcal{A}$ on $X$ we obtain a complex $\Rup\Gamma_{X,G}(\mathcal{A})\in\mathcal{D}(\GMod )$; but we may also forget its $G$-structure and apply the derived functor of the usual global sections functor $\Gamma_X\colon\Sh (X)\to\mathbf{Ab}$ to it.
\begin{Lemma}
	For $\mathcal{A}\in\Sh_G(X)$ the complex of abelian groups underlying $\Rup\Gamma_{X,G}(\mathcal{A})$ (i.e.\ its image in $\mathcal{D}(\mathbf{Ab})$) is canonically isomorphic to the complex $\Rup\Gamma_X(\mathcal{A})$.
\end{Lemma}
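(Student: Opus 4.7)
The plan is to reduce the statement to the assertion that the forgetful functor $u\colon \Sh_G(X) \to \Sh(X)$ carries injective $G$-equivariant sheaves to $\Gamma_X$-acyclic (in fact flabby) sheaves. Granted this, the lemma becomes formal. Indeed, writing $F\colon \GMod \to \mathbf{Ab}$ for the (exact) forgetful functor, there is a tautological identity of functors
\begin{equation*}
F \circ \Gamma_{X,G} = \Gamma_X \circ u \colon \Sh_G(X) \to \mathbf{Ab}.
\end{equation*}
Pick an injective resolution $\mathcal{A} \to \mathcal{I}^\bullet$ in $\Sh_G(X)$; since $F$ is exact, the complex underlying $\Rup\Gamma_{X,G}(\mathcal{A}) = \Gamma_{X,G}(\mathcal{I}^\bullet)$ is $\Gamma_X(u\mathcal{I}^\bullet)$. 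Because $u$ is exact, $u\mathcal{I}^\bullet$ is a resolution of $u\mathcal{A}$; if moreover each $u\mathcal{I}^n$ is $\Gamma_X$-acyclic, it computes $\Rup\Gamma_X(u\mathcal{A})$, and we are done.

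The content of the argument is therefore the acyclicity claim, which I would establish by producing an exact left adjoint $L\colon \Sh(X) \to \Sh_G(X)$ to $u$. Any functor with an exact left adjoint preserves injectives, and injective abelian sheaves on a topological space are flabby, hence $\Gamma_X$-acyclic. The construction of $L$ is the one place where the hypotheses on the $G$-action are used. For each open normal subgroup $H \trianglelefteq G$ the finite quotient $G/H$ acts freely on $X$, and one has the usual induction functor $L_H$ along the forgetful functor $\Sh_{G/H}(X) \to \Sh(X)$, given on stalks by $(L_H \mathcal{F})_x = \bigoplus_{gH \in G/H} \mathcal{F}_{g^{-1}x}$; freeness of the action makes $L_H$ exact and left adjoint to the forgetful functor. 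Passing to the filtered colimit $L\mathcal{F} = \varinjlim_H L_H \mathcal{F}$ over the open normal subgroups of $G$ produces a sheaf with continuous $G$-action, and exactness together with the adjunction property are inherited from the finite level because filtered colimits of sheaves are exact and continuous $G$-modules are filtered colimits of $G/H$-modules.

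The hard part will be verifying that this $L$ really lands in continuous $G$-equivariant sheaves and really is left adjoint to $u$ globally; both checks amount to unwinding definitions, the only subtlety being continuity of the $G$-action on $L\mathcal{F}$, which follows because any section of $L\mathcal{F}$ over a quasi-compact open lies in the image of some $L_H\mathcal{F}$ and is therefore fixed by $H$. Once $L$ is set up, the formal preservation of injectives together with the identity $F\circ\Gamma_{X,G} = \Gamma_X\circ u$ gives the required canonical isomorphism in $\mathcal{D}(\mathbf{Ab})$.
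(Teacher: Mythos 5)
Your reduction in the first paragraph is fine and is exactly what the paper uses: the forgetful functor $F\colon \GMod\to\mathbf{Ab}$ is exact and $F\circ\Gamma_{X,G}=\Gamma_X\circ u$, so it suffices to show that $u$ carries injective equivariant sheaves to $\Gamma_X$-acyclic sheaves. The gap is in how you try to establish that acyclicity.

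You propose to produce an exact left adjoint $L$ to $u\colon\Sh_G(X)\to\Sh(X)$ and deduce that $u$ preserves injectives. Two things go wrong. First, the construction of the finite-level functors $L_H$ is ill-defined: you say that $G/H$ acts freely on $X$, but since $G$ acts freely on $X$ (Proposition~\ref{Prop:SimplePropertiesOfCHXF} and our standing hypotheses), the action of $G$ on $X$ is faithful, so it cannot factor through any proper quotient $G/H$; the group $G/H$ acts on $X/H$, not on $X$. Consequently the stalk formula $(L_H\mathcal{F})_x=\bigoplus_{gH\in G/H}\mathcal{F}_{g^{-1}x}$ is not well-defined, since $\mathcal{F}_{(gh)^{-1}x}$ is the stalk at a genuinely different point when $h\in H\smallsetminus\{1\}$. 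Second, and more fundamentally, the left adjoint you seek does not exist. Under the equivalence $\Sh_G(X)\simeq\Sh(X/G)$, the forgetful functor $u$ is identified with $\pi^{-1}$ for the quotient map $\pi\colon X\to X/G$. For infinite profinite $G$ the fibres of $\pi$ are homeomorphic to $G$, which is compact and not discrete, so $\pi$ is not a local homeomorphism; a left adjoint $\pi_!$ of $\pi^{-1}$ exists essentially only in the \'etale case. One can also see directly that $\pi^{-1}$ does not commute with infinite products (the stalk of an infinite product of sheaves is not the product of the stalks), so no left adjoint can exist.

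The resolution, which is what the paper does by citing Schneider, is to prove something weaker than preservation of injectives: one shows that $u$ carries injective equivariant sheaves to \emph{soft} sheaves on the compact Hausdorff space $X$, and soft sheaves are $\Gamma_X$-acyclic. Softness, unlike injectivity, can be checked locally and does not require any adjoint to $u$. If you want a self-contained argument in place of the citation, the route is to verify directly that for an injective $\mathcal{I}\in\Sh_G(X)$ and a closed $K\subset X$, every section of $u\mathcal{I}$ over $K$ extends to $X$, using compactness of $X$ and $G$ together with the local triviality of the $G$-action.
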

In particular, the cohomology groups of either of these complexes become continuous $G$-modules whose underlying abelian groups are the ordinary sheaf cohomology groups $\Hup^p(X,\mathcal{A})$.
\begin{proof}
	This follows directly from the fact, proved in \cite[Corollary~3]{MR1620705}, that the forgetful functor $\Sh_G(X)\to\Sh (X)$ sends injective objects to soft sheaves, hence sends an injective resolution of $\mathcal{A}$ in $\Sh_G(X)$ to an acyclic resolution of $\mathcal{A}$ in~$\Sh (X)$.
\end{proof}
\subsubsection*{Sheaves on the quotient} Let $X$ and $G$ as before, and consider the quotient map $p\colon X\to G\backslash X=Y$. There is a canonical equivalence of abelian categories between $\Sh_G(X)$ and $\Sh(Y)$, which can again be described rather simply in terms of espaces \'etal\'es:

If $\mathcal{B}$ is a sheaf of abelian groups on $Y$ with espace \'etal\'e $B$, then $\pi^{-1}\mathcal{B}$ has a natural $G$-structure since its espace \'etal\'e is the fibre product $B\times_YX$, where $G$ operates on the second factor. Vice versa, if $\mathcal{A}$ is a $G$-equivariant sheaf on $X$ with espace \'etal\'e $A$, we may form the quotient $G\backslash A\to Y$ which is the espace \'etal\'e of a sheaf on~$Y$. It is not hard to see that these two constructions are mutually inverse.
\begin{Proposition}\label{Prop:IdentityOfDerivedFunctors}
	Let $X$ be a compact Hausdorff space and let $G$ be a profinite group acting continuously and freely on~$X$, with quotient $Y=G\backslash X$. Let $A$ be an abelian group, and denote the constant sheaves on $X$ and $Y$ modelled on $A$ by $A_X$ and $A_Y$, respectively; endow $A_X$ with the tautological $G$-operation. Then there is a natural isomorphism in $\mathcal{D}(\mathbf{Ab})$:
	\begin{equation*}
	\Rup (-)^G\left( \Rup\Gamma_{X,G} (A_X) \right)\cong\Rup\Gamma_Y(A_Y).
	\end{equation*}
\end{Proposition}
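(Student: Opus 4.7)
The plan is to deduce the isomorphism from Grothendieck's theorem on the total derived functor of a composition, after first identifying $\Gamma_Y$ with $(-)^G\circ\Gamma_{X,G}$ at the underived level. The free continuous $G$-action makes $p\colon X\to Y$ behave like a principal $G$-bundle: as recalled just before the proposition, the pullback $p^{-1}\colon\Sh(Y)\to\Sh_G(X)$ is an equivalence of abelian categories, with quasi-inverse the quotient-of-espaces-\'etal\'es functor $p^G_*$, and $p^{-1}A_Y=A_X$ with its tautological $G$-action. Moreover, for any $\mathcal{B}\in\Sh(Y)$, a continuous section of $\mathcal{B}$ over an open $V\subseteq Y$ lifts uniquely (by freeness of the action) to a $G$-invariant continuous section of $p^{-1}\mathcal{B}$ over $p^{-1}(V)$; hence the natural map $\Gamma(Y,\mathcal{B})\overset{\cong}{\to}\Gamma(X,p^{-1}\mathcal{B})^G$ is an isomorphism, identifying $\Gamma_Y$ with $(-)^G\circ\Gamma_{X,G}$ under the equivalence.

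By Grothendieck's spectral sequence for the composition of derived functors it therefore suffices to show that $\Gamma_{X,G}$ sends injectives of $\Sh_G(X)$ to $(-)^G$-acyclic continuous $G$-modules. Since $p^{-1}$ is an equivalence it preserves (and reflects) injectives, so an injective object of $\Sh_G(X)$ is of the form $p^{-1}\mathcal{J}$ for $\mathcal{J}$ injective in $\Sh(Y)$. The task reduces to proving that $\Gamma(X,p^{-1}\mathcal{J})$ is a $(-)^G$-acyclic continuous $G$-module.

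To handle the acyclicity I would reduce to the finite case. Write $G=\varprojlim_N G/N$ over the open normal subgroups $N\lhd G$; then each $X_N\defined N\backslash X$ is compact Hausdorff, the finite group $G/N$ acts freely on $X_N$ with $(G/N)\backslash X_N=Y$, and $X=\varprojlim_N X_N$. The argument of the first paragraph, applied to each finite covering $p_N\colon X_N\to Y$, shows that $p_N^{-1}\mathcal{J}$ is injective in $\Sh_{G/N}(X_N)$. In the finite-group case the classical Cartan--Leray result (cf.\ Grothendieck's T\^ohoku paper) asserts that global sections of an injective equivariant sheaf under a free finite group action form an injective, hence $(-)^{G/N}$-acyclic, $(G/N)$-module. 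Using the generalisation of Proposition~\ref{Prop:SheafCohomologyProjectiveLimits} from constant coefficients to the pulled-back sheaves $p_N^{-1}\mathcal{J}$ (Godement \cite[Chapter~X, Theorem~3.1]{MR0050886}) and the standard identity $\Hup^p(G,M)\cong\varinjlim_N\Hup^p(G/N,M^N)$ for continuous $G$-modules, one concludes
\[
\Gamma(X,p^{-1}\mathcal{J})\;\cong\;\varinjlim_N\Gamma(X_N,p_N^{-1}\mathcal{J})
\]
as a continuous $G$-module, and $\Hup^p(G,\Gamma(X,p^{-1}\mathcal{J}))=\varinjlim_N\Hup^p(G/N,\Gamma(X_N,p_N^{-1}\mathcal{J}))=0$ for $p>0$.

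The main obstacle is verifying the acyclicity at the finite level together with its compatibility across the cofiltered system: one must check that the continuous $G$-action on $\Gamma(X,p^{-1}\mathcal{J})$ really is discrete (i.e.\ that every global section is fixed by some open subgroup), which in turn relies on the fact that for a free continuous profinite action on a compact Hausdorff space every $G$-equivariant sheaf that is the pullback of a sheaf from $Y$ arises from the cofiltered limit of its finite-level counterparts. Once this compatibility is established, the finite Cartan--Leray argument feeds through the colimit to yield the desired vanishing.
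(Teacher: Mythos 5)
The paper's own proof is essentially two lines: set up the commuting square identifying $\Gamma_Y$ with $(-)^G\circ\Gamma_{X,G}$ under the equivalence $\Sh(Y)\simeq\Sh_G(X)$ (observing $A_Y\mapsto A_X$ on espaces \'etal\'es), and then invoke the chain rule for derived functors. Your proposal reaches the same reduction but then tries to \emph{verify} the chain-rule hypothesis (that $\Gamma_{X,G}$ sends injectives to $(-)^G$-acyclic objects) by a cofiltered-limit reduction to the finite-group case, so the routes genuinely diverge at that point.

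Your cofiltered-limit route can be made to work, but it is considerably more machinery than needed and, as you yourself flag, it currently has a gap: you never establish that $\Gamma(X,p^{-1}\mathcal{J})$ is a \emph{discrete} $G$-module, i.e.\ that every global section is stabilised by an open subgroup. (That is true, by a compactness argument: any section of an espace \'etal\'e pulled back from $Y$ is locally constant, so $X$ is covered by finitely many basic open sets $U_j = \pi_{N_j}^{-1}(U_j')$ on which the section is constant; take $N=\bigcap_j N_j$. But you do not give this argument and call it ``the main obstacle.'') You also lean on ``the classical Cartan--Leray result'' for finite groups without saying why the relevant injectivity statement holds there either, so you have pushed the problem around rather than solved it. The much cleaner device, which makes your entire reduction unnecessary, is to observe that $\Gamma_{X,G}\colon\Sh_G(X)\to\GMod$ has an \emph{exact left adjoint}, namely the constant-sheaf-with-diagonal-$G$-action functor $M\mapsto M_X$ (exactness is stalkwise, adjunction is $p^{\ast}\dashv p_{\ast}$ for $X\to\ast$, upgraded $G$-equivariantly). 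A right adjoint of an exact functor preserves injectives, and injective objects are acyclic for any left exact functor, so the chain rule applies at once — which is what the paper's citation of the chain rule is tacitly using, both in the finite and the profinite case.
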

\begin{proof}
	Consider the following diagram of left exact functors between abelian categories:
	\begin{equation*}
	\xymatrix{
		\Sh (Y) \ar[d]_-{\Gamma_Y}\ar[r]^-{\simeq } & \Sh_G(X) \ar[d]^-{\Gamma_{X,G}} \\
		\mathbf{Ab} & \GMod \ar[l]^-{(-)^G}	
	}
	\end{equation*}
	It is easy to see that it is commutative up to isomorphism of functors.

	The equivalence on the upper horizontal line sends $A_Y$ to $A_X$, as can be seen on their espaces \'etal\'es, which are simply $A\times Y$ and $A\times X$ with $G$ operating trivially on~$A$. The claim then follows by the chain rule for derived functors $\Rup (F\circ G)\cong\Rup F\circ\Rup G$.
\end{proof}
\begin{Corollary}\label{Cor:HochschildSerreTopological}
	With the same assumptions as in Proposition~\ref{Prop:IdentityOfDerivedFunctors} there is a spectral sequence with
	\begin{equation*}
	E_2^{p,q}=\Hup^p(G,\Hup^q(X,A))\Rightarrow\Hup^{p+q}(Y,A),
	\end{equation*}
	where $\Hup^p(G,-)$ denotes continuous group cohomology, and $\Hup^q(X,A)$ and $\Hup^{p+q}(Y,A)$ denote sheaf cohomology.\hfill $\square$
\end{Corollary}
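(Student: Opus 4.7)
The plan is to identify the desired spectral sequence with the Grothendieck spectral sequence for the composition of two left-exact functors. Recall from the proof of Proposition~\ref{Prop:IdentityOfDerivedFunctors} that we have a commutative (up to natural isomorphism) diagram
\[
\xymatrix{
\Sh (Y)\ar[d]_-{\Gamma_Y}\ar[r]^-{\simeq } & \Sh_G(X) \ar[d]^-{\Gamma_{X,G}}\\
\mathbf{Ab} & \GMod \ar[l]^-{(-)^G}
}
\]
under which $A_Y$ corresponds to $A_X$. First I would verify the acyclicity hypothesis needed for Grothendieck's theorem, namely that $\Gamma_{X,G}$ sends injective objects in $\Sh_G(X)$ to $(-)^G$-acyclic continuous $G$-modules. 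This follows by the same kind of argument as in \cite[Corollary~3]{MR1620705} used in the preceding lemma: one can realise enough injectives in $\Sh_G(X)$ as equivariant Godement-type products over the orbits of $G$, for which the global sections form coinduced modules and are therefore acyclic (in fact injective) in $\GMod$.

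Granting this, Grothendieck's spectral sequence produces, for every $\mathcal A\in\Sh_G(X)$, a convergent spectral sequence
\[
E_2^{p,q}=\Rup^p(-)^G\bigl(\Rup^q\Gamma_{X,G}(\mathcal A)\bigr)\Rightarrow\Rup^{p+q}\Gamma_Y(\mathcal A).
\]
Taking $\mathcal A=A_X$, the abutment is $\Hup^{p+q}(Y,A_Y)=\Hup^{p+q}(Y,A)$ by the equivalence identifying $A_X$ with $A_Y$.

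It remains to identify the $E_2$-term. The lemma preceding Proposition~\ref{Prop:IdentityOfDerivedFunctors} tells us that the underlying abelian group of $\Rup^q\Gamma_{X,G}(A_X)$ is the sheaf cohomology $\Hup^q(X,A)$, equipped with the natural $G$-action induced by functoriality. To apply the definition of continuous group cohomology I must check that this action is continuous. For this I invoke Proposition~\ref{Prop:SheafCohomologyProjectiveLimits}: because $G$ acts freely and continuously on the compact Hausdorff space $X$, the space $X$ is the cofiltered projective limit $\varprojlim_H (H\backslash X)$ taken over the open normal subgroups $H\trianglelefteq G$ (each $H\backslash X$ is again compact Hausdorff), so
\[
\Hup^q(X,A)\;\cong\;\varinjlim_{H}\Hup^q(H\backslash X,A),
\]
and this description exhibits $\Hup^q(X,A)$ as a filtered colimit of $G/H$-modules, hence as a continuous $G$-module. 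Consequently $\Rup^p(-)^G$ applied to it is precisely $\Hup^p(G,\Hup^q(X,A))$, yielding the $E_2$-term in the stated form.

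The main obstacle will be the first step: justifying rigorously that $\Gamma_{X,G}$ sends injective equivariant sheaves to acyclic continuous $G$-modules, since the category $\Sh_G(X)$ is slightly nonstandard and the usual reference texts phrase the result in the discrete-group setting. Once this technicality is handled, everything else is formal from Propositions~\ref{Prop:IdentityOfDerivedFunctors} and~\ref{Prop:SheafCohomologyProjectiveLimits}.
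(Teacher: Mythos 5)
Your plan follows the same route the paper intends: apply the Grothendieck spectral sequence to the factorisation $\Gamma_Y \simeq (-)^G\circ\Gamma_{X,G}$ established in Proposition~\ref{Prop:IdentityOfDerivedFunctors}, identify the abutment with $\Hup^{p+q}(Y,A)$, and identify $\Rup^q\Gamma_{X,G}(A_X)$ with $\Hup^q(X,A)$ equipped with its natural continuous $G$-action. The paper treats the corollary as immediate from the proposition (the ``chain rule'' invoked there already amounts to the Grothendieck composite-functor theorem), so your reconstruction is accurate in spirit; the observation that continuity of the $G$-action on $\Hup^q(X,A)$ can be seen explicitly via Proposition~\ref{Prop:SheafCohomologyProjectiveLimits} and the presentation $X=\varprojlim_H(H\backslash X)$ is a worthwhile addition, although strictly speaking that continuity is already built into the definition of $\Rup^q\Gamma_{X,G}$ as a derived functor valued in $\GMod$.

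On the one genuine technical point you flag --- verifying that $\Gamma_{X,G}$ sends injectives to $(-)^G$-acyclic objects --- your sketch via equivariant Godement products over orbits needs care in the profinite setting: the na\"ive product $\prod_{x\in X}(i_x)_*\mathcal{A}_x$ carries a $G$-action that need not be continuous (a general section has no reason to have an open stabiliser), so this is not obviously a resolution inside $\Sh_G(X)$. A cleaner route is to transport the problem across the equivalence $\Sh_G(X)\simeq\Sh(Y)$: an injective $\mathcal{I}\in\Sh_G(X)$ corresponds to an injective $\mathcal{J}\in\Sh(Y)$, and for an open normal $H\trianglelefteq G$ one has
\[
\Gamma(X,\mathcal{I})^H\;\cong\;\Gamma\bigl(Y,(p_H)_*p_H^{-1}\mathcal{J}\bigr)\;\cong\;\Gamma(Y,\mathcal{J})\otimes_{\bbz}\bbz[G/H],
\]
the coinduced $(G/H)$-module, since $p_H\colon H\backslash X\to Y$ is a finite Galois covering with deck group $G/H$. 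By Shapiro's lemma each $\Hup^q(G/H,\Gamma(X,\mathcal{I})^H)$ vanishes for $q>0$, and passing to the filtered colimit over $H$ gives $\Hup^q(G,\Gamma(X,\mathcal{I}))=0$ for $q>0$. With that substitution, your argument is complete.
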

Proposition~\ref{Prop:IdentityOfDerivedFunctors} and Corollary~\ref{Cor:HochschildSerreTopological} have analogues in \'etale cohomology. We content ourselves with stating the analogue of the latter.
\begin{Proposition}\label{Prop:CartanLerayEtale}
	Let $\mathcal{X}\to\mathcal{Y}$ be a pro-\'etale Galois covering of schemes, with a profinite deck transformation group~$G$, and let $A$ be an abelian group. Then there is a natural spectral sequence with
	\begin{equation*}
	E_2^{p,q}=\Hup^p(G,\Hup^q_{\et }(\mathcal{X},A))\Rightarrow\Hup^{p+q}_{\et }(\mathcal{Y},A).
	\end{equation*}
\end{Proposition}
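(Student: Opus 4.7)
The plan is to reduce to the finite-étale case and pass to the filtered colimit, in direct analogy with the proof of Corollary~\ref{Cor:HochschildSerreTopological}. First, I would write $\mathcal{X}=\varprojlim_{U}\mathcal{X}_U$ as a cofiltered limit over open normal subgroups $U\trianglelefteq G$, where each $\mathcal{X}_U=\mathcal{X}/U\to\mathcal{Y}$ is a finite étale Galois covering with deck transformation group $G/U$. (Here $\mathcal{X}/U$ exists as a scheme because $\mathcal{X}\to\mathcal{Y}$ is affine pro-étale with finite quotients; it is precisely the intermediate cover whose pullback to $\mathcal{X}$ is $U$-equivariant and trivial.)

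For each such $U$, the finite étale Galois covering $\mathcal{X}_U\to\mathcal{Y}$ is an étale $G/U$-torsor. The standard Leray/Hochschild--Serre spectral sequence for finite étale Galois covers (e.g.\ by factoring $\Gamma_{\mathcal{Y}}$ as $(-)^{G/U}$ composed with equivariant global sections on $\mathcal{X}_U$, then applying the Grothendieck spectral sequence) yields a convergent first quadrant spectral sequence
\[
E_2^{p,q}(U)=\Hup^p(G/U,\Hup^q_{\et}(\mathcal{X}_U,A))\Rightarrow\Hup^{p+q}_{\et}(\mathcal{Y},A).
\]
For $U'\subseteq U$ the surjection $G/U'\twoheadrightarrow G/U$ together with the pullback $\Hup^q_{\et}(\mathcal{X}_U,A)\to\Hup^q_{\et}(\mathcal{X}_{U'},A)$ induces a morphism of spectral sequences $E_*^{p,q}(U)\to E_*^{p,q}(U')$, with the abutment $\Hup^{p+q}_{\et}(\mathcal{Y},A)$ mapped to itself by the identity.

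Next, I would take the filtered colimit $\varinjlim_U E_*^{p,q}(U)$. Since filtered colimits of abelian groups are exact, this is again a convergent spectral sequence with the same abutment. It remains to identify the colimit of the $E_2$ pages with $\Hup^p(G,\Hup^q_{\et}(\mathcal{X},A))$. By the standard theorem on étale cohomology of cofiltered limits of qcqs schemes with affine transition maps (SGA~4, Exp.~VII, or \cite[Tag 09YQ]{StacksProject}), applied to torsion coefficients (the general case of an abelian $A$ reduces to this by writing $A$ as a colimit or by decomposing into torsion and torsion-free parts as in Proposition~\ref{Prop:CohomologyOfPontryaginDuals}), one has
\[
M\defined\varinjlim_U\Hup^q_{\et}(\mathcal{X}_U,A)\cong\Hup^q_{\et}(\mathcal{X},A),
\]
and $M$ carries a continuous $G$-action (every element is fixed by some open $U$).

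The main remaining obstacle is showing $\varinjlim_U\Hup^p(G/U,\Hup^q_{\et}(\mathcal{X}_U,A))=\Hup^p(G,M)$, where the right hand side is continuous group cohomology as defined in the text via $\Hup^p(G,M)=\varinjlim_U\Hup^p(G/U,M^U)$. The subtlety is that the $G/U$-module $\Hup^q_{\et}(\mathcal{X}_U,A)$ is in general only related to $M^U$ via the edge map of the Leray spectral sequence of the pro-étale $U$-Galois cover $\mathcal{X}\to\mathcal{X}_U$, and not equal to it on the nose. I would handle this by noting that both filtered systems $\{(\Hup^q_{\et}(\mathcal{X}_U,A))\}_U$ and $\{M^U\}_U$ have the same colimit $M$ and are intertwined in a manner compatible with the $G/U$-actions, so that after passage to the filtered colimit the induced map on $\Hup^p$ becomes an isomorphism; concretely, every continuous cochain $(G/U)^p\to M^U$ factors through some $\Hup^q_{\et}(\mathcal{X}_{U'},A)$ for sufficiently small $U'\subseteq U$, and every cocycle at finite level $U$ lifts (after refining $U$) to the target system, giving mutually inverse cofinal maps. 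Putting this together identifies the colimit spectral sequence with the one claimed in the proposition.
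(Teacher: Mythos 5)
Your argument follows the same route the paper takes (and that the cited reference [Milne, Chapter~III, Remark~2.21(b)] takes): first establish the Grothendieck spectral sequence for the composition $\Gamma_{\mathcal{Y}}=(-)^{G/U}\circ\Gamma(\mathcal{X}_U,-)$ when the deck group is finite, then pass to the filtered colimit over open normal subgroups $U\trianglelefteq G$, which is all the paper's proof says. You correctly single out the one nontrivial point in the limit passage --- that $\Hup^q_{\et}(\mathcal{X}_U,A)$ is only related to $M^U$, not equal to it --- but the concluding ``cofinality'' paragraph is garbled as written (a cochain $(G/U)^p\to M^U$ cannot literally ``factor through'' a cohomology group); the clean formulation is that for any compatible inductive system $(N_U)_U$ of discrete $G/U$-modules with colimit $N$ one has a canonical isomorphism $\varinjlim_U\Hup^p(G/U,N_U)\cong\Hup^p(G,N)$, proved by identifying inhomogeneous cochain complexes $\varinjlim_U C^{\bullet}(G/U,N_U)\cong C^{\bullet}_{\mathrm{cont}}(G,N)$ (using compactness of $G$, discreteness of the $N_U$, and exactness of filtered colimits), and applying this with $N_U=\Hup^q_{\et}(\mathcal{X}_U,A)$ and $N=\Hup^q_{\et}(\mathcal{X},A)$ supplied by the continuity theorem finishes the identification of the $E_2$-page.
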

\begin{proof}
	This is shown in \cite[Chapter~III, Remark~2.21.(b)]{MR559531}. Here is a brief summary of the proof.
	
	First we assume that $G$ is finite. Then $\mathcal{X}$ is an object of the small \'etale site of $\mathcal{Y}$ on which $G$ acts by automorphism, hence the functor
	\begin{equation*}
	\Sh (\mathcal{Y}_{\et})\to\GMod ,\quad\mathcal{F}\mapsto\Gamma (\mathcal{X},\mathcal{F}),
	\end{equation*}
	is well-defined. Its composition with the forgetful functor $\GMod\to\mathbf{Ab}$ is the usual global sections functor. Hence we obtain a spectral sequence relating the derived functors of these functors.
	
	We deduce the general case by passing to the limit over all coverings $H\backslash\mathcal{X}\to\mathcal{Y}$ with $H\subseteq G$ an open normal subgroup.
\end{proof}

\subsection{The cohomology of $X_F$ and $\mathcal{X}_F$} We can now compute some cohomology groups of these spaces using the Cartan--Leray spectral sequence. We discuss the topological case in detail, the \'etale case for torsion coefficients is analogous.

We begin by computing the cohomology of $X_F$ when $F$ is algebraically closed.
\begin{Proposition}\label{Prop:CohomologyOfXFBar}
	Let $\Fbar$ be an algebraically closed field containing $\bbq (\zeta_{\infty })$.
	\begin{enumerate}
		\item Let $A$ be an abelian torsion group. Then $\Hup^0(X_{\Fbar },A)=A$ and $\Hup^p(X_{\Fbar },A)=0$ for all $p>0$.
		\item There is a canonical isomorphism of graded algebras
		\begin{equation*}
		\bigoplus_{p\ge 0}\Hup^p(X_{\Fbar },\bbq )\cong\bigoplus_{p\ge 0}\bigwedge\nolimits_{\bbq }^{\!\! p}\Fbar^{\times }_{\mathrm{tf}}.
		\end{equation*}
		\item $\Hup^0(X_{\Fbar },\bbz )=\bbz$, and for each $p>0$ the inclusion $\bbz\hookrightarrow\bbq$ induces an isomorphism $\Hup^p(X_{\Fbar },\bbz )\cong\Hup^p(X_{\Fbar },\bbq )$.
	\end{enumerate}
\end{Proposition}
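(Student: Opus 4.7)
The plan is to reduce everything to Proposition~\ref{Prop:CohomologyOfPontryaginDuals}, which already computes the sheaf cohomology of the Pontryagin dual of a $\bbq$-vector space. The key observation is that when $F=\Fbar$ is algebraically closed of characteristic zero, the multiplicative group $\Fbar^{\times }$ is divisible, so the torsion-free quotient $\Fbar^{\times }_{\mathrm{tf}}=\Fbar^{\times }/\mu_{\infty }$ is a $\bbq$-vector space. By the defining short exact sequence
\[
0\to\Hom (\Fbar^{\times }_{\mathrm{tf}},\bbs^1)\to\Hom (\Fbar^{\times },\bbs^1)\to\Hom (\mu_{\infty },\bbs^1)\to 0
\]
(which is the middle column of the big diagram~(\ref{eqn:HugeDiagramReal}) used in Section~6), the subset $X_{\Fbar }$ is a translate, hence a coset, of the closed subgroup $\Hom (\Fbar^{\times }_{\mathrm{tf}},\bbs^1)=(\Fbar^{\times }_{\mathrm{tf}})^{\vee }$. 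In particular $X_{\Fbar }$ is homeomorphic to the Pontryagin dual of the $\bbq$-vector space $V\defined\Fbar^{\times }_{\mathrm{tf}}$.

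Granting this homeomorphism, parts~(i)--(iii) are essentially immediate applications of the respective parts of Proposition~\ref{Prop:CohomologyOfPontryaginDuals}. For (i), sheaf cohomology with torsion coefficients vanishes in positive degrees on $V^{\vee }$. For (ii), one has $\Hup^p(V^{\vee },\bbq )\cong\bigwedge_{\bbq }^pV$ as graded algebras (with cup product corresponding to wedge product), yielding the claimed description. For (iii), the short exact sequence $0\to\bbz\to\bbq\to\bbq /\bbz\to 0$ together with the vanishing in (i) for $A=\bbq /\bbz$ gives the isomorphism $\Hup^p(X_{\Fbar },\bbz )\cong\Hup^p(X_{\Fbar },\bbq )$ for $p>0$, while $\Hup^0$ is $\bbz$ because $X_{\Fbar}$ is connected by Proposition~\ref{Prop:SimplePropertiesOfCHXF}.(ii).

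The only mildly subtle point is ensuring that the choice of a splitting $\Fbar^{\times }\cong\mu_{\infty }\oplus V$ used to identify $X_{\Fbar }$ with $V^{\vee }$ does not affect the canonicity claimed in (ii) and (iii). To see this, I would note that any two such identifications differ by translation by an element of $V^{\vee }=(\Fbar^{\times }_{\mathrm{tf}})^{\vee }$, and translation acts trivially on cohomology of a topological group (since translations are homotopic to the identity through the group operation, or more concretely because the induced self-homeomorphisms of $(\Fbar^{\times }_{\mathrm{tf}})^{\vee }$ are pulled back from the identity under the natural map $V^{\vee}\to V^{\vee}$). Alternatively, one can give a canonical description: the class in $\Hup^1(X_{\Fbar },\bbz )$ associated to $\alpha\in\Fbar^{\times }_{\mathrm{tf}}$ is the pullback of the fundamental class of $\bbs^1$ along the evaluation map $X_{\Fbar}\to\bbs^1$, $\chi\mapsto\chi (\alpha)$, which is independent of the splitting. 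Cup products of these classes then provide the canonical map $\bigwedge^p_{\bbq }\Fbar^{\times }_{\mathrm{tf}}\to\Hup^p(X_{\Fbar },\bbq )$, and checking it is an isomorphism reduces to the finitely generated case already contained in Proposition~\ref{Prop:CohomologyOfPontryaginDuals}.(ii) via Proposition~\ref{Prop:SheafCohomologyProjectiveLimits}.

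Given that all the real work sits in Proposition~\ref{Prop:CohomologyOfPontryaginDuals}, I do not anticipate any significant obstacle; the main thing to be careful about is the canonicity discussion above, but since the proposition as stated only asserts the existence of isomorphisms (with the cup/wedge product compatibility), even a non-canonical identification $X_{\Fbar}\cong V^{\vee}$ suffices.
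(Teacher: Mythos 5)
Your proof takes essentially the same route as the paper: observe that $X_{\Fbar }$ is a coset of the closed subgroup $(\Fbar^{\times }_{\mathrm{tf}})^{\vee }$ inside $(\Fbar^{\times })^{\vee }$, transport everything along a translation homeomorphism, and quote Proposition~\ref{Prop:CohomologyOfPontryaginDuals}. The paper handles canonicity slightly differently — it observes that $\chi\mapsto t_{\chi }^{\ast }$ is a continuous map from the connected space $X_{\Fbar }$ to a discrete target and is therefore constant — while you argue that translations act trivially on cohomology.

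One imprecision worth fixing: your parenthetical claim that ``translations are homotopic to the identity through the group operation'' does not apply here, because $(\Fbar^{\times }_{\mathrm{tf}})^{\vee }$ is connected but not path-connected (for $\Fbar^\times_{\mathrm{tf}}\neq 0$ it contains a solenoid factor, cf.\ Propositions~\ref{Prop:PiZeroPathSolenoid} and~\ref{Prop:PiZeroPathPontryaginDual}), so translation by an element outside the neutral path component is not a priori homotopic to the identity. The conclusion that translations act trivially on sheaf cohomology with constant coefficients is nonetheless true; one clean justification is to write $(\Fbar^{\times }_{\mathrm{tf}})^{\vee }$ as a cofiltered limit of tori and invoke Proposition~\ref{Prop:SheafCohomologyProjectiveLimits}, where on each torus the statement is the usual path-connected one, or simply to use the paper's continuity-plus-connectedness observation. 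Your alternative, intrinsic description of the generators of $\Hup^1$ via evaluation maps $\chi\mapsto\chi(\alpha)$ is a good way to make the isomorphism manifestly canonical and circumvents the issue entirely.
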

\begin{proof}
	Recall that $X_{\Fbar }$ is a translate of the subgroup $(\Fbar^{\times }_{\mathrm{tf}})^{\vee }\subset (\Fbar^{\times })^{\vee }$. Hence for any $\chi\in X_{\Fbar }$ we obtain a homeomorphism $t_{\chi }\colon (\Fbar^{\times }_{\mathrm{tf}})^{\vee }\to X_{\Fbar }$ by $t_{\chi }(\omega )=\chi\omega$, and therefore an isomorphism of cohomology groups $t_{\chi}^{\ast }\colon\Hup^{\bullet }(X_{\Fbar },A)\to\Hup^{\bullet }((\Fbar^{\times }_{\mathrm{tf}})^{\vee },A)$. These depend continuously on $\chi\in X_{\Fbar }$, but since $X_{\Fbar }$ is connected, they must be independent of~$\chi$. Hence we obtain a canonical isomorphism $\Hup^{\bullet }(X_{\Fbar },A)\cong\Hup^{\bullet }((\Fbar^{\times }_{\mathrm{tf}})^{\vee },A)$, and the statements follow from Proposition~\ref{Prop:CohomologyOfPontryaginDuals}.
\end{proof}
\begin{Theorem}\label{Thm:CechCohomologyOfXF}
	Let $F$ be a field containing $\bbq (\zeta_{\infty})$.
	\begin{enumerate}
		\item For every abelian torsion group $A$ and every $m\ge 0$ there is a natural isomorphism
		\begin{equation*}
		\Hup^m(X_F,A)\cong\Hup^m(F,A).
		\end{equation*}
		\item For each $m\ge 0$ there are natural isomorphisms
		\begin{equation*}
		\Hup^m(X_F,\bbq )\cong\left( \bigwedge\nolimits_{\bbq }^{\!\! m}\Fbar^{\times}_{\mathrm{tf}}\right)^{\Gal (\Fbar /F)}.
		\end{equation*}
		In low degrees this simplifies to
		\begin{equation*}\Hup^0(X_F,\bbq )=\bbq\qquad\text{and}\qquad\Hup^1(X_F,\bbq )\cong F^{\times }\otimes_{\bbz }\bbq .
		\end{equation*}
		\item The cohomology groups with integral coefficients begin with 
		\begin{equation*}
		\Hup^0(X_F,\bbz )=\bbz\qquad\text{and}\qquad\Hup^1(X_F,\bbz )\cong F^{\times }_{\mathrm{tf}}.
		\end{equation*}
	\end{enumerate}
\end{Theorem}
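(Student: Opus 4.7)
The proof proceeds via the Cartan--Leray spectral sequence (Corollary~\ref{Cor:HochschildSerreTopological}) applied to the Galois covering $p\colon X_{\Fbar}\to X_F$ with deck group $G=\Gal(\Fbar/F)$; the action is free and continuous by Proposition~\ref{Prop:SimplePropertiesOfCHXF}.(i), so the spectral sequence applies, giving
\begin{equation*}
E_2^{p,q}=\Hup^p(G,\Hup^q(X_{\Fbar},A))\Longrightarrow\Hup^{p+q}(X_F,A).
\end{equation*}

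For part (i), Proposition~\ref{Prop:CohomologyOfXFBar}.(i) kills all rows $q\geq 1$ when $A$ is torsion, so the spectral sequence collapses to the axis $q=0$, giving $\Hup^m(X_F,A)\cong\Hup^m(G,A)=\Hup^m(F,A)$. For part (ii), Proposition~\ref{Prop:CohomologyOfXFBar}.(ii) identifies the rows $q\geq 1$ with $\bigwedge^q_{\bbq}\Fbar^{\times}_{\mathrm{tf}}$, which is a $\bbq$-vector space; for any continuous $G$-module $V$ that is a $\bbq$-vector space, one has $\Hup^p(G,V)=\varinjlim_H\Hup^p(G/H,V^H)=0$ for $p>0$ by averaging over the finite quotient $G/H$ (whose order is invertible in $V^H$). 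Hence the spectral sequence collapses on the axis $p=0$, giving the natural isomorphism $\Hup^m(X_F,\bbq)\cong(\bigwedge^m_{\bbq}\Fbar^{\times}_{\mathrm{tf}})^G$. For the low-degree identifications, $m=0$ is trivial; for $m=1$, apply $G$-invariants to $0\to\mu_{\infty}\to\Fbar^{\times}\to\Fbar^{\times}_{\mathrm{tf}}\to 0$ using Hilbert~90 to obtain $0\to F^{\times}_{\mathrm{tf}}\to(\Fbar^{\times}_{\mathrm{tf}})^G\to\Hup^1(G,\mu_{\infty})\to 0$, and observe that the middle group is a $\bbq$-vector space while the right group is torsion, hence $(\Fbar^{\times}_{\mathrm{tf}})^G\cong F^{\times}_{\mathrm{tf}}\otimes\bbq\cong F^{\times}\otimes\bbq$.

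For part (iii), $\Hup^0(X_F,\bbz)=\bbz$ follows from connectedness of $X_F$ (Proposition~\ref{Prop:SimplePropertiesOfCHXF}.(ii)). For $\Hup^1$, I feed parts~(i) and~(ii) into the long exact sequence of constant sheaves arising from $0\to\bbz\to\bbq\to\bbq/\bbz\to 0$:
\begin{equation*}
\bbq\twoheadrightarrow\bbq/\bbz\to\Hup^1(X_F,\bbz)\to F^{\times}\otimes\bbq\to\Hup^1(F,\bbq/\bbz).
\end{equation*}
The leftmost arrow is surjective, so $\Hup^1(X_F,\bbz)$ embeds into $F^{\times}\otimes\bbq$ as the kernel of the connecting map to $\Hup^1(F,\mu_{\infty})$. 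The goal is then to show this kernel equals $F^{\times}_{\mathrm{tf}}\subset F^{\times}\otimes\bbq$, which follows once this connecting map is identified with the Kummer pairing $F^{\times}_{\mathrm{sat}}/\mu_{\infty}=F^{\times}\otimes\bbq\to\Hom_{\mathrm{cont}}(G,\mu_{\infty})$, whose kernel is $F^{\times}/\mu_{\infty}=F^{\times}_{\mathrm{tf}}$ by Kummer theory.

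The main technical obstacle is this last identification with the Kummer map. My plan is to make it explicit on 1-cocycles: under the spectral-sequence identification $(\Fbar^{\times}_{\mathrm{tf}})^G\cong\Hup^1(X_F,\bbq)$, a class represented by $\alpha\in\Fbar^{\times}$ with $\sigma\alpha/\alpha\in\mu_{\infty}$ for all $\sigma$ corresponds to the function $f_{\alpha}\colon X_{\Fbar}\to\bbs^1$, $\chi\mapsto\chi(\alpha)$, pulled back to $X_F$. A direct computation gives $(\sigma f_{\alpha})(\chi)=\chi(\sigma^{-1}\alpha)=\iota(\sigma\alpha/\alpha)^{-1}\cdot f_{\alpha}(\chi)$, so the $G$-equivariance obstruction is the 1-cocycle $\sigma\mapsto\sigma\alpha/\alpha\in\mu_{\infty}$. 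This is precisely the Kummer cocycle, and under the Cartan--Leray filtration it computes the image of $[\alpha]$ in $\Hup^1(G,\mu_{\infty})=\Hup^1(X_F,\bbq/\bbz)$. Once this is checked, parts (iii)'s identification $\Hup^1(X_F,\bbz)\cong F^{\times}_{\mathrm{tf}}$ follows, with the inverse map $F^{\times}_{\mathrm{tf}}\to\Hup^1(X_F,\bbz)$ given geometrically by sending $\alpha\in F^{\times}$ to the class of the $G$-invariant function $f_{\alpha}$ via the exponential sequence $0\to\bbz\to\bbr\to\bbs^1\to 0$.
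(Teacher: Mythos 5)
Your approach is essentially identical to the paper's: the same Cartan--Leray spectral sequence for the covering $X_{\Fbar}\to X_F$, the same collapse arguments (torsion coefficients kill the $q>0$ rows by Proposition~\ref{Prop:CohomologyOfXFBar}.(i); rational coefficients kill the $p>0$ columns since $\bbq$-vector spaces have trivial continuous Galois cohomology), and in (iii) the same reduction via the long exact coefficient sequence for $0\to\bbz\to\bbq\to\bbq/\bbz\to 0$ to identifying the connecting map with the Kummer pairing. The one place you go beyond the paper is that the paper disposes of this last identification with ``a tedious but straightforward calculation shows\dots'' whereas you sketch the actual computation using the functions $f_\alpha(\chi)=\chi(\alpha)$ and the obstruction cocycle. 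Your sketch is morally right, but be careful with the sign/convention bookkeeping: with the paper's convention $\sigma\cdot\chi=\chi\circ\sigma^{-1}$ one gets $(\sigma f_\alpha)(\chi)=\chi(\sigma\alpha)=\iota(\sigma\alpha/\alpha)\,f_\alpha(\chi)$, and more generally $\sigma^{-1}\alpha/\alpha$ is $\sigma^{-1}(\sigma\alpha/\alpha)^{-1}$ rather than $(\sigma\alpha/\alpha)^{-1}$ since $\sigma$ acts nontrivially on $\mu_\infty$; since the kernel of the Kummer map is unaffected by replacing it by its inverse or by a conjugate cocycle, none of this affects the conclusion that $\Hup^1(X_F,\bbz)\cong F^\times_{\mathrm{tf}}$, but it would need to be cleaned up if one wanted the isomorphism pinned down on the nose. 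Also note that $f_\alpha$ itself only descends to $X_F$ when $\alpha\in F^\times$; for general $\alpha$ with $\sigma\alpha/\alpha\in\mu_\infty$ it is the cohomology class (not the function) that is $G$-invariant, and the lift to $\Hup^1(X_F,\bbq)$ is supplied by the degenerate spectral sequence, not by literal pullback.
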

\begin{proof}
	We consider the Cartan--Leray spectral sequence as in Corollary~\ref{Cor:HochschildSerreTopological} for $X=X_{\Fbar }$, $G=\Gal (\Fbar /F)$ and $Y=X_F$:
	\begin{equation}\label{eqn:CLSSForOurTopologicalSpace}
	E_2^{p,q}=\Hup^p(\Gal (\Fbar /F),\Hup^q(X_{\Fbar },A)\Rightarrow\Hup^{p+q}(X_F,A).
	\end{equation}
	\begin{altenumerate}
		\item If $A$ is torsion then $\Hup^q(X_{\Fbar },A)=0$ for all $q>0$ by Proposition~\ref{Prop:CohomologyOfXFBar}, hence the spectral sequence (\ref{eqn:CLSSForOurTopologicalSpace}) degenerates at $E_2$ and we obtain an isomorphism
		\begin{equation*}
			\Hup^p(\Gal (\Fbar /F),A)\cong\Hup^p(X_F,A).
		\end{equation*}
		\item Consider the spectral sequence (\ref{eqn:CLSSForOurTopologicalSpace}) for $A=\bbq$. All the cohomology groups $\Hup^q(X_{\Fbar},\bbq )$ are $\bbq$-vector spaces, hence have trivial Galois cohomology, so $E_2^{p,q}=0$ for $p\neq 0$. Again the spectral sequence (\ref{eqn:CLSSForOurTopologicalSpace}) degenerates at $E_2$ and we obtain isomorphisms
		\begin{equation*}
		\Hup^0(\Gal (\Fbar /F),\Hup^q(X_{\Fbar },\bbq )\cong\Hup^q(X_F,\bbq ).
		\end{equation*}
		Using Proposition~\ref{Prop:CohomologyOfXFBar}.(ii) we can rewrite this in the desired form. It is clear that $\Hup^0(X_F,\bbq )=\bbq$; for the calculation of $\Hup^1(X_F,\bbq)$ we need that the Galois invariants in $\Fbar^{\times }_{\mathrm{tf}}$ are isomorphic to $F^{\times }\otimes\bbq$. To see this consider the short exact sequence of Galois modules
		\begin{equation*}
		0\to \mu_{\infty }\to \Fbar^{\times }\to\Fbar^{\times }_{\mathrm{tf}}\to 0
		\end{equation*}
		and the associated long exact Galois cohomology sequence
		\begin{equation*}
		0\to\mu_{\infty }\to F^{\times }\to (\Fbar^{\times }_{\mathrm{tf}})^{\Gal (\Fbar /F)}\to\Hom (\Gal (\Fbar /F),\mu_{\infty })\to\cdots
		\end{equation*}
		which shows that the cokernel of the inclusion $F_{\mathrm{tf}}^{\times }\to (\Fbar^{\times }_{\mathrm{tf}})^{\Gal (\Fbar /F)}$ is a torsion group. Hence $F^{\times }\otimes\bbq\to (\Fbar^{\times }_{\mathrm{tf}})^{\Gal (\Fbar /F)}$ must be an isomorphism.
		\item 
		Consider the long exact cohomology sequence for the short exact sequence of coefficient groups $0\to\bbz \to\bbq \to\mu_{\infty }\to 0$:
		\begin{equation*}
		\cdots\to\bbq \twoheadrightarrow\mu_{\infty }\overset{0}{\to }\Hup^1(X_F,\bbz )\to\Hup^1(X_F,\bbq  )\to\Hup^1(X_F,\mu_{\infty })\to\cdots .
		\end{equation*}
		We see that $\Hup^1(X_F,\bbz )$ is the kernel of the map $\Hup^1(X_F,\bbq )\to\Hup^1(X_F,\mu_{\infty })$. By (ii) the domain of this map is isomorphic to $F^{\times }\otimes\bbq\cong (F^{\times}_{\mathrm{sat}})_{\mathrm{tf}}$, by (i) the target is isomorphic to $\Hup^1(\Gal (\Fbar /F),\mu_{\infty })=\Hom (\Gal (\Fbar /F),\mu_{\infty })$. A tedious but straightforward calculation shows that this map
		\begin{equation*}
		(F^{\times }_{\mathrm{sat}})_{\mathrm{tf}}\to\Hom (\Gal (\Fbar /F),\mu_{\infty })
		\end{equation*}
		is given by $\alpha\mapsto\langle -,\alpha\rangle$, where $\langle -,-\rangle$ is the Kummer pairing discussed in section~\ref{ssn:MultiplicativelyFreeFields}. Hence its kernel is precisely $F^{\times }_{\mathrm{tf}}$.\qedhere
	\end{altenumerate}
\end{proof}
\begin{Remark}
	Even for $A=\bbz$ the spectral sequence (\ref{eqn:CLSSForOurTopologicalSpace}) gets somewhat simplified, namely then $E_2^{p,q}=0$ whenever $p\neq 0$ and $q\neq 0$. This is because then $\Hup^q(X_{\Fbar },\bbz )$ is a $\bbq$-vector space by Proposition~\ref{Prop:CohomologyOfXFBar}.(iii), hence all higher Galois cohomology groups for this space vanish.
\end{Remark}

In a similar vein we can identify Galois cohomology with constant torsion coefficients with \'etale cohomology of~$\mathcal{X}_F$:
\begin{Theorem}\label{Thm:EtaleCohomologyOfXFIsGaloisCohomology}
	Let $F\supseteq\bbq (\zeta_{\infty })$ be a field and let $A$ be an abelian torsion group, viewed as a trivial Galois module. Then for every $m\ge 0$ there is a canonical isomorphism
	\begin{equation*}
	\Hup^m_{\et }(\mathcal{X}_F,A)\cong\Hup^m(F,A).
	\end{equation*}
\end{Theorem}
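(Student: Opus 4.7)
The plan is to mimic the argument used for Theorem~\ref{Thm:CechCohomologyOfXF}.(i) in the topological setting, replacing sheaf cohomology on $X_F$ by \'etale cohomology on $\mathcal{X}_F$ and the Cartan--Leray sequence of Corollary~\ref{Cor:HochschildSerreTopological} by its \'etale analogue, Proposition~\ref{Prop:CartanLerayEtale}.

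First I would observe that the morphism $\mathcal{X}_{\Fbar}\to \mathcal{X}_F$ is a pro-\'etale Galois covering with deck transformation group $G=\Gal(\Fbar/F)$. Indeed, writing $\Fbar=\varinjlim_E E$ as the filtered colimit over all finite Galois subextensions $E/F$, one has $\mathcal{X}_{\Fbar}=\varprojlim_E \mathcal{X}_E$, and by Theorem~\ref{Theorem:CoversOfXFandExtensionsOfF} each $\mathcal{X}_E\to \mathcal{X}_F$ is a finite \'etale Galois covering with group $\Gal(E/F)$. Proposition~\ref{Prop:CartanLerayEtale} then supplies a spectral sequence
\[
E_2^{p,q}=\Hup^p\!\left(G,\Hup^q_{\et}(\mathcal{X}_{\Fbar},A)\right)\Longrightarrow \Hup^{p+q}_{\et}(\mathcal{X}_F,A).
\]

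Next I would show that $\Hup^q_{\et}(\mathcal{X}_{\Fbar},A)=0$ for every $q>0$ when $A$ is a torsion abelian group. From the proof of Proposition~\ref{Prop:XFbarIsSC}, $\mathcal{X}_{\Fbar}$ is (identified with) a connected component of $\Spec\bbc[\Fbar^\times]$; since $\Spec\bbc[\Fbar^\times]$ is a group scheme, translation gives an isomorphism of $\mathcal{X}_{\Fbar}$ with the neutral component, which by Corollary~\ref{Cor:PiZeroOfGroupAlgebra} is $\Spec\bbc[(\Fbar^\times)_{\mathrm{tf}}]$. Because $\Fbar^\times$ is divisible, $V\defined (\Fbar^\times)_{\mathrm{tf}}$ is a $\bbq$-vector space, so Proposition~\ref{Prop:EtaleCohomologyOfSpecGroupAlgebra} applies and gives $\Hup^q_{\et}(\mathcal{X}_{\Fbar},A)=\Hup^q_{\et}(\Spec\bbc[V],A)=0$ for $q>0$, while $\Hup^0_{\et}(\mathcal{X}_{\Fbar},A)=A$ with its trivial $G$-action (as $\mathcal{X}_{\Fbar}$ is connected by Proposition~\ref{Prop:XFbarIsSC}).

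The spectral sequence therefore degenerates at $E_2$ on the $q=0$ row, producing a canonical isomorphism
\[
\Hup^m_{\et}(\mathcal{X}_F,A)\cong \Hup^m(G,A)=\Hup^m(F,A).
\]
The only genuinely non-formal ingredient is Proposition~\ref{Prop:EtaleCohomologyOfSpecGroupAlgebra}, whose proof is already in place; everything else is bookkeeping, and the main point requiring a moment of care is simply the identification of $\mathcal{X}_{\Fbar}$ with $\Spec\bbc[V]$ for a $\bbq$-vector space $V$, so that the torsion cohomology vanishing is available.
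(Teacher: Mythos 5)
Your proof is correct and follows essentially the same route as the paper's: apply the \'etale Cartan--Leray spectral sequence of Proposition~\ref{Prop:CartanLerayEtale} to $\mathcal{X}_{\Fbar}\to\mathcal{X}_F$, kill the higher rows using the vanishing from Proposition~\ref{Prop:EtaleCohomologyOfSpecGroupAlgebra} after identifying $\mathcal{X}_{\Fbar}$ with $\Spec\bbc[(\Fbar^\times)_{\mathrm{tf}}]$, and read off the isomorphism from the degenerate $E_2$ page. The only difference is that you spell out the identification of $\mathcal{X}_{\Fbar}$ with a group-algebra spectrum (via Proposition~\ref{Prop:XFbarIsSC} and Corollary~\ref{Cor:PiZeroOfGroupAlgebra}) and the pro-\'etale Galois structure explicitly, which the paper leaves implicit.
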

\begin{proof}
	Consider the Cartan--Leray spectral sequence as in Proposition~\ref{Prop:CartanLerayEtale}:
	\begin{equation*}
	E_2^{p,q}=\Hup^p(\Gal (\Fbar /F),\Hup^q_{\et }(\mathcal{X}_{\Fbar },A))\Rightarrow\Hup_{\et }^{p+q}(\mathcal{X}_F,A).
	\end{equation*}
	Since $\mathcal{X}_{\Fbar }\cong\Spec\bbc [\Fbar^{\times }_{\mathrm{tf}}]$ as a scheme, $\Hup^q(\mathcal{X}_{\Fbar},A)=0$ for all $q>0$ by Proposition~\ref{Prop:EtaleCohomologyOfSpecGroupAlgebra}. Hence the spectral sequence degenerates at $E_2$, and the claim follows.
\end{proof}

\subsubsection*{The Galois symbol} Recall that for a field $F$ and an integer $m\ge 0$ the $m$-th \emph{Milnor K-group} $\MK_m(F)$ is defined as the quotient of the $m$-th exterior power $\bigwedge_{\bbz }^mF^{\times }$ by the subgroup generated by all expressions of the form $\alpha\wedge (1-\alpha )\wedge \beta_2\wedge\dotsb\wedge \beta_m$ for $\alpha\in F\smallsetminus\{ 0,1\}$ and $\beta_i\in F^{\times }$. In other words, the graded algebra $\bigoplus_{m\ge 0}\MK_m(F)$ is the quotient of the exterior algebra $\bigwedge^{\bullet }F^{\times }$ by the two-sided homogeneous ideal generated by all $\alpha\wedge (1-\alpha )$ with $\alpha\in F\smallsetminus \{ 0,1 \}$. See \cite{MR0260844} for more information.

The Milnor K-groups are related to the more universal and well-known \emph{Quillen $K$-groups} $\mathrm{K}_m(F)$ as follows. There are canonical isomorphisms $\mathrm{K}_0(F)\cong\bbz$ and $\mathrm{K}_1(F)\cong F^{\times }$. There is therefore a unique multiplicative extension $\bigwedge_{\bbz }^{\bullet }F^{\times }\to\mathrm{K}_{\bullet }(F)$;  it factors degreewise through a homomorphism $\MK_m(F)\to\mathrm{K}_m(F)$. This is an isomorphism for $m=0,1$ by construction and for $m=2$ by Matsumoto~\cite{MR0240214}, see also~\cite[\S 12]{MR0260844}.

For $n$ prime to the characteristic of $F$ there is a canonical homomorphism
\begin{equation*}
\partial\colon F^{\times }\to\Hup^1(F,\bbz /n\bbz (1))=\Hup^1(\Gal (\Fbar /F),\mu_n)
\end{equation*}
that sends $\alpha\in F^{\times }$ to the cohomology class $\partial\alpha$ defined by the crossed homomorphism $\Gal (\Fbar /F)\to\mu_n$ sending $\sigma$ to $\sigma (\sqrt[n]{\alpha })/\sqrt[n]{\alpha }$. Taking cup products this extends to a homomorphism of graded rings
\begin{equation}\label{eqn:MapDefiningGaloisSymbol}
\bigoplus_{m\ge 0}\bigwedge\nolimits^{\!\! m}F^{\times }\to\bigoplus_{m\ge 0}\Hup^m(F,\bbz /n\bbz (m)),\quad \alpha_1\wedge\dotsb\wedge \alpha_m\mapsto \partial \alpha_1\cup\dots\cup\partial\alpha_m.
\end{equation}
For any $\alpha\in F\smallsetminus\{ 0,1\}$ the relation $\partial\alpha\cup\partial (1-\alpha )=0$ holds in $\Hup^2(F,\bbz /n\bbz (2))$. Hence (\ref{eqn:MapDefiningGaloisSymbol}) factors through the Milnor K-groups of $F$, defining the \emph{Galois symbols}
\begin{equation*}
\partial^m\colon\MK_m(F)\to\Hup^m(F,\bbz /n\bbz (m)).
\end{equation*}
The \emph{Bloch--Kato conjecture} \cite[p.~118]{MR849653}, now a theorem due to Voevodsky \cite[Theorem~6.1]{MR2811603}, asserts that for every field $F$, every integer $m\ge 0$ and every $n\in\bbn$ prime to the characteristic of $F$ the induced group homomorphism
\begin{equation*}
\MK_m(F)\otimes\bbz /n\bbz\to\Hup^m(F,\bbz /n\bbz (m))
\end{equation*}
is an isomorphism.

Assume now that $F$ contains~$\bbq (\zeta_{\infty })$; then we may ignore Tate twists. By Theorem~\ref{Thm:CechCohomologyOfXF}.(i) we obtain therefore an isomorphism
\begin{equation}\label{eqn:GaloisSymbolCohomologyXF}
\partial_n^m\colon \MK_m(F)\otimes\bbz /n\bbz\to\Hup^m(X_F,\bbz /n\bbz );
\end{equation}
taking an inductive limit over all $n\in\bbn$ we can also construct an isomorphism
\begin{equation*}
\partial^m_{\infty }\colon\MK_m(F)\otimes\bbq /\bbz\to\Hup^m(X_F,\bbq /\bbz ).
\end{equation*}
Note also that $\partial_n^1$ lifts canonically to an isomorphism $\partial^1\colon \MK_1(F)=F^{\times }\to\Hup^1(X_F,\bbz )$ by Theorem~\ref{Thm:CechCohomologyOfXF}.(iii). However, for $\alpha\in F\smallsetminus\{ 0,1\}$ the element $\partial^1(\alpha )\cup\partial^1(1-\alpha )\in\Hup^2(X_F,\bbz )$ is nonzero since its image in $\Hup^2(X_F,\bbq )$ is nonzero by Theorem~\ref{Thm:CechCohomologyOfXF}.(ii). In particular the resulting homomorphism
\begin{equation*}
\bigwedge\nolimits^{\!\! m}_{\bbz }F^{\times }\to\Hup^m(X_F,\bbz ),\quad \alpha_1\wedge\dotsb\wedge \alpha_m\mapsto\partial^1(\alpha_1)\cup\dotsb\cup\partial^1(\alpha_m)
\end{equation*}
does not factor through $\MK_m(F)$ for any $m\ge 2$ and any field $F\supseteq\bbq (\zeta_{\infty })$.
\begin{Proposition}\label{prop:corblochkato}
	Let $F\supseteq\bbq (\zeta_{\infty })$ be a field.
	\begin{enumerate}
		\item For every $m\ge 0$ the homomorphism
		\begin{equation*}
		\Hup^m(X_F,\bbq )\to\Hup^m(X_F,\bbq /\bbz )
		\end{equation*}
		is surjective.
		\item For every $m\ge 0$ the group $\Hup^m(X_F,\bbz )$ is torsion-free.
	\end{enumerate}
\end{Proposition}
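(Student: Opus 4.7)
The two parts are essentially equivalent, and I would derive both from~(i). The long exact sequence of sheaf cohomology attached to $0\to\bbz\to\bbq\to\bbq/\bbz\to 0$ reads
\[
\cdots\to\Hup^m(X_F,\bbq)\to\Hup^m(X_F,\bbq/\bbz)\xrightarrow{\delta}\Hup^{m+1}(X_F,\bbz)\to\Hup^{m+1}(X_F,\bbq)\to\cdots .
\]
Since sheaf cohomology on the compact Hausdorff space $X_F$ commutes with the filtered colimit $\bbq=\varinjlim_n\tfrac{1}{n}\bbz$ of constant sheaves (the same fact invoked in the proof of Proposition~\ref{Prop:CohomologyOfPontryaginDuals}), one has $\Hup^m(X_F,\bbq)\cong\Hup^m(X_F,\bbz)\otimes\bbq$, so the kernel of $\Hup^{m+1}(X_F,\bbz)\to\Hup^{m+1}(X_F,\bbq)$ equals $\Hup^{m+1}(X_F,\bbz)_{\mathrm{tors}}$. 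The cokernel of $\Hup^m(X_F,\bbq)\to\Hup^m(X_F,\bbq/\bbz)$ is therefore canonically identified with $\Hup^{m+1}(X_F,\bbz)_{\mathrm{tors}}$; combined with $\Hup^0(X_F,\bbz)=\bbz$, this shows that (i) for all $m\ge 0$ is equivalent to (ii) for all $m\ge 0$.

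To prove (i), I would exploit the Bloch--Kato isomorphism~(\ref{eqn:GaloisSymbolCohomologyXF}). Passing to the colimit in $n$ yields a natural isomorphism $\Hup^m(X_F,\bbq/\bbz)\cong\MK_m(F)\otimes\bbq/\bbz$, realised by cup products of the Kummer classes which, by Theorem~\ref{Thm:CechCohomologyOfXF}.(iii), are the reductions modulo $n$ of the integral classes $\partial^1(\alpha)\in\Hup^1(X_F,\bbz)$. Using cup products of these integral classes and extending $\bbq$-linearly, one obtains a homomorphism
\[
\Phi\colon (F^\times)^{\otimes m}\otimes_{\bbz}\bbq\longrightarrow\Hup^m(X_F,\bbq)
\]
whose composition with $\Hup^m(X_F,\bbq)\to\Hup^m(X_F,\bbq/\bbz)$ equals the composite
\[
(F^\times)^{\otimes m}\otimes\bbq\twoheadrightarrow (F^\times)^{\otimes m}\otimes\bbq/\bbz\twoheadrightarrow\MK_m(F)\otimes\bbq/\bbz\cong\Hup^m(X_F,\bbq/\bbz),
\]
which is surjective (the first arrow because $\bbq\twoheadrightarrow\bbq/\bbz$ is surjective, the second by the definition of Milnor K-theory). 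A short diagram chase then forces the right-hand map $\Hup^m(X_F,\bbq)\to\Hup^m(X_F,\bbq/\bbz)$ to be surjective as well, giving~(i).

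The one point that will require care is verifying that the integral class $\partial^1(\alpha)\in\Hup^1(X_F,\bbz)$, when reduced mod $n$ via the isomorphism $\Hup^1(X_F,\bbz/n\bbz)\cong\Hup^1(F,\mu_n)$ of Theorem~\ref{Thm:CechCohomologyOfXF}.(i), really coincides with the Kummer class used to define the Galois symbol. This compatibility is essentially built into the proof of Theorem~\ref{Thm:CechCohomologyOfXF}.(iii), where $\partial^1$ was characterised through the Kummer pairing; once it is granted, naturality of cup products with respect to coefficient morphisms handles everything else.
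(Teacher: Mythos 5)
Your proof is correct and follows essentially the same route as the paper's: both derive (ii) from (i) via the long exact sequence for $0\to\bbz\to\bbq\to\bbq/\bbz\to 0$, and both establish (i) by placing the Bloch--Kato surjection $\MK_m(F)\otimes\bbq/\bbz\twoheadrightarrow\Hup^m(X_F,\bbq/\bbz)$ into a commutative square whose top row is surjective and factors through $\Hup^m(X_F,\bbq)$. The only cosmetic difference is that you use $(F^{\times})^{\otimes m}\otimes\bbq$ where the paper uses $\bigwedge^{m}_{\bbq}(F^{\times}\otimes\bbq)$ (the former surjects onto the latter), and the compatibility you flag as ``the one point that will require care'' is precisely the commutativity of that square, which the paper asserts holds ``by construction.''
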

\begin{proof}
	Consider the commutative diagram
	\begin{equation*}
	\xymatrix{
		\bigwedge\nolimits_{\bbq }^{\!\! m}(F^{\times }\otimes\bbq )\ar[r]\ar[d] & \MK_m(F)\otimes\bbq /\bbz \ar[d]\\
		\Hup^m(X_F,\bbq )\ar[r]& \Hup^m(X_F,\bbq /\bbz ).
		}
	\end{equation*}
	Here the upper horizontal map is surjective by construction, and the right vertical map is surjective by the Bloch--Kato conjecture. Hence the lower horizontal map has to be surjective as well, which proves~(i). The map from (i) is part of a long exact sequence
	\begin{equation*}
	\begin{split}
	\cdots\to &\Hup^m(X_F,\bbz )\to\Hup^m(X_F,\bbq )\to\Hup^m(X_F,\bbq /\bbz)\overset{\delta^m}{\to}\\
	&\Hup^{m+1}(X_F,\bbz )\to\Hup^{m+1}(X_F,\bbq )\to\Hup^{m+1}(X_F,\bbq /\bbz)\overset{\delta^{m+1}}{\to}\cdots .
	\end{split}
	\end{equation*}
	By (i) the connecting homomorphisms $\delta^m$ have to vanish, hence the short sequences
	\begin{equation*}
	0\to\Hup^m(X_F,\bbz )\to\Hup^m(X_F,\bbq )\to\Hup^m(X_F,\bbq /\bbz)\to 0
	\end{equation*}
	are also exact. Therefore $\Hup^m(X_F,\bbz)$ injects into a $\bbq$-vector space, which proves~(ii).
\end{proof}

\subsubsection*{The cup product} Consider the following axiomatisation of the situation in Theorem~\ref{Thm:CechCohomologyOfXF}.(i). Assume that $X$ is a connected compact Hausdorff space such that $\Hup^n(X,A)=0$ for all finite abelian groups $A$ and $n>0$, and that a profinite group $G$ operates continuously and freely on~$X$. Then arguing as above we obtain canonical isomorphisms $\Hup^n(G,A)\cong\Hup^n(X/G,A)$ for all $n\geq 0$. The existence of such a space $X$ can thus be used to relate continuous group cohomology with the cohomology of a compact Hausdorff space, and one may ask whether such a space always exists. The answer is no:

\begin{Proposition}\label{Prop:CompactCSImpliesTorsionFree}
Let $X$ be a connected compact Hausdorff space with $\Hup^n(X,A)$ $=0$ for $n>0$, for all finite abelian groups~$A$. Let $G$ be a profinite group acting continuously and freely on~$X$. Then $G$ is torsion-free.
\end{Proposition}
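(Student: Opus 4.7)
The plan is to argue by contradiction, reducing to the case of a cyclic subgroup of prime order and then invoking the fact that any positive-degree \v{C}ech cohomology class on a compact Hausdorff space is nilpotent.

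First, assume that $G$ has nontrivial torsion, and choose an element of prime order $\ell$ in $G$. The subgroup it generates is finite, hence closed in the profinite group $G$, and isomorphic to $H \defined \bbz/\ell\bbz$; it acts continuously and freely on $X$, with compact Hausdorff quotient $Y \defined X/H$. I would then apply the Cartan--Leray spectral sequence of Corollary~\ref{Cor:HochschildSerreTopological} with $\bbf_\ell$-coefficients,
\[
E_2^{p,q} = \Hup^p(H, \Hup^q(X, \bbf_\ell)) \Longrightarrow \Hup^{p+q}(Y, \bbf_\ell).
\]
The hypothesis on $X$ gives $\Hup^q(X, \bbf_\ell) = 0$ for $q > 0$, while $\Hup^0(X, \bbf_\ell) = \bbf_\ell$ because $X$ is connected; so the spectral sequence collapses on the bottom row and yields an isomorphism of graded rings $\Hup^*(Y, \bbf_\ell) \cong \Hup^*(H, \bbf_\ell)$. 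The target contains an element $u$ of positive degree (degree $1$ for $\ell = 2$, degree $2$ otherwise) with $u^n \neq 0$ for every $n \geq 1$.

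To obtain a contradiction, I would exploit the following general fact: on a compact Hausdorff space $Y$, every class of positive degree in $\Hup^*(Y, \bbf_\ell)$ is nilpotent. Indeed, $\Hup^*(Y, \bbf_\ell)$ agrees with \v{C}ech cohomology, and by compactness this can be computed as the directed colimit
\[
\Hup^*(Y, \bbf_\ell) = \varinjlim_{\mathfrak{U}} \Hup^*(\lvert N(\mathfrak{U})\rvert, \bbf_\ell)
\]
over finite open covers $\mathfrak{U}$ of $Y$, in the spirit of the proof of Proposition~\ref{Prop:SheafCohomologyProjectiveLimits}. Each nerve $\lvert N(\mathfrak{U})\rvert$ is a finite simplicial complex of finite dimension $d_{\mathfrak{U}}$, so the class $u$ is represented at some stage $\mathfrak{U}_0$ by $\tilde{u} \in \Hup^{\lvert u\rvert}(\lvert N(\mathfrak{U}_0)\rvert, \bbf_\ell)$. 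Since cup products are functorial under refinement, $u^n$ is represented by $\tilde{u}^n \in \Hup^{n\lvert u\rvert}(\lvert N(\mathfrak{U}_0)\rvert, \bbf_\ell)$, which vanishes as soon as $n\lvert u\rvert > d_{\mathfrak{U}_0}$. Hence $u^n = 0$ for such $n$, contradicting the previous paragraph.

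The main points to keep careful track of are that the Cartan--Leray spectral sequence is multiplicative and that \v{C}ech cup products pass to the colimit over refinements of covers; both are standard. Notably, this argument sidesteps Smith theory entirely and is essentially a manifestation of the fact that a compact Hausdorff space cannot carry the mod-$\ell$ cohomology ring of $B\bbz/\ell\bbz$.
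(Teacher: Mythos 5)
Your proof is correct and follows essentially the same route as the paper: reduce to a cyclic subgroup of prime order, feed the hypothesis into the Cartan--Leray spectral sequence to get $\Hup^*(X/H,\bbf_\ell)\cong\Hup^*(H,\bbf_\ell)$ as rings, and derive a contradiction from the fact that positive-degree \v{C}ech classes on a compact Hausdorff space are nilpotent (Lemma~\ref{Lem:CohomologyClassesOnCompactAreNilpotent}) while $\Hup^*(\bbz/\ell\bbz,\bbf_\ell)$ contains a non-nilpotent class. The only cosmetic difference is that you unpack the nilpotence lemma via the nerve of a finite cover, where the paper simply notes the finite \v{C}ech complex has bounded degree.
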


It would be interesting to see whether one can deduce further properties of $G$.

\begin{proof}
	Assume the contrary; by passing to a suitable subgroup we may then assume that $G$ is cyclic of order $p$ for some prime~$p$. By the Cartan--Leray spectral sequence we obtain isomorphisms $\Hup^n(X/G,\bbf_p)\cong\Hup^n(G,\bbf_p)$. It is not hard to see that these isomorphisms commute with the cup product.

By Lemma~\ref{Lem:CohomologyClassesOnCompactAreNilpotent} below, every class in $\Hup^2(X/G,\bbf_p)$ is nilpotent. However, there exists a non-nilpotent class in $\Hup^2(G,\bbf_p)$ (see e.g.\ \cite[Chapter~II, Corollary~4.2]{MR2035696}), a contradiction.
\end{proof}

\begin{Lemma}\label{Lem:CohomologyClassesOnCompactAreNilpotent}
Let $X$ be a compact Hausdorff space, let $A$ be a ring and let $n>0$. Then every class in $\Hup^n(X,A)$ is nilpotent.
\end{Lemma}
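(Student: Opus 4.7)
The strategy is to exploit the Čech description of sheaf cohomology on compact Hausdorff spaces and the fact that cup products can be realised at the level of a fixed open cover whose nerve is finite dimensional. Every class can then be annihilated simply by pushing it beyond the dimension of that nerve.

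More concretely, the plan is as follows. Since $X$ is paracompact Hausdorff, sheaf cohomology $\Hup^n(X,A_X)$ agrees with Čech cohomology $\check{\Hup}^n(X,A_X)$ by \cite[Th\'eor\`eme~5.10.1]{MR0345092}, as already noted in the paper. Because $X$ is compact, every open cover admits a finite refinement, so finite open covers are cofinal in the system of all open covers, and
\[
\check{\Hup}^n(X,A_X) \;=\; \varinjlim_{\mathfrak{U}} \check{\Hup}^n(\mathfrak{U},A_X),
\]
where $\mathfrak{U}$ ranges over \emph{finite} open covers of $X$. For each such $\mathfrak{U}=\{U_1,\dotsc ,U_r\}$, $\check{\Hup}^{\bullet}(\mathfrak{U},A_X)$ is the cohomology of the ordered Čech complex, which is concentrated in degrees $0$ through $d_{\mathfrak{U}}\defined \dim N(\mathfrak{U})\le r-1$, where $N(\mathfrak{U})$ denotes the nerve of $\mathfrak{U}$.

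Now given a class $\alpha\in\Hup^n(X,A_X)$ with $n\ge 1$, one chooses a finite open cover $\mathfrak{U}$ such that $\alpha$ is the image of some $\check{\alpha }\in\check{\Hup}^n(\mathfrak{U},A_X)$. The key observation is that the usual Alexander--Whitney formula
\[
(\xi \cup \eta )_{i_0\dotsm i_{p+q}} \;=\; \xi_{i_0\dotsm i_p}\big|_{U_{i_0\dotsm i_{p+q}}} \cdot \eta_{i_p\dotsm i_{p+q}}\big|_{U_{i_0\dotsm i_{p+q}}}
\]
equips $\check{\Hup}^{\bullet}(\mathfrak{U},A_X)$ with a graded ring structure and that the refinement maps to finer covers (and hence to $\check{\Hup}^{\bullet}(X,A_X)=\Hup^{\bullet}(X,A_X)$) are ring homomorphisms. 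In particular, $\check{\alpha}^k\in\check{\Hup}^{nk}(\mathfrak{U},A_X)$ is a pre-image of $\alpha^k$ at the same cover $\mathfrak{U}$. Once $k$ is chosen large enough that $nk > d_{\mathfrak{U}}$, the group $\check{\Hup}^{nk}(\mathfrak{U},A_X)$ vanishes for dimension reasons, so $\check{\alpha}^k=0$, and therefore $\alpha^k=0$ in $\Hup^{nk}(X,A_X)$.

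The only subtle point is the compatibility of cup products between the Čech description at a fixed cover and the sheaf-cohomological cup product on $X$, but this is standard and already implicit in the identification of sheaf and Čech cohomology quoted above. Hence there is no real obstacle, and the argument fits in a few lines.
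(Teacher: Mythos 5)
Your proposal is correct and takes essentially the same approach as the paper: both identify $\Hup^{\ast}(X,A)$ with the colimit of Čech cohomology over \emph{finite} open covers (valid since $X$ is compact Hausdorff), note that this is an isomorphism of graded rings, and conclude by observing that for a finite cover $\mathfrak{U}$ the Čech complex vanishes above a fixed degree, so positive-degree classes in $\check{\Hup}^{\ast}(\mathfrak{U},A)$ are nilpotent. You simply spell out the dimension count and the compatibility of cup products under refinement, which the paper leaves implicit.
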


\begin{proof}
There is a canonical isomorphism of graded rings
\begin{equation*}
\Hup^{\ast }(X,A)\cong\varinjlim_{\mathfrak{U}}\check{\mathrm{H}}^{\ast }(\mathfrak{U},A)
\end{equation*}
where the limit runs over all \emph{finite} open covers $\mathfrak{U}$ of~$X$. By construction, every class of positive degree in $\check{\mathrm{H}}^{\ast }(\mathfrak{U},A)$ is nilpotent.
\end{proof}

\begin{Corollary}\label{Cor:AbsoluteGaloisIsTorsionFree}
Let $F\supseteq\bbq (\zeta_{\infty})$ be a field. Then the absolute Galois group of $F$ is torsion-free.\hfill $\square$
\end{Corollary}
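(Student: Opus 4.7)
The plan is to apply Proposition~\ref{Prop:CompactCSImpliesTorsionFree} directly to the pair $(X_{\Fbar}, G)$ where $G = \Gal(\Fbar/F)$ acts on $X_{\Fbar}$ in the standard way. All the hypotheses have already been verified in the preceding sections, so essentially nothing remains except to assemble the ingredients.

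First, I would note that $X_{\Fbar}$ is a connected compact Hausdorff space by Proposition~\ref{Prop:SimplePropertiesOfCHXF}, and that $G = \Gal(\Fbar/F)$ acts continuously and freely on it (also Proposition~\ref{Prop:SimplePropertiesOfCHXF}.(i), which in turn relied on Hilbert~90 via Lemma~\ref{Lem:MitHilbert90}). Next, the vanishing $\Hup^n(X_{\Fbar}, A) = 0$ for $n>0$ and every finite (hence torsion) abelian group $A$ is precisely Proposition~\ref{Prop:CohomologyOfXFBar}.(i). These are exactly the hypotheses of Proposition~\ref{Prop:CompactCSImpliesTorsionFree}, so that proposition yields immediately that $G = \Gal(\Fbar/F)$ is torsion-free.

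There is no real obstacle; the work was done in establishing the cohomological vanishing for $X_{\Fbar}$ and the freeness of the Galois action, plus the abstract nilpotence/non-nilpotence argument of Proposition~\ref{Prop:CompactCSImpliesTorsionFree} using Lemma~\ref{Lem:CohomologyClassesOnCompactAreNilpotent}. One might alternatively give a direct proof reproducing that argument: if $\sigma \in G$ had prime order $p$, then setting $H = \langle\sigma\rangle$ and $Y = H\backslash X_{\Fbar}$, the Cartan--Leray spectral sequence of Corollary~\ref{Cor:HochschildSerreTopological} would collapse to give $\Hup^{\ast}(Y, \bbf_p) \cong \Hup^{\ast}(H, \bbf_p)$ as graded rings, but the right-hand side contains a non-nilpotent class in degree $2$ while every positive-degree class in $\Hup^{\ast}(Y, \bbf_p)$ is nilpotent by Lemma~\ref{Lem:CohomologyClassesOnCompactAreNilpotent}, a contradiction. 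Either presentation is immediate given the tools already developed.
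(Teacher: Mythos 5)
Your proposal is correct and matches the paper's intended argument exactly: the Corollary is stated with $\square$ because it follows immediately from Proposition~\ref{Prop:CompactCSImpliesTorsionFree} applied to $X_{\Fbar}$ with its free $\Gal(\Fbar/F)$-action, using Proposition~\ref{Prop:SimplePropertiesOfCHXF} and Proposition~\ref{Prop:CohomologyOfXFBar}.(i) for the hypotheses. You have correctly assembled the same ingredients, and your alternative direct presentation simply inlines the proof of Proposition~\ref{Prop:CompactCSImpliesTorsionFree}, so it is the same argument in either form.
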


Of course, Corollary~\ref{Cor:AbsoluteGaloisIsTorsionFree} also follows from the Theorem of Artin--Schreier (\cite[Satz~4]{MR3069467}, see also \cite[Theorem~11.14]{MR1009787}) which says that the absolute Galois group of a field is finite and nontrivial if and only if that field is real closed. Clearly, no field containing $\bbq (\zeta_{\infty})$ can be real closed.

\section{The cyclotomic character}

\noindent In this section we develop a variant of the preceding constructions that works also for fields which do not contain all roots of unity, provided their absolute Galois groups are pro-$\ell$-groups. We will summarise the necessary structural results, the proofs being very similar to the case treated before, and then discuss in more detail some actions on cohomology groups that only become nontrivial in this new case.

\subsection{A variant with (some) roots of unity} Throughout this section we fix a rational prime $\ell$ and a perfect field $F$ with algebraic closure $\Fbar$ whose characteristic is not equal to $\ell$ (but may well be positive) such that $\Gal (\Fbar /F)$ is a pro-$\ell$-group.

We write $\mu_n$ for the group of all $n$-th roots of unity in $\Fbar$, where $n\in\bbn$. We also set
\begin{equation*}
\mu_{\ell^{\infty }}=\bigcup_{n\in\bbn }\mu_{\ell^n}\qquad\text{and}\qquad\mu_{\ell '}=\bigcup_{n\in\bbn\smallsetminus\ell\bbn }\mu_n.
\end{equation*}
The groups $\mu_{\ell^n}$ for $n<\infty$ are cyclic of order~$\ell^n$. There is a continuous group homomorphism
\begin{equation*}
\chi_{\ell ,F}\colon\Gal (\Fbar /F)\to\Aut\mu_{\ell^{\infty }}\overset{\cong}{\to}\bbz_{\ell}^{\times }
\end{equation*}
called the \emph{$\ell$-adic cyclotomic character} and characterised by $\sigma (\zeta )=\zeta^{\chi_{\ell ,F}(\sigma )}$ for all $\sigma\in\Gal (\Fbar /F)$ and $\zeta\in\mu_{\ell^{\infty }}$. Its kernel is the group $\Gal (\Fbar /F(\zeta_{\ell^{\infty }}))$, and the possibilities for its image are rather restricted.
\begin{Proposition}\label{Prop:ImageOfLAdicCyclotomic} Let $n$ be maximal such that $\mu_{\ell^n}\subset F$.	If $\ell$ is odd or if $n>1$, the image of $\Gal (\Fbar /F)$ under the $\ell$-adic cyclotomic character is equal to the subgroup
	\begin{equation*}
	U_{\ell^n}\defined 1+\ell^n\bbz_{\ell }\subset\bbz_{\ell}^{\times }.
	\end{equation*}
	If $\ell =2$ and $n=1$, there is some $m\in\{ 2,3,4,\dots ,\infty \}$ such that the image is generated by $U_{2^m}\subset\bbz_2^{\times}$ and $-1\in\bbz_2^{\times }$, where we set $U_{2^{\infty }}=\{ 1\}$.
\end{Proposition}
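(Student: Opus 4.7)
The plan is to bound $H \defined \mathrm{im}(\chi_{\ell, F})$ from above and below inside $\bbz_\ell^{\times}$, then use the structure of closed pro-$\ell$ subgroups to pin $H$ down. By the defining property of the cyclotomic character, $\chi_{\ell, F}(\sigma) \equiv 1 \pmod{\ell^k}$ if and only if $\sigma$ fixes $\mu_{\ell^k}$ pointwise, so the hypothesis $\mu_{\ell^n} \subset F$ gives $H \subseteq U_{\ell^n}$, while maximality of $n$ gives $H \not\subseteq U_{\ell^{n+1}}$. Moreover, $H$ is a continuous image of the pro-$\ell$ group $\Gal(\Fbar/F)$, hence a closed pro-$\ell$ subgroup of $\bbz_\ell^{\times}$.

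In the main case ($\ell$ odd, or $\ell = 2$ with $n \geq 2$), the subgroup $U_{\ell^n}$ is itself pro-$\ell$ and topologically isomorphic to $\bbz_\ell$ via the $\ell$-adic logarithm. Its closed subgroups are therefore exactly $\{1\}$ together with the open subgroups $U_{\ell^{n+k}}$ for $k \geq 0$. Combining this classification with the two containments above forces $H = U_{\ell^n}$; the convention $U_{\ell^{\infty}} = \{1\}$ covers the case $n = \infty$.

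For the residual case $\ell = 2$, $n = 1$, I would first apply the main case to $F' \defined F(i)$, whose absolute Galois group is a closed (hence still pro-$2$) subgroup of $\Gal(\Fbar/F)$ of index dividing $2$. This yields $\mathrm{im}(\chi_{2, F'}) = U_{2^m}$ with $m \in \{2, 3, \ldots, \infty\}$ maximal such that $\mu_{2^m} \subset F'$. Since $\chi_{2, F}$ restricts to $\chi_{2, F'}$ on $\Gal(\Fbar/F')$, this identifies $H \cap U_4 = U_{2^m}$ and exhibits $H$ as an extension $1 \to U_{2^m} \to H \to \bbz/2\bbz \to 1$, where the quotient is realised via $\bbz_2^{\times} \to \bbz_2^{\times}/U_4 = \{\pm 1\}$.

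The main obstacle is to upgrade this abstract extension to the concrete assertion that $H$ is topologically generated by $U_{2^m}$ and $-1$, i.e.\ that $-1 \in H$. The natural route is an Artin--Schreier style argument: any complex conjugation coming from a real closure of $F$ is an order-two element of $\Gal(\Fbar/F)$ acting as inversion on $\mu_{2^{\infty}}$, and hence maps to $-1 \in \bbz_2^{\times}$ under $\chi_{2,F}$. The genuinely delicate point is the case where $F$ admits no real closure and $\Gal(\Fbar/F)$ is therefore torsion-free; producing an element of $H$ mapping to $-1$ then requires some extra structural input about which pro-$2$ groups can arise as $\Gal(F(\zeta_{2^{\infty}})/F)$ under these hypotheses, and this is where I expect the bulk of the work to lie.
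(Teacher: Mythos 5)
Your treatment of the main case ($\ell$ odd, or $\ell = 2$ with $n > 1$) is exactly the paper's argument: transport to $\bbz_\ell$ via the $\ell$-adic logarithm, use that the closed subgroups of $\bbz_\ell$ are the ideals $\ell^k\bbz_\ell$, and pin down $k = 0$ from the two containments. No issues there.

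For $\ell = 2$, $n = 1$, your reduction to $F' = F(i)$ cleanly repackages the paper's one-line derivation of $H\cap U_4 = U_{2^m}$, and you have put your finger on the genuine sticking point: the abstract extension $1\to U_{2^m}\to H\to\bbz/2\bbz\to 1$ does \emph{not} by itself give $-1\in H$. Under $\bbz_2^\times\cong\{\pm 1\}\times U_4$ there are closed subgroups, such as $\overline{\langle 3\rangle}\cong\bbz_2$, which surject onto $\{\pm 1\}$ and meet $U_4$ in some $U_{2^m}$, yet are torsion-free and hence omit $-1$. Your Artin--Schreier idea handles the case where $\Gal(\Fbar/F)$ has torsion, but in the torsion-free case no ``extra structural input'' will save the claim, because it is false there. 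Take $F = \bigcup_{m\text{ odd}}\bbf_{3^m}$: this is perfect of characteristic $3 \neq 2$ with $\Gal(\Fbar/F)\cong\bbz_2$ (hence pro-$2$), and $i\notin F$ since $3^m\equiv 3\pmod 4$ for odd $m$, so $n = 1$; the Frobenius maps to $3$ under $\chi_{2,F}$, so $\operatorname{im}\chi_{2,F} = \overline{\langle 3\rangle} = \{3^a : a\in\bbz_2\}$, which does not contain $-1$ and therefore is not of the form $\pm U_{2^m}$ for any $m$. The paper's own proof of this case --- the sentence deducing $H = \pm U_{2^m}$ from a displayed short exact sequence, which as printed even has an inconsistent middle term --- does not actually justify that step; you located the same gap. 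The slip is harmless downstream, since the remainder of Section~8 only invokes Proposition~\ref{Prop:ImageOfLAdicCyclotomic} under the explicit standing hypothesis $\operatorname{im}\chi_{\ell,F} = U_{\ell^n}$, but your instinct that the $\ell = 2$, $n = 1$ branch was not fully under control was exactly right.
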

\begin{proof}
	By assumption, this image $H=\operatorname{im}\chi_{\ell ,F}$ is a closed subgroup of $\bbz_{\ell }^{\times 
	}$ which is contained in $U_{\ell^n}$ but not contained in~$U_{\ell^{n+1}}$. If $\ell$ is odd or $n>1$, the $\ell$-adic exponential series defines an isomorphism of topological groups $\ell^n\bbz_{\ell }\to U_{\ell^n}$. Any closed subgroup of $\bbz_{\ell }$ is an ideal, hence of the form $\ell^m\bbz_{\ell }$ for $0\le m\le \infty$, and an ideal contained in $\ell^n\bbz_{\ell }$ but not contained in $\ell^{n+1}\bbz_{\ell }$ must be equal to~$\ell^n\bbz_{\ell }$.
	
	The case $\ell =2$ and $n=1$ remains. Here the exponential series defines an isomorphism $4\bbz_2\to U_4$, hence by the previous argument we find that $H\cap U_4=U_{2^m}$ for some $2\le m\le \infty$. From the short exact sequence
	\begin{equation*}
	0\to 4\bbz_2\overset{\exp}{\to}U_4\to\{\pm 1\}\to 0
	\end{equation*}
	and the assumption that $H\nsubseteq U_4$ we conclude that $H=\pm U_{2^m}$.
\end{proof}
By considering algebraic extensions of finite fields we see that all of these possibilities do occur. For another example, if $F$ is a real closed field, then $\ell =2$ and $n=1$, and the image of $\chi_{2,F}\colon\Gal (\Fbar /F)\hookrightarrow\bbz_2^{\times }$ is simply $\{\pm 1\}\subset\bbz_2^{\times }$.

We will only formulate our main results in this section for the case where $\operatorname{im}\chi_{\ell ,F}=U_{\ell^n}$, the case $\operatorname{im}\chi_{2,F}=\pm U_{2^m}$ being similar.

\subsubsection*{The spaces $Y_{\ell^n,F}$} For each $n\in\bbn\cup\{\infty\}$ and each perfect field $F$ of characteristic different from $\ell$ with $\mu_{\ell^n}\subset F$ and each injective character $\iota\colon\mu_{\ell^n}\to\bbs^1$ we will now define a topological space $Y_{\ell^n,F}(\iota )$. We start with the algebraically closed case and let
\begin{equation*}
Y_{\ell^n,\Fbar }(\iota )=\{ \chi\in\Hom (\Fbar^{\times }/\mu_{\ell '},\bbs^1)\mid\chi\rvert_{\mu_{\ell^n}}=\iota \}
\end{equation*}
where `$\Hom$' denotes group homomorphisms; this space is endowed with the compact-open topology.
The Galois group $\Gal (\Fbar /F)$ operates continuously on $Y_{\ell^n,\Fbar}(\iota )$, and we set
\begin{equation*}
Y_{\ell^n,F}(\iota )=\Gal (\Fbar /F)\backslash Y_{\ell^n,\Fbar }(\iota ).
\end{equation*}
\begin{Proposition}\label{Prop:SimplePropertiesOfYlnF}
	Let $\ell$ be a rational prime and $F$ a perfect field of characteristic different from $\ell$ such that $\Gal (\Fbar /F)$ is a pro-$\ell$-group. Let $n\in\bbn\cup\{\infty\}$ such that $\mu_{\ell^n}\subset F$ and let $\iota\colon\mu_{\ell^n}\to\bbs^1$ be an injective character.
	\begin{enumerate}
		\item $Y_{\ell^n,F}(\iota )$ is a nonempty compact Hausdorff space.
		\item $\Gal (\Fbar /F)$ operates freely and properly on $Y_{\ell^n,\Fbar}(\iota )$.
	\end{enumerate}
\end{Proposition}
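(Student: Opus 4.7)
The plan is to follow the proof of Proposition~\ref{Prop:SimplePropertiesOfCHXF} rather closely, substituting Hilbert 90 at the prime $\ell$ for its use in Lemma~\ref{Lem:MitHilbert90}. First I would exhibit an explicit element of $Y_{\ell^n,\Fbar}(\iota)$, then show that $Y_{\ell^n,\Fbar}(\iota)$ is compact Hausdorff as a subspace of a Pontryagin dual. Freeness of the Galois action will again follow from Hilbert 90, at which point properness is automatic and the quotient $Y_{\ell^n,F}(\iota)$ inherits the compact Hausdorff property.

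For nonemptiness, I would use that $\Fbar^\times$ is divisible with torsion subgroup $\mu_\infty$, which splits as $\mu_{\ell^\infty}\oplus\mu_{\ell'}$. Hence $\Fbar^\times/\mu_{\ell'}$ is divisible with torsion subgroup $\mu_{\ell^\infty}$, and by injectivity of $\mu_{\ell^\infty}$ in the category of abelian groups one obtains a (non-canonical) decomposition $\Fbar^\times/\mu_{\ell'}\cong\mu_{\ell^\infty}\oplus V$ for some $\bbq$-vector space $V$. The injective character $\iota\colon\mu_{\ell^n}\to\bbs^1$ extends to $\mu_{\ell^\infty}$ by injectivity of $\bbs^1$ in the category of abelian groups, and pairing any such extension with the trivial character on $V$ yields an element of $Y_{\ell^n,\Fbar}(\iota)$. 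Compactness and Hausdorffness follow because $Y_{\ell^n,\Fbar}(\iota)$ is a coset of the closed subgroup $\Hom(\Fbar^\times/(\mu_{\ell'}\cdot\mu_{\ell^n}),\bbs^1)$ in the Pontryagin dual $\Hom(\Fbar^\times/\mu_{\ell'},\bbs^1)$, which itself is compact Hausdorff.

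For (ii) I would mirror Lemma~\ref{Lem:MitHilbert90}. Given $1\neq\sigma\in\Gal(\Fbar/F)$ and $\chi\in Y_{\ell^n,\Fbar}(\iota)$, pass to the fixed field of $\overline{\langle\sigma\rangle}$ to assume $\sigma$ topologically generates the Galois group. Since this group is then a nontrivial procyclic pro-$\ell$ group, $\overline{\langle\sigma^\ell\rangle}$ is a proper closed subgroup, and its fixed field $E$ gives a cyclic extension $E/F$ of degree $\ell$. The hypothesis that $\Gal(\Fbar/F)$ is pro-$\ell$ combined with $\ell\neq\operatorname{char}F$ forces $\mu_\ell\subset F$ (since $[F(\mu_\ell):F]$ divides $\ell-1$ and must be an $\ell$-power), so $\mathrm{N}_{E/F}(\zeta_\ell)=\zeta_\ell^\ell=1$ and Hilbert 90 furnishes $\alpha\in E^\times$ with $\sigma(\alpha)/\alpha=\zeta_\ell$. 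The relation $\sigma(\alpha)=\zeta_\ell\cdot\alpha$ precludes $\alpha\in\mu_{\ell'}$, because its left-hand side would then differ from $\alpha$ by an element of $\mu_{\ell'}$, contradicting the fact that $\zeta_\ell$ has order exactly $\ell$. Hence $\chi$ is defined on the class of $\alpha$ in $\Fbar^\times/\mu_{\ell'}$, and one computes
\[
\chi(\sigma(\alpha))=\iota(\zeta_\ell)\cdot\chi(\alpha)\neq\chi(\alpha),
\]
the inequality holding because $\iota(\zeta_\ell)$ is a nontrivial $\ell$-th root of unity by injectivity of $\iota$. Therefore $\sigma(\chi)=\chi\circ\sigma^{-1}\neq\chi$, as required.

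The remaining assertions are formal: continuous actions of compact Hausdorff groups on compact Hausdorff spaces are always proper, and for such a free action the orbit space is again compact Hausdorff, giving~(i). The only genuinely new input beyond the full-cyclotomic case of Lemma~\ref{Lem:MitHilbert90} is verifying that the $\alpha$ produced by Hilbert 90 survives the quotient by $\mu_{\ell'}$; this is immediate from the order mismatch in $\sigma(\alpha)/\alpha=\zeta_\ell$, so I do not expect any real obstacle here.
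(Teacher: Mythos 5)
Your proof is correct and follows the same overall blueprint as the paper's, which simply refers back to Proposition~\ref{Prop:SimplePropertiesOfCHXF} and invokes Lemma~\ref{Lem:MitHilbert90UndL} for the freeness. Two points of genuine difference are worth noting. First, the paper's Lemma~\ref{Lem:MitHilbert90UndL} splits the freeness argument into two cases via Artin--Schreier: a nontrivial procyclic pro-$\ell$ group is either finite cyclic (forced to be $\bbz /2\bbz$ with $F$ real closed, handled by evaluating at $\zeta_4$) or isomorphic to $\bbz_\ell$ (handled by Hilbert 90). Your observation that in \emph{any} nontrivial procyclic pro-$\ell$ group the subgroup $\overline{\langle\sigma^\ell\rangle}=\ell G$ has index exactly $\ell$, so that the fixed field of $\sigma^\ell$ is always a cyclic degree-$\ell$ extension, lets you run the Hilbert 90 argument uniformly and dispense with the Artin--Schreier dichotomy; this is a pleasant small simplification. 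Second, you explicitly verify $\mu_\ell\subset F$ from the pro-$\ell$ hypothesis, a step the paper elides but that is genuinely needed to make sense of $\mathrm{N}_{E/F}(\zeta_\ell)$.

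One minor inefficiency: your remark that the $\alpha$ furnished by Hilbert 90 does not lie in $\mu_{\ell'}$ is unnecessary. The character $\chi$ is defined on all of $\Fbar^{\times}/\mu_{\ell'}$, so there is no issue of whether $\chi$ ``is defined'' on the class of $\alpha$; the argument rests entirely on the fact that the class of $\zeta_\ell$ is nontrivial in $\Fbar^{\times}/\mu_{\ell'}$ and that $\chi(\zeta_\ell)=\iota(\zeta_\ell)\neq 1$ by injectivity of $\iota$ on $\mu_{\ell^n}\supseteq\mu_\ell$. This is exactly why only $n\ge 1$ is needed, which your verification of $\mu_\ell\subset F$ guarantees.
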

\begin{proof}
	The proof is essentially analogous to that of Proposition~\ref{Prop:SimplePropertiesOfCHXF}.(i) and~(ii). For the freeness in (ii) we use Lemma~\ref{Lem:MitHilbert90UndL} below, which is similar to Lemma~\ref{Lem:MitHilbert90} above.
\end{proof}
\begin{Lemma}\label{Lem:MitHilbert90UndL}
	Let $\ell$ and $F$ be as in Proposition~\ref{Prop:SimplePropertiesOfYlnF}, and let $k$ be a field. Then $\Gal (\Fbar /F)$ operates freely on the set $\mathcal{I}_{\ell }(\Fbar ,k)$ of all group homomorphisms $\Fbar^{\times }\to k^{\times }$ which are injective on $\mu_{\ell^{\infty }}$ and trivial on~$\mu_{\ell '}$.
\end{Lemma}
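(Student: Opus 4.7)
The plan is to mimic almost verbatim the proof of Lemma~\ref{Lem:MitHilbert90}, with the single substitution that the auxiliary prime~$p$ used there is replaced throughout by~$\ell$. The only new ingredient one needs is that $\mu_{\ell}\subset F$ automatically, which falls out of the pro-$\ell$ hypothesis.

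First I would fix $\sigma\in\Gal(\Fbar/F)\smallsetminus\{1\}$ and $\chi\in\mathcal{I}_\ell(\Fbar,k)$, and reduce to the case that $\sigma$ topologically generates $G=\Gal(\Fbar/F)$ by replacing $F$ with the fixed field of $\sigma$ in~$\Fbar$; this replacement preserves the pro-$\ell$ hypothesis on the absolute Galois group, since the new absolute Galois group is the closed subgroup $\overline{\langle\sigma\rangle}\subseteq G$. Next I would show that $\overline{\langle\sigma^{\ell}\rangle}$ is a proper closed subgroup of~$G$: indeed, every nontrivial finite continuous quotient of $G$ is a nontrivial cyclic $\ell$-group $H$, and in such an~$H$ the subgroup of $\ell$-th powers is strictly smaller than~$H$, so the image of $\sigma^{\ell}$ cannot generate~$H$. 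Therefore $E=\Fbar^{\overline{\langle\sigma^{\ell}\rangle}}$ is a cyclic Galois extension of $F$ of degree~$\ell$, with $\Gal(E/F)$ generated by the restriction of~$\sigma$.

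The second ingredient is that $\mu_{\ell}\subseteq F$. If $\ell$ is odd then $F(\mu_{\ell})/F$ has degree dividing $\ell-1$, which is coprime to~$\ell$; since $G$ is pro-$\ell$ this degree must be one, so $\mu_{\ell}\subset F$. If $\ell=2$ this is automatic from $\operatorname{char}F\neq 2$. Let $\zeta\in F$ be a primitive $\ell$-th root of unity; then $\mathrm{N}_{E/F}(\zeta)=\zeta^{\ell}=1$, and by the classical form of Hilbert's Satz~90 there exists $\alpha\in E^{\times}$ with $\sigma(\alpha)/\alpha=\zeta$. Since $\chi$ is injective on $\mu_{\ell^{\infty}}$ and $\zeta\in\mu_{\ell^{\infty}}\smallsetminus\{1\}$, we have $\chi(\zeta)\neq 1$, so
\[
\chi\bigl(\sigma(\alpha)\bigr)=\chi(\zeta)\cdot\chi(\alpha)\neq\chi(\alpha),
\]
which forces $\sigma(\chi)=\chi\circ\sigma^{-1}\neq\chi$, as required.

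There is no serious obstacle in this argument; the only subtlety is the verification that $\mu_\ell\subseteq F$, which is where the pro-$\ell$ hypothesis (rather than the much stronger hypothesis $\mu_{\infty}\subseteq F$ of Lemma~\ref{Lem:MitHilbert90}) is used. The triviality condition on $\mu_{\ell'}$ imposed on $\chi$ plays no role in the proof of freeness; it appears only in order to single out the correct connected component when one later takes the analogue of~$X_F$.
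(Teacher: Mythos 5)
Your proof is correct and follows essentially the same route as the paper's own proof, modelling it on Lemma~\ref{Lem:MitHilbert90} with $p$ replaced by~$\ell$; the only genuine difference is that you treat all cases uniformly via the observation that $\overline{\langle\sigma^\ell\rangle}$ is always a proper closed subgroup, whereas the paper splits into the case $\Gal(\Fbar/F)\cong\bbz_\ell$ (handled via Hilbert~90 exactly as you do) and the case of finite cyclic quotient, which by Artin--Schreier forces $\ell=2$ and $F$ real closed and which the paper then treats by a bare-hands computation with~$\zeta_4$ rather than by Hilbert~90. Both routes work, and your uniform version is arguably slightly cleaner; you also make explicit the point that $\mu_\ell\subset F$ (which feeds into $\mathrm{N}_{E/F}(\zeta_\ell)=1$ before one can invoke Satz~90), a fact the paper uses implicitly without comment, and your closing remark that the triviality of $\chi$ on $\mu_{\ell'}$ is irrelevant to freeness is accurate.
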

\begin{proof}
	Let $\sigma\in\Gal (\Fbar /F)$ be different from the identity element, and let $\chi\in\mathcal{I}_{\ell }(\Fbar ,k)$. We need to show that $\sigma (\chi )\neq\chi$.
	
	Replacing $F$ by the fixed field of $\sigma$ we may assume that $\sigma$ topologically generates $\Gal (\Fbar /F)$. Since $\Gal (\Fbar /F)$ is a pro-$\ell$-group, it is then either finite cyclic or isomorphic to~$\bbz_{\ell}$.
	
	In the first case it has to be cyclic of order $2$ by the Theorem of Artin--Schreier \cite[Satz~4]{MR3069467}, and $F$ has to be real closed. Then $\chi (\zeta_4)$ is a primitive fourth root of unity in $k$, and $\sigma (\chi )(\zeta_4)=\chi (\sigma (\zeta_4))=\chi (\zeta_4^{-1})=\chi (\zeta_4)^{-1}\neq \chi (\zeta_4)$, hence $\chi\neq\sigma (\chi)$.
	
	In the second case we let $E$ be the fixed field of $\sigma^{\ell}$, so that $E/F$ is a cyclic extension of degree~$\ell$. By Hilbert's Theorem~90 we find some $\alpha\in E$ with $\sigma (\alpha )/\alpha=\zeta_{\ell}$, and since $\chi (\zeta_{\ell })\neq 1$ we then find $\sigma (\chi )(\alpha )\neq \chi (\alpha )$, i.e.\ $\chi\neq\sigma (\chi)$.
\end{proof}
It is here that the assumption that $\Gal (\Fbar /F)$ is a pro-$\ell$-group is used.

\begin{Lemma}\label{Lem:ConnectedComponentsOfYlnFj}
	Let $\ell$ and $F$ as before, and assume that the image of $\chi_{\ell ,F}\colon \Gal (\Fbar /F)\to\bbz_{\ell}^{\times }$ is equal to $U_{\ell^n}$ for some $n\in\bbn\cup\{\infty\}$. Let $\iota\colon\mu_{\ell^n}\to\bbs^1$ be an injective character and let $\tilde{\iota }\colon \mu_{\ell^{\infty }}\to\bbs^1$ be an injective character with $\tilde{\iota}\rvert_{\mu_{\ell^n}}=\iota$.
	
	Then the spaces $Y_{\ell^n,F}(\iota )$ and $Y_{\ell^{\infty },F(\zeta_{\ell^{\infty}})}(\tilde{\iota })$ are canonically homeomorphic.
\end{Lemma}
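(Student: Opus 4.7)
My plan is to exhibit the homeomorphism as the one induced by the natural inclusion $Y_{\ell^{\infty},\Fbar}(\tilde{\iota})\hookrightarrow Y_{\ell^n,\Fbar}(\iota)$, which exists because the condition $\chi|_{\mu_{\ell^{\infty}}}=\tilde{\iota}$ refines the condition $\chi|_{\mu_{\ell^n}}=\iota$. Composing with the quotient by $\Gal(\Fbar/F)$, one gets a continuous map $f\colon Y_{\ell^{\infty},\Fbar}(\tilde{\iota})\to Y_{\ell^n,F}(\iota)$, and the goal is to show that $f$ is surjective and that its fibres are exactly the $\Gal(\Fbar/F(\zeta_{\ell^{\infty}}))$-orbits. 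Since $f$ factors through $Y_{\ell^{\infty},F(\zeta_{\ell^{\infty}})}(\tilde{\iota})$, this yields a continuous bijection from the latter to $Y_{\ell^n,F}(\iota)$; both being compact Hausdorff by Proposition~\ref{Prop:SimplePropertiesOfYlnF}, this bijection is automatically a homeomorphism.

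For surjectivity, the key observation is that the set $T$ of injective characters $\mu_{\ell^{\infty}}\to\bbs^1$ extending $\iota$ is a torsor under $U_{\ell^n}=\ker(\bbz_{\ell}^{\times}\to(\bbz/\ell^n)^{\times})$, where the action is via $\Aut(\mu_{\ell^{\infty}})\cong\bbz_{\ell}^{\times}$. The restriction map $Y_{\ell^n,\Fbar}(\iota)\to T$, $\chi\mapsto\chi|_{\mu_{\ell^{\infty}}}$, is $\Gal(\Fbar/F)$-equivariant for the action of $\Gal(\Fbar/F)$ on $T$ via the cyclotomic character $\chi_{\ell,F}$. By assumption the image of $\chi_{\ell,F}$ is all of $U_{\ell^n}$, so $\Gal(\Fbar/F)$ acts transitively on $T$. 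In particular every $\Gal(\Fbar/F)$-orbit in $Y_{\ell^n,\Fbar}(\iota)$ meets the fibre over $\tilde{\iota}$, i.e.\ meets $Y_{\ell^{\infty},\Fbar}(\tilde{\iota})$, which proves surjectivity of $f$.

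For the fibre computation, suppose $\chi_1,\chi_2\in Y_{\ell^{\infty},\Fbar}(\tilde{\iota})$ satisfy $f(\chi_1)=f(\chi_2)$, so that $\chi_2=\sigma(\chi_1)$ for some $\sigma\in\Gal(\Fbar/F)$. Restricting to $\mu_{\ell^{\infty}}$ gives $\tilde{\iota}=\sigma(\tilde{\iota})$, and because $\tilde{\iota}$ is injective this forces $\sigma$ to act trivially on $\mu_{\ell^{\infty}}$, i.e.\ $\sigma\in\Gal(\Fbar/F(\zeta_{\ell^{\infty}}))$. Conversely any such $\sigma$ preserves $Y_{\ell^{\infty},\Fbar}(\tilde{\iota})$ and produces two points with the same image. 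This shows the fibres of $f$ are precisely the $\Gal(\Fbar/F(\zeta_{\ell^{\infty}}))$-orbits, completing the argument.

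The only place where any real input is needed is the surjectivity step, where one uses the hypothesis that $\operatorname{im}\chi_{\ell,F}=U_{\ell^n}$; everything else is formal manipulation of torsors and the standard observation that a continuous bijection between compact Hausdorff spaces is a homeomorphism. I do not anticipate any serious obstacle.
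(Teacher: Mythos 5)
Your proof is correct and essentially identical to the paper's: the paper likewise descends the tautological inclusion $Y_{\ell^{\infty},\Fbar}(\tilde{\iota})\hookrightarrow Y_{\ell^n,\Fbar}(\iota)$ to a continuous map of compact Hausdorff spaces and reduces to bijectivity, which it establishes by decomposing $Y_{\ell^n,\Fbar}(\iota)$ set-theoretically over the extensions $j$ of $\iota$ to $\mu_{\ell^{\infty}}$, noting that $\Gal(\Fbar/F(\zeta_{\ell^{\infty}}))$ preserves each piece while $\Gal(F(\zeta_{\ell^{\infty}})/F)\cong U_{\ell^n}$ permutes them simply transitively. Your torsor $T$ is precisely that index set, and your two checks (transitivity of the Galois action on $T$ via $\chi_{\ell,F}$ for surjectivity, and injectivity of $\tilde{\iota}$ for the fibre computation) simply spell out in more detail the same simply-transitive-action argument.
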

\begin{proof}
	There is a tautological inclusion $Y_{\ell^{\infty },\Fbar}(\tilde{\iota})\hookrightarrow Y_{\ell^n,\Fbar }(\iota )$, which is equivariant for the group inclusion $\Gal (\Fbar /F(\zeta_{\infty}))\hookrightarrow\Gal (\Fbar /F)$. It therefore descends to a continuous map $Y_{\ell^{\infty },F(\zeta_{\ell^{\infty }})}(\tilde{\iota })\to Y_{\ell^n,F}(\iota )$, and since the spaces under consideration are compact Hausdorff spaces, it suffices to show that this map is a bijection.
	
	Note that $Y_{\ell^n,\Fbar }(\iota )$ is the disjoint union (as sets, not as topological spaces!)
	\begin{equation*}
	\bigcup_{\substack{j\in\Hom (\mu_{\ell^{\infty }},\bbs^1)\\ j\rvert_{\mu_{\ell^n}}=\iota }}Y_{\ell^{\infty },\Fbar }(j),
	\end{equation*}
	and the subgroup $\Gal (\Fbar /F(\zeta_{\ell^{\infty }}))\subset\Gal (\Fbar /F)$ preserves each summand while the quotient $\Gal (F(\zeta_{\ell^{\infty }})/F)\cong U_{\ell^n}$ permutes the summands simply transitively. Therefore the quotient of each summand by its stabiliser $\Gal (\Fbar /F(\zeta_{\ell^{\infty }}))$ maps bijectively to the quotient of the whole set by $\Gal (\Fbar /F)$.
\end{proof}
\begin{Proposition}
	Let $\ell$ and $F$ be as before with $\operatorname{im}\chi_{\ell ,F}=U_{\ell^n}$ and let $\iota\colon\mu_{\ell^n}\hookrightarrow\bbs^1$. Then $Y_{\ell^n,F}(\iota )$ is connected.
\end{Proposition}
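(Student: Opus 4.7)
\medskip

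\noindent\textbf{Proof plan.} The plan is to reduce to the case $n=\infty$ via Lemma~\ref{Lem:ConnectedComponentsOfYlnFj}, and then imitate the argument for $X_{\Fbar}$ in Proposition~\ref{Prop:SimplePropertiesOfCHXF}.(ii), taking advantage of the fact that the relevant Pontryagin dual is a dual of a $\bbq$-vector space.

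First, I would choose any injective extension $\tilde{\iota}\colon \mu_{\ell^{\infty}}\hookrightarrow \bbs^1$ of $\iota$ and note that the field $F(\zeta_{\ell^{\infty}})$ satisfies the standing hypotheses: it contains $\mu_{\ell^{\infty}}$, has characteristic different from $\ell$, and its absolute Galois group $\Gal(\Fbar/F(\zeta_{\ell^{\infty}}))$ is a closed subgroup of the pro-$\ell$ group $\Gal(\Fbar/F)$, hence pro-$\ell$. Lemma~\ref{Lem:ConnectedComponentsOfYlnFj} then yields a canonical homeomorphism
\[
Y_{\ell^n,F}(\iota)\;\cong\;Y_{\ell^{\infty},F(\zeta_{\ell^{\infty}})}(\tilde{\iota}),
\]
so it suffices to show that the right-hand space is connected. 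This is the crucial step: it packages the potentially delicate transitivity of the $U_{\ell^n}$-action on the connected components of $Y_{\ell^n,\Fbar}(\iota)$ into an already-established statement.

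Next, writing $E=F(\zeta_{\ell^{\infty}})$ for brevity, I would analyse $Y_{\ell^{\infty},\Fbar}(\tilde{\iota})$. By construction it is a translate inside $\Hom(\Fbar^{\times}/\mu_{\ell'},\bbs^1)$ of the closed subgroup
\[
\bigl\{\chi\in\Hom(\Fbar^{\times}/\mu_{\ell'},\bbs^1)\mid \chi|_{\mu_{\ell^{\infty}}}=1\bigr\}\;=\;\Hom\bigl(\Fbar^{\times}/\mu_{\infty},\bbs^1\bigr)\;=\;(\Fbar^{\times}_{\mathrm{tf}})^{\vee},
\]
using $\mu_{\ell'}\cdot\mu_{\ell^{\infty}}=\mu_{\infty}$. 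Since $\Fbar^{\times}$ is divisible with torsion subgroup $\mu_{\infty}$, the quotient $\Fbar^{\times}_{\mathrm{tf}}=\Fbar^{\times}/\mu_{\infty}$ is a $\bbq$-vector space; its Pontryagin dual is therefore a connected compact Hausdorff abelian group (for example, by the general facts recalled at the beginning of Section~3, or directly: being torsion-free on the discrete side translates to connectedness on the compact side). Consequently $Y_{\ell^{\infty},\Fbar}(\tilde{\iota})$, being a translate of a connected topological group, is itself connected.

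Finally, $Y_{\ell^{\infty},E}(\tilde{\iota})=\Gal(\Fbar/E)\backslash Y_{\ell^{\infty},\Fbar}(\tilde{\iota})$ is a continuous image of a connected space, so it is connected, which completes the proof after composing with the homeomorphism above. There is no real obstacle in this argument beyond recognising the correct torsor structure; the one point that deserves care (and which the statement of Lemma~\ref{Lem:ConnectedComponentsOfYlnFj} takes off our hands) is that when $n$ is finite the space $Y_{\ell^n,\Fbar}(\iota)$ decomposes as a disjoint union of translates indexed by the characters $\mu_{\ell^{\infty}}\to\bbs^1$ extending $\iota$, and these summands are only joined together after passing to the Galois quotient—this is exactly what the reduction to $n=\infty$ encodes.
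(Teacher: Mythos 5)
Your argument is correct and is essentially identical to the paper's: reduce to $n=\infty$ via Lemma~\ref{Lem:ConnectedComponentsOfYlnFj}, observe that $Y_{\ell^{\infty},\Fbar}(\tilde{\iota})$ is a translate of the Pontryagin dual of the $\bbq$-vector space $\Fbar^{\times}/\mu_{\infty}$, hence connected, and then pass to the Galois quotient. You spell out a few routine verifications the paper leaves implicit (that $F(\zeta_{\ell^{\infty}})$ satisfies the standing hypotheses, that $\mu_{\ell'}\cdot\mu_{\ell^{\infty}}=\mu_{\infty}$), but there is no difference in method.
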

\begin{proof}
	By Lemma~\ref{Lem:ConnectedComponentsOfYlnFj} we may assume that $n=\infty$. Then $Y_{\ell^{\infty },\Fbar}(\iota )$ is homeomorphic to the Pontryagin dual of the torsion-free group $\Fbar^{\times }/\mu_{\ell '}\mu_{\ell^{\infty }}=\Fbar^{\times }/\mu_{\infty }$, hence it is connected. Therefore its quotient $Y_{\ell^{\infty },F}(\iota )$ is also connected.
\end{proof}
\begin{Proposition}\label{Prop:EtaleFGYellF}
	Let $\ell$, $F$, $n$, $\iota$ and $\tilde{\iota }$ as before.
	\begin{enumerate}
		\item The \'etale fundamental group of $Y_{\ell^{\infty },\Fbar }(\tilde{\iota })$ is trivial.
		\item The \'etale fundamental group of $Y_{\ell^n,F}(\iota )$ is isomorphic to $\Gal (\Fbar /F(\zeta_{\ell^{\infty } }))$; this isomorphism is canonical up to inner automorphisms.
	\end{enumerate}
\end{Proposition}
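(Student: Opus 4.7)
The plan is to treat (i) directly as a consequence of the results in Section 3 on Pontryagin duals, and then to deduce (ii) by first reducing to the case $n=\infty$ via Lemma~\ref{Lem:ConnectedComponentsOfYlnFj} and afterwards imitating the argument of Theorem~\ref{Theorem:TopologicalEtaleFundamentalGroupOfCHSIsGalois}.

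For (i), I would observe that the space
\[
Y_{\ell^{\infty },\Fbar}(\tilde{\iota})=\{\chi\in\Hom(\Fbar^{\times}/\mu_{\ell'},\bbs^1)\mid\chi\rvert_{\mu_{\ell^{\infty}}}=\tilde{\iota}\}
\]
is a coset of the closed subgroup $\Hom(\Fbar^{\times}/\mu_{\infty},\bbs^1)\subset\Hom(\Fbar^{\times}/\mu_{\ell'},\bbs^1)$, and translation by a fixed element in the ambient compact topological group defines a homeomorphism onto that subgroup. Since $\Fbar^{\times}$ is divisible with torsion subgroup $\mu_{\infty}$, the quotient $\Fbar^{\times}/\mu_{\infty}$ is a $\bbq$-vector space, so Corollary~\ref{Cor:PioneetPontryaginDual} applies and yields $\pioneet(Y_{\ell^{\infty },\Fbar}(\tilde{\iota}))=1$.

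For (ii), I would first use Lemma~\ref{Lem:ConnectedComponentsOfYlnFj} to replace $F$ by $F(\zeta_{\ell^{\infty}})$ and $\iota$ by $\tilde\iota$, reducing to the case $n=\infty$. Then I would copy the template of Theorem~\ref{Theorem:TopologicalEtaleFundamentalGroupOfCHSIsGalois}: for every finite Galois extension $E/F(\zeta_{\ell^{\infty}})$ inside $\Fbar$, set $Y_E=\Gal(\Fbar/E)\backslash Y_{\ell^{\infty},\Fbar}(\tilde\iota)$. By Proposition~\ref{Prop:SimplePropertiesOfYlnF}.(ii), the pro-$\ell$ group $\Gal(\Fbar/F(\zeta_{\ell^{\infty}}))$ acts continuously, freely and properly, and so does its finite quotient $\Gal(E/F(\zeta_{\ell^{\infty}}))$ on $Y_E$. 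Hence the maps $Y_{\ell^{\infty},\Fbar}(\tilde\iota)\to Y_E$ and $Y_E\to Y_{\ell^{\infty},F(\zeta_{\ell^{\infty}})}(\tilde\iota)$ are finite covering spaces of compact Hausdorff spaces, and the latter is Galois with deck group $\Gal(E/F(\zeta_{\ell^{\infty}}))$. The assignment $E\mapsto Y_E$ then defines a functor from finite Galois extensions of $F(\zeta_{\ell^{\infty}})$ in $\Fbar$ to $\mathbf{FCov}(Y_{\ell^{\infty},F(\zeta_{\ell^{\infty}})}(\tilde\iota))$, and passing to finite étale algebras as in the proof of Theorem~\ref{Theorem:TopologicalEtaleFundamentalGroupOfCHSIsGalois} one obtains a functor from $\mathbf{FEt}(\Spec F(\zeta_{\ell^{\infty}}))$; fully faithfulness reduces to the Galois case, which has already been settled.

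Essential surjectivity is the place where I expect the only non-formal point, and it is handled precisely by Lemma~\ref{Lem:SimplyConnectedProfiniteCovIsUniversal}: the cofiltered inverse system $(Y_E)$ has limit $Y_{\ell^{\infty},\Fbar}(\tilde\iota)$ (the limit of quotients by a cofiltered intersection of closed subgroups of a compact Hausdorff group equals the quotient by the intersection, which is trivial in our case), and this limit is simply connected by part~(i). Hence every connected finite covering of $Y_{\ell^{\infty},F(\zeta_{\ell^{\infty}})}(\tilde\iota)$ is dominated by some $Y_E$, which finishes the proof, and the resulting identification of $\pioneet(Y_{\ell^n,F}(\iota))$ with $\Gal(\Fbar/F(\zeta_{\ell^{\infty}}))$ is canonical up to inner automorphisms in the usual way (depending on a compatible choice of base point in the fibre $Y_{\ell^{\infty},\Fbar}(\tilde\iota)$). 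The main technical hurdle is therefore verifying that the system $(Y_E)$ really has $Y_{\ell^{\infty},\Fbar}(\tilde\iota)$ as its inverse limit in the category of compact Hausdorff spaces; this is where the freeness of the Galois action together with compactness is used, via a standard Cantor intersection argument analogous to Lemma~\ref{Lem:CantorIntersection}.
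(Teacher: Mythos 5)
Your proof is correct and follows essentially the same route as the paper: both prove (i) by identifying $Y_{\ell^\infty,\Fbar}(\tilde\iota)$ as a translate of the Pontryagin dual of the $\bbq$-vector space $\Fbar^\times/\mu_\infty$ and invoking Corollary~\ref{Cor:PioneetPontryaginDual}, and both prove (ii) by combining the free, proper $\Gal(\Fbar/F(\zeta_{\ell^\infty}))$-action from Proposition~\ref{Prop:SimplePropertiesOfYlnF}.(ii), the simple connectedness from (i), and Lemma~\ref{Lem:ConnectedComponentsOfYlnFj} to handle finite $n$. The only difference is that the paper compresses (ii) by appealing directly to the universal profinite covering space formalism, while you re-derive it by imitating Theorem~\ref{Theorem:TopologicalEtaleFundamentalGroupOfCHSIsGalois} and Lemma~\ref{Lem:SimplyConnectedProfiniteCovIsUniversal}; the underlying argument, including the Cantor-intersection verification that $\varprojlim_E Y_E = Y_{\ell^\infty,\Fbar}(\tilde\iota)$, is the same.
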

\begin{proof}
	For (i) note that $Y_{\ell^{\infty },\Fbar }(\tilde{\iota })$ is homeomorphic to the Pontryagin dual of $\Fbar^{\times }/\mu_{\infty }$, a $\bbq$-vector space. Hence $Y_{\ell^{\infty },\Fbar}(\tilde{\iota })$ is the universal profinite covering space of $Y_{\ell^{\infty },F(\zeta_{\ell^{\infty }})}$ by Proposition~\ref{Prop:SimplePropertiesOfYlnF}. It follows that the deck transformation group $\Gal (\Fbar /F(\zeta_{\ell^{\infty }}))$ is isomorphic to the \'etale fundamental group of $Y_{\ell^{\infty },F(\zeta_{\ell^{\infty }})}(\tilde{\iota  })$; by Lemma~\ref{Lem:ConnectedComponentsOfYlnFj} this space is canonically homeomorphic to $Y_{\ell^n,F}(\iota )$.
\end{proof}
\begin{Theorem}
	Let $\ell$, $F$, $n$ and $\iota$ as before, and let $A$ be an abelian torsion group. Then there are canonical isomorphisms
	\begin{equation*}
	\Hup^m(Y_{\ell^n,F},A)\cong\Hup^m(F(\zeta_{\ell^{\infty }}),A)
	\end{equation*}
	and
	\begin{equation*}
	\Hup^m(Y_{\ell^n,F},\bbq )\cong\left( \bigwedge\nolimits_{\bbq }^{\!\! m}\Fbar^{\times}_{\mathrm{tf}}\right)^{\Gal (\Fbar /F(\zeta_{\ell^{\infty }}))}
	\end{equation*}
	for each $m\ge 0$.
\end{Theorem}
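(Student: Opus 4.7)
The plan is to mimic the proof of Theorem~\ref{Thm:CechCohomologyOfXF} almost verbatim, with $X_{\Fbar}$ replaced by $Y_{\ell^{\infty},\Fbar}(\tilde{\iota})$ and $\Gal(\Fbar/F)$ replaced by $\Gal(\Fbar/F(\zeta_{\ell^{\infty}}))$. First I would reduce to the case $n=\infty$ and $\mu_{\ell^{\infty}}\subset F$ by invoking Lemma~\ref{Lem:ConnectedComponentsOfYlnFj}, which gives a canonical homeomorphism $Y_{\ell^n,F}(\iota )\cong Y_{\ell^{\infty},F(\zeta_{\ell^{\infty}})}(\tilde{\iota})$. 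In that reduced situation one may replace $F$ by $F(\zeta_{\ell^{\infty}})$ and assume without loss of generality that all $\ell$-power roots of unity lie in $F$.

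Next I would compute the cohomology of the cover $Y_{\ell^{\infty},\Fbar}(\tilde{\iota})$. By construction this space is a translate inside $\Hom(\Fbar^{\times}/\mu_{\ell'},\bbs^1)$ of the closed subgroup $\Hom(\Fbar^{\times}/\mu_{\infty},\bbs^1)$, which is the Pontryagin dual of the $\bbq$-vector space $\Fbar^{\times}/\mu_{\infty}=\Fbar^{\times}_{\mathrm{tf}}$. Translation on a connected compact group induces the identity on cohomology (as in the proof of Proposition~\ref{Prop:CohomologyOfXFBar}), so Proposition~\ref{Prop:CohomologyOfPontryaginDuals} applies and yields canonical isomorphisms
\begin{equation*}
\Hup^0(Y_{\ell^{\infty},\Fbar}(\tilde{\iota}),A)\cong A,\qquad \Hup^q(Y_{\ell^{\infty},\Fbar}(\tilde{\iota}),A)=0\ (q>0)
\end{equation*}
for any torsion abelian group $A$, and $\Hup^m(Y_{\ell^{\infty},\Fbar}(\tilde{\iota}),\bbq )\cong\bigwedge_{\bbq}^m\Fbar^{\times}_{\mathrm{tf}}$ as graded $\bbq$-algebras, compatibly with the natural $\Gal(\Fbar/F(\zeta_{\ell^{\infty}}))$-actions on both sides.

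Now the profinite group $G=\Gal(\Fbar/F(\zeta_{\ell^{\infty}}))$ acts freely and continuously on $Y_{\ell^{\infty},\Fbar}(\tilde{\iota})$ with quotient $Y_{\ell^{\infty},F(\zeta_{\ell^{\infty}})}(\tilde{\iota})\cong Y_{\ell^n,F}(\iota)$ by Proposition~\ref{Prop:SimplePropertiesOfYlnF}.(ii), so Corollary~\ref{Cor:HochschildSerreTopological} provides a Cartan--Leray spectral sequence
\begin{equation*}
E_2^{p,q}=\Hup^p(G,\Hup^q(Y_{\ell^{\infty},\Fbar}(\tilde{\iota}),A))\Rightarrow\Hup^{p+q}(Y_{\ell^n,F}(\iota),A).
\end{equation*}
For a torsion coefficient group $A$ the rows $q>0$ vanish by what we computed, so the sequence degenerates at $E_2$ and gives $\Hup^m(Y_{\ell^n,F},A)\cong\Hup^m(G,A)=\Hup^m(F(\zeta_{\ell^{\infty}}),A)$, which is the first isomorphism. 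For $A=\bbq$ the coefficient modules $\Hup^q(Y_{\ell^{\infty},\Fbar}(\tilde{\iota}),\bbq)$ are $\bbq$-vector spaces, and continuous group cohomology of a profinite group with uniquely divisible coefficients vanishes in positive degrees (since $\Hup^p(G,V)=\varinjlim_H\Hup^p(G/H,V^H)$ and each $G/H$ is finite), so again only the column $p=0$ survives, yielding $\Hup^m(Y_{\ell^n,F},\bbq )\cong(\bigwedge_{\bbq}^m\Fbar^{\times}_{\mathrm{tf}})^G$.

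There is no substantial obstacle here; everything is a direct transcription of the argument of Theorem~\ref{Thm:CechCohomologyOfXF} once the freeness of the Galois action on $Y_{\ell^{\infty},\Fbar}(\tilde{\iota})$ (Proposition~\ref{Prop:SimplePropertiesOfYlnF}, resting on Lemma~\ref{Lem:MitHilbert90UndL}) and the reduction to $n=\infty$ (Lemma~\ref{Lem:ConnectedComponentsOfYlnFj}) have been set up, both of which are already available in the text.
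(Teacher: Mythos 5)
Your proposal is correct and follows essentially the same approach as the paper, whose own proof of this theorem is simply the remark that ``the proof is similar to that of Theorem~\ref{Thm:CechCohomologyOfXF}''; you have filled in the details of exactly that analogy — the reduction to $n=\infty$ via Lemma~\ref{Lem:ConnectedComponentsOfYlnFj}, the identification of $Y_{\ell^{\infty},\Fbar}(\tilde{\iota})$ with a translate of $(\Fbar^{\times}_{\mathrm{tf}})^{\vee}$, and the degeneration of the Cartan--Leray spectral sequence using the freeness from Proposition~\ref{Prop:SimplePropertiesOfYlnF}.(ii) — correctly.
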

\begin{proof}
	The proof is similar to that of Theorem~\ref{Thm:CechCohomologyOfXF}.
\end{proof}
There is also again a scheme-theoretic version of these constructions. For a perfect field $F$ of characteristic different from $\ell$ with algebraic closure $\Fbar$ such that $\Gal (\Fbar /F)$ is a pro-$\ell$-group, an $n\in\bbn\cup\{\infty\}$ with $\mu_{\ell^n}\subset F$ and an embedding $\iota\colon\mu_{\ell^n}(F)\hookrightarrow\bbs^1$ we set
\begin{equation*}
B_{\ell^n,F}=\left(\bbc [\Fbar^{\times }/\mu_{\ell^\prime}]/I\right)^{\Gal (\Fbar /F)},
\end{equation*}
where $I\subset\bbc [\Fbar{\times }/\mu_{\ell^\prime}]$ is the ideal generated by all $[\zeta ]-\iota (\zeta )\cdot [1]$ with $\zeta\in\mu_{\ell^n}(F)$. Note that if $n<\infty$ then this ideal is generated by a single element $[\zeta_{\ell^n}]-\iota (\zeta_{\ell^n})\cdot [1]$ with $\zeta_{\ell^n}$ a primitive $\ell^n$-th root of unity.

Then we set $\mathcal{Y}_{\ell^n,F}=\Spec B_{\ell^n,F}$. Note that this still depends on $\iota$, but we suppress this to lighten the notation. In complete analogy to the schemes $\mathcal{X}_F$ we obtain the following properties:
\begin{Theorem}\label{thm:cohomelladic}
	Let $\ell$ be a rational prime and let $F$ be a perfect field of characteristic other than~$\ell$. Assume that $\Gal (\Fbar /F)$ is a pro-$\ell$-group and that the image of $\chi_{\ell ,n}\colon\Gal (\Fbar /F)\to \bbz_{\ell}^{\times }$ is equal to $U_{\ell^n}$ for some $n\in\bbn\cup\{\infty\}$.
	
	Then the scheme $\mathcal{Y}_{\ell^n,F}$ is connected, and its \'etale fundamental group is isomorphic to $\Gal (\Fbar /F(\zeta_{\ell^{\infty }}))$, the isomorphism being canonical up to inner automorphisms. For abelian torsion groups $A$ we obtain natural isomorphisms
	\begin{equation*}
	\Hup^m_{\et }(\mathcal{Y}_{\ell^n,F},A)\cong\Hup^m(F(\zeta_{\ell^{\infty }}),A).
	\end{equation*}
	The space $Y_{\ell^n,F}$ can be identified with a subspace in $\mathcal{Y}_{\ell^n,F}(\bbc )$ with the complex topology; it is a strong deformation retract.
\end{Theorem}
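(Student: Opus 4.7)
The plan is to imitate, step by step, the proofs already given for the analogous statements about $\mathcal{X}_F$ in Sections~\ref{sec:GaloisAsEtaleFGOfSchemes} and~\ref{sec:GaloisTop}, with Lemma~\ref{Lem:MitHilbert90UndL} playing the role that Lemma~\ref{Lem:MitHilbert90} played before. First I would handle the algebraically closed case: when $F=\Fbar$, the ring $B_{\ell^n,\Fbar}$ is, by construction, a single connected component of $\Spec\bbc[\Fbar^\times/\mu_{\ell'}]$ cut out by the conditions $[\zeta]=\iota(\zeta)$ for $\zeta\in\mu_{\ell^n}$. Since $\Fbar^\times/\mu_{\ell'}$ has torsion subgroup $\mu_{\ell^\infty}$ and the quotient by $\mu_{\ell^\infty}$ is a $\bbq$-vector space, Corollary~\ref{Cor:PiZeroOfGroupAlgebra} identifies $\pi_0(\Spec\bbc[\Fbar^\times/\mu_{\ell'}])$ with $\mu_{\ell^\infty}^\vee$; the imposed conditions pick out a single component, whose identity part is $\Spec\bbc[(\Fbar^\times/\mu_\infty)]$, which is simply connected by Corollary~\ref{Cor:FGofDiagonalisableGroupScheme}. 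Hence $\mathcal{Y}_{\ell^n,\Fbar}\defined\Spec\bigl(\bbc[\Fbar^\times/\mu_{\ell'}]/I\bigr)$ is connected and simply connected.

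Next I would descend to $F$. The natural map $\mathcal{Y}_{\ell^n,\Fbar}\to\mathcal{Y}_{\ell^n,F}$ is the quotient by $\Gal(\Fbar/F(\zeta_{\ell^\infty}))$ (indeed the stabiliser of $\iota$ inside $\Gal(\Fbar/F)$ acts, and the formation of $B_{\ell^n,F}$ is invariants under this stabiliser; compare Lemma~\ref{Lem:ConnectedComponentsOfYlnFj} on the topological side). By Lemma~\ref{Lem:MitHilbert90UndL} this Galois group acts freely on geometric points of $\mathcal{Y}_{\ell^n,\Fbar}$ (the relevant characters are injective on $\mu_{\ell^\infty}$ by construction, which is what Lemma~\ref{Lem:MitHilbert90UndL} needs). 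Exactly as in the proof of Theorem~\ref{Theorem:CoversOfXFandExtensionsOfF}, I would then use Proposition~\ref{Prop:QuotientByFreeFiniteActionFiniteEtale}, applied to each finite layer $\Gal(E/F(\zeta_{\ell^\infty}))$ with $E/F(\zeta_{\ell^\infty})$ finite Galois, to deduce that every finite subextension $\mathcal{Y}_{\ell^n,E}\to\mathcal{Y}_{\ell^n,F}$ is finite \'etale Galois; combining these gives a pro-\'etale tower whose limit is $\mathcal{Y}_{\ell^n,\Fbar}$. Lemma~\ref{Lem:SimplyConnectedProEtaleIsUniversal} together with simple connectedness of $\mathcal{Y}_{\ell^n,\Fbar}$ then identifies the \'etale fundamental group of $\mathcal{Y}_{\ell^n,F}$ with $\Gal(\Fbar/F(\zeta_{\ell^\infty}))$.

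For the cohomology computation I would apply the Cartan--Leray spectral sequence in the \'etale topology (Proposition~\ref{Prop:CartanLerayEtale}) to the pro-\'etale Galois cover $\mathcal{Y}_{\ell^n,\Fbar}\to\mathcal{Y}_{\ell^n,F}$ with group $G=\Gal(\Fbar/F(\zeta_{\ell^\infty}))$,
\[
E_2^{p,q}=\Hup^p(G,\Hup^q_{\et}(\mathcal{Y}_{\ell^n,\Fbar},A))\Rightarrow\Hup^{p+q}_{\et}(\mathcal{Y}_{\ell^n,F},A).
\]
Since $\mathcal{Y}_{\ell^n,\Fbar}$ is a connected component of $\Spec\bbc[V]$ with $V=(\Fbar^\times/\mu_{\ell'})/\mu_{\ell^\infty}$ a $\bbq$-vector space, Proposition~\ref{Prop:EtaleCohomologyOfSpecGroupAlgebra} gives $\Hup^q_{\et}(\mathcal{Y}_{\ell^n,\Fbar},A)=0$ for $q>0$ and $A$ torsion, collapsing the spectral sequence to the desired isomorphism $\Hup^m_{\et}(\mathcal{Y}_{\ell^n,F},A)\cong\Hup^m(F(\zeta_{\ell^\infty}),A)$.

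Finally, for the comparison with $Y_{\ell^n,F}$, I would repeat the argument of Theorem~\ref{Thm:CompatibilityBetweenXFSpaceAndXFScheme}. The geometric points of $\mathcal{Y}_{\ell^n,\Fbar}$ correspond to group homomorphisms $\chi\colon\Fbar^\times/\mu_{\ell'}\to\bbc^\times$ with $\chi|_{\mu_{\ell^n}}=\iota$; splitting $\bbc^\times\cong\bbs^1\times\bbr$ gives a $\Gal(\Fbar/F)$-equivariant homeomorphism
\[
\mathcal{Y}_{\ell^n,\Fbar}(\bbc)\cong Y_{\ell^n,\Fbar}(\iota)\times\Hom(\Fbar^\times/\mu_{\ell'},\bbr),
\]
and the linear contraction $(\chi,\lambda,t)\mapsto(\chi,t\lambda)$ on the second factor descends, by Galois equivariance and Lemma~\ref{Lem:GeoPointsOfQuotient} applied to the $\Gal(\Fbar/F)$-action on $B_{\ell^n,\Fbar}$, to a strong deformation retraction $\mathcal{Y}_{\ell^n,F}(\bbc)\to Y_{\ell^n,F}$. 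The only real subtlety, and the step I would expect to be most delicate, is checking that the stabiliser of $\iota$ inside $\Gal(\Fbar/F)$ really is $\Gal(\Fbar/F(\zeta_{\ell^\infty}))$ (so that the quotient matches what $B_{\ell^n,F}$ computes): this uses the assumption $\operatorname{im}\chi_{\ell,F}=U_{\ell^n}$ so that the full quotient $\Gal(F(\zeta_{\ell^\infty})/F)\cong U_{\ell^n}$ acts faithfully on $\mu_{\ell^n}$ via $\iota$, exactly as in Lemma~\ref{Lem:ConnectedComponentsOfYlnFj}. Once this is in place, everything else is formally parallel to the $\mathcal{X}_F$ case.
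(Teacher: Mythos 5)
Your overall plan is the right one and matches the paper's approach in spirit: argue in parallel with the $\mathcal{X}_F$ case, with Lemma~\ref{Lem:MitHilbert90UndL} replacing Lemma~\ref{Lem:MitHilbert90}, and finish with Cartan--Leray (Proposition~\ref{Prop:CartanLerayEtale}). However, there is a concrete error in the first two steps that the paper avoids by doing the reduction to $n=\infty$ \emph{before} anything else. For $n<\infty$ the scheme $\mathcal{Y}_{\ell^n,\Fbar}=\Spec(\bbc[\Fbar^\times/\mu_{\ell'}]/I)$ is \emph{not} connected: the ideal $I$ only constrains $\mu_{\ell^n}$, so $\pi_0(\mathcal{Y}_{\ell^n,\Fbar})$ is a torsor under $U_{\ell^n}$, indexed by extensions $\tilde{\iota}$ of $\iota$ to $\mu_{\ell^\infty}$ (this is exactly what makes the disjoint-union decomposition in the proof of Lemma~\ref{Lem:ConnectedComponentsOfYlnFj} nontrivial). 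Consequently the map $\mathcal{Y}_{\ell^n,\Fbar}\to\mathcal{Y}_{\ell^n,F}$ is the quotient by the \emph{full} group $\Gal(\Fbar/F)$ (this is what $B_{\ell^n,F}=(B_{\ell^n,\Fbar})^{\Gal(\Fbar/F)}$ says), not by $\Gal(\Fbar/F(\zeta_{\ell^\infty}))$; the latter stabilises each component but does not act transitively on $\pi_0$, so quotienting only by it would not even make $\mathcal{Y}_{\ell^n,F}$ connected. The correct statement is that $\Gal(\Fbar/F)$ permutes the components transitively (here you use $\operatorname{im}\chi_{\ell,F}=U_{\ell^n}$), the stabiliser of a fixed component $\mathcal{Y}_{\ell^\infty,\Fbar}(\tilde{\iota})$ is $\Gal(\Fbar/F(\zeta_{\ell^\infty}))$, and the quotient of that single component by its stabiliser is $\mathcal{Y}_{\ell^n,F}$. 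That is precisely the content of the scheme analogue of Lemma~\ref{Lem:ConnectedComponentsOfYlnFj}, i.e.\ the canonical isomorphism $\mathcal{Y}_{\ell^n,F}\cong\mathcal{Y}_{\ell^\infty,F(\zeta_{\ell^\infty})}$, which the paper establishes and invokes as the \emph{first} step of the proof.

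You gesture at this in your final paragraph, but you frame it as a delicate side-check rather than as the structural reduction that makes the rest of the argument literally true. A small further slip there: $U_{\ell^n}\cong\Gal(F(\zeta_{\ell^\infty})/F)$ acts \emph{trivially} on $\mu_{\ell^n}$ (that is what defines $U_{\ell^n}=1+\ell^n\bbz_\ell$); it acts faithfully on $\mu_{\ell^\infty}$, hence simply transitively on the set of extensions $\tilde{\iota}$, which is what you need. With the reduction to $n=\infty$ put up front (so that $\mathcal{Y}_{\ell^\infty,\Fbar}(\tilde{\iota})$ really is connected and simply connected, and the quotient really is by $\Gal(\Fbar/F(\zeta_{\ell^\infty}))$ with $F$ replaced by $F(\zeta_{\ell^\infty})$), the rest of your outline---freeness via Lemma~\ref{Lem:MitHilbert90UndL} and Proposition~\ref{Prop:QuotientByFreeFiniteActionFiniteEtale}, the identification of $\pioneet$ via Lemma~\ref{Lem:SimplyConnectedProEtaleIsUniversal}, the Cartan--Leray collapse using Proposition~\ref{Prop:EtaleCohomologyOfSpecGroupAlgebra}, and the deformation retraction built from $\bbc^\times\cong\bbs^1\times\bbr$---goes through as in the paper.
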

\begin{proof}
	In analogy to Lemma~\ref{Lem:ConnectedComponentsOfYlnFj} there is a canonical isomorphism $\mathcal{Y}_{\ell^n,F}\cong\mathcal{Y}_{\ell^{\infty },F(\zeta_{\ell^{\infty }})}$, hence we may assume that $n=\infty$. The proof is then analogous to those of Corollary~\ref{Cor:PioneetOfXFIsAbsoluteGalois}, Theorem~\ref{Thm:CompatibilityBetweenXFSpaceAndXFScheme}.(ii) and Theorem~\ref{Thm:EtaleCohomologyOfXFIsGaloisCohomology}.
\end{proof}

\subsection{Three actions on cohomology}
We assume that $\ell$ is a prime, $F$ a perfect field of characteristic other than $\ell$ with algebraic closure $\Fbar$ such that $\Gal (\Fbar /F)$ is a pro-$\ell$-group and such that the image of $\chi_{\ell ,F}\colon\Gal (\Fbar /F)\to\bbz_{\ell}^{\times }$ is equal to $U_{\ell^n}$ for some (finite!) $n\in\bbn$. We have seen that then for each abelian torsion group $A$ and each $m\ge 0$ there are canonical isomorphisms
\begin{equation}\label{eqn:ThreeIncarnationsOfHmFA}
\Hup^m(Y_{\ell^n,F},A)\cong\Hup^m_{\et }(\mathcal{Y}_{\ell^n,F},A)\cong\Hup^m(F(\zeta_{\ell^{\infty }}),A).
\end{equation}
On each of the groups in (\ref{eqn:ThreeIncarnationsOfHmFA}) there is a natural action from the left by a certain group; we will show below that these three actions are compatible.

\subsubsection*{The topological action} By construction $Y_{\ell^n,\Fbar }$ is a closed subset of the Pontryagin dual $(\Fbar^{\times }/\mu_{\ell '})^{\vee }$, more precisely a translate of $(\Fbar^{\times}_{\mathrm{tf}})^{\vee }$. The group $\Fbar^{\times }/\mu_{\ell '}$ is divisible and has no torsion elements of order prime to~$\ell$, therefore it is a $\bbz_{(\ell )}$-module in a unique way (here $\bbz_{(\ell)}\subset\bbq$ is the ring of rational numbers whose denominators are prime to~$\ell$). Therefore the group of units $\bbz_{(\ell )}^{\times }$ acts on $(\Fbar^{\times }/\mu_{\ell '})$ by group automorphisms from the left: $u\cdot (\alpha\bmod\mu_{\ell '})=(\alpha^u\bmod\mu_{\ell '})$. Hence it acts on the Pontryagin dual $(\Fbar^{\times }/\mu_{\ell '})^{\vee}$ from the right: $(\chi\cdot u)(\alpha )=\chi (\alpha^u)$.

This action does not preserve the subspace $Y_{\ell^n,\Fbar }\subset (\Fbar^{\times}/\mu_{\ell '})^{\vee }$, but its restriction to the subgroup
\begin{equation*}
U_{(\ell^n )}=1+\ell^n\bbz_{(\ell)}\subset\bbz_{(\ell )}^{\times }
\end{equation*}
will, because elements of $U_{(\ell^n)}$ operate trivially on~$\mu_{\ell^n}$.

This action of $U_{(\ell^n)}$ on $Y_{\ell^n,F}$ commutes with that of $\Gal (\Fbar /F)$, hence it descends to a right action of $U_{(\ell^n)}$ on $Y_{\ell^n,F}$.

Then by functoriality we obtain a left action of $U_{(\ell^n)}$ on the cohomology group $\Hup^m(Y_{\ell^n,F},A)$ for any abelian torsion group $A$ and any $m\ge 0$.

\subsubsection*{The arithmetic action} The scheme $\mathcal{Y}_{\ell^n,F}$ admits a natural model over the ring of cyclotomic integers $\bbz [\zeta_{\ell^n}]$. To be precise, let \begin{equation*}\mathcal{Y}_{F,\,\mathrm{int}}=\Spec B_{F,\,\mathrm{int}}
\end{equation*}
with
\begin{equation*}
B_{F,\,\mathrm{int}}=\left(\bbz [\Fbar^{\times }/\mu_{\ell}^\prime][([\zeta_\ell]-1)^{-1}]\right)^{\Gal (\Fbar /F)}
\end{equation*}
where $\zeta_\ell\in F$ is an $\ell$-th root of unity. Each embedding $\iota\colon\mu_{\ell^n}(F)\hookrightarrow\bbs^1$ induces a ring embedding
\begin{equation*}
e_{\iota }\colon\bbz [\zeta_{\ell^n}]=\bbz [\mu_{\ell^n}(\bbc )]\hookrightarrow B_{F,\,\mathrm{int}}
\end{equation*}
with $e_{\iota }(\iota (\zeta ))=[\zeta ]$ for each $\zeta\in\mu_{\ell^n}(F)$. This ring embedding turns $\mathcal{Y}_{F,\,\mathrm{int}}$ into a $\bbz [\zeta_{\ell^n}]$-scheme, and there is a natural isomorphism
\begin{equation*}
\mathcal{Y}_{F,\,\mathrm{int}} \times_{e_{\iota },\,\Spec\bbz [\zeta_{\ell^n}]}\Spec\bbc\cong\mathcal{Y}_{\ell^n,F}(\iota ).
\end{equation*}
There are then also a natural isomorphisms
\begin{equation*}
\Hup^m_{\et }(\mathcal{Y}_{\ell^n,F},A)\cong\Hup^m_{\et }(\mathcal{Y}_{F,\,\mathrm{int}}\times_{e_{\iota },\,\Spec\bbz [\zeta_{\ell^n}]}\Spec\qbar ,A).
\end{equation*}
Now $\Gal (\qbar /\bbq (\zeta_{\ell^n}))$ operates from the left on $B_{F,\,\mathrm{int}}\otimes_{e_{\iota },\,\bbz [\zeta_{\ell^n}]}\qbar$ (trivially on the first factor and tautologically on the second factor), hence from the right on the spectrum of this algebra, hence from the left on the cohomology of the latter. Thus we obtain a left action of  $\Gal (\qbar /\bbq (\zeta_{\ell^n}))$ on $\Hup^m_{\et }(\mathcal{Y}_{\ell^n,F},A)$.

\subsubsection*{The group-theoretic action} This is the simplest to describe: from the short exact sequence of profinite groups
\begin{equation*}
1\to\Gal (\Fbar /F(\zeta_{\ell^{\infty }}))\to\Gal (\Fbar /F)\overset{\chi_{\ell ,F}}{\to }U_{\ell^n}\to 1
\end{equation*}
we obtain a right action of $\Gal (\Fbar /F)$ on its normal subgroup $\Gal (\Fbar /F(\zeta_{\ell^{\infty }}))$ by $h^g=g^{-1}hg$, hence a left action on $\Hup^m(F(\zeta_{\ell^{\infty }}),A)=\Hup^m(\Gal (\Fbar /F (\zeta_{\ell^{\infty }})),A)$. Since inner group automorphisms act trivially on cohomology, this descends to a left action by $U_{\ell^n}\cong\Gal (F(\zeta_{\ell^{\infty }})/F)$.

\begin{Theorem}\label{Thm:CompatibilityThreeActionsCohomology}
	Let $\ell$ be a rational prime, $F$ a perfect field of characteristic other than $\ell$ with algebraic closure $\Fbar$ such that $\Gal (\Fbar /F)$ is a pro-$\ell$-group and such that $\operatorname{im}\chi_{\ell ,F}=U_{\ell^n}$ for some $n\in\bbn$. Let $A$ be an abelian torsion group. Then the following claims hold for each $m\ge 0$:
	\begin{enumerate}
		\item The arithmetic action of $\Gal (\qbar /\bbq (\zeta_{\ell^n}))$ on $\Hup^m_{\et}(\mathcal{Y}_{\ell^n,F},A)$ factors through the $\ell$-adic cyclotomic character
		\begin{equation*}
		\chi_{\ell}=\chi_{\ell ,\bbq (\zeta_{\ell^n})}\colon\Gal (\qbar /\bbq (\zeta_{\ell^n}))\twoheadrightarrow U_{\ell^n}\subset\bbz_{\ell}^{\times }.
		\end{equation*}
		Hence it defines an action by $U_{\ell^n}$ on $\Hup^m_{\et }(\mathcal{Y}_{\ell^n,F},A)$ which we also call arithmetic.
		\item The topological action of $U_{(\ell^n)}$ on $\Hup^m(Y_{\ell^n,F},A)$ extends uniquely to a continuous action of $U_{\ell^n}=1+\ell^n\bbz_{\ell }$ on the same space. Here continuity refers to the $\ell$-adic topology on~$U_{\ell^n}$.
	\end{enumerate}
	Moreover the isomorphisms in (\ref{eqn:ThreeIncarnationsOfHmFA})	are $U_{\ell^n}$-equivariant up to a sign. More precisely, the isomorphism
	\begin{equation*}
	\Hup^m_{\et }(\mathcal{Y}_{\ell^n,F},A)\cong\Hup^m(F(\zeta_{\ell^{\infty }}),A)
	\end{equation*}
	is equivariant for the identity $U_{\ell^n}\to U_{\ell^n}$, while the other two isomorphisms
	\begin{equation*}
	\Hup^m(Y_{\ell^n,F},A)\cong\Hup^m_{\et }(\mathcal{Y}_{\ell^n,F},A)\qquad\text{and}\qquad\Hup^m(Y_{\ell^n,F},A)\cong\Hup^m(F(\zeta_{\ell^{\infty }}),A)
	\end{equation*}
	are equivariant for the inverse map $U_{\ell^n}\to U_{\ell^n}$, $u\mapsto u^{-1}$.
\end{Theorem}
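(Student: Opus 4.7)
\noindent The plan is to prove both (i) and (ii), together with the equivariance assertions, uniformly, by realizing each of the three actions as an outer automorphism of the fundamental group obtained by lifting to a universal (profinite, resp.\ pro-\'etale) cover. Fix an extension $\tilde\iota\colon\mu_{\ell^\infty}\hookrightarrow\bbs^1$ of $\iota$. As in Section~\ref{sec:GaloisTop}, the single component $Y_{\ell^\infty,\Fbar}(\tilde\iota)\subset Y_{\ell^n,\Fbar}$ is a universal profinite cover of $Y_{\ell^n,F}$ with deck group $\pi\defined\Gal(\Fbar/F(\zeta_{\ell^\infty}))$, and analogously $\mathcal{Y}_{\ell^\infty,\Fbar}(\tilde\iota)\to\mathcal{Y}_{\ell^n,F}$ on the scheme side. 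Since both covers have trivial higher cohomology with torsion coefficients, the Cartan--Leray spectral sequences (Corollary~\ref{Cor:HochschildSerreTopological} and Proposition~\ref{Prop:CartanLerayEtale}) degenerate and yield the identifications $\Hup^m(Y_{\ell^n,F},A)\cong\Hup^m(\pi,A)\cong\Hup^m_{\et}(\mathcal{Y}_{\ell^n,F},A)$ of~(\ref{eqn:ThreeIncarnationsOfHmFA}).

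\medskip

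\noindent To compare the topological and group-theoretic actions, I would lift each $u\in U_{(\ell^n)}$ to a self-map of $Y_{\ell^\infty,\Fbar}(\tilde\iota)$ covering the $u$-action on $Y_{\ell^n,F}$. Neither $u$ by itself (which shifts the component to $Y_{\ell^\infty,\Fbar}(\tilde\iota^u)$, where $\tilde\iota^u(\zeta)=\tilde\iota(\zeta)^u$) nor any Galois lift $\sigma_u\in\Gal(\Fbar/F)$ with $\chi_{\ell,F}(\sigma_u)=u$ (which shifts it in the opposite direction to $Y_{\ell^\infty,\Fbar}(\tilde\iota^{u^{-1}})$) preserves the chosen component, but the composition $\sigma_u\circ(\,\cdot\, u)$ does. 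Because $u$ commutes with $\Gal(\Fbar/F)$ on $Y_{\ell^n,\Fbar}$ by construction, a direct calculation shows that this lift conjugates $h\in\pi$ to $\sigma_u h\sigma_u^{-1}$, so the induced outer automorphism of $\pi$ is conjugation by $\sigma_u$. Modulo the left/right action sign flip noted in Remark~\ref{Rmk:RightActionSignInSes}, this matches the group-theoretic action of $u^{-1}\in U_{\ell^n}\cong\Gal(F(\zeta_{\ell^\infty})/F)$ on $\Hup^m(\pi,A)$. The same mechanism on the scheme side---with $g\in\Gal(\qbar/\bbq(\zeta_{\ell^n}))$ satisfying $\chi_\ell(g)=u$ replacing the topological operator, and the combined lift $\sigma_u\circ g$---identifies the arithmetic action with the group-theoretic action of $u$ without inversion, the sign discrepancy between the two cases being accounted for by the opposing directions of the component shifts induced by $\sigma_u$ on a left-action (topological) versus a right-action (scheme) model.

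\medskip

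\noindent With these two pairwise comparisons, assertion (i) is immediate: every element of $\ker\chi_\ell$ admits a lift with $\sigma_u=1$ and therefore acts trivially on cohomology, so the arithmetic action factors through $\chi_\ell$ and yields precisely the group-theoretic $U_{\ell^n}$-action. For (ii), note that the group-theoretic $U_{\ell^n}$-action on $\Hup^m(\pi,A)$ is continuous for the $\ell$-adic topology, and under the isomorphism $\Hup^m(Y_{\ell^n,F},A)\cong\Hup^m(\pi,A)$ its restriction to $U_{(\ell^n)}$ recovers (the inverse of) the topological action; since $U_{(\ell^n)}$ is dense in the compact group $U_{\ell^n}$ and the cohomology group is discrete, at most one continuous extension of the topological action to $U_{\ell^n}$ can exist, and existence is supplied by the group-theoretic action. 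The third equivariance is obtained by composing the first two.

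\medskip

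\noindent The main obstacle I foresee is the careful bookkeeping of signs and direction conventions propagated through the Cartan--Leray formalism across both the topological and the scheme picture. One must verify that the component shifts induced by the combined operators $\sigma_u\circ(\,\cdot\,u)$ and $\sigma_u\circ g$ cancel in exactly the stated directions---so that the relative sign between the topological and arithmetic comparisons is precisely $u\mapsto u^{-1}$---and that the outer automorphisms computed from these lifts translate, through Hochschild--Serre, into the group-theoretic actions on $\Hup^m(\pi,A)$ with the signs prescribed in the theorem.
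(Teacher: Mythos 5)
Your approach is correct and overlaps substantially with the paper's for two of the three pairwise comparisons. For topological vs.\ group-theoretic, the paper organises the lift argument via the diagram of Galois categories (\ref{eqn:DiagramGaloisCategoriesYellnF}), Lemma~\ref{Lem:DiagramOfGaloisCategoriesCommutes} and the exact sequence in Proposition~\ref{Prop:SesDiagEtaleFGs}; your lifting of $u\cdot$ to $\sigma_u\circ(\,\cdot\, u)$ on $Y_{\ell^\infty,\Fbar}(\tilde\iota)$ and computing the outer automorphism of $\pi$ is the concrete shadow of that argument, and the density/discreteness step recovering (ii) is exactly Corollary~\ref{Cor:FinalStepTopologicalGroupTheoretic}. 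For arithmetic vs.\ group-theoretic, your use of the combined lift inside $\Gal(\Fbar/F)\times\Gal(\qbar/\bbq(\zeta_{\ell^n}))$ matching via $\chi_\ell$ is exactly the group $G_{F,\bbq}(\ell^n)$ and the universal cover $\mathcal{Y}_{\Fbar,\qbar}^\circ$ from Lemma~\ref{Lem:IntermediateActionArithmeticGroupTheoretical} and Proposition~\ref{Prop:FinalStepArithmeticGrouptheoretic}, and your derivation of (i) from the observation that $\ker\chi_\ell$ lifts to $\{1\}\times\Gal(\qbar/\bbq(\zeta_{\ell^\infty}))$, which centralises $\pi\times\{1\}$, is a short and clean route that bypasses the paper's independent proof (Proposition~\ref{Prop:GaloisOnEtaleCohoFactThroughChiEll} via Chebotarev and commuting Frobenii).

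The genuine difference is the third compatibility. You obtain the equivariance of $\Hup^m(Y_{\ell^n,F},A)\cong\Hup^m_{\et}(\mathcal{Y}_{\ell^n,F},A)$ by composing the other two, which is logically valid (the paper explicitly concedes that proving all three is redundant). The paper instead proves it directly by exhibiting a $\Lambda$-structure on $\mathcal{Y}_{F,\mathrm{int}}$ whose Frobenius lifts $\varphi_p$ realise the $\bbz_{(\ell)}^\times$-action, and then invoking the compatibility of $\varphi_p\bmod p$ with the arithmetic Frobenius on $\widebar{\bbf}_p$-cohomology (Propositions~\ref{Prop:LambdaStructureInverseArithmeticFrobenius}--\ref{Prop:ActionByGlobalFrobeniusAndComplexConjugation}). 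Your route is shorter and avoids the $\Lambda$-scheme machinery; the paper's route buys the conceptual insight that the topological operators $\varphi_p$ are literal Frobenius lifts in the sense of Borger, which is independently interesting. One place you should be more careful if writing this up is the claim that the discrepancy in sign ``is accounted for by the opposing directions of the component shifts'': the inversion in the topological case traces to the right-action convention in Remark~\ref{Rmk:RightActionSignInSes}.(ii), not to an opposition between ``left'' and ``right'' models, and the arithmetic-side computation does not introduce a compensating shift but simply lacks the inversion the topological one carries. Nailing this down requires tracking the normalisation in Lemma~\ref{Lem:IntermediateActionArithmeticGroupTheoretical} as the paper does.
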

\begin{proof}
	Broken up into smaller pieces, this is proved below in Proposition~\ref{Prop:FinalStepArithmeticGrouptheoretic}, Corollary~\ref{Cor:FinalStepTopologicalGroupTheoretic} and Proposition~\ref{Prop:FinalStepCompatibilityTopologicalArithmetic} below.
\end{proof}

The proof of Theorem~\ref{Thm:CompatibilityThreeActionsCohomology} will fill up the remainder of this section. More precisely, for each two of the three actions we will establish equivariance for these two actions, and prove along the way that the actions factor through~$U_{\ell^n}$.  From a strictly logical point of view this is redundant, but we believe each of the three proofs reveals something particular about the objects under consideration.

\subsection{Compatibility of the arithmetic and group-theoretic actions} 
 For each field $F$ satisfying the conditions of Theorem~\ref{Thm:CompatibilityThreeActionsCohomology} and each `coefficient field' $k$ we set $\mathcal{Y}_{F,k}=\mathcal{Y}_{F,\,\mathrm{int}}\times_{\Spec\bbz }\Spec k$. The scheme $\mathcal{Y}_{F,k}$ is not necessarily connected, but for $k=\bbq$ it is. We will now determine the \'etale fundamental group of $\mathcal{Y}_{F,\bbq}$.

There is a pro-\'etale but possibly disconnected normal covering $\mathcal{Y}_{\Fbar ,\qbar }$. Its deck transformation group can be identified with $\Gal (\Fbar /F)\times\Gal (\qbar /\bbq )$ which operates in the obvious way on $\mathcal{Y}_{\Fbar ,\qbar}$ (from the right, however). The space of connected components $\pi_0(\mathcal{Y}_{\Fbar ,\qbar })$ is canonically homeomorphic to $\operatorname{Isom}(\mu_{\ell^{\infty }}(\Fbar ),\mu_{\ell^{\infty }}(\qbar))$, with its obvious left action by $\Gal (\qbar /\bbq )$ and its obvious right action by $\Gal (\Fbar /F)$. The space $\operatorname{Isom}(\mu_{\ell^{\infty }}(\Fbar ),\mu_{\ell^{\infty }}(\qbar ))$ is a two-sided principal homogeneous space for the abelian group $\bbz_{\ell}^{\times }$, and the Galois actions respect this structure.
\begin{Lemma}
	Let $\Gal (\Fbar /F)\times \Gal (\qbar /\bbq )$ act on $\operatorname{Isom}(\mu_{\ell^{\infty }}(\Fbar ),\mu_{\ell^{\infty }}(\qbar ))$ by $(\sigma_F,\sigma_{\bbq })\cdot\iota =\chi_{\ell }(\sigma_F)\chi_{\ell}(\sigma_{\bbq })^{-1}\cdot \iota$. Then the homeomorphism $\pi_0(\mathcal{Y}_{\Fbar ,\qbar })\to\operatorname{Isom}(\mu_{\ell^{\infty }}(\Fbar ),\mu_{\ell^{\infty }}(\qbar ))$ is equivariant for $\Gal (\Fbar /F)\times\Gal (\qbar /\bbq )$.
\end{Lemma}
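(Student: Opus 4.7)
The plan is to work with idempotents in the coordinate ring $R=\bbz[\Fbar^\times/\mu_{\ell'}][([\zeta_\ell]-1)^{-1}]\otimes_\bbz\qbar$ of $\mathcal{Y}_{\Fbar,\qbar}=\Spec R$ and track the action of $\sigma_F\otimes\sigma_\bbq$ through the homeomorphism. The identification $\pi_0(\mathcal{Y}_{\Fbar,\qbar})\cong\operatorname{Isom}(\mu_{\ell^\infty}(\Fbar),\mu_{\ell^\infty}(\qbar))$ is a direct consequence of Corollary~\ref{Cor:PiZeroOfGroupAlgebra}: the torsion subgroup of $\Fbar^\times/\mu_{\ell'}$ is $\mu_{\ell^\infty}(\Fbar)$, so the components of $\Spec\qbar[\Fbar^\times/\mu_{\ell'}]$ are parametrised by $\Hom(\mu_{\ell^\infty}(\Fbar),\qbar^\times)$, and inverting $[\zeta_\ell]-1$ kills precisely the characters that send $\zeta_\ell$ to $1$, leaving exactly the isomorphisms $\iota\colon\mu_{\ell^\infty}(\Fbar)\to\mu_{\ell^\infty}(\qbar)$. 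Concretely, each such $\iota$ corresponds to a primitive idempotent $e_\iota$ uniquely characterised by
\begin{equation*}
[\zeta]\cdot e_\iota=\iota(\zeta)\cdot e_\iota\qquad\text{for all }\zeta\in\mu_{\ell^\infty}(\Fbar).
\end{equation*}

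The deck transformation $(\sigma_F,\sigma_\bbq)\in G$ acts on $R$ as the ring automorphism $\sigma_F\otimes\sigma_\bbq$, inducing a right action on $\Spec R$. Applying $\sigma_F\otimes\sigma_\bbq$ to both sides of the above relation gives
\begin{equation*}
[\sigma_F(\zeta)]\cdot(\sigma_F\otimes\sigma_\bbq)(e_\iota)=\sigma_\bbq(\iota(\zeta))\cdot(\sigma_F\otimes\sigma_\bbq)(e_\iota).
\end{equation*}
Write $(\sigma_F\otimes\sigma_\bbq)(e_\iota)=e_{\iota'}$ and compare with the defining relation of $e_{\iota'}$; substituting $\alpha=\sigma_F(\zeta)$ into $[\alpha]\cdot e_{\iota'}=\iota'(\alpha)\cdot e_{\iota'}$ yields $\iota'(\sigma_F(\zeta))=\sigma_\bbq(\iota(\zeta))$ for all $\zeta$, i.e.~$\iota'\circ\sigma_F=\sigma_\bbq\circ\iota$. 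By the very definition of $\chi_\ell$, the endomorphism $\sigma_F$ of $\mu_{\ell^\infty}(\Fbar)$ is multiplication by $\chi_\ell(\sigma_F)\in\bbz_\ell^\times$, and similarly for $\sigma_\bbq$ on $\mu_{\ell^\infty}(\qbar)$. In the two-sided $\bbz_\ell^\times$-torsor $\operatorname{Isom}(\mu_{\ell^\infty}(\Fbar),\mu_{\ell^\infty}(\qbar))$ this rewrites as $\iota'\cdot\chi_\ell(\sigma_F)=\chi_\ell(\sigma_\bbq)\cdot\iota$, so $\iota'=\chi_\ell(\sigma_\bbq)\chi_\ell(\sigma_F)^{-1}\iota$.

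This describes the action of $\sigma_F\otimes\sigma_\bbq$ on primitive idempotents, which is the left action on $\pi_0(\mathcal{Y}_{\Fbar,\qbar})$ induced from the left action of $G$ on $R$. Since the deck group acts on $\Spec R$ from the right, the corresponding action on components is the inverse, namely $\iota\mapsto\chi_\ell(\sigma_F)\chi_\ell(\sigma_\bbq)^{-1}\iota$, which is the formula in the lemma (the abelianness of $\bbz_\ell^\times$ allows this scalar formula to be read either as a left or as a right action without change). The only real obstacle in the argument is keeping the left/right dichotomy consistent; once the bookkeeping is set up, the equivariance falls out of the defining relation of $e_\iota$ and the definition of $\chi_\ell$.
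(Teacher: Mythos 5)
Your approach is more explicit than the paper's (which simply invokes the ``obvious'' identifications and a one-line sign check), and in principle it verifies the formula directly from the ring action -- which is the right way to make the sign bookkeeping watertight. But there is a real, if repairable, technical gap: the primitive idempotents $e_\iota$ you use do not exist as elements of $R=\bbz[\Fbar^\times/\mu_{\ell'}][([\zeta_\ell]-1)^{-1}]\otimes_\bbz\qbar$. The component space $\pi_0(\mathcal{Y}_{\Fbar,\qbar})\cong\operatorname{Isom}(\mu_{\ell^\infty}(\Fbar),\mu_{\ell^\infty}(\qbar))$ is a profinite $\bbz_\ell^\times$-torsor, hence has no isolated points, so $R$ has no primitive idempotents at all; the relation $[\zeta]\cdot e_\iota=\iota(\zeta)\cdot e_\iota$ cannot characterise an element of $R$. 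Consequently the step ``apply $\sigma_F\otimes\sigma_\bbq$ to $e_\iota$'' is not literally available.

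The argument is rescued either by working at finite level -- for each $n$ and each embedding $\iota_n\colon\mu_{\ell^n}\hookrightarrow\qbar^\times$, the idempotent $e_{\iota_n}\in\qbar[\mu_{\ell^n}][([\zeta_\ell]-1)^{-1}]\subset R$ does exist, the component $C_\iota$ is $\bigcap_n\{e_{\iota\rvert_{\mu_{\ell^n}}}\neq 0\}$, and the whole computation goes through on $e_{\iota_n}$ -- or more cleanly by phrasing the eigen-relation on prime ideals: $\mathfrak{p}\in C_\iota$ iff $[\zeta]-\iota(\zeta)\in\mathfrak{p}$ for all $\zeta\in\mu_{\ell^\infty}(\Fbar)$, and then chasing $\rho^{-1}(\mathfrak{p})$ directly (using that a nonzero element of $\qbar$ is a unit of $R$). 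Either version yields $\iota'=\sigma_\bbq^{-1}\circ\iota\circ\sigma_F=\chi_\ell(\sigma_F)\chi_\ell(\sigma_\bbq)^{-1}\iota$ for the right action on $\pi_0$, matching your final answer and the lemma. Your closing paragraph on the left/right dichotomy -- the left action on idempotents of $R$ corresponds to the \emph{inverse} of the right action on $\pi_0(\Spec R)$ -- is correct but compressed; it would be worth writing out $C_e\cdot g=C_{\rho(g)^{-1}(e)}$ explicitly, since that single identity is where the sign in the lemma comes from.
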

\begin{proof}
	This follows from the previous discussion. As to the different signs, note that $\Gal (\qbar /\bbq)$ operates most naturally from the left on $\operatorname{Isom}(\mu_{\ell^{\infty }}(\Fbar ),\mu_{\ell^{\infty }}(\qbar ))$ and from the right on $\mathcal{Y}_{\Fbar ,\bbq }$, whereas $\Gal (\Fbar /F)$ operates most naturally from the right on both spaces.
\end{proof}
\begin{Corollary}
	Fix some group isomorphism $\tilde{\iota }\colon\mu_{\ell^{\infty }}(\Fbar )\to\mu_{\ell^{\infty }}(\qbar )$, and denote the corresponding component of $\mathcal{Y}_{\Fbar ,\qbar }$ by $\mathcal{Y}_{\Fbar ,\qbar }^{\circ}$. Then $\mathcal{Y}_{\Fbar ,\qbar }^{\circ }\to\mathcal{Y}_{F,\bbq }$ is a universal profinite covering space, and its deck transformation group is
	\begin{equation*}
	G_{F,\bbq }\defined \{ (\sigma_F,\sigma_{\bbq })\in\Gal (\Fbar /F)\times\Gal (\qbar /\bbq )\mid \chi_{\ell,F}(\sigma_F)=\chi_{\ell }(\sigma_{\bbq }) \} .
	\end{equation*}
	Hence there is a natural isomorphism $\pioneet (\mathcal{Y}_{F,\bbq })\cong G_{F,\bbq }$, canonical up to inner automorphisms.\hfill $\square$
\end{Corollary}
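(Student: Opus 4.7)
\emph{Proof plan.} The strategy is to verify the three hypotheses of Lemma~\ref{Lem:SimplyConnectedProEtaleIsUniversal}: that $\mathcal{Y}_{\Fbar,\qbar}^\circ\to\mathcal{Y}_{F,\bbq}$ is pro-\'etale, that it has the correct deck group, and that its source is simply connected. For the first of these I would write $\mathcal{Y}_{\Fbar,\qbar}=\varprojlim_{F',K}\mathcal{Y}_{F',K}$ as the inverse limit over finite Galois intermediate extensions $F'/F$ of $\Fbar/F$ and $K/\bbq$ of $\qbar/\bbq$, so that $\mathcal{Y}_{\Fbar,\qbar}\to\mathcal{Y}_{F,\bbq}$ is pro-\'etale Galois with group $\Gal(\Fbar/F)\times\Gal(\qbar/\bbq)$; the finite \'etale-ness of each $\mathcal{Y}_{F',K}\to\mathcal{Y}_{F,\bbq}$ reduces, as in the proof of Theorem~\ref{Theorem:CoversOfXFandExtensionsOfF}, to the freeness of this group action on geometric points, which follows from Lemma~\ref{Lem:MitHilbert90UndL} applied to the $\Fbar$-factor of a geometric point $(\chi,j)$ (the condition that $[\zeta_\ell]-1$ be invertible forces $\chi$ to be injective on $\mu_{\ell^\infty}$), together with the obvious faithfulness of the $\Gal(\qbar/\bbq)$-action on any embedding $j\colon\qbar\hookrightarrow k$. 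Restricting to the chosen component, the action on $\pi_0$ determined in the preceding lemma shows that the stabiliser of $\tilde\iota$ is exactly $G_{F,\bbq}$, as required.

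The decisive step is the simple connectedness of $\mathcal{Y}_{\Fbar,\qbar}^\circ$, mimicking Proposition~\ref{Prop:XFbarIsSC}. Since no Galois invariants appear when $F=\Fbar$, one has $B_{\Fbar,\,\mathrm{int}}=\bbz[\Fbar^\times/\mu_{\ell'}][([\zeta_\ell]-1)^{-1}]$. The group $\Fbar^\times/\mu_{\ell'}$ is divisible with torsion subgroup $\mu_{\ell^\infty}$, so it splits (non-canonically) as $\mu_{\ell^\infty}\oplus V$ with $V$ a $\bbq$-vector space, and the ring becomes $\bbz[\mu_{\ell^\infty}][([\zeta_\ell]-1)^{-1}]\otimes_\bbz\bbz[V]$. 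Base-changing to $\qbar$, the first factor decomposes as $\prod_{\tilde\iota}\qbar$ indexed by isomorphisms $\tilde\iota\colon\mu_{\ell^\infty}(\Fbar)\to\mu_{\ell^\infty}(\qbar)$ (an injective endomorphism of $\mu_{\ell^\infty}\cong\bbq_\ell/\bbz_\ell$ is automatically surjective, so injective characters are isomorphisms). Hence $\mathcal{Y}_{\Fbar,\qbar}\cong\coprod_{\tilde\iota}\Spec(\qbar[V])$, and each summand is a copy of $\mathcal{Y}_{\Fbar,\qbar}^\circ$. Since $V$ is a $\bbq$-vector space, Corollary~\ref{Cor:FGofDiagonalisableGroupScheme} — whose proof via Proposition~\ref{Prop:FiniteCoversOfGroupAlgebraSpectrum} uses only that $\bbc$ is algebraically closed of characteristic zero, and so carries over verbatim with $\qbar$ in place of $\bbc$ — implies that $\Spec(\qbar[V])$ is simply connected.

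Combining these observations, Lemma~\ref{Lem:SimplyConnectedProEtaleIsUniversal} shows that every finite connected \'etale cover of $\mathcal{Y}_{F,\bbq}$ is dominated by some $\mathcal{Y}_{F',K}$, so $\mathcal{Y}_{\Fbar,\qbar}^\circ\to\mathcal{Y}_{F,\bbq}$ is indeed a universal profinite covering space, and $\pioneet(\mathcal{Y}_{F,\bbq})\cong G_{F,\bbq}$ canonically up to inner automorphism. The principal obstacle is the simple connectedness step; the other points are essentially formal bookkeeping once the combinatorial description of $\pi_0(\mathcal{Y}_{\Fbar,\qbar})$ from the preceding lemma is available.
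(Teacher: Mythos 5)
Your proof is correct and it fills in, along the intended lines, the details the paper leaves implicit — the Corollary is stated with a $\square$ and is meant to follow from the preceding Lemma and the general profinite-covering formalism (Lemma~\ref{Lem:SimplyConnectedProEtaleIsUniversal}), exactly as you do. You identify the right ingredients in the right roles: freeness of the $\Gal (\Fbar /F)\times\Gal (\qbar /\bbq )$-action on geometric points via Lemma~\ref{Lem:MitHilbert90UndL} (correctly observing that inverting $[\zeta_\ell]-1$ forces injectivity on $\mu_{\ell^\infty}$), the $\pi_0$-action from the preceding Lemma to compute the component stabiliser $G_{F,\bbq}$, and simple connectedness of the component by mimicking Proposition~\ref{Prop:XFbarIsSC} with the splitting $\Fbar^\times/\mu_{\ell'}\cong\mu_{\ell^\infty}\oplus V$.

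One small imprecision worth flagging: $\mathcal{Y}_{\Fbar,\qbar}$ is \emph{not} literally the scheme-theoretic coproduct $\coprod_{\tilde\iota}\Spec (\qbar [V])$. The index set $\operatorname{Isom}(\mu_{\ell^\infty}(\Fbar),\mu_{\ell^\infty}(\qbar))\cong\bbz_\ell^\times$ is an uncountable profinite set, and $\mathcal{Y}_{\Fbar,\qbar}=\Spec (B_{\Fbar,\mathrm{int}}\otimes\qbar)$ is quasi-compact, whereas the genuine disjoint union over infinitely many nonempty schemes is not. Concretely, $\qbar[\mu_{\ell^\infty}][([\zeta_\ell]-1)^{-1}]$ is the ring of \emph{locally constant} $\qbar$-valued functions on $\bbz_\ell^\times$, whose $\Spec$ is the profinite space $\bbz_\ell^\times$, not a discrete one. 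What your argument actually needs, and what is true, is that \emph{each connected component} of $\mathcal{Y}_{\Fbar,\qbar}$ is isomorphic to $\Spec(\qbar[V])$; passing from the component ring to the fibre of the $\pi_0$-map over the chosen $\tilde\iota$ gives $\qbar[V]$ because $\Spec(\qbar[V])$ is connected. With that adjustment, the application of Corollary~\ref{Cor:FGofDiagonalisableGroupScheme} over $\qbar$ (which, as you note, goes through verbatim for any algebraically closed field of characteristic zero) completes the simple connectedness step, and the rest is as you say.
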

Now take an isomorphism $\iota\colon\mu_{\ell^n}(F)\to\mu_{\ell^n}(\qbar )$, and extend it to an isomorphism $\tilde{\iota }\colon\mu_{\ell^{\infty }}(\Fbar )\to\mu_{\ell^{\infty }}(\qbar )$. We wish to find our space $\mathcal{Y}_{\ell^n,F}$ and its model over $\bbq (\zeta_{\ell^n})$ back as a quotient of $\mathcal{Y}_{\Fbar ,\qbar }^{\circ }$.

We construct two intermediate coverings of $\mathcal{Y}_{\Fbar ,\qbar }^{\circ }\to\mathcal{Y}_{F,\bbq }$.
\begin{itemize}
	\item First note that there is a continuous epimorphism
	\begin{equation*}
	\chi_{\ell}\colon G_{F,\bbq }\to\bbz_{\ell}^{\times },\quad (\sigma_F,\sigma_{\bbq } )\mapsto\chi_{\ell}(\sigma_F)=\chi_{\ell}(\sigma_{\bbq }).
	\end{equation*}
	Then $G_{F,\bbq }(\ell^n)=\chi_{\ell}^{-1}(U_{\ell^n})$ is an open normal subgroup of $G_{F,\bbq }\cong\pioneet (\mathcal{Y}_{F,\bbq })$, and the corresponding intermediate covering is equal to $\mathcal{Y}_{F,\bbq (\zeta_{\ell^n})}^{\circ }$, the connected component of $\mathcal{Y}_{F,\bbq (\zeta_{\ell^n})}$ determined by~$\iota$. This is precisely the model of $\mathcal{Y}_{\ell^n,F}$ over $\bbq (\zeta_{\ell^n})$ used to define the $\Gal (\qbar /\bbq (\zeta_{\ell^n }))$-action on its \'etale cohomology.
	\item The closed normal subgroup $\Gal (\Fbar /F(\zeta_{\ell^{\infty }}))\times\{ 1\}\subset G_{F,\bbq }\subset\Gal (\Fbar /F)\times\Gal (\qbar /\bbq)$ defines the normal profinite covering space 
	\begin{equation*}
		\mathcal{Y}_{F(\zeta_{\ell^{\infty }}),\qbar}^{\circ }\to\mathcal{Y}_{F,\bbq (\zeta_{\ell^n})}^{\circ }\to\mathcal{Y}_{F,\bbq };
	\end{equation*}
	as a $\mathcal{Y}_{F,\bbq (\zeta_{\ell^n})}^{\circ}$-scheme this is isomorphic to $\mathcal{Y}_{F,\bbq (\zeta_{\ell^n})}\otimes_{\bbq (\zeta_{\ell^n})}\qbar$.
\end{itemize}
\begin{Lemma}\label{Lem:IntermediateActionArithmeticGroupTheoretical}
	There are natural isomorphisms of cohomology groups
	\begin{equation*}
	\Hup^m(F(\zeta_{\ell^{\infty }}),A)\overset{\cong}{\leftarrow}\Hup^m_{\et}(\mathcal{Y}_{F(\zeta_{\ell^\infty }),\qbar }^{\circ },A)\overset{\cong}{\to}\Hup^m_{\et }(\mathcal{Y}_{\ell^n,F,\qbar},A)
	\end{equation*}
	equivariant for the group homomorphisms (natural projections)
	\begin{equation*}
	\Gal (\Fbar /F)\leftarrow G_{F,\bbq }(\ell^n)\to\Gal (\qbar /\bbq (\zeta_{\ell^n})).
	\end{equation*}
\end{Lemma}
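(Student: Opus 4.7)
The plan is to construct each of the two isomorphisms separately and then verify the equivariance under the natural projections.

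For the second arrow, observe that the paragraph immediately preceding the lemma identifies $\mathcal{Y}_{F(\zeta_{\ell^{\infty}}),\qbar}^{\circ}$, as a $\mathcal{Y}_{F,\bbq(\zeta_{\ell^n})}^{\circ}$-scheme, with the base change $\mathcal{Y}_{F,\bbq(\zeta_{\ell^n})}^{\circ}\otimes_{\bbq(\zeta_{\ell^n})}\qbar$; but this base change is by definition $\mathcal{Y}_{\ell^n,F,\qbar}$. Hence the second arrow is induced by a canonical isomorphism of schemes, and its equivariance for the projection $G_{F,\bbq}(\ell^n)\to\Gal(\qbar/\bbq(\zeta_{\ell^n}))$ is immediate from the construction, since the arithmetic action on $\mathcal{Y}_{\ell^n,F,\qbar}$ is defined to be exactly the residual $\Gal(\qbar/\bbq(\zeta_{\ell^n}))$-action on the second tensor factor.

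For the first arrow, I would apply Proposition~\ref{Prop:CartanLerayEtale} to the pro-\'etale Galois covering $\mathcal{Y}_{\Fbar,\qbar}^{\circ}\to\mathcal{Y}_{F(\zeta_{\ell^{\infty}}),\qbar}^{\circ}$ whose deck transformation group is the closed normal subgroup $\Gal(\Fbar/F(\zeta_{\ell^{\infty}}))\times\{1\}\subset G_{F,\bbq}$. This yields a spectral sequence
\[
E_2^{p,q}=\Hup^p(\Gal(\Fbar/F(\zeta_{\ell^{\infty}})),\Hup^q_{\et}(\mathcal{Y}_{\Fbar,\qbar}^{\circ},A))\Rightarrow\Hup^{p+q}_{\et}(\mathcal{Y}_{F(\zeta_{\ell^{\infty}}),\qbar}^{\circ},A).
\]
The crux is to establish that $\Hup^q_{\et}(\mathcal{Y}_{\Fbar,\qbar}^{\circ},A)=0$ for $q>0$ and any torsion abelian group~$A$. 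To this end, note that $\mathcal{Y}_{\Fbar,\qbar}^{\circ}$ is a connected component of $\Spec(\qbar[\Fbar^{\times}/\mu_{\ell'}][([\zeta_\ell]-1)^{-1}])$; choosing a splitting of the torsion subgroup $\mu_{\ell^{\infty}}$ in $\Fbar^{\times}/\mu_{\ell'}$, one sees that this component is (non-canonically) isomorphic to $\Spec\qbar[\Fbar^{\times}/\mu_{\infty}]$, the spectrum of the group algebra of a $\bbq$-vector space. The vanishing then follows either by repeating the argument of Proposition~\ref{Prop:EtaleCohomologyOfSpecGroupAlgebra} verbatim with $\bbc$ replaced by $\qbar$, or by invoking the invariance of \'etale cohomology with torsion coefficients under extension of algebraically closed base fields of characteristic $0$. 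Granting this vanishing, the spectral sequence collapses to give the desired natural isomorphism $\Hup^m_{\et}(\mathcal{Y}_{F(\zeta_{\ell^{\infty}}),\qbar}^{\circ},A)\cong\Hup^m(\Gal(\Fbar/F(\zeta_{\ell^{\infty}})),A)=\Hup^m(F(\zeta_{\ell^{\infty}}),A)$.

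Finally, equivariance of the first arrow with respect to the projection $G_{F,\bbq}(\ell^n)\to\Gal(\Fbar/F)$ should follow from the naturality of the Cartan--Leray spectral sequence: the full group $G_{F,\bbq}(\ell^n)$ lifts to an action on $\mathcal{Y}_{\Fbar,\qbar}^{\circ}$ via its inclusion into $G_{F,\bbq}$ (which stabilises the chosen connected component by the very definition of $G_{F,\bbq}$), and descends to the quotient action through $\Gal(\qbar/\bbq(\zeta_{\ell^n}))$ on $\mathcal{Y}_{F(\zeta_{\ell^{\infty}}),\qbar}^{\circ}$; the induced action on the $E_2$-page is then the conjugation action on the group cohomology of $\Gal(\Fbar/F(\zeta_{\ell^{\infty}}))$, which is mediated precisely by the first projection $G_{F,\bbq}(\ell^n)\to\Gal(\Fbar/F)$ composed with the natural outer action of the quotient $\Gal(\Fbar/F)/\Gal(\Fbar/F(\zeta_{\ell^{\infty}}))$ on the normal subgroup. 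The main obstacle will be the careful bookkeeping required to match these various actions -- in particular the verification that the conjugation-on-cohomology arising from Cartan--Leray agrees with the group-theoretic action built into the right-hand side of the statement -- but no new geometric ideas are needed beyond unwinding the definitions.
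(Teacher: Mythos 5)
Your proof is correct and matches the paper's approach: the paper's (very terse) proof likewise obtains the left-hand isomorphism from the Cartan--Leray spectral sequence for the pro-\'etale cover $\mathcal{Y}_{\Fbar,\qbar}^{\circ}\to\mathcal{Y}_{F(\zeta_{\ell^{\infty}}),\qbar}^{\circ}$, using the vanishing of higher cohomology of $\mathcal{Y}_{\Fbar,\qbar}^{\circ}$ with torsion coefficients, and the right-hand isomorphism and the equivariance statements from the identifications set up in the paragraph preceding the lemma. You have, in effect, supplied the details behind the paper's ``the rest follows from the preceding discussion,'' including the observation that $\mathcal{Y}_{\Fbar,\qbar}^{\circ}$ is (non-canonically) the spectrum of the group algebra of a $\bbq$-vector space over $\qbar$ so that Proposition~\ref{Prop:EtaleCohomologyOfSpecGroupAlgebra} applies, and the explicit tracking of how $G_{F,\bbq}(\ell^n)$ acts through its two projections on the Cartan--Leray $E_2$-page and on the target. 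This is the same argument, just written out at a level of detail the paper leaves implicit.
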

\begin{proof}
The isomorphism $\Hup^m_{\et }(\mathcal{Y}_{F(\zeta_{\ell^{\infty }}),\qbar }^{\circ },A)\to\Hup^m(F(\zeta_{\ell^{\infty }}),A)$ is obtained from the Cartan--Leray spectral sequence applied to the universal covering $\mathcal{Y}_{\Fbar ,\qbar }^{\circ}\to\mathcal{Y}_{F(\zeta_{\ell^{\infty }}),\qbar }^{\circ }$; note that all the higher cohomology groups of $\mathcal{Y}_{\Fbar ,\qbar }^{\circ}$ with torsion coefficients vanish.

The rest follows from the preceding discussion.
\end{proof}
\begin{Proposition}\label{Prop:FinalStepArithmeticGrouptheoretic}
	Let $\sigma_F\in\Gal (\Fbar /F)$ and $\sigma_{\bbq }\in\Gal (\qbar /\bbq (\zeta_{\ell^n}))$ be such that $\chi_{\ell }(\sigma_F)=\chi_{\ell }(\sigma_{\bbq })$. Then under the isomorphism $\Hup^m(F(\zeta_{\ell^{\infty }}),A)\cong\Hup^m_{\et}(\mathcal{Y}_{\ell^n,F,\qbar },A)$ in (\ref{eqn:ThreeIncarnationsOfHmFA}) the actions of $\sigma_F$ and $\sigma_{\bbq }$ correspond to each other.
\end{Proposition}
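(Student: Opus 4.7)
The plan is to use Lemma~\ref{Lem:IntermediateActionArithmeticGroupTheoretical} as a bridge and reduce the claimed equality of two actions to the equivariance of a single map under a common lift. First I would observe that the hypothesis $\chi_\ell(\sigma_F)=\chi_\ell(\sigma_\bbq)$ is precisely the condition defining $G_{F,\bbq}(\ell^n)\subseteq \Gal(\Fbar/F)\times \Gal(\qbar/\bbq)$, so the pair $\tau\defined (\sigma_F,\sigma_\bbq)$ is an element of $G_{F,\bbq}(\ell^n)$, and hence acts naturally on the middle cohomology group $\Hup^m_\et(\mathcal{Y}^\circ_{F(\zeta_{\ell^\infty}),\qbar},A)$ appearing in that lemma, via its identification with the Galois group of the pro-\'etale covering $\mathcal{Y}^\circ_{\Fbar,\qbar}\to \mathcal{Y}^\circ_{F(\zeta_{\ell^\infty}),\qbar}$'s base over $\mathcal{Y}_{F,\bbq}$.

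Next, by Lemma~\ref{Lem:IntermediateActionArithmeticGroupTheoretical}, the natural isomorphisms
\[
\Hup^m(F(\zeta_{\ell^\infty}),A)\;\overset{\cong}{\leftarrow}\;\Hup^m_\et(\mathcal{Y}^\circ_{F(\zeta_{\ell^\infty}),\qbar},A)\;\overset{\cong}{\to}\;\Hup^m_\et(\mathcal{Y}_{\ell^n,F,\qbar},A)
\]
are equivariant for the two projections $G_{F,\bbq}(\ell^n)\twoheadrightarrow \Gal(\Fbar/F)$ and $G_{F,\bbq}(\ell^n)\twoheadrightarrow \Gal(\qbar/\bbq(\zeta_{\ell^n}))$. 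Applied to our distinguished element $\tau$, this means the action of $\tau$ on the middle group corresponds on the left to the action of $\sigma_F$ on $\Hup^m(F(\zeta_{\ell^\infty}),A)$, and on the right to the action of $\sigma_\bbq$ on $\Hup^m_\et(\mathcal{Y}_{\ell^n,F,\qbar},A)$. Composing the two isomorphisms in the diagram, it follows at once that $\sigma_F$ and $\sigma_\bbq$ correspond to each other under the resulting isomorphism $\Hup^m(F(\zeta_{\ell^\infty}),A)\cong \Hup^m_\et(\mathcal{Y}_{\ell^n,F,\qbar},A)$, which is the statement of the proposition.

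The only slightly delicate point — and essentially the main (although not very hard) obstacle — is to make sure that the two equivariance statements packaged in Lemma~\ref{Lem:IntermediateActionArithmeticGroupTheoretical} are of the correct handedness, namely that both the Cartan--Leray identification on the left and the base-change identification on the right produce \emph{left} actions compatible with the conventions used to define the arithmetic action on $\Hup^m_\et(\mathcal{Y}_{\ell^n,F},A)$ (via a right action on the scheme, inverted to a left action on cohomology) and the group-theoretic action on $\Hup^m(F(\zeta_{\ell^\infty}),A)$ (via conjugation by a lift). Since both conventions are set up uniformly through the same recipe of ``right action on a geometric object $\mapsto$ left action on cohomology'', the signs cancel coherently, and no extra inversion appears between the $\sigma_F$-action and the $\sigma_\bbq$-action. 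In particular, this also shows that the arithmetic action factors through $\chi_\ell$, recovering statement~(i) of Theorem~\ref{Thm:CompatibilityThreeActionsCohomology}: if $\chi_\ell(\sigma_\bbq)=1$, then one may take $\sigma_F=1$, and the arithmetic action of $\sigma_\bbq$ agrees with that of the identity of $\Gal(\Fbar/F)$, hence is trivial.
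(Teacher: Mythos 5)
Your proof is correct and takes essentially the same route as the paper, which says only ``This follows from Lemma~\ref{Lem:IntermediateActionArithmeticGroupTheoretical}: just note how the element $(\sigma_F,\sigma_{\bbq})\in G_{F,\bbq}(\ell^n)$ acts.'' You fill in the same argument with more words: the hypothesis $\chi_\ell(\sigma_F)=\chi_\ell(\sigma_\bbq)$ puts the pair in $G_{F,\bbq}(\ell^n)$, and the two equivariances in the lemma then identify the action of this element on the middle term with $\sigma_F$ on the left and $\sigma_\bbq$ on the right. Your closing observation that this also re-derives part~(i) of Theorem~\ref{Thm:CompatibilityThreeActionsCohomology} (by taking $\sigma_F=1$ when $\chi_\ell(\sigma_\bbq)=1$) is a correct and worthwhile remark.
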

\begin{proof}
	This follows from Lemma~\ref{Lem:IntermediateActionArithmeticGroupTheoretical}: just note how the element $(\sigma_F,\sigma_{\bbq })\in G_{F,\bbq }(\ell^n)$ acts.
\end{proof}

\subsection{Compatibility of the group-theoretic and topological actions} We shall consider diverse Galois categories and exact functors between them:
\begin{equation}\label{eqn:DiagramGaloisCategoriesYellnF}
\xymatrix{
	\UellnSet\ar[d]\ar[r]&\mathbf{FEt}(\Spec F)\ar[d]\ar[r]&\mathbf{FEt}(\Spec F(\zeta_{\ell^{\infty }}))\ar[d]\\
	\UBrellnSet\ar[r]&\mathbf{FCov}_{U_{(\ell^n)}}(Y_{\ell^n,F})\ar[r]&\mathbf{FCov}(Y_{\ell^n,F})
}
\end{equation}
Though we have suppressed this in the notation, this diagram will depend on a choice of $\iota\colon\mu_{\ell^n}\hookrightarrow\bbs^1$ and of an extension $\tilde{\iota}\colon\mu_{\ell^{\infty }}\hookrightarrow\bbs^1$.
The functors in (\ref{eqn:DiagramGaloisCategoriesYellnF}) are as follows:
\begin{altitemize}
	\item $\UellnSet\to\mathbf{FEt}(\Spec F)$ is the composition of two functors
	\[
	\UellnSet\to\Gal (\Fbar /F)\text{-\textbf{\textup{FSet}}}\to\mathbf{FEt}(\Spec F),
	\]
	the first of which is induced by the group homomorphism $\chi_{\ell,F}\colon\Gal (\Fbar /F)\to U_F$ and the second of which is `Grothendieck's Galois theory'. For an explicit description, let $S$ be a finite set with a continuous left action of $\Gal (\Fbar /F)$, then $\Gal (\Fbar /F)$ acts from the left on the $F$-algebra $\Fbar^S$ by
	\begin{equation*}
	\sigma ((\alpha_s)_{s\in S})=(\alpha_{\sigma^{-1}s})_{s\in S},
	\end{equation*} 
	and the ring of invariants $E(S)=(\Fbar^S)^{\Gal (\Fbar /F)}$ is a finite \'etale $F$-algebra. Then the functor can be described as $S\mapsto\Spec E(S)$.
	\item $\mathbf{FEt}(\Spec F)\to\mathbf{FEt}(\Spec F(\zeta_{\ell^{\infty }}))$ is the functor $X\mapsto X\times_{\Spec F}\Spec F(\zeta_{\ell^{\infty }})$.
	\item The two functors in the lower horizontal line are obtained from the $U_{(\ell^n)}$-action on $Y_{\ell^n,F}$ as on page~\pageref{page:SesFCovForStackyQuotient}, that is, the first one sends a finite $U_{(\ell^n)}$-set $S$ to the product $Y_{\ell^n,F}\times S$ with the diagonal $U_{(\ell^n)}$-action, and the second one is the obvious forgetful functor.
	\item $\UellnSet\to\UBrellnSet$ is induced by the $\ell$-adic completion map $U_{(\ell^n)}\to U_{\ell^n}$.
	\item $\mathbf{FEt}(\Spec F)\to\mathbf{FCov}_{U_{(F)}}(Y_{\ell^n,F})$ sends $\Spec E$ for a finite extension $E/F$ to $Y_{\ell^n,E}$, and more generally for an \'etale $F$-algebra $E$ we set
	\begin{equation*}
	Y_{\ell^n,E}=\coprod_{\mathfrak{p}\in\Spec E}Y_{\ell^n,E/\mathfrak{p}}.
	\end{equation*}
	Note that in the basic case where $E$ is a field we need to choose an algebraic closure $\Ebar /E$ to even define $Y_{\ell^n,E}$, and an isomorphism $\Ebar\to\Fbar$ to obtain a map $Y_{\ell^n,E}\to Y_{\ell^n,F}$. However, as for the spaces $Y_F$ we check that $Y_{\ell^n,F}$ and $Y_{\ell^n,E}\to Y_{\ell^n,F}$ are independent up to canonical isomorphism from the choices of $\Fbar$, $\Ebar$ and $\Ebar\to\Fbar$.
	\item $\mathbf{FEt}(\Spec F(\zeta_{\ell^{\infty }}))\to\mathbf{FCov}(Y_{\ell^n,F})$ is the composition
	\[
	\mathbf{FEt}(\Spec F(\zeta_{\ell^{\infty }}))\to\mathbf{FCov}(Y_{\ell^{\infty },F(\zeta_{\ell^{\infty }})})\to\mathbf{FCov}(Y_{\ell^n,F})
	\]
	where the first functor sends $\Spec E$ to $Y_{\ell^{\infty },E}$ and the second functor is induced by the homeomorphism $Y_{\ell^n,F}\cong Y_{\ell^{\infty },F(\zeta_{\ell^{\infty }})}$.
\end{altitemize}
\begin{Lemma}\label{Lem:DiagramOfGaloisCategoriesCommutes}
	The diagram of exact functors between Galois categories (\ref{eqn:DiagramGaloisCategoriesYellnF})
	commutes up to isomorphism of functors. 
\end{Lemma}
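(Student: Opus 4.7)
The plan is to check commutativity of each square by evaluating both compositions on a generating class of objects and producing a natural isomorphism. All functors in the diagram are exact and compatible with finite coproducts; since every object of $\UellnSet$ is a finite disjoint union of orbits $U_{\ell^n}/U_{\ell^m}$ (as the open subgroups of $U_{\ell^n}$ are precisely the $U_{\ell^m}$ for $n\le m\le\infty$), it suffices to check the left square on such an orbit. For the right square one reduces similarly to $\Spec E$ for $E/F$ a finite field extension. The uniform tool throughout is the decomposition $Y_{\ell^n,\Fbar}(\iota)=\coprod_{j} Y_{\ell^\infty,\Fbar}(j)$, where $j$ ranges over extensions of $\iota$ to $\mu_{\ell^\infty}$, together with the description of residual actions of subgroups of $\Gal(\Fbar/F)$ on the index set $\{j\}$ via the $\ell$-adic cyclotomic character, exactly as in the proof of Lemma~\ref{Lem:ConnectedComponentsOfYlnFj}.

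For the right square, start with $E/F$ finite and set $F'=E\cap F(\zeta_{\ell^\infty})$; since $F(\zeta_{\ell^\infty})/F$ is Galois, $F'=F(\zeta_{\ell^m})$ for some $n\le m\le\infty$, and because $F(\zeta_{\ell^\infty})/F'$ is Galois and linearly disjoint from $E$ over $F'$, one obtains
\begin{equation*}
E\otimes_F F(\zeta_{\ell^\infty})\cong\bigoplus_{\sigma\in\Gal(F'/F)}EF(\zeta_{\ell^\infty}).
\end{equation*}
Hence the composition through the top-right corner yields a disjoint union of $[F':F]$ copies of $Y_{\ell^\infty,EF(\zeta_{\ell^\infty})}$. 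Going the other way, a direct calculation of $Y_{\ell^n,E}(\iota)=\Gal(\Fbar/E)\backslash Y_{\ell^n,\Fbar}(\iota)$, analogous to the one in the proof of Lemma~\ref{Lem:ConnectedComponentsOfYlnFj}, shows that $\Gal(\Fbar/E)$-orbits on $\{j\}$ are parametrised by $\Gal(F(\zeta_{\ell^\infty})/F)/\Gal(F(\zeta_{\ell^\infty})/F')=\Gal(F'/F)$, each orbit contributing a component homeomorphic to $Y_{\ell^\infty,EF(\zeta_{\ell^\infty})}$. Matching components gives the required natural homeomorphism over $Y_{\ell^n,F}$.

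For the left square, take $S=U_{\ell^n}/U_{\ell^m}$. Via $\chi_{\ell,F}\colon\Gal(\Fbar/F)\twoheadrightarrow U_{\ell^n}\cong\Gal(F(\zeta_{\ell^\infty})/F)$, the finite \'etale $F$-algebra $E(S)$ is canonically isomorphic to the fixed field $F(\zeta_{\ell^m})$, so the upper path produces $Y_{\ell^n,F(\zeta_{\ell^m})}\to Y_{\ell^n,F}$, which by the right-square analysis above is a disjoint union of $[U_{\ell^n}\colon U_{\ell^m}]=\ell^{m-n}$ copies of $Y_{\ell^n,F}$. The lower path produces the trivial cover $Y_{\ell^n,F}\times S$ with the same number of components. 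The essential content of the left square is thus the assertion that the two residual $U_{(\ell^n)}$-actions on this trivial cover agree: along the lower path, $U_{(\ell^n)}$ acts on $S$ via the composite $U_{(\ell^n)}\hookrightarrow U_{\ell^n}\twoheadrightarrow U_{\ell^n}/U_{\ell^m}$, and along the upper path, $U_{(\ell^n)}$ acts on $Y_{\ell^n,F(\zeta_{\ell^m})}$ by its tautological action on $Y_{\ell^n,\Fbar}(\iota)$, which permutes the set $\{j\}$ of extensions of $\iota$ to $\mu_{\ell^\infty}$ by $(u\cdot j)(\zeta)=j(\zeta^u)$, and then descends through the $U_{\ell^m}$-quotient to exactly the same homomorphism $U_{(\ell^n)}\to U_{\ell^n}/U_{\ell^m}$.

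The main obstacle is to keep all these identifications natural: the decomposition into components is only canonical once an extension $\tilde\iota$ of $\iota$ has been chosen, and the bijection between the orbit set and $U_{\ell^n}/U_{\ell^m}$ depends on selecting a base point in $\{j\}$. One therefore has to verify that the resulting isomorphism is independent of these choices up to unique compatible modification, and that it is functorial in $S$ (for the left square) and in $\Spec E$ (for the right square), so that one obtains an honest isomorphism of functors rather than merely pointwise homeomorphisms. This bookkeeping is the only nontrivial step; the underlying geometric picture is fully dictated by Lemma~\ref{Lem:ConnectedComponentsOfYlnFj} and the description of $\chi_{\ell,F}$.
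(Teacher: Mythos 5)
Your proof is correct in outline but takes a genuinely different route from the paper's. For the left square, the paper works with an arbitrary transitive $U_{\ell^n}$-set $S$, chooses $s_0\in S$, observes that $\Hom_\iota(\mu_{\ell^\infty},\bbs^1)$ is a $U_{\ell^n}$-torsor trivialised by $\tilde\iota$, and uses the unique $U_{\ell^n}$-equivariant surjection $q\colon\Hom_\iota(\mu_{\ell^\infty},\bbs^1)\twoheadrightarrow S$ with $q(\tilde\iota)=s_0$ to write down the map $\chi\mapsto(\chi,q(\chi\rvert_{\mu_{\ell^\infty}}))$ on $Y_{\ell^n,\Fbar}$, which descends to the desired $U_{(\ell^n)}$-equivariant homeomorphism $Y_{\ell^n,E(S)}\to Y_{\ell^n,F}\times S$. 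You instead specialise to orbits $S=U_{\ell^n}/U_{\ell^m}$, identify $E(S)\cong F(\zeta_{\ell^m})$, and compare the two compositions by counting and matching components via the right-square decomposition. Both lead to the same conclusion, but the paper's explicit map makes the statement uniform in $S$ and hence makes functoriality more or less immediate, whereas your reduction requires the extra verification you flag at the end (naturality in $S$ and in $\Spec E$, and independence of the chosen basepoints in $\{j\}$ and in $S$). Note also that the $U_{(\ell^n)}$-actions involved are right actions, and the exact sequence comparison (Remark~\ref{Rmk:RightActionSignInSes}.(ii)) introduces an inversion; when you assert that both residual $U_{(\ell^n)}$-actions on the trivial cover are ``the composite $U_{(\ell^n)}\hookrightarrow U_{\ell^n}\twoheadrightarrow U_{\ell^n}/U_{\ell^m}$,'' this is a place where that sign needs to be tracked carefully (the paper's version also defers this to a ``straightforward'' check, so you are not behind the paper here, but it is a genuine point of care). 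For the right square, the paper simply asserts ``straightforward but tedious''; your unpacking of $E\otimes_F F(\zeta_{\ell^\infty})\cong\prod_{\Gal(F'/F)}EF(\zeta_{\ell^\infty})$ with $F'=E\cap F(\zeta_{\ell^\infty})$ and its match against the $\Gal(\Fbar/E)$-orbits on $\{j\}$ is a correct way to fill that in, and is more detail than the paper provides.
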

\begin{proof}
	The commutativity of the right square is straightforward but tedious.
	
	For the commutativity of the left hand side, let $S$ be a finite set endowed with a continuous left action by~$U_{\ell^n}$. We will construct a natural isomorphism
	\begin{equation*}
	Y_{\ell^n,F}\times S\cong Y_{\ell^n,E(S)}
	\end{equation*}
	of $U_{(\ell^n)}$-equivariant finite covering spaces of~$Y_{\ell^n,F}$.
	
	First we may assume that the $U_{\ell^n}$-action on $S$ is transitive, because all the functors involved respect finite direct sums. Hence $E(S)$ is a finite field extension of~$F$. As explained above, to construct the covering $Y_{\ell^n,E(S)}\to Y_{\ell^n,F}$ we need to choose an embedding of $E(S)$ into~$\Fbar$, and later check that the choice of this embedding changes everything by canonical isomorphisms only (we omit that later part). By definition, $E(S)=(\Fbar^S)^{\Gal (\Fbar /F)}$, and hence
	\begin{equation*}
	\Hom_F(E(S),\Fbar )\cong\Hom_{\Fbar }(\Fbar^S,\Fbar )\cong S.
	\end{equation*}
	Therefore the choice we need to make is that of a particular element $s_0\in S$, which then allows us to trivialise the $\Gal (\Fbar /F)$-set $S$ as $\Gal (\Fbar /F)/H$, where $H=\Gal (\Fbar /E(S))$ is the stabiliser of~$s_0$.
	
	Note that
	\begin{equation*}
	\Hom_{\iota}(\mu_{\ell^{\infty }},\bbs^1)=\{ j\colon\mu_{\ell^{\infty }}\to\bbs^1\mid j\rvert_{\mu_{\ell^n}}=\iota  \}
	\end{equation*}
	is a $U_{\ell^n}$-torsor, and it is trivialised by the choice of $\tilde{\iota }\in\Hom_{\iota }(\mu_{\ell^{\infty }},\bbs^1)$ (which is implicit in the construction of the rightmost vertical functor in~(\ref{eqn:DiagramGaloisCategoriesYellnF})). There is then a unique $U_{\ell{}^n}$-equivariant map
	\begin{equation*}
	q\colon\Hom_{\iota }(\mu_{\ell^{\infty }},\bbs^1)\twoheadrightarrow S
	\end{equation*}
	with $q(\tilde{\iota })=s_0$. We define a continuous map
	\begin{equation*}
	Y_{\ell^n,\Fbar }\to Y_{\ell^n,\Fbar }\times S,\quad \chi\mapsto (\chi, q(\chi\rvert_{\mu_{\ell^{\infty }}}))
	\end{equation*}
	which is equivariant for the group inclusion $\Gal (\Fbar /E(S))\hookrightarrow\Gal (\Fbar /F)$ (trivial Galois action on $S$), hence it descends to a continuous map
	\begin{equation*}
	Y_{\ell^n,E(S)}\to Y_{\ell^n,F}\times S.
	\end{equation*}
	It is straightforward to check that this last map is a bijection, hence a homeomorphism, and that it is $U_{(\ell^n)}$-equivariant.
\end{proof}
Choosing a point in $Y_{\ell^{\infty },\Fbar }(\tilde{\iota })$ we obtain a compatible family of fibre functors on all the categories in (\ref{eqn:DiagramGaloisCategoriesYellnF}), hence a commutative diagram of profinite groups.
\begin{Proposition}\label{Prop:SesDiagEtaleFGs}
	The diagram of profinite groups and continuous group homomorphisms
	\begin{equation*}
	\xymatrix{
		1\ar[r]&\pioneet (Y_{\ell^n,F})\ar[r]\ar[d]_-{\cong }&\pioneet ([Y_{\ell^n,F}/U_{(\ell^n)}])\ar[r]\ar@{->>}[d]&\hat{U}_{(\ell^n)}\ar[r]\ar@{->>}[d]&1\\
		1\ar[r]&\Gal (\Fbar /F(\zeta_{\ell^{\infty }}))\ar[r]&\Gal (\Fbar /F)\ar[r]_-{\chi_{\ell ,F}}&U_{\ell^n}\ar[r]&1
		}
	\end{equation*}
	(where $\hat{U}_{(\ell^n)}$ denotes the profinite completion of the abstract group~$U_{(\ell^n)})$ commutes, and the rows are exact. The leftmost vertical map is an isomorphism, the other two vertical maps are surjective but not injective.
\end{Proposition}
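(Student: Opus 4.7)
The strategy will be to assemble the diagram from pieces already constructed and then carry out a short diagram chase on the vertical maps. The top row will come from Proposition~\ref{Prop:SESStackyFibration} (with Remark~\ref{Rmk:RightActionSignInSes} accounting for the right action) applied to the natural $U_{(\ell^n)}$-action on $Y_{\ell^n,F}$, yielding exactness at the middle term and the surjection onto $\hat U_{(\ell^n)}$. The bottom row is the defining sequence of the cyclotomic character, exact by our standing assumption $\operatorname{im}\chi_{\ell,F}=U_{\ell^n}$. The vertical maps will be obtained by fixing a compatible family of fiber functors determined by a basepoint $\tilde y \in Y_{\ell^\infty,\Fbar}(\tilde\iota)$ and passing to automorphism groups across the commutative square of Galois categories in Lemma~\ref{Lem:DiagramOfGaloisCategoriesCommutes}; commutativity of the resulting square of groups is then formal. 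The left isomorphism is supplied directly by Proposition~\ref{Prop:EtaleFGYellF}(ii).

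Next I will analyse the right vertical $\hat U_{(\ell^n)} \to U_{\ell^n}$, which is the canonical factorisation of the abstract-group inclusion $U_{(\ell^n)} \hookrightarrow U_{\ell^n}$ through the profinite completion of the source. Surjectivity follows because $\bbz_{(\ell)}$ is $\ell$-adically dense in $\bbz_\ell$, so $U_{(\ell^n)}$ is dense in $U_{\ell^n}$. For non-injectivity I will exhibit a finite quotient of $U_{(\ell^n)}$ that cannot factor through $U_{\ell^n}$: for any prime $p\neq\ell$, the $p$-adic valuation on $\bbq^\times$ restricts to a surjection $v_p\colon U_{(\ell^n)}\to\bbz$ (a short direct check: given $k\in\bbz$ one finds $a\in\bbz_{(\ell)}$ with $v_p(1+\ell^n a)=k$), whose composition with reduction $\bbz\to\bbz/p\bbz$ is a surjection onto a non-$\ell$-group, and therefore cannot factor through the pro-$\ell$ group $U_{\ell^n}$. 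Hence $\ker(\hat U_{(\ell^n)} \to U_{\ell^n})$ is non-trivial.

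The middle vertical and the final injectivity on the left will both drop out of the diagram. For surjectivity of the middle, given $\sigma\in\Gal(\Fbar/F)$ I will lift $\chi_{\ell,F}(\sigma)$ successively to $\hat U_{(\ell^n)}$ and then to some $\tilde\sigma\in\pioneet([Y_{\ell^n,F}/U_{(\ell^n)}])$ via the two surjections already in place; the image of $\tilde\sigma$ in $\Gal(\Fbar/F)$ differs from $\sigma$ by an element of $\Gal(\Fbar/F(\zeta_{\ell^\infty}))$ by commutativity, and this error is corrected by an element of $\pioneet(Y_{\ell^n,F})$ supplied by the left isomorphism. For non-injectivity of the middle, the same chase in reverse produces, from any nontrivial $k\in\ker(\hat U_{(\ell^n)}\to U_{\ell^n})$ and a lift $a\in\pioneet([Y_{\ell^n,F}/U_{(\ell^n)}])$, a nontrivial element of $\ker$ of the middle vertical by subtracting off the unique element of $\pioneet(Y_{\ell^n,F})$ matching the image of $a$ in $\Gal(\Fbar/F(\zeta_{\ell^\infty}))$. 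Finally, the injectivity of the leftmost top arrow --- which is \emph{not} part of the conclusion of Proposition~\ref{Prop:SESStackyFibration} --- is immediate from commutativity, since its composition with the middle vertical equals the injection $\pioneet(Y_{\ell^n,F}) \xrightarrow{\cong} \Gal(\Fbar/F(\zeta_{\ell^\infty})) \hookrightarrow \Gal(\Fbar/F)$. I do not expect a serious obstacle: the proposition is essentially formal given Lemma~\ref{Lem:DiagramOfGaloisCategoriesCommutes} and Proposition~\ref{Prop:EtaleFGYellF}(ii), and the only step of genuine content is the elementary identification of non-$\ell$ finite quotients of the abstract group $U_{(\ell^n)}$.
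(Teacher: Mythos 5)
Your proof is correct and follows essentially the same route as the paper: commutativity from Lemma~\ref{Lem:DiagramOfGaloisCategoriesCommutes}, exactness of the rows from Proposition~\ref{Prop:SESStackyFibration} and Galois theory, the left isomorphism from Proposition~\ref{Prop:EtaleFGYellF}.(ii), and the remaining claims by a diagram chase. The one place where you go beyond what the paper spells out — producing the explicit $\bbz/p\bbz$ quotient of $U_{(\ell^n)}$ via the $p$-adic valuation to establish that $\hat U_{(\ell^n)}\to U_{\ell^n}$ is not injective, a fact the paper dismisses as ``clearly'' true — is a welcome piece of detail and checks out.
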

\begin{proof}
	The commutativity follows from Lemma~\ref{Lem:DiagramOfGaloisCategoriesCommutes}. The lower row is exact by Galois theory and our assumptions on~$F$; the upper row is exact except possibly at $\pioneet (Y_{\ell^n,F})$ by Proposition~\ref{Prop:SESStackyFibration}. The map $\pioneet (Y_{\ell^n,F})\to\Gal (\Fbar /F(\zeta_{\ell^{\infty }}))$ is an isomorphism by Proposition~\ref{Prop:EtaleFGYellF}.(ii). Combining the last two observations we also obtain exactness of the upper row at~$\pioneet (Y_{\ell^n,F})$. Finally, the rightmost vertical map is clearly surjective but not injective, hence the same holds for the middle vertical map.
\end{proof}
\begin{Corollary}\label{Cor:FinalStepTopologicalGroupTheoretic}
	For each abelian torsion group $A$ the natural isomorphisms of cohomology groups
	\begin{equation*}
	\Hup^m(Y_{\ell^n,F},A)\overset{\cong }{\to }\Hup^m(\pioneet (Y_{\ell^n,F}),A)\overset{\cong }{\to }\Hup^m(F(\zeta_{\ell^{\infty }}),A)
	\end{equation*}
	are equivariant for the canonical group homomorphisms
	\begin{equation*}
	U_{(\ell^n)}\to\hat{U}_{(\ell^n)}\to U_{\ell^n},
	\end{equation*}
	up to an exponent which is $1$ for the first map and $-1$ for the second map.
	
	In particular the $U_{(\ell^n)}$-action on $\Hup^m(Y_{\ell^n,F})$ extends uniquely to a continuous $U_{\ell^n}$-action.
\end{Corollary}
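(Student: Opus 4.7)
The plan is to deduce both the equivariance and the continuity statement directly from Proposition~\ref{Prop:SesDiagEtaleFGs}, by realising each of the two isomorphisms as an instance of the Cartan--Leray spectral sequence (Corollary~\ref{Cor:HochschildSerreTopological}) and then invoking naturality. The first step is to establish the isomorphism $\Hup^m(Y_{\ell^n,F},A)\cong\Hup^m(\pioneet(Y_{\ell^n,F}),A)$ equivariantly. I would apply Cartan--Leray to the universal profinite covering $p\colon Y_{\ell^{\infty},\Fbar}(\tilde{\iota})\to Y_{\ell^n,F}$, whose deck group is $\pioneet(Y_{\ell^n,F})$ by Proposition~\ref{Prop:EtaleFGYellF}.(i). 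Since $Y_{\ell^\infty,\Fbar}(\tilde\iota)$ is, up to translation, the Pontryagin dual of the $\bbq$-vector space $\Fbar^\times/\mu_\infty$, its higher sheaf cohomology with torsion coefficients vanishes by Proposition~\ref{Prop:CohomologyOfPontryaginDuals}.(i), and the spectral sequence collapses to the desired isomorphism. The right $U_{(\ell^n)}$-action on $Y_{\ell^n,F}$ lifts tautologically to $Y_{\ell^\infty,\Fbar}$ (the same formula $\chi\cdot u=\chi(\cdot^u)$ makes sense), and under the universal property of the profinite covering this lifted action induces, up to inner automorphisms of $\pioneet(Y_{\ell^n,F})$ (which act trivially on group cohomology), precisely the conjugation action obtained from the upper row of Proposition~\ref{Prop:SesDiagEtaleFGs}. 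Naturality of Cartan--Leray then gives equivariance for the map $U_{(\ell^n)}\to\hat{U}_{(\ell^n)}$ with exponent $+1$.

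Next, I would establish the second isomorphism $\Hup^m(\pioneet(Y_{\ell^n,F}),A)\cong\Hup^m(F(\zeta_{\ell^\infty}),A)$ together with its equivariance. This is simply the identification of continuous group cohomologies under the isomorphism of profinite groups $\pioneet(Y_{\ell^n,F})\cong\Gal(\Fbar/F(\zeta_{\ell^\infty}))$ of Proposition~\ref{Prop:EtaleFGYellF}.(ii). Both actions on the respective kernels arise from conjugation by lifts in the middle groups of the two rows of Proposition~\ref{Prop:SesDiagEtaleFGs}, and the leftmost vertical isomorphism together with the middle vertical surjection gives the comparison at once, so equivariance holds up to an automorphism of $U_{\ell^n}$ determined by the diagram. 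The sign $-1$ is then forced by Remark~\ref{Rmk:RightActionSignInSes}.(ii): the surjection $\pioneet([Y_{\ell^n,F}/U_{(\ell^n)}])\twoheadrightarrow\hat{U}_{(\ell^n)}$ in the upper row was constructed from a \emph{right} $U_{(\ell^n)}$-action using the convention $\gamma x=x\gamma^{-1}$, whereas the cyclotomic character $\chi_{\ell,F}\colon\Gal(\Fbar/F)\twoheadrightarrow U_{\ell^n}$ in the lower row is in its natural left-action normalisation. Concretely, a lift $\tilde{u}\in\pioneet([Y_{\ell^n,F}/U_{(\ell^n)}])$ of $u\in\hat{U}_{(\ell^n)}$ corresponds under the middle vertical map to a $\sigma\in\Gal(\Fbar/F)$ with $\chi_{\ell,F}(\sigma)=u^{-1}$, and conjugating cohomology classes by $\sigma$ is what implements the $u^{-1}$-action on $\Hup^m(F(\zeta_{\ell^\infty}),A)$.

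The main technical nuisance is precisely this bookkeeping of left-versus-right conventions; modulo that, everything is formal from the commutativity in Proposition~\ref{Prop:SesDiagEtaleFGs} and Lemma~\ref{Lem:DiagramOfGaloisCategoriesCommutes}. Finally, for the assertion that the $U_{(\ell^n)}$-action on $\Hup^m(Y_{\ell^n,F},A)$ extends uniquely to a continuous $U_{\ell^n}$-action: existence is immediate, since transporting the identification to $\Hup^m(F(\zeta_{\ell^\infty}),A)$ exhibits the action as (a twist by inversion of) the restriction of the natural continuous $U_{\ell^n}\cong\Gal(F(\zeta_{\ell^\infty})/F)$-action on Galois cohomology; and uniqueness follows because $U_{(\ell^n)}$ is dense in $U_{\ell^n}$ and a continuous action of a topological group on a discrete abelian group (here the torsion group $\Hup^m(Y_{\ell^n,F},A)$, whose stabilisers are open) is determined by its restriction to any dense subgroup.
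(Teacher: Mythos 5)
Your overall strategy — read the corollary off from Proposition~\ref{Prop:SesDiagEtaleFGs} by interpreting both isomorphisms as Cartan--Leray degenerations and then chasing the sign via Remark~\ref{Rmk:RightActionSignInSes}.(ii) — is exactly what the paper does, and your density argument for uniqueness of the extension is correct. However, two specific claims in your proof do not hold as stated.

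First, your assertion that the right $U_{(\ell^n)}$-action on $Y_{\ell^n,F}$ lifts \emph{tautologically} to $Y_{\ell^{\infty},\Fbar}(\tilde{\iota})$ by the same formula $\chi\cdot u = \chi(\cdot^u)$ is wrong. If $\chi\rvert_{\mu_{\ell^{\infty}}} = \tilde{\iota}$, then $(\chi\cdot u)\rvert_{\mu_{\ell^{\infty}}} = \tilde{\iota}^u \neq \tilde{\iota}$ for $u\neq 1$, so the formula moves the character off the constrained subspace $Y_{\ell^{\infty},\Fbar}(\tilde{\iota})$ and into a different translate $Y_{\ell^{\infty},\Fbar}(\tilde{\iota}^u)$. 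The action does lift (because it preserves the quotient $Y_{\ell^n,F}$ and the covering is universal), but the lift must include a compensating composition with an element of $\Gal(\Fbar/F)$ that carries $Y_{\ell^{\infty},\Fbar}(\tilde{\iota}^u)$ back to $Y_{\ell^{\infty},\Fbar}(\tilde{\iota})$. That compensating Galois element is precisely what fails to commute with the deck group $\Gal(\Fbar/F(\zeta_{\ell^{\infty}}))$ and produces the nontrivial conjugation action. Your argument as written is internally inconsistent: a tautological lift commuting with the deck transformations (as you describe) would induce the \emph{trivial} action on $\pioneet(Y_{\ell^n,F})$, not the conjugation action you claim to obtain in the very next sentence.

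Second, your ``concretely'' sentence contradicts the commutativity of the diagram in Proposition~\ref{Prop:SesDiagEtaleFGs}: if $\tilde{u}\in\pioneet([Y_{\ell^n,F}/U_{(\ell^n)}])$ maps to $u\in\hat{U}_{(\ell^n)}$ under the upper surjection and to $\sigma\in\Gal(\Fbar/F)$ under the middle vertical map, then the commutativity of the right-hand square forces $\chi_{\ell,F}(\sigma)$ to equal the image of $u$ under $\hat{U}_{(\ell^n)}\to U_{\ell^n}$, not $u^{-1}$. The inversion mentioned in Remark~\ref{Rmk:RightActionSignInSes}.(ii) is already absorbed into the construction of the upper surjection so that the diagram in Proposition~\ref{Prop:SesDiagEtaleFGs} commutes as drawn; the exponent $-1$ in the corollary arises when one compares that surjection with the directly defined topological $U_{(\ell^n)}$-action on $Y_{\ell^n,F}$, which is a separate bookkeeping step. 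The cleaner framing is simply the one the paper gives: the whole corollary follows from the commutativity and exactness of Proposition~\ref{Prop:SesDiagEtaleFGs}, and the lone sign is a bookkeeping artefact of the right-action-to-left-action translation in Remark~\ref{Rmk:RightActionSignInSes}.(ii), with no need to trace it through an explicit (and here incorrectly described) lift.
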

\begin{proof}
	This follows from Proposition~\ref{Prop:SesDiagEtaleFGs}. The exponent $-1$, i.e.\ inversion, on the acting groups for $\Hup^m(\pioneet (Y_{\ell^n,F}),A)\to\Hup^m(F (\zeta_{\ell^{\infty }}),A)$ occurs because $U_{(\ell^n)}$ operates on $Y_{\ell^n,F}$ from the right, see Remark~\ref{Rmk:RightActionSignInSes}.(ii). 
\end{proof}

\subsection{Compatibility of the topological and arithmetic actions} This argument is technically much more difficult than the other two, so considering its logical redundancy we will be very sketchy here.

The essential ingredient is the fact that $\mathcal{Y}_{F,\mathrm{int}}$ is a $\Lambda$-scheme in the sense of Borger \cite{MR2833791}. The general definition of $\Lambda$-schemes is rather involved, but a na\"{\i}ve variant suffices for our purposes.
\begin{Definition} Let $\mathcal{X}$ be a scheme which is flat over $\bbz$. A $\Lambda$-structure on $\mathcal{X}$ is a family of mutually commuting endomorphisms $\varphi_p\colon\mathcal{X}\to\mathcal{X}$, indexed by the rational primes $p=2,3,5,7,\dots$, such that for all $p$ the base change
	\begin{equation*}
	\varphi_p\times\mathrm{id}\colon\mathcal{X}\times\Spec\bbf_p\to\mathcal{X}\times\Spec\bbf_p
	\end{equation*}
	is the absolute Frobenius of $\mathcal{X}_p=\mathcal{X}\times\Spec\bbf_p$, i.e.\ the morphism which is the identity on the topological space underlying $\mathcal{X}_p$ and which is $f\mapsto f^p$ on sections of the structure sheaf.
\end{Definition}
For example let $M$ be an abelian group in multiplicative notation. Then there is a canonical $\Lambda$-structure on $\Spec\bbz [M]$ given by 
\begin{equation}\label{eqn:LambdaStructureOnSpecGroupRing}
\varphi_p^{\sharp}\colon\bbz [M]\to\bbz [M],\qquad [m]\mapsto [m^p].
\end{equation}
We will now construct a $\Lambda$-structure on $\mathcal{Y}_{F,\mathrm{int}}=\Spec B_{F,\mathrm{int}}$ by writing down the corresponding endomorphisms $\varphi_p^{\sharp }$ of $B_{F,\mathrm{int}}$. Recall that
\begin{equation*}
B_{F,\mathrm{int}}=\left( \bbz [\Fbar^{\times }/\mu_{\ell '}] [([\zeta_\ell]-1)^{-1}]\right)^{\Gal (\Fbar /F)}
\end{equation*}
We begin by setting $\varphi_{\ell}^{\sharp }=\mathrm{id}$ (or anything)\footnote{Note that inverting $[\zeta_\ell]-1$ in particular inverts $\ell$, so the condition that $\varphi_\ell^\sharp$ lifts Frobenius is vacuous.}, and next consider the case $p\neq\ell$.

Here we define $\varphi_p^{\sharp}$ on $\bbz [\Fbar^{\times }/\mu_{\ell '}]$ by (\ref{eqn:LambdaStructureOnSpecGroupRing}). This descends to an endomorphism
\begin{equation*}
\varphi_p^{\sharp }\colon \bbz [\Fbar^{\times }/\mu_{\ell '}] [([\zeta_\ell]-1)^{-1}]\to \bbz [\Fbar^{\times }/\mu_{\ell '}] [([\zeta_\ell]-1)^{-1}].
\end{equation*}
\begin{Lemma}
	Let $p\neq\ell$. The canonical ring homomorphism
	\begin{equation*}
	B_{F,\mathrm{int}}\otimes\bbf_p\to\left(\bbf_p[\Fbar^{\times }/\mu_{\ell '}] [([\zeta_\ell]-1)^{-1}]\right)^{\Gal (\Fbar /F)}
	\end{equation*}
	is an isomorphism.
\end{Lemma}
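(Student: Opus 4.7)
The claim is that $G$-invariants commute with the base change $-\otimes\bbf_p$ applied to $R := \bbz[\Fbar^{\times}/\mu_{\ell'}][([\zeta_\ell]-1)^{-1}]$, where $G = \Gal(\Fbar/F)$. The natural strategy is to use the short exact sequence
\begin{equation*}
0\to R\xrightarrow{p}R\to R/p\to 0,
\end{equation*}
which, because $G$ acts continuously on each of these discrete abelian groups (elements have open stabilisers, as $\Fbar/F$ is a union of finite Galois extensions), yields the long exact sequence
\begin{equation*}
0\to R^G/pR^G\to (R/p)^G\to \Hup^1(G,R)[p]\to 0.
\end{equation*}
Thus the desired isomorphism reduces to the assertion that $\Hup^1(G,R)$ is $p$-torsion-free.

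To establish this, I would write $R$ as a filtered colimit of $G$-stable, finitely generated, \emph{torsion-free} $\bbz$-submodules. Concretely, for each finite $G$-stable subset $S\subseteq\Fbar^{\times}/\mu_{\ell'}$ and each $n\ge 0$, set $R_{S,n}=([\zeta_\ell]-1)^{-n}\cdot\bbz[S]$; this is $G$-stable because $\zeta_\ell\in F$ forces $[\zeta_\ell]-1\in R^G$, it is free of finite rank over $\bbz$, and the union of the $R_{S,n}$ over all $(S,n)$ is $R$. Continuous $G$-cohomology commutes with such filtered colimits of discrete modules, giving $\Hup^1(G,R)=\varinjlim_{S,n}\Hup^1(G,R_{S,n})$, so it suffices to check that each $\Hup^1(G,R_{S,n})$ is $p$-torsion-free.

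The $G$-action on each $R_{S,n}$ factors through a finite quotient $G_{S,n}$ (namely $\Gal(E/F)$ for $E$ the Galois closure of $F$ and a choice of representatives of $S$), and $G_{S,n}$ is a finite $\ell$-group since $G$ is pro-$\ell$. Write $H_{S,n}=\ker(G\twoheadrightarrow G_{S,n})$. Inflation--restriction in low degree gives
\begin{equation*}
0\to\Hup^1(G_{S,n},R_{S,n})\to\Hup^1(G,R_{S,n})\to\Hup^1(H_{S,n},R_{S,n})^{G_{S,n}},
\end{equation*}
and the last term vanishes: since $H_{S,n}$ acts trivially on $R_{S,n}$, that group is $\Hom_{\mathrm{cts}}(H_{S,n}^{\mathrm{ab}},R_{S,n})$, and any continuous homomorphism from a profinite group to a discrete finitely generated torsion-free abelian group has finite, hence trivial, image. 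Therefore $\Hup^1(G,R_{S,n})=\Hup^1(G_{S,n},R_{S,n})$ is annihilated by $|G_{S,n}|$, a power of $\ell$; in particular it carries no $p$-torsion.

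The principal technical point is thus the interchange of continuous $G$-cohomology with the colimit and the reduction to finite $\ell$-group cohomology; everything else is formal. The hypothesis that $G$ is pro-$\ell$ enters decisively exactly once, in the final sentence of the previous paragraph, which is where the restriction $p\neq\ell$ becomes essential (inverting $[\zeta_\ell]-1$ plays no role beyond keeping the $\ell$-power factor under control in the localisation).
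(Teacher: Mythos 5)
Your argument is correct, and it is genuinely different from the paper's. The paper's own proof is a one-line descent argument: it uses that $B_{F,\mathrm{int}}\to B_{\Fbar,\mathrm{int}}$ is a pro-finite \'etale $\Gal(\Fbar/F)$-cover (a variant of Theorem~\ref{thm:Wratfiniteetale}), so the invariant ring is an equaliser of faithfully flat maps and its formation therefore commutes with arbitrary base change, in particular with $\otimes\,\bbf_p$. Your route is instead a direct continuous-cohomology computation: you reduce the claim to the vanishing of $\Hup^1(G,R)[p]$ via the long exact sequence attached to $0\to R\xrightarrow{p}R\to R/p\to 0$ (note that the \emph{injectivity} of $R^G/pR^G\to(R/p)^G$ also uses that $R$ is $\bbz$-torsion-free, which is worth spelling out), and then observe that $\Hup^1(G,R)$ is $\ell$-primary torsion because $G$ is pro-$\ell$. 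This is more elementary and entirely independent of Theorem~\ref{thm:Wratfiniteetale}, and it isolates exactly where the hypotheses ``$G$ pro-$\ell$'' and ``$p\neq\ell$'' enter; the paper's argument is shorter given the machinery already in place and proves commutation with \emph{any} base change at once. Two small remarks on the write-up: for the family $\{R_{S,n}\}$ to be filtered you should arrange that $S$ is also stable under multiplication by $\mu_\ell$ (so that $([\zeta_\ell]-1)\cdot\bbz[S]\subseteq\bbz[S]$), which is harmless since $\mu_\ell\subseteq F$ is $G$-fixed; and the $R_{S,n}$-decomposition is in fact slightly more than you need, since for any discrete module $M$ over a pro-$\ell$ group $G$ one has $\Hup^n(G,M)\cong\varinjlim_H\Hup^n(G/H,M^H)$ over open normal $H$ (as the paper recalls in Section~7), a filtered colimit of groups annihilated by the $\ell$-power $\lvert G/H\rvert$, hence $\ell$-primary torsion for every $n\geq 1$.
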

\begin{proof}
	For $F=\Fbar$ this is clear. For the general case, we use that $B_{F,\mathrm{int}}\to B_{\Fbar,\mathrm{int}}$ is a pro-finite \'etale $\Gal(\Fbar/F)$-cover (cf.~Theorem~\ref{thm:Wratfiniteetale}), so that forming quotients under $\Gal(\Fbar/F)$ commutes with base change.
\end{proof}

\begin{Corollary}
	The family of endomorphisms $\varphi_p\colon\mathcal{Y}_{F,\mathrm{int}}\to\mathcal{Y}_{F,\mathrm{int}}$ given on rings by the $\varphi_p^{\sharp}$ as constructed above define a $\Lambda$-structure on~$\mathcal{Y}_{F,\mathrm{int}}$.\hfill $\square$
\end{Corollary}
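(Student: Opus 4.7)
The plan is to verify, for each prime $p$, the three conditions packaged in the definition of a $\Lambda$-structure: (a)~well-definedness of $\varphi_p\colon\mathcal{Y}_{F,\mathrm{int}}\to\mathcal{Y}_{F,\mathrm{int}}$; (b)~pairwise commutativity of the family $\{\varphi_p\}$; (c)~the condition that $\varphi_p\times\id$ is absolute Frobenius on $\mathcal{Y}_{F,\mathrm{int}}\times\Spec\bbf_p$. In addition, one has to check flatness of $\mathcal{Y}_{F,\mathrm{int}}$ over~$\bbz$, but this follows by faithfully flat descent along the pro-\'etale cover $B_{F,\mathrm{int}}\to B_{\Fbar,\mathrm{int}}$ of Theorem~\ref{thm:Wratfiniteetale}, since $B_{\Fbar,\mathrm{int}}$ is a localization of the free $\bbz$-module $\bbz[\Fbar^{\times}/\mu_{\ell '}]$.

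For~(a), the case $p=\ell$ is trivial since we declared $\varphi_\ell=\id$; this is consistent with~(c) because $\ell$ is invertible in $B_{F,\mathrm{int}}$, the key point being that in the localization $\bbz[\mu_\ell][([\zeta_\ell]-1)^{-1}]$ the relation $([\zeta_\ell]-1)\Phi_\ell([\zeta_\ell])=0$ forces $\Phi_\ell([\zeta_\ell])=0$, so the localization factors through $\bbz[\zeta_\ell][(\zeta_\ell-1)^{-1}]=\bbz[\zeta_\ell][\frac 1{\ell}]$; Galois invariance of~$\ell^{-1}$ places it in $B_{F,\mathrm{int}}$. For $p\neq\ell$, the group-algebra formula $\varphi_p^{\sharp}\colon[m]\mapsto[m^p]$ on $\bbz[\Fbar^{\times}/\mu_{\ell '}]$ commutes with the $\Gal(\Fbar/F)$-action (since $\sigma[m^p]=[\sigma(m)^p]=\varphi_p^{\sharp}(\sigma[m])$), so it will descend to invariants \emph{provided} it extends to the localization at $[\zeta_\ell]-1$. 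For this I would factor
\[
\varphi_p^{\sharp}([\zeta_\ell]-1)=[\zeta_\ell]^p-1=([\zeta_\ell]-1)\cdot\bigl(1+[\zeta_\ell]+\cdots+[\zeta_\ell]^{p-1}\bigr),
\]
and observe that the second factor, after passing to the localization where $\Phi_\ell([\zeta_\ell])=0$, becomes the classical cyclotomic unit $(1-\zeta_\ell^p)/(1-\zeta_\ell)\in\bbz[\zeta_\ell][\frac 1{\ell}]^{\times}$. Hence $\varphi_p^{\sharp}([\zeta_\ell]-1)$ is a unit in the localization, and $\varphi_p^{\sharp}$ extends uniquely.

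Condition~(b) is then immediate on the level of the ambient group ring, since $\varphi_p^{\sharp}\varphi_q^{\sharp}[m]=[m^{pq}]=\varphi_q^{\sharp}\varphi_p^{\sharp}[m]$, and extends to the localization and to invariants by uniqueness. For~(c) with $p\neq\ell$, I would use the Lemma immediately preceding the statement to identify
\[
B_{F,\mathrm{int}}\otimes\bbf_p\;\cong\;\bigl(\bbf_p[\Fbar^{\times}/\mu_{\ell '}][([\zeta_\ell]-1)^{-1}]\bigr)^{\Gal(\Fbar/F)}.
\]
On the ambient $\bbf_p$-algebra, any element has the form $\sum_i a_i[m_i]$ with $a_i\in\bbf_p$, and
\[
\Bigl(\sum_i a_i[m_i]\Bigr)^p=\sum_i a_i^p[m_i]^p=\sum_i a_i[m_i^p]=\varphi_p^{\sharp}\Bigl(\sum_i a_i[m_i]\Bigr),
\]
using Fermat $a_i^p=a_i$; the same identity survives localization. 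Since absolute Frobenius is natural with respect to ring maps, its restriction to $\Gal(\Fbar/F)$-invariants is again absolute Frobenius on $B_{F,\mathrm{int}}\otimes\bbf_p$, and this agrees with $\varphi_p^{\sharp}\otimes\id$ by the identity just displayed.

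The main obstacle will be the careful bookkeeping in~(a): one must check that inverting $[\zeta_\ell]-1$ is genuinely preserved by $\varphi_p^{\sharp}$, and this rests on the cyclotomic unit computation above together with the identification of the localized group ring with (a ring of fractions of) $\bbz[\zeta_\ell]$. Once that is set up cleanly at the level of~$\Fbar$, descent to $F$ is automatic from Galois-equivariance, and conditions~(b), (c) are then essentially formal consequences of the identity $[m^p]=[m]^p$ in the group algebra and Fermat's little theorem in characteristic~$p$.
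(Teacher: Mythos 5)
Your argument is correct and is essentially the one the paper intends: the $p=\ell$ case is vacuous since $\ell$ is invertible, commutativity is the identity $[m]^{pq}=[m]^{qp}$ in the group ring, and the Frobenius condition for $p\neq\ell$ follows from the preceding Lemma identifying $B_{F,\mathrm{int}}\otimes\bbf_p$ together with the freshman's-dream computation $(\sum a_i[m_i])^p=\sum a_i[m_i^p]$. The one place where the paper compresses to a bare assertion --- "This descends to an endomorphism" of the localization --- you have filled in correctly with the cyclotomic-unit computation showing $\varphi_p^{\sharp}([\zeta_\ell]-1)=([\zeta_\ell]-1)\cdot(1+[\zeta_\ell]+\cdots+[\zeta_\ell]^{p-1})$ becomes a unit multiple of $[\zeta_\ell]-1$ after localizing, and you have also correctly verified the implicit flatness hypothesis over $\bbz$ by descent along the pro-\'etale cover.
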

Note that for $p\neq\ell$ the $\varphi_p$ are even automorphisms of~$\mathcal{Y}_{F,\mathrm{int}}$. They can be described in a different way as follows:

The group $\Fbar^{\times }/\mu_{\ell '}$ is a $\bbz_{(\ell)}$-module, hence $\bbz_{(\ell)}^{\times }$ acts by automorphisms from the left on $\Fbar^{\times }/\mu_{\ell '}$ and also on $\bbz [\Fbar^{\times }/\mu_{\ell '}][([\zeta_\ell]-1)^{-1}]$. Since this action commutes with $\Gal (\Fbar /F)$, it restricts to a left $\bbz_{(\ell)}^{\times }$-action on $B_{F,\mathrm{int}}$; hence a right $\bbz_{(\ell)}^{\times }$-action on $\mathcal{Y}_{F,\mathrm{int}}$.
\begin{Lemma}\label{Lem:TopologicalActionOnSchemeDefinedByLambdaStructure}
	Consider the $\bbz_{(\ell)}^{\times }$-action on $\mathcal{Y}_{F,\mathrm{int}}$ as just described. For each prime $p\neq\ell$ the element $p\in\bbz_{(\ell)}^{\times }$ acts by the automorphism $\varphi_p$, and the element $-1\in\bbz_{(\ell)}^{\times }$ acts by the automorphism $[a]\mapsto [a^{-1}]$ on $\bbz [\Fbar^{\times }/\mu_{\ell '}]$.\hfill $\square$
\end{Lemma}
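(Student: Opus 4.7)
The plan is to verify the claim by direct computation, after unravelling what the two actions in question are on generators.

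First, recall that $\Fbar^{\times}/\mu_{\ell'}$ is a divisible abelian group whose torsion subgroup is the $\ell$-primary part $\mu_{\ell^{\infty}}$; consequently it is uniquely divisible by every integer prime to $\ell$, and so carries a canonical structure of $\bbz_{(\ell)}$-module, with $u\in\bbz_{(\ell)}$ acting on $a\bmod\mu_{\ell'}$ as $a\mapsto a^u$. For $u\in\bbz_{(\ell)}^{\times}$ this gives a group automorphism, and extending $\bbz$-linearly yields a ring automorphism $\psi_u^{\sharp}$ of $\bbz[\Fbar^{\times}/\mu_{\ell'}]$ determined by $\psi_u^{\sharp}([a])=[a^u]$. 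I would then observe that this automorphism extends to the localisation $\bbz[\Fbar^{\times}/\mu_{\ell'}][([\zeta_\ell]-1)^{-1}]$: indeed $\psi_u^{\sharp}([\zeta_\ell]-1)=[\zeta_\ell^u]-1$, and since $[\zeta_\ell^u]-1$ differs from $[\zeta_\ell]-1$ by a unit in $\bbz[\mu_\ell][([\zeta_\ell]-1)^{-1}]$ (because both elements generate the same prime of $\bbz[\mu_\ell]$ above $\ell$, via the identity $x^u-1=(x-1)\cdot(1+x+\dots+x^{u-1})$ and its inverse with $u$ replaced by its inverse mod $\ell$), $\psi_u^{\sharp}([\zeta_\ell]-1)$ is already invertible in the target. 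Since $\psi_u^{\sharp}$ commutes with $\Gal(\Fbar/F)$ (both actions being purely on exponents of the $a$'s), it descends to a ring automorphism of $B_{F,\mathrm{int}}$, which is by construction the action of $u$ used in the statement.

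Next I would simply compare: for a prime $p\neq\ell$, on the generator $[a]$ with $a\in\Fbar^{\times}/\mu_{\ell'}$ we have $\psi_p^{\sharp}([a])=[a^p]$, which by the definition in equation (\ref{eqn:LambdaStructureOnSpecGroupRing}) is exactly $\varphi_p^{\sharp}([a])$; both endomorphisms being $\bbz$-algebra maps determined by their values on group-like elements, they coincide on $\bbz[\Fbar^{\times}/\mu_{\ell'}]$, hence on the localisation and on $B_{F,\mathrm{int}}$. Dually $\varphi_p$ equals the right action of $p\in\bbz_{(\ell)}^{\times}$ on $\mathcal{Y}_{F,\mathrm{int}}$. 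For $u=-1$, $\psi_{-1}^{\sharp}([a])=[a^{-1}]$ by definition, which is the remaining claim.

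The whole argument is a formal unravelling; the only point where one must pause is checking that $\psi_u^{\sharp}$ passes to the localisation at $[\zeta_\ell]-1$, but as indicated this is immediate from the fact that $[\zeta_\ell]-1$ and $[\zeta_\ell^u]-1$ generate the same ideal after inverting $\ell$, and $\ell$ is already invertible in $\bbz[\Fbar^{\times}/\mu_{\ell'}][([\zeta_\ell]-1)^{-1}]$ via the factorisation $\ell=\prod_{i=1}^{\ell-1}([\zeta_\ell^i]-1)$ up to a unit.
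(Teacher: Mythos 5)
Your proof is correct and is exactly the direct unravelling that the paper leaves to the reader (the lemma carries a $\square$ and no proof body): both the $\bbz_{(\ell)}^\times$-action and the $\Lambda$-structure operator $\varphi_p$ are defined on group-ring generators by $[a]\mapsto[a^u]$, so the comparison is immediate once one knows the action passes to the localisation, which your geometric-series identity $x^u-1=(x-1)(1+x+\cdots+x^{u-1})$ and its inverse handle. One minor inaccuracy in the closing aside: the factorisation $\ell = \prod_{i=1}^{\ell-1}([\zeta_\ell^i]-1)$ up to a unit does not hold in the group ring $\bbz[\mu_\ell]$ itself, which is not a domain (for $\ell=2$, $[\zeta_2]-1$ is not $\pm 2$ up to a unit there); it holds only after inverting $[\zeta_\ell]-1$, where the group-ring relation forces $1+[\zeta_\ell]+\cdots+[\zeta_\ell^{\ell-1}]=0$ and one lands in $\bbz[\zeta_\ell,1/\ell]$ — but this aside is dispensable, since the geometric-series computation already shows $[\zeta_\ell^u]-1$ is a unit multiple of $[\zeta_\ell]-1$ with no detour through $\ell$.
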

Note that $\bbz_{(\ell)}^{\times }$ is generated by $-1$ and the primes different from $\ell$, hence the $\bbz_{(\ell)}^{\times }$-action on $\mathcal{Y}_{F,\mathrm{int}}$ is uniquely determined by Lemma~\ref{Lem:TopologicalActionOnSchemeDefinedByLambdaStructure}.

Extending scalars from $\bbz$ to $\bbc$ we then obtain a right $\bbz_ {(\ell)}^{\times }$-action on
\begin{equation*}
\mathcal{Y}_{F,\mathrm{int}}\times\Spec\bbc\cong\coprod_{\iota\colon\mu_{\ell^n}\hookrightarrow\bbs^1}\mathcal{Y}_{\ell^n,F}(\iota ).
\end{equation*}
The stabiliser of each component is $U_{(\ell^n)}$, and the quotient $\bbz_{(\ell)}^{\times }/U_{(\ell^n)}\cong (\bbz /\ell^n\bbz )^{\times }$ operates simply transitively on the set of components. Hence there is a canonical $\bbz_{(\ell)}^\times$-equivariant isomorphism of cohomology groups
\begin{equation*}
\Hup^m(\mathcal{Y}_{F,\mathrm{int}}\otimes\qbar ,A)\cong\Hup^m(\mathcal{Y}_{F,\mathrm{int}}\otimes\bbc ,A)\cong\bigoplus_{\iota }\Hup^m(\mathcal{Y}_{\ell^n,F}(\iota ),A),
\end{equation*}
and each summand on the right hand side is stable under $U_{(\ell^n)}$. By unravelling definitions we see that the $U_{(\ell^n)}$-action on each summand corresponds to the `topological action' described above under the canonical isomorphism $\Hup^m(\mathcal{Y}_{\ell^n,F}(\iota),A)\cong\Hup^m(Y_{\ell^n,F},A)$.

We now draw some consequences from the fact that the $\varphi_p$ constitute a $\Lambda$-structure on~$\mathcal{Y}_{F,\mathrm{int}}$.
\begin{Proposition}\label{Prop:LambdaStructureInverseArithmeticFrobenius}
	For each $p\neq\ell$ and each abelian torsion group $A$ the automorphism of $\Hup^m_{\et }(\mathcal{Y}_{F,\mathrm{int}}\times\Spec\widebar{\bbf }_p,A)$ induced by $\varphi_p\bmod p$ is the inverse of the `arithmetic' automorphism induced by the canonical generator $\sigma_p\in\Gal (\widebar{\bbf}_p/\bbf_p)$ with $\sigma_p(a)=a^p$.
\end{Proposition}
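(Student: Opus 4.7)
The plan is to combine the defining property of a $\Lambda$-structure with the classical fact that absolute Frobenius acts as the identity on étale cohomology with torsion coefficients. Write $\mathcal{Y}_0 = \mathcal{Y}_{F,\mathrm{int}}\times\Spec\bbf_p$ and $\mathcal{Y} = \mathcal{Y}_0\times_{\Spec\bbf_p}\Spec\widebar{\bbf}_p$. By the very definition of the $\Lambda$-structure, the endomorphism $\varphi_p\bmod p$ of $\mathcal{Y}_0$ is the absolute Frobenius $F_{\mathcal{Y}_0}$, and the endomorphism of $\mathcal{Y}$ whose cohomological action we wish to compute is $F_{\mathcal{Y}_0}\times\mathrm{id}_{\widebar{\bbf}_p}$.

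Next I would unwind the arithmetic action on $\mathcal{Y}$. By construction, an element $\tau\in\Gal(\widebar{\bbf}_p/\bbf_p)$ acts on the ring $B_0\otimes_{\bbf_p}\widebar{\bbf}_p$ (where $B_0 = B_{F,\mathrm{int}}\otimes\bbf_p$) as $b\otimes c\mapsto b\otimes\tau(c)$, and hence as $\Spec(\mathrm{id}\otimes\tau)=\mathrm{id}\times\Spec(\tau)$ on $\mathcal{Y}$. For the canonical generator $\sigma_p(c)=c^p$, the associated scheme morphism $\Spec(\sigma_p)$ of $\Spec\widebar{\bbf}_p$ is literally the absolute Frobenius $F_{\widebar{\bbf}_p}$. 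Consequently the action of $\sigma_p$ on $\Hup^m_{\et}(\mathcal{Y},A)$ is $(\mathrm{id}\times F_{\widebar{\bbf}_p})^{\ast}$.

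Now I would factor the absolute Frobenius of $\mathcal{Y}$. A direct calculation on sections $b\otimes c\mapsto b^p\otimes c^p$ of $B_0\otimes_{\bbf_p}\widebar{\bbf}_p$ shows that
\[
F_{\mathcal{Y}} = (F_{\mathcal{Y}_0}\times\mathrm{id})\circ(\mathrm{id}\times F_{\widebar{\bbf}_p}),
\]
with the two factors commuting. Since $\mathcal{Y}$ is an $\bbf_p$-scheme, $F_{\mathcal{Y}}$ induces the identity endofunctor of the small étale site of $\mathcal{Y}$, so $F_{\mathcal{Y}}^{\ast}=\mathrm{id}$ on $\Hup^m_{\et}(\mathcal{Y},A)$ for any torsion sheaf. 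Applying pullback to the factorisation therefore yields
\[
\mathrm{id} = (F_{\mathcal{Y}_0}\times\mathrm{id})^{\ast}\circ(\mathrm{id}\times F_{\widebar{\bbf}_p})^{\ast} = (\varphi_p\bmod p)^{\ast}\circ\sigma_p^{\ast},
\]
i.e.\ $(\varphi_p\bmod p)^{\ast}=\sigma_p^{-1}$, which is the claim.

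The only real obstacle is bookkeeping of sign and handedness conventions: one must check carefully that the paper's convention of going from the tautological left action of $\sigma_p$ on the tensor factor $\widebar{\bbf}_p$ to a left action on cohomology does produce $(\mathrm{id}\times F_{\widebar{\bbf}_p})^{\ast}$ (and not its inverse), and that the two endomorphisms in the factorisation of $F_{\mathcal{Y}}$ genuinely commute so that no order ambiguity enters. Once these conventions are aligned with those fixed earlier in the section, everything else is formal.
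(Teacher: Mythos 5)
Your proof is correct and follows essentially the same approach as the paper's sketch: both factor the absolute Frobenius of $\mathcal{Y}_{F,\mathrm{int}}\times\Spec\widebar{\bbf}_p$ as the composite of $\varphi_p\bmod p$ (which is $F_{\mathcal{Y}_0}\times\mathrm{id}$ by the $\Lambda$-structure axiom) with the arithmetic Frobenius $\mathrm{id}\times F_{\widebar{\bbf}_p}$, and then invoke triviality of the absolute Frobenius action on \'etale cohomology with torsion coefficients. The only cosmetic caveat is that $F_{\mathcal{Y}}$ is a universal homeomorphism rather than literally the identity of the small \'etale site, but the resulting triviality of $F_{\mathcal{Y}}^{\ast}$ is exactly what the paper cites from SGA~4$\tfrac12$, Rapport, \S1.8.
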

\begin{proof}[Sketch of proof]
	This follows from the fact that $\sigma_p^{\flat }$ and $\varphi_p\bmod p$ commute, their product is the absolute Frobenius endomorphism of the $\bbf_p$-scheme $\mathcal{Y}_{F,\mathrm{int}}\times\Spec\widebar{\bbf}_p$, and this absolute Frobenius acts trivially on \'etale cohomology. The latter is clear for $\Hup^0_{\et }$ and then follows formally for $\Hup^m_{\et }$ by the universal property of sheaf cohomology. See \cite[Rapport, section~1.8]{MR0463174} for a detailed discussion in the finite type case.
\end{proof}
\begin{Proposition}\label{Prop:ActionsByGalFpQpQ}
	Fix an algebraic closure $\qbar_p$ of $\bbq_p$ and an embedding $\qbar\hookrightarrow\qbar_p$. Then for any abelian torsion group $A$ and any $m\ge 0$ the natural maps
	\begin{equation*}
	\Hup^m_{\et }(\mathcal{Y}_{F,\mathrm{int}}\otimes\widebar{\bbf}_p,A)\leftarrow\Hup^m_{\et }(\mathcal{Y}_{F,\mathrm{int}}\otimes\mathcal{O}_{\qbar_p},A)\to\Hup^m_{\et }(\mathcal{Y}_{F,\mathrm{int}}\otimes\qbar_p,A)\leftarrow\Hup^m_{\et }(\mathcal{Y}_{F,\mathrm{int}}\otimes\qbar ,A)
	\end{equation*}
	are isomorphisms, and they are equivariant for the action of $\bbz_{(\ell)}^\times$ and the homomorphisms of absolute Galois groups
	\begin{equation*}
	\Gal (\widebar{\bbf}_p/\bbf_p)\leftarrow \Gal (\qbar_p/\bbq_p)=\Gal (\qbar_p/\bbq_p)\to\Gal (\qbar /\bbq ).
	\end{equation*}
\end{Proposition}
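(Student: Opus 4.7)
The plan is to reduce to the case of finite-type schemes by a continuity argument and then invoke standard base change theorems in \'etale cohomology. Throughout, the implicit hypothesis is that $\lvert A\rvert$ is coprime to both $\ell$ and $p$, which is the natural setting; the subsequent applications of the proposition in the paper are to $\ell$-primary $A$ with $p\neq\ell$. First, I would write $\Fbar^{\times }/\mu_{\ell '}$ as a filtered union of $\Gal(\Fbar/F)$-stable finitely generated subgroups $M_\alpha$, each necessarily of the form $\mu_{\ell^{n_\alpha }}\oplus\bbz^{r_\alpha }$. This presents $B_{\Fbar,\mathrm{int}}$ as a filtered colimit of finite-type $\bbz [\frac{1}{\ell }]$-algebras $B_\alpha$; taking $\Gal(\Fbar/F)$-invariants commutes with this filtered colimit, and by Theorem~\ref{thm:Wratfiniteetale} each $B_\alpha^{\Gal(\Fbar/F)}$ is a finite \'etale $\bbz [\frac{1}{\ell }]$-algebra-quotient of a finite-type algebra, hence itself of finite type. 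Thus $\mathcal{Y}_{F,\mathrm{int}}=\varprojlim_\alpha\mathcal{Y}_\alpha$ is a cofiltered limit of affine, flat, finite-type schemes over $\Spec\bbz [\frac{1}{\ell }]$ with affine transition maps. By the continuity of \'etale cohomology along such limits, each of the four cohomology groups in the statement is the filtered colimit of the corresponding groups for the $\mathcal{Y}_\alpha$, reducing us to the finite-type case.

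For each finite-type $\mathcal{Y}_\alpha$, the isomorphism $\Hup^m_{\et }(\mathcal{Y}_\alpha\otimes\qbar ,A)\cong\Hup^m_{\et }(\mathcal{Y}_\alpha\otimes\qbar_p,A)$ is the standard invariance of \'etale cohomology with torsion coefficients under enlargement of the algebraically closed base field for a scheme of finite type. The isomorphism $\Hup^m_{\et }(\mathcal{Y}_\alpha\otimes\mathcal{O}_{\qbar_p},A)\cong\Hup^m_{\et }(\mathcal{Y}_\alpha\otimes\qbar_p,A)$ follows from smooth base change applied to the open immersion $\Spec\qbar_p\hookrightarrow\Spec\mathcal{O}_{\qbar_p}$, using that $\mathcal{Y}_\alpha$ is smooth over a cyclotomic $\bbz [\frac{1}{\ell }]$-algebra (after a finite \'etale cover by the Galois group of a large enough cyclotomic extension of $F$, it becomes an open subscheme of a split torus $\bbg_m^{r_\alpha }$ over a ring of cyclotomic integers with $\ell$ inverted).

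The crux is the specialisation $\Hup^m_{\et }(\mathcal{Y}_\alpha\otimes\mathcal{O}_{\qbar_p},A)\cong\Hup^m_{\et }(\mathcal{Y}_\alpha\otimes\widebar{\bbf }_p,A)$, which is the main obstacle because $\mathcal{Y}_\alpha$ is smooth but not proper over $\Spec\bbz [\frac{1}{\ell }]$, so the smooth proper base change theorem does not apply directly. I would handle this in either of two ways. The clean geometric approach is to use that split tori admit smooth equivariant toric compactifications over $\bbz [\frac{1}{\ell }]$ with strict normal crossings boundary; Deligne's smooth proper base change applied to the compactification together with absolute cohomological purity along the boundary strata gives a specialisation isomorphism for the cohomology of the open part. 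Alternatively one can compute directly: for a split torus $\bbg_{m,R}^r$ over any connected strictly henselian $\bbz [\frac{1}{\ell }]$-algebra $R$ with $\lvert A\rvert$ invertible in $R$, the Leray spectral sequence for $\bbg_{m,R}^r\to\Spec R$ combined with the Kummer-theoretic identification of $R^i\pi_*A$ as a constant sheaf on $\Spec R$ yields $\Hup^m_{\et }(\bbg_{m,R}^r,A)\cong\bigwedge^m A^r$, manifestly independent of $R$; the Cartan--Leray spectral sequence for the finite \'etale cover $\mathcal{Y}_\alpha'\to\mathcal{Y}_\alpha$ then propagates this identification to all of $\mathcal{Y}_\alpha$. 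Either approach produces the four isomorphisms of the proposition, and the equivariance for the $\bbz_{(\ell )}^\times$-action and the Galois homomorphisms $\Gal(\widebar{\bbf }_p/\bbf_p)\leftarrow\Gal(\qbar_p/\bbq_p)\to\Gal(\qbar /\bbq )$ is then immediate from the naturality of the specialisation and restriction maps in the base.
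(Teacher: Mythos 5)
The paper's own proof is a two-line sketch that takes a genuinely different route from yours: it applies the Cartan--Leray spectral sequence of Proposition~\ref{Prop:CartanLerayEtale} to the pro-\'etale $\Gal(\Fbar/F(\zeta_{\ell^\infty}))$-cover $\mathcal{Y}_{\Fbar,\mathrm{int}}\otimes R\to\mathcal{Y}_{F,\mathrm{int}}\otimes R$ over each of the four bases $R$, using that (a connected component of) $\mathcal{Y}_{\Fbar,\mathrm{int}}\otimes R$ is an infinite torus over a strictly henselian base and hence has vanishing higher cohomology with $p$-coprime torsion coefficients, exactly as in Proposition~\ref{Prop:EtaleCohomologyOfSpecGroupAlgebra}. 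All four groups are thereby identified with $\Hup^m(F(\zeta_{\ell^\infty}),A)$, the natural maps become identities on this common target, and equivariance is inherited from functoriality of the spectral sequence. This avoids the Noetherian reduction, compactification, purity, and smooth/proper base change that your argument requires. Your route --- reduction to finite type followed by base change theorems --- is sound in outline and has the merit of making the underlying geometry (tori over rings of cyclotomic integers with $\ell$ inverted) explicit, but it is substantially more technical, and each step (the toric compactification plus purity, or alternatively the Leray computation over a strictly henselian base, and the propagation via Hochschild--Serre) would need to be fleshed out with care. Finally, one correction to your opening remark: you state the implicit hypothesis as ``$\lvert A\rvert$ coprime to both $\ell$ and $p$'' and then immediately note that the intended application is to $\ell$-primary $A$ --- these contradict each other. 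The correct implicit hypothesis is merely that $A$ has no $p$-torsion; no condition involving $\ell$ is needed since $\ell$ is already inverted on $\mathcal{Y}_{F,\mathrm{int}}$. (For $A$ with $p$-torsion the proposition does fail: by Artin--Schreier theory $\Hup^1_{\et}(\bbg_{m,\widebar{\bbf}_p},\bbz/p)$ is infinite-dimensional and does not stabilise in the pro-torus limit, so neither your base change argument nor the Cartan--Leray argument applies to the special fibre.)
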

\begin{proof}[Sketch of proof]
	This can be shown identifying each of the \'etale cohomology groups with $\Hup^m(F(\zeta_{\ell^{\infty }}),A)$.
\end{proof}

The next proposition is a variant of a result of Borger, \cite[Theorem 6.1]{borger}.

\begin{Proposition}\label{Prop:GaloisOnEtaleCohoFactThroughChiEll}
	The action of $\Gal (\qbar /\bbq )$ on $\Hup^m_{\et }(\mathcal{Y}_{F,\mathrm{int}}\otimes\qbar ,A)$ factors through the $\ell$-adic cyclotomic character $\chi_{\ell ,\bbq}\colon\Gal (\qbar /\bbq )\to\bbz_{\ell}^{\times }$.
\end{Proposition}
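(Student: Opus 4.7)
The plan is to derive the proposition by combining Propositions~\ref{Prop:LambdaStructureInverseArithmeticFrobenius} and~\ref{Prop:ActionsByGalFpQpQ} with a Chebotarev density argument, after first extending the $\bbz_{(\ell)}^\times$-action to a continuous $\bbz_\ell^\times$-action on cohomology.

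First I would produce that continuous extension. Under the decomposition into connected components,
\[
\Hup^m_{\et}(\mathcal{Y}_{F,\mathrm{int}} \otimes \qbar, A) \cong \bigoplus_{\iota\colon\mu_{\ell^n}\hookrightarrow\qbar} \Hup^m_{\et}(\mathcal{Y}_{\ell^n,F}(\iota), A),
\]
the subgroup $U_{(\ell^n)}\subset \bbz_{(\ell)}^\times$ preserves each summand, and Corollary~\ref{Cor:FinalStepTopologicalGroupTheoretic} supplies a unique continuous extension to a $U_{\ell^n}$-action on each summand. Since the finite quotient $\bbz_\ell^\times / U_{\ell^n} \cong (\bbz/\ell^n\bbz)^\times$ permutes the summands through the natural map from $\bbz_{(\ell)}^\times$, these pieces assemble into a continuous right $\bbz_\ell^\times$-action refining the original one.

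Second I would compute the action of a Frobenius element. Fix a prime $p \neq \ell$, choose an embedding $\qbar \hookrightarrow \qbar_p$, and let $\sigma \in \Gal(\qbar/\bbq)$ be any lift of the canonical generator of $\Gal(\widebar{\bbf}_p/\bbf_p)$. By Proposition~\ref{Prop:ActionsByGalFpQpQ}, $\sigma$ acts on $\Hup^m_{\et}(\mathcal{Y}_{F,\mathrm{int}} \otimes \qbar, A)$ through its image on the corresponding cohomology in characteristic $p$; by Proposition~\ref{Prop:LambdaStructureInverseArithmeticFrobenius}, that action equals the inverse of the automorphism induced by $\varphi_p \bmod p$; and by Lemma~\ref{Lem:TopologicalActionOnSchemeDefinedByLambdaStructure}, $\varphi_p$ is precisely the scheme-automorphism corresponding to $p \in \bbz_{(\ell)}^\times$. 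Since the isomorphisms of Proposition~\ref{Prop:ActionsByGalFpQpQ} are $\bbz_{(\ell)}^\times$-equivariant, it follows that $\sigma$ acts on $\Hup^m_{\et}(\mathcal{Y}_{F,\mathrm{int}} \otimes \qbar, A)$ through $p^{-1} = \chi_{\ell,\bbq}(\sigma)^{-1} \in \bbz_\ell^\times$ in the continuous action built above. Any inertia element at $p$ maps trivially to $\Gal(\widebar{\bbf}_p/\bbf_p)$ and therefore acts trivially on cohomology, consistently with $\chi_{\ell,\bbq}$ being unramified at $p$.

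Third I would conclude by density. Both the given Galois action and the composition
\[
\Gal(\qbar/\bbq) \xrightarrow{\chi_{\ell,\bbq}} \bbz_\ell^\times \xrightarrow{u\mapsto u^{-1}} \bbz_\ell^\times \longrightarrow \Aut\bigl(\Hup^m_{\et}(\mathcal{Y}_{F,\mathrm{int}} \otimes \qbar, A)\bigr)
\]
are continuous with respect to the discrete topology on the target (each cohomology class is already fixed by an open subgroup of $\Gal(\qbar/\bbq)$, coming from a finite subextension). They agree on every Frobenius element $\mathrm{Frob}_p$ with $p\neq\ell$ by the second step, and these elements are dense in $\Gal(\qbar/\bbq)$ by Chebotarev; hence the two maps coincide, so the Galois action factors through $\chi_{\ell,\bbq}$. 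The hard part of this argument is the bookkeeping in the second step: one must simultaneously keep track of the inversion introduced by Proposition~\ref{Prop:LambdaStructureInverseArithmeticFrobenius}, of the translation between the right action of $\varphi_p$ on the scheme and the induced left action on cohomology, and of the identification $\varphi_p \leftrightarrow p$ of Lemma~\ref{Lem:TopologicalActionOnSchemeDefinedByLambdaStructure}, so as to correctly match $\chi_{\ell,\bbq}(\mathrm{Frob}_p)^{-1}$ with the action of $\mathrm{Frob}_p$.
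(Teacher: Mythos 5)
Your argument is correct, but it routes through a different part of the paper's machinery than the authors' own proof. The paper's proof of this proposition is ``soft'': from Propositions~\ref{Prop:LambdaStructureInverseArithmeticFrobenius} and~\ref{Prop:ActionsByGalFpQpQ} it extracts only two facts, namely that the images of any two Frobenius elements commute (since Frobenii are identified with inverses of the mutually commuting $\Lambda$-operators $\varphi_p$) and that the action is unramified at every prime $p\neq\ell$; Chebotarev then shows the action factors through the abelianisation $\Gal(\qbar/\bbq)^{\mathrm{ab}}\cong\Gal(\bbq(\zeta_\infty)/\bbq)$, and unramifiedness outside $\ell$ (together with Kronecker--Weber) cuts this down to $\Gal(\bbq(\zeta_{\ell^\infty})/\bbq)\cong\bbz_\ell^\times$. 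That proof never needs to identify the Frobenius action as an element of $\bbz_\ell^\times$, nor does it need the continuous topological $\bbz_\ell^\times$-action. You instead compute the Frobenius action exactly, showing $\mathrm{Frob}_p$ acts by $\chi_{\ell,\bbq}(\mathrm{Frob}_p)^{-1}$ via the continuous extension of the topological $\bbz_{(\ell)}^\times$-action, and compare two continuous maps by Chebotarev density. In effect you prove the sharper Proposition~\ref{Prop:FinalStepCompatibilityTopologicalArithmetic} and deduce the present statement from it. This is logically sound and gives a stronger conclusion, but it imports Corollary~\ref{Cor:FinalStepTopologicalGroupTheoretic} from the group-theoretic/topological compatibility subsection, whereas the authors deliberately keep their proof internal to the arithmetic side so that the three compatibility arguments of Theorem~\ref{Thm:CompatibilityThreeActionsCohomology} remain independent of one another. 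Two small points to tighten: the ``assembly'' of a continuous $\bbz_\ell^\times$-action from the per-summand $U_{\ell^n}$-actions together with the $(\bbz/\ell^n\bbz)^\times$ permutation of summands requires a brief check that the two pieces glue consistently along $U_{(\ell^n)}=\bbz_{(\ell)}^\times\cap U_{\ell^n}$; and the density step is cleanest stated pointwise, i.e.\ for each class $x$ the orbit maps $g\mapsto\phi(g)(x)$ and $g\mapsto\psi(g)(x)$ are continuous into the discrete group $\Hup^m$, so the locus where they agree is closed and contains the Frobenii.
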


\begin{proof} By Proposition~\ref{Prop:ActionsByGalFpQpQ} and Proposition~\ref{Prop:LambdaStructureInverseArithmeticFrobenius}, the action of any two Frobenius elements commute, as they can be identified with the action of the commuting operators $\varphi_p$. By Chebotarev, we see that the action of $\Gal(\qbar/\bbq)$ factors through its maximal abelian quotient; also, it is unramified at all primes different from~$\ell$ by Proposition~\ref{Prop:ActionsByGalFpQpQ}. This implies that it factors through the quotient $\Gal (\bbq (\zeta_{\ell^{\infty }})/\bbq)$, which is precisely the quotient defined by the $\ell$-adic cyclotomic character.
\end{proof}

\begin{Proposition}\label{Prop:ActionByGlobalFrobeniusAndComplexConjugation}
	\begin{enumerate}
		\item Let $p\neq\ell$ be a rational prime, and let $\sigma_p\in\Gal (\qbar /\bbq )$ be such that $\chi_{\ell }(\sigma_p)=p\in\bbz_{\ell}^{\times }$. Then $\sigma_p$ operates on $\Hup^m_{\et }(\mathcal{Y}_{F,\mathrm{int}}\otimes\qbar ,A)$ through the inverse of~$\varphi_p$.
		\item Let $\sigma_{-1}\in\Gal (\qbar /\bbq )$ be such that $\chi_{\ell }(\sigma_{-1})=-1\in\bbz_{\ell}^{\times }$. Then $\sigma_{-1}$ operates on $\Hup^m_{\et }(\mathcal{Y}_{F,\mathrm{int}}\otimes\qbar ,A)$ through the involution induced by the involution $[a]\mapsto [a^{-1}]$ on $\bbz [\Fbar^{\times }/\mu_{\ell '}]$.
	\end{enumerate}
\end{Proposition}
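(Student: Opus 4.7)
Since Proposition~\ref{Prop:GaloisOnEtaleCohoFactThroughChiEll} asserts that the Galois action on $\Hup^m_{\et}(\mathcal{Y}_{F,\mathrm{int}}\otimes\qbar,A)$ factors through $\chi_{\ell,\bbq}$, in each of (i) and (ii) it suffices to exhibit a single convenient element $\sigma$ with the prescribed value of $\chi_\ell$ and verify the formula for that element.

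For (i) the plan is to take $\sigma_p$ to be a Frobenius element at $p$, picked using a fixed embedding $\qbar\hookrightarrow\qbar_p$ and the inclusion $\Gal(\widebar{\bbf}_p/\bbf_p)\hookleftarrow\Gal(\qbar_p/\bbq_p)\to\Gal(\qbar/\bbq)$. Because $p$ is unramified in $\bbq(\zeta_{\ell^\infty})$, one has $\chi_\ell(\sigma_p)=p\in\bbz_\ell^\times$. By Proposition~\ref{Prop:ActionsByGalFpQpQ} the action of $\sigma_p$ on $\Hup^m_{\et}(\mathcal{Y}_{F,\mathrm{int}}\otimes\qbar,A)$ is transported, via the specialisation isomorphism to $\Hup^m_{\et}(\mathcal{Y}_{F,\mathrm{int}}\otimes\widebar{\bbf}_p,A)$, to the action of the arithmetic Frobenius, which by Proposition~\ref{Prop:LambdaStructureInverseArithmeticFrobenius} coincides with the inverse of $\varphi_p\bmod p$. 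Since $\varphi_p$ is an endomorphism defined over $\bbz$, its action on the $\widebar{\bbf}_p$-cohomology is the reduction of its action on the $\qbar$-cohomology under the specialisation isomorphism, so we conclude that $\sigma_p$ acts as $\varphi_p^{-1}$. By the factorisation of the action through $\chi_\ell$, the same is true for any $\sigma\in\Gal(\qbar/\bbq)$ with $\chi_\ell(\sigma)=p$.

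For (ii) I would choose $\sigma_{-1}$ to be the complex conjugation $c$ associated to a fixed embedding $\qbar\hookrightarrow\bbc$; clearly $\chi_\ell(c)=-1$. Using the invariance of \'etale cohomology with torsion coefficients under extension of algebraically closed base fields, $\sigma_{-1}$ then acts on $\Hup^m_{\et}(\mathcal{Y}_{F,\mathrm{int}}\otimes\bbc,A)$ via the scheme automorphism $\id\otimes c$. Write $\tau$ for the involution $[a]\mapsto[a^{-1}]$ of $\mathcal{Y}_{F,\mathrm{int}}$ (the action of $-1\in\bbz_{(\ell)}^\times$); I want to show that $c$ and $\tau$ induce the same map on $\Hup^m_{\et}(\mathcal{Y}_{F,\mathrm{int}}\otimes\bbc,A)$. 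To do this I would analyse the composition $c\circ\tau$. Decompose $\mathcal{Y}_{F,\mathrm{int}}\otimes\bbc=\bigsqcup_\iota\mathcal{Y}_{\ell^n,F}(\iota)$ as in Theorem~\ref{thm:cohomelladic}; both $c$ and $\tau$ send the component indexed by $\iota$ to the one indexed by $\overline{\iota}=\iota^{-1}$, so $c\circ\tau$ preserves each component. On $\bbc$-points, a character $\chi$ of $\mathcal{Y}_{\ell^n,F}(\iota)$ is sent by $c\circ\tau$ to the character $a\mapsto\overline{\chi(a)^{-1}}$, which on the compact Hausdorff subspace $Y_{\ell^n,F}(\iota)$ of $\bbs^1$-valued characters equals $\chi$ itself, since $\overline{z^{-1}}=z$ for $z\in\bbs^1$. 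Hence $c\circ\tau$ restricts to the identity on $Y_{\ell^n,F}(\iota)$.

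By Theorem~\ref{thm:cohomelladic}, $Y_{\ell^n,F}(\iota)$ is a (strong) deformation retract of $\mathcal{Y}_{\ell^n,F}(\iota)(\bbc)$ via a retraction $r$ and inclusion $i$ with $i\circ r$ homotopic to the identity; since $(c\circ\tau)\circ i=i$, composing with $r$ gives $(c\circ\tau)\circ i\circ r=i\circ r\simeq\id$, so $c\circ\tau$ is homotopic to the identity and acts trivially on singular cohomology with coefficients in $A$. Combining this with the identifications of Theorem~\ref{thm:cohomelladic} (whose compatibility with the topological and scheme-theoretic involutions is a routine unwinding of definitions), we deduce $c^{\ast}=\tau^{\ast}$ on $\Hup^m_{\et}(\mathcal{Y}_{\ell^n,F}(\iota),A)$; summing over components yields the result for $\mathcal{Y}_{F,\mathrm{int}}\otimes\bbc$, and hence for $\mathcal{Y}_{F,\mathrm{int}}\otimes\qbar$. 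The main subtlety (and the only genuinely delicate step) is to make sure that the comparison between the \'etale cohomology of $\mathcal{Y}_{F,\mathrm{int}}\otimes\bbc$ and the singular cohomology of its complex points is equivariant for these concrete automorphisms; this is where careful use of the deformation retract description of Theorem~\ref{thm:cohomelladic}, together with the identification $\Hup^m_{\et}(\mathcal{Y}_{\ell^n,F},A)\cong\Hup^m(F(\zeta_{\ell^\infty}),A)\cong\Hup^m(Y_{\ell^n,F},A)$, becomes essential.
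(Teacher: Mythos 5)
Your proposal is correct and follows essentially the same route as the paper: for (i) the paper likewise derives the statement as an immediate conjunction of Propositions~\ref{Prop:LambdaStructureInverseArithmeticFrobenius}, \ref{Prop:ActionsByGalFpQpQ} and~\ref{Prop:GaloisOnEtaleCohoFactThroughChiEll}, and for (ii) it likewise uses Proposition~\ref{Prop:GaloisOnEtaleCohoFactThroughChiEll} to reduce to the case where $\sigma_{-1}$ is complex conjugation and then examines the chain of isomorphisms through $\Hup^m(\mathcal{Y}_{F,\mathrm{int}}(\bbc),A)\cong\bigoplus_{\iota}\Hup^m(Y_{\ell^n,F}(\iota),A)$. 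The only difference is one of detail: where the paper writes that the action of complex conjugation is ``easily determined by contemplating the isomorphisms'', you spell out the computation — namely that $c\circ\tau$ fixes the compact retract $Y_{\ell^n,F}(\iota)$ pointwise (since $\overline{z^{-1}}=z$ on $\bbs^1$) and hence is homotopic to the identity on $\mathcal{Y}_{\ell^n,F}(\iota)(\bbc)$ — which is a correct and welcome elaboration of the paper's terse argument.
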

\begin{proof}
	(i) follows from the conjunction of Propositions \ref{Prop:LambdaStructureInverseArithmeticFrobenius}, \ref{Prop:ActionsByGalFpQpQ} and~\ref{Prop:GaloisOnEtaleCohoFactThroughChiEll}. For (ii), by Proposition~\ref{Prop:GaloisOnEtaleCohoFactThroughChiEll} we may assume that $\sigma_{-1}$ is complex conjugation, whose action on cohomology is easily determined by contemplating the isomorphisms
	\begin{equation*}
	\Hup^m_{\et }(\mathcal{Y}_{F,\mathrm{int}}\otimes\qbar ,A)\cong\Hup^m_{\et }(\mathcal{Y}_{F,\mathrm{int}}\otimes\bbc ,A)\cong\Hup^m(\mathcal{Y}_{F,\mathrm{int}}(\bbc ),A)\cong\bigoplus_{\iota }\Hup^m(Y_{\ell^n,F}(\iota),A).\qedhere
	\end{equation*}
\end{proof}
\begin{Proposition}\label{Prop:FinalStepCompatibilityTopologicalArithmetic}
	The canonical isomorphism
	\begin{equation*}
	\Hup^m(Y_{\ell^n,F}(\iota ),A)\to\Hup^m_{\et }(\mathcal{Y}_{F,\mathrm{int}}\times_{e_{\iota },\,\Spec\bbz [\zeta_{\ell^n}]}\Spec\qbar ,A)
	\end{equation*}
	is equivariant for the \emph{inverse} inclusion $U_{(\ell^n)}\to U_{\ell^n}$, $u\mapsto u^{-1}$, in the sense that for any $u\in U_{(\ell^n)}$ and any $\sigma\in\Gal (\qbar /\bbq (\zeta_{\ell^n}))$ with $\chi_{\ell ,\bbq }(\sigma )=u^{-1}$ the action by $u$ on the left hand side agrees with the action by $\sigma$ on the right hand side.
\end{Proposition}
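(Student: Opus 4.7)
The plan is to realize both the topological action of $U_{(\ell^n)}$ and the arithmetic action of $\Gal(\qbar/\bbq(\zeta_{\ell^n}))$ as restrictions of a single $\bbz_{(\ell)}^\times$-action on $\Hup^m_{\et}(\mathcal{Y}_{F,\mathrm{int}}\otimes\qbar, A)$, and then to verify the identification $u\leftrightarrow u^{-1}$ on the generators $\{-1\}\cup\{p\text{ prime}: p\neq\ell\}$ of $\bbz_{(\ell)}^\times$.

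First I would make the topological action transparent on the scheme side. By Lemma~\ref{Lem:TopologicalActionOnSchemeDefinedByLambdaStructure} the $\Lambda$-structure induces a right $\bbz_{(\ell)}^\times$-action on $\mathcal{Y}_{F,\mathrm{int}}$ with $p$ acting as $\varphi_p$ and $-1$ as the inversion involution $[a]\mapsto [a^{-1}]$. After base change to $\bbc$ this action is compatible with the splitting $\mathcal{Y}_{F,\mathrm{int}}\otimes\bbc\cong\coprod_{\iota}\mathcal{Y}_{\ell^n,F}(\iota)$, the stabiliser of each component being $U_{(\ell^n)}$. An unravelling of the pointwise formula $(\chi\cdot u)(a)=\chi(a^u)$ on $\bbc$-points shows that, under the homotopy-invariance isomorphism $\Hup^m(Y_{\ell^n,F}(\iota),A)\cong\Hup^m_{\et}(\mathcal{Y}_{\ell^n,F}(\iota),A)$ provided by Theorem~\ref{thm:cohomelladic}, the restriction of this $\bbz_{(\ell)}^\times$-action to $U_{(\ell^n)}$ reproduces the topological action.

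Next I would transport everything to the $\qbar$-side. Proposition~\ref{Prop:ActionsByGalFpQpQ} supplies, for any chosen embedding $\qbar\hookrightarrow\bbc$, a $\bbz_{(\ell)}^\times$-equivariant isomorphism $\Hup^m_{\et}(\mathcal{Y}_{F,\mathrm{int}}\otimes\qbar,A)\overset{\sim}{\to}\Hup^m_{\et}(\mathcal{Y}_{F,\mathrm{int}}\otimes\bbc,A)$. Hence $\Hup^m_{\et}(\mathcal{Y}_{F,\mathrm{int}}\otimes\qbar,A)$ carries two commuting actions: the $\bbz_{(\ell)}^\times$-action just described and the arithmetic action of $\Gal(\qbar/\bbq)$, the latter factoring through $\chi_{\ell,\bbq}$ by Proposition~\ref{Prop:GaloisOnEtaleCohoFactThroughChiEll}.

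The final step compares these two actions on the chosen generators. Proposition~\ref{Prop:ActionByGlobalFrobeniusAndComplexConjugation} gives that for $\sigma$ with $\chi_\ell(\sigma)=p$ the arithmetic action equals $\varphi_p^{-1}$, and for $\chi_\ell(\sigma)=-1$ it equals the inversion involution; comparison with Lemma~\ref{Lem:TopologicalActionOnSchemeDefinedByLambdaStructure} shows that on each generator of $\bbz_{(\ell)}^\times$, the arithmetic action of $\sigma$ coincides with the $\bbz_{(\ell)}^\times$-action of $\chi_\ell(\sigma)^{-1}$. By multiplicativity the same equality holds throughout $\bbz_{(\ell)}^\times$, and restricting to $\Gal(\qbar/\bbq(\zeta_{\ell^n}))$ on one side and to its matching preimage $U_{(\ell^n)}$ on the other yields the proposition. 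The principal bookkeeping hurdle — and the only real obstacle — is keeping left and right actions and the various inversion conventions coherent across the identifications; once the preceding propositions are combined correctly, the equivariance up to inversion drops out.
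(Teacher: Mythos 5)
Your proof follows essentially the same route as the paper: reduce to a single $\bbz_{(\ell)}^\times$-action on $\Hup^m_{\et}(\mathcal{Y}_{F,\mathrm{int}}\otimes\qbar,A)$, check the inversion on the generators $-1$ and the primes $p\neq\ell$, and invoke Lemma~\ref{Lem:TopologicalActionOnSchemeDefinedByLambdaStructure} together with Proposition~\ref{Prop:ActionByGlobalFrobeniusAndComplexConjugation}. One small bookkeeping correction: the $\bbz_{(\ell)}^\times$-equivariant identification $\Hup^m_{\et}(\mathcal{Y}_{F,\mathrm{int}}\otimes\qbar,A)\cong\Hup^m_{\et}(\mathcal{Y}_{F,\mathrm{int}}\otimes\bbc,A)\cong\bigoplus_{\iota}\Hup^m(Y_{\ell^n,F}(\iota),A)$ that you attribute to Proposition~\ref{Prop:ActionsByGalFpQpQ} is not what that proposition asserts (it concerns the specialization maps through $\mathcal{O}_{\qbar_p}$ and $\widebar{\bbf}_p$); the identification you want is the base-change invariance of \'etale cohomology combined with Theorem~\ref{thm:cohomelladic}, as laid out in the paragraph preceding Proposition~\ref{Prop:LambdaStructureInverseArithmeticFrobenius}. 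With that citation repaired, the argument is correct and matches the paper's.
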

\begin{proof}
	It suffices to show that the canonical isomorphism
	\begin{equation*}
	\bigoplus_{\iota\colon\mu_{\ell^n}\hookrightarrow\bbs^1}\Hup^m(Y_{\ell^n,F}(\iota ),A)\to\Hup^m_{\et }(\mathcal{Y}_{F,\mathrm{int}}\times_{\Spec\bbz }\Spec\qbar ,A)
	\end{equation*}
	is equivariant for the inverse inclusion $\bbz_{(\ell)}^{\times }\to\bbz_{\ell}^{\times }$, $u\mapsto u^{-1}$, where the $\bbz_{\ell }^{\times }$-action on the right hand side is given by Proposition~\ref{Prop:GaloisOnEtaleCohoFactThroughChiEll}. Since $\bbz_{(\ell)}^{\times }$ is generated by $-1$ and the primes different from $\ell$ it is sufficient to check this for these elements. The combination of Lemma~\ref{Lem:TopologicalActionOnSchemeDefinedByLambdaStructure} and Proposition~\ref{Prop:ActionByGlobalFrobeniusAndComplexConjugation} yields the desired result.
\end{proof}

\providecommand{\bysame}{\leavevmode\hbox to3em{\hrulefill}\thinspace}
\providecommand{\MR}{\relax\ifhmode\unskip\space\fi MR }
\providecommand{\MRhref}[2]{%
	\href{http://www.ams.org/mathscinet-getitem?mr=#1}{#2}
}
\providecommand{\href}[2]{#2}

\end{document}